\documentclass[11pt]{article}

\usepackage{iftex}
\ifPDFTeX
  \usepackage[T1]{fontenc}
  \usepackage[utf8]{inputenc}
  \usepackage{lmodern}
\else
  \usepackage{fontspec}
\fi
\usepackage[margin=1in]{geometry}
\usepackage{amsmath,amssymb,amsthm,mathtools}
\usepackage{microtype}
\usepackage{enumitem}
\usepackage{needspace}
\usepackage{xcolor}
\usepackage{booktabs}
\usepackage{graphicx}
\usepackage{tikz}
\usetikzlibrary{arrows.meta,calc,positioning,shapes.misc,shapes.geometric,
  patterns,decorations.pathreplacing,fit,backgrounds}
\usepackage[font=small,labelfont=bf]{caption}
\usepackage{subcaption}
\usepackage{thmtools,thm-restate}
\usepackage[colorlinks=true,linkcolor=blue!50!black,citecolor=green!35!black,
  urlcolor=blue!50!black]{hyperref}

\theoremstyle{plain}

\newtheorem{proposition}{Proposition}[section]
\newtheorem{lemma}[proposition]{Lemma}
\newtheorem{corollary}[proposition]{Corollary}

\newtheorem*{theorem*}{Theorem}
\theoremstyle{definition}

\newtheorem{remark}[proposition]{Remark}

\newtheorem*{remark*}{Remark}
\counterwithin{equation}{section}

\makeatletter
\@for\PB@env:=proposition,lemma,corollary,claim,definition,remark,example\do{%
  \expandafter\def\csname theH\PB@env\endcsname{\theHsection.\arabic{proposition}}}
\makeatother

\newlist{thmlist}{enumerate}{1}
\setlist[thmlist]{label=\textup{(\alph*)}, ref=(\alph*), leftmargin=*, widest=b,
  topsep=3pt, itemsep=2pt}
\newlist{steplist}{enumerate}{1}
\setlist[steplist]{label=\textup{(\roman*)}, ref=(\roman*), leftmargin=*, widest=iii,
  topsep=3pt, itemsep=2pt}

\newcommand{\E}{\mathbb{E}}
\newcommand{\Prob}{\mathbb{P}}
\newcommand{\R}{\mathbb{R}}
\newcommand{\N}{\mathbb{N}}
\newcommand{\ind}{\mathbf{1}}
\newcommand{\id}{\mathrm{id}}
\DeclareMathOperator{\supp}{supp}
\DeclareMathOperator{\sech}{sech}
\DeclareMathOperator{\Law}{Law}
\DeclareMathOperator{\Shift}{Shift}
\newcommand{\Wone}{W_1}                      %
\newcommand{\Pspace}{\mathcal{P}}             %
\newcommand{\normCfour}[1]{\|#1\|_{C^4[0,1]}}

\newcommand{\xic}{\xi_c}                      %
\newcommand{\azero}{a_0}                      %
\newcommand{\ZN}{Z_N}                         %
\newcommand{\FNb}{F_{N,\beta}}                %

\newcommand{\feq}{f_{\mathrm{eq}}}            %
\newcommand{\Lambar}{\overline{\Lambda}}      %
\newcommand{\Lamhat}{\widehat{\Lambda}}       %
\newcommand{\zetas}{\zeta_*}                  %
\newcommand{\alphas}{\alpha_*}                %
\newcommand{\Obs}{\mathcal{O}}                %
\newcommand{\Ustar}{U_*}                      %
\newcommand{\Wstar}{W_*}                      %
\newcommand{\Dstar}{D_*}                      %
\newcommand{\rhostar}{\rho_*}                 %
\newcommand{\rhob}{\rho_\beta(0)}             %
\newcommand{\cb}{c_\beta}                     %
\newcommand{\cPR}{c_5^{\mathrm{PR}}}          %
\newcommand{\qdot}{\dot q(0)}                 %
\newcommand{\Kcl}{\mathcal{K}}                %
\newcommand{\Kclodd}{\mathcal{K}_3}           %
\newcommand{\Tfrak}{\mathfrak{T}}             %
\newcommand{\Sfrak}{\mathfrak{S}}             %
\newcommand{\qrs}{q^{\mathrm{rs}}}            %
\newcommand{\Ffield}{\mathcal{F}}             %
\newcommand{\mfield}{\mathfrak{m}}            %
\newcommand{\qmin}{q_{\min}}                  %

\newcommand{\condH}{\hyperref[hyp:H]{\textup{(H)}}}

\newcommand{\condFM}{\hyperref[hyp:FM]{\textup{(FM)}}}

\newcommand{\DisplayCondH}{%
\begin{equation*}\tag{H}\label{hyp:H}
 0\ \text{is an accumulation point of}\ \supp\zetas .
\end{equation*}}
\newcommand{\DisplayCondHp}{%
\begin{equation*}\tag{H$'$}\label{hyp:Hp}
 [0,\kappa]\subset\supp\zetas\ \text{for some}\ \kappa>0 ,
\end{equation*}}
\newcommand{\DisplayCondFM}{%
\begin{equation*}\tag{FM}\label{hyp:FM}
 \lim_{N\to\infty}\frac1N\log\E\ZN^{\theta}=\Lambda(\theta)
 \qquad\text{for every}\ \theta\in(0,1),
\end{equation*}}

\newcommand{\StateThmSK}{%
\begin{restatable}[The Sherrington--Kirkpatrick model at every $\beta>1$]{theorem}{thmsk}
\label{thm:sk}
Let $\xi(t)=\beta^2t^2/2$ with $\beta>1$. Recall that zero is an accumulation point of
$\supp\zetas$ \cite[Proposition~5.1]{Lop26a} \textup{(}see also
\cite[Theorem~5.1]{Pei26}\textup{)}, and that \condFM\ holds for this covariance
\cite[Theorem~2.1]{CM19} \textup{(}see Lemma~\ref{lem:A-fm-sk}\textup{)}. The
distribution function $\alphas$ of $\zetas$ is continuously differentiable near zero,
and its slope at zero is
\[
 \rhob:=\alphas'(0)=\frac{\beta^5\Wstar^2}{2}>0,
 \qquad \Wstar=\Phi_{\zetas,xxxx}(0,0).
\]
Put $\cb=\frac{9}{640}\,\beta^3\rhob^{-3}$.
\begin{thmlist}
\item As $\theta\downarrow0$,
 \[
  p(\theta)-\feq=\cb\,\theta^5\,(1+o(1)),\qquad
  q_\theta=\frac{3\theta}{2\rhob}\,(1+o(1)),\qquad
  \frac{\zeta_\theta(\{q_\theta\})}{\theta}\longrightarrow\frac12,
 \]
 and the first positive contact of $\zeta_\theta$ is marginal for every sufficiently
 small $\theta>0$.
\item As $\delta\downarrow0$,
 \[
  \Sigma(\feq+\delta)=-\frac56\,(6\cb)^{-1/5}\,\delta^{6/5}\,(1+o(1)),
 \]
 and there is $\bar\delta>0$ such that the first positive contact at every level in
 $(\feq,\feq+\bar\delta)$ is marginal.
\item For every $f\in(\feq,f_1)$,
 \[
  \lim_{N\to\infty}\frac1N\log\Prob\bigl(N^{-1}\log\ZN\ge f\bigr)=\Sigma(f),
 \]
 and consequently
 \[
  \lim_{\delta\downarrow0}\delta^{-6/5}\lim_{N\to\infty}
  \Bigl(-\frac1N\log\Prob\bigl(N^{-1}\log\ZN\ge\feq+\delta\bigr)\Bigr)
  =\frac56\,(6\cb)^{-1/5}.
 \]
 Both limits also hold for
 $\FNb=\log\sum_{\sigma\in\{-1,1\}^N}\exp\bigl(\beta N^{-1/2}\sum_{i<j}g_{ij}\sigma_i\sigma_j\bigr)$,
 with independent standard Gaussian $g_{ij}$, after $N^{-1}\log\ZN$ is replaced by
 $N^{-1}\FNb-\log2$.
\end{thmlist}
\end{restatable}}

\newcommand{\StateThmLocal}{%
\begin{restatable}[Onset of the constrained measure at zero overlap]{theorem}{thmlocal}
\label{thm:local}
Let $\xi(t)=\sum_{p\ge2}\beta_p^2t^p$ have radius of convergence greater than one, with
$\beta_2>0$ and $\xi'''(0)=0$, and assume \condH. Then $\Dstar>0$, $\zetas(\{0\})=0$ and
$\Ustar=\azero^{-1/2}$. There is $\kappa>0$ such that $[0,\kappa]\subset\supp\zetas$ and
$\alphas$ is continuously differentiable on $[0,\kappa)$, with
\[
 \rhostar:=\alphas'(0)=\frac{\Dstar}{2\sqrt{\azero}} .
\]
Put $c=\dfrac{9\azero^3}{80\Dstar^3}=\dfrac{9}{640}\,\azero^{3/2}\rhostar^{-3}$.
\begin{thmlist}
\item \emph{Marginal onset.} There is $\theta_0\in(0,1)$ such that, for
 $0<\theta<\theta_0$, $\Gamma_{\zeta_\theta}(t)=t$ on an interval $[q_\theta,s_\theta]$
 with $s_\theta>q_\theta$; in particular the first positive contact $q_\theta$ is
 marginal. As $\theta\downarrow0$,
 \[
  \frac{q_\theta}{\theta}\longrightarrow\frac{3\sqrt{\azero}}{\Dstar}=\frac{3}{2\rhostar},
  \qquad
  \frac{\zeta_\theta(\{q_\theta\})}{\theta}\longrightarrow\frac12 .
 \]
\item \emph{Fifth-order law.} As $\theta\downarrow0$,
 \begin{gather*}
  p'(\theta)=5c\,\theta^4(1+o(1)),\qquad
  p(\theta)-\feq=c\,\theta^5(1+o(1)),\\
  \Lambda(\theta)-\theta\feq=c\,\theta^6(1+o(1)),\qquad
  \Lambda'(\theta)-\feq=6c\,\theta^5(1+o(1)).
 \end{gather*}
\item \emph{Six-fifths law.} $\Lambda$ is continuously differentiable and strictly convex
 on $(0,1)$, $\feq<f_1$, and $\theta_f=(\Lambda')^{-1}(f)$ for every $f\in(\feq,f_1)$.
 As $\delta\downarrow0$, with $f=\feq+\delta$,
 \[
  \Sigma(\feq+\delta)=-\frac56\,(6c)^{-1/5}\,\delta^{6/5}(1+o(1)),\qquad
  \theta_f=\Bigl(\frac{\delta}{6c}\Bigr)^{1/5}(1+o(1)),\qquad
  q_f=\frac{3}{2\rhostar}\,\theta_f\,(1+o(1)),
 \]
 and there is $\bar\delta>0$ such that the first positive contact at every level in
 $(\feq,\feq+\bar\delta)$ is marginal.
\item \emph{Free energy.} If \condFM\ holds, then for every $f\in(\feq,f_1)$
 \[
  \lim_{N\to\infty}\frac1N\log\Prob\bigl(N^{-1}\log\ZN\ge f\bigr)=\Sigma(f),
 \]
 and consequently
 \[
  \lim_{\delta\downarrow0}\delta^{-6/5}\lim_{N\to\infty}
  \Bigl(-\frac1N\log\Prob\bigl(N^{-1}\log\ZN\ge\feq+\delta\bigr)\Bigr)
  =\frac56\,(6c)^{-1/5}.
 \]
\end{thmlist}
\end{restatable}}

\newcommand{\StateThmField}{%
\begin{restatable}[A small external field]{theorem}{thmfield}
\label{thm:field}
Assume the hypotheses of Theorem~\ref{thm:local}. For $h\in\R$ let $\zeta_h$ be the
unique minimizer of
$P^h_\zeta=\Phi_\zeta(0,h)-\frac12\int_0^1t\xi''(t)\zeta([0,t])\,dt$, and put
\[
 \Ffield(h)=P^h_{\zeta_h},\qquad \qmin(h)=\min\supp\zeta_h,\qquad
 \mfield(h)=\Phi_{\zeta_h,x}(0,h).
\]
Let $X^{\zeta,h}$ be the diffusion \eqref{eq:intro-diffusion} started from $h$, and
$\Gamma_{\zeta,h}(t)=\E\Phi_{\zeta,x}(t,X^{\zeta,h}_t)^2$. As $h\to0$, the following hold.
\begin{thmlist}
\item \emph{A gap with a marginal edge.} There are $h_1,\kappa_1>0$ such that, for
 $0<|h|<h_1$ and $q=\qmin(h)$,
 \[
  0<q<\kappa_1,\quad \zeta_h([0,q))=0,\quad [q,\kappa_1]\subset\supp\zeta_h,\quad
  \Gamma_{\zeta_h,h}(q)=q,\quad
  \xi''(q)\,\E\Phi_{\zeta_h,xx}(q,X^{\zeta_h,h}_q)^2=1 .
 \]
\item \emph{The gap and its atom.}
 \begin{gather*}
  \qmin(h)^3=\frac{3h^2}{\azero\Dstar}\,(1+o(1)),\\
  \zeta_h(\{\qmin(h)\})=\rhostar\,\qmin(h)\,(1+o(1))=\zetas([0,\qmin(h)])\,(1+o(1)).
 \end{gather*}
\item \emph{Magnetization.}
 $\displaystyle\mfield(h)=\frac{h}{\sqrt{\azero}}
 \Bigl(1-\frac{\Dstar}{4}\,\qmin(h)^2\,(1+o(1))\Bigr)$.
\item \emph{Free energy.} $\Ffield$ is even and continuously differentiable,
 $\Ffield'=\mfield$, $\Ffield''(0)=\azero^{-1/2}$, and
 \[
  \Ffield(h)=\Ffield(0)+\frac{h^2}{2\sqrt{\azero}}
  -\frac{3}{40}\,(9\Dstar)^{1/3}\azero^{-7/6}\,|h|^{10/3}\,(1+o(1)).
 \]
 In particular $\Ffield$ has no fourth-order Taylor expansion at zero.
\end{thmlist}
These conclusions hold for the Sherrington--Kirkpatrick covariance
$\xi(t)=\beta^2t^2/2$ at every $\beta>1$, for which \condH\ holds by
\cite[Proposition~5.1]{Lop26a} \textup{(}see also \cite[Theorem~5.1]{Pei26}\textup{)}.
\end{restatable}}

\newcommand{\StateThmAtom}{%
\begin{restatable}[An atom at zero]{theorem}{thmatom}
\label{thm:atom}
Let $\xi(t)=\sum_{p\ge2}\beta_p^2t^p$ be not identically zero, with radius of
convergence greater than one, and put $m_0=\zetas(\{0\})$.
\begin{thmlist}
\item \emph{Slope at equilibrium.} For every $\delta>0$,
 \[
  \Sigma(\feq+\delta)=-m_0\delta-S(\delta),\qquad
  S(\delta)=\sup_{0\le s\le1-m_0}\bigl\{s\delta-\Lamhat(m_0+s)\bigr\},
 \]
 where $\Lamhat(\theta)=\Lambar(\theta)-\theta\feq$, and $0\le S(\delta)=o(\delta)$ as
 $\delta\downarrow0$. In particular $\Sigma'_+(\feq)=-\zetas(\{0\})$, and
 $|\Sigma(\feq+\delta)|=o(\delta)$ if and only if $\zetas(\{0\})=0$.
\item \emph{The atom.} If zero is an isolated point of $\supp\zetas$, then $m_0>0$;
 this is the case whenever $\xi''(0)<1$. If zero is an accumulation point of
 $\supp\zetas$, then $\xi''(0)\Ustar^2=1$, so $\xi''(0)\ge1$, and
 \[
  \zetas(\{0\})=\frac{\xi'''(0)}{2\xi''(0)^{3/2}}=\frac{3\beta_3^2}{(2\beta_2^2)^{3/2}} .
 \]
\item \emph{Near-critical mixtures with a cubic term.} There is $\varepsilon_3\in(0,\frac14]$
 with the following property. Let $\beta_3>0$, $\xi''(0)>1$ and
 $\normCfour{\xi-\xic}<\varepsilon_3$, where $\xic(t)=t^2/2$. Then
 $\supp\zetas=[0,v_*]$ for some $v_*>0$, $m_0=3\beta_3^2/(2\beta_2^2)^{3/2}\in(0,1)$, and the
 constant
 \[
  \Dstar:=\frac{\xi''''(0)}{\azero}-\frac{3\xi'''(0)^2}{2\azero^2}+\azero^3\Wstar^2 ,
 \]
 which in this part replaces the constant $\Dstar$ of Section~\ref{sec:intro-constrained} and
 equals \eqref{eq:atom-odd-D}, satisfies $\Dstar\ge2$.
 With $c=9\azero^3/(80\Dstar^3)$, as $\delta\downarrow0$,
 \[
  \Sigma(\feq+\delta)=-m_0\delta-\frac45\,(5m_0c)^{-1/4}\,\delta^{5/4}\,(1+o(1)),
  \qquad
  \frac45\,(5m_0c)^{-1/4}=\frac{8}{5\sqrt3}\Bigl(\frac{\Dstar}{\azero}\Bigr)^{3/4}m_0^{-1/4}.
 \]
 At the level $f=\feq+\delta$, for all sufficiently small $\delta>0$, the first positive
 contact $q_f$ of $\zeta_f$ is marginal, and
 \begin{gather*}
  \theta_f-m_0=\Bigl(\frac{\delta}{5m_0c}\Bigr)^{1/4}(1+o(1)),\\
  q_f=\frac{3\sqrt{\azero}}{\Dstar}\,(\theta_f-m_0)(1+o(1)),\qquad
  \zeta_f(\{q_f\})=\frac{\theta_f-m_0}{2}\,(1+o(1)).
 \end{gather*}
\item \emph{Free energy.} If $\zetas\ne\delta_0$ and \condFM\ holds, then
 $\lim_{N}N^{-1}\log\Prob(N^{-1}\log\ZN\ge f)=\Sigma(f)$ for every $f\in(\feq,f_1)$.
\end{thmlist}
\end{restatable}}

\newcommand{\StateCorPureP}{%
\begin{restatable}[Pure $p$-spin models]{maincorollary}{corpurep}
\label{cor:pure-p}
Let $\xi(t)=\beta_p^2t^p$ with $p\ge3$ and $\beta_p>0$. Then $\zetas(\{0\})>0$ and, as
$\delta\downarrow0$,
\[
 \Sigma(\feq+\delta)=-\zetas(\{0\})\,\delta+o(\delta);
 \qquad\text{in particular}\qquad \Sigma'_+(\feq)=-\zetas(\{0\})<0 .
\]
If $\zetas=\delta_0$, then $\Sigma(\feq+\delta)=-\delta$ for every $\delta>0$. If
$\zetas\ne\delta_0$ and \condFM\ holds, then for $0<\delta<f_1-\feq$
\[
 \lim_{N\to\infty}\frac1N\log\Prob\bigl(N^{-1}\log\ZN\ge\feq+\delta\bigr)
 =\Sigma(\feq+\delta).
\]
\end{restatable}}

\newcommand{\StateThmNearCrit}{%
\begin{restatable}[Near criticality]{theorem}{thmnearcrit}
\label{thm:nearcrit}
Put $\xic(t)=t^2/2$. For $\varepsilon>0$ let $\Kcl(\varepsilon)$ be the set of
covariances $\xi(t)=\sum_{p\ge2}\beta_p^2t^p$ with radius of convergence greater than
one, $\beta_3=0$, $\xi''(0)>1$ and $\normCfour{\xi-\xic}<\varepsilon$.
\begin{thmlist}
\item \emph{The near-critical class.} There is $\varepsilon_*\in(0,\frac14]$ such that
 every $\xi\in\Kcl(\varepsilon_*)$ has $\supp\zetas=[0,v_*]$ for some $v_*>0$, so that
 \condH\ holds, and $\Dstar\ge2$. Theorem~\ref{thm:local} therefore applies, and its
 constant satisfies $c=9\azero^3/(80\Dstar^3)\to9/5120$ as $\normCfour{\xi-\xic}\to0$
 within $\Kcl(\varepsilon_*)$. In particular $\cb\to9/5120$ as $\beta\downarrow1$.
\item \emph{A de Almeida--Thouless threshold.} After decreasing $\varepsilon_*$, let
 $\xi\in\Kcl(\varepsilon_*)$. There is a unique $\theta_c\in(0,1)$ with the following
 properties. For $0<\theta<\theta_c$, $\supp\zeta_\theta=\{0\}\cup[q_\theta,v_\theta]$ with
 $0<q_\theta<v_\theta$, and the first positive contact is marginal. For
 $\theta_c\le\theta<1$, the measure $\zeta_\theta$ has exactly two atoms, at zero and at its
 only positive contact, and this contact is strictly Plefka stable when $\theta>\theta_c$.
 Moreover
 \[
  \theta_c=\tfrac23\,(\azero-1)\bigl(1+\tfrac14\xi''''(0)\bigr)+O\bigl((\azero-1)^2\bigr),
 \]
 with an absolute implied constant; for $\xi(t)=\beta^2t^2/2$ with $1<\beta^2<1+\varepsilon_*$
 this reads $\theta_c=\frac23(\beta^2-1)+O((\beta^2-1)^2)$. With $f_c=\Lambar'_+(\theta_c)$,
 one has $\feq<f_c<f_1$, and a level $f\in(\feq,f_1)$ has a marginal first positive contact if
 and only if $f\le f_c$.
\item \emph{Stability above the first contact.} Let $\xi\in\Kcl(\varepsilon_*)$, with
 $\varepsilon_*$ as in \textup{(b)}; let $Q_\theta$ and $Q_*$ be the quantile functions of
 $\zeta_\theta$ and $\zetas$, where $Q(x)=\inf\{t:\zeta([0,t])\ge x\}$, and put
 $x_\theta=\zeta_\theta([0,q_\theta])$. As $\theta\downarrow0$,
 \[
  \sup_{x_\theta<x<1}|Q_\theta(x)-Q_*(x)|=O(\theta^5),\qquad
  \Wone(\zeta_\theta,\zetas)=\frac{5\sqrt{\azero}}{4\Dstar}\,\theta^2\,(1+O(\theta)),
 \]
 and $q_\theta=\frac{3\sqrt{\azero}}{\Dstar}\,\theta\,(1+O(\theta))$,
 $\zeta_\theta(\{q_\theta\})=\frac\theta2\,(1+O(\theta))$. The implied constants depend
 on $\xi$.
\end{thmlist}
\end{restatable}}

\ifdefined\PBsinglesetup\PBsinglesetup\fi

\title{Upper deviations of the Sherrington--Kirkpatrick free energy:\\
the fifth-order law and the onset of constrained Parisi measures}
\author{Yan Ru Pei}
\date{}
\hypersetup{pdftitle={Upper deviations of the Sherrington-Kirkpatrick free energy: the
  fifth-order law and the onset of constrained Parisi measures},pdfauthor={Yan Ru Pei}}

\begin{document}

\ifdefined\PBsinglebody
  \PBsinglebody
\else

\maketitle

\begin{abstract}
Let $Z_N$ be the partition function of the Sherrington--Kirkpatrick model at inverse
temperature $\beta>1$. Replica theory, through the work of Kondor near the critical
temperature and of Parisi and Rizzo at all temperatures, predicts that the probability that
$N^{-1}\log Z_N$ exceeds its typical value by $\delta$ decays like
$\exp(-Na\,\delta^{6/5})$, with an explicit constant $a$ determined by the slope at zero of
the Parisi order parameter; Aronow and Lopatto recently proved bounds of this order. The
exponent is the Legendre transform of the Parisi variational problem constrained to carry
an atom $\theta$ at zero. We show that the constrained minimum exceeds the equilibrium free
energy by $c_\beta\theta^5(1+o(1))$, where $c_\beta=\frac{9}{640}\beta^3\rho_\beta(0)^{-3}$
and $\rho_\beta(0)>0$ is the density at zero of the Parisi measure. With the
fractional-moment formula, which holds for this model, this gives the exponent
$\frac56(6c_\beta)^{-1/5}\delta^{6/5}(1+o(1))$ in the limit $N\to\infty$ followed by
$\delta\downarrow0$. The constant comes from the shape of the constrained minimizer: its
first positive contact is marginal and, to leading order, sits at $3\theta/(2\rho_\beta(0))$
and carries an atom $\theta/2$, and below that contact a function that equals the identity at the contact
points differs from the identity, to leading order, by a universal cubic. This plateau
calculus applies to every mixture with $\xi'''(0)=0$ whose Parisi measure accumulates at
zero, and it governs two other perturbations of the bottom of the measure: a small external
field $h$ opens a gap of order $|h|^{2/3}$ and adds a term of order $|h|^{10/3}$ to the
free energy, and an atom of the Parisi measure at zero, which every pure $p$-spin model
with $p\ge3$ has, makes the rate function leave zero linearly, with slope equal to that
atom. Near criticality, including the Sherrington--Kirkpatrick model for $\beta$ close to
one, the constant tends to Kondor's value $9/5120$, the marginal regime ends at a de
Almeida--Thouless threshold in the constraint, and above its first contact the constrained
measure agrees with the Parisi measure up to $O(\theta^5)$. The results at every $\beta>1$
use Lopatto's theorem that the Parisi measure of this model accumulates at zero.
\end{abstract}

\setcounter{tocdepth}{1}
\tableofcontents

\section{Introduction}\label{sec:intro}

\subsection{Upper deviations of the free energy}\label{sec:intro-deviations}

Let $H_N$ be the centered Gaussian field on $\{-1,1\}^N$ with covariance
$\E H_N(\sigma)H_N(\sigma')=\beta^2(\sigma\cdot\sigma')^2/(2N)$, and put
$\ZN=2^{-N}\sum_{\sigma}e^{H_N(\sigma)}$. This is the Sherrington--Kirkpatrick model \cite{SK75}
at inverse temperature $\beta$, written with the diagonal terms of its quadratic form; in the
usual form $\FNb=\log\sum_\sigma\exp(\beta N^{-1/2}\sum_{i<j}g_{ij}\sigma_i\sigma_j)$, with
independent standard Gaussian couplings $g_{ij}$, they are absent, and $\log\ZN+N\log2$ has the
law of $\FNb+G$ with $G\sim\mathcal N(0,\beta^2/2)$ independent of $\FNb$ \cite{AL26}. By the
Parisi formula \cite{Par79,Par80}, proved by Guerra \cite{Gue03} and Talagrand \cite{Tal06},
$N^{-1}\log\ZN$ converges almost surely to $\feq$, the minimum of the Parisi functional over
probability measures on $[0,1]$, whose minimizer $\zetas$, the Parisi measure, is unique
\cite{AC15b}. For $\beta>1$, how unlikely is it that $N^{-1}\log\ZN$ exceeds $\feq$ by a fixed
$\delta>0$?

By Markov's inequality, $\Prob(N^{-1}\log\ZN\ge f)\le e^{-N\theta f}\,\E\ZN^\theta$ for
$0<\theta<1$, so upper deviations are governed by fractional moments. Talagrand \cite{Tal07}
related the growth rate of $\E\ZN^\theta$ to the Parisi problem constrained to measures with an atom of mass at least
$\theta$ at zero: with $p(\theta)$ the constrained minimum, the fractional-moment formula
\condFM\ reads $\lim_{N}N^{-1}\log\E\ZN^\theta=\Lambda(\theta):=\theta\,p(\theta)$ for
$0<\theta<1$, and for the Sherrington--Kirkpatrick model it is the one-level case of a theorem of
Contucci and Mingione \cite[Theorem~2.1]{CM19}. With an exponential change of measure, \condFM\
identifies the upper-tail rate of $N^{-1}\log\ZN$ as $-\Sigma$, where
$\Sigma(f)=\inf_\theta\{\Lambda(\theta)-\theta f\}$ (that Talagrand's results identify this rate
is noted in \cite[Remark~1.3]{Bou26b}; Appendix~\ref{sec:cons-moments} gives our argument). Near
$\feq$ the rate is decided by how fast $p(\theta)$ decreases to $\feq$: if $p(\theta)-\feq$ is of
order $\theta^5$ as $\theta\downarrow0$, the rate at $\feq+\delta$ is of order $\delta^{6/5}$.

Replica theory predicts a fifth-order increase. Kondor \cite{Kon83} found a term $9n^5/5120$ in
the replica free energy of a truncated model near the critical temperature, as reported by
Parisi and Rizzo \cite{PR08} and \cite[Section~IV]{PR09}, who extended the computation to every temperature below
the critical one. With their replica number $n$ in the role of $\theta$, they predict
$p(\theta)-\feq\approx\cb\theta^5$ with $\cb=\frac{9}{640}\beta^3\rhob^{-3}$, where $\rhob$ is
the density at zero of the Parisi measure, the reciprocal of the slope $\qdot$ at $x=0$ of
Parisi's order parameter $q(x)$, the inverse function of $t\mapsto\zetas([0,t])$; a Legendre
transform turns this into their six-fifths law \cite[(4)]{PR09}, by which $N^{-1}\log\ZN$ exceeds
$\feq$ by $\delta$ with probability $\exp\{-N\frac56(6\cb)^{-1/5}\delta^{6/5}\}$ to leading
order. Appendix~\ref{app:dictionary} translates their notation; locators in \cite{PR09} refer to
its arXiv version~2. Locators in \cite{AC15a}, \cite{JT17} and \cite{Che17} refer to the arXiv
versions listed in the bibliography.

For the Sherrington--Kirkpatrick model at every $\beta>1$, Aronow and Lopatto recently proved that $p(\theta)-\feq$ is bounded above and below by constant multiples of
$\theta^5$ \cite[Lemma~7.2, Proposition~7.3]{AL26}, and that at finite $N$ the probability that
$\FNb$ exceeds $N(\feq+\log2+x)$ lies between $e^{-CNx^{6/5}}$ and $e^{-C'Nx^{6/5}}$ for
$AN^{-2/3}\le x\le x_0$, with constants depending on $\beta$ \cite[Theorem~1.5]{AL26}; after
division by $\beta$ these bounds hold uniformly in $\beta\ge2$, and they extend to the
ground-state energy \cite[Theorem~1.6]{AL26}. Their constants are not specified.

This paper identifies the constant: for the Sherrington--Kirkpatrick model at every $\beta>1$
(Theorem~\ref{thm:sk}),
\[
 \lim_{\delta\downarrow0}\delta^{-6/5}\lim_{N\to\infty}
 \Bigl(-\frac1N\log\Prob\bigl(N^{-1}\log\ZN\ge\feq+\delta\bigr)\Bigr)
 =\frac56\,(6\cb)^{-1/5},
\]
where $\rhob=\beta^5\Wstar^2/2>0$ and $\Wstar$ is the fourth spatial derivative at the origin of
the solution of the Parisi equation for $\zetas$ (Section~\ref{sec:intro-constrained}); the limits
are taken in this order. The same holds for $\FNb$, with $N^{-1}\FNb-\log2$ in place of
$N^{-1}\log\ZN$, and the constant is the leading term of the prediction of Parisi and Rizzo. The
statement at every $\beta>1$ uses Lopatto's theorem that zero is an accumulation point of the
support of the Parisi measure \cite[Proposition~5.1]{Lop26a}.

The constant is a statement about the shape of the constrained minimizer $\zeta_\theta$ near the
bottom of its support, at scale $\theta$: for small $\theta$ its first positive contact is
marginal, sits at
$\frac{3\theta}{2\rhob}(1+o(1))$ and carries an atom of mass $\frac\theta2(1+o(1))$, and below
this contact a function $\Gamma_\theta$ that equals the identity at the contacts differs from the
identity by a universal cubic, to leading order (Section~\ref{sec:intro-ideas}). This mechanism is
local at the origin: it gives the constant for every mixed $p$-spin model without a cubic term
whose Parisi measure accumulates at zero (Theorem~\ref{thm:local}), and it governs two other
perturbations of the bottom of the measure, a small external field (Theorem~\ref{thm:field}) and
an atom at zero (Theorem~\ref{thm:atom}). Kondor's coefficient is the limit of $\cb$ as
$\beta\downarrow1$, and near criticality, including the Sherrington--Kirkpatrick model for $\beta$
close to one, the marginal regime ends at a de Almeida--Thouless threshold in the constraint
(Theorem~\ref{thm:nearcrit}).

\subsection{Constrained Parisi measures}\label{sec:intro-constrained}

We state the results for mixed $p$-spin models, since the mechanism is local at the origin. Let
$\xi(t)=\sum_{p\ge2}\beta_p^2t^p$ be not identically zero, with radius of convergence greater than one, let $H_N$ be the
centered Gaussian field on $\{-1,1\}^N$ with covariance \eqref{eq:intro-covariance}, and, for a
probability measure $\zeta$ on $[0,1]$, let $\Phi_\zeta$ solve the Parisi equation
\eqref{eq:intro-pde} and let $P_\zeta$ be the Parisi functional \eqref{eq:intro-functional}:
\begin{gather}
 \E H_N(\sigma)H_N(\sigma')=N\xi\bigl(N^{-1}\sigma\cdot\sigma'\bigr),
 \qquad
 \ZN=2^{-N}\sum_{\sigma\in\{-1,1\}^N}e^{H_N(\sigma)} ,
 \label{eq:intro-covariance}\\
 \partial_t\Phi_\zeta+\frac{\xi''(t)}2\bigl\{\Phi_{\zeta,xx}+\zeta([0,t])\Phi_{\zeta,x}^2\bigr\}=0,
 \qquad \Phi_\zeta(1,x)=\log\cosh x,
 \label{eq:intro-pde}\\
 P_\zeta=\Phi_\zeta(0,0)-\frac12\int_0^1t\xi''(t)\zeta([0,t])\,dt,
 \qquad \feq=\inf_\zeta P_\zeta .
 \label{eq:intro-functional}
\end{gather}
The model of Section~\ref{sec:intro-deviations} is $\xi(t)=\beta^2t^2/2$, that is,
$\beta_2^2=\beta^2/2$ and $\beta_p=0$ for $p\ge3$; an unsubscripted $\beta$ always denotes its
inverse temperature. The Parisi equation is solved by a Cole--Hopf transform between consecutive
atoms when $\zeta$ has finitely many atoms, and its solution extends to every $\zeta$ by
continuity (Section~\ref{sec:cons-standard}); each $\Phi_\zeta(t,\cdot)$ is even. The functional
$\zeta\mapsto P_\zeta$ is strictly convex and has a unique minimizer $\zetas$, the Parisi measure
\cite[Theorem~2 and Corollary~1]{AC15b}; whenever the Parisi formula applies, for instance for every polynomial
$\xi$ \cite{Tal06,Pan14}, $\feq$ is the almost sure limit of $N^{-1}\log\ZN$. We write
$\alphas(t)=\zetas([0,t])$, $\azero=\xi''(0)=2\beta_2^2$, $\Ustar=\Phi_{\zetas,xx}(0,0)$,
$\Wstar=\Phi_{\zetas,xxxx}(0,0)$ and $\Dstar=\xi''''(0)\Ustar^2+\azero^3\Wstar^2$; when $\xi'''(0)>0$, Theorem~\ref{thm:atom}(c) and Section~\ref{sec:atom-odd} use a modified constant, also written $\Dstar$.

\paragraph{The constrained problem.}
For $0\le\theta<1$ the minimizer $\zeta_\theta$ of $P_\zeta$ under the constraint
$\zeta(\{0\})\ge\theta$ is unique, by strict convexity; we put $p(\theta)=P_{\zeta_\theta}$, so
that $\zeta_0=\zetas$ and $p(0)=\feq$, and
$\Lambda(\theta)=\theta\inf_{\zeta(\{0\})\ge\theta}P_\zeta=\theta p(\theta)$. With $\Lambar$ the
extension of $\Lambda$ to $[0,1]$ by its lower limit at $1$, which is convex
\cite[Lemma~3.2]{Bou26b}, let
\begin{equation}\label{eq:intro-dual}
 \Sigma(f)=\inf_{\theta\in[0,1]}\bigl\{\Lambar(\theta)-\theta f\bigr\},
 \qquad f_1=\Lambar'_-(1).
\end{equation}
When $\zetas\ne\delta_0$ we have $\feq<f_1$, and for every level $f\in(\feq,f_1)$ the infimum in
\eqref{eq:intro-dual} is attained at a unique $\theta_f\in(0,1)$
(Lemmas~\ref{lem:cons-variational}(d) and~\ref{lem:cons-regular}(d)); we write $\zeta_f=\zeta_{\theta_f}$.
For $f\ge f_1$ the infimum in \eqref{eq:intro-dual} is attained at $\theta=1$, outside the range of \condFM, and we do not treat these levels.

\paragraph{Contacts.}
With the diffusion of Auffinger and Chen \cite[Theorem~3]{AC15b} associated with $\zeta$, put
\begin{gather}
 dX_t=\xi''(t)\zeta([0,t])\Phi_{\zeta,x}(t,X_t)\,dt+\sqrt{\xi''(t)}\,dB_t,\qquad X_0=0,
 \label{eq:intro-diffusion}\\
 \Gamma_\zeta(t)=\E\Phi_{\zeta,x}(t,X_t)^2,\qquad
 \Obs_\zeta(t)=\frac12\int_t^1\xi''(s)\bigl\{\Gamma_\zeta(s)-s\bigr\}\,ds .
 \label{eq:intro-obstacle}
\end{gather}
We call $\Obs_\zeta$ the \emph{obstacle}. By evenness $\Gamma_\zeta(0)=0$, and $\Gamma_\zeta$ is continuously differentiable with
$\Gamma_\zeta'(t)=\xi''(t)\,\E\Phi_{\zeta,xx}(t,X_t)^2$ \cite[Proposition~3]{AC15a}, read along $X$ by Girsanov's theorem (Section~\ref{sec:plat}). A
\emph{positive contact} of $\zeta$ is a minimizer of $\Obs_\zeta$ over $(0,1]$, and the smallest
one, when it exists, is the \emph{first positive contact}. A positive contact $q$ lies in $(0,1)$ (Lemma~\ref{lem:cons-standard}(f)),
and the first- and second-order conditions for a minimum give $\Gamma_\zeta(q)=q$ and
$\Gamma_\zeta'(q)\le1$, that is, the Plefka inequality
\begin{equation}\label{eq:intro-plefka}
 1-\xi''(q)\,\E\Phi_{\zeta,xx}(q,X_q)^2\ge0 .
\end{equation}
The contact is \emph{marginal} if equality holds and \emph{strictly Plefka stable} otherwise; for
$\zeta=\zeta_f$ and its first positive contact, strict stability is the strict Plefka condition
of Boursier \cite[(1.8)]{Bou26b}. If $\theta>\zetas(\{0\})$, then $\zeta_\theta(\{0\})=\theta$,
the positive contacts of $\zeta_\theta$ form a nonempty compact subset of $(0,1)$ that carries
the restriction of $\zeta_\theta$ to $(0,1]$, and the first positive contact $q_\theta$ exists
(Section~\ref{sec:cons}). A \emph{plateau} of a measure is an open interval on which its
distribution function is constant (in the quantile function $q(x)$ of Parisi and Rizzo, a plateau
of $q(x)$ is an atom of the measure; we use the word only in our sense); the \emph{first plateau}
of $\zeta_\theta$ is $(0,q_\theta)$, where its distribution function equals $\theta$. We write
$q_f$ for the first positive contact of $\zeta_f$, and say that the first positive contact at the
level $f$ is marginal if $q_f$ is. Finally, $\Wone(\mu,\nu)=\int_0^1|\mu([0,t])-\nu([0,t])|\,dt$
is the transport distance between probability measures on $[0,1]$, which metrizes weak
convergence, and $\normCfour{\eta}=\max_{0\le k\le4}\sup_{0\le t\le1}|\eta^{(k)}(t)|$.

\paragraph{Named conditions.}
Zero always lies in $\supp\zetas$ (Lemma~\ref{lem:ons-zero-in-support}). The local results
assume that it is not an isolated point of the support:
\DisplayCondH
When $\xi'''(0)=0$ and $\beta_2>0$, this condition implies the stronger condition
\DisplayCondHp
(Theorem~\ref{thm:local}), and if $\xi'''(0)=0$ it is equivalent to $\zetas(\{0\})=0$
(Remark~\ref{rem:ons-H-no-atom}). Condition \condH\ fails whenever $\xi''(0)<1$ (Theorem~\ref{thm:atom}(b)), for instance for
every pure $p$-spin model with $p\ge3$. It holds for the Sherrington--Kirkpatrick model at every
$\beta>1$ \cite[Proposition~5.1]{Lop26a} (see also \cite[Theorem~5.1]{Pei26}; both are preprints) and for the
near-critical covariances of Theorems~\ref{thm:nearcrit}(a) and~\ref{thm:atom}(c); beyond these
cases we do not know a proof of \condH\ for genuinely mixed covariances. The large-deviation statements use the
fractional-moment formula
\DisplayCondFM
which holds for $\xi(t)=\beta^2t^2/2$ by \cite[Theorem~2.1]{CM19} (Lemma~\ref{lem:A-fm-sk});
Remark~\ref{rem:mo-fm} lists its sources for other covariances.

\subsection{The fifth-order law and the six-fifths constant}\label{sec:intro-results}

We state the result for the Sherrington--Kirkpatrick model first; for it $\azero=\beta^2$,
$\xi''''(0)=0$ and $\Dstar=\beta^6\Wstar^2$.

\StateThmSK

Parts (a) and (b) concern the Parisi variational problem only and use \condH; part (c) adds
\condFM\ and an exponential change of measure. All limits are taken at fixed $\beta$, with
$N\to\infty$ before $\delta\downarrow0$: we make no statement about deviations that shrink with
$N$, the $o(1)$ terms carry no rate, and nothing is uniform in $\beta$. Kondor's coefficient
appears only as the limit of $\cb$ as $\beta\downarrow1$ (Theorem~\ref{thm:nearcrit}(a)): at
$\beta=1$ itself $\zetas=\delta_0$, and the theorem does not apply. At every $\beta>1$ the theorem
takes \condH\ from one of two preprints: Lopatto's theorem \cite[Proposition~5.1]{Lop26a} or its
computer-assisted alternative proof \cite[Theorem~5.1]{Pei26}; for $1<\beta^2<1+\varepsilon_*$, with
$\varepsilon_*$ as in Theorem~\ref{thm:nearcrit}(a), both inputs, \condH\ and \condFM, rest on refereed results and this
paper (Remark~\ref{rem:fifth-sk-inputs}).

In the dictionary of Appendix~\ref{app:dictionary}, Theorem~\ref{thm:sk} proves the leading terms
of three predictions of Parisi and Rizzo: the fifth-order law \cite[(B3)]{PR09}, the six-fifths
law \cite[(4)]{PR09}, and the lowest nonzero value $q_0=\frac32n\qdot$ of their order
parameter \cite[Section~IV and (B2)]{PR09}, which is our first positive contact $q_\theta$. With
Theorem~\ref{thm:local} it also gives the exact identities \cite[(B9)--(B10)]{PR09}, which Parisi
and Rizzo attribute to Crisanti and Rizzo \cite{CR02}; in our notation they read $\Ustar=1/\beta$
and $\rhob=\beta^5\Wstar^2/2$. For this model both were known: $\Ustar=1/\beta$ is
\cite[Lemma~6.1]{Lop26a}, and the value and positivity of $\rhob$ follow from
\cite[Theorem~B.1 and (B.4)]{Lop26b} and from \cite[Section~7]{AL26}
(Remark~\ref{rem:fifth-sk-inputs}); Theorem~\ref{thm:local} derives them from \condH\ and
extends the second to every covariance in its scope. The bounds of Aronow and Lopatto
\cite[Theorems~1.5 and~1.6]{AL26} are complementary:
they hold at finite $N$, uniformly in $\beta\ge2$ after rescaling, and at zero temperature (which
we do not treat), and letting first $N\to\infty$ and then $x\downarrow0$ in
\cite[Theorem~1.5]{AL26} shows that its two constants bracket $\frac56(6\cb)^{-1/5}$ (for
$\beta\ge2$, those of \cite[Theorem~1.6]{AL26} bracket $\beta^{6/5}\frac56(6\cb)^{-1/5}$).
Finally, part~(b) bears on TAP states: for strictly convex $\xi$, Boursier \cite[Theorem~1]{Bou26b}
showed that the probability that a TAP state, in the sense of the generalized TAP free energy
\cite{CPS23}, exists at a level $f\in(\feq,f_1)$ has exponential rate $-\Sigma(f)$ under the
strict Plefka condition \cite[(1.8)]{Bou26b} at $f$ (every such level is regular in the sense of
\cite[Definition~1.1]{Bou26b}, by Lemma~\ref{lem:cons-variational}(d)); by part~(b) that condition fails at every
level in $(\feq,\feq+\bar\delta)$, so the levels that carry the six-fifths law lie outside its
scope (Corollary~\ref{cor:fifth-plefka}).

Theorem~\ref{thm:sk} is the case $\xi(t)=\beta^2t^2/2$ of the following local theorem, which uses
only the behaviour of $\xi$ and $\zetas$ at the origin; part~(c) of Theorem~\ref{thm:sk} also
uses \condFM\ for this covariance and the transfer to $\FNb$ (Appendix~\ref{app:guerra}).

\StateThmLocal

The nondegeneracy $\Dstar>0$ is a conclusion. It comes from a universal strict sign,
$\Phi_{\zeta,xxxx}(t,0)\le-\kappa_t<0$ for every probability measure $\zeta$ and every $t<1$,
with an explicit $\kappa_t$ (Lemma~\ref{lem:plat-fourth-sign}), which gives $\Wstar\le-\kappa_0$
and $\Dstar\ge\azero^3\kappa_0^2$. (The non-strict sign, and for the Sherrington--Kirkpatrick
minimizer a strict bound at the origin, were known; Remark~\ref{rem:plat-sign}.) The constant
$c$ depends on $\zetas$ only through its density $\rhostar$ at zero, as in the formula of Parisi
and Rizzo. The hypothesis $\beta_2>0$ is implied by \condH, by Theorem~\ref{thm:atom}(b). By part (a), the constraint places
mass $\theta$ at zero, empties $(0,q_\theta)$ and puts an atom of mass close to $\theta/2$ at
$q_\theta$, where the distribution function of $\zeta_\theta$ thus rejoins that of $\zetas$ to
leading order, because $\theta+\frac\theta2=\rhostar q_\theta(1+o(1))$
(Figure~\ref{fig:bottom}(a), in Section~\ref{sec:ons-onset}). Besides the Sherrington--Kirkpatrick model, the theorem applies to
the near-critical class of Theorem~\ref{thm:nearcrit}(a), which contains genuinely mixed
covariances and covariances with odd degrees $p\ge5$. Part (d) is stated under \condFM; it holds for
$\xi(t)=\beta^2t^2/2$ by \cite[Theorem~2.1]{CM19} (Lemma~\ref{lem:A-fm-sk}), for other even
$\xi$ its sources are \cite[Section~7]{Tal07}, which explains how to modify the proof of the
Parisi formula, and the preprint \cite[Theorem~2.1]{CGKLM}, and for
$\xi$ with odd degrees the only source we know is \cite[Theorem~2.1]{CGKLM}.

\subsection{Two other perturbations of the bottom of the measure}\label{sec:intro-perturbations}

The constraint perturbs the bottom of the Parisi measure by forcing an atom at zero. Two other
perturbations act on the same region and obey the same calculus. The first is a small external
field $h$, which removes the mass of $\zetas$ near zero.

\StateThmField

As in Theorem~\ref{thm:local}, \condH\ is the only condition on $\zetas$, and $\Dstar>0$ is
automatic. The field opens a
gap of size of order $|h|^{2/3}$ at the bottom of the support and gathers the zero-field mass
below $\qmin(h)$ into an atom at $\qmin(h)$ (Figure~\ref{fig:bottom}(b)). For the
Sherrington--Kirkpatrick model, with $h=\beta H$ and $T=1/\beta$, parts (b) and (d) give the
leading terms of the small-field laws \cite[(B4)--(B5)]{PR09} of Parisi and Rizzo, who attribute
the order $h^{10/3}$ to \cite{MPV87,CRT03} (Corollary~\ref{cor:dict-field}). The value
$\Ffield''(0)=1/\beta$ is Toulouse's identity \cite{Tou80}; for the Sherrington--Kirkpatrick
model at every $\beta>1$ it is proved, as a second-order limit, in
\cite[Theorem~3.1(b),(d)]{Pei26}, using \cite[Lemma~6.1]{Lop26a}. Theorem~\ref{thm:field} adds
the value $\azero^{-1/2}$ for every covariance in its scope and the $|h|^{10/3}$ term; that
$\Ffield\in C^1$ with $\Ffield'=\mfield$ is the routine envelope argument of
Lemma~\ref{lem:fld-continuity}. For a fixed field $h>0$, Lopatto \cite[Theorem~1.1]{Lop26b} proved
that the Parisi measure of the Sherrington--Kirkpatrick model is supported on an interval
$[q_-,q_+]$, with an atom at $q_-$ when it is not replica symmetric; Theorem~\ref{thm:field} gives
the small-field asymptotics of $q_-=\qmin(h)$ and of that atom without using that result.

The second perturbation is an atom of the Parisi measure at zero. It occurs when the covariance
has a cubic term or when zero is isolated in the support, and it makes the rate leave the
equilibrium level linearly.

\StateThmAtom

\StateCorPureP

Part (a) of Theorem~\ref{thm:atom} holds for every covariance. The identity
$\Sigma'_+(\feq)=-\zetas(\{0\})$ is stated without proof by Boursier \cite[Remark~1.4]{Bou26a},
who relates it to an extreme-value heuristic; we prove it as a right derivative with the exact
remainder $S$ (Section~\ref{sec:atom-slope}). That $\zetas(\{0\})>0$ whenever $\xi''(0)<1$, as in
part (b) and Corollary~\ref{cor:pure-p}, is \cite[Theorem~1]{AC15a}. Part (c), proved in Section~\ref{sec:atom-odd} in the near-critical setting of Section~\ref{sec:intro-nearcrit}, runs the argument
of Theorem~\ref{thm:local} with $\theta-m_0$ in place of $\theta$, and with the constant $\Dstar$
of part (c), which accounts for the cubic term and the atom. The constraint is inactive up to
$m_0$; beyond it $p(\theta)-\feq$ is of fifth order in $\theta-m_0$, and since
$\Lambda(\theta)=\theta p(\theta)$ with $\theta$ close to $m_0$, this gives a correction of order
$\delta^{5/4}$ to the linear term (Figure~\ref{fig:bottom}(c)). For pure $p$-spin models
the linear rate is the rigorous counterpart of the observation of Nakajima and Hukushima
\cite{NH08} that the replica free energy is constant near $n=0$. For even $p$ the bound
$\limsup_NN^{-1}\log\Prob(N^{-1}\log\ZN\ge\feq+\delta)\le-\zetas(\{0\})\,\delta$ already follows
from Guerra's bound \cite[Section~6]{Tal07}, with $\zetas$ as trial measure, and Markov's
inequality. Theorem~\ref{thm:atom}(a) and Corollary~\ref{cor:pure-p} add the exact remainder, the
characterization of a superlinear rate and the matching lower bound under \condFM. For covariances
with odd degrees, in particular in Theorem~\ref{thm:atom}(d) for mixtures with a cubic term and in
Corollary~\ref{cor:pure-p} for odd $p$, the only source of \condFM\ we know is
\cite[Theorem~2.1]{CGKLM}, a preprint whose Guerra-type upper bound is written out for convex $\xi$
and sketched otherwise.

\subsection{Near criticality}\label{sec:intro-nearcrit}

Close to the critical covariance $\xic(t)=t^2/2$, the pair $(\xic,\delta_0)$, whose Parisi
solution is $\log\cosh x+\frac12(1-t)$, serves as a reference. Closeness to it gives global
information that the local calculus does not provide: interval support of $\zetas$, and the
behaviour of $\zeta_\theta$ for constraints that are not small.

\StateThmNearCrit

Part (a) contains the Sherrington--Kirkpatrick model for $1<\beta^2<1+\varepsilon_*$, and its last
assertion is Kondor's value as a limit: as $\xi\to\xic$, one has $\azero\to1$,
$\xi''''(0)\to0$ and $\Wstar\to(\log\cosh)''''(0)=-2$, so that $\Dstar\to4$ and
$c\to9/(80\cdot4^3)=9/5120$. Parts (b) and (c) hold on the whole class, in particular for the
Sherrington--Kirkpatrick model with $1<\beta^2<1+\varepsilon_*$. For that model they are the
constrained-Parisi form, near the critical temperature, of two further predictions of Parisi and
Rizzo: their line of de Almeida--Thouless instability \cite{dAT78} in the replica-number
direction, $n_{\rm dAT}=\frac43(1-T)+O((1-T)^2)$ \cite[(22)--(23)]{PR09}, whose leading term is
that of $\theta_c$ in (b), since $\frac23(\beta^2-1)=\frac43(1-T)+O((1-T)^2)$; and their
stability statement $q(x)=q_{\rm free}(x)+O(n^5)$ for $x_c<x<1$, that is, above the first
positive contact \cite[Section~IV]{PR09}, which is (c) with $Q_\theta$ and $Q_*$ in the roles of
$q$ and $q_{\rm free}$. Parisi and Rizzo state both at every temperature below the critical one;
we prove them near it. A positive quartic coefficient multiplies the leading term of $\theta_c$
by $1+\frac14\xi''''(0)$. Part (b) also shows that marginality is not automatic: above
$\theta_c$ the first positive contact is strictly Plefka stable, so that within $(\feq,f_1)$ the
strict Plefka condition \cite[(1.8)]{Bou26b} holds exactly on $(f_c,f_1)$
(Corollary~\ref{cor:nc-dat-levels}).

\begin{figure}[tp]
\centering
\newcommand*{\rmloc}[1]{\\[1pt]{\tiny\color{black!85}#1}}%
\begin{NoHyper}%
\begin{tikzpicture}[
  x=1cm, y=1cm, font=\scriptsize,
  execute at begin node={\thickmuskip=5mu\medmuskip=4mu\relax},
  >={Stealth[length=1.5mm,width=1.1mm]},
  thmlook/.style={fill=black!80, draw=black, line width=0.8pt},
  ourslook/.style={rounded corners=1.3mm, fill=black!16, draw=black, line width=0.6pt},
  knownlook/.style={fill=white, draw=black, line width=0.4pt},
  redolook/.style={fill=white, draw=black, line width=0.45pt,
    path picture={\draw[black, line width=0.45pt]
      ([shift={(1.6pt,1.6pt)}]path picture bounding box.south west) rectangle
      ([shift={(-1.6pt,-1.6pt)}]path picture bounding box.north east);}},
  certlook/.style={fill=black!4, draw=black, line width=0.8pt, dash pattern=on 2.2pt off 1.3pt},
  toollook/.style={rounded rectangle, fill=black!9, draw=black, line width=0.6pt},
  box/.style={rectangle, align=center, text width=2.02cm, inner xsep=1.2pt, inner ysep=1.5pt,
    minimum height=0.9cm},
  wide/.style={text width=4.2cm},
  thm/.style={box, wide, thmlook, text=white, font=\scriptsize\bfseries},
  ours/.style={box, ourslook},
  known/.style={box, knownlook},
  redo/.style={box, redolook, inner xsep=2.6pt, inner ysep=2.8pt},
  cert/.style={box, certlook},
  tool/.style={toollook, align=center, inner sep=2.5pt, minimum height=0.5cm},
  sw/.style={rectangle, minimum width=0.5cm, minimum height=0.3cm, inner sep=0pt},
  lg/.style={anchor=west, inner sep=2pt, text height=1.6ex, text depth=0.3ex},
  use/.style={->, line width=0.5pt, draw=black!80},
  rel/.style={use, dashed},
  band/.style={->, line width=0.9pt, draw=black!45},
  col/.style={rounded corners=2.5mm, fill=black!3, draw=black!30, line width=0.4pt},
]
\draw[col] (0.00,1.05) rectangle (5.10,-7.45);
\draw[col] (5.50,1.05) rectangle (10.50,-7.45);
\draw[col] (10.70,1.05) rectangle (16.30,-7.45);
\node[thm] (T1) at (2.425,0) {Theorems~\ref{thm:sk} and~\ref{thm:local}\\
  $\theta^5$ and $\delta^{6/5}$ laws;\\ SK at every $\beta>1$};
\node[thm] (T2) at (7.950,0) {Theorems~\ref{thm:field} and~\ref{thm:atom},
  Corollary~\ref{cor:pure-p}\\ small field; atom at zero};
\node[thm] (T3) at (13.525,0) {Theorem~\ref{thm:nearcrit}\\ near criticality:\\
  Kondor's limit, $\theta_c$, stability};
\node[ours]  (A1) at (1.20,-2.2) {Marginal first\\ contact, onset\\
  $q_\theta\sim3\theta/(2\rho_*)$\rmloc{Prop.~\ref{prop:ons-onset}}};
\node[known] (A4) at (3.65,-2.2) {Fractional\\ moments (FM)\\ for SK%
  \rmloc{\cite{CM19}; Lem.~\ref{lem:A-fm-sk}}};
\node[redo]  (A3) at (1.20,-4.4) {Interval of\\ support at 0,\\ density $\rho_*>0$%
  \rmloc{\cite{AC15a}; Prop.~\ref{prop:ons-equilibrium}}};
\node[known] (A6) at (3.65,-4.4) {Replica\\ prediction of\\ onset and $c_\beta$%
  \rmloc{\cite{PR09}}};
\node[known] (A5) at (1.20,-6.6) {Condition (H)\\ for SK at\\ every $\beta>1$%
  \rmloc{\cite[Prop.~5.1]{Lop26a}}};
\node[cert]  (A7) at (3.65,-6.6) {Alternative\\ proof of (H)\\ for SK%
  \rmloc{\cite[Thm~5.1]{Pei26}}};
\node[ours]  (C1) at (6.75,-2.2) {Field: plateau\\ positivity,\\ uniform in $h$;\\ edge identities%
  \rmloc{Secs.~\ref{sec:fld-proof}, \ref{sec:fld-identities}}};
\node[known] (C6) at (9.15,-2.2) {Replica\\ predictions:\\ field; pure $p$%
  \rmloc{\cite{PR09}; \cite{NH08}}};
\node[redo]  (C2) at (6.75,-4.4) {Slope $-m_0$ of\\ $\Sigma$ at $f_{\rm eq}$;\\ size of $m_0$%
  \rmloc{\cite{Bou26a}; \cite{AC15a}}%
  \rmloc{Props.~\ref{prop:atom-slope}, \ref{prop:atom-atom}}};
\node[ours]  (C5) at (9.15,-4.4) {Cubic term:\\ onset above\\ $m_0$; $\delta^{5/4}$ term%
  \rmloc{Sec.~\ref{sec:atom-odd}; App.~\ref{app:cubic}}};
\node[ours]  (B2) at (12.30,-2.2) {Threshold $\theta_c$;\\ two atoms\\ beyond it%
  \rmloc{Prop.~\ref{prop:nc-dat-threshold}; App.~\ref{app:dat}}};
\node[ours]  (B4) at (14.75,-2.2) {Contraction\\ above $q_\theta$;\\ plateau acts by\\ one scalar%
  \rmloc{Prop.~\ref{prop:nc-above-contact}; Lem.~\ref{lem:D-plateau}}};
\node[ours]  (B1) at (12.30,-4.4) {Near-critical\\ class: (H)\\ and $D_*\ge2$%
  \rmloc{Prop.~\ref{prop:nc-class}}};
\node[redo]  (B3) at (12.30,-6.6) {Gap exclusion,\\ confinement\\ near $\xi_c$%
  \rmloc{\cite{Zho26}; Sec.~\ref{sec:nc-reference}}};
\node[known, text width=2.40cm, anchor=south] (B5) at (14.93,0 |- B3.south) {Replica\\
  predictions:\\ Kondor's $9/5120$;\\ dAT line, stability\rmloc{\cite{Kon83}; \cite{PR09}}};
\draw[use] (T1.east) -- (T2.west);
\draw[use] (T1.north) -- (2.425,0.80) -- (13.525,0.80) -- (T3.north);
\draw[use] (A1) -- (A1 |- T1.south);
\draw[use] (A4) -- (A4 |- T1.south);
\draw[use] (A3) -- (A1);
\draw[use] (A5) -- (A3);
\draw[use] (C1) -- (C1 |- T2.south);
\coordinate (C2u) at ([yshift=2mm]C2.east);
\coordinate (C2d) at ([yshift=-2mm]C2.east);
\coordinate (C5u) at ([yshift=2mm]C5.east);
\draw[use] (C2u) -- (7.950,0 |- C2u) -- (7.950,0 |- T2.south);
\draw[use] (C2d) -- (C5.west |- C2d);
\draw[use] (C5u) -- (10.35,0 |- C5u) -- (10.35,0) -- (T2.east);
\draw[use] (B3.west) -- (9.15,-6.6) -- (C5.south);
\draw[use] (B2) -- (B2 |- T3.south);
\draw[use] (B4) -- (B4 |- T3.south);
\draw[use] (B1) -- (B2);
\coordinate (B1u) at ([yshift=2mm]B1.east);
\coordinate (B1d) at ([yshift=-2mm]B1.east);
\draw[use] (B1u) -- (13.525,0 |- B1u) -- (13.525,0 |- T3.south);
\draw[use] (B1d) -- (14.75,0 |- B1d) -- (B4.south);
\draw[use] (B3) -- (B1);
\draw[rel] (A7) -- (A5);
\draw[rel] (A6.west) -- (2.425,-4.4) -- (2.425,-4.4 |- T1.south);
\draw[rel] (C6) -- (C6 |- T2.south);
\draw[rel] (16.00,0 |- B5.north) -- (16.00,0) -- (T3.east);
\node[tool, text width=15.6cm] (S1) at (8.15,-8.15) {The constrained problem and the plateau
  calculus (Sections~\ref{sec:cons} and~\ref{sec:plat}): envelope identity for $p'(\theta)$,
  It\^o identities on plateaus, universal sign of $\Phi_{\zeta,xxxx}(t,0)$, no gaps between
  contacts near zero, universal cubic};
\draw[band] (2.425,0 |- S1.north) -- (2.425,-7.45);
\draw[band] (7.950,0 |- S1.north) -- (7.950,-7.45);
\draw[band] (13.525,0 |- S1.north) -- (13.525,-7.45);
\node[sw, thmlook]  (g1) at (0.45,-9.25) {}; \node[lg] at (g1.east) {main result};
\node[sw, ourslook, rounded corners=0.6mm] (g2) at (4.95,-9.25) {};
  \node[lg] at (g2.east) {our ingredient};
\node[sw, toollook, minimum width=0.8cm] (g3) at (9.90,-9.25) {};
  \node[lg] at (g3.east) {shared technique};
\node[sw, knownlook] (g4) at (0.45,-9.75) {};
  \node[lg] at (g4.east) {existing result or prediction};
\node[sw, certlook]  (g6) at (4.95,-9.75) {};
  \node[lg] at (g6.east) {computer-assisted certificate};
\draw[band] (9.50,-9.75) -- (10.30,-9.75) node[lg] {shared technique used in the column};
\node[sw, redolook]  (g5) at (0.45,-10.25) {};
  \node[lg] at (g5.east) {existing result, adapted: extended or (re-)proved};
\draw[use]  (9.50,-10.25) -- (10.30,-10.25) node[lg] {used in proof};
\draw[rel]  (13.00,-10.25) -- (13.80,-10.25) node[lg] {other relation};
\end{tikzpicture}%
\end{NoHyper}
\caption[How the main results are proved]{How the main results are proved. Each column leads to
one group of results; SK and dAT stand for Sherrington--Kirkpatrick and de Almeida--Thouless. A
solid arrow means that its source is used in the proof of its target. The two arrows that leave
the first theorem box say that Theorem~\ref{thm:local} is used in the proofs of
Theorems~\ref{thm:field} and~\ref{thm:nearcrit}, and the arrow from the third column into the
second says that Theorem~\ref{thm:atom}(c) is proved with the estimates near $\xic$ of
Sections~\ref{sec:nc-reference} and~\ref{sec:nc-class} (Proposition~\ref{prop:nc-support}).
The grey arrows from the bottom band mark the calculus of Sections~\ref{sec:cons}
and~\ref{sec:plat}, which every column uses. Only the main dependencies are drawn. The arrow
from condition~\condH\ for SK means that Lopatto's theorem supplies the hypothesis~\condH\ of
Proposition~\ref{prop:ons-equilibrium} for SK; it enters the proof of Theorem~\ref{thm:sk}.
Dashed arrows mark two other relations: an alternative, computer-assisted proof of
condition~\condH\ for SK, and predictions of replica theory that the target proves in the sense
stated next. Theorems~\ref{thm:sk} and~\ref{thm:field} prove leading terms of predictions of
Parisi and Rizzo~\cite{PR09}, and Corollary~\ref{cor:pure-p} is the rigorous counterpart of an
observation of Nakajima and Hukushima~\cite{NH08}. Theorem~\ref{thm:nearcrit}(b),(c) hold on the
whole near-critical class of part~(a), which contains SK for $\beta$ close to one; the dashed
arrow into Theorem~\ref{thm:nearcrit} marks Kondor's coefficient~\cite{Kon83}, obtained as a
limit, and the dAT line and stability prediction of Parisi and Rizzo, whose near-critical
constrained-Parisi form it proves. For SK the fractional-moment formula enters only
Theorem~\ref{thm:sk}(c), through Lemma~\ref{lem:A-fm-sk}; in Theorems~\ref{thm:local}(d)
and~\ref{thm:atom}(d) and in Corollary~\ref{cor:pure-p} it is a hypothesis. Box styles give provenance; grey labels give
sources and locate each item in the paper. Section~\ref{sec:intro-ideas} explains each column,
and Section~\ref{sec:disc-prior} lists the existing results that we re-prove or extend. The
bottom band combines existing tools, the first-variation formula~\cite{Che17}, the It\^o
identities of~\cite{AC15a} and the duality facts of~\cite{Bou26b}, with the ingredients of
Section~\ref{sec:plat}. For SK, the density $\rho_*>0$ in the box on the interval of support is
also in~\cite{Lop26b} and~\cite[Section~7]{AL26}.}
\label{fig:roadmap}
\end{figure}

\subsection{How the proofs work}\label{sec:intro-ideas}

Figure~\ref{fig:roadmap} shows the architecture: each column leads to one group of theorems, and
every column uses the calculus of Sections~\ref{sec:cons} and~\ref{sec:plat}, drawn as the bottom
band. We describe the first column, which contains the decisive steps, and then what changes in
the other two.

\paragraph{The envelope identity and what it requires.}
Write $\Gamma_\theta=\Gamma_{\zeta_\theta}$ and $\Obs_\theta=\Obs_{\zeta_\theta}$. The starting
point is an envelope identity for the constrained minimum (Proposition~\ref{prop:cons-envelope}):
at every $\theta\in(\zetas(\{0\}),1)$ the function $p$ is differentiable, and
\begin{equation}\label{eq:intro-envelope}
 p'(\theta)=\Obs_\theta(0)-\Obs_\theta(q_\theta)
 =\frac12\int_0^{q_\theta}\xi''(s)\bigl\{\Gamma_\theta(s)-s\bigr\}\,ds .
\end{equation}
It follows from the first-variation formula \cite[Theorem~2]{Che17} by a squeeze in the
parametrization $\zeta=\theta\delta_0+(1-\theta)\nu$. In words, the marginal cost of the
constraint is the area between $\Gamma_\theta$ and the identity on the first plateau, weighted by
$\xi''/2$. Computing it is a singular perturbation problem: the cost $p(\theta)-\feq$ turns out to
be of order $\theta^5$, while $\zeta_\theta$ differs from $\zetas$ by at least order $\theta^2$ in
$\Wone$, since the constraint moves a mass of order $\theta$ over a distance of order $\theta$. A
trial measure gives an upper bound of the right order \cite[Lemma~7.2]{AL26}, and a
second-variation argument the matching lower order \cite[Proposition~7.3]{AL26}; neither
determines the shape of $\zeta_\theta$ at scale $\theta$, and that shape determines the constant.
Through \eqref{eq:intro-envelope} it enters only via the profile of $\Gamma_\theta$ on the first
plateau, which is pinned down by four facts: (i) the first positive contact is marginal,
$\Gamma_\theta'(q_\theta)=1$, which is not automatic, since the Plefka inequality can be strict
(Theorem~\ref{thm:nearcrit}(b)); (ii) $\Gamma_\theta'''$ is controlled uniformly on the first
plateau; (iii) this control requires a sign, $\Dstar>0$; (iv) zero is not isolated in the support
of $\zetas$, condition \condH, which brings the contacts of $\zeta_\theta$ down to zero as
$\theta\downarrow0$. Parisi and Rizzo assume that the order parameter is unperturbed above the
first positive contact \cite[Appendix~B]{PR09}; this ansatz presupposes (i).

\paragraph{The deduction.}
Granting (i)--(iv), the constant follows in a few lines. On the first plateau the distribution
function of $\zeta_\theta$ is constant, so $\Phi_{\zeta_\theta}$ is a Cole--Hopf transform there,
and It\^o's formula expresses $\Gamma_\theta''$ and $\Gamma_\theta'''$ through expectations of
polynomials in the second to fourth spatial derivatives of $\Phi_{\zeta_\theta}$ along the
diffusion (Section~\ref{sec:plat-ito}). Since $\xi'''(0)=0$, these identities give
$\Gamma_\theta'''\to\Dstar$ uniformly on $(0,q_\theta)$ as $\theta\downarrow0$
(Lemma~\ref{lem:plat-local-plateau}). The function $h_\theta=\Gamma_\theta-\id$, which satisfies
$h_\theta(0)=0$ by evenness, $h_\theta(q_\theta)=0$ by the contact condition and
$h_\theta'(q_\theta)=0$ by marginality, is therefore to leading order the universal cubic
\begin{equation}\label{eq:intro-cubic}
 h_\theta(t)=\frac{\Dstar}{6}\,t\,(q_\theta-t)^2+o(q_\theta^3)\qquad(0\le t\le q_\theta),
\end{equation}
and the same holds for its second derivative at both ends, up to $o(q_\theta)$. Two curvatures of
this cubic fix the onset (Figure~\ref{fig:plateau}(b)). At zero, the It\^o identity gives
$\Gamma_\theta''(0+)=-2\azero^2\theta\,\bar U_\theta^3$, where $\bar U_\theta=\Phi_{\zeta_\theta,xx}(0,0)$
tends to $\Ustar=\azero^{-1/2}$; this is where the value $\theta$ of the distribution function on
the plateau enters, and matching with the curvature $-\frac23\Dstar q_\theta$ of the cubic gives
$q_\theta/\theta\to3\sqrt{\azero}/\Dstar=3/(2\rhostar)$. At $q_\theta$, the cubic has curvature
$\frac13\Dstar q_\theta$ from the left, while $\Gamma_\theta''=0$ from the right, because
$\Gamma_\theta$ is the identity on an interval of contacts. The drop $\Gamma_\theta''(q_\theta-)-\Gamma_\theta''(q_\theta+)$ is produced by the atom at
$q_\theta$: it equals
$2\xi''(q_\theta)^2\zeta_\theta(\{q_\theta\})\E\Phi_{\zeta_\theta,xx}(q_\theta,X_{q_\theta})^3$, and
matching gives $\zeta_\theta(\{q_\theta\})/\theta\to\frac12$. Finally, the area under the cubic is
$\Dstar q_\theta^4/72$ and $\xi''(s)=\azero+O(s^2)$, so \eqref{eq:intro-envelope} gives
(Figure~\ref{fig:plateau}(c))
\[
 p'(\theta)=\frac{\azero\Dstar}{144}\,q_\theta^4\,(1+o(1))=5c\,\theta^4\,(1+o(1)),\qquad
 c=\frac{9\azero^3}{80\Dstar^3}.
\]
Integrating from $p(0+)=\feq$ gives $p(\theta)-\feq=c\theta^5(1+o(1))$, hence
$\Lambda(\theta)-\theta\feq=c\theta^6(1+o(1))$ and $\Lambda'(\theta)-\feq=6c\theta^5(1+o(1))$. At
the level $f=\feq+\delta$ the dual parameter solves $\Lambda'(\theta_f)=f$, so that
$\theta_f\sim(\delta/6c)^{1/5}$ and
\[
 \Sigma(\feq+\delta)=\Lambda(\theta_f)-\theta_f(\feq+\delta)=-5c\,\theta_f^6\,(1+o(1))
 =-\frac56\,(6c)^{-1/5}\,\delta^{6/5}\,(1+o(1)).
\]
Under \condFM, an exponential change of measure turns $\Sigma$ into the upper-tail rate
(Lemma~\ref{lem:cons-gartner-ellis}); this needs $\Lambda$ to be differentiable, which is again
the envelope identity.

\begin{figure}[tp]
\centering
\begin{tikzpicture}[x=1cm, y=1cm, font=\scriptsize,
  >={Stealth[length=1.5mm,width=1.1mm]},
  axis/.style={line width=0.4pt, draw=black!75},
  curve/.style={line width=0.9pt, draw=black},
  level/.style={line width=0.6pt, draw=black, dash pattern=on 3pt off 2pt},
  chord/.style={line width=0.7pt, draw=black, dash pattern=on 0.8pt off 1.4pt},
  side/.style={line width=0.4pt, draw=black!55},
  contacts/.style={line width=2.4pt, draw=black!45},
  area/.style={fill=black!13},
  pointer/.style={->, line width=0.4pt, draw=black!70},
  ttl/.style={font=\footnotesize, anchor=south west, inner sep=0pt},
  dot/.style={circle, fill=black, inner sep=0pt, minimum size=3.2pt},
]
\begin{scope}
 \node[ttl] at (-0.3,3.05) {(a) Gap exclusion};
 \fill[area] (0,0) -- plot[domain=0:1, samples=60, variable=\s]
   ({4*\s},{2.6*(0.78-1.42*\s+1.5*\s*\s)}) -- (4,0) -- cycle;
 \draw[axis] (-0.15,0) -- (4.35,0);
 \draw[side] (0,0) -- (0,2.028);
 \draw[side] (4,0) -- (4,2.236);
 \draw[level] (-0.1,2.6) -- (4.25,2.6);
 \node[anchor=east, inner sep=1.5pt] at (-0.1,2.6) {$1$};
 \node[anchor=south] at (2,2.62) {height $1$: area $v-u$};
 \draw[chord] (0,2.028) -- (4,2.236);
 \node at (3.0,2.40) {chord};
 \draw[curve] plot[domain=0:1, samples=60, variable=\s]
   ({4*\s},{2.6*(0.78-1.42*\s+1.5*\s*\s)});
 \node at (2.0,1.62) {$\Gamma_\zeta'$, convex};
 \node at (2.0,0.62) {area $\Gamma_\zeta(v)-\Gamma_\zeta(u)$};
 \node[dot] at (0,2.028) {};
 \node[dot] at (4,2.236) {};
 \node[anchor=north, inner sep=2pt] at (0,0) {$u$};
 \node[anchor=north, inner sep=2pt] at (4,0) {$v$};
 \node[anchor=north, inner sep=2pt] at (2,0) {no mass in $(u,v)$};
\end{scope}
\begin{scope}[xshift=5.6cm]
 \node[ttl] at (-0.1,3.05) {(b) The universal cubic};
 \fill[area] (0,0) -- plot[domain=0:1, samples=60, variable=\s]
   ({3*\s},{13.5*\s*(1-\s)^2}) -- cycle;
 \draw[axis] (-0.1,0) -- (4.45,0);
 \draw[contacts] (3,0) -- (4,0);
 \draw[curve] plot[domain=0:1, samples=60, variable=\s] ({3*\s},{13.5*\s*(1-\s)^2});
 \draw[curve] (3,0) -- (4,0);
 \node[anchor=south west, inner sep=1pt] at (1.45,1.95) {$h_\theta=\Gamma_\theta-\id$};
 \node[align=center] at (1.1,0.6) {area\\ $\approx\Dstar q_\theta^4/72$};
 \node[anchor=south west, inner sep=1pt] at (-0.1,2.35)
   {$h_\theta''(0+)\approx-\frac23\Dstar q_\theta$};
 \draw[pointer] (0.05,2.33) -- (0.065,0.30);
 \node[anchor=south east, align=center, inner sep=1pt] at (4.6,0.62)
   {$h_\theta''(q_\theta-)$\\ $\approx\frac13\Dstar q_\theta$};
 \draw[pointer] (3.22,0.66) -- (2.78,0.10);
 \node[anchor=south, inner sep=1.5pt] at (3.6,0.06) {contacts};
 \node[anchor=north, inner sep=2pt] at (0,0) {$0$};
 \node[anchor=north, inner sep=2pt] at (3,0) {$q_\theta$};
\end{scope}
\begin{scope}[xshift=11.2cm]
 \node[ttl] at (-0.1,3.05) {(c) The envelope identity};
 \draw[axis] (-0.1,0) -- (4.45,0);
 \draw[contacts] (3,0) -- (4,0);
 \draw[curve] plot[domain=0:1, samples=60, variable=\s] ({3*\s},{2.1*(1-\s)^3*(1+3*\s)});
 \draw[curve] (3,0) -- (4,0);
 \draw[<->, line width=0.5pt] (0,0.03) -- (0,2.07);
 \node[anchor=west, inner sep=2pt] at (0.02,1.0) {$p'(\theta)$};
 \node[dot] at (0,2.1) {};
 \node[anchor=south west, inner sep=2pt] at (0,2.12) {mass $\theta$ held at $0$};
 \node[anchor=south west, inner sep=1pt] at (1.45,1.35) {$\Obs_\theta(t)-\Obs_\theta(q_\theta)$};
 \node[anchor=south, align=center, inner sep=1.5pt] at (3.55,0.06)
   {contacts:\\ $\Obs_\theta$ minimal};
 \node[anchor=north, inner sep=2pt] at (0,0) {$0$};
 \node[anchor=north, inner sep=2pt] at (3,0) {$q_\theta$};
\end{scope}
\end{tikzpicture}
\caption{The plateau calculus, schematically (constant $\Gamma'''$ and $\xi''$; vertical scales
differ by panel).
(a)~Gap exclusion (Lemma~\ref{lem:plat-local-gap}). If two contacts $u<v$ near zero of a
measure $\zeta$ close to $\zetas$ enclosed a plateau, then $\Gamma_\zeta'''\ge\Dstar/2>0$ there
(Lemma~\ref{lem:plat-local-plateau}); $\Gamma_\zeta'$, at most $1$ at the contacts, would be
convex and below its chord, so the grey area $\Gamma_\zeta(v)-\Gamma_\zeta(u)$ would be less
than $v-u$, contradicting $\Gamma_\zeta(u)=u$, $\Gamma_\zeta(v)=v$. Hence for small $\theta$ the
contacts of $\zeta_\theta$ near zero fill an interval $[q_\theta,s_\theta]$ where
$\Gamma_\theta=\id$, and the first positive contact is marginal, $\Gamma_\theta'(q_\theta)=1$
(Proposition~\ref{prop:ons-onset}(b)).
(b)~The universal cubic (Lemma~\ref{lem:plat-cubic}, Corollary~\ref{cor:plat-first-plateau}).
On the first plateau $(0,q_\theta)$, $h_\theta=\Gamma_\theta-\id$ vanishes at $0$ (evenness)
and with $h_\theta'$ at $q_\theta$ (contact, marginality), and $h_\theta'''\to\Dstar$: to
leading order it is the cubic $(\Dstar/6)\,t\,(q_\theta-t)^2$, with curvature
$-\frac23\Dstar q_\theta$ at zero and $\frac13\Dstar q_\theta$ at $q_\theta-$; matching these
with the curvatures that the atoms at $0$ and at $q_\theta$ produce fixes $q_\theta$ and the
atom at $q_\theta$ (Section~\ref{sec:intro-ideas}).
(c)~The envelope identity (Proposition~\ref{prop:cons-envelope}). The obstacle $\Obs_\theta$ is
minimal on the contacts; the constraint holds the mass $\theta$ at zero, where $\Obs_\theta$ is
larger by $p'(\theta)=\Obs_\theta(0)-\Obs_\theta(q_\theta)
=\frac12\int_0^{q_\theta}\xi''(s)\,h_\theta(s)\,ds$. By the area in (b) this is
$(\azero\Dstar/144)\,q_\theta^4(1+o(1))=5c\,\theta^4(1+o(1))$, the fifth-order law of
Theorem~\ref{thm:local}(b).}
\label{fig:plateau}
\end{figure}

\paragraph{Where the four facts come from.}
The sign (iii) is the universal sign of Section~\ref{sec:plat-sign}: a Feynman--Kac representation
of $\Phi_{\zeta,xxx}$ along the diffusion killed at zero, with the reflection principle, gives
$\Phi_{\zeta,xxx}(t,x)\le-\kappa_tx$ for $0\le x\le\frac12$ and every $\zeta$. With
$\xi'''(0)=0$ it makes $\Gamma_\zeta'''\ge\Dstar/2$ on every plateau, in a fixed interval near
zero, of every $\zeta$ close to $\zetas$, which is (ii); a trapezoid principle then excludes
plateaus between two contacts in that interval (Section~\ref{sec:plat-gap},
Figure~\ref{fig:plateau}(a)). Section~\ref{sec:ons} combines this gap exclusion with (iv) to
obtain an interval of support at zero, with $\Ustar=\azero^{-1/2}$, $\zetas(\{0\})=0$ and
$\alphas'(0)=\rhostar$, and then the marginal onset (i); Section~\ref{sec:fifth} assembles the
deduction.

\paragraph{The other two columns.}
For the field, $\zeta_h$ has no mass in $[0,\qmin(h))$. On that gap $\Phi_{\zeta_h}$ solves a heat
equation, and the diffusion started at $h$ is Gaussian at time $q=\qmin(h)$, with mean $h$ and
variance $\xi'(q)$. The contact condition, marginality and the definition of $\mfield$ become
three exact identities for the odd profile $v=\Phi_{\zeta_h,x}(q,\cdot)$: $\E v(X)^2=q$,
$\E v'(X)^2=1/\xi''(q)$ and $\E v(X)=\mfield(h)$ for $X\sim\mathcal N(h,\xi'(q))$; expanding the
first two balances $h^2/\azero$ against $\frac13\Dstar q^3$, which gives the gap, and the third
gives the magnetization (Section~\ref{sec:fld}). For the atom at zero, Theorem~\ref{thm:atom}(a)
is convex duality and part (b) evaluates the It\^o identity at the origin
(Section~\ref{sec:atom}); part (c) runs the plateau calculus at $\theta-m_0$ with global inputs
from the third column, the interval support of $\zetas$ and the confinement of the contacts near
zero, hence the arrow into the second column (Section~\ref{sec:atom-odd}). Near $\xic$,
closeness to the reference pair $(\xic,\delta_0)$ replaces the local inputs: it gives
$\Gamma'''\ge2$ on plateaus in a fixed interval near zero, an exclusion of gaps between contacts
that adapts Zhou's mechanism \cite{Zho26} to the constrained problem for mixed covariances, and
the confinement of every positive contact to that interval (Section~\ref{sec:nc-reference}),
hence Theorem~\ref{thm:nearcrit}(a) (Section~\ref{sec:nc-class}). The de Almeida--Thouless threshold comes from an explicit computation along the two-atom branch
$\theta\delta_0+(1-\theta)\delta_q$ under a $\cosh^\theta$-tilted Gaussian law, which runs for
every covariance of the class, whose nonnegative coefficients control the branch uniformly, and
from a convexity argument that identifies the minimizer (Section~\ref{sec:nc-dat},
Appendix~\ref{app:dat}). Stability above the first contact is a contraction argument above
$q_\theta$, where $\zeta_\theta$ and $\zetas$ satisfy the same relation on their supports, so
that their difference is driven by the laws of the two diffusions at $q_\theta$. Tested against
even functions, these laws differ, to the order that matters, only through one scalar, the
first moment of the difference of the two distribution functions on the first plateau, and the
common contact $\Gamma_\theta(q_\theta)=q_\theta=\Gamma_{\zetas}(q_\theta)$ makes that scalar
small (Section~\ref{sec:nc-stability}, Appendix~\ref{app:stability}).

\subsection{Organization}\label{sec:intro-organization}

Section~\ref{sec:fifth} states four inputs about the bottom of $\zetas$ and $\zeta_\theta$: the envelope identity
(Proposition~\ref{prop:cons-envelope}), the equilibrium measure near zero
(Proposition~\ref{prop:ons-equilibrium}), the marginal onset (Proposition~\ref{prop:ons-onset})
and the area of the first plateau (Lemma~\ref{lem:fifth-area}), and deduces
Theorems~\ref{thm:local} and~\ref{thm:sk} from them; Sections~\ref{sec:cons}, \ref{sec:plat}
and~\ref{sec:ons} prove these inputs. Sections~\ref{sec:fld}, \ref{sec:atom} and~\ref{sec:nc}
prove Theorem~\ref{thm:field}, Theorem~\ref{thm:atom}(a),(b),(d) with Corollary~\ref{cor:pure-p},
and Theorem~\ref{thm:nearcrit} with Theorem~\ref{thm:atom}(c). Section~\ref{sec:disc} discusses
earlier work, including the existing results that we re-prove or extend, and open
questions. The appendices contain the routine proofs (Appendix~\ref{app:analytic}), the
spin-system side, fractional moments and Guerra's bound (Appendix~\ref{app:guerra}), the
dictionary to the replica predictions (Appendix~\ref{app:dictionary}), and the longer proofs for
the field, the cubic term, the de Almeida--Thouless threshold and the stability above the first
contact (Appendices~\mbox{\ref{app:field}--\ref{app:stability}}).

\section{Proof of Theorems~\ref*{thm:sk} and~\ref*{thm:local}}\label{sec:fifth}

Theorem~\ref{thm:sk} is the case $\xi(t)=\beta^2t^2/2$ of Theorem~\ref{thm:local}, once~\condH\
is known; its part~(c) also uses \condFM, which holds for this covariance (Lemma~\ref{lem:A-fm-sk}),
and, for $\FNb$, Lemma~\ref{lem:cons-original-sk} (Section~\ref{sec:fifth-sk}). We deduce
Theorem~\ref{thm:local} from four statements
about the bottom of $\zetas$ and $\zeta_\theta$ (Section~\ref{sec:fifth-inputs}), proved in
Sections~\ref{sec:cons}--\ref{sec:ons}: the envelope identity writes $p'(\theta)$ as a weighted
area of $\Gamma_\theta-\id$ over the first plateau $(0,q_\theta)$, the area of $\Gamma_\theta-\id$
there is that of the universal cubic, $\Dstar q_\theta^4/72$ to leading order, and the marginal
onset gives $q_\theta\sim3\theta/(2\rhostar)$. Hence $p'(\theta)\sim5c\,\theta^4$; integration
gives the fifth-order law, and the Legendre transform of a sixth power is a power $6/5$.

Throughout, $\xi(t)=\sum_{p\ge2}\beta_p^2t^p$ is not identically zero and has radius of
convergence greater than one, and $m_0=\zetas(\{0\})$; from
Proposition~\ref{prop:ons-equilibrium} on we assume the hypotheses $\beta_2>0$, $\xi'''(0)=0$
and~\condH\ of Theorem~\ref{thm:local}. We write $\Gamma_\theta=\Gamma_{\zeta_\theta}$,
$\Obs_\theta=\Obs_{\zeta_\theta}$, and $U_t=\Phi_{\zeta,xx}(t,X_t)$, $V_t=\Phi_{\zeta,xxx}(t,X_t)$
along the diffusion~\eqref{eq:intro-diffusion} of a measure $\zeta$. All asymptotics are at a
fixed covariance, with no uniformity in $\xi$.

\subsection{Four inputs}\label{sec:fifth-inputs}

The first input is exact and holds for every covariance.

\begin{proposition}[Envelope identity]\label{prop:cons-envelope}
The function $p$ is continuously differentiable on $(0,1)$. For every $\theta\in(0,1)$, with
$\zeta_\theta=\theta\delta_0+(1-\theta)\mu_\theta$,
\begin{equation}\label{eq:cons-envelope-general}
 p'(\theta)=\Obs_\theta(0)-\int\Obs_\theta\,d\mu_\theta .
\end{equation}
In particular $p'=0$ on $(0,1)\cap(0,m_0]$, and for $m_0<\theta<1$
\begin{equation}\label{eq:cons-envelope}
 p'(\theta)=\Obs_\theta(0)-\Obs_\theta(q_\theta)
 =\frac12\int_0^{q_\theta}\xi''(s)\bigl\{\Gamma_\theta(s)-s\bigr\}\,ds>0 .
\end{equation}
Consequently $\Lambda$ is continuously differentiable on $(0,1)$, with
$\Lambda'(\theta)=p(\theta)+\theta p'(\theta)$.
\end{proposition}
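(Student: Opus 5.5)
We propose to prove the envelope identity by a squeeze argument, using the first-variation formula of Chen \cite[Theorem~2]{Che17}. Parametrize admissible measures as $\zeta=\theta\delta_0+(1-\theta)\nu$ with $\nu$ a probability measure on $[0,1]$. Every measure with $\zeta(\{0\})\ge\theta$ has this form, possibly with $\nu(\{0\})>0$. Hence
\[
 p(\theta)=\min_\nu P_{\theta\delta_0+(1-\theta)\nu}.
\]
The first-variation formula gives, for two measures $\zeta,\zeta'$,
\[
 \frac{d}{d\varepsilon}\Big|_{0+}P_{\zeta+\varepsilon(\zeta'-\zeta)}=\int\Obs_\zeta\,d(\zeta'-\zeta),
\]
up to the normalization of $\Obs_\zeta$ fixed in \eqref{eq:intro-obstacle}. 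I would check that sign and constant against \cite{Che17} first.

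\textbf{Upper bound.} Fix $\theta$ and $\mu_\theta$, and use the trial measure $\zeta^{(\eta)}=\eta\delta_0+(1-\eta)\mu_\theta$ at the level $\eta$. It is admissible for the constraint at $\eta$, so $p(\eta)\le P_{\zeta^{(\eta)}}$, with equality at $\eta=\theta$. The map $\eta\mapsto\zeta^{(\eta)}$ is affine with direction $\delta_0-\mu_\theta$, so by the first-variation formula and convexity of $P$ along segments
\[
 p(\eta)-p(\theta)\le(\eta-\theta)\Bigl(\Obs_\theta(0)-\int\Obs_\theta\,d\mu_\theta\Bigr)+o(|\eta-\theta|).
\]

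\textbf{Lower bound.} Use the minimizer at level $\eta$: the measure $\theta\delta_0+(1-\theta)\mu_\eta$ is admissible at level $\theta$. This gives
\[
 p(\theta)-p(\eta)\le(\theta-\eta)\Bigl(\Obs_\eta(0)-\int\Obs_\eta\,d\mu_\eta\Bigr)+R,
\]
where the remainder $R$ must be $o(|\eta-\theta|)$. Then one needs continuity of $\eta\mapsto\Obs_\eta(0)-\int\Obs_\eta\,d\mu_\eta$. It follows from weak continuity of $\eta\mapsto\zeta_\eta$ (uniqueness plus compactness), from continuity of $\zeta\mapsto\Phi_\zeta$ and $\Gamma_\zeta$ in $\Wone$, and from the fact that $\Obs_\zeta$ is Lipschitz uniformly in $\zeta$.

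Together the two bounds make the one-sided difference quotients converge to the same continuous function. This gives $p\in C^1$ and \eqref{eq:cons-envelope-general}. I expect the main obstacle to be the uniformity of the remainder $R$ in the lower bound, because the base point moves with $\eta$. I would handle it with a second-order bound on $P$ along segments that is uniform over all measures, obtained from Lipschitz dependence of $\Gamma_\zeta$ on $\zeta$ in $\Wone$. A cleaner alternative is to use convexity of $P$ along the segment from $\zeta_\eta$ to $\theta\delta_0+(1-\theta)\mu_\eta$, which bounds $R$ by the difference of the first variations at the two endpoints. That difference is $O(|\theta-\eta|)$ times a modulus of continuity.

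For the consequences, use the optimality conditions of the constrained problem. Minimality over $\nu$ means that $\mu_\theta$ is supported on the minimizers of $\Obs_\theta$ over $[0,1]$.

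\textbf{Case $\theta\le m_0$.} Here the constraint is inactive and $\zeta_\theta=\zetas$, so $p$ is constant and $p'=0$. Directly, $\mu_\theta$ then charges zero, so $0$ is a minimizer of $\Obs_\theta$ and the difference in \eqref{eq:cons-envelope-general} vanishes.

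\textbf{Case $\theta>m_0$.} Now $\zeta_\theta(\{0\})=\theta$, so $\mu_\theta$ lives on the positive contacts. By Section~\ref{sec:cons}, $\Obs_\theta$ takes the common value $\Obs_\theta(q_\theta)$ on these contacts. Hence $\int\Obs_\theta\,d\mu_\theta=\Obs_\theta(q_\theta)$, and the integral form in \eqref{eq:cons-envelope} follows from the definition of $\Obs$.

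Strict positivity holds because $\Obs_\theta(0)\ge\Obs_\theta(q_\theta)$, with equality only if $0$ also minimized $\Obs_\theta$. Equality would make $\zeta_\theta$ satisfy the unconstrained optimality conditions, so $\zeta_\theta=\zetas$ by uniqueness, which contradicts $\zeta_\theta(\{0\})=\theta>m_0$.

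Finally, since $\Lambda=\theta p$, we get $\Lambda\in C^1(0,1)$ with $\Lambda'(\theta)=p(\theta)+\theta p'(\theta)$ by the product rule.
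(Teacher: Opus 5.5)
Your proposal is correct and is essentially the paper's proof. It uses the same two competitors, $\eta\delta_0+(1-\eta)\mu_\theta$ and $\theta\delta_0+(1-\theta)\mu_\eta$, to squeeze the difference quotient, and it handles the moving base point by the joint continuity of $\partial_u P_{u\delta_0+(1-u)\mu}=\int\Obs_{u\delta_0+(1-u)\mu}\,d(\delta_0-\mu)$ in $(u,\mu)$; the paper uses the mean value theorem where you suggest convexity, and then reads off the consequences from the contact-set facts of Lemma~\ref{lem:cons-variational}(e).

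One small slip: at $\theta=m_0$ the measure $\mu_\theta=(\zetas-m_0\delta_0)/(1-m_0)$ does not charge zero, so your ``direct'' argument fails there as stated. It should instead use $0\in\supp\zetas$ and $\Obs_*=o_*$ on $\supp\zetas$, although constancy of $p$ on $[0,m_0]$ already suffices.
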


Its proof squeezes $p(\theta+h)-p(\theta)$ between first variations at the old and the new
minimizer (Section~\ref{sec:cons-envelope}); the right side of~\eqref{eq:cons-envelope} sees
$\Gamma_\theta$ only on $[0,q_\theta]$, at whose ends $\Gamma_\theta-\id$ vanishes
(Figure~\ref{fig:plateau}(b),(c)).

Let $K_{\rm loc}\in(0,1)$ be the constant of Lemma~\ref{lem:plat-local-plateau}, below which two
contacts of a measure close to $\zetas$ always enclose some mass
(Lemma~\ref{lem:plat-local-gap}).

\begin{proposition}[The equilibrium measure near zero]\label{prop:ons-equilibrium}
Assume $\beta_2>0$, $\xi'''(0)=0$ and~\condH. Then the following hold.
\begin{thmlist}
\item There is $\kappa\in(0,K_{\rm loc})$ with $[0,\kappa]\subset\supp\zetas$.
\item $\zetas(\{0\})=0$ and $\Ustar=\azero^{-1/2}$.
\item The distribution function $\alphas$ is continuously differentiable on $[0,\kappa)$.
 For $t\in[0,\kappa)$, with $U_t,V_t$ taken for $\zeta=\zetas$,
 \begin{equation}\label{eq:ons-quotient}
  \alphas(t)=\frac{\xi'''(t)\,\E U_t^2+\xi''(t)^2\,\E V_t^2}{2\,\xi''(t)^2\,\E U_t^3},
 \end{equation}
 and the slope at zero is
 \begin{equation}\label{eq:ons-slope}
  \rhostar:=\alphas'(0)=\frac{\Dstar}{2\sqrt{\azero}}>0 .
 \end{equation}
\end{thmlist}
\end{proposition}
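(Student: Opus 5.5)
The plan follows the description in Section~\ref{sec:intro-ideas}: combine the gap exclusion of Lemma~\ref{lem:plat-local-gap} with \condH\ to obtain an interval of support, and then differentiate the contact identities along that interval. For (a), recall that every point of $\supp\zetas$ is a contact of $\zetas$, i.e.\ a minimizer of $\Obs_{\zetas}$, with $\Gamma_{\zetas}(t)=t$ there. This is the standard optimality characterization, which follows from the first-variation formula of \cite{Che17}. By \condH\ there are support points $u_n\downarrow0$. Take two support points $0<u<v<K_{\rm loc}$. If $(u,v)\cap\supp\zetas=\emptyset$, then $(u,v)$ is a plateau of $\zetas$ bounded by two contacts. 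Lemma~\ref{lem:plat-local-gap}, applied with $\zeta=\zetas$, forbids this. Hence $\supp\zetas$ has no gaps in $(0,K_{\rm loc})$ beyond its first positive support point. Since support points accumulate at $0$, we get $[0,\kappa]\subset\supp\zetas$ for some $\kappa<K_{\rm loc}$, chosen as a support point.

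For (b), the identity $\Gamma_{\zetas}(t)=t$ holds on $[0,\kappa]$, so $\Gamma_{\zetas}'(t)=1$ there. This is $\xi''(t)\E U_t^2=1$, and letting $t\downarrow0$ gives $\azero\Ustar^2=1$, i.e.\ $\Ustar=\azero^{-1/2}$ (using $U_0=\Phi_{\zetas,xx}(0,0)>0$). For $\zetas(\{0\})=0$ we differentiate once more. The Itô identities of Section~\ref{sec:plat-ito} give
\[
\Gamma''(t)=\xi'''(t)\E U_t^2+\xi''(t)^2\bigl(\E V_t^2-2\alphas(t)\E U_t^3\bigr),
\]
valid at points where $\alphas$ is right-continuous in the relevant sense. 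Evaluating this at $t=0+$ and using $V_0=\Phi_{\zetas,xxx}(0,0)=0$ (evenness) and $\xi'''(0)=0$ gives
\[
\Gamma''(0+)=-2\azero^2\alphas(0)\Ustar^3 .
\]
Since $\Gamma\equiv\id$ near $0$, this vanishes, so $\alphas(0)=0$. This is consistent with Theorem~\ref{thm:atom}(b).

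For (c), I would use the same second-derivative identity at every $t\in[0,\kappa)$. Because $\Gamma_{\zetas}=\id$ on $[0,\kappa]$, we have $\Gamma''\equiv0$ there. The identity should hold one-sidedly even though $\alphas$ may jump, so the right and left versions give the formula for both $\alphas(t)$ and $\alphas(t-)$. Solving yields \eqref{eq:ons-quotient}, provided $\E U_t^3>0$, which holds because $U_t>0$ by convexity of $\Phi$ in $x$. The right side of \eqref{eq:ons-quotient} is continuous in $t$, since expectations along the diffusion are continuous. Hence $\alphas$ is continuous on $[0,\kappa)$ with no atoms.

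Continuous differentiability is the main obstacle. The right side of \eqref{eq:ons-quotient} is differentiable as long as $t\mapsto\E V_t^2,\E U_t^2,\E U_t^3$ are $C^1$, and their derivatives, computed by Itô's formula, involve only $\alphas$ itself (now continuous) and derivatives of $\Phi$ up to fifth order. So the plan is to differentiate \eqref{eq:ons-quotient} directly: the quotient is $C^1$ because its ingredients are. Evaluating at $t=0$ is then explicit. Evenness gives $\E U_t^3\to\Ustar^3$ with $\Ustar^3=\azero^{-3/2}$, and $V_0=0$. The numerator has derivative at $0$ equal to
\[
\xi''''(0)\Ustar^2+\azero^2\tfrac{d}{dt}\E V_t^2\big|_0,
\]
using $\xi'''(0)=0$. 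Itô's formula gives $dV_t\approx\Phi_{xxxx}\sqrt{\xi''}\,dB_t$ to leading order, so $\frac{d}{dt}\E V_t^2|_0=\azero\Wstar^2$, using $\alphas(0)=0$ and $V_0=0$ to kill the drift terms. Hence the numerator has derivative $\Dstar$ at $0$, and
\[
\rhostar=\frac{\Dstar}{2\azero^2\Ustar^3}=\frac{\Dstar}{2\sqrt{\azero}} .
\]
Positivity comes from Lemma~\ref{lem:plat-fourth-sign}, which gives $\Wstar\le-\kappa_0<0$ and hence $\Dstar\ge\azero^3\kappa_0^2>0$ (recall $\xi''''(0)\ge0$).
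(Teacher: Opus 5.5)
Your proposal is correct and follows the paper's proof. Part (a) is gap exclusion through Lemma~\ref{lem:plat-local-gap} between the nearest support points around a putative gap, (b) uses the right-derivative formula of Lemma~\ref{lem:plat-ito}(b) at $t=0$, and (c) uses the quotient $N_{\zetas}/\sigma_{\zetas}$ with $N_{\zetas}(0)=0$, $N_{\zetas}'(0)=\Dstar$ and $\sigma_{\zetas}(0)=2\sqrt{\azero}$. The one minor variation is that you read off $\alphas(t)=\alphas(t-)=N_{\zetas}(t)/\sigma_{\zetas}(t)$ pointwise from both one-sided derivative formulas, as the paper does in Lemma~\ref{lem:D-support-relation}, rather than differentiating $\xi''\E U_t^2\equiv1$ almost everywhere and using right continuity. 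Also, your $C^1$ step relies on the It\^o moment identities \eqref{eq:ons-moments} for the non-atomic $\zetas$, and these need the justification the paper supplies: approximation by finitely atomic measures with \eqref{eq:plat-jet-stability} and Lemma~\ref{lem:plat-coupling}.
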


In~(a), gap exclusion turns the accumulation point of~\condH\ into an interval of support. On
such an interval, endpoint zero included, the smoothness of $\alphas$ and~\eqref{eq:ons-quotient}
are \cite[Theorem~2(ii) and its proof]{AC15a}; \eqref{eq:ons-slope} is the derivative of that
formula at the endpoint zero (Section~\ref{sec:ons-equilibrium}).

\begin{proposition}[Marginal onset]\label{prop:ons-onset}
Assume $\beta_2>0$, $\xi'''(0)=0$ and~\condH. Then the following hold.
\begin{thmlist}
\item As $\theta\downarrow0$: $p(\theta)\to\feq$, $\Wone(\zeta_\theta,\zetas)\to0$ and
 $q_\theta\to0$.
\item There is $\theta_0\in(0,1)$ such that, for $0<\theta<\theta_0$,
 $\Gamma_\theta(t)=t$ on an interval $[q_\theta,s_\theta]$ with $s_\theta>q_\theta$. In
 particular the first positive contact is marginal:
 \[
  \Gamma_\theta'(q_\theta)
  =\xi''(q_\theta)\,\E\Phi_{\zeta_\theta,xx}(q_\theta,X_{q_\theta})^2=1 .
 \]
\item As $\theta\downarrow0$,
 \[
  \frac{q_\theta}{\theta}\longrightarrow\frac{3\sqrt{\azero}}{\Dstar}=\frac{3}{2\rhostar},
  \qquad
  \frac{\zeta_\theta(\{q_\theta\})}{\theta}\longrightarrow\frac12 .
 \]
\end{thmlist}
\end{proposition}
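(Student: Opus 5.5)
We prove the three parts in order; each uses the previous one together with Proposition~\ref{prop:ons-equilibrium} and the plateau calculus of Section~\ref{sec:plat}.

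\textbf{Part (a): continuity at $\theta=0$.} Use compactness and convexity. The measures with $\zeta(\{0\})\ge\theta$ form a decreasing family of compact convex sets in $\Wone$. The trial measure $\theta\delta_0+(1-\theta)\zetas$ gives
\[
\feq\le p(\theta)\le P_{\theta\delta_0+(1-\theta)\zetas}\to\feq ,
\]
by the Lipschitz continuity of $\zeta\mapsto P_\zeta$ in $\Wone$. Any weak limit of $\zeta_\theta$ is then a minimizer of $P$, hence equals $\zetas$ by uniqueness, so $\Wone(\zeta_\theta,\zetas)\to0$.

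For $q_\theta\to0$, argue as follows. The obstacle $\Obs_\zeta$ depends continuously on $\zeta$, uniformly on $[0,1]$. Since $[0,\kappa]\subset\supp\zetas$ by Proposition~\ref{prop:ons-equilibrium}(a), every point of $[0,\kappa]$ minimizes $\Obs_{\zetas}$. Suppose instead that $q_\theta\ge\eta>0$ along a subsequence. Then $\zeta_\theta$ puts no mass in $(0,\eta)$. But $\zeta_\theta\to\zetas$ weakly, and $\zetas((0,\eta))>0$ because $\alphas'(0)=\rhostar>0$. This is a contradiction.

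\textbf{Part (b): marginality, the main obstacle.} Fix the local window $[0,K_{\rm loc})$ of Lemma~\ref{lem:plat-local-plateau}. For $\theta$ small, $\zeta_\theta$ is close to $\zetas$, so two facts hold:
\begin{itemize}[leftmargin=*]
\item by gap exclusion (Lemma~\ref{lem:plat-local-gap}), two contacts of $\zeta_\theta$ in the window cannot enclose a plateau;
\item $\Gamma_\theta'''\ge\Dstar/2$ on every plateau in the window.
\end{itemize}
We next need contacts of $\zeta_\theta$ in the window beyond $q_\theta$. The support of $\zeta_\theta$ restricted to $(0,1]$ consists of contacts, and $\zeta_\theta\to\zetas$ with $\zetas$ charging every subinterval of $[0,\kappa]$. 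Hence the support of $\zeta_\theta$ meets every fixed subinterval of $(0,\kappa)$ for $\theta$ small.

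These two facts together force the contacts in $[q_\theta,\kappa/2]$ to form a single interval $[q_\theta,s_\theta]$, and on it $\Gamma_\theta=\id$. The difficulty is the small-scale structure, namely making "fills every subinterval" uniform down to scale $q_\theta$. I would handle it directly with the trapezoid principle. Between consecutive contacts $u<v$ with no mass between them, $\Gamma_\theta'$ is convex with $\Gamma_\theta'\le1$ at both ends, and $\Gamma_\theta(v)-\Gamma_\theta(u)=v-u$. This forces $\Gamma_\theta'\equiv1$ on $[u,v]$, which contradicts strict convexity. So $\supp\zeta_\theta\cap[q_\theta,\kappa/2]$ is an interval.

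Since $\Gamma_\theta-\id$ vanishes on $[q_\theta,s_\theta]$, we get $\Gamma_\theta'(q_\theta)=1$. The expression for $\Gamma_\theta'(q_\theta)$ in the statement is the Auffinger--Chen formula for $\Gamma'$.

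\textbf{Part (c): the onset constants via curvature matching.} Work on the first plateau $(0,q_\theta)$, where the distribution function of $\zeta_\theta$ equals $\theta$. Set $h_\theta=\Gamma_\theta-\id$. The It\^o identities give $h_\theta'''\to\Dstar$ uniformly there, using $\xi'''(0)=0$ and the convergence of the derivatives of $\Phi_{\zeta_\theta}$ to those of $\Phi_{\zetas}$. The boundary conditions are
\[
h_\theta(0)=0,\qquad h_\theta(q_\theta)=0,\qquad h_\theta'(q_\theta)=0,
\]
from evenness, the contact condition and part (b). Together these give the cubic \eqref{eq:intro-cubic} with second-derivative control:
\[
h_\theta''(0+)=-\tfrac23\Dstar q_\theta+o(q_\theta),\qquad h_\theta''(q_\theta-)=\tfrac13\Dstar q_\theta+o(q_\theta).
\]

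Now compare with the It\^o identity at each endpoint.

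\emph{At $0$.} The It\^o identity gives $\Gamma_\theta''(0+)=-2\azero^2\theta\bar U_\theta^3$. We have $\bar U_\theta\to\Ustar=\azero^{-1/2}$, by Proposition~\ref{prop:ons-equilibrium}(b) and continuity. Matching the two values of $h_\theta''(0+)$ gives $2\sqrt{\azero}\,\theta\sim\frac23\Dstar q_\theta$, that is, $q_\theta/\theta\to3\sqrt{\azero}/\Dstar$. This equals $3/(2\rhostar)$ by \eqref{eq:ons-slope}.

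\emph{At $q_\theta$.} Here $\Gamma_\theta''(q_\theta+)=0$, because $\Gamma_\theta=\id$ on $[q_\theta,s_\theta]$. The jump in $\Gamma_\theta''$ across $q_\theta$ is
\[
2\xi''(q_\theta)^2\,\zeta_\theta(\{q_\theta\})\,\E U_{q_\theta}^3\sim2\azero^{1/2}\,\zeta_\theta(\{q_\theta\}).
\]
Setting this equal to $\frac13\Dstar q_\theta\sim\sqrt{\azero}\,\theta$ gives $\zeta_\theta(\{q_\theta\})/\theta\to\frac12$.
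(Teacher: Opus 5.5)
Your proposal is correct and follows essentially the paper's own proof: compactness, uniqueness and the portmanteau theorem for (a); gap exclusion (Lemma~\ref{lem:plat-local-gap}) between $q_\theta$ and a support point of $\zeta_\theta$ in a fixed window inside $(0,\kappa)$ for (b); and, for (c), the universal cubic of Corollary~\ref{cor:plat-first-plateau} matched against $\Gamma_\theta''(0+)=-2\azero^2\theta\bar U_\theta^3$ at zero and against the jump formula at $q_\theta$. One detail is worth making explicit: the uniform limit $\Gamma_\theta'''\to\Dstar$ on $(0,q_\theta)$ needs $q_\theta\to0$, $\Wone(\zeta_\theta,\zetas)\to0$, and also that the plateau height $m=\theta$ tends to zero, which removes the term $6m^2\azero^3\Ustar^4$; this is exactly the bound of Lemma~\ref{lem:plat-local-plateau}.
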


Part~(b) is gap exclusion for $\zeta_\theta$; part~(c) matches the universal cubic with the
curvatures that the atoms at $0$ and at $q_\theta$ produce (Figures~\ref{fig:plateau}(b)
and~\ref{fig:bottom}(a); Section~\ref{sec:ons-onset}).

By Proposition~\ref{prop:ons-equilibrium}(b), $m_0=0$, so~\eqref{eq:cons-envelope} holds at
every $\theta\in(0,1)$. The last input evaluates its right side.

\begin{lemma}[The area of the first plateau]\label{lem:fifth-area}
Under the hypotheses of Theorem~\ref{thm:local}, as $\theta\downarrow0$,
\[
 p'(\theta)=\frac{\azero\Dstar}{144}\,q_\theta^4\,(1+o(1)).
\]
\end{lemma}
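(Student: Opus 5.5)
We start from the envelope identity \eqref{eq:cons-envelope}, which holds for every $\theta\in(0,1)$ because $m_0=0$ by Proposition~\ref{prop:ons-equilibrium}(b):
\[
p'(\theta)=\frac12\int_0^{q_\theta}\xi''(s)\,h_\theta(s)\,ds,\qquad h_\theta=\Gamma_\theta-\id .
\]
Since $q_\theta\to0$ by Proposition~\ref{prop:ons-onset}(a) and $\xi''(s)=\azero+O(s^2)$ (recall $\xi'''(0)=0$), the weight can be replaced by $\azero$ at relative cost $O(q_\theta^2)$, provided $\int_0^{q_\theta}|h_\theta|$ is of order $q_\theta^4$. It therefore suffices to prove
\[
\int_0^{q_\theta}h_\theta(s)\,ds=\frac{\Dstar}{72}\,q_\theta^4\,(1+o(1)),
\]
which also gives $\int_0^{q_\theta}|h_\theta|=O(q_\theta^4)$. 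Indeed $\frac{\azero}{2}\cdot\frac{\Dstar}{72}=\frac{\azero\Dstar}{144}$.

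Next we pin down the profile of $h_\theta$ on the first plateau $(0,q_\theta)$. Three boundary conditions are available:
\begin{itemize}[label=--]
\item $h_\theta(0)=0$, since $\Gamma_\theta(0)=0$ by evenness;
\item $h_\theta(q_\theta)=0$, since $q_\theta$ is a contact;
\item $h_\theta'(q_\theta)=0$, by the marginality of Proposition~\ref{prop:ons-onset}(b), valid for $\theta<\theta_0$.
\end{itemize}
On the plateau the distribution function of $\zeta_\theta$ is constant, equal to $\theta$. The Itô identities of Section~\ref{sec:plat-ito} then show that $\Gamma_\theta$ is $C^3$ on $(0,q_\theta)$, and Lemma~\ref{lem:plat-local-plateau} should give
\[
\sup_{0<t<q_\theta}\bigl|\Gamma_\theta'''(t)-\Dstar\bigr|\to0 .
\]
This uses $\zeta_\theta\to\zetas$ in $\Wone$, $q_\theta\to0$, and the continuity of the third and fourth spatial derivatives of $\Phi_\zeta$ in $\zeta$ near $t=0$. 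I would invoke that lemma, or its corollary for the first plateau, Corollary~\ref{cor:plat-first-plateau}, directly. This uniform convergence is the main obstacle. If the lemma only gives the bound $\Gamma'''\ge\Dstar/2$, I would return to the Itô formula and show that each term converges to its value at $(0,\zetas)$. The terms involving the plateau height $\theta$ are $O(\theta)=O(q_\theta)$, and the remaining terms are continuous in $(t,\zeta)$.

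Granting the uniform convergence, write $h_\theta'''=\Dstar+\varepsilon_\theta(t)$ with $\sup|\varepsilon_\theta|\to0$. A cubic is determined by its third derivative together with the three boundary conditions above: the boundary value problem $g'''=r$, $g(0)=g(q)=g'(q)=0$ has a unique solution, given by a Green's kernel $G$ with $\int\!\!\int|G|=O(q^4)$ after integrating over $t$. By linearity, therefore,
\[
h_\theta(t)=\frac{\Dstar}{6}\,t\,(q_\theta-t)^2+R_\theta(t),\qquad
\int_0^{q_\theta}|R_\theta|\le C\,q_\theta^4\,\sup|\varepsilon_\theta|=o(q_\theta^4).
\]
The cubic term is checked directly: its third derivative is $\Dstar$, and it satisfies the three boundary conditions.

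Finally, the integral of the cubic is
\[
\int_0^q\frac{\Dstar}{6}\,t\,(q-t)^2\,dt=\frac{\Dstar}{6}\cdot\frac{q^4}{12}=\frac{\Dstar q^4}{72}.
\]
Combining this with the $O(q_\theta^6)$ error from the weight $\xi''$ gives $p'(\theta)=\frac{\azero\Dstar}{144}q_\theta^4(1+o(1))$. The relative error is $o(1)$ because $\Dstar>0$ (Proposition~\ref{prop:ons-equilibrium}).
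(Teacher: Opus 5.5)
Your proposal is correct and follows the paper's proof essentially step by step. The shared steps are: the envelope identity with $m_0=0$; the three boundary conditions for $h_\theta$, with marginality taken from Proposition~\ref{prop:ons-onset}(b); uniform convergence $\Gamma_\theta'''\to\Dstar$ on the first plateau from the quantitative bound of Lemma~\ref{lem:plat-local-plateau} at height $m=\theta$, which is exactly the error parameter of Corollary~\ref{cor:plat-first-plateau}; your Green's-kernel estimate, which is the paper's kernel identity \eqref{eq:plat-kernel} with $J\ge0$; and an $O(q_\theta^6)$ error from $\xi''(s)-\azero=O(s^2)$. One small correction: $\int_0^{q_\theta}|h_\theta|=O(q_\theta^4)$ does not follow from the asymptotics of $\int_0^{q_\theta}h_\theta$ alone. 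It does follow from your later decomposition into the cubic plus $R_\theta$; the paper instead uses the pointwise bound $|h_\theta|\le(\Dstar/6+\frac56\eta_\theta)q_\theta^3$ of Lemma~\ref{lem:plat-cubic}.
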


To leading order $\Gamma_\theta-\id$ is the universal cubic $\frac{\Dstar}6\,t(q_\theta-t)^2$
of Lemma~\ref{lem:plat-cubic} on the first plateau, with integral $\Dstar q_\theta^4/72$, and
$\xi''\approx\azero$ there (Section~\ref{sec:ons-area}).

\paragraph{Other inputs.}
The deduction also uses the universal sign $\Dstar>0$ (Lemma~\ref{lem:plat-fourth-sign}), the
convex duality of Lemmas~\ref{lem:cons-variational}(b),(d) and~\ref{lem:cons-regular} and
Corollary~\ref{cor:cons-level}, the
Lipschitz bound~\eqref{eq:cons-lipschitz}, the exponential tilting of
Lemma~\ref{lem:cons-gartner-ellis}, and, for the Sherrington--Kirkpatrick model,
Lemmas~\ref{lem:A-fm-sk} and~\ref{lem:cons-original-sk} (Appendix~\ref{app:guerra}). The
proofs in Sections~\ref{sec:cons}--\ref{sec:ons} do not use Theorems~\ref{thm:sk}
and~\ref{thm:local}.

\subsection{Proof of Theorem~\ref*{thm:local}}\label{sec:fifth-proof-local}

The inequality $\Dstar>0$ is Lemma~\ref{lem:plat-fourth-sign}. The identities
$\zetas(\{0\})=0$ and $\Ustar=\azero^{-1/2}$, the interval $[0,\kappa]\subset\supp\zetas$, and
the regularity and slope of $\alphas$ are Proposition~\ref{prop:ons-equilibrium}. Item~(a) is
Proposition~\ref{prop:ons-onset}(b),(c); throughout, $\theta_0\in(0,1)$ is its threshold.

\begin{proof}[Proof of Theorem~\ref{thm:local}\textup{(b)}]
By Lemma~\ref{lem:fifth-area} and the onset limit $q_\theta/\theta\to3\sqrt{\azero}/\Dstar$
of item~(a),
\begin{equation}\label{eq:fifth-p-prime}
\begin{split}
 p'(\theta)&=\frac{\azero\Dstar}{144}\,q_\theta^4\,(1+o(1))
 =\frac{\azero\Dstar}{144}\Bigl(\frac{3\sqrt{\azero}}{\Dstar}\Bigr)^4\theta^4(1+o(1))\\
 &=\frac{9\azero^3}{16\Dstar^3}\,\theta^4(1+o(1))
 =5c\,\theta^4(1+o(1)).
\end{split}
\end{equation}
Write $p'(s)=5cs^4(1+r(s))$ with $r(s)\to0$ as $s\downarrow0$, and choose
$\theta_1\in(0,\theta_0)$ with $|r|\le1$ on $(0,\theta_1]$. Then $p$ is differentiable
at every point of $(0,\theta_1]$, with $|p'|\le10c\,\theta_1^4$ there. Hence $p$ is
Lipschitz on $(0,\theta_1]$, and $p(\theta)-p(\eta)=\int_\eta^\theta p'(s)\,ds$ for
$0<\eta<\theta\le\theta_1$. As $\eta\downarrow0$ we have $p(\eta)\to\feq$: the measure
$\eta\delta_0+(1-\eta)\zetas$ satisfies the constraint at $\eta$ and lies within
$\Wone$-distance $\eta$ of $\zetas$, so \eqref{eq:cons-lipschitz} gives
$\feq\le p(\eta)\le\feq+\frac12\xi''(1)\eta$. Letting $\eta\downarrow0$,
\[
 \bigl|p(\theta)-\feq-c\,\theta^5\bigr|\le c\,\theta^5\sup_{0<s\le\theta}|r(s)|,
\]
which is $p(\theta)-\feq=c\,\theta^5(1+o(1))$. Since $\Lambda(\theta)=\theta p(\theta)$,
\[
 \Lambda(\theta)-\theta\feq=\theta\bigl\{p(\theta)-\feq\bigr\}=c\,\theta^6(1+o(1)),\qquad
 \Lambda'(\theta)-\feq=\bigl\{p(\theta)-\feq\bigr\}+\theta p'(\theta)=6c\,\theta^5(1+o(1)).
 \qedhere
\]
\end{proof}

\begin{proof}[Proof of Theorem~\ref{thm:local}\textup{(c)}]
Since $\zetas(\{0\})=0$, Corollary~\ref{cor:cons-level} with $m_0=0$ and
Lemma~\ref{lem:cons-variational}(d) give that $\Lambda$ is continuously differentiable and
strictly convex on $(0,1)$, that $\Lambda'$ maps $(0,1)$ homeomorphically onto $(\feq,f_1)$,
that $\feq<f_1$ (Lemma~\ref{lem:cons-regular}(d)), and that $\theta_f=(\Lambda')^{-1}(f)$.

Now let $\delta\downarrow0$, and write $\theta_\delta=\theta_{\feq+\delta}$ for
$0<\delta<f_1-\feq$. Continuity of $(\Lambda')^{-1}$ gives $\theta_\delta\to0$, and by
item~(b) $\delta=\Lambda'(\theta_\delta)-\feq=6c\,\theta_\delta^5(1+o(1))$, so
$\theta_\delta=(\delta/(6c))^{1/5}(1+o(1))$. The infimum that defines $\Sigma(\feq+\delta)$ is
attained at $\theta_\delta<1$, so
\[
 \Sigma(\feq+\delta)=\bigl\{\Lambda(\theta_\delta)-\theta_\delta\feq\bigr\}-\delta\,\theta_\delta
 =c\,\theta_\delta^6(1+o(1))-6c\,\theta_\delta^6(1+o(1))
 =-5c\Bigl(\frac{\delta}{6c}\Bigr)^{6/5}(1+o(1)),
\]
and $5c(6c)^{-6/5}=\frac56(6c)^{-1/5}$. The leading coefficients $c$ and $6c$ do not
cancel, so the relative errors combine as written. Since $\theta_f\to0$ as
$f\downarrow\feq$, the asymptotics of $q_f=q_{\theta_f}$ follow from item~(a). Finally,
put
\begin{equation}\label{eq:fifth-delta0}
 \bar\delta=\Lambda'(\theta_0)-\feq .
\end{equation}
Then $0<\bar\delta<f_1-\feq$, because $\Lambda'$ is strictly increasing and $\theta_0<1$.
If $0<\delta<\bar\delta$, then $\theta_\delta<\theta_0$, and item~(a) applies to
$\zeta_{\theta_\delta}$: the first positive contact at the level $\feq+\delta$ is
marginal.
\end{proof}

\begin{proof}[Proof of Theorem~\ref{thm:local}\textup{(d)}]
Under the hypotheses of Theorem~\ref{thm:local}, $\zetas(\{0\})=0$, so
$\zetas\ne\delta_0$. By item~(c), $\Lambda$ is continuously differentiable and strictly
convex on $(0,1)$, and every level $f\in(\feq,f_1)$ has the dual minimizer
$\theta_f=(\Lambda')^{-1}(f)\in(0,1)$. With \condFM, Lemma~\ref{lem:cons-gartner-ellis}
therefore gives
\[
 \lim_{N\to\infty}\frac1N\log\Prob\bigl(N^{-1}\log\ZN\ge f\bigr)=\Sigma(f)
 \qquad(\feq<f<f_1).
\]
Since $\feq<f_1$, the level $\feq+\delta$ lies in $(\feq,f_1)$ for
$0<\delta<f_1-\feq$, and then the inner limit in the second display of item~(d) equals
$-\Sigma(\feq+\delta)$. By item~(c), $\delta^{-6/5}\{-\Sigma(\feq+\delta)\}\to
\frac56(6c)^{-1/5}$ as $\delta\downarrow0$.
\end{proof}

The limit $N\to\infty$ is taken first, at fixed $\delta$: item~(d) says nothing at finite $N$
or about deviations $\delta_N$ that shrink with $N$, and its $o(1)$ terms carry no rate.
Under~\condFM, the identification of $-\Sigma$ as the upper-tail rate function is not new: it
follows from Talagrand's large-deviation principle \cite[Remark~1.3]{Bou26b}. Boursier's direct
lower bound for $\log\ZN$, proved for $\xi$ strictly convex on $[-1,1]$
\cite[Assumption~1.1]{Bou26b}, at regular levels under a strict Plefka condition
\cite[Remark~1.4 and Proposition~3.12]{Bou26b} does not apply in $(\feq,\feq+\bar\delta)$
(Corollary~\ref{cor:fifth-plefka}), so near $\feq$ the lower bound in item~(d) rests on~\condFM.
What Theorem~\ref{thm:local} adds is the behaviour of $\Sigma$ near $\feq$: the power $6/5$ and
its constant. For $\xi$ convex on $[-1,1]$, the upper bounds on the probabilities in item~(d)
hold without~\condFM\ (Remark~\ref{rem:mo-fm}).

\subsection{The Sherrington--Kirkpatrick model: proof of Theorem~\ref*{thm:sk}}\label{sec:fifth-sk}

\begin{proof}[Proof of Theorem~\ref{thm:sk}]
The covariance $\xi(t)=\beta^2t^2/2$ is a polynomial with $\beta_2^2=\beta^2/2>0$ and
$\xi'''=\xi''''=0$. Thus $\azero=\beta^2$ and $\Dstar=\beta^6\Wstar^2$. Hypothesis~\condH\
holds for every $\beta>1$ by Lopatto's theorem \cite[Proposition~5.1]{Lop26a}: his Parisi
equation and functional \cite[(1.2)--(1.3)]{Lop26a} are those of this covariance, with a
functional larger than ours by the constant $\log2$, so his Parisi measure $\mu_\beta$ is
$\zetas$. The same statement, for the same equation and functional, has an alternative
computer-assisted proof \cite[Theorem~5.1]{Pei26} (Remark~\ref{rem:fifth-sk-inputs}).

Theorem~\ref{thm:local} therefore applies. Its first assertions show that $\alphas$ is
continuously differentiable near zero, with
$\rhob=\alphas'(0)=\rhostar=\Dstar/(2\beta)=\beta^5\Wstar^2/2$, which is positive because
$\Wstar<0$ (Lemma~\ref{lem:plat-fourth-sign}). The constant of Theorem~\ref{thm:local} is
\[
 c=\frac9{640}\,\azero^{3/2}\rhostar^{-3}=\frac9{640}\,\beta^3\rhob^{-3}=\cb
 =\frac{9}{80\,\beta^{12}\Wstar^6}.
\]
Parts (a) and~(b) of Theorem~\ref{thm:sk} are items (a)--(c) of
Theorem~\ref{thm:local} with $c=\cb$ and $3/(2\rhostar)=3/(2\rhob)$.

For part~(c), \condFM\ holds for this covariance by Lemma~\ref{lem:A-fm-sk}, and
Theorem~\ref{thm:local}(d) gives both limits for $\ZN$. Here $\ZN$ is the partition
function of the field with covariance $N\xi(N^{-1}\sigma\cdot\sigma')$, whose
interaction includes the diagonal terms. Lemma~\ref{lem:cons-original-sk} transfers the
first limit, at every $f\in(\feq,f_1)$, to $N^{-1}\FNb-\log2$. The second limit follows,
because its inner limits are again $-\Sigma(\feq+\delta)$.
\end{proof}

\begin{remark}[What Theorem~\ref*{thm:sk} rests on]\label{rem:fifth-sk-inputs}
Besides the present paper, the proof uses refereed results on the Parisi functional and its
equation \cite{AC15a,AC15b,Che17} and the following inputs.
\begin{steplist}
\item \emph{Condition~\condH.} Lopatto's theorem \cite[Proposition~5.1]{Lop26a} is proved in a
 preprint, and so is the alternative proof \cite[Theorem~5.1]{Pei26}, which uses nothing from
 \cite[Sections~3--5]{Lop26a} and whose single computer-assisted step is one polynomial
 inequality, verified in exact rational arithmetic \cite[Section~5 and Appendix~A.5]{Pei26}.
 No other property of $\zetas$ is taken from these papers: the interval of support near
 zero, $\Ustar=1/\beta$ and the absence of an atom at zero are derived from~\condH\ in
 Proposition~\ref{prop:ons-equilibrium}. Thus, like the results of \cite{AL26} at every
 $\beta>1$, which use Lopatto's interval-support theorem \cite[Theorem~1.1]{Lop26a}, the
 statement at every $\beta>1$ rests on a preprint.
\item \emph{Condition~\condFM.} It enters only part~(c). For this covariance it is the one-level
 case of the multiscale formula of Contucci and Mingione \cite[Theorem~2.1]{CM19}, a refereed
 result (Lemma~\ref{lem:A-fm-sk}); other sources are Talagrand's argument for even covariances
 \cite[Section~7]{Tal07} and the preprints \cite[Theorem~2.1]{CGKLM} and, quantitatively for
 $\theta\le\frac12$, \cite[Theorem~1.1]{AL26}. Its upper half is Guerra's bound for fractional
 moments \cite[Section~6]{Tal07} (Proposition~\ref{prop:A-guerra}).
\item \emph{Temperatures close to critical.} If $1<\beta^2<1+\varepsilon_*$, with
 $\varepsilon_*$ as in Theorem~\ref{thm:nearcrit}(a), then $\xi\in\Kcl(\varepsilon_*)$, because
 $\normCfour{\xi-\xic}=\beta^2-1$, and Theorem~\ref{thm:nearcrit}(a) proves~\condH\ within this
 paper, from refereed results only (Proposition~\ref{prop:nc-class}). For $\beta$ close to $1$,
 \condH\ also follows from Zhou's theorem \cite[Theorem~1]{Zho26}, which is stated for the
 covariance $\beta_Z^2t^2$ with $\frac1{\sqrt2}<\beta_Z\le\frac1{\sqrt2}+\eta$ for some
 $\eta>0$, that is, for $\beta=\sqrt2\beta_Z\in(1,1+\sqrt2\eta]$; we use only its inclusion
 $[0,\upsilon_\beta]\subset\supp\zetas$. For these $\beta$, Theorem~\ref{thm:sk} rests on
 refereed results and the present paper alone.
\end{steplist}
For this model, Proposition~\ref{prop:ons-equilibrium}(b) is
\cite[Lemmas~6.1 and~6.2]{Lop26a}, and by \cite[Theorem~1.1]{Lop26a} $\zetas$ has a smooth
density on $[0,q_\beta)$, where $q_\beta$ is the top of its support, whose value at zero is
therefore $\rhob$; this is not used here. The value and the positivity of $\rhob$, in physics
form \cite[(B9)--(B10)]{PR09} of Crisanti and Rizzo~\cite{CR02}, also follow from
\cite[Theorem~B.1 and (B.4)]{Lop26b}, which uses the interval support of $\zetas$, and from
\cite[(7.8), (7.10) and (7.12)]{AL26} on differentiation at zero and
\cite[Lemma~7.1, (7.7), and Lemma~7.2]{AL26} (Section~\ref{sec:intro-results}).
\end{remark}

\begin{remark}[Comparison with Aronow and Lopatto]\label{rem:fifth-al}
For every fixed $\beta>1$, Aronow and Lopatto prove
$e^{-C_1Nx^{6/5}}\le\Prob(\FNb\ge N(P_\beta+x))\le e^{-C_2Nx^{6/5}}$, where
$P_\beta=\feq+\log2$, at finite $N\ge N_0(\beta)$ and for $A_\beta N^{-2/3}\le x\le x_\beta$,
with positive constants depending on $\beta$ \cite[Theorem~1.5]{AL26} (uniformly in $\beta\ge2$
for $\FNb/\beta$, with a ground-state counterpart \cite[Theorem~1.6 and (1.14)]{AL26}). By
Theorem~\ref{thm:sk}(c), $x^{-6/5}\lim_{N}N^{-1}\log\Prob(\FNb\ge N(P_\beta+x))\to
-\frac56(6\cb)^{-1/5}$ as $x\downarrow0$. They also prove that the constrained minimum increases
by an amount of order $\lambda^5$ \cite[Lemma~7.2 and Proposition~7.3]{AL26}; in their rescaled
variables \cite[Section~7]{AL26}, $E_\beta(\lambda)=\beta^{-1}P_\beta(\lambda/\beta)$, where
$P_\beta(s)=p(s)+\log2$, and $u_\beta(q,x)=\beta^{-1}\Phi_{\zetas}(q,\beta x)$ up to an additive
constant, so that $u_{\beta,xxxx}(0,0)=\beta^3\Wstar$; Theorem~\ref{thm:sk}(a) reads
\[
 \lim_{\lambda\downarrow0}\lambda^{-5}\bigl(E_\beta(\lambda)-E_\beta(0)\bigr)
 =\cb\,\beta^{-6}=\frac9{80}\,u_{\beta,xxxx}(0,0)^{-6}.
\]
Section~\ref{sec:intro-results} compares the scope of the two results.
\end{remark}

\subsection{Strict Plefka stability fails near equilibrium}\label{sec:fifth-plefka}

Boursier's large-deviation theorem for the existence of TAP states \cite[Theorem~1]{Bou26b} is
proved, for $\xi$ strictly convex on $[-1,1]$ \cite[Assumption~1.1]{Bou26b}, at regular levels
$f\in(\feq,f_1)$, those with a unique dual minimizer
\cite[Definition~1.1]{Bou26b}, under the strict Plefka condition \cite[(1.8)]{Bou26b}
\begin{equation}\label{eq:fifth-strict-plefka}
 1-\xi''(q_f)\,\E\Phi_{\zeta_f,xx}(q_f,X_{q_f})^2>0 ,
\end{equation}
at the first positive contact $q_f$ of $\zeta_f$, with $X$ its
diffusion~\eqref{eq:intro-diffusion}. The left side equals $1-\Gamma_{\zeta_f}'(q_f)\ge0$
(see~\eqref{eq:intro-plefka}), so the condition asks that $q_f$ not be marginal
\cite[Remark~1.2]{Bou26b}.

\begin{corollary}\label{cor:fifth-plefka}
Assume the hypotheses of Theorem~\ref{thm:local}; by the proof of
Theorem~\ref{thm:sk}, they hold for $\xi(t)=\beta^2t^2/2$ with $\beta>1$. Let $\bar\delta$
be as in \eqref{eq:fifth-delta0}.
Every level $f\in(\feq,\feq+\bar\delta)$ is regular, and $\Gamma_{\zeta_f}(t)=t$ on an
interval $[q_f,s_f]$ with $s_f>q_f$. In particular
\[
 1-\xi''(q_f)\,\E\Phi_{\zeta_f,xx}(q_f,X_{q_f})^2=0,
\]
so the strict Plefka condition \cite[(1.8)]{Bou26b} fails at every level in
$(\feq,\feq+\bar\delta)$.
\end{corollary}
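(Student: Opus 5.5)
The corollary follows from Theorem~\ref{thm:local}(c) and Proposition~\ref{prop:ons-onset}(b); I would argue it in three short steps.

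\emph{Regularity.} Under the hypotheses of Theorem~\ref{thm:local}, part~(c) of that theorem, proved above through Corollary~\ref{cor:cons-level} and Lemma~\ref{lem:cons-variational}(d), says that $\Lambda$ is continuously differentiable and strictly convex on $(0,1)$. It also says that $\Lambda'$ maps $(0,1)$ homeomorphically onto $(\feq,f_1)$. Hence every $f\in(\feq,f_1)$ has the unique dual minimizer $\theta_f=(\Lambda')^{-1}(f)\in(0,1)$, which is regularity in the sense of \cite[Definition~1.1]{Bou26b}. Since $\bar\delta<f_1-\feq$ by \eqref{eq:fifth-delta0}, this covers $(\feq,\feq+\bar\delta)$.

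\emph{Placing $\theta_f$ below $\theta_0$.} Fix $f=\feq+\delta$ with $0<\delta<\bar\delta=\Lambda'(\theta_0)-\feq$. Because $\Lambda'$ is strictly increasing, $\Lambda'(\theta_f)=f<\Lambda'(\theta_0)$ gives $\theta_f<\theta_0$. Also $\theta_f>0$, because $f>\feq$ and $\Lambda'$ maps $(0,1)$ onto $(\feq,f_1)$. So $0<\theta_f<\theta_0$.

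\emph{Marginality.} Proposition~\ref{prop:ons-onset}(b), applied with $\theta=\theta_f$, gives $\Gamma_{\zeta_f}(t)=t$ on an interval $[q_f,s_f]$ with $s_f>q_f$, where $q_f=q_{\theta_f}$ is the first positive contact. Differentiating the identity $\Gamma_{\zeta_f}=\id$ from the right at $q_f$, and using that $\Gamma_\zeta$ is $C^1$ with $\Gamma_\zeta'(t)=\xi''(t)\E\Phi_{\zeta,xx}(t,X_t)^2$, yields
\[
\xi''(q_f)\,\E\Phi_{\zeta_f,xx}(q_f,X_{q_f})^2=1 .
\]
Thus the left side of \eqref{eq:fifth-strict-plefka} vanishes, and \cite[(1.8)]{Bou26b} fails. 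For the SK covariance, the hypotheses hold by the first paragraph of the proof of Theorem~\ref{thm:sk}.

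No step here is a real obstacle, since all the substance sits in Proposition~\ref{prop:ons-onset}(b). The one point to check is that the diffusion $X$ of \eqref{eq:intro-diffusion} matches the one in Boursier's condition \cite[(1.8)]{Bou26b}. It does: the paper defines \eqref{eq:fifth-strict-plefka} with that same diffusion.
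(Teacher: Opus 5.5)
Your proof is correct and follows the paper's argument essentially step for step. You get regularity from Theorem~\ref{thm:local}(c), and $\theta_f<\theta_0$ from the monotonicity of $\Lambda'$. The interval of contacts $[q_f,s_f]$ then comes from Proposition~\ref{prop:ons-onset}(b), which is Theorem~\ref{thm:local}(a), and the displayed equality follows from $\Gamma_{\zeta_f}'(q_f)=1$ together with the It\^o identity of Lemma~\ref{lem:plat-ito}(a).
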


\begin{proof}
By Theorem~\ref{thm:local}(c), every $f\in(\feq,f_1)$ has the unique dual minimizer
$\theta_f=(\Lambda')^{-1}(f)$, so it is regular. If $f<\feq+\bar\delta=\Lambda'(\theta_0)$,
then $\theta_f<\theta_0$, because $\Lambda'$ is increasing. Theorem~\ref{thm:local}(a)
at $\theta=\theta_f$ gives $\Gamma_{\zeta_f}=\id$ on $[q_f,s_f]$ with $s_f>q_f$. Hence
$\Gamma'_{\zeta_f}(q_f)=1$, and the It\^o identity
$\Gamma_\zeta'(t)=\xi''(t)\E\Phi_{\zeta,xx}(t,X_t)^2$ (Lemma~\ref{lem:plat-ito}) gives the
displayed equality.
\end{proof}

The Plefka factor $1-\Gamma'_{\zeta_f}$ in fact vanishes on the whole interval $[q_f,s_f]$. So
the levels that carry the six-fifths law, all levels in $(\feq,\feq+\bar\delta)$, lie outside
the strict-Plefka hypothesis of \cite[Theorem~1]{Bou26b}; by Theorem~\ref{thm:sk}(b), for the
Sherrington--Kirkpatrick model this holds at every $\beta>1$. We state no result on TAP states
here. On the near-critical class $\Kcl(\varepsilon_*)$ of Theorem~\ref{thm:nearcrit}(b), which
contains the Sherrington--Kirkpatrick covariance for $1<\beta^2<1+\varepsilon_*$, the picture is
complete: a level has a marginal first positive contact exactly when $f\le f_c$, so the strict
Plefka condition fails exactly on $(\feq,f_c]$ and holds on $(f_c,f_1)$
(Corollary~\ref{cor:nc-dat-levels}).

\section{The constrained problem and the envelope identity}\label{sec:cons}

This section proves the envelope identity, Proposition~\ref{prop:cons-envelope}, and collects
the convex-duality facts used in Section~\ref{sec:fifth}: for $\theta$ above the equilibrium atom
$m_0=\zetas(\{0\})$, the minimizer $\zeta_\theta$ carries exactly the mass $\theta$ at zero and
no mass in $(0,q_\theta)$, below its first positive contact $q_\theta$, where
$\Gamma_\theta(q_\theta)=q_\theta$ and $\Gamma_\theta'(q_\theta)\le1$
(Lemma~\ref{lem:cons-variational}), and each level $f\in(\feq,f_1)$ has a dual parameter
$\theta_f$ (Lemmas~\ref{lem:cons-variational}(d) and~\ref{lem:cons-regular}(d),
Corollary~\ref{cor:cons-level}).
Section~\ref{sec:cons-standard} recalls standard facts, and most of
Section~\ref{sec:cons-variational} follows Boursier~\cite[Lemma~3.2]{Bou26b}. We write the
arguments out, in Appendix~\ref{sec:an-cons}, because \cite{Bou26b} assumes $\xi$ strictly convex
on $[-1,1]$, while later sections use them for every covariance of the class below.

\paragraph{Standing assumption and notation.}
Throughout this section $\xi(t)=\sum_{p\ge2}\beta_p^2t^p$ is not identically zero and has
radius of convergence greater than one; then $\xi''$ is positive on $(0,1]$ and nondecreasing on
$[0,1]$, so that $\xi''\le\xi''(1)$ there. No convexity of $\xi$ on $[-1,0]$, no condition on
$\xi'''(0)$ and no hypothesis on $\supp\zetas$ is assumed. We use the objects
\eqref{eq:intro-pde}--\eqref{eq:intro-obstacle}, write $\alpha_\zeta(t)=\zeta([0,t])$, and write
$\zeta_n\Rightarrow\zeta$ for weak convergence on $\Pspace([0,1])$, which
$\Wone(\mu,\nu)=\int_0^1|\alpha_\mu-\alpha_\nu|\,dt$ metrizes. We abbreviate
$\Gamma_\theta=\Gamma_{\zeta_\theta}$, $\Obs_\theta=\Obs_{\zeta_\theta}$, and similarly
$\Gamma_f,\Obs_f$ for $\zeta_f$ and $\Gamma_*,\Obs_*$ for $\zetas$.

\subsection{Standard properties of the Parisi functional}\label{sec:cons-standard}

The facts below concern only the scalar problem; no spin system appears in this section.

\begin{lemma}[Standard properties]\label{lem:cons-standard}
Let $\zeta\in\Pspace([0,1])$.
\begin{thmlist}
\item \emph{Regularity.} For every $j\ge0$ the derivative $\partial_x^j\Phi_\zeta$ exists and
 is continuous on $[0,1]\times\R$. The function $\Phi_\zeta(t,\cdot)$ is even, and
 $|\Phi_{\zeta,x}|\le1$, $0<\Phi_{\zeta,xx}\le1$. If $\zeta_n\Rightarrow\zeta$, then
 $\partial_x^j\Phi_{\zeta_n}\to\partial_x^j\Phi_\zeta$ uniformly on $[0,1]\times\R$ for every
 $j\ge0$.
\item \emph{The function $\Gamma_\zeta$.} $\Gamma_\zeta$ is continuously differentiable on
 $[0,1]$, with
 \[
  \Gamma_\zeta'(t)=\xi''(t)\,\E\Phi_{\zeta,xx}(t,X_t)^2,\qquad
  \Gamma_\zeta(0)=0,\qquad 0\le\Gamma_\zeta\le1,\qquad \Gamma_\zeta(1)<1 .
 \]
 If $\zeta_n\Rightarrow\zeta$, then $\Gamma_{\zeta_n}\to\Gamma_\zeta$,
 $\Gamma_{\zeta_n}'\to\Gamma_\zeta'$ and $\Obs_{\zeta_n}\to\Obs_\zeta$ uniformly on $[0,1]$.
\item \emph{Convexity.} The map $\zeta\mapsto P_\zeta$ is strictly convex on $\Pspace([0,1])$
 and has a unique minimizer $\zetas$.
\item \emph{First variation.} For $\nu\in\Pspace([0,1])$ and $s\in[0,1]$ put
 $\zeta_s=(1-s)\zeta+s\nu$. The function $s\mapsto P_{\zeta_s}$ is convex and continuously
 differentiable on $[0,1]$, with one-sided derivatives at the endpoints, and
 \begin{equation}\label{eq:cons-first-variation}
  \frac{d}{ds}P_{\zeta_s}=\int\Obs_{\zeta_s}\,d(\nu-\zeta)
  =\frac12\int_0^1\xi''(t)\bigl\{\alpha_\nu(t)-\alpha_\zeta(t)\bigr\}
   \bigl\{\Gamma_{\zeta_s}(t)-t\bigr\}\,dt .
 \end{equation}
\item \emph{Lipschitz bound.} For $\mu,\nu\in\Pspace([0,1])$,
 \begin{equation}\label{eq:cons-lipschitz}
  |P_\mu-P_\nu|\le\tfrac12\,\xi''(1)\,\Wone(\mu,\nu).
 \end{equation}
 In particular $\zeta\mapsto P_\zeta$ is weakly continuous.
\item \emph{The obstacle.} $\Obs_\zeta\in C^2[0,1]$, $\Obs_\zeta(1)=0$, and
 \begin{equation}\label{eq:cons-obstacle}
  \Obs_\zeta'=-\tfrac12\,\xi''\,(\Gamma_\zeta-\id),\qquad
  \Obs_\zeta''=-\tfrac12\,\xi'''\,(\Gamma_\zeta-\id)-\tfrac12\,\xi''\,(\Gamma_\zeta'-1),
  \qquad \Obs_\zeta'(1)>0 .
 \end{equation}
 Consequently $1$ is not a minimum point of $\Obs_\zeta$ on $(0,1]$, and every minimum point
 $t\in(0,1)$ of $\Obs_\zeta$ on $(0,1]$ satisfies $\Gamma_\zeta(t)=t$ and
 $\Gamma_\zeta'(t)\le1$.
\item \emph{Optimality of $\zetas$.} Put $o_*=\int\Obs_*\,d\zetas$. Then
 \begin{equation}\label{eq:cons-optimality}
  \Obs_*\ge o_*\ \text{ on }[0,1],\qquad \Obs_*=o_*\ \text{ on }\supp\zetas .
 \end{equation}
 Conversely, if $\int\Obs_\zeta\,d(\nu-\zeta)\ge0$ for every $\nu\in\Pspace([0,1])$, then
 $\zeta=\zetas$.
\end{thmlist}
\end{lemma}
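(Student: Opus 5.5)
The plan is to assemble Lemma~\ref{lem:cons-standard} from the standard literature item by item. The analytic heart is the regularity statement (a), together with the uniform convergence under weak limits. Everything else follows from (a) by Itô calculus and by differentiating explicit integrals.

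\emph{Item (a).} For $\zeta$ with finitely many atoms, the Cole--Hopf transform solves \eqref{eq:intro-pde} between consecutive atoms. Each step is of the form $\Phi(t,x)=\frac1m\log\E\exp\{m\Phi(t',x+\sqrt{\xi'(t')-\xi'(t)}\,g)\}$, or a heat-semigroup step when $m=0$. Each step preserves three properties: evenness, smoothness in $x$, and the bounds $|\Phi_x|\le1$, $0<\Phi_{xx}\le1$. These hold at $t=1$ for $\log\cosh$ and propagate by the Gaussian-tilt representation $\Phi_x=\E_{\text{tilt}}\Phi_x(t',\cdot)$, $\Phi_{xx}=\E_{\text{tilt}}\Phi_{xx}+m\,\mathrm{Var}_{\text{tilt}}\Phi_x$. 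For the upper bound $\Phi_{xx}\le1$ I would use the known inequality $\Phi_{xx}+\Phi_x^2\le1$ (Auffinger--Chen \cite{AC15a}), which also propagates. Higher $x$-derivatives are bounded uniformly in $\zeta$ by differentiating the PDE and using a maximum principle, or by citing \cite{AC15a,JT17}. These uniform bounds give equicontinuity. A stability estimate of the form $\|\partial_x^j\Phi_\mu-\partial_x^j\Phi_\nu\|_\infty\le C_j\Wone(\mu,\nu)$, proved by Grönwall on the difference of the PDEs, then yields the extension to all $\zeta$ and the uniform convergence under $\zeta_n\Rightarrow\zeta$.

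\emph{Item (b).} The diffusion \eqref{eq:intro-diffusion} has a bounded, Lipschitz drift, so it is well posed and its law at each time depends continuously on $\zeta$. Applying Itô's formula to $\Phi_x^2$, and using the PDE for $\Phi_x$ obtained by differentiation, the drift terms cancel and leave $\Gamma'=\xi''\E\Phi_{xx}^2$. We have $\Gamma(0)=0$ because $\Phi_x(0,0)=0$ by evenness. The bound $\Gamma\le1$ follows from $|\Phi_x|\le1$, and $\Gamma(1)<1$ because $|\tanh|<1$ and $X_1$ has a Gaussian component. Convergence of $\Gamma_{\zeta_n}$, $\Gamma_{\zeta_n}'$ and $\Obs_{\zeta_n}$ follows from (a) together with the convergence of the laws of $X$.

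\emph{Items (c)--(g).} Item (c) is \cite[Theorem~2 and Corollary~1]{AC15b}. Item (d) is the first-variation formula \cite[Theorem~2]{Che17}; the second equality in \eqref{eq:cons-first-variation} is Fubini applied to $\Obs=\frac12\int_t^1\xi''(\Gamma-s)\,ds$. Convexity of $s\mapsto P_{\zeta_s}$ comes from (c), and continuity of the derivative comes from (b).

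For item (e), integrate \eqref{eq:cons-first-variation} along the segment from $\nu$ to $\mu$. Since $|\Gamma-t|\le1$, the derivative is at most $\frac12\xi''(1)\int|\alpha_\mu-\alpha_\nu|$ in absolute value.

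For item (f), differentiate the definition of $\Obs$ to get \eqref{eq:cons-obstacle}. Then $\Obs'(1)=-\frac12\xi''(1)(\Gamma(1)-1)>0$ by (b). Hence $\Obs$ is strictly increasing just below $1$, so $1$ is not a minimizer on $(0,1]$. At an interior minimum point, the first-order condition gives $\Obs'=0$, i.e.\ $\Gamma=\id$ since $\xi''>0$ on $(0,1]$. The second-order condition $\Obs''\ge0$, with the $\xi'''$ term vanishing because $\Gamma=\id$ there, gives $\Gamma'\le1$.

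For item (g), apply (d) at $\zeta=\zetas$: minimality gives $\int\Obs_*\,d(\nu-\zetas)\ge0$ for every $\nu$. Taking $\nu=\delta_t$ yields $\Obs_*\ge o_*$ everywhere, and since the integral of $\Obs_*$ against $\zetas$ equals $o_*$, continuity forces equality on the support. For the converse, convexity of $s\mapsto P_{\zeta_s}$ and a nonnegative derivative at $s=0$ give $P_\nu\ge P_\zeta$ for all $\nu$, so $\zeta=\zetas$ by uniqueness.

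The main obstacle I expect is in (a): obtaining the derivative bounds uniformly in $\zeta$ for every $j$, together with the Lipschitz stability in $\Wone$. I would first look for these in \cite{AC15a,JT17} and cite them. Otherwise, I would prove them by induction on $j$, applying the Cole--Hopf representation on atomic measures with constants independent of the atoms.
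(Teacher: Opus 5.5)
Your proposal is correct and follows essentially the same route as the paper: (a)--(d) come from Auffinger--Chen and Chen, where the paper simply cites \cite[Propositions~1--3]{AC15a} instead of sketching the Cole--Hopf and It\^o arguments as you do, and (e)--(g) are derived exactly as you describe. One detail is worth making explicit: \cite[Theorem~2]{Che17} gives only a right derivative at the base point, so the paper applies it at $\zeta_s$ in the directions $\pm(\nu-\zeta)$ to get the derivative at every $s\in(0,1)$; convexity together with a continuous right derivative would also suffice.
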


The proof, which collects results of~\cite{AC15a,AC15b,Che17,JT17}, is in
Appendix~\ref{sec:an-cons}. The papers~\cite{AC15a,AC15b,Che17} assume a summability condition
such as $\sum_p2^p\beta_p^2<\infty$, which our radius assumption need not imply; it serves there
only to define the Hamiltonian, and the cited statements involve only finitely many derivatives
of $\xi$ on $[0,1]$, which are bounded here. Auffinger and Chen's statements along a time-changed
Brownian motion hold along the diffusion \eqref{eq:intro-diffusion} by Girsanov's theorem (see
the analytic facts at the beginning of Section~\ref{sec:plat}).

\subsection{The constrained problem and its first contact}\label{sec:cons-variational}

For $0\le\theta<1$ let
$\mathcal A_\theta=\{\zeta\in\Pspace([0,1]):\zeta(\{0\})\ge\theta\}$. Every
$\zeta\in\mathcal A_\theta$ is uniquely of the form $\zeta=\theta\delta_0+(1-\theta)\mu$ with
$\mu\in\Pspace([0,1])$, namely $\mu=(\zeta-\theta\delta_0)/(1-\theta)$, and every measure of
this form lies in $\mathcal A_\theta$. Thus
\[
 \Lambda(\theta)=\theta p(\theta),\qquad
 p(\theta)=\inf_{\zeta\in\mathcal A_\theta}P_\zeta
 =\inf_{\mu\in\Pspace([0,1])}P_{\theta\delta_0+(1-\theta)\mu}.
\]
As in~\cite[Section~1.3]{Bou26b}, the closed extension $\Lambar$ of $\Lambda$ equals $\Lambda$ on
$[0,1)$, takes the value $\liminf_{\theta\uparrow1}\Lambda(\theta)$ at $1$, and is $+\infty$
outside $[0,1]$; then $\Sigma$ and $f_1=\Lambar'_-(1)$ are given by \eqref{eq:intro-dual}. Put
$m_0=\zetas(\{0\})$ and $\Lamhat(\theta)=\Lambar(\theta)-\theta\feq$ for $0\le\theta\le1$.

\begin{lemma}[The constrained problem]\label{lem:cons-variational}
The following hold.
\begin{thmlist}
\item For every $\theta\in[0,1)$, $P$ has a unique minimizer $\zeta_\theta$ on
 $\mathcal A_\theta$, and $\zeta_\theta=\zetas$ if $\theta\le m_0$. The function
 $p(\theta)=P_{\zeta_\theta}$ is continuous and nondecreasing on $[0,1)$, with
 $\feq\le p(\theta)\le\feq+\tfrac12\,\xi''(1)\,\theta$, and $\theta\mapsto\zeta_\theta$ is weakly
 continuous on $[0,1)$.
\item $\Lambar$ is closed and convex, $\Lambar'_+(0)=\feq$,
 $\Lambar(1)=\lim_{\theta\uparrow1}\Lambda(\theta)=P_{\delta_0}$, and $f_1<\infty$.
\item \emph{Saturation.} If $m_0<\theta<1$, then $\zeta_\theta(\{0\})=\theta$. The function $p$ is
 strictly increasing on $[m_0,1)$, and $\Lambda$ is strictly convex on $[m_0,1)$.
\item \emph{Dual minimizer.} For every $f\in(\feq,f_1)$, the function
 $\theta\mapsto\Lambar(\theta)-\theta f$ has a unique minimizer $\theta_f$ on $[0,1]$, and
 $\theta_f\in(0,1)\cap[m_0,1)$.
\item \emph{First contact.} Let $m_0<\theta<1$, write
 $\zeta_\theta=\theta\delta_0+(1-\theta)\mu_\theta$, and put
 $o_\theta=\int\Obs_\theta\,d\mu_\theta$. Then $\Obs_\theta\ge o_\theta$ on $[0,1]$,
 $\Obs_\theta=o_\theta$ on $\supp\mu_\theta$, and $\Obs_\theta(0)>o_\theta$. The positive
 contact set
 \[
  C_\theta=\Bigl\{t\in(0,1]:\Obs_\theta(t)=\min_{(0,1]}\Obs_\theta\Bigr\}
 \]
 is a nonempty compact subset of $(0,1)$ that contains $\supp\zeta_\theta\cap(0,1]$. Its
 smallest point $q_\theta$ satisfies
 \begin{equation}\label{eq:cons-contact}
  \zeta_\theta((0,q_\theta))=0,\qquad \Gamma_\theta(q_\theta)=q_\theta,\qquad
  \Gamma_\theta'(q_\theta)=\xi''(q_\theta)\,
  \E\Phi_{\zeta_\theta,xx}(q_\theta,X_{q_\theta})^2\le1,
 \end{equation}
 where $X$ is the diffusion of $\zeta_\theta$, and $\Obs_\theta(r)>\Obs_\theta(q_\theta)$ for
 $0\le r<q_\theta$.
\end{thmlist}
\end{lemma}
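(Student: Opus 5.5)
The plan is to work throughout in the parametrization $\zeta=\theta\delta_0+(1-\theta)\mu$ and use only Lemma~\ref{lem:cons-standard}.

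\textbf{(a) Existence, continuity and bounds.} Existence comes from compactness: $\mathcal A_\theta$ is weakly closed in the compact space $\Pspace([0,1])$, and $P$ is weakly continuous by \eqref{eq:cons-lipschitz}. Uniqueness follows from strict convexity, since $\mathcal A_\theta$ is convex. If $\theta\le m_0$, then $\zetas\in\mathcal A_\theta$, so $\zeta_\theta=\zetas$. The map $p$ is nondecreasing because the sets $\mathcal A_\theta$ decrease in $\theta$. For the upper bound, test with $\theta\delta_0+(1-\theta)\zetas$: its $\Wone$-distance to $\zetas$ is at most $\theta$, so \eqref{eq:cons-lipschitz} gives $p(\theta)\le\feq+\frac12\xi''(1)\theta$.

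Continuity of $p$ comes from the same device. Map $\zeta_{\theta'}$ into $\mathcal A_\theta$ by mixing in or removing a mass $|\theta-\theta'|$ at zero; this moves it by $O(|\theta-\theta'|)$ in $\Wone$ and gives two-sided Lipschitz bounds. Weak continuity of $\theta\mapsto\zeta_\theta$ then follows by compactness. Any subsequential limit of $\zeta_{\theta_n}$ lies in $\mathcal A_\theta$ (the set $\{\zeta(\{0\})\ge\theta\}$ is closed under weak limits), and it attains $p(\theta)$ by continuity of $P$ and $p$, so uniqueness identifies it.

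\textbf{(b) and (c).} For (b), convexity of $\Lambda$ comes from the representation $\Lambda(\theta)=\inf_\mu\theta P_{\theta\delta_0+(1-\theta)\mu}$. I would follow Boursier and show that $\theta\mapsto\theta P_{\theta\delta_0+(1-\theta)\mu}$ is jointly convex after the change of variables $\nu=(1-\theta)\mu$ as an unnormalized measure. Equivalently, this is the Cole--Hopf/Guerra perspective: $\theta P$ is the $\theta$-moment functional, which is convex. This step is the one I expect to be the main obstacle, and I would check it against \cite[Lemma~3.2]{Bou26b}, adapting it without strict convexity of $\xi$.

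The identity $\Lambar'_+(0)=\feq$ follows from $\Lambda(\theta)/\theta=p(\theta)\to\feq$. The value at $1$ is $P_{\delta_0}$, since $\mathcal A_\theta$ shrinks to $\{\delta_0\}$ and the bound $|P_{\zeta_\theta}-P_{\delta_0}|\le\frac12\xi''(1)(1-\theta)$ holds. Finiteness of $f_1$ follows from convexity together with the finite Lipschitz slope of $\Lambda$ near $1$.

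For saturation in (c), suppose $\zeta_\theta(\{0\})>\theta$ with $\theta>m_0$. Then $\zeta_\theta$ is an interior local minimizer in the direction of $\zetas$. Convexity of $s\mapsto P_{(1-s)\zeta_\theta+s\zetas}$ keeps $\mathcal A_\theta$ feasible for small $s$, so $\zeta_\theta$ would be a global minimizer and hence equal $\zetas$, contradicting $m_0<\theta$. Strict monotonicity of $p$ on $[m_0,1)$ then follows: if $p(\theta)=p(\theta')$ with $\theta<\theta'$, then $\zeta_{\theta'}\in\mathcal A_\theta$ attains $p(\theta)$, so it equals $\zeta_\theta$, whose atom is $\theta$, a contradiction. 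Strict convexity of $\Lambda$ comes from strict convexity of $P$ applied along mixtures of minimizers in the jointly convex formulation.

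\textbf{(d) Dual minimizer.} Existence holds on $[0,1]$ by closedness. The minimizer is not $0$, since $\Lambar'_+(0)=\feq<f$, and it is not $1$, since $f<f_1=\Lambar'_-(1)$. Uniqueness comes from strict convexity on $[m_0,1)$ and the fact that $\Lambda$ is linear with slope $\feq<f$ on $[0,m_0]$, where $p=\feq$; this also places $\theta_f\ge m_0$.

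\textbf{(e) First contact.} Here I would use the first variation \eqref{eq:cons-first-variation} within $\mathcal A_\theta$, with directions $\theta\delta_0+(1-\theta)\nu$. Optimality gives $\int\Obs_\theta\,d(\nu-\mu_\theta)\ge0$ for all $\nu$, hence $\Obs_\theta\ge o_\theta$ with equality on $\supp\mu_\theta$.

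To show $\Obs_\theta(0)>o_\theta$, suppose instead that $\Obs_\theta(0)=o_\theta$. Then $\Obs_\theta\ge\int\Obs_\theta\,d\zeta_\theta$ everywhere, and the converse in Lemma~\ref{lem:cons-standard}(g) forces $\zeta_\theta=\zetas$, which is impossible for $\theta>m_0$.

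Consequently $\supp\mu_\theta\subset(0,1]$ and $\min_{(0,1]}\Obs_\theta=o_\theta$, so $C_\theta\supset\supp\zeta_\theta\cap(0,1]$ and $C_\theta$ is nonempty. The set $C_\theta$ is compact: it is closed in $(0,1]$, it avoids a neighbourhood of $0$ by continuity and $\Obs_\theta(0)>o_\theta$, and it excludes $1$ by Lemma~\ref{lem:cons-standard}(f). The contact relations \eqref{eq:cons-contact} at $q_\theta$ come from (f) as well. The vanishing $\zeta_\theta((0,q_\theta))=0$ follows because the support lies in $C_\theta$. Finally, the strict inequality $\Obs_\theta(r)>\Obs_\theta(q_\theta)$ on $[0,q_\theta)$ is exactly the minimality of $q_\theta$ together with $\Obs_\theta(0)>o_\theta$.
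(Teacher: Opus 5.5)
Your arguments for (a), for saturation and strict monotonicity in (c), for (d) once convexity is available, and for all of (e) are those of the paper and are fine. For the continuity of $p$, no mass needs to be removed. Monotonicity gives $p(\theta')\le p(\theta)$ for $\theta'<\theta$. The mixture $\lambda\delta_0+(1-\lambda)\zeta_{\theta'}$ with $\lambda=(\theta-\theta')/(1-\theta')$ gives the other side.

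The gap is the convexity of $\Lambda$ in (b). The strict convexity in (c) and the uniqueness of $\theta_f$ in (d) rest on it.

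The function $(\theta,\nu)\mapsto\theta P_{\theta\delta_0+\nu}$, with $\nu=(1-\theta)\mu$, is not jointly convex. A segment that is affine in $(\theta,\nu)$ moves the measure affinely, $\zeta(s)=(1-s)\zeta_1+s\zeta_2$. Along it you see $\phi(s)=\theta(s)h(s)$, with $\theta(s)$ affine and $h(s)=P_{\zeta(s)}$ convex. If $\theta_1=0$, then $\phi(0)=0$, and convexity of $\phi$ would force $\phi(s)/s=\theta_2h(s)$ to be nondecreasing. This fails whenever $P_{\zeta_2}<P_{\zeta_1}$, for instance for $\zeta_2=\zeta_{\theta_2}$ and any $\zeta_1$ with $P_{\zeta_1}>p(\theta_2)$.

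The fractional-moment heuristic does not help either. Convexity of $\theta\mapsto\log\E Z_N^\theta$ reaches $\Lambda$ only through \condFM, which this lemma does not assume.

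The missing idea is to mix the two minimizers with weights proportional to their constraints. For $\theta=t\theta_1+(1-t)\theta_2>0$,
\[
 \nu=\frac{t\theta_1\zeta_{\theta_1}+(1-t)\theta_2\zeta_{\theta_2}}{\theta}
 \quad\text{satisfies}\quad
 \nu(\{0\})\ge\frac{t\theta_1^2+(1-t)\theta_2^2}{\theta}\ge\theta ,
\]
so $\nu\in\mathcal A_\theta$. Convexity of $P$ then gives $\Lambda(\theta)\le\theta P_\nu\le t\Lambda(\theta_1)+(1-t)\Lambda(\theta_2)$.

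In perspective terms, the variable to use is $\lambda=\theta\zeta$, not $(1-\theta)\mu$. The map $\theta P_{\lambda/\theta}$ is the perspective of $P$, hence jointly convex. The constraint becomes $\lambda(\{0\})\ge\theta^2$, a convex condition because $\theta\mapsto\theta^2$ is convex.

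For strict convexity on $[m_0,1)$, saturation gives $\nu(\{0\})=\theta+t(1-t)(\theta_1-\theta_2)^2/\theta>\theta=\zeta_\theta(\{0\})$. So $\nu\ne\zeta_\theta$, and uniqueness of the constrained minimizer makes the first inequality strict.
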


We call $q_\theta$ the \emph{first positive contact} of $\zeta_\theta$. The inequality in
\eqref{eq:cons-contact} is the Plefka inequality \eqref{eq:intro-plefka}; the contact is
\emph{marginal} if $\Gamma_\theta'(q_\theta)=1$ and \emph{strictly Plefka stable} if
$\Gamma_\theta'(q_\theta)<1$, which is the strict Plefka condition~\cite[(1.8)]{Bou26b}. By
part~(d), every level $f\in(\feq,f_1)$ is regular in the sense of~\cite[Definition~1.1]{Bou26b}.
We write $\zeta_f=\zeta_{\theta_f}$; since $\theta_f>m_0$ (Lemma~\ref{lem:cons-regular}(d)
below), its first positive contact $q_f=q_{\theta_f}$ is defined, and the first positive contact
at the level~$f$ is called marginal if $q_f$ is.

For $\xi$ strictly convex on $[-1,1]$ \cite[Assumption~1.1]{Bou26b}, part (b), the saturation in
(c), the interiority of $\theta_f$ in (d), and part (e) without the inequality
$\Gamma_\theta'(q_\theta)\le1$ are contained in~\cite[Lemma~3.2]{Bou26b} and its proof, which we
follow; its part (3) covers every $\theta\in(m_0,1)$, since by Corollary~\ref{cor:cons-level}
every such $\theta$ is a dual minimizer. That inequality at dual minimizers is noted
in~\cite[Remark~1.2]{Bou26b}, and the uniqueness of $\zeta_\theta$ in~\cite[Section~1.3]{Bou26b}.
The continuity in (a), the strict monotonicity and strict convexity in (c) and the uniqueness of
$\theta_f$ in (d) are needed below; Section~\ref{sec:ons} uses (e) as $\theta\downarrow0$. The
proof, in Appendix~\ref{sec:an-cons}, uses compactness of $\mathcal A_\theta$, strict convexity
of $P$ (Lemma~\ref{lem:cons-standard}(c),(g)), Lemma~\ref{lem:cons-standard}(f),
\eqref{eq:cons-lipschitz} and \eqref{eq:cons-first-variation}, and not the strict convexity of
$\xi$ on $[-1,1]$ assumed in~\cite{Bou26b}.

\subsection{Proof of the envelope identity}\label{sec:cons-envelope}

We prove Proposition~\ref{prop:cons-envelope}. Since $\mu_\theta$ is the unique minimizer of
$\mu\mapsto P_{\theta\delta_0+(1-\theta)\mu}$, the envelope principle suggests that $p'(\theta)$
is the partial derivative in $\theta$ at the fixed measure $\mu_\theta$, which the
first-variation formula \eqref{eq:cons-first-variation} evaluates through the obstacle; a
squeeze between the old and the new minimizer makes this precise at every $\theta$.

\begin{proof}[Proof of Proposition~\ref{prop:cons-envelope}]
For $u\in[0,1)$ and $\mu\in\Pspace([0,1])$ put $\pi(u,\mu)=P_{u\delta_0+(1-u)\mu}$. Then
$p(u)=\min_\mu\pi(u,\mu)$, with the unique minimizer $\mu_u=(\zeta_u-u\delta_0)/(1-u)$, and
$u\mapsto\mu_u$ is weakly continuous by Lemma~\ref{lem:cons-variational}(a). For fixed $\mu$,
$u\mapsto u\delta_0+(1-u)\mu$ is a segment, and \eqref{eq:cons-first-variation} gives
\[
 \partial_u\pi(u,\mu)=\int\Obs_{u\delta_0+(1-u)\mu}\,d(\delta_0-\mu)\qquad(0<u<1).
\]
This is jointly continuous in $(u,\mu)$. Indeed, if $u_n\to u$ and $\mu_n\Rightarrow\mu$, the
corresponding obstacles converge uniformly by Lemma~\ref{lem:cons-standard}(b), and
$\int\Obs\,d\mu_n\to\int\Obs\,d\mu$ for every continuous $\Obs$.

Fix $\theta\in(0,1)$ and a small $h\ne0$. Since $p(\theta)=\pi(\theta,\mu_\theta)
\le\pi(\theta,\mu_{\theta+h})$ and $p(\theta+h)=\pi(\theta+h,\mu_{\theta+h})
\le\pi(\theta+h,\mu_\theta)$,
\[
 \pi(\theta+h,\mu_{\theta+h})-\pi(\theta,\mu_{\theta+h})
 \le p(\theta+h)-p(\theta)
 \le\pi(\theta+h,\mu_\theta)-\pi(\theta,\mu_\theta).
\]
By the mean value theorem the outer terms equal $h\,\partial_u\pi(u',\mu_{\theta+h})$ and
$h\,\partial_u\pi(u'',\mu_\theta)$ for some $u',u''$ between $\theta$ and $\theta+h$. Divide by
$h$; for $h<0$ this reverses both inequalities. As $h\to0$, joint continuity and
$\mu_{\theta+h}\Rightarrow\mu_\theta$ show that both bounds converge to
$\partial_u\pi(\theta,\mu_\theta)$. Hence $p$ is differentiable at $\theta$, and
$p'(\theta)=\partial_u\pi(\theta,\mu_\theta)$, which is \eqref{eq:cons-envelope-general}
because $\theta\delta_0+(1-\theta)\mu_\theta=\zeta_\theta$. The same joint continuity makes
$p'$ continuous.

By Lemma~\ref{lem:cons-variational}(a), $p$ is constant on $[0,m_0]$; since $p$ is
differentiable, $p'=0$ on $(0,1)\cap(0,m_0]$. Let $m_0<\theta<1$. By
Lemma~\ref{lem:cons-variational}(e), $\Obs_\theta$ equals $o_\theta=\Obs_\theta(q_\theta)$ on
$\supp\mu_\theta$, so $\int\Obs_\theta\,d\mu_\theta=\Obs_\theta(q_\theta)$, and
$\Obs_\theta(0)>\Obs_\theta(q_\theta)$. The integral form follows from
\eqref{eq:intro-obstacle}. The last assertion follows from $\Lambda(\theta)=\theta p(\theta)$.
\end{proof}

Three features of \eqref{eq:cons-envelope} are used later. First, it holds at every
$\theta\in(m_0,1)$ (by Corollary~\ref{cor:cons-level}, exactly the dual minimizers), and $p'$ is
continuous and nonnegative; as $p$
is continuous on $[0,1)$ with $p(m_0)=\feq$, this gives
$p(\theta)-\feq=\int_{m_0}^\theta p'(u)\,du$ for $m_0<\theta<1$, which is how
Section~\ref{sec:fifth} turns asymptotics of $p'$ into the fifth-order law. Second, it uses
$\Gamma_\theta$ only on $[0,q_\theta]$, the closure of the first plateau $(0,q_\theta)$: on
$[0,q_\theta)$ the distribution
function of $\zeta_\theta$ equals $\theta$ by \eqref{eq:cons-contact}, and $\Gamma_\theta-\id$
vanishes at both ends, since $\Gamma_\theta(0)=0$ by Lemma~\ref{lem:cons-standard}(b) and
$\Gamma_\theta(q_\theta)=q_\theta$. Section~\ref{sec:plat} computes its profile in between
(Figure~\ref{fig:plateau}(b),(c)). Third, at a dual minimizer it yields the level identity of
Corollary~\ref{cor:cons-level}.

\subsection{Regular levels and the level identity}\label{sec:cons-regular}

The next lemma records how the atom $m_0$ enters the dual problem; it is the bookkeeping needed
in Section~\ref{sec:atom}, where $m_0$ may be positive.

\begin{lemma}[Regular levels]\label{lem:cons-regular}
The following hold.
\begin{thmlist}
\item $\Lamhat\ge0$ on $[0,1]$, $\Lamhat=0$ on $[0,m_0]$, and $\Lamhat>0$ on $(m_0,1]$.
\item If $m_0<\theta<1$, then $\zeta_\theta(\{0\})=\theta$.
\item $\Lambar$ is affine on no nondegenerate subinterval of $[m_0,1]$.
\item If $m_0<1$, equivalently $\zetas\ne\delta_0$, then $\feq<f_1<\infty$, and for every
 $f\in(\feq,f_1)$ the dual minimizer $\theta_f$ of Lemma~\ref{lem:cons-variational}(d) lies
 in $(m_0,1)$. If $m_0=1$, then $f_1=\feq$.
\end{thmlist}
\end{lemma}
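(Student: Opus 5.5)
I will derive the four items in the order (a), (b), (c), (d), using Lemma~\ref{lem:cons-variational} and the envelope identity, Proposition~\ref{prop:cons-envelope}.

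For (a), nonnegativity is immediate: $\Lambar(\theta)=\theta p(\theta)\ge\theta\feq$ because $p\ge\feq$ by Lemma~\ref{lem:cons-variational}(a), and at $\theta=1$ it passes to the liminf. On $[0,m_0]$ we have $\zeta_\theta=\zetas$, so $p=\feq$ and $\Lamhat=0$. For $\theta\in(m_0,1)$, Lemma~\ref{lem:cons-variational}(c) says $p$ is strictly increasing on $[m_0,1)$, so $p(\theta)>p(m_0)=\feq$ and $\Lamhat(\theta)>0$. At $\theta=1$ I will use $\Lambar(1)=P_{\delta_0}$ from Lemma~\ref{lem:cons-variational}(b). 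If $m_0<1$ then $\delta_0\ne\zetas$, so uniqueness of the minimizer gives $P_{\delta_0}>\feq$.

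Item (b) is exactly the saturation in Lemma~\ref{lem:cons-variational}(c).

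For (c), strict convexity of $\Lambda$ on $[m_0,1)$, again from Lemma~\ref{lem:cons-variational}(c), already excludes affine pieces inside $[m_0,1)$. It remains to treat an interval $[a,1]$ with $a<1$. Such an interval contains $[a',1)$ for some $a'\ge m_0$, on which $\Lambda$ would be affine, contradicting strict convexity. So no endpoint issue survives.

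For (d), suppose $m_0<1$. The lower bound $f_1>\feq$ will come from convexity of $\Lambar$ together with $\Lambar'_+(0)=\feq$. If $f_1=\feq$, then all one-sided derivatives on $[0,1]$ would equal $\feq$. Then $\Lambar$ would be affine with slope $\feq$, giving $\Lamhat\equiv0$, which contradicts (a) on $(m_0,1]$. Finiteness of $f_1$ is in Lemma~\ref{lem:cons-variational}(b).

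Next, the dual minimizer $\theta_f$ lies in $(0,1)$ by Lemma~\ref{lem:cons-variational}(d). To show $\theta_f>m_0$, note that on $[0,m_0]$ the function $\Lambar(\theta)-\theta f=\theta(\feq-f)$ is strictly decreasing because $f>\feq$. So a minimizer cannot lie in $[0,m_0)$, and at $m_0$ it cannot be a minimizer either: the right derivative there is $\Lambar'_+(m_0)-f$. I need $\Lambar'_+(m_0)<f$. The envelope identity gives $p$ continuously differentiable with $p'(m_0)=0$ (when $m_0>0$, as $p'=0$ on $(0,m_0]$). This yields $\Lambda'(m_0)=p(m_0)=\feq<f$. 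When $m_0=0$ the same holds by $\Lambar'_+(0)=\feq$.

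Finally, if $m_0=1$ then $\zetas=\delta_0$ and $\Lamhat=0$ on $[0,1]$. So $\Lambar$ is linear with slope $\feq$, and $f_1=\feq$.

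The only delicate point is this derivative at $m_0$ when $m_0>0$. I expect it to be handled by the $C^1$ statement of Proposition~\ref{prop:cons-envelope}; that statement is on $(0,1)$, which contains $m_0$ whenever $0<m_0<1$.
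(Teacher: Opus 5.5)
Your proposal is correct and follows the paper's proof essentially step by step: (a)--(c) come from Lemma~\ref{lem:cons-variational}(a)--(c) together with the uniqueness of $\zetas$, and in (d) the case $\theta_f=m_0$ is excluded because Proposition~\ref{prop:cons-envelope} gives $\Lambda'(m_0)=p(m_0)+m_0p'(m_0)=\feq$. The only cosmetic difference is in proving $f_1>\feq$: the paper uses the chord bound $f_1\ge\{\Lambar(1)-\Lambar(m_0)\}/(1-m_0)=\feq+\Lamhat(1)/(1-m_0)$ directly, where you argue by contradiction that $\Lambar$ would otherwise be affine.
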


When $m_0=0$, parts (a)--(d) reduce to $\Lamhat>0$ on $(0,1]$, $\theta_f\in(0,1)$ and
$\feq<f_1$. The next corollary inverts the dual relation.

\begin{corollary}[Levels and the level identity]\label{cor:cons-level}
Assume $\zetas\ne\delta_0$. Then $\Lambda'$ is continuous and strictly increasing on $(m_0,1)$,
with $\Lambda'(\theta)\to\feq$ as $\theta\downarrow m_0$ and $\Lambda'(\theta)\to f_1$ as
$\theta\uparrow1$. Hence $\Lambda'$ maps $(m_0,1)$ homeomorphically onto $(\feq,f_1)$, and
every $f\in(\feq,f_1)$ has $\theta_f=(\Lambda')^{-1}(f)$ and
\begin{equation}\label{eq:cons-level}
 f=\Lambda'(\theta_f)=P_{\zeta_f}+\theta_f\bigl\{\Obs_f(0)-\Obs_f(q_f)\bigr\}.
\end{equation}
\end{corollary}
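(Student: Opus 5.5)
The plan is to deduce the corollary from the envelope identity (Proposition~\ref{prop:cons-envelope}) together with the convex-duality facts of Lemmas~\ref{lem:cons-variational} and~\ref{lem:cons-regular}.

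\emph{Continuity and monotonicity.} By Proposition~\ref{prop:cons-envelope}, $\Lambda$ is $C^1$ on $(0,1)$ with $\Lambda'=p+\theta p'$, so $\Lambda'$ is continuous on $(m_0,1)$. Lemma~\ref{lem:cons-variational}(c) says that $\Lambda$ is strictly convex on $[m_0,1)$. A $C^1$ strictly convex function has strictly increasing derivative, so $\Lambda'$ is strictly increasing on $(m_0,1)$.

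\emph{Limit at the left end.} As $\theta\downarrow m_0$, the value $\Lambda'(\theta)$ decreases to some limit. If $m_0>0$, then $p'=0$ on $(0,m_0]$ and continuity of $p'$ give $\Lambda'(m_0)=p(m_0)=\feq$. If $m_0=0$, the limit is $\Lambar'_+(0)=\feq$ by Lemma~\ref{lem:cons-variational}(b), since a $C^1$ convex function has one-sided derivatives equal to the limits of its derivative.

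\emph{Limit at the right end.} As $\theta\uparrow1$, I would use that $\Lambar$ is closed and convex on $[0,1]$, with $\Lambar(1)=\lim_{\theta\uparrow1}\Lambda(\theta)$. For a convex function continuous at the endpoint, $\Lambar'_-(1)=\lim_{\theta\uparrow1}\Lambda'(\theta)$. This limit is $f_1$ by definition, and it is finite by Lemma~\ref{lem:cons-variational}(b).

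The main care is in this endpoint step. It should follow from monotonicity of difference quotients: for $s<\theta<1$ one has $\Lambda'(\theta)\le(\Lambar(1)-\Lambda(\theta))/(1-\theta)\le\Lambar'_-(1)$, and conversely the chord slope $(\Lambar(1)-\Lambar(s))/(1-s)$ is bounded by $\lim_{\theta\uparrow 1}\Lambda'(\theta)$ by the mean value theorem on $[s,\theta]$ followed by letting $\theta\uparrow1$. Continuity at $1$ is needed in that last passage.

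\emph{Homeomorphism and the dual minimizer.} A continuous, strictly increasing map with these limits is a homeomorphism of $(m_0,1)$ onto $(\feq,f_1)$. Since $\feq<f_1$ by Lemma~\ref{lem:cons-regular}(d), this interval is nonempty.

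Fix $f\in(\feq,f_1)$. By Lemma~\ref{lem:cons-regular}(d), the dual minimizer satisfies $\theta_f\in(m_0,1)$. It is interior, so the first-order condition for minimizing $\Lambda(\theta)-\theta f$, with $\Lambda$ differentiable, gives $\Lambda'(\theta_f)=f$, that is, $\theta_f=(\Lambda')^{-1}(f)$.

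\emph{The level identity.} Finally, $\Lambda'(\theta_f)=p(\theta_f)+\theta_f p'(\theta_f)$. Here $p(\theta_f)=P_{\zeta_f}$, and by \eqref{eq:cons-envelope} with $\theta_f>m_0$, $p'(\theta_f)=\Obs_f(0)-\Obs_f(q_f)$. Substituting these gives \eqref{eq:cons-level}.
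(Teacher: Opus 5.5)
Your proposal is correct and follows the paper's proof essentially step by step. Continuity and strict monotonicity of $\Lambda'$ come from Proposition~\ref{prop:cons-envelope} and the strict convexity in Lemma~\ref{lem:cons-variational}(c), the endpoint limits come from convexity and continuity of $\Lambar$, and the level identity comes from $\Lambda'=p+\theta p'$ with \eqref{eq:cons-envelope}.

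There are two minor differences. For $m_0=0$, the paper uses the explicit sandwich $\feq\le\Lambda'(\theta)\le2p(2\theta)-p(\theta)$ rather than your general endpoint fact; that fact needs continuity of $\Lambar$ at $0$, which holds since $p$ is bounded. The paper also identifies $\theta_f$ by showing that $(\Lambda')^{-1}(f)$ minimizes the convex dual function and invoking the uniqueness in Lemma~\ref{lem:cons-variational}(d), whereas you start from the interiority of $\theta_f$ in Lemma~\ref{lem:cons-regular}(d).
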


Both are proved in Appendix~\ref{sec:an-cons} from Lemma~\ref{lem:cons-variational}, the
uniqueness of $\zetas$ and Proposition~\ref{prop:cons-envelope}. The latter gives
$\Lambda'=p+\theta p'$ on $(0,1)$, hence $\Lambda'(m_0)=\feq$ when $0<m_0<1$, which excludes
$\theta_f=m_0$ in Lemma~\ref{lem:cons-regular}(d).

\section{The plateau calculus}\label{sec:plat}

Every main result concerns the bottom of a Parisi-type measure: the constrained minimizers
$\zeta_\theta$ as $\theta\downarrow0$, the minimizers $\zeta_h$ in a small field and, in
Theorem~\ref{thm:atom}, $\zetas$ itself when it has an atom at zero. In each case one must
locate the contacts near zero and describe how $\Gamma_\zeta(t)=\E\Phi_{\zeta,x}(t,X_t)^2$
approaches the identity there. This section supplies the calculus for both tasks (the bottom
band of Figure~\ref{fig:roadmap}); Section~\ref{sec:ons} uses it to prove
Propositions~\ref{prop:ons-equilibrium} and~\ref{prop:ons-onset} and
Lemma~\ref{lem:fifth-area}, and Sections~\ref{sec:fld}--\ref{sec:nc} use it again. Three
statements do the work: a \emph{universal sign} $\Phi_{\zeta,xxx}(t,x)\le-\kappa_tx$ for
$0\le x\le\frac12$, with $\kappa_t>0$ independent of $\zeta$, which gives $\Wstar<0$ and hence,
when $\beta_2>0$, $\Dstar>0$ (Section~\ref{sec:plat-sign}); \emph{gap exclusion}, which forbids a plateau between two
contacts near zero of a measure close to $\zetas$ and so makes the first positive contact of
$\zeta_\theta$ marginal (Section~\ref{sec:plat-gap}, Figure~\ref{fig:plateau}(a)); and the
\emph{universal cubic}: below a marginal contact $q$, $\Gamma_\zeta-\id$ is to leading order
$\frac{\Dstar}6\,t(q-t)^2$, whatever the covariance, and its end curvatures and area give the
onset $q_\theta$, the atom at $q_\theta$ and the fifth-order law (Section~\ref{sec:plat-cubic},
Figure~\ref{fig:plateau}(b),(c)). Sections~\ref{sec:plat-ito} and~\ref{sec:plat-obstacle}
record It\^o identities on plateaus, several of them due to Auffinger and
Chen~\cite{AC15a}, and the conditions at a contact; a reader willing to accept them can
start at Section~\ref{sec:plat-sign}.

\paragraph{Standing assumptions and notation.}
In this section $\xi(t)=\sum_{p\ge2}\beta_p^2t^p$ is not identically zero and has radius of
convergence greater than one, so that $\xi''>0$ on $(0,1]$. The bounds
\eqref{eq:plat-bounds}, Sections~\ref{sec:plat-ito} and~\ref{sec:plat-obstacle}, and
Lemma~\ref{lem:plat-fourth-sign} use nothing more. Lemmas~\ref{lem:plat-jets}
and~\ref{lem:plat-coupling} (Appendix~\ref{sec:plat-jets}) and
Sections~\ref{sec:plat-gap}--\ref{sec:plat-cubic} assume in addition $\beta_2>0$; the
condition $\xi'''(0)=0$ is assumed only where stated, in those two sections, and \condH\ is
not used in this section. We write $a(t)=\xi''(t)$ and $\alpha_\zeta(t)=\zeta([0,t])$, so
that $a$ is nondecreasing and $\azero\le a\le\xi''(1)$ on $[0,1]$, with $\azero>0$ when
$\beta_2>0$. Along the diffusion $X=X^\zeta$ of \eqref{eq:intro-diffusion} we abbreviate
$U_t=\Phi_{\zeta,xx}(t,X_t)$, $V_t=\Phi_{\zeta,xxx}(t,X_t)$ and
$W_t=\Phi_{\zeta,xxxx}(t,X_t)$, and $\Gamma_\zeta$, $\Obs_\zeta$ are as in
\eqref{eq:intro-obstacle}. Since $X_0=0$ and $\Phi_\zeta(t,\cdot)$ is even,
$U_0=\Phi_{\zeta,xx}(0,0)$ and $V_0=0$. The symbol $\Wone$ is always the transport distance
and $B$ the driving Brownian motion. A \emph{plateau} of $\zeta$ is an interval on which
$\alpha_\zeta$ is constant; the constant is its \emph{height}. An open interval $I$ is a
plateau if and only if $\zeta(I)=0$.

\paragraph{Analytic facts.}
For every $\zeta\in\Pspace([0,1])$ the function $\Phi_\zeta(t,\cdot)$ is even, every spatial
derivative $\partial_x^k\Phi_\zeta$ exists and is continuous on $[0,1]\times\R$, and
\begin{equation}\label{eq:plat-bounds}
 |\Phi_{\zeta,x}|\le1,\qquad 0<\Phi_{\zeta,xx}\le1,\qquad
 \sup_{[0,1]\times\R}|\partial_x^k\Phi_\zeta|\le B_k\quad(k\ge1),
\end{equation}
with $B_k$ depending only on $\xi$ and $k$: the first two bounds
are~\cite[Proposition~2, (15)--(16)]{AC15a}, and the third is~\cite[Proposition~5,
(66)]{AC15a} for measures with finitely many atoms and passes to every $\zeta$
by~\cite[Proposition~1(i)]{AC15a}, which also gives
$\partial_x^k\Phi_{\zeta_n}\to\partial_x^k\Phi_\zeta$ uniformly on $[0,1]\times\R$
($k\ge0$) whenever $\zeta_n\to\zeta$ weakly. None of this uses $\beta_2>0$, and the results
of~\cite{AC15a}, stated under a summability condition on the $\beta_p$, apply to our
covariances (Section~\ref{sec:cons-standard}). By Girsanov's theorem, the exponential
weight~\cite[(13)]{AC15a} with which Auffinger and Chen work along $B(\xi'(t))$ is the
density of the law of $(X_s)_{s\le t}$ with respect to that of $(B(\xi'(s)))_{s\le t}$. So
their statements hold along $X$: $\Gamma_\zeta$ is their $\Gamma_\mu$~\cite[(18)]{AC15a},
and expectations along $X$ of polynomials in finitely many spatial derivatives of
$\Phi_\zeta$ are continuous in $t$ and converge uniformly in $t$ along weakly convergent
sequences of measures~\cite[Proposition~1(ii)]{AC15a}. The quantitative forms we use (jet
stability~\eqref{eq:plat-jet-stability} in $\Wone$ and its covariance-varying
form~\eqref{eq:plat-jet-general}, the bounds~\eqref{eq:plat-time-lip},
\eqref{eq:plat-small-time}, \eqref{eq:plat-origin} in time, at small time and at the origin,
and the coupling Lemma~\ref{lem:plat-coupling}) are routine maximum-principle and Gronwall
estimates, proved in Appendix~\ref{sec:plat-jets}.

\subsection{It\^o identities on plateaus}\label{sec:plat-ito}

On a plateau the Parisi equation linearizes: if $\alpha_\zeta=m$ on an interval, then
$e^{m\Phi_\zeta}$ solves the backward heat equation $w_t+\frac a2w_{xx}=0$ there. Two
consequences are used throughout. The first is the law of the diffusion across a plateau.

\begin{lemma}[Plateau law]\label{lem:plat-law}
Let $0\le s<t\le1$ with $\zeta((s,t))=0$, and put $m=\zeta([0,s])$. Conditionally on
$X_s=x$, the law of $X_t$ has the density
$y\mapsto\exp\{m[\Phi_\zeta(t,y)-\Phi_\zeta(s,x)]\}\,\varphi_{\xi'(t)-\xi'(s)}(y-x)$,
where $\varphi_T$ is the $\mathcal N(0,T)$ density. In particular, if $0<q\le1$,
$\zeta((0,q))=0$ and $m=\zeta(\{0\})$, then
\begin{equation}\label{eq:plat-law}
 \frac{d\Law(X_q)}{d\mathcal N(0,\xi'(q))}(y)
 =\exp\bigl\{m\bigl[\Phi_\zeta(q,y)-\Phi_\zeta(0,0)\bigr]\bigr\} .
\end{equation}
\end{lemma}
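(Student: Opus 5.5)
The plan is to use the Cole--Hopf linearization on the plateau together with Girsanov's theorem, comparing the law of $X$ on $[s,t]$ with that of the martingale $x+\int_s^\cdot\sqrt{a(r)}\,dB_r$. On $(s,t)$ we have $\alpha_\zeta(r)=m$ for Lebesgue-a.e.\ $r$, since $\zeta((s,t))=0$ gives $\zeta([0,r])=\zeta([0,s])=m$ for $s\le r<t$. The function $w=e^{m\Phi_\zeta}$ then satisfies $w_r+\frac a2 w_{xx}=0$ on $(s,t)\times\R$. This follows from the Parisi equation \eqref{eq:intro-pde}: $w_r=mw\Phi_r$ and $w_{xx}=mw(\Phi_{xx}+m\Phi_x^2)$. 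For $m=0$ the density claim is the Gaussian transition of the driftless equation, so we assume $m>0$. We work first with $\zeta$ having finitely many atoms, where $\Phi_\zeta$ is smooth in $t$ between atoms. For general $\zeta$ we pass to the limit using the continuity facts of Lemma~\ref{lem:cons-standard}(a),(b), or argue directly, since on $(s,t)$ the equation has constant coefficient $m$ and $\Phi_\zeta$ is classical there.

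Let $Y_r=x+\int_s^r\sqrt{a(u)}\,dB_u$, so that $Y_t-x\sim\mathcal N(0,\xi'(t)-\xi'(s))$. By It\^o's formula and the backward heat equation,
\[
 d\,w(r,Y_r)=w_x(r,Y_r)\sqrt{a(r)}\,dB_r ,
\]
so $M_r=w(r,Y_r)/w(s,x)$ is a positive local martingale. It is a true martingale because $|\Phi_{\zeta,x}|\le1$ by \eqref{eq:plat-bounds}. Indeed $M_r=\mathcal E\bigl(\int_s^r m\Phi_{\zeta,x}(u,Y_u)\sqrt{a(u)}\,dB_u\bigr)$ is a stochastic exponential with bounded integrand, so Novikov's condition holds.

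Under $d\mathbb Q=M_t\,d\Prob$, Girsanov's theorem shows that $Y$ acquires the drift $a(r)\,m\,\Phi_{\zeta,x}(r,Y_r)$, which equals the drift $a(r)\alpha_\zeta(r)\Phi_{\zeta,x}$ of \eqref{eq:intro-diffusion} on $(s,t)$. Uniqueness in law for this SDE, whose drift is bounded and Lipschitz in $x$ because $\Phi_{\zeta,xx}\le1$, identifies $\Law_{\mathbb Q}(Y_t)$ with the conditional law of $X_t$ given $X_s=x$, using the Markov property of $X$. Hence this conditional law has density
\[
 \frac{w(t,y)}{w(s,x)}\,\varphi_{\xi'(t)-\xi'(s)}(y-x)=\exp\{m[\Phi_\zeta(t,y)-\Phi_\zeta(s,x)]\}\,\varphi_{\xi'(t)-\xi'(s)}(y-x).
\]
For \eqref{eq:plat-law}, take $s=0$ and $x=X_0=0$. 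The hypothesis $\zeta((0,q))=0$ gives $m=\zeta([0,0])=\zeta(\{0\})$, and $\xi'(0)=0$.

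The only delicate points are the regularity needed for It\^o's formula at the endpoints, where $\zeta$ may have atoms at $s$ or $t$, and the justification for general $\zeta$. We handle both by applying the argument on $[s',t']\subset(s,t)$ and letting $s'\downarrow s$, $t'\uparrow t$. Continuity of $\Phi_\zeta$ on $[0,1]\times\R$ and continuity of the paths of $X$ make the densities converge, which settles the boundary cases.
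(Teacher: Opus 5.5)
Your proposal is correct and follows essentially the paper's proof. Both arguments perform a Doob transform by $g=e^{m\Phi_\zeta}$, use Girsanov's theorem to give $x+\int_s^\cdot\sqrt a\,dB$ the drift $am\Phi_{\zeta,x}$ of $X$, invoke uniqueness for the Lipschitz SDE, and read off the density $g(t,y)/g(s,x)$ against the Gaussian law. The only difference is technical: the paper obtains the martingale property and the endpoint cases at once by extending the Cole--Hopf (Gaussian-average) representation of $g$ to the closed interval $[s,t]$ through continuity of $\Phi_\zeta$, whereas you use It\^o's formula with Novikov's condition on interior subintervals $[s',t']$ and then pass to the limit.
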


This is the plateau Cole--Hopf identity of Boursier~\cite[Lemma~A.3, (A.11)]{Bou26b}, first stated
in~\cite[Lemma~B.3]{Bou26a};
\eqref{eq:plat-law} is its case with lower endpoint zero, and that law has a positive
density with Gaussian tails. The proof (Appendix~\ref{sec:plat-jets}) is a Doob transform:
on the plateau $e^{m\Phi_\zeta}$ is space-time harmonic for the martingale part of $X$.

The second consequence concerns the derivatives of $\Gamma_\zeta$. Write
$u=\Phi_{\zeta,x}$, $U=\Phi_{\zeta,xx}$ and $V=\Phi_{\zeta,xxx}$. On a plateau of height
$m$, and for a measure with finitely many atoms on each closed strip between consecutive
atoms, differentiating \eqref{eq:intro-pde} gives
\begin{equation}\label{eq:plat-third-pde}
 \begin{gathered}
  u_t+\tfrac a2u_{xx}+amuu_x=0,\qquad
  U_t+\tfrac a2U_{xx}+am(uU_x+U^2)=0,\\
  V_t+\tfrac a2V_{xx}+amuV_x+3amUV=0 .
 \end{gathered}
\end{equation}
Along $X$, It\^o's formula turns the last two equations into
$dU_t=-ma(t)U_t^2\,dt+\sqrt{a(t)}\,V_t\,dB_t$ and
$dV_t=-3ma(t)U_tV_t\,dt+\sqrt{a(t)}\,W_t\,dB_t$.

\begin{lemma}[It\^o identities]\label{lem:plat-ito}
Let $\zeta\in\Pspace([0,1])$ and $\Gamma=\Gamma_\zeta$.
\begin{thmlist}
\item $\Gamma\in C^1[0,1]$, $\Gamma(0)=0$ and
 $\Gamma'(t)=a(t)\E U_t^2$.
\item For $t\in[0,1)$ the right derivative of $\Gamma'$ at $t$ exists and equals
 $a'(t)\E U_t^2+a(t)^2\E\bigl(V_t^2-2\zeta([0,t])\,U_t^3\bigr)$; for $t\in(0,1]$ the left derivative exists and is given by the same expression with
 $\zeta([0,t))$ in place of $\zeta([0,t])$. Writing $\Gamma''(t+)$ and $\Gamma''(t-)$ for
 these one-sided derivatives,
 \begin{gather}
  \Gamma''(0+)=\xi'''(0)\,\Phi_{\zeta,xx}(0,0)^2
  -2\azero^2\,\zeta(\{0\})\,\Phi_{\zeta,xx}(0,0)^3,\label{eq:plat-zero}\\
  \Gamma''(t-)-\Gamma''(t+)=2a(t)^2\,\zeta(\{t\})\,\E U_t^3\qquad(0<t<1).\label{eq:plat-jump}
 \end{gather}
\item On an open plateau of height $m$, $\Gamma$ is three
 times continuously differentiable, and
 \begin{align}
  \Gamma''&=a'\E U_t^2+a^2\E\bigl(V_t^2-2mU_t^3\bigr),\label{eq:plat-second}\\
  \Gamma'''&=a''\E U_t^2+3aa'\E\bigl(V_t^2-2mU_t^3\bigr)
  +a^3\E\bigl(W_t^2-12mU_tV_t^2+6m^2U_t^4\bigr),\label{eq:plat-third}
 \end{align}
 where $a$, $a'$, $a''$ are evaluated at $t$.
\end{thmlist}
\end{lemma}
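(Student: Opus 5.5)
The plan is to compute everything along the diffusion $X$ with It\^o's formula, working first with measures having finitely many atoms and then passing to general $\zeta$ by weak approximation. For part (a), the relation $\Gamma'(t)=a(t)\E U_t^2$ is Lemma~\ref{lem:cons-standard}(b), and $\Gamma(0)=0$ follows from evenness, since $u(0,0)=0$. The main computation comes first. Fix a measure $\zeta$ with finitely many atoms and a strip $(s,t)$ between consecutive atoms, on which $\alpha_\zeta=m$ is constant. The drift of $X$ there is $amu$, so the generator acting on a function $F(t,x)$ is $\partial_t+\frac a2\partial_{xx}+amu\,\partial_x$.

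Applying it to $U^2$ and using \eqref{eq:plat-third-pde}, we get
\[
\mathcal L(U^2)=2U\bigl(U_t+\tfrac a2U_{xx}+amuU_x\bigr)+aU_x^2=-2amU^3+aV^2 .
\]
Hence $\frac{d}{dt}\E U_t^2=a\E(V_t^2-2mU_t^3)$. The local martingale terms are true martingales because of the uniform bounds \eqref{eq:plat-bounds} and the Gaussian-type moments of $X$. Multiplying by $a$ gives \eqref{eq:plat-second} on the open strip. Differentiating once more, with $\mathcal L(V^2)=-6amUV^2+aW^2$ and $\mathcal L(U^3)=3U^2(-amU^2)+3aUV^2$, gives
\[
\frac{d}{dt}\E\bigl(V^2-2mU^3\bigr)=a\E\bigl(W^2-6mUV^2-6mUV^2+6m^2U^4\bigr),
\]
and the product rule with $a^2$ produces \eqref{eq:plat-third}. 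For a general $\zeta$, part (c) concerns a plateau of $\zeta$ itself, where the Parisi equation holds classically with constant $m$. So the same computation applies directly, using only the smoothness in $x$ and the fact that the equations \eqref{eq:plat-third-pde} give time derivatives. Alternatively, one can approximate $\zeta$ by atomic measures that agree with it on the plateau and invoke the uniform convergence of expectations of polynomials in the derivatives \cite[Proposition~1(ii)]{AC15a}.

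For part (b), the expression $\frac{d}{dt}\E U_t^2=a\E(V_t^2-2\alpha_\zeta(t)U_t^3)$ holds for atomic $\zeta$ away from the atoms. Across an atom, $U$ and $X$ are continuous in $t$, so $\Gamma'$ is continuous, while the derivative jumps because $\alpha_\zeta$ does. Integrating gives
\[
\Gamma'(t)-\Gamma'(r)=\int_r^t\bigl[a'\E U^2+a^2\E(V^2-2\alpha_\zeta U^3)\bigr]\,ds .
\]
This integral identity then passes to arbitrary $\zeta$ by weak approximation $\zeta_n\Rightarrow\zeta$ with $\alpha_{\zeta_n}\to\alpha_\zeta$ at a.e.\ point, using dominated convergence and the uniform convergence of the expectations. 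The integrand is continuous in $s$ except through the factor $\alpha_\zeta(s)$, which is right-continuous with left limits. The right derivative at $t$ therefore uses $\zeta([0,t])$ and the left derivative uses $\zeta([0,t))$. Subtracting the two gives \eqref{eq:plat-jump}. At $t=0$ we have $X_0=0$, $U_0=\Phi_{\zeta,xx}(0,0)$ and $V_0=0$ (because $\Phi_{\zeta,xxx}$ is odd), and $a'(0)=\xi'''(0)$, which yields \eqref{eq:plat-zero}.

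The main obstacle is the passage to general $\zeta$ in part (b). One needs the joint continuity of $s\mapsto\E P(U_s,V_s)$ together with convergence that is uniform along the approximating sequence. Only then can the integral identity be taken to the limit, and the one-sided derivatives read off from the right- and left-continuity of $\alpha_\zeta$. I would take both properties from \cite[Proposition~1(ii)]{AC15a} transferred along $X$ by Girsanov, as in the analytic facts stated above.
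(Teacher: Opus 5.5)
Your proposal is correct. For (a) and (c) it is the paper's argument: on an open plateau $\Phi_\zeta$ is smooth, It\^o's formula gives the stochastic differentials of $U$ and $V$ with bounded martingale integrands, and two differentiations of $\Gamma'=a\E U_t^2$ give \eqref{eq:plat-second} and \eqref{eq:plat-third}. The paper supplies the justification your ``holds classically'' needs for general $\zeta$: on the plateau, $\Phi_\zeta$ is a Cole--Hopf transform of its value at a later time. Your generator computations are right, including the coefficient $-12m$. Two small points of wording: \eqref{eq:plat-second} comes from the product rule applied to $a\E U_t^2$, not from multiplying by $a$; and the $C^3$ claim uses continuity in $t$ of the expectations on the right.

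The difference is in (b). The paper does not prove it in place. It cites \cite[Proposition~3, (21)--(22)]{AC15a}, read along $X$ by Girsanov, and then evaluates at $t=0$ and subtracts, as you do. You instead derive it in three steps:
\begin{itemize}
\item prove the integrated identity $\Gamma'(t)-\Gamma'(r)=\int_r^t\{a'\E U_s^2+a^2\E(V_s^2-2\alpha_\zeta(s)U_s^3)\}\,ds$ for finitely atomic measures, strip by strip;
\item pass it to general $\zeta$ by approximation;
\item read off the one-sided derivatives from the c\`adl\`ag factor $\alpha_\zeta$ multiplying continuous expectations.
\end{itemize}
This is the argument the paper writes out in Appendix~\ref{app:field}, Steps~1--3 of the proof of Lemma~\ref{lem:fld-ito}, for the diffusion started at $h$.

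Citing is shorter. Your version is self-contained apart from two inputs from \cite[Propositions~1 and~5]{AC15a}: continuity of the jets in time across atoms, and uniform convergence of the expectations under weak approximation. It also carries over unchanged to a nonzero starting point, which is why the paper needs it there.
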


Parts (a) and (b) are~\cite[Proposition~3, (20)--(22)]{AC15a}, read along $X$, and part
(c) follows by taking expectations in the stochastic differentials above
(Appendix~\ref{sec:plat-jets}). Part (b) holds at atoms without any assumption of a density,
and it converts curvature into mass. Put
\begin{equation}\label{eq:plat-NS}
 N_\zeta(t)=a'(t)\,\E U_t^2+a(t)^2\,\E V_t^2,\qquad
 \sigma_\zeta(t)=2a(t)^2\,\E U_t^3
\end{equation}
(the functions $\gamma_{1,\mu}$, $\gamma_{2,\mu}$ of~\cite[Proposition~3]{AC15a}). By (b),
the right and left derivatives of $\Gamma_\zeta'$ at $t$ are
$N_\zeta(t)-\zeta([0,t])\,\sigma_\zeta(t)$ and $N_\zeta(t)-\zeta([0,t))\,\sigma_\zeta(t)$; at
the end of a plateau, \eqref{eq:plat-jump} reads off the atom $\zeta(\{t\})$ from the jump
$\zeta(\{t\})\,\sigma_\zeta(t)$ of $\Gamma_\zeta''$.

\subsection{Contact conditions and the trapezoid principle}\label{sec:plat-obstacle}

Integration against the obstacle $\Obs_\zeta$ is the derivative of the Parisi functional
in directions of total mass zero \eqref{eq:cons-first-variation}. The positive support
points of an unconstrained or constrained minimizer are minimum points of its obstacle
(Section~\ref{sec:cons-variational}), and a minimum point of $\Obs_\zeta$ on $(0,1]$ is a
positive contact: there $\Gamma_\zeta$ meets the identity, and the second-order condition is
the Plefka inequality \eqref{eq:intro-plefka}.

\needspace{3\baselineskip}
\begin{lemma}[Contact conditions]\label{lem:plat-obstacle}
Let $\zeta\in\Pspace([0,1])$, $\Gamma=\Gamma_\zeta$ and $\Obs=\Obs_\zeta$.
\begin{thmlist}
\item $\Obs\in C^2[0,1]$, with $\Obs'(t)=-\tfrac12a(t)\bigl(\Gamma(t)-t\bigr)$ and
 $\Obs''(t)=-\tfrac12a'(t)\bigl(\Gamma(t)-t\bigr)-\tfrac12a(t)\bigl(\Gamma'(t)-1\bigr)$.
 Moreover $\Obs'(1)>0$, so $1$ is not a minimum point of $\Obs$ on $[0,1]$.
\item At every local minimum point $t\in(0,1)$ of
 $\Obs$,
 \begin{equation}\label{eq:plat-contact}
  \Gamma(t)=t,\qquad \Gamma'(t)=\xi''(t)\,\E\Phi_{\zeta,xx}(t,X_t)^2\le1 .
 \end{equation}
\item \emph{Endpoint inequality.} If $\Obs(t)\ge\Obs(0)$ for all small $t>0$, then
 $\azero\Phi_{\zeta,xx}(0,0)^2=\Gamma'(0)\le1$.
\item \emph{Trapezoid principle.} Let $0\le u<v<1$
 with $\Gamma(u)=u$, $\Gamma(v)=v$, $\Gamma'(u)\le1$ and $\Gamma'(v)\le1$. Then $\Gamma'$
 is not strictly convex on $(u,v)$.
\end{thmlist}
\end{lemma}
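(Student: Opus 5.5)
The plan is to derive all four parts from the formulas for $\Obs$ and $\Obs'$ in \eqref{eq:intro-obstacle} and \eqref{eq:cons-obstacle}, together with the It\^o identity of Lemma~\ref{lem:plat-ito}(a).

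For (a), differentiate $\Obs(t)=\frac12\int_t^1 a(s)\{\Gamma(s)-s\}\,ds$ twice. This uses $\Gamma\in C^1$ from Lemma~\ref{lem:plat-ito}(a) and $a=\xi''\in C^1$, and it gives the stated formulas for $\Obs'$ and $\Obs''$ and shows $\Obs\in C^2$. To see that $\Obs'(1)>0$, note that $\Obs'(1)=-\frac12 a(1)(\Gamma(1)-1)$. Here $a(1)=\xi''(1)>0$, and $\Gamma(1)=\E\tanh(X_1)^2<1$ because $X_1$ has a law with a positive density (Lemma~\ref{lem:cons-standard}(b)). A point where the derivative is positive on the right end of $[0,1]$ cannot be a minimum point, since $\Obs(1-\epsilon)<\Obs(1)$.

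For (b), let $t\in(0,1)$ be an interior local minimum. The first-order condition $\Obs'(t)=0$ gives $\Gamma(t)=t$, because $a(t)>0$ on $(0,1]$. Substituting this into the second-order condition $\Obs''(t)\ge0$ gives $-\frac12 a(t)(\Gamma'(t)-1)\ge0$, which is \eqref{eq:plat-contact} once we use $\Gamma'=a\,\E U^2$.

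For (c), the endpoint case is more delicate, because $\Obs'(0)=0$ automatically: $\Gamma(0)=0$. We have $\Obs(t)-\Obs(0)=-\frac12\int_0^t a(s)(\Gamma(s)-s)\,ds$. If $\Gamma'(0)>1$, then $\Gamma(s)-s\ge c s$ for small $s$ with some $c>0$. Since $a\ge a(0)$, and assuming $\azero>0$ (otherwise $\Gamma'(0)=0\le1$ trivially), we get $\Obs(t)<\Obs(0)$ for small $t>0$, which is a contradiction. Finally, $\Gamma'(0)=\azero\E U_0^2=\azero\Phi_{\zeta,xx}(0,0)^2$ because $X_0=0$.

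For (d), argue by contradiction and suppose $g=\Gamma'$ is strictly convex on $(u,v)$. By continuity, $g$ is convex on $[u,v]$, and strict convexity forces $g$ to lie strictly below its chord at every interior point. The chord joins $(u,g(u))$ to $(v,g(v))$ with both values at most $1$, so the chord is at most $1$. Hence $g<1$ on $(u,v)$, and
\[
\Gamma(v)-\Gamma(u)=\int_u^v g<v-u,
\]
which contradicts $\Gamma(v)-\Gamma(u)=v-u$.

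The only step needing care is the strictness in (d), namely that strict convexity gives strict inequality against the chord on the open interval, and the degenerate case $\azero=0$ in (c). Neither is a real obstacle.
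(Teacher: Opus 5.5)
Your proposal is correct and follows the paper's proof essentially step by step: (a) and (b) come from differentiating the obstacle and using the first- and second-order conditions, as in Lemma~\ref{lem:cons-standard}(f); (c) comes from the small-$t$ behaviour of $\Obs(t)-\Obs(0)$; and (d) comes from the chord comparison for the strictly convex function $\Gamma'$. The only superfluous step is appealing to a positive density of $X_1$ to get $\Gamma(1)<1$, since this already follows from $\tanh^2<1$ pointwise.
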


\begin{proof}
(a) and (b) are Lemma~\ref{lem:cons-standard}(f), whose proof uses only
$\Obs'(t)=0\le\Obs''(t)$ at the minimum point and therefore applies at local minima.
(c) By Lemma~\ref{lem:plat-ito}(a) and $X_0=0$, $\Gamma'(0)=\azero\Phi_{\zeta,xx}(0,0)^2$, which vanishes if $\azero=0$. If $\azero>0$: since
$\Gamma(0)=0$ and $\Gamma\in C^1$,
$\Obs(t)-\Obs(0)=-\frac12\int_0^ta(s)(\Gamma(s)-s)\,ds
=-\frac{\azero}4(\Gamma'(0)-1)t^2+o(t^2)$,
so the quadratic coefficient is nonnegative. (d) If $\Gamma'$ were strictly convex on
$(u,v)$, then, being continuous on $[u,v]$, it would lie strictly below its chord inside
the interval, and
\[
 1=\frac{\Gamma(v)-\Gamma(u)}{v-u}=\frac1{v-u}\int_u^v\Gamma'(t)\,dt
 <\frac{\Gamma'(u)+\Gamma'(v)}2\le1,
\]
a contradiction.
\end{proof}

For support points of the Parisi measure, part (b) is also~\cite[Corollary~3.10]{JT17}.
Part (c) applies to $\zetas$, whose obstacle is minimal on $\supp\zetas$, because
$0\in\supp\zetas$ (Lemma~\ref{lem:ons-zero-in-support}). Part (d) is how marginality
enters the paper: on a plateau between two contacts, $\Gamma(v)-\Gamma(u)=v-u$ forces the
mean of $\Gamma'$ to be one, while strict convexity of $\Gamma'$ and part (b) force it below one.
Section~\ref{sec:plat-gap} shows that near zero $\Gamma'$ is indeed strictly convex on every
plateau of a measure close to $\zetas$.

\subsection{A universal sign at the origin}\label{sec:plat-sign}

The constant $\Dstar=\xi''''(0)\Ustar^2+\azero^3\Wstar^2$ enters all asymptotic formulas
of Theorems~\ref{thm:sk}--\ref{thm:field}, through $\rhostar=\Dstar/(2\sqrt{\azero})$ and
$c=9\azero^3/(80\Dstar^3)$, and their proofs need $\Dstar>0$; for the
Sherrington--Kirkpatrick model $\xi''''(0)=0$, so positivity rests entirely on $\Wstar\ne0$.
The next lemma gives a strict sign for the third spatial derivative of every solution of the
Parisi equation, with a constant that does not depend on the measure.

\begin{lemma}[Universal sign]\label{lem:plat-fourth-sign}
Let $\xi(t)=\sum_{p\ge2}\beta_p^2t^p$ be not identically zero, with radius of convergence
greater than one. Put $T_t=\xi'(1)-\xi'(t)$, let $\varphi_T$ be the $\mathcal N(0,T)$
density, and set
\[
 c_1=\min\bigl\{2\tanh y\,\sech^2y:\ 1\le y\le2+\xi'(1)\bigr\},\qquad
 \kappa_t=\frac{c_1\,\varphi_{T_t}(5/2)}{T_t}>0\qquad(0\le t<1).
\]
For every $\zeta\in\Pspace([0,1])$, $0\le t<1$ and $0\le x\le\frac12$,
\[
 \Phi_{\zeta,xxx}(t,x)\le-\kappa_tx,\qquad\text{hence}\qquad
 \Phi_{\zeta,xxxx}(t,0)\le-\kappa_t<0 .
\]
In particular $\Wstar\le-\kappa_0$. If $\beta_2>0$, then the constant
$\Dstar=\xi''''(0)\Ustar^2+\azero^3\Wstar^2$ of Section~\ref{sec:intro-constrained} satisfies
$\Dstar\ge\azero^3\kappa_0^2>0$.
\end{lemma}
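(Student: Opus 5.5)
The plan is to prove the bound for measures $\zeta$ with finitely many atoms and then pass to general $\zeta$ by the uniform convergence $\partial_x^k\Phi_{\zeta_n}\to\partial_x^k\Phi_\zeta$ of \eqref{eq:plat-bounds}; the constant $\kappa_t$ does not depend on $\zeta$, so the bound survives the limit. For such $\zeta$, write $\alpha(t)=\zeta([0,t])$, which is piecewise constant. Differentiating \eqref{eq:intro-pde} three times, as in \eqref{eq:plat-third-pde} but with the height $m$ replaced by $\alpha(t)$, gives on each strip between atoms
\[
V_t+\tfrac a2V_{xx}+a\alpha\,uV_x+3a\alpha\,UV=0,
\]
where $V=\Phi_{\zeta,xxx}$ is continuous in $t$ across atoms. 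Since $\Phi_\zeta(t,\cdot)$ is even, $V$ is odd and $V(t,0)=0$. At the terminal time, $V(1,x)=-2\tanh x\,\sech^2x\le0$ for $x\ge0$, and $V(1,y)\le-c_1$ for $y\in[1,2+\xi'(1)]$.

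Fix $t<1$ and $0\le x\le\frac12$, and let $Y$ solve $dY_s=a(s)\alpha(s)u(s,Y_s)\,ds+\sqrt{a(s)}\,dB_s$ on $[t,1]$ with $Y_t=x$. Let $\tau$ be the hitting time of $0$. Feynman--Kac on the half-line $(0,\infty)$, with the Dirichlet condition $V(\cdot,0)=0$, applied strip by strip and chained across the atoms, gives
\[
V(t,x)=\E\Bigl[\exp\Bigl(\int_t^1 3a\alpha U\,ds\Bigr)V(1,Y_1);\,\tau>1\Bigr].
\]
The exponent is nonnegative because $U>0$, and $V(1,Y_1)\le0$ on $\{\tau>1\}$. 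Hence
\[
V(t,x)\le-c_1\,\Prob\bigl(\tau>1,\ Y_1\in[1,2+\xi'(1)]\bigr).
\]

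The main step, and the only place that needs care, is a lower bound on this probability that is uniform in $\zeta$. The key observation is that the drift has a sign. For $y>0$ we have $u(s,y)=\Phi_{\zeta,x}>0$, since $\Phi_{\zeta,x}$ is odd and increasing, and $\alpha\ge0$; so before $\tau$ the drift is nonnegative. It is also at most $a(s)$, because $|u|\le1$ and $\alpha\le1$. Write the martingale part as $\beta(\xi'(s)-\xi'(t))$ for a Brownian motion $\beta$. A pathwise comparison then gives, on $[t,\tau\wedge1]$,
\[
x+\beta(\xi'(s)-\xi'(t))\le Y_s\le x+\beta(\xi'(s)-\xi'(t))+\xi'(1).
\]
Consequently the event that $x+\beta$ stays positive up to time $T_t$ and ends in $[1,2]$ is contained in $\{\tau>1,\ Y_1\in[1,2+\xi'(1)]\}$. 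By the reflection principle, its probability is $\int_1^2\{\varphi_{T_t}(y-x)-\varphi_{T_t}(y+x)\}\,dy$. By the mean value theorem each integrand is at least $2x\min_{z\in[1/2,5/2]}z\varphi_{T_t}(z)/T_t$. Because $z\varphi_T(z)$ is unimodal, this minimum is attained at an endpoint, and both endpoint values are at least $\frac12\varphi_{T_t}(5/2)$. Therefore the probability is at least $x\,\varphi_{T_t}(5/2)/T_t$. Here $T_t>0$ for $t<1$, because $\xi''>0$ on $(0,1]$. This yields $\Phi_{\zeta,xxx}(t,x)\le-\kappa_tx$.

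Since $V(t,0)=0$, dividing by $x$ and letting $x\downarrow0$ gives $\Phi_{\zeta,xxxx}(t,0)\le-\kappa_t$. Taking $\zeta=\zetas$ and $t=0$ gives $\Wstar\le-\kappa_0$. Finally, $\xi''''(0)=24\beta_4^2\ge0$ and $\Ustar^2\ge0$, so $\Dstar\ge\azero^3\Wstar^2\ge\azero^3\kappa_0^2$, which is positive when $\beta_2>0$ since then $\azero=2\beta_2^2>0$.

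The step I expect to require most attention is the justification of the killed Feynman--Kac representation across the atoms of $\zeta$, together with the comparison argument (which uses the sign of the drift only before $\tau$). Both should follow from strong existence for the Lipschitz SDE and the boundedness of $V$ and its derivatives given by \eqref{eq:plat-bounds}.
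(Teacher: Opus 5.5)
Your proposal is correct and follows the paper's proof step for step: finitely atomic approximation, the Feynman--Kac representation of $V=\Phi_{\zeta,xxx}$ killed at the origin with the nonnegative potential $3a\alpha U$ (the paper phrases it as optional stopping for the martingale $V(s,Y_s)\exp\{\int_t^s 3a\alpha U\,dr\}$), the pathwise comparison with the driftless process using the sign of $u$ on $(0,\infty)$, and the reflection principle. The only difference is cosmetic: you bound $\varphi_T(y-x)-\varphi_T(y+x)$ by the mean value theorem and unimodality of $z\varphi_T(z)$, whereas the paper writes it as $\int_{-x}^{x}\frac{y+r}{T}\varphi_T(y+r)\,dr$ and uses $y+r\in[\frac12,\frac52]$ directly.
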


The idea is that $V=\Phi_{\zeta,xxx}$ vanishes on $\{x=0\}$ by oddness, equals
$-2\tanh x\sech^2x<0$ for $x>0$ at the terminal time, and solves the linear equation
\eqref{eq:plat-third-pde}, whose potential term $3a\alpha_\zeta UV$ has a nonnegative
coefficient. A Feynman--Kac representation killed at the origin carries the terminal sign
backward in time, and a reflection estimate makes it quantitative.

\begin{proof}
First let $\zeta$ have finitely many atoms, and write $u=\Phi_{\zeta,x}$,
$U=\Phi_{\zeta,xx}$, $V=\Phi_{\zeta,xxx}$ and $\alpha=\alpha_\zeta$. On each closed strip
between consecutive atoms, $V$ solves the last equation of \eqref{eq:plat-third-pde}, with
$m$ the height of the strip. Fix $t<1$ and $0<x\le\frac12$. Let $Y$ solve
\[
 dY_s=a(s)\alpha(s)u(s,Y_s)\,ds+\sqrt{a(s)}\,dB_s\qquad(t\le s\le1),\qquad Y_t=x ;
\]
the drift is bounded and Lipschitz in space, since $|u|\le1$ and $0<u_x\le1$. Put
$c(s,y)=3a(s)\alpha(s)U(s,y)\ge0$. By It\^o's formula on each strip,
\[
 M_s=V(s,Y_s)\exp\Bigl\{\int_t^sc(r,Y_r)\,dr\Bigr\}
\]
is a continuous martingale on $[t,1]$, bounded by $B_3e^{3\xi'(1)}$; it is continuous
across atoms because $V$ is continuous in time. Let $\tau=\inf\{s\ge t:Y_s=0\}$. Since $V$
is odd in $x$, $V(s,0)=0$, and optional stopping gives
\[
 V(t,x)=\E M_{\tau\wedge1}
 =\E\Bigl[V(1,Y_1)\,e^{\int_t^1c(r,Y_r)\,dr};\ \tau>1\Bigr].
\]
On $\{\tau>1\}$ we have $Y_1>0$, so $V(1,Y_1)=-2\tanh Y_1\sech^2Y_1<0$, while the
exponential factor is at least one. Therefore $V(t,x)\le\E[V(1,Y_1);A]$ for every event
$A\subset\{\tau>1\}$.

Let $\widetilde Y_s=x+\int_t^s\sqrt{a(r)}\,dB_r$. The function $u(s,\cdot)$ is odd and
increasing, so $u(s,y)\ge0$ for $y\ge0$. Hence, for $t\le s\le\tau\wedge1$,
$0\le Y_s-\widetilde Y_s=\int_t^sa(r)\alpha(r)u(r,Y_r)\,dr\le\xi'(1)$.
If $\widetilde Y$ stays positive on $[t,1]$, then $\tau>1$, for otherwise
$\widetilde Y_\tau\le Y_\tau=0$. Take
$A=\{\widetilde Y_s>0\ \text{for}\ t\le s\le1,\ \widetilde Y_1\in[1,2]\}$.
Then $A\subset\{\tau>1\}$ and $Y_1\in[1,2+\xi'(1)]$ on $A$, so $V(t,x)\le-c_1\Prob(A)$.
The process $\widetilde Y$ is a Brownian motion started at $x$ and run with the clock
$\xi'(s)-\xi'(t)$, which reaches $T=T_t>0$ at $s=1$ because $\xi''>0$ on $(0,1]$. The
reflection principle gives
$\Prob(A)=\int_1^2\{\varphi_T(y-x)-\varphi_T(y+x)\}\,dy$. For $y\in[1,2]$ and $0<x\le\frac12$,
\[
 \varphi_T(y-x)-\varphi_T(y+x)=\int_{-x}^x\frac{y+r}T\,\varphi_T(y+r)\,dr
 \ge\frac xT\,\varphi_T(5/2),
\]
because $y+r\in[\frac12,\frac52]$. Hence $V(t,x)\le-\kappa_tx$.

For a general $\zeta$, choose measures $\zeta_n$ with finitely many atoms and
$\zeta_n\to\zeta$ weakly. The constant $\kappa_t$ does not depend on $n$, and
$\Phi_{\zeta_n,xxx}\to\Phi_{\zeta,xxx}$ uniformly~\cite[Proposition~1(i)]{AC15a}, so the
inequality passes to the limit. Finally $V(t,0)=0$ and $V(t,\cdot)$ is continuously
differentiable, so $\Phi_{\zeta,xxxx}(t,0)=\lim_{x\downarrow0}V(t,x)/x\le-\kappa_t$. At
$t=0$ and $\zeta=\zetas$ this is $\Wstar\le-\kappa_0$; since $\xi''''(0)\ge0$, the last
assertion follows.
\end{proof}

\begin{remark}\label{rem:plat-sign}
The non-strict sign $\Phi_{\zeta,xxx}(t,x)\le0$ for $t,x>0$ and every $\zeta$
is~\cite[(7.6)]{Lop26a} for the Sherrington--Kirkpatrick equation; the time change
$t\mapsto\xi'(t)/\xi'(1)$, which turns the Parisi equation of $\xi$ into that of the
Sherrington--Kirkpatrick covariance with $\beta^2=\xi'(1)$, carries it to every $\xi$. A
Feynman--Kac proof for the Sherrington--Kirkpatrick equation is in~\cite[Lemma~5.4]{Pei26},
a maximum-principle proof for a regularized zero-temperature equation
in~\cite[Lemma~3.4 and Appendix~B.3]{Che26}, and for the Sherrington--Kirkpatrick minimizer
Aronow and Lopatto prove $-\beta^3\Wstar\ge c_0>0$, uniformly in $\beta\ge\beta_0>1$, by
parabolic comparison~\cite[Lemma~7.1, (7.7)]{AL26}. We use the strict bound at the origin with
an explicit constant, for every covariance: it makes $\Dstar>0$ automatic in
Theorems~\ref{thm:local} and~\ref{thm:field} (Remark~\ref{rem:fld-nondegeneracy}). Thus
$\Dstar>0$ comes from $\Wstar<0$; the term $\xi''''(0)\Ustar^2$ only adds to it.
\end{remark}

\subsection{Local plateau positivity and gap exclusion}\label{sec:plat-gap}

In this subsection we assume $\xi'''(0)=0$; Section~\ref{sec:plat-cubic} uses it only in the
last sentence of Corollary~\ref{cor:plat-first-plateau}. Recall $\Ustar=\Phi_{\zetas,xx}(0,0)$,
$\Wstar=\Phi_{\zetas,xxxx}(0,0)$ and $\Dstar=\xi''''(0)\Ustar^2+\azero^3\Wstar^2$, which is
positive by Lemma~\ref{lem:plat-fourth-sign}. For $\zeta\in\Pspace([0,1])$ and
$t,m\in[0,1]$, let $\Psi_\zeta(t,m)$ be the right side of \eqref{eq:plat-third} with the
height replaced by a free parameter:
\begin{equation}\label{eq:plat-psi}
 \Psi_\zeta(t,m)=a''\E U_t^2+3aa'\E\bigl(V_t^2-2mU_t^3\bigr)
 +a^3\E\bigl(W_t^2-12mU_tV_t^2+6m^2U_t^4\bigr),
\end{equation}
with $a$, $a'$, $a''$ evaluated at $t$. On an open plateau of $\zeta$ of height $m$,
$\Gamma_\zeta'''=\Psi_\zeta(\cdot,m)$. At $t=0$ and $\zeta=\zetas$ the expression is
explicit: $X_0=0$, $V_0=0$ and $a'(0)=\xi'''(0)=0$, so
$\Psi_{\zetas}(0,m)=\Dstar+6m^2\azero^3\Ustar^4\ge\Dstar$. The next lemma says that this
value controls $\Gamma_\zeta'''$ on every plateau near the origin, for every measure close
to $\zetas$.

\begin{lemma}[Local plateau positivity]\label{lem:plat-local-plateau}
Assume $\xi'''(0)=0$. There is $C_\Psi$, depending only on $\xi$, such that
\[
 \bigl|\Psi_\zeta(t,m)-\Dstar-6m^2\azero^3\Ustar^4\bigr|
 \le C_\Psi\bigl(t^{1/2}+\Wone(\zeta,\zetas)\bigr)
\]
for all $\zeta\in\Pspace([0,1])$, $t\in[0,1]$ and $m\in[0,1]$. Consequently there are
$K_{\rm loc}\in(0,1)$ and $r_{\rm loc}>0$ with the following property: if
$\Wone(\zeta,\zetas)<r_{\rm loc}$ and $(u,v)\subset(0,K_{\rm loc})$ is an open interval with
$\zeta((u,v))=0$, then $\Gamma_\zeta'''\ge\Dstar/2$ on $(u,v)$.
\end{lemma}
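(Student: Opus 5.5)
The estimate splits into a time estimate at fixed $\zeta=\zetas$ and a stability estimate in $\zeta$. By the triangle inequality,
\[
 |\Psi_\zeta(t,m)-\Psi_{\zetas}(0,m)|\le|\Psi_\zeta(t,m)-\Psi_{\zetas}(t,m)|+|\Psi_{\zetas}(t,m)-\Psi_{\zetas}(0,m)| .
\]
We showed above that $\Psi_{\zetas}(0,m)=\Dstar+6m^2\azero^3\Ustar^4$, using $V_0=0$ and $a'(0)=\xi'''(0)=0$. Since $m\in[0,1]$ enters \eqref{eq:plat-psi} only polynomially with bounded coefficients, it suffices to bound, uniformly in $m$, each term of the form (coefficient in $a$)$\times\E P(U_t,V_t,W_t)$ with $P$ a fixed monomial.

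\textbf{The coefficients $a,a',a''$.} These depend only on $t$ and are Lipschitz on $[0,1]$, because $\xi$ is analytic on a neighbourhood of $[0,1]$. They therefore contribute $O(t)\le O(t^{1/2})$ to the time increment, and nothing to the $\zeta$-increment.

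\textbf{The expectations, $\zeta$-increment.} I would invoke the jet stability estimate \eqref{eq:plat-jet-stability}: expectations along $X^\zeta$ of polynomials in $\Phi_{\zeta,xx},\Phi_{\zeta,xxx},\Phi_{\zeta,xxxx}$ are Lipschitz in $\zeta$ for $\Wone$, uniformly in $t$. This gives a bound $C\,\Wone(\zeta,\zetas)$. The mechanism behind it has two parts. First, the derivatives $\partial_x^k\Phi_\zeta$ depend Lipschitz-continuously on $\zeta$ in sup norm, by a Gronwall argument on the differentiated Parisi equation driven by $\alpha_\zeta-\alpha_{\zetas}$, whose $L^1$ norm is $\Wone$. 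Second, the laws of the diffusions are compared through the coupling of Lemma~\ref{lem:plat-coupling}, together with the bounds \eqref{eq:plat-bounds}.

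\textbf{The expectations, time increment.} I would use the small-time bound \eqref{eq:plat-small-time}. Since $X_t-X_0$ has a bounded drift plus a martingale with quadratic variation $\xi'(t)\le Ct$, we have $\E|X_t|\le Ct^{1/2}$. The derivatives $\partial_x^k\Phi_{\zetas}$ are Lipschitz in $x$ by \eqref{eq:plat-bounds}, and Lipschitz in $t$ by \eqref{eq:plat-time-lip}. Together these give
\[
 |\E P(U_t,V_t,W_t)-P(U_0,0,W_0)|\le Ct^{1/2}.
\]
This $t^{1/2}$ is the source of the exponent in the statement. Collecting the three pieces yields the displayed inequality with a constant $C_\Psi$ depending only on $\xi$. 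That constant involves $B_k$ for $k\le5$, since $W$ must be Lipschitz in $x$, which brings in $\partial_x^5\Phi$.

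\textbf{The consequence.} Pick $K_{\rm loc}\in(0,1)$ and $r_{\rm loc}>0$ with
\[
 C_\Psi\bigl(K_{\rm loc}^{1/2}+r_{\rm loc}\bigr)\le\Dstar/2 ;
\]
this is possible because $\Dstar>0$ by Lemma~\ref{lem:plat-fourth-sign}. Let $(u,v)\subset(0,K_{\rm loc})$ be an open plateau of height $m$. By Lemma~\ref{lem:plat-ito}(c), $\Gamma_\zeta'''=\Psi_\zeta(\cdot,m)$ on $(u,v)$, and hence
\[
 \Gamma_\zeta'''\ge\Dstar+6m^2\azero^3\Ustar^4-\Dstar/2\ge\Dstar/2 .
\]

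The main obstacle is the uniformity of the $\zeta$-increment estimate. The jet stability has to be Lipschitz in $\Wone$ rather than merely continuous, uniformly in $t\in[0,1]$, and it has to cover the fourth derivative $W$, which requires a priori bounds up to order five. If the appendix lemma provides only these ingredients separately, the step I would check most carefully is the comparison of the two diffusion laws. My first attempt would be a synchronous coupling of $X^\zeta$ and $X^{\zetas}$ driven by the same Brownian motion. The drifts are Lipschitz in space, and their difference is bounded by $|\alpha_\zeta-\alpha_{\zetas}|$ plus the sup-norm difference of the derivatives. Gronwall's lemma then gives $\E|X^\zeta_t-X^{\zetas}_t|\le C\,\Wone$.
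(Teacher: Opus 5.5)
Your proof is correct, but it orders the triangle inequality differently from the paper. You first compare $\Psi_\zeta(t,m)$ with $\Psi_{\zetas}(t,m)$ at the same time $t$. That forces you to compare expectations along two different diffusions, so you need a Lipschitz-in-$\Wone$ coupling bound $\E|X^\zeta_t-X^{\zetas}_t|\le C\,\Wone(\zeta,\zetas)$.

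Your synchronous-coupling Gronwall sketch does give that bound. The martingale parts cancel, and the drift difference is at most $\|a\|_\infty\bigl(|\alpha_\zeta-\alpha_{\zetas}|+\|\Phi_{\zeta,x}-\Phi_{\zetas,x}\|_\infty+|X^\zeta-X^{\zetas}|\bigr)$. Two attributions in the sketch are off, though. \eqref{eq:plat-jet-stability} is a sup-norm bound on the jets, not a statement about expectations along the diffusion. Lemma~\ref{lem:plat-coupling} as stated gives only convergence, not a Lipschitz bound.

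The paper avoids the coupling altogether by reversing the order, through \eqref{eq:plat-origin}. Along the diffusion of $\zeta$ itself it first returns to the space-time origin:
\[
 |\partial_x^k\Phi_\zeta(t,X^\zeta_t)-\partial_x^k\Phi_\zeta(0,0)|\le B_{k+1}|X^\zeta_t|+L_kt ,
\]
where $\E|X^\zeta_t|\le Ct^{1/2}$ holds uniformly in $\zeta$ by \eqref{eq:plat-small-time}. Only then does it compare the two measures, at the single deterministic point $(0,0)$, where sup-norm jet stability suffices.

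Since the target value $\Dstar+6m^2\azero^3\Ustar^4$ is a constant read off at the origin, this ordering is strictly cheaper. The step you flagged as the main obstacle, comparing the two diffusion laws, never arises. What your ordering buys is closeness of $\Psi_\zeta(t,m)$ to $\Psi_{\zetas}(t,m)$ at every fixed $t$, which is more than the lemma needs.
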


\begin{proof}
For $k=2,3,4$ put $J_k(t)=\partial_x^k\Phi_\zeta(t,X^\zeta_t)$ and
$j_k=\partial_x^k\Phi_{\zetas}(0,0)$, so that $j_3=0$ by parity. By \eqref{eq:plat-origin}
with $\nu=\zetas$, $\E|J_k(t)-j_k|\le C(t^{1/2}+\Wone(\zeta,\zetas))$ for $t\le1$, with $C$
depending only on $\xi$. The quantity $\Psi_\zeta(t,m)$ is a polynomial in $a(t)$, $a'(t)$,
$a''(t)$, $m$ and expectations of monomials of degree at most four in $J_2,J_3,J_4$, all
bounded by the constants $B_k$; the functions $a$, $a'$, $a''$ are Lipschitz on $[0,1]$.
Replacing each $J_k(t)$ by $j_k$ and each $a^{(i)}(t)$ by $a^{(i)}(0)$ therefore changes
$\Psi_\zeta(t,m)$ by at most $C_\Psi(t^{1/2}+\Wone(\zeta,\zetas))$, uniformly in
$m\in[0,1]$, and the result is
\[
 \xi''''(0)\Ustar^2+3\azero\xi'''(0)\bigl(0-2m\Ustar^3\bigr)
 +\azero^3\bigl(\Wstar^2+6m^2\Ustar^4\bigr)=\Dstar+6m^2\azero^3\Ustar^4,
\]
since $\xi'''(0)=0$. Choose $K_{\rm loc}\in(0,1)$ and $r_{\rm loc}>0$ with
$C_\Psi(K_{\rm loc}^{1/2}+r_{\rm loc})\le\Dstar/2$. If $\zeta((u,v))=0$, then $(u,v)$ is a
plateau of some height $m\in[0,1]$, and
$\Gamma_\zeta'''=\Psi_\zeta(\cdot,m)\ge\Dstar+6m^2\azero^3\Ustar^4-\Dstar/2\ge\Dstar/2$
there.
\end{proof}

\begin{lemma}[Gap exclusion]\label{lem:plat-local-gap}
Assume $\xi'''(0)=0$, and let $K_{\rm loc},r_{\rm loc}$ be as in
Lemma~\ref{lem:plat-local-plateau}. Let $\Wone(\zeta,\zetas)<r_{\rm loc}$, and let
$0<u<v<K_{\rm loc}$ be local minimum points of $\Obs_\zeta$. Then $\zeta((u,v))>0$.
\end{lemma}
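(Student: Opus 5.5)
The argument is by contradiction, combining the contact conditions of Lemma~\ref{lem:plat-obstacle}(b),(d) with the convexity supplied by Lemma~\ref{lem:plat-local-plateau}. Suppose $\zeta((u,v))=0$. Then $(u,v)$ is an open plateau of $\zeta$ of some height $m\in[0,1]$, and it lies in $(0,K_{\rm loc})$. Since $\Wone(\zeta,\zetas)<r_{\rm loc}$, Lemma~\ref{lem:plat-local-plateau} applies and gives $\Gamma_\zeta'''\ge\Dstar/2>0$ on $(u,v)$. Here $\Dstar>0$ holds by Lemma~\ref{lem:plat-fourth-sign}; the hypothesis $\beta_2>0$ is part of the standing assumptions of this subsection.

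The first step is to turn this into strict convexity of $\Gamma_\zeta'$. By Lemma~\ref{lem:plat-ito}(c), $\Gamma_\zeta$ is $C^3$ on the open plateau. So $\Gamma_\zeta''$ is strictly increasing on $(u,v)$, which means $\Gamma_\zeta'$ is strictly convex on $(u,v)$. We also need $\Gamma_\zeta'$ continuous on the closed interval $[u,v]$, and this follows from $\Gamma_\zeta\in C^1[0,1]$ (Lemma~\ref{lem:plat-ito}(a)).

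The second step supplies the endpoint data. The points $u$ and $v$ are local minimum points of $\Obs_\zeta$ lying in $(0,1)$, because $v<K_{\rm loc}<1$. Lemma~\ref{lem:plat-obstacle}(b) then gives
\[
\Gamma_\zeta(u)=u,\qquad \Gamma_\zeta(v)=v,\qquad \Gamma_\zeta'(u)\le1,\qquad \Gamma_\zeta'(v)\le1 .
\]

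The final step applies the trapezoid principle, Lemma~\ref{lem:plat-obstacle}(d). Under exactly the conditions just established, it says $\Gamma_\zeta'$ cannot be strictly convex on $(u,v)$. This contradicts the first step, so $\zeta((u,v))>0$.

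No step is a genuine obstacle, because all the analytic work sits in the two cited lemmas. The only point to check is that the hypotheses of Lemma~\ref{lem:plat-local-plateau} are met: $(u,v)$ must be an open interval inside $(0,K_{\rm loc})$ with $\zeta((u,v))=0$, which is exactly how we set things up.
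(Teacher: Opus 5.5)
Your proposal is correct and follows the paper's proof essentially step for step. You get the contact conditions at $u,v$ from Lemma~\ref{lem:plat-obstacle}(b), and strict convexity of $\Gamma_\zeta'$ on the assumed plateau from Lemma~\ref{lem:plat-local-plateau}; the trapezoid principle, Lemma~\ref{lem:plat-obstacle}(d), then gives the contradiction.
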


\begin{proof}
By Lemma~\ref{lem:plat-obstacle}(b), $\Gamma_\zeta(u)=u$, $\Gamma_\zeta(v)=v$ and
$\Gamma_\zeta'(u),\Gamma_\zeta'(v)\le1$. If $\zeta((u,v))=0$, then
Lemma~\ref{lem:plat-local-plateau} gives $\Gamma_\zeta'''\ge\Dstar/2>0$ on $(u,v)$, so
$\Gamma_\zeta'$ is strictly convex there. This contradicts
Lemma~\ref{lem:plat-obstacle}(d).
\end{proof}

Figure~\ref{fig:plateau}(a) shows the contradiction. Under \condH\ the lemma fills an
interval $[0,\kappa]$ with support of $\zetas$ (Proposition~\ref{prop:ons-equilibrium}); for
$\zeta_\theta$ with $\theta$ small it makes every point between $q_\theta$ and a support point
$s_\theta$ near zero a contact, so that $\Gamma_{\zeta_\theta}=\id$ on $[q_\theta,s_\theta]$
and $\Gamma_{\zeta_\theta}'(q_\theta)=1$ (Proposition~\ref{prop:ons-onset}). Excluding a gap by the incompatibility of $\Gamma(u)=u$,
$\Gamma(v)=v$, $\Gamma'(u)\le1$, $\Gamma'(v)\le1$ is the mechanism by which
Zhou~\cite{Zho26} showed that the Sherrington--Kirkpatrick Parisi measure is supported on an
interval starting at the origin slightly below the critical temperature. Here the
incompatibility comes from the sign of $\Gamma'''$ near the origin, for every covariance with
$\xi'''(0)=0$ and every measure close to $\zetas$; Section~\ref{sec:nc-reference} gives a
version uniform near the critical covariance.

\subsection{The universal cubic}\label{sec:plat-cubic}

Below the first positive contact $q$ of a constrained minimizer the measure has no mass
except its atom at zero, so $(0,q)$ is a plateau. If the contact is marginal and followed
by an interval of contacts, as Lemma~\ref{lem:plat-local-gap} guarantees for small
$\theta$ (Proposition~\ref{prop:ons-onset}), then $h=\Gamma_\zeta-\id$ satisfies $h(0)=0$ by
evenness, $h(q)=0$ because $q$ is a contact, and $h'(q)=0$ by marginality. These three
boundary conditions and an almost constant third derivative determine a cubic; two exact
identities make this quantitative.

\begin{lemma}[Two identities on a plateau]\label{lem:plat-kernel}
Let $q>0$, and let $g\in C^1[0,q]$ be three times continuously differentiable on $(0,q)$
with $g'''$ bounded and $g(0)=g(q)=g'(q)=0$. Then $g''$ extends continuously to $[0,q]$; write $g''(0+)$ and $g''(q-)$ for its end
values. Moreover
\begin{equation}\label{eq:plat-onset-identity}
 0=\int_0^qt\,g''(t)\,dt=\frac{q^2}2\,g''(0+)+\frac12\int_0^q(q^2-s^2)\,g'''(s)\,ds,
\end{equation}
and
\begin{equation}\label{eq:plat-kernel}
 \int_0^qg(s)\,ds=\int_0^qJ(u)\,g'''(u)\,du,\qquad J(u)=\frac{u^2(q-u)}6\ge0,\qquad
 \int_0^qJ(u)\,du=\frac{q^4}{72}.
\end{equation}
\end{lemma}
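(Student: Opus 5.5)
The plan is to prove both identities by integration by parts, using only the boundary conditions $g(0)=g(q)=g'(q)=0$. The limits at the ends come from the bounded third derivative.

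\emph{Continuous extension of $g''$.} Since $g'''$ is bounded on $(0,q)$, the function $g''$ is Lipschitz on $(0,q)$. It is therefore uniformly continuous there and has finite limits $g''(0+)$ and $g''(q-)$. By the mean value theorem, $g'$ is then differentiable from the right at $0$ and from the left at $q$, with these one-sided second derivatives. For $0<\epsilon<s<q$ we have $g''(s)=g''(\epsilon)+\int_\epsilon^sg'''$. Letting $\epsilon\downarrow0$, and using the boundedness of $g'''$ (dominated convergence), gives $g''(s)=g''(0+)+\int_0^sg'''$. The same limit argument lets us integrate by parts on $[0,q]$ with boundary terms taken as one-sided limits.

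\emph{The onset identity \eqref{eq:plat-onset-identity}.} Integrate by parts:
\[
\int_0^q t\,g''(t)\,dt=\bigl[t\,g'(t)\bigr]_0^q-\int_0^qg'(t)\,dt=q\,g'(q)-\bigl(g(q)-g(0)\bigr)=0 .
\]
For the second equality, substitute $g''(t)=g''(0+)+\int_0^tg'''(s)\,ds$ and apply Fubini, which is legitimate because $g'''$ is bounded. This gives
\[
\int_0^q t\,g''(t)\,dt=\frac{q^2}{2}\,g''(0+)+\int_0^q g'''(s)\int_s^q t\,dt\,ds
=\frac{q^2}{2}\,g''(0+)+\frac12\int_0^q(q^2-s^2)\,g'''(s)\,ds .
\]

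\emph{The kernel identity \eqref{eq:plat-kernel}.} Expand $g$ around $q$ using $g(q)=g'(q)=0$:
\[
g(s)=\int_s^q(r-s)\,g''(r)\,dr=\frac12\int_s^q(r-s)^2\,g''(r)\,dr\cdot 0+\int_s^q(r-s)g''(r)\,dr .
\]
It is cleaner to do the integrations by parts directly. First,
\[
\int_0^q g=\bigl[s\,g\bigr]_0^q-\int_0^q s\,g'=-\int_0^q s\,g'(s)\,ds .
\]
Next,
\[
-\int_0^q s\,g'=-\Bigl[\tfrac{s^2}{2}\,g'\Bigr]_0^q+\int_0^q\tfrac{s^2}{2}\,g''=\int_0^q\tfrac{s^2}{2}\,g''(s)\,ds,
\]
because $g'(q)=0$. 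We want a kernel that vanishes at both ends, so that the unknown end values of $g''$ drop out. Using \eqref{eq:plat-onset-identity}, $\int_0^q t\,g''=0$, we may subtract $\frac q6\int_0^q s\,g''$ and write
\[
\int_0^q g=\int_0^q\Bigl(\frac{s^2}{2}-\frac{q\,s}{6}\Bigr)g''(s)\,ds .
\]
This kernel does not vanish at $q$. Instead, choose the primitive $K(s)$ of $-J$ that vanishes at both ends; one checks $\int_0^qJ=q^4/12-q^4/24$. Let me re-derive this properly. We have
\[
J(u)=\frac{u^2(q-u)}{6},\qquad J'(u)=\frac{2qu-3u^2}{6},
\]
so that $J(0)=J(q)=0$, $J'(0)=0$ and $J'(q)=-q^2/6$. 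Integrating by parts twice,
\[
\int_0^qJ\,g'''=\bigl[J\,g''\bigr]_0^q-\int_0^qJ'\,g''=-\int_0^qJ'\,g''=-\bigl[J'\,g'\bigr]_0^q+\int_0^qJ''\,g .
\]
The boundary term vanishes because $J'(q)\,g'(q)=0$ and $J'(0)=0$. We have $J''(u)=(q-3u)/3$, so
\[
\int_0^qJ\,g'''=\int_0^q\frac{q-3u}{3}\,g(u)\,du=\frac q3\int_0^q g-\int_0^q u\,g .
\]
This is not yet $\int_0^q g$, so it must be combined with $\int_0^q t\,g''=0$. Integrating by parts twice,
\[
\int_0^q t\,g''=\bigl[t\,g'-g\bigr]_0^q=0,
\]
which is consistent but carries no new information. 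I therefore expect the stated kernel to need checking against the test case $g'''\equiv1$, which gives $g=\frac16 t(q-t)^2$. Then
\[
\int_0^q g=\frac{q^4}{72}=\int_0^q J ,
\]
which is consistent. The main obstacle is this bookkeeping: choosing the kernel so that all four boundary terms vanish using only $g(0)=g(q)=g'(q)=0$, with the unknown $g'(0)$ and $g''$ end values eliminated. If the direct route stalls, the plan is the representation route. Write
\[
g=g''(0+)\,P_2+\int_0^q G(\cdot,u)\,g'''(u)\,du,
\]
where $P_2$ is the cubic-free solution of the boundary problem. Then determine $g''(0+)$ from \eqref{eq:plat-onset-identity} and integrate in $s$. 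The resulting weight against $g'''$ must be $J$, since the test cases $g'''=\delta_u$ determine it. Finally, $\int_0^qJ=\frac16\bigl(\frac{q^4}{3}-\frac{q^4}{4}\bigr)=q^4/72$.
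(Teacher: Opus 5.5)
Your continuous extension of $g''$ and your proof of the onset identity \eqref{eq:plat-onset-identity} are correct and follow the paper. The kernel identity \eqref{eq:plat-kernel}, however, is not proved. Your proposal ends with two things, neither of which establishes it. The first is a consistency check on the single case $g'''\equiv1$; one test function cannot identify a kernel. The second is a Green's-function route whose decisive step, that the resulting weight is $J$, is asserted rather than computed, and that step is the whole content of the identity.

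Your derivation stalled because of two slips.

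\emph{The subtracted multiple.} Your reduction $\int_0^qg=\int_0^q\frac{s^2}{2}g''(s)\,ds$ is correct, but you then subtracted $\frac q6\int_0^qs\,g''$; the right multiple is $\frac q3$. The kernel multiplying $g''$ need not vanish at the ends. What matters is that its primitive, which becomes the weight against $g'''$, vanishes at both ends. For $\frac{s^2}2-\lambda s$ this primitive vanishes at $q$ exactly when $\lambda=\frac q3$. That choice gives the weight $\frac{s^2}2-\frac{qs}3=-J'(s)$. One integration by parts, using $J(0)=J(q)=0$ and boundedness of $g''$, then gives $-\int_0^qJ'g''=\int_0^qJg'''$. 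This is the paper's proof.

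\emph{The second integration by parts.} In your direct route, the step after the first integration by parts should read
\[
 -\int_0^qJ'g''=-\bigl[J'g'\bigr]_0^q+\int_0^qJ''g'=\int_0^qJ''g',
\]
with $g'$, not $g$, in the last integral. One more integration by parts closes the argument, since $J'''\equiv-1$ and $g(0)=g(q)=0$:
\[
 \int_0^qJ''g'=\bigl[J''g\bigr]_0^q-\int_0^qJ'''g=\int_0^qg .
\]
So your three-fold integration by parts proves \eqref{eq:plat-kernel} on its own, without the onset identity. It uses only $J(0)=J(q)=J'(0)=0$ together with $g(0)=g(q)=g'(q)=0$.

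Either correction completes the proof; as written, the argument for \eqref{eq:plat-kernel} has a gap. Your intermediate value $\int_0^qJ=q^4/12-q^4/24$ is also wrong, but your final computation $\frac16(\frac{q^4}3-\frac{q^4}4)=\frac{q^4}{72}$ is correct.
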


For $g=\Gamma_\zeta-\id$ the first identity reads
$0=q\Gamma_\zeta'(q)-\Gamma_\zeta(q)=\int_0^qt\,\Gamma_\zeta''(t)\,dt$.

\begin{proof}
Since $g'''$ is bounded, $g''$ is Lipschitz on $(0,q)$ and extends continuously to the
endpoints. Integration by parts gives $\int_0^qtg''(t)\,dt=qg'(q)-g(q)+g(0)=0$, and
inserting $g''(t)=g''(0+)+\int_0^tg'''(s)\,ds$ gives the second form of
\eqref{eq:plat-onset-identity}. For \eqref{eq:plat-kernel}, the conditions $g(q)=g'(q)=0$
give $g(s)=\int_s^q(u-s)g''(u)\,du$, and Fubini's theorem gives
$\int_0^qg=\frac12\int_0^qu^2g''(u)\,du$. Since $\int_0^qug''(u)\,du=0$, we may subtract
$\frac q3$ times it:
\[
 \int_0^qg=\int_0^q\Bigl(\frac{u^2}2-\frac{qu}3\Bigr)g''(u)\,du
 =-\int_0^qJ'(u)\,g''(u)\,du=\int_0^qJ(u)\,g'''(u)\,du,
\]
because $J(0)=J(q)=0$ and $g''$ is bounded. Finally
$\int_0^qJ=\frac16(\frac{q^4}3-\frac{q^4}4)=\frac{q^4}{72}$.
\end{proof}

The model case is $g'''\equiv D$ for a constant $D$. The unique solution with the three
boundary conditions is $h_D(t)=\frac D6\,t(q-t)^2$, with $h_D''(t)=Dt-\frac23Dq$,
so $h_D''(0)=-\frac23Dq$, $h_D''(q)=\frac13Dq$ and $\int_0^qh_D=\frac{Dq^4}{72}$. The
next lemma says that $g$ is close to $h_D$ when $g'''$ is close to $D$, with errors that
are explicit multiples of the deviation.

\begin{lemma}[The universal cubic]\label{lem:plat-cubic}
Let $g$ be as in Lemma~\ref{lem:plat-kernel}, let $D\in\R$, and let $\varepsilon$ satisfy
$\varepsilon\ge\sup_{0<t<q}|g'''(t)-D|$. Then, for $0<t<q$,
\[
 \bigl|g''(t)-h_D''(t)\bigr|\le\frac53\,\varepsilon q,\qquad
 \bigl|g(t)-h_D(t)\bigr|\le\frac56\,\varepsilon q\,(q-t)^2 .
\]
In particular
\[
 \Bigl|g''(0+)+\frac23Dq\Bigr|\le\frac23\,\varepsilon q,\qquad
 \Bigl|g''(q-)-\frac13Dq\Bigr|\le\frac53\,\varepsilon q,\qquad
 \Bigl|\int_0^qg-\frac{Dq^4}{72}\Bigr|\le\frac{\varepsilon q^4}{72}.
\]
\end{lemma}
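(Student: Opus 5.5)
The proof works with the difference $e=g-h_D$. Since $h_D$ is a cubic with the same three boundary conditions as $g$, the function $e$ satisfies the hypotheses of Lemma~\ref{lem:plat-kernel}: $e\in C^1[0,q]$, $e(0)=e(q)=e'(q)=0$, and $e'''=g'''-D$ is bounded by $\varepsilon$ on $(0,q)$. All the estimates are linear in $e$, so it suffices to bound $e''$ and $e$ in terms of $\varepsilon$, with no further reference to $g$.

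\textbf{Step 1: the second derivative.} Identity \eqref{eq:plat-onset-identity} applied to $e$ gives
\[
e''(0+)=-\frac1{q^2}\int_0^q(q^2-s^2)\,e'''(s)\,ds .
\]
Since $\int_0^q(q^2-s^2)\,ds=\frac23q^3$, this yields $|e''(0+)|\le\frac23\varepsilon q$, which is the first of the end estimates. Writing $e''(t)=e''(0+)+\int_0^te'''$ then gives
\[
|e''(t)|\le\tfrac23\varepsilon q+\varepsilon t\le\tfrac53\varepsilon q
\qquad(0<t<q),
\]
and in particular $|e''(q-)|\le\frac53\varepsilon q$. Because $h_D''(0)=-\frac23Dq$ and $h_D''(q)=\frac13Dq$, these are the two stated end estimates for $g''$.

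\textbf{Step 2: the function itself.} From $e(q)=e'(q)=0$ we have the Taylor representation $e(t)=\int_t^q(u-t)\,e''(u)\,du$. Inserting the bound $|e''|\le\frac53\varepsilon q$ gives
\[
|e(t)|\le\tfrac53\varepsilon q\cdot\tfrac12(q-t)^2=\tfrac56\,\varepsilon q\,(q-t)^2 .
\]
This matches the stated bound with exactly the constant $\frac56$, so no sharper pointwise control is needed here.

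\textbf{Step 3: the area.} Integrating the pointwise bound from Step 2 would only give $\frac5{18}\varepsilon q^4$, which is too weak. Instead, identity \eqref{eq:plat-kernel} applied to $e$, together with $J\ge0$ and $\int_0^qJ=q^4/72$, gives
\[
\Bigl|\int_0^qe\Bigr|\le\varepsilon\int_0^qJ=\frac{\varepsilon q^4}{72}.
\]
Combined with $\int_0^qh_D=Dq^4/72$, this is the stated area estimate.

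The only step that needs care is checking that $e$ satisfies the regularity hypotheses of Lemma~\ref{lem:plat-kernel}. This holds because $h_D$ is a polynomial and $e'''=g'''-D$ is bounded on $(0,q)$. Once that is in place, each constant follows from the one-line computations above.
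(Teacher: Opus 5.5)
Your proposal is correct and follows the paper's proof step for step: set $e=g-h_D$, use \eqref{eq:plat-onset-identity} to get $|e''(0+)|\le\frac23\varepsilon q$, integrate $e'''$ to get $|e''|\le\frac53\varepsilon q$, bound $e$ through $e(t)=\int_t^q(u-t)\,e''(u)\,du$, and obtain the area estimate from the kernel identity \eqref{eq:plat-kernel} with $J\ge0$. All constants check out.
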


\begin{proof}
The function $e=g-h_D$ satisfies the hypotheses of Lemma~\ref{lem:plat-kernel} with
$|e'''|\le\varepsilon$. By \eqref{eq:plat-onset-identity} applied to $e$,
$|e''(0+)|\le q^{-2}\varepsilon\int_0^q(q^2-s^2)\,ds=\frac23\varepsilon q$. Then
$e''(t)=e''(0+)+\int_0^te'''$ gives $|e''(t)|\le\frac23\varepsilon q+\varepsilon t
\le\frac53\varepsilon q$, and $e(t)=\int_t^q(u-t)e''(u)\,du$ gives
$|e(t)|\le\frac56\varepsilon q(q-t)^2$. The area bound is \eqref{eq:plat-kernel} applied
to $e$, with $J\ge0$.
\end{proof}

The cubic is universal in that only $D$ and $q$ enter, not the covariance or the measure.
The next corollary applies it to the first plateau, where the boundary curvatures encode two
physical quantities: the curvature at zero is fixed by the atom at zero through
\eqref{eq:plat-zero}, and the curvature at $q-$ is the jump \eqref{eq:plat-jump} produced by
the atom at $q$ (Figure~\ref{fig:plateau}(b)).

\begin{corollary}[The first plateau]\label{cor:plat-first-plateau}
Let $\zeta\in\Pspace([0,1])$ and $0<q<s<1$ be such that $\zeta((0,q))=0$ and
$\Gamma_\zeta(t)=t$ for $q\le t\le s$. Put $m=\zeta(\{0\})$ and, for $D\in\R$,
$\varepsilon=\sup_{0<t<q}|\Gamma_\zeta'''(t)-D|$. Then $\varepsilon<\infty$, and:
\begin{thmlist}
\item $\bigl|\xi'''(0)\,\Phi_{\zeta,xx}(0,0)^2-2\azero^2m\,\Phi_{\zeta,xx}(0,0)^3
 +\frac23Dq\bigr|\le\frac23\,\varepsilon q$;
\item $\bigl|2a(q)^2\,\zeta(\{q\})\,\E U_q^3-\frac13Dq\bigr|\le\frac53\,\varepsilon q$;
\item $\bigl|\Gamma_\zeta(t)-t-\frac D6t(q-t)^2\bigr|\le\frac56\,\varepsilon q(q-t)^2$ for
 $0\le t\le q$, and
 $\bigl|\int_0^q(\Gamma_\zeta(t)-t)\,dt-\frac{Dq^4}{72}\bigr|\le\frac{\varepsilon q^4}{72}$.
\end{thmlist}
If $\xi'''(0)=0$ and $D=\Dstar$, then moreover
$\varepsilon\le C_\Psi(q^{1/2}+\Wone(\zeta,\zetas))+6m^2\azero^3\Ustar^4$, with $C_\Psi$ as
in Lemma~\ref{lem:plat-local-plateau}.
\end{corollary}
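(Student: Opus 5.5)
We apply Lemma~\ref{lem:plat-cubic} to $g=\Gamma_\zeta-\id$ on $[0,q]$, after checking the hypotheses of Lemma~\ref{lem:plat-kernel}, and then identify the end curvatures of $g$ with the atom formulas \eqref{eq:plat-zero} and \eqref{eq:plat-jump}.

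First, the hypotheses. Since $\zeta((0,q))=0$, the interval $(0,q)$ is an open plateau of height $m=\zeta(\{0\})$. By Lemma~\ref{lem:plat-ito}(c), $\Gamma_\zeta$ is $C^3$ there, with $\Gamma_\zeta'''=\Psi_\zeta(\cdot,m)$. This is a bounded expression by \eqref{eq:plat-bounds} and the boundedness of $a,a',a''$ on $[0,1]$, so $\varepsilon<\infty$. Also $g\in C^1[0,1]$ by Lemma~\ref{lem:plat-ito}(a), $g(0)=\Gamma_\zeta(0)=0$, and $g(q)=0$. Because $\Gamma_\zeta=\id$ on $[q,s]$ with $s>q$ and $\Gamma_\zeta$ is $C^1$, we get $g'(q)=\Gamma_\zeta'(q+)-1=0$. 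So Lemma~\ref{lem:plat-cubic} applies, and part (c) is exactly its bounds on $g-h_D$ and on $\int_0^q g$; at $t=0,q$ the bounds hold by continuity.

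For (a) and (b), we identify $g''(0+)$ and $g''(q-)$, the continuous extensions from Lemma~\ref{lem:plat-kernel}, with the one-sided derivatives of $\Gamma_\zeta'$ in Lemma~\ref{lem:plat-ito}(b). At the left end, $\Gamma_\zeta'$ is $C^1$ on $(0,q)$ with $\Gamma_\zeta''$ extending continuously to $0$. A mean value argument then shows that the right derivative of $\Gamma_\zeta'$ at $0$ equals $\lim_{t\downarrow0}\Gamma_\zeta''(t)$, so $g''(0+)=\Gamma_\zeta''(0+)$. Formula \eqref{eq:plat-zero} evaluates this as $\xi'''(0)\Phi_{\zeta,xx}(0,0)^2-2\azero^2m\Phi_{\zeta,xx}(0,0)^3$, and the first inequality of Lemma~\ref{lem:plat-cubic} gives (a). At the right end, the same argument gives $g''(q-)=\Gamma_\zeta''(q-)$. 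On $(q,s)$, $\Gamma_\zeta'\equiv1$, so the right derivative $\Gamma_\zeta''(q+)$ is $0$. Then \eqref{eq:plat-jump} gives $\Gamma_\zeta''(q-)=2a(q)^2\zeta(\{q\})\E U_q^3$, and the second inequality of Lemma~\ref{lem:plat-cubic} gives (b). This endpoint identification is the step needing most care, since the one-sided derivatives in Lemma~\ref{lem:plat-ito}(b) are defined as derivatives of $\Gamma'$ at the point rather than as limits of $\Gamma''$ from inside. If the mean value argument turned out delicate, I would fall back on the explicit formula in Lemma~\ref{lem:plat-ito}(b), whose expectations are continuous in $t$, to match the two notions directly.

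Finally, the bound on $\varepsilon$. Assume $\xi'''(0)=0$ and $D=\Dstar$. For $0<t<q$, Lemma~\ref{lem:plat-local-plateau} gives
\[
 |\Gamma_\zeta'''(t)-\Dstar|\le|\Psi_\zeta(t,m)-\Dstar-6m^2\azero^3\Ustar^4|+6m^2\azero^3\Ustar^4
 \le C_\Psi\bigl(t^{1/2}+\Wone(\zeta,\zetas)\bigr)+6m^2\azero^3\Ustar^4 .
\]
Taking the supremum over $t<q$, with $t^{1/2}\le q^{1/2}$, gives the claimed bound on $\varepsilon$.
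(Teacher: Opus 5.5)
Your proposal is correct and follows the paper's proof essentially step for step. You verify the hypotheses of Lemma~\ref{lem:plat-kernel} for $g=\Gamma_\zeta-\id$, identify the end values of $g''$ with the one-sided derivatives of $\Gamma_\zeta'$ by the mean value theorem, read these off from \eqref{eq:plat-zero} and \eqref{eq:plat-jump} (the right derivative at $q$ vanishes because $\Gamma_\zeta'\equiv1$ on $[q,s]$), apply Lemma~\ref{lem:plat-cubic}, and bound $\varepsilon$ through Lemma~\ref{lem:plat-local-plateau} with $t<q$. The endpoint identification you flag as delicate is exactly the mean value step the paper uses, and it goes through because $\Gamma_\zeta'$ is continuous on $[0,q]$ and differentiable on $(0,q)$, with $\Gamma_\zeta''$ extending continuously to both ends.
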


\begin{proof}
On $[0,q)$ we have $\alpha_\zeta=m$, so $(0,q)$ is a plateau of height $m$. By
Lemma~\ref{lem:plat-ito}, $g=\Gamma_\zeta-\id$ is in $C^1[0,q]$ and three times
continuously differentiable on $(0,q)$, with $g'''=\Gamma_\zeta'''=\Psi_\zeta(\cdot,m)$
bounded by \eqref{eq:plat-bounds}; thus $\varepsilon<\infty$. Also $g(0)=0$ by
Lemma~\ref{lem:plat-ito}(a), and $g(q)=g'(q)=0$ because $\Gamma_\zeta=\id$ on $[q,s]$.
The end values of $g''$ are one-sided derivatives of $\Gamma_\zeta'$, by the mean value
theorem. At zero, $g''(0+)$ is given by \eqref{eq:plat-zero} with $\zeta(\{0\})=m$. At
$q$, the right derivative of $\Gamma_\zeta'$ vanishes because $\Gamma_\zeta'=1$ on
$[q,s]$, so \eqref{eq:plat-jump} gives $g''(q-)=2a(q)^2\zeta(\{q\})\E U_q^3$.
Lemma~\ref{lem:plat-cubic} now gives (a)--(c). The bound on $\varepsilon$ is
Lemma~\ref{lem:plat-local-plateau}, since $t<q$ on the plateau.
\end{proof}

Under the hypotheses of Theorem~\ref{thm:local}, Section~\ref{sec:ons} applies the corollary
to $\zeta=\zeta_\theta$, with $m=\theta$, $q=q_\theta$ and $D=\Dstar$, once
Lemma~\ref{lem:plat-local-gap} has supplied the interval of contacts above $q_\theta$; as
$\theta\downarrow0$, $\varepsilon\to0$, and parts (a)--(c) give the onset, the atom at
$q_\theta$ and the area that enters the envelope identity (Proposition~\ref{prop:ons-onset}(c),
Lemma~\ref{lem:fifth-area}). When the first contact is
strictly Plefka stable, as beyond the de Almeida--Thouless threshold of
Theorem~\ref{thm:nearcrit}(b), $g'(q)=0$ fails and Corollary~\ref{cor:plat-first-plateau}
does not apply.

\section{The bottom of the Parisi measure and the marginal onset}\label{sec:ons}

With the plateau calculus of Section~\ref{sec:plat} in hand, this section proves three of
the four inputs from which Section~\ref{sec:fifth} deduced Theorems~\ref{thm:sk}
and~\ref{thm:local}: the shape of $\zetas$ near zero
(Proposition~\ref{prop:ons-equilibrium}), the marginal onset of $\zeta_\theta$
(Proposition~\ref{prop:ons-onset}) and the area of its first plateau
(Lemma~\ref{lem:fifth-area}). All three read a measure off the function $\Gamma_\zeta$
of~\eqref{eq:intro-obstacle}. Positive support points are contacts, where $\Gamma_\zeta(t)=t$ and
$\Gamma_\zeta'(t)\le1$ (for $\zetas$ these conditions also hold at zero,
Lemma~\ref{lem:ons-zero-in-support}), and near zero two contacts cannot enclose an interval
without mass (Lemma~\ref{lem:plat-local-gap}). So the contacts near zero fill an interval on which
$\Gamma_\zeta'\equiv1$, and the one-sided It\^o formula of Lemma~\ref{lem:plat-ito} turns
this into equations for the distribution function: a quotient formula for $\alphas$ and, from the
atom $\theta$ that the constraint places at zero, the first positive contact
$q_\theta\approx3\theta/(2\rhostar)$ of $\zeta_\theta$ and its atom of about $\theta/2$
(Figure~\ref{fig:bottom}(a)). The universal
cubic then evaluates the area that enters the envelope identity.

\paragraph{Setting.}
Lemma~\ref{lem:ons-zero-in-support} holds for every covariance
$\xi(t)=\sum_{p\ge2}\beta_p^2t^p$ that is not identically zero and has radius of convergence
greater than one. From Section~\ref{sec:ons-equilibrium} on we assume in addition the
hypotheses of Theorem~\ref{thm:local}: $\beta_2>0$, $\xi'''(0)=0$ and~\condH. As in
Section~\ref{sec:plat}, $a=\xi''$, and for the measure $\zeta$ under discussion
$U_t=\Phi_{\zeta,xx}(t,X_t)$, $V_t=\Phi_{\zeta,xxx}(t,X_t)$ and
$W_t=\Phi_{\zeta,xxxx}(t,X_t)$ along its diffusion $X=X^\zeta$. We write
$\Gamma_*=\Gamma_{\zetas}$ and $\Obs_*=\Obs_{\zetas}$; $C_\Psi$, $K_{\rm loc}\in(0,1)$ and
$r_{\rm loc}>0$ are the constants of Lemma~\ref{lem:plat-local-plateau}. All constants depend
on $\xi$, and every limit is taken at fixed $\xi$.

\subsection{Zero lies in the support}\label{sec:ons-zero-support}

\begin{lemma}[Zero lies in the support]\label{lem:ons-zero-in-support}
Let $\xi(t)=\sum_{p\ge2}\beta_p^2t^p$ be not identically zero, with radius of convergence
greater than one. Then $0\in\supp\zetas$ and $1\notin\supp\zetas$. The obstacle $\Obs_*$
attains its minimum over $[0,1]$ at every point of $\supp\zetas$, and every
$u\in\supp\zetas$ satisfies $\Gamma_*(u)=u$ and $\Gamma_*'(u)\le1$.
\end{lemma}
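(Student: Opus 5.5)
The plan is to read everything off the optimality condition \eqref{eq:cons-optimality} of Lemma~\ref{lem:cons-standard}(g) and the contact analysis of Lemma~\ref{lem:plat-obstacle}. By \eqref{eq:cons-optimality}, $\Obs_*\ge o_*$ on $[0,1]$ with equality on $\supp\zetas$, so $\Obs_*$ attains its minimum over $[0,1]$ at every support point. This is the third assertion. Since Lemma~\ref{lem:plat-obstacle}(a) (equivalently Lemma~\ref{lem:cons-standard}(f)) gives $\Obs_*'(1)>0$, the point $1$ is not a minimum point of $\Obs_*$ on $[0,1]$. Hence $1\notin\supp\zetas$.

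For the contact conditions I would take $u\in\supp\zetas$. If $u\in(0,1)$, then $u$ is a local minimum point of $\Obs_*$, and Lemma~\ref{lem:plat-obstacle}(b) gives $\Gamma_*(u)=u$ and $\Gamma_*'(u)\le1$. If $u=0$, then $\Gamma_*(0)=0$ by Lemma~\ref{lem:plat-ito}(a). Moreover $\Obs_*(t)\ge o_*=\Obs_*(0)$ for all $t$, so the endpoint inequality of Lemma~\ref{lem:plat-obstacle}(c) gives $\Gamma_*'(0)\le1$. Thus every support point is a contact in this sense, once $0\in\supp\zetas$ is known.

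The main step is showing $0\in\supp\zetas$. I would argue by contradiction. Suppose $\zetas([0,\epsilon))=0$ for some $\epsilon>0$, and put $u_0=\min\supp\zetas\in(0,1)$. Then $(0,u_0)$ is a plateau of height zero. On it the Parisi equation is the backward heat equation with no nonlinear term, and $X$ is a Gaussian martingale. Then \eqref{eq:plat-second} gives $\Gamma_*''=a'\E U_t^2+a^2\E V_t^2\ge0$ on $(0,u_0)$, so $\Gamma_*$ is convex on $[0,u_0]$. We also have $\Gamma_*(0)=0$ and $\Gamma_*(u_0)=u_0$, so convexity gives $\Gamma_*(t)\le t$ on $[0,u_0]$. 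Then $\Obs_*'=-\frac12 a(\Gamma_*-\id)\ge0$ there, so $\Obs_*(0)\le\Obs_*(u_0)=o_*$. Combined with $\Obs_*\ge o_*$, this gives $\Obs_*(0)=o_*$.

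To reach a contradiction I would use the converse part of Lemma~\ref{lem:cons-standard}(g) through a strict-decrease argument. Convexity of $\Gamma_*$ is strict unless $a'\E U^2+a^2\E V^2\equiv0$. That is impossible for small $t>0$: $V_t$ is not identically zero, since Lemma~\ref{lem:plat-fourth-sign} gives $\Phi_{\zeta,xxx}(t,x)\le-\kappa_t x<0$ for $x>0$, and $X_t$ has full support. So $\Gamma_*(t)<t$ on $(0,u_0)$. This makes $\Obs_*$ strictly increasing there, hence $\Obs_*(0)<o_*$, contradicting $\Obs_*\ge o_*$. I expect the only delicate point to be verifying that \eqref{eq:plat-second} applies up to the endpoints, which follows from the $C^1$ regularity in Lemma~\ref{lem:plat-ito}(a) and the continuity of $\Gamma_*$ on $[0,u_0]$.
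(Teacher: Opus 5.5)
Your proposal is correct and follows the paper's proof essentially step for step. You use the optimality condition of Lemma~\ref{lem:cons-standard}(g) for the minimum on the support, $\Obs_*'(1)>0$ to exclude $1$, the interior and endpoint contact conditions, and strict convexity of $\Gamma_*-\mathrm{id}$ on a height-zero plateau $(0,t_0)$ to force $\Obs_*(0)<o_*$. There is one difference: you obtain $\E V_t^2>0$ from the universal sign of Lemma~\ref{lem:plat-fourth-sign}, which is stated only for $0<x\le\frac12$, but that suffices. The paper instead uses the more elementary fact that $\Phi_{\zetas,xxx}(t,\cdot)\equiv0$ would make $\Phi_{\zetas,x}(t,\cdot)$ linear and hence unbounded. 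Your intermediate non-strict convexity step and the appeal to the converse of Lemma~\ref{lem:cons-standard}(g) are unnecessary.
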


The inclusion $0\in\supp\zetas$ is \cite[Theorem~1(i)]{AC15a}, and the contact conditions on
the support are stated in \cite[Theorem~5]{AC15a} and \cite[Lemma~3.8 and Corollary~3.10]{JT17}.
The proof of \cite[Theorem~1(i)]{AC15a} uses \cite[Theorem~5]{AC15a} at the smallest support
point; when that point is isolated, the inequality $\Gamma_*'\le1$ there can be taken from
\cite[Corollary~3.10]{JT17}, as \cite[Proposition~2.1~(P6) and Appendix~A.1]{Pei26} also notes;
the derivative formulas behind that corollary are stated without proof \cite[Lemma~8.4]{JT17}.
Our direct proof is in
Appendix~\ref{sec:an-zero}: if the smallest support point $t_0$ were positive, $(0,t_0)$
would be a plateau of height zero on which~\eqref{eq:plat-second} makes $\Gamma_*-\id$
strictly convex, and then $\Obs_*(0)<\Obs_*(t_0)=\min_{[0,1]}\Obs_*$. It uses the covariance
only through $\xi''>0$ on $(0,1]$ and $\xi'''\ge0$, so it applies to every covariance in
this paper.

\begin{remark}[The condition \textup{(H)}]\label{rem:ons-H-no-atom}
By Lemma~\ref{lem:ons-zero-in-support}, every neighbourhood of zero has positive
$\zetas$-mass. Condition~\condH\ asks for more: zero is not an isolated point of
$\supp\zetas$, or equivalently $\zetas((0,\varepsilon))>0$ for every $\varepsilon>0$. If
$\zetas(\{0\})=0$, then~\condH\ follows from the lemma, because
$\zetas((0,\varepsilon))=\zetas([0,\varepsilon))>0$. If $\xi'''(0)=0$, the converse also
holds:~\condH\ implies $\zetas(\{0\})=0$ (Proposition~\ref{prop:ons-equilibrium}(b) under
the hypotheses of Theorem~\ref{thm:local}, and Theorem~\ref{thm:atom}(b) in general). So
when $\xi'''(0)=0$, condition~\condH\ says exactly that $\zetas$ has no atom at zero. If
$\xi'''(0)>0$, condition~\condH\ forces an atom at zero, of the size given in
Theorem~\ref{thm:atom}(b), and the two conditions separate: the near-critical mixtures with
a cubic term ($\beta_3>0$) of Theorem~\ref{thm:atom}(c) satisfy~\condH\ and have
$\zetas(\{0\})>0$. Where~\condH\ is known to hold or fail, with its sources, is recorded in
Section~\ref{sec:intro-constrained}.
\end{remark}

\subsection{Interval support and the density at zero}\label{sec:ons-equilibrium}

Proposition~\ref{prop:ons-equilibrium} turns~\condH, which asks only that zero not be
isolated in $\supp\zetas$, into an interval $[0,\kappa]$ of support, by the gap exclusion of
Lemma~\ref{lem:plat-local-gap}, and reads $\zetas$ off the identity $\Gamma_*=\id$ there.
The value $\Gamma_*'(0)=1$ gives $\Ustar$. At zero the It\^o formula for $\Gamma_*''$
contains the atom $\zetas(\{0\})$, multiplied by $-2\azero^2\Ustar^3$, and no other nonzero
term, so the atom vanishes. Inside the interval, $\Gamma_*''=0$ is the linear
equation~\eqref{eq:ons-quotient} for $\alphas(t)$, whose numerator vanishes at zero with
derivative $\Dstar$; so $\alphas(t)=\rhostar t+o(t)$, and the quantile function of $\zetas$
has slope $1/\rhostar$ at zero. On an interval $[t_1,t_2)$ of the support, $t_1=0$ included,
smoothness of $\alphas$ and the quotient formula are \cite[Theorem~2(ii) and its proof]{AC15a},
and~\eqref{eq:ons-slope} is the derivative of that formula at zero when an interval
$[0,\varepsilon)$ lies in the support. Our proof needs only~\condH, from which part~(a) produces
such an interval. We include the short derivation of~\eqref{eq:ons-quotient} along $X$ because
its moment identities~\eqref{eq:ons-moments} also give the derivative $\Dstar$ of the numerator
at zero, hence~\eqref{eq:ons-slope}.

\begin{proof}[Proof of Proposition~\ref{prop:ons-equilibrium}]
By Lemma~\ref{lem:ons-zero-in-support}, every point of $\supp\zetas\cap(0,1)$ is an
interior minimum of $\Obs_*$, at which $\Gamma_*=\id$ and $\Gamma_*'\le1$.

(a) By~\condH\ there is $s_1\in\supp\zetas\cap(0,K_{\rm loc})$. Suppose that some
$t\in(0,s_1)$ is not in $\supp\zetas$. By~\condH, the support has points in $(0,t)$. Put
$u=\max(\supp\zetas\cap[0,t])$ and $v=\min(\supp\zetas\cap[t,s_1])$. Then $0<u<t<v\le s_1$
and $\zetas((u,v))=0$. The points $u,v\in(0,K_{\rm loc})$ are minima of $\Obs_*$ with
$\Gamma_*(u)=u$, $\Gamma_*(v)=v$, $\Gamma_*'(u)\le1$ and $\Gamma_*'(v)\le1$. Since
$\Wone(\zetas,\zetas)=0<r_{\rm loc}$, this contradicts Lemma~\ref{lem:plat-local-gap}. Hence
$(0,s_1]\subset\supp\zetas$. The support is closed, so $[0,s_1]\subset\supp\zetas$; put
$\kappa=s_1$.

(b) By~(a), $\Gamma_*(t)=t$ for $0<t\le\kappa$, and $\Gamma_*(0)=\Phi_{\zetas,x}(0,0)^2=0$ by
oddness. Since $\Gamma_*$ is continuously differentiable, $\Gamma_*'\equiv1$ on
$[0,\kappa]$. At $t=0$ we have $X_0=0$, so $1=\Gamma_*'(0)=\azero\Ustar^2$; since
$\Ustar>0$, $\Ustar=\azero^{-1/2}$. The right derivative of $\Gamma_*'$ at zero is
therefore zero. By the right-derivative formula of Lemma~\ref{lem:plat-ito}(b) at $t=0$ it
equals
\[
 \xi'''(0)\Ustar^2+\azero^2\bigl(\Phi_{\zetas,xxx}(0,0)^2-2\zetas(\{0\})\Ustar^3\bigr)
 =-2\azero^2\zetas(\{0\})\Ustar^3,
\]
because $\xi'''(0)=0$ and $\Phi_{\zetas,xxx}(0,0)=0$ by oddness. Since $\Ustar>0$, we get
$\zetas(\{0\})=0$.

(c) Put $A_2(t)=\E U_t^2$, $A_3(t)=\E U_t^3$ and $B(t)=\E V_t^2$, so that, in the
notation~\eqref{eq:plat-NS}, $N_{\zetas}=a'A_2+a^2B$ and $\sigma_{\zetas}=2a^2A_3$. Since
$0<U_t\le1$, Jensen's inequality gives $A_3\ge A_2^{3/2}>0$. For $0\le s\le t\le1$,
\begin{equation}\label{eq:ons-moments}
\begin{aligned}
 A_2(t)-A_2(s)&=\int_s^ta(\tau)\,\E\bigl(V_\tau^2-2\alphas(\tau)U_\tau^3\bigr)\,d\tau,\\
 A_3(t)-A_3(s)&=3\int_s^ta(\tau)\,\E\bigl(U_\tau V_\tau^2-\alphas(\tau)U_\tau^4\bigr)\,d\tau,\\
 B(t)-B(s)&=\int_s^ta(\tau)\,\E\bigl(W_\tau^2-6\alphas(\tau)U_\tau V_\tau^2\bigr)\,d\tau .
\end{aligned}
\end{equation}
For a measure with finitely many atoms these identities follow from It\^o's formula: on an
interval where the distribution function equals a constant $m$, the processes $U$ and $V$
solve the stochastic differential equations stated after~\eqref{eq:plat-third-pde}, and
$U,V,W$ are continuous across atoms. For $\zetas$ the identities follow by approximation with
finitely atomic measures, using the jet stability~\eqref{eq:plat-jet-stability} of
Lemma~\ref{lem:plat-jets}(a) and the coupling continuity of Lemma~\ref{lem:plat-coupling}.

By~(a) and~(b), $aA_2=\Gamma_*'\equiv1$ on $[0,\kappa]$. The function $A_2$ is
absolutely continuous, and at almost every $t\in(0,\kappa)$ its derivative is the
integrand in~\eqref{eq:ons-moments}. Differentiating $aA_2\equiv1$ at such $t$ gives
$a'A_2+a^2(B-2\alphas A_3)=0$, that is, $\alphas(t)=\psi(t)$ with
$\psi=N_{\zetas}/\sigma_{\zetas}=(a'A_2+a^2B)/(2a^2A_3)$. The functions $A_2,A_3,B$ are
continuous by~\eqref{eq:ons-moments}, and $a>0$ and $A_3>0$, so $\psi$ is continuous on
$[0,\kappa]$. Since $\alphas$ is right continuous, approximating
$t\in[0,\kappa)$ from the right through such points gives $\alphas(t)=\psi(t)$ for every
$t\in[0,\kappa)$. This is~\eqref{eq:ons-quotient}. Now $\alphas$ is continuous on
$[0,\kappa)$, so the integrands in~\eqref{eq:ons-moments} are continuous there. Hence
$A_2,A_3,B$ are continuously differentiable on $[0,\kappa)$, and so is $\alphas=\psi$.

At $t=0$, the numerator $N_{\zetas}$ vanishes, since $a'(0)=\xi'''(0)=0$ and
$B(0)=\Phi_{\zetas,xxx}(0,0)^2=0$. The terms of $N_{\zetas}'(0)$ that contain $a'(0)$ vanish,
so
\[
 N_{\zetas}'(0)=a''(0)A_2(0)+\azero^2B'(0)=\xi''''(0)\Ustar^2+\azero^3\Wstar^2=\Dstar ,
\]
by~\eqref{eq:ons-moments} with $\alphas(0)=0$ and $V_0=0$. The denominator at zero is
$\sigma_{\zetas}(0)=2\azero^2A_3(0)=2\azero^2\Ustar^3=2\sqrt{\azero}$. Since
$N_{\zetas}(0)=0$, $\alphas'(0)=N_{\zetas}'(0)/(2\sqrt{\azero})=\Dstar/(2\sqrt{\azero})$. This
is positive by Lemma~\ref{lem:plat-fourth-sign}.
\end{proof}

\subsection{Marginal onset}\label{sec:ons-onset}

Fix $\theta\in(0,1)$ and write $\Gamma_\theta=\Gamma_{\zeta_\theta}$ and
$\Obs_\theta=\Obs_{\zeta_\theta}$. Since $\zetas(\{0\})=0$
(Proposition~\ref{prop:ons-equilibrium}(b)), the constraint is active,
$\zeta_\theta(\{0\})=\theta$ (Lemma~\ref{lem:cons-regular}(b)), and
Lemma~\ref{lem:cons-variational}(e) applies: the positive contact set $C_\theta$ is a
nonempty compact subset of $(0,1)$ that carries $\zeta_\theta|_{(0,1]}$, and its smallest
point $q_\theta$, the first positive contact, satisfies~\eqref{eq:cons-contact} and
$\Obs_\theta(0)>\Obs_\theta(q_\theta)$. At every positive contact $t$ we have
$\Gamma_\theta(t)=t$ and $\Gamma_\theta'(t)\le1$ by~\eqref{eq:plat-contact}; at $q_\theta$
this is the Plefka inequality~\eqref{eq:intro-plefka}.

\begin{figure}[tp]
\centering
\begin{tikzpicture}[x=1cm, y=1cm, font=\scriptsize,
  >={Stealth[length=1.5mm,width=1.1mm]},
  axis/.style={->, line width=0.4pt, draw=black!75},
  eq/.style={line width=0.7pt, draw=black, dash pattern=on 3pt off 2pt},
  pert/.style={line width=1.0pt, draw=black, line join=miter},
  sketch/.style={line width=2.8pt, draw=black!28, line cap=round},
  excess/.style={fill=black!15},
  deficit/.style={pattern=north east lines, pattern color=black!65},
  brace/.style={decorate, decoration={brace, amplitude=3pt}, line width=0.5pt},
  tick/.style={line width=0.4pt, draw=black!75},
  ttl/.style={font=\footnotesize, anchor=south west, inner sep=0pt},
  dot/.style={circle, fill=black, inner sep=0pt, minimum size=3pt},
]
\begin{scope}
 \node[ttl] at (-0.45,3.3) {(a) Constraint $\zeta(\{0\})\ge\theta$};
 \fill[excess] (0,0) -- (0,1.2) -- (1.68,1.2) -- cycle;
 \fill[deficit] (1.68,1.2) -- (2.52,1.2) -- (2.52,1.8) -- cycle;
 \draw[axis] (0,0) -- (4.1,0) node[anchor=north, inner sep=2pt] {$t$};
 \draw[axis] (0,0) -- (0,3.05) node[anchor=west, inner sep=2pt] {$\zeta([0,t])$};
 \draw[sketch] (2.52,1.8) -- (3.78,2.7);
 \draw[eq] (0,0) -- (3.78,2.7);
 \draw[pert] (0,0) -- (0,1.2) -- (2.52,1.2) -- (2.52,1.8);
 \draw[tick] (0,1.2) -- (-0.07,1.2) node[anchor=east, inner sep=3pt] {$\theta$};
 \draw[tick] (0,1.8) -- (-0.07,1.8) node[anchor=east, inner sep=3pt] {$\frac32\theta$};
 \draw[tick] (2.52,0) -- (2.52,-0.07)
   node[anchor=north, inner sep=2pt] {$q_\theta\approx\frac{3\theta}{2\rhostar}$};
 \draw[brace] (2.6,1.8) -- (2.6,1.2);
 \node[anchor=west, inner sep=1pt] at (2.7,1.5) {$\approx\frac\theta2$};
 \node[anchor=south, inner sep=2pt] at (0.85,1.2) {$\zeta_\theta$};
 \node[anchor=north west, inner sep=1pt] at (3.55,2.3) {$\zetas$};
\end{scope}
\begin{scope}[xshift=5.6cm]
 \node[ttl] at (-0.1,3.3) {(b) Field $h$: $\qmin^3\approx3h^2/(\azero\Dstar)$};
 \fill[deficit] (0,0) -- (2.52,0) -- (2.52,1.8) -- cycle;
 \draw[axis] (0,0) -- (4.1,0) node[anchor=north, inner sep=2pt] {$t$};
 \draw[axis] (0,0) -- (0,3.05) node[anchor=west, inner sep=2pt] {$\zeta([0,t])$};
 \draw[sketch] (2.52,1.8) -- (3.78,2.7);
 \draw[eq] (0,0) -- (3.78,2.7);
 \draw[pert] (0,0) -- (2.52,0) -- (2.52,1.8);
 \draw[tick] (2.52,0) -- (2.52,-0.07) node[anchor=north, inner sep=2pt] {$\qmin$};
 \draw[brace] (2.6,1.8) -- (2.6,0);
 \node[anchor=west, align=left, inner sep=1pt] at (2.72,0.9)
   {atom\\ $\approx\zetas([0,\qmin])$};
 \node[anchor=south east, inner sep=2pt] at (2.48,1.8) {$\zeta_h$};
 \node[anchor=north west, inner sep=1pt] at (3.55,2.3) {$\zetas$};
\end{scope}
\begin{scope}[xshift=11.2cm]
 \node[ttl] at (-0.45,3.3) {(c) Atom $m_0$ at zero, $\theta>m_0$};
 \fill[excess] (0,0.9) -- (0,1.8) -- (1.26,1.8) -- cycle;
 \fill[deficit] (1.26,1.8) -- (1.89,1.8) -- (1.89,2.25) -- cycle;
 \draw[axis] (0,0) -- (4.1,0) node[anchor=north, inner sep=2pt] {$t$};
 \draw[axis] (0,0) -- (0,3.05) node[anchor=west, inner sep=2pt] {$\zeta([0,t])$};
 \draw[sketch] (1.89,2.25) -- (2.52,2.7);
 \draw[eq] (0,0.9) -- (2.52,2.7);
 \node[dot] at (0,0.9) {};
 \draw[pert] (0,0) -- (0,1.8) -- (1.89,1.8) -- (1.89,2.25);
 \draw[tick] (0,0.9) -- (-0.07,0.9) node[anchor=east, inner sep=3pt] {$m_0$};
 \draw[tick] (0,1.8) -- (-0.07,1.8) node[anchor=east, inner sep=3pt] {$\theta$};
 \draw[tick] (1.89,0) -- (1.89,-0.07)
   node[anchor=north, inner sep=2pt] {$q_\theta\approx\frac{3(\theta-m_0)}{2\rhostar}$};
 \draw[brace] (1.97,2.25) -- (1.97,1.8);
 \node[anchor=west, inner sep=1pt] at (2.08,2.0) {$\approx\frac{\theta-m_0}2$};
 \node[anchor=south, inner sep=2pt] at (0.6,1.8) {$\zeta_\theta$};
 \node[anchor=north west, inner sep=1pt] at (1.0,1.3) {$\zetas$};
\end{scope}
\end{tikzpicture}
\caption{Three perturbations of the bottom of the Parisi measure, schematically:
distribution functions $t\mapsto\zeta([0,t])$ near $t=0$, dashed for the Parisi measure
$\zetas$, of slope $\rhostar$ at zero; solid for the perturbed measure, thick grey
(schematic) above its first positive contact. The perturbed function lies above the
dashed one where shaded and below it where hatched.
(a)~The constrained minimizer $\zeta_\theta$ for small $\theta$ under the hypotheses of
Theorem~\ref{thm:local}: an atom $\theta$ at zero, no mass in $(0,q_\theta)$ and, to
leading order, an atom $\frac\theta2$ at $q_\theta\approx\frac{3\theta}{2\rhostar}$
(Proposition~\ref{prop:ons-onset}).
(b)~The minimizer $\zeta_h$ in a small field $h$ (Theorem~\ref{thm:field}): no mass in
$[0,\qmin(h))$, then an atom carrying, to leading order, the mass $\zetas([0,\qmin(h)])$;
the edge $\qmin(h)$ is a marginal contact.
(c)~A near-critical mixture with a cubic term and $m_0=\zetas(\{0\})>0$, constrained at
$\theta$ slightly above $m_0$ (Theorem~\ref{thm:atom}(c),
Proposition~\ref{prop:atom-odd-onset}): the first contact and its atom are those of (a)
with $\theta-m_0$ in place of $\theta$.
To leading order the curves rejoin at the first positive contact; in (a) and (c) this is
the balance $\theta+\frac\theta2=\rhostar q_\theta$, respectively
$\theta+\frac{\theta-m_0}2=m_0+\rhostar q_\theta$, in (b) it is
Theorem~\ref{thm:field}(b). In (a), if $\zetas([0,t])=\rhostar t$, the onset
$q_\theta=\frac{3\theta}{2\rhostar}$ is exactly the condition
$\int_0^{q_\theta}t\,\{\zeta_\theta([0,t])-\zetas([0,t])\}\,dt=0$: the two triangles have
opposite first moments. Their total area is
$\frac{5\theta^2}{8\rhostar}=\frac{5\sqrt{\azero}}{4\Dstar}\,\theta^2$, so
$\Wone(\zeta_\theta,\zetas)$ is at least of order $\theta^2$, whereas $p(\theta)-\feq$ is
of order $\theta^5$. For $\xi\in\Kcl(\varepsilon_*)$, Theorem~\ref{thm:nearcrit}(c) shows
that this area is $\Wone(\zeta_\theta,\zetas)$ up to a factor $1+O(\theta)$ and that the
quantile functions agree above the first contact up to $O(\theta^5)$
(Section~\ref{sec:nc-stability}).}
\label{fig:bottom}
\end{figure}

Proposition~\ref{prop:ons-onset}(b) fixes the shape of $\zeta_\theta$ at its first contact:
as for $\zetas$, Lemma~\ref{lem:plat-local-gap} rules out two contacts near zero with no mass
between them, and since $\zeta_\theta$ is close to $\zetas$, which charges every subinterval
of $[0,\kappa]$, the contacts of $\zeta_\theta$ fill an interval $[q_\theta,s_\theta]$, with
$\Gamma_\theta'=1$ at its left end. At a general contact the Plefka inequality may be strict
(Theorem~\ref{thm:nearcrit}(b)); near zero it is an equality. Part~(c) is a computation on
the plateau $(0,q_\theta)$. There $h_\theta=\Gamma_\theta-\id$ vanishes at zero and, with its
derivative, at $q_\theta$, and $h_\theta'''\approx\Dstar$; so $h_\theta$ is close to the
universal cubic $\frac{\Dstar}6\,t\,(q_\theta-t)^2$ (Figure~\ref{fig:plateau}(b)), whose
second derivative is $-\frac23\Dstar q_\theta$ at zero and $\frac13\Dstar q_\theta$ at
$q_\theta$. Matching the first with the exact value
$\Gamma_\theta''(0+)=-2\azero^2\theta\,\Phi_{\zeta_\theta,xx}(0,0)^3$ produced by the atom at
zero fixes $q_\theta$; the drop of $\Gamma_\theta''$ from $\frac13\Dstar q_\theta$ to zero at
$q_\theta$ is carried, in the jump formula~\eqref{eq:plat-jump}, by the atom
$\zeta_\theta(\{q_\theta\})$, which fixes its size.

\begin{proof}[Proof of Proposition~\ref{prop:ons-onset}]
(a) By Lemma~\ref{lem:cons-variational}(a), $\feq\le p(\theta)\le\feq+\frac12\xi''(1)\,\theta$,
and $\theta\mapsto\zeta_\theta$ is weakly continuous at $\theta=0$, where $\zeta_0=\zetas$. Since
$\Wone$ metrizes the weak topology, $\Wone(\zeta_\theta,\zetas)\to0$. Let $\kappa$ be as in
Proposition~\ref{prop:ons-equilibrium}(a), and let $0<\varepsilon<\kappa$. The open sets
$(0,\varepsilon)$ and $(\kappa/2,\kappa)$ meet $\supp\zetas\supset[0,\kappa]$, so they have
positive $\zetas$-mass. By the portmanteau theorem, they have positive $\zeta_\theta$-mass
for all small $\theta$. Hence $\zeta_\theta$ has support points in both intervals. These
points lie in $C_\theta$, which is closed and carries $\zeta_\theta|_{(0,1]}$; so
$q_\theta<\varepsilon$ for all small $\theta$.

(b) By~(a), we may choose $\theta_0\in(0,1)$ such that, for $0<\theta<\theta_0$,
$\Wone(\zeta_\theta,\zetas)<r_{\rm loc}$, $q_\theta<\kappa/2$, and $\zeta_\theta$ has a
support point $s_\theta\in(\kappa/2,\kappa)$. Then $s_\theta\in C_\theta$, and
$s_\theta<\kappa<K_{\rm loc}$. Suppose some $t\in(q_\theta,s_\theta)$ is not in $C_\theta$.
Put $u=\max(C_\theta\cap[q_\theta,t])$ and $v=\min(C_\theta\cap[t,s_\theta])$. Then $u<t<v$
and $(u,v)\cap C_\theta=\emptyset$, so $\zeta_\theta((u,v))=0$. The points
$u,v\in(0,K_{\rm loc})$ are minima of $\Obs_\theta$, with $\Gamma_\theta=\id$ and
$\Gamma_\theta'\le1$ there. This contradicts Lemma~\ref{lem:plat-local-gap}. Thus
$[q_\theta,s_\theta]\subset C_\theta$, and $\Gamma_\theta=\id$ on $[q_\theta,s_\theta]$.
Since $\Gamma_\theta$ is continuously differentiable, $\Gamma_\theta'(q_\theta)=1$, and
$\Gamma_\theta'=a\,\E U^2$ (Lemma~\ref{lem:plat-ito}) gives the displayed form of
marginality.

(c) Fix $0<\theta<\theta_0$, and write $q=q_\theta$ and
$\bar U_\theta=\Phi_{\zeta_\theta,xx}(0,0)$. By~(b) and~\eqref{eq:cons-contact},
$\zeta_\theta((0,q))=0$ and $\Gamma_\theta=\id$ on $[q,s_\theta]$, so
Corollary~\ref{cor:plat-first-plateau} applies to $\zeta_\theta$ with $m=\theta$ and
$D=\Dstar$. Since $\xi'''(0)=0$, \eqref{eq:plat-zero} reads
\begin{equation}\label{eq:ons-onset-zero}
 \Gamma_\theta''(0+)=-2\azero^2\theta\,\bar U_\theta^3,\qquad
 \bar U_\theta\longrightarrow\Ustar=\azero^{-1/2},
\end{equation}
where the limit follows from~\eqref{eq:plat-jet-stability} and~(a). The last bound of the
corollary and~(a) give
\[
 \eta_\theta:=\sup_{0<t<q_\theta}\bigl|\Gamma_\theta'''(t)-\Dstar\bigr|
 \le C_\Psi\bigl(q_\theta^{1/2}+\Wone(\zeta_\theta,\zetas)\bigr)+6\theta^2\azero^3\Ustar^4
 \longrightarrow0 .
\]

\emph{The onset.} Part~(a) of the corollary, which rests on the exact
identity~\eqref{eq:plat-onset-identity}, gives
\[
 \Bigl|2\azero^2\theta\,\bar U_\theta^3-\frac23\Dstar q\Bigr|\le\frac23\,\eta_\theta\,q .
\]
Since $\Dstar>0$ (Lemma~\ref{lem:plat-fourth-sign}),
\[
 \frac{q_\theta}{\theta}=\frac{3\azero^2\bar U_\theta^3}{\Dstar+O(\eta_\theta)}
 \longrightarrow\frac{3\azero^2\Ustar^3}{\Dstar}=\frac{3\sqrt{\azero}}{\Dstar}
 =\frac{3}{2\rhostar}.
\]

\emph{The atom at the contact.} Part~(b) of the corollary, which rests on the jump
formula~\eqref{eq:plat-jump} and on $\Gamma_\theta'\equiv1$ on $[q,s_\theta]$, gives
$|2a(q)^2\zeta_\theta(\{q\})\E U_q^3-\frac13\Dstar q|\le\frac53\eta_\theta q$.
By~\eqref{eq:plat-origin} with $k=2$, $\nu=\zetas$ and $0<U\le1$,
$a(q_\theta)^2\E U_{q_\theta}^3\to\azero^2\Ustar^3=\sqrt{\azero}$. Hence
\[
 \frac{\zeta_\theta(\{q_\theta\})}{\theta}
 =\frac{\{\frac13\Dstar+O(\eta_\theta)\}\,q_\theta/\theta}{2\sqrt{\azero}+o(1)}
 \longrightarrow\frac{\Dstar}{3}\cdot\frac{3\sqrt{\azero}}{\Dstar}\cdot
 \frac{1}{2\sqrt{\azero}}=\frac12 .
\qedhere
\]
\end{proof}

Propositions~\ref{prop:ons-onset}(c) and~\ref{prop:ons-equilibrium}(c) give
$\zeta_\theta([0,q_\theta])=\frac32\theta+o(\theta)$ and
$\alphas(q_\theta)=\rhostar q_\theta+o(q_\theta)=\frac32\theta+o(\theta)$: to leading order
the distribution function of $\zeta_\theta$ rejoins that of $\zetas$ at the first positive
contact, and the constraint gathers the mass $\frac32\theta$ that $\zetas$ spreads over
$[0,q_\theta]$ into atoms $\theta$ at zero and $\frac12\theta$ at $q_\theta$
(Figure~\ref{fig:bottom}(a)). Above $q_\theta$, Proposition~\ref{prop:ons-onset} says only
that $[q_\theta,s_\theta]$ consists of contacts; for $\xi\in\Kcl(\varepsilon_*)$,
Theorem~\ref{thm:nearcrit}(c) shows that the quantile functions of $\zeta_\theta$ and
$\zetas$ agree above $x_\theta=\zeta_\theta([0,q_\theta])$ up to $O(\theta^5)$. The same
calculus describes two other perturbations of the bottom of the measure. In a small field
$h$ the mass of $\zetas$ on $[0,\qmin(h)]$ is gathered, to leading order, into one atom at
$\qmin(h)$ (Theorem~\ref{thm:field}(b), Figure~\ref{fig:bottom}(b)). The near-critical
mixtures with a cubic term of Theorem~\ref{thm:atom}(c) have an atom $m_0=\zetas(\{0\})$ at
zero; at levels $f$ slightly above $\feq$, $\theta_f>m_0$ and $\zeta_f$ has a marginal first
contact $q_f$ of order $\theta_f-m_0$ carrying an atom $\frac12(\theta_f-m_0)$ to leading
order (Section~\ref{sec:atom-odd}, Figure~\ref{fig:bottom}(c)).

\subsection{The area of the first plateau}\label{sec:ons-area}

Since $\zetas(\{0\})=0$, the envelope identity~\eqref{eq:cons-envelope} holds at every
$\theta\in(0,1)$, and Lemma~\ref{lem:fifth-area} evaluates its right side
$\frac12\int_0^{q_\theta}\xi''(s)\{\Gamma_\theta(s)-s\}\,ds$. To leading order,
$\Gamma_\theta-\id$ is the universal cubic $\frac{\Dstar}6\,t(q_\theta-t)^2$ of
Lemma~\ref{lem:plat-cubic} on the first plateau, whose integral is $\Dstar q_\theta^4/72$,
and $\xi''\approx\azero$ there (Figure~\ref{fig:plateau}(b),(c));
Corollary~\ref{cor:plat-first-plateau} makes this quantitative from the three boundary values
and the uniform limit of the third derivative.
With the lemma, all four inputs of Section~\ref{sec:fifth-inputs} are proved.

\begin{proof}[Proof of Lemma~\ref{lem:fifth-area}]
In this proof $C$ denotes constants that depend only on $\xi$. Fix $\theta\in(0,\theta_0)$,
with $\theta_0$ as in Proposition~\ref{prop:ons-onset}(b), and write $q=q_\theta$ and
$g=\Gamma_\theta-\id$ on $[0,q]$. As in the proof of Proposition~\ref{prop:ons-onset}(c),
Corollary~\ref{cor:plat-first-plateau} applies to $\zeta_\theta$ with $m=\theta$ and
$D=\Dstar$: the measure has no mass in $(0,q)$ by~\eqref{eq:cons-contact}, and
$\Gamma_\theta=\id$ on an interval $[q,s_\theta]$ by Proposition~\ref{prop:ons-onset}(b). Its
error parameter is the quantity $\eta_\theta$ of that proof, which tends to zero as
$\theta\downarrow0$, as shown there, because $q_\theta\to0$, $\zeta_\theta\Rightarrow\zetas$
(Proposition~\ref{prop:ons-onset}(a)) and $\theta\to0$. Part~(c) of the corollary, which rests on the kernel
identity~\eqref{eq:plat-kernel}, gives
\begin{equation}\label{eq:fifth-area}
 \Bigl|\int_0^qg(s)\,ds-\frac{\Dstar}{72}\,q^4\Bigr|\le\frac{\eta_\theta}{72}\,q^4,
 \qquad |g(s)|\le\Bigl(\frac{\Dstar}6+\frac56\,\eta_\theta\Bigr)q^3\quad(0\le s\le q).
\end{equation}
Since $\xi'''(0)=0$, Taylor's formula gives $|\xi''(s)-\azero|\le Cs^2$ on $[0,1]$. The
envelope identity and the pointwise bound in~\eqref{eq:fifth-area} give
\begin{equation}\label{eq:fifth-envelope-error}
 \Bigl|p'(\theta)-\frac{\azero}2\int_0^qg(s)\,ds\Bigr|
 \le\frac12\int_0^q|\xi''(s)-\azero|\,|g(s)|\,ds\le Cq^6
\end{equation}
for small $\theta$. Together with the area bound in~\eqref{eq:fifth-area},
$p'(\theta)=\frac{\azero\Dstar}{144}\,q^4\{1+O(\eta_\theta)\}+O(q^6)$. Since $\Dstar>0$,
$\eta_\theta\to0$ and $q_\theta\to0$, this is the claim.
\end{proof}

\section{A small external field: proof of Theorem~\ref*{thm:field}}\label{sec:fld}

The constraint forces mass at zero; an external field $h\ne0$ does the opposite and removes zero
from the support (Lemma~\ref{lem:fld-diffusion}(c)). For small $|h|$ the support of $\zeta_h$
therefore starts after a gap $[0,\qmin(h))$, with an atom at $\qmin(h)$ that carries, to leading
order, the whole zero-field mass of $[0,\qmin(h)]$, followed by an interval of support. The edge
$\qmin(h)$ is a marginal contact of order $|h|^{2/3}$, and the free energy acquires a term of order
$|h|^{10/3}$ (Figure~\ref{fig:bottom}(b)). We prove Theorem~\ref{thm:field} by running the plateau
calculus of Section~\ref{sec:plat} from the starting point $h$. The hypotheses are those of
Theorem~\ref{thm:local}: radius of convergence greater than one, $\beta_2>0$, $\xi'''(0)=0$ and
\condH; nothing else is assumed.

\begin{remark}[No nondegeneracy hypothesis]\label{rem:fld-nondegeneracy}
The proof needs $\Dstar>0$, for the uniform convexity of Lemma~\ref{lem:fld-plateau}, to
divide by $\Dstar$ in part~(b), and for $\lambda>0$ in part~(d). This is automatic: since
$\beta_2>0$, Lemma~\ref{lem:plat-fourth-sign}, which holds for every probability measure on
$[0,1]$ and uses neither \condH\ nor $\xi'''(0)=0$, gives $\Dstar\ge\azero^3\kappa_0^2>0$.
Likewise an interval of support
at zero and a positive density there are not assumed: under \condH\ both are conclusions of
Theorem~\ref{thm:local} (Proposition~\ref{prop:ons-equilibrium}).
\end{remark}

\paragraph{Idea of the proof.}
Let $q=\qmin(h)$. The gap $[0,q)$ is a plateau of height zero. On it the diffusion
\eqref{eq:fld-sde} has no drift, so its value at the edge is exactly Gaussian, with law
$\mathcal N(h,\xi'(q))$. The contact condition $\Gamma_{\zeta_h,h}(q)=q$, the marginality
condition of part~(a) and the definition of the magnetization then become three exact moment
identities for the odd profile $v=\Phi_{\zeta_h,x}(q,\cdot)$ under this law
(see \eqref{eq:fld-exact}):
\[
 \E v(X)^2=q,\qquad \E v'(X)^2=\frac1{\xi''(q)},\qquad \E v(X)=\mfield(h),
 \qquad X\sim\mathcal N\bigl(h,\xi'(q)\bigr).
\]
In the combination $\E v(X)^2-\xi'(q)\,\E v'(X)^2$ the terms of order $\xi'(q)$ and $\xi'(q)^2$
of the Gaussian expansion of $v$ cancel, and what remains balances $h^2/\azero$ against
$\Dstar q^3/3$ (Lemma~\ref{lem:fld-taylor}). This is the exponent $2/3$. The constant is the
$\Dstar$ of the constrained onset because the same jets $\Ustar$ and $\Wstar$ of $\Phi_{\zetas}$ at
the origin enter; Remark~\ref{rem:fld-plateau-picture} reads the exponent directly off the
plateau calculus. The identities presuppose part~(a): the bottom of the support of $\zeta_h$ is a
single interval whose lower edge is a marginal contact. Its proof is the gap exclusion of
Section~\ref{sec:plat-gap}, made uniform in a small starting point $h$ of the diffusion.

Section~\ref{sec:fld-setting} states three routine lemmas on the diffusion started at $h$, proved
in Appendix~\ref{app:field}; Section~\ref{sec:fld-proof} proves part~(a),
Section~\ref{sec:fld-identities} parts (b)--(d), and Section~\ref{sec:fld-sk} the last sentence of
Theorem~\ref{thm:field}.

\subsection{Setting and inputs}\label{sec:fld-setting}

Throughout this section $\xi(t)=\sum_{p\ge2}\beta_p^2t^p$ has radius of convergence greater than
one, $\azero=\xi''(0)=2\beta_2^2>0$ and $\xi'''(0)=6\beta_3^2=0$; condition \condH\ is assumed only
where stated. Put $a(t)=\xi''(t)$. Since all coefficients are nonnegative, $\azero\le a\le\xi''(1)$
on $[0,1]$, and $\xi'$ is strictly increasing there. For a probability measure $\zeta$ on $[0,1]$
we write $\alpha_\zeta(t)=\zeta([0,t])$, and $\Phi_\zeta$ is the solution of
\eqref{eq:intro-pde}; it does not depend on the field.

\paragraph{The free energy in a field.}
For every $h\in\R$ the functional $P^h_\zeta$ of Theorem~\ref{thm:field} has a unique minimizer
$\zeta_h$ \cite[Theorem~2 and Corollary~1]{AC15b}. Moreover $P^0_\zeta=P_\zeta$, $\zeta_0=\zetas$
and $\Ffield(0)=\feq$. Let $H_N$ be as in \eqref{eq:intro-covariance}. Whenever the Parisi formula
with an external field \cite{Tal06,Pan14} (see also \cite[Theorem~1]{AC15b}) applies, for instance for every polynomial $\xi$,
\begin{equation}\label{eq:fld-free-energy}
 \lim_{N\to\infty}\frac1N\log\sum_{\sigma\in\{-1,1\}^N}
 \exp\Bigl\{H_N(\sigma)+h\sum_{i=1}^N\sigma_i\Bigr\}=\log2+\Ffield(h)
 \qquad\text{almost surely,}
\end{equation}
so that $\Ffield$ is the free energy in the field $h$ and $\mfield$ is the magnetization.
Theorem~\ref{thm:field} concerns only the variational quantities $\Ffield$ and $\zeta_h$; the
formula \eqref{eq:fld-free-energy} serves to interpret them.

The diffusion of Theorem~\ref{thm:field} is \eqref{eq:intro-diffusion} started at $h$,
\begin{equation}\label{eq:fld-sde}
 dX_t=\xi''(t)\alpha_\zeta(t)\Phi_{\zeta,x}(t,X_t)\,dt+\sqrt{\xi''(t)}\,dB_t,
 \qquad X_0=h .
\end{equation}
We write $X^{\zeta,h}$ for its solution and
\begin{equation}\label{eq:fld-obstacle}
 \Gamma_{\zeta,h}(t)=\E\Phi_{\zeta,x}(t,X^{\zeta,h}_t)^2,\qquad
 \Obs_{\zeta,h}(t)=\frac12\int_t^1\xi''(s)\bigl\{\Gamma_{\zeta,h}(s)-s\bigr\}\,ds ,
\end{equation}
so that $\Gamma_{\zeta,0}=\Gamma_\zeta$ and $\Obs_{\zeta,0}=\Obs_\zeta$ are the objects of
\eqref{eq:intro-obstacle}. As in Section~\ref{sec:plat}, $U_t$, $V_t$ and $W_t$ denote
$\Phi_{\zeta,xx}$, $\Phi_{\zeta,xxx}$ and $\Phi_{\zeta,xxxx}$ evaluated at $(t,X^{\zeta,h}_t)$.

\paragraph{Facts about the Parisi equation.}
Let $\zeta$ be a probability measure on $[0,1]$; the results of \cite{AC15a,AC15b} apply under our
radius assumption (Section~\ref{sec:cons-standard}). If $\zeta$ has finitely many atoms and
$\alpha_\zeta=m$ on $[t_l,t_{l+1})$, where $t_{l+1}$ is the next atom or $t_{l+1}=1$, then for
$t_l\le s<t_{l+1}$ the Cole--Hopf form \cite[(11)--(12)]{AC15a} reads
\[
 \Phi_\zeta(s,x)=\frac1m\log\E\exp\Bigl\{m\,\Phi_\zeta\bigl(t_{l+1},
 x+\sqrt{\xi'(t_{l+1})-\xi'(s)}\,Z\bigr)\Bigr\},
\]
with $Z$ standard Gaussian, read as $\E\Phi_\zeta(t_{l+1},x+\sqrt{\xi'(t_{l+1})-\xi'(s)}\,Z)$ when
$m=0$. For general $\zeta$, $\Phi_\zeta$ is the uniform limit of $\Phi_{\zeta_n}$ along finitely
atomic $\zeta_n\to\zeta$. By Lemma~\ref{lem:cons-standard}(a) and \eqref{eq:plat-bounds},
$\Phi_\zeta(t,\cdot)$ is even, its spatial derivatives are continuous and converge uniformly along
weakly convergent sequences of measures, $|\Phi_{\zeta,x}|\le1$, $0<\Phi_{\zeta,xx}\le1$ and
$|\partial_x^j\Phi_\zeta|\le B_j$ on $[0,1]\times\R$ for $j\ge1$; and jet stability
\eqref{eq:plat-jet-stability} gives
$\|\partial_x^k(\Phi_\zeta-\Phi_\nu)\|_\infty\le C_k\|a\|_\infty\Wone(\zeta,\nu)$ for $k\ge1$ and
every probability measure $\nu$ on $[0,1]$.

\paragraph{The diffusion started at $h$.}
The next two lemmas extend to \eqref{eq:fld-sde} facts that are standard for $h=0$, and the third
records the dependence on $h$, by the routine envelope argument. They are proved in
Appendix~\ref{app:field}; in the first two the starting point enters only through the symmetry
$h\mapsto-h$, the initial gap and the sign of the obstacle at zero.

\needspace{4\baselineskip}
\begin{lemma}[Symmetry, the initial gap and optimality]\label{lem:fld-diffusion}
Let $\zeta$ be a probability measure on $[0,1]$ and $h\in\R$.
\begin{thmlist}
\item Equation~\eqref{eq:fld-sde} has a pathwise unique continuous solution, and $X^{\zeta,-h}$
 has the law of $-X^{\zeta,h}$. Consequently $\Gamma_{\zeta,-h}=\Gamma_{\zeta,h}$,
 $\E\Phi_{\zeta,xx}(t,X^{\zeta,-h}_t)^2=\E\Phi_{\zeta,xx}(t,X^{\zeta,h}_t)^2$,
 $P^{-h}_\zeta=P^h_\zeta$ and $\zeta_{-h}=\zeta_h$.
\item If $0<q\le1$ and $\zeta([0,q))=0$, then $X^{\zeta,h}_q\sim\mathcal N(h,\xi'(q))$ and
 \begin{equation}\label{eq:fld-heat}
  \Phi_\zeta(t,x)=\E\Phi_\zeta\bigl(q,x+\sqrt{\xi'(q)-\xi'(t)}\,Z\bigr)\qquad(0\le t\le q).
 \end{equation}
\item The measure $\zeta_h$ gives full mass to the set of minimizers of $\Obs_{\zeta_h,h}$ over
 $[0,1]$, and every $q\in\supp\zeta_h\cap(0,1)$ satisfies
 \begin{equation}\label{eq:fld-sc}
  \Gamma_{\zeta_h,h}(q)=q,\qquad \xi''(q)\,\E\Phi_{\zeta_h,xx}(q,X^{\zeta_h,h}_q)^2\le1 .
 \end{equation}
 If $h\ne0$, then $0\notin\supp\zeta_h$.
\end{thmlist}
\end{lemma}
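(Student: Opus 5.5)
We prove the three parts in order, each by adapting the zero-field argument to a diffusion started at $h$.

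\emph{Part (a).} The drift $b(t,x)=\xi''(t)\alpha_\zeta(t)\Phi_{\zeta,x}(t,x)$ is bounded by $\xi''(1)$. It is Lipschitz in $x$ with constant $\xi''(1)$, because $0<\Phi_{\zeta,xx}\le1$ by \eqref{eq:plat-bounds}. Strong existence and pathwise uniqueness on $[0,1]$ therefore follow from the standard Itô theory. For the symmetry, set $Y=-X^{\zeta,h}$. Since $\Phi_\zeta(t,\cdot)$ is even, $\Phi_{\zeta,x}(t,\cdot)$ is odd, so $Y$ solves \eqref{eq:fld-sde} with starting point $-h$ and driving motion $-B$. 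Pathwise uniqueness then gives $\Law(X^{\zeta,-h})=\Law(-X^{\zeta,h})$. Because $\Phi_{\zeta,x}^2$ and $\Phi_{\zeta,xx}^2$ are even, the identities for $\Gamma_{\zeta,\pm h}$ and for the second moments follow. Also $P^{-h}_\zeta=P^h_\zeta$, since $\Phi_\zeta(0,\cdot)$ is even. Uniqueness of the minimizer then gives $\zeta_{-h}=\zeta_h$.

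\emph{Part (b).} Suppose $\zeta([0,q))=0$. On $[0,q)$ we have $\alpha_\zeta=0$, so the drift vanishes and $X_q=h+\int_0^q\sqrt{\xi''}\,dB\sim\mathcal N(h,\xi'(q))$. On $[0,q]$ the Parisi equation reduces to the backward heat equation $\partial_t\Phi+\frac12\xi''\Phi_{xx}=0$ with bounded, smooth terminal data $\Phi_\zeta(q,\cdot)$. For $\Phi_\zeta$ this is first checked for finitely atomic measures without mass in $[0,q)$, using the Cole--Hopf form with $m=0$. The general case follows by approximating $\zeta$ with finitely atomic measures supported in $[q,1]$, using uniform convergence. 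The Feynman--Kac formula for the heat equation then gives \eqref{eq:fld-heat}.

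\emph{Part (c).} This is the first-order optimality condition of Lemma~\ref{lem:cons-standard}(d),(g), applied to $P^h$. The first-variation formula \eqref{eq:cons-first-variation} holds with $\Obs_{\zeta,h}$ in place of $\Obs_\zeta$. This is \cite[Theorem~2]{Che17} with a general starting point; by Girsanov's theorem the diffusion started at $h$ plays the role of $X$. Minimality of $\zeta_h$ along the segments $(1-s)\zeta_h+s\delta_t$ gives $\Obs_{\zeta_h,h}(t)\ge\int\Obs_{\zeta_h,h}\,d\zeta_h$ for every $t$. Hence $\zeta_h$ charges only minimizers of the obstacle. At an interior minimizer $q\in(0,1)$, the argument of Lemma~\ref{lem:plat-obstacle}(a),(b) gives \eqref{eq:fld-sc}; it uses only $\Obs'=-\frac12\xi''(\Gamma-\id)$ and the derivative formula $\Gamma_{\zeta,h}'=\xi''\E U^2$ (Lemma~\ref{lem:plat-ito}(a), whose proof does not use $X_0=0$).

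The claim $0\notin\supp\zeta_h$ for $h\ne0$ is the one place where the starting point genuinely enters, and we expect it to be the main obstacle. The key observation is $\Gamma_{\zeta,h}(0)=\Phi_{\zeta,x}(0,h)^2>0$, because $\Phi_{\zeta,x}(0,\cdot)$ is odd with strictly positive derivative. Consequently $\Obs_{\zeta_h,h}'(t)<0$ near $t=0$, so the obstacle is strictly decreasing there. It follows that $\Obs_{\zeta_h,h}(0)>\Obs_{\zeta_h,h}(t)\ge\min\Obs_{\zeta_h,h}$ for small $t>0$. Thus $0$, and by continuity a neighbourhood of $0$, contains no minimizer, so $\zeta_h$ puts no mass there. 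What needs care is this closedness step: the minimizer set is closed and misses $0$, so it misses some $[0,\epsilon)$.
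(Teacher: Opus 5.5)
Your proposal is correct and follows the paper's proof essentially step by step. For (a) you use oddness of the drift together with pathwise uniqueness. For (b) you use the vanishing drift on the gap and the $m=0$ Cole--Hopf form, approximating by measures supported in $[q,1]$. For (c) you use first-order optimality for the support statement, the $C^2$ obstacle at interior minima for \eqref{eq:fld-sc}, and $\Obs_{\zeta_h,h}'<0$ near $0$ from $\Gamma_{\zeta_h,h}(0)=\Phi_{\zeta_h,x}(0,h)^2>0$.

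The only real difference is in the support statement of (c). The paper invokes \cite[Proposition~1.1]{JT17}, after identifying the weak solution used there with $\Phi_\zeta$. You derive it directly from Chen's first-variation formula at field $h$, which the paper notes yields the same result independently. One small slip: in (b), $\Phi_\zeta(q,\cdot)$ is Lipschitz with bounded derivatives, not bounded.
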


For $h\ge0$ the first statement of (c) is \cite[Proposition~1.1]{JT17}, once the weak solution
of the Parisi equation used there is identified with $\Phi_\zeta$, and the last statement is
\cite[Lemma~3.9]{JT17}. We use \eqref{eq:fld-sc} only at points of $(0,1)$, where it follows
from the first statement by the contact calculus of Section~\ref{sec:plat-obstacle}.

\begin{lemma}[It\^o identities from a nonzero starting point]\label{lem:fld-ito}
Let $\zeta$ be a probability measure on $[0,1]$, $h\in\R$, $X=X^{\zeta,h}$, $\Gamma=\Gamma_{\zeta,h}$
and $\alpha=\alpha_\zeta$.
\begin{thmlist}
\item For every polynomial $\mathsf P$ and every $j\ge1$, the function
 $t\mapsto\E\,\mathsf P(\partial_x\Phi_\zeta,\dots,\partial_x^j\Phi_\zeta)(t,X_t)$ is continuous
 on $[0,1]$.
\item $\Gamma\in C^1[0,1]$ and $\Gamma'(t)=a(t)\E U_t^2$.
\item For every $t\in[0,1)$, $\Gamma'$ has the right derivative
 $a'(t)\E U_t^2+a(t)^2\E\bigl(V_t^2-2\alpha(t)U_t^3\bigr)$.
\item If $\alpha=m$ on an open interval $I\subset(0,1)$, then $\Gamma\in C^3(I)$ and, on $I$,
 \begin{align}
  \Gamma''&=a'\E U^2+a^2\E\bigl(V^2-2mU^3\bigr),\label{eq:fld-second}\\
  \Gamma'''&=a''\E U^2+3aa'\E\bigl(V^2-2mU^3\bigr)
   +a^3\E\bigl(W^2-12mUV^2+6m^2U^4\bigr).\label{eq:fld-third}
 \end{align}
\end{thmlist}
\end{lemma}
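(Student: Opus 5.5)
The plan is to reduce each item to the case $h=0$, which is Lemma~\ref{lem:plat-ito}, by changing measure or by approximation; the starting point $h$ plays no structural role. The first step is to fix $\zeta$ with finitely many atoms. On each strip $[t_l,t_{l+1})$ where $\alpha_\zeta=m$, the Cole--Hopf form shows that $\Phi_\zeta$ is smooth in $(t,x)$ in the interior, with all spatial derivatives bounded by the constants $B_j$ of \eqref{eq:plat-bounds}. The derivatives $u=\Phi_{\zeta,x}$, $U$, $V$ solve \eqref{eq:plat-third-pde} there, and the analogous equation for $W=\Phi_{\zeta,xxxx}$ follows by one more spatial differentiation. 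The drift $a\alpha u$ is bounded and Lipschitz in $x$ because $0<u_x\le1$. So \eqref{eq:fld-sde} has a pathwise unique strong solution from any $X_0=h$.

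For (a), It\^o's formula applied to $\mathsf P(\partial_x\Phi_\zeta,\dots,\partial_x^j\Phi_\zeta)(t,X_t)$ on each strip has a bounded drift. Since every $\partial_x^k\Phi_\zeta$ is continuous in $t$ across atoms (Lemma~\ref{lem:cons-standard}(a)), the expectation is Lipschitz on $[0,1]$ for finitely atomic $\zeta$.

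For (b)--(d), the It\^o differentials from Section~\ref{sec:plat-ito} hold verbatim, because they are pointwise identities along any solution of the SDE. They are $du_t=\sqrt a\,U_t\,dB_t$ (the drift cancels by the first equation of \eqref{eq:plat-third-pde}), $dU_t=-maU_t^2\,dt+\sqrt aV_t\,dB_t$, $dV_t=-3maU_tV_t\,dt+\sqrt aW_t\,dB_t$, and the analogous one for $W$. Taking expectations of $u^2$, $U^2$, $V^2$, $U^3$, and of the products needed for $\Gamma'''$ gives
\[
\Gamma'=a\E U^2,\qquad \Gamma''=a'\E U^2+a^2\E(V^2-2mU^3)
\]
on each strip. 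One more differentiation gives \eqref{eq:fld-third}; the algebra is identical to the case $h=0$, since the starting point never enters. For (c), the right derivative at an atom $t_l$ uses the strip $[t_l,t_{l+1})$, so it carries $\alpha(t_l)=\zeta([0,t_l])$.

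The second step passes to general $\zeta$. I would take finitely atomic $\zeta_n\Rightarrow\zeta$. Then uniform convergence of $\partial_x^k\Phi_{\zeta_n}$, together with continuity of the drift, gives convergence of $X^{\zeta_n,h}_t$ in $L^2$, uniformly in $t$. I would get this by a Gronwall estimate on the coupled SDEs driven by the same $B$, as in Lemma~\ref{lem:plat-coupling}: the difference of the drifts is bounded by $a\,|\alpha_n-\alpha|$ plus a term in $\|u_n-u\|_\infty$ plus a Lipschitz term in $|X^n-X|$. After time integration, the term $|\alpha_n-\alpha|$ is controlled by $\Wone(\zeta_n,\zeta)$. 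Hence the expectations in (a) converge uniformly in $t$, which proves (a).

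The identities $\Gamma(t)-\Gamma(0)=\int_0^ta\E U^2$ and $\Gamma'(t)-\Gamma'(s)=\int_s^t[a'\E U^2+a^2\E(V^2-2\alpha U^3)]$ pass to the limit, since $\alpha_n\to\alpha$ at continuity points and boundedly. This yields (b), and it yields (c) because $\alpha$ is right continuous and the other terms are continuous by (a). For (d), an open plateau $I$ of $\zeta$ need not be a plateau of $\zeta_n$. So I would instead work directly on $I$, where $\Phi_\zeta$ solves the Cole--Hopf equation with constant $m$: $e^{m\Phi_\zeta}$ solves a backward heat equation on $I$, so $\Phi_\zeta$ is smooth in $t$ there, and It\^o's formula applies on compact subintervals of $I$. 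Formula \eqref{eq:fld-third} then follows as before, and \eqref{eq:fld-second} matches (c) on $I$.

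The main obstacle is the approximation step in (b) and (c): the one-sided derivative at atoms of a general $\zeta$, whose distribution function may have a dense set of jumps. What makes it work is the integral form: the right derivative of $\Gamma'$ at $t$ is the limit of difference quotients of $\int_t^{t+\varepsilon}$ of an integrand whose only discontinuous factor, $\alpha$, is right continuous.
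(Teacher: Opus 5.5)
Your proof is correct, and its skeleton is the paper's. You use It\^o's formula strip by strip for finitely atomic $\zeta$, relying only on the PDE and not on the starting point. You pass to general $\zeta$ by a pathwise Gronwall coupling of the diffusions started at $h$. You use the integrated identities for $\E u^2$ and $\E U^2$, with $\alpha_\zeta(s)$ inside the time integral, and right continuity of $\alpha_\zeta$ for (c). Two steps differ.

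\emph{Part (a).} The paper does not approximate at all. For every $\zeta$ the jets $\partial_x^k\Phi_\zeta$ are jointly continuous and bounded on $[0,1]\times\R$, and $X$ has continuous paths, so dominated convergence gives continuity at once. Your Lipschitz bound for finitely atomic measures followed by a uniform limit is valid but unnecessary.

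\emph{Part (d).} You are right that an arbitrary approximating $\zeta_n$ need not have $I$ as a plateau. The paper's remedy is to keep approximating, but with grids $\{k/n\}\cup\{s_1,s_2\}$ for $s_1<s_2$ in $I$, and sampling $\alpha_\zeta$ at the largest grid point below $s$. Then $\alpha_{\zeta_n}=m$ on $[s_1,s_2]$, and the third identity, for $\E(V^2-2mU^3)$, passes to the limit on $[s_1,s_2]$. You instead work with $\zeta$ itself on $I$. You use that $\Phi_\zeta$ is there a Cole--Hopf transform of its value at a later time of $I$, hence smooth on $I\times\R$. This is exactly how the paper proves the case $h=0$ in Lemma~\ref{lem:plat-ito}(c).

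Your route gives classical It\^o differentials on $I$ directly. Its cost is that the Cole--Hopf representation on a plateau of a general measure is itself an approximation statement: one needs approximants that are constant on compact subintervals of $I$, then uniform convergence of $\Phi_{\zeta_n}$. The paper's plateau-preserving grid handles (b), (c) and (d) with one approximation scheme and never needs time-smoothness of $\Phi_\zeta$.
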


For $h=0$ these are the identities of Section~\ref{sec:plat-ito}; see also
\cite[Proposition~3]{AC15a}. For $h\ge0$, parts (b) and (c) are also stated in
\cite[Lemma~8.4]{JT17}, whose proofs are omitted there; part~(d) is not stated there. For the Sherrington--Kirkpatrick covariance and $h\ge0$, parts (b) and (d), and part (c) in almost-everywhere form, are also in \cite[Lemma~2.1]{Lop26b}, in the time variable $\beta^2t$.

\begin{lemma}[Continuity in the field]\label{lem:fld-continuity}
The map $h\mapsto\zeta_h$ is weakly continuous, and $\zeta_{-h}=\zeta_h$. The function $\Ffield$
is even and continuously differentiable with $\Ffield'=\mfield$, and $\mfield$ is odd and
continuous.
\end{lemma}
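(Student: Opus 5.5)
The statement to prove is Lemma~\ref{lem:fld-continuity}: weak continuity of $h\mapsto\zeta_h$, evenness of $\Ffield$, oddness and continuity of $\mfield$, and $\Ffield\in C^1$ with $\Ffield'=\mfield$. The plan follows the envelope argument of Proposition~\ref{prop:cons-envelope}, with $h$ in the role of $\theta$.

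\emph{Symmetry.} By Lemma~\ref{lem:fld-diffusion}(a), $P^{-h}_\zeta=P^h_\zeta$ for every $\zeta$ and $\zeta_{-h}=\zeta_h$, so $\Ffield(-h)=\Ffield(h)$. Since $\Phi_{\zeta}(0,\cdot)$ is even, $\Phi_{\zeta,x}(0,\cdot)$ is odd, and therefore $\mfield(-h)=\Phi_{\zeta_h,x}(0,-h)=-\mfield(h)$.

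\emph{Continuity of the minimizer.} Write $\pi(h,\zeta)=P^h_\zeta=\Phi_\zeta(0,h)-\frac12\int_0^1t\xi''(t)\alpha_\zeta(t)\,dt$. This is jointly continuous in $(h,\zeta)$: the integral term is $\Wone$-Lipschitz, and $\Phi_{\zeta_n}\to\Phi_\zeta$ uniformly on $[0,1]\times\R$ with $|\Phi_{\zeta,x}|\le1$. Let $h_n\to h$. By compactness of $\Pspace([0,1])$, any subsequence of $\zeta_{h_n}$ has a further subsequence converging weakly to some $\bar\zeta$. For every $\nu$ we have $\pi(h_n,\zeta_{h_n})\le\pi(h_n,\nu)$, and passing to the limit gives $\pi(h,\bar\zeta)\le\pi(h,\nu)$. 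Uniqueness of the minimizer \cite[Theorem~2 and Corollary~1]{AC15b} then forces $\bar\zeta=\zeta_h$, so the whole sequence converges. Continuity of $\mfield$ follows from the uniform convergence $\Phi_{\zeta_n,x}\to\Phi_{\zeta,x}$ together with the continuity of $\Phi_{\zeta_h,x}(0,\cdot)$.

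\emph{Differentiability.} $\partial_h\pi(h,\zeta)=\Phi_{\zeta,x}(0,h)$, which is jointly continuous in $(h,\zeta)$ by the same uniform convergence. Squeeze exactly as in the proof of Proposition~\ref{prop:cons-envelope}:
\[
 \pi(h+k,\zeta_{h+k})-\pi(h,\zeta_{h+k})\le\Ffield(h+k)-\Ffield(h)\le\pi(h+k,\zeta_h)-\pi(h,\zeta_h).
\]
Apply the mean value theorem in $h$ to both outer differences, divide by $k$ (the inequalities reverse for $k<0$), and use joint continuity together with $\zeta_{h+k}\Rightarrow\zeta_h$. Both bounds converge to $\Phi_{\zeta_h,x}(0,h)=\mfield(h)$, so $\Ffield'=\mfield$, and $\Ffield\in C^1$ because $\mfield$ is continuous.

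The only step requiring care is the joint continuity of $\Phi_{\zeta,x}(0,h)$ in $(\zeta,h)$. This follows directly from the uniform convergence of spatial derivatives in Lemma~\ref{lem:cons-standard}(a) and the bound $|\Phi_{\zeta,xx}|\le1$, so no real obstacle is expected.
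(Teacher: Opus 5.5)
Your proposal is correct and follows the paper's proof essentially step for step: the symmetry comes from Lemma~\ref{lem:fld-diffusion}(a), weak continuity of $h\mapsto\zeta_h$ from compactness of $\Pspace([0,1])$, joint continuity of $P^h_\zeta$ and uniqueness of the minimizer, and $\Ffield'=\mfield$ from the same squeeze between the two minimizers plus the mean value theorem and uniform convergence of $\Phi_{\zeta,x}$. The paper states the last step as an explicit estimate, $|\Phi_{\zeta,x}(0,y)-\Phi_{\zeta_h,x}(0,h)|\le\|\Phi_{\zeta,x}-\Phi_{\zeta_h,x}\|_\infty+|y-h|$, which is exactly the joint continuity you invoke.
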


\subsection{The bottom of the support}\label{sec:fld-proof}

We prove part~(a) of Theorem~\ref{thm:field}. The gap exclusion of Section~\ref{sec:plat-gap}
rests on the positivity of the third derivative of $\Gamma$ on short plateaus near the origin
(Lemma~\ref{lem:plat-local-plateau}). We need this positivity for the diffusion started at a small
$h$, uniformly in $h$. For a probability measure $\zeta$ on $[0,1]$, $h\in\R$, $t\in[0,1]$ and
$m\in[0,1]$, let $\Psi_{\zeta,h}(t,m)$ be the right side of \eqref{eq:plat-psi} with $U,V,W$
taken along $X^{\zeta,h}$, that is, the right side of \eqref{eq:fld-third} at time $t$ with the
height $m$ as a free parameter. Thus $\Gamma_{\zeta,h}'''=\Psi_{\zeta,h}(\cdot,m)$ on every open
interval on which $\alpha_\zeta=m$, and $\Psi_{\zeta,0}=\Psi_\zeta$ is the quantity controlled in
Lemma~\ref{lem:plat-local-plateau}.

\begin{lemma}[Plateau positivity, uniformly in a small field]\label{lem:fld-plateau}
As $t\downarrow0$, $h\to0$ and $\Wone(\zeta,\zetas)\to0$,
\[
 \sup_{0\le m\le1}\bigl|\Psi_{\zeta,h}(t,m)-\Dstar-6m^2\azero^3\Ustar^4\bigr|\longrightarrow0 .
\]
Consequently there are $K_{\rm fld}\in(0,1)$, $h_0>0$ and $\varepsilon_{\rm fld}>0$ with the following property.
If $|h|\le h_0$, $\Wone(\zeta,\zetas)\le\varepsilon_{\rm fld}$ and $\alpha_\zeta$ is constant on an open
interval $I\subset(0,K_{\rm fld})$, then $\Gamma_{\zeta,h}'''\ge\Dstar/2$ on $I$.
\end{lemma}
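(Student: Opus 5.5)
The plan follows the proof of Lemma~\ref{lem:plat-local-plateau}, with one new ingredient: the diffusion now starts at $h$ instead of at the origin. As there, $\Psi_{\zeta,h}(t,m)$ is a polynomial in $a(t),a'(t),a''(t)$, in $m\in[0,1]$, and in expectations of monomials of degree at most four in $J_k(t)=\partial_x^k\Phi_\zeta(t,X^{\zeta,h}_t)$, $k=2,3,4$. By \eqref{eq:plat-bounds} all of these are bounded by constants $B_k$ depending only on $\xi$. The functions $a,a',a''$ are Lipschitz on $[0,1]$. It therefore suffices to show
\[
\E\bigl|J_k(t)-j_k\bigr|\longrightarrow0,\qquad j_k=\partial_x^k\Phi_{\zetas}(0,0),
\]
as $t\downarrow0$, $h\to0$ and $\Wone(\zeta,\zetas)\to0$. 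The reduction of the limit expression to $\Dstar+6m^2\azero^3\Ustar^4$ is then the same algebra as before: $j_3=0$ by parity and $a'(0)=\xi'''(0)=0$. The estimate is uniform in $m\in[0,1]$ because $m$ enters polynomially with bounded coefficients.

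To prove the convergence of $J_k(t)$, I would split
\[
J_k(t)-j_k=\bigl[\partial_x^k\Phi_\zeta-\partial_x^k\Phi_{\zetas}\bigr](t,X_t)+\bigl[\partial_x^k\Phi_{\zetas}(t,X_t)-\partial_x^k\Phi_{\zetas}(0,0)\bigr].
\]
Jet stability \eqref{eq:plat-jet-stability} bounds the first term uniformly by $C_k\|a\|_\infty\Wone(\zeta,\zetas)$.

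For the second term, the function $\partial_x^k\Phi_{\zetas}$ is continuous on $[0,1]\times\R$ and has bounded $x$-derivative $B_{k+1}$. So it is enough to control $|X_t|$ and the time modulus at a fixed point.
\begin{itemize}
\item From \eqref{eq:fld-sde}, $|X_t-h|\le\xi'(t)+|\int_0^t\sqrt{a}\,dB|$, hence $\E|X_t|\le|h|+\xi'(t)+\xi'(t)^{1/2}$.
\item Continuity of $s\mapsto\partial_x^k\Phi_{\zetas}(s,0)$ at $s=0$ handles the time direction. Alternatively, the quantitative bound \eqref{eq:plat-time-lip} gives an explicit rate.
\end{itemize}
Together these give $\E|J_k(t)-j_k|\le C(t^{1/2}+|h|+\Wone(\zeta,\zetas))+\omega(t)$, with $\omega(t)\to0$. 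This is exactly \eqref{eq:plat-origin} with the extra $|h|$ term. If that appendix bound is already stated for a general starting point, I would invoke it directly.

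For the consequence, choose $K_{\rm fld}$, $h_0$ and $\varepsilon_{\rm fld}$ small enough that the supremum in the display is at most $\Dstar/2$ whenever $t<K_{\rm fld}$, $|h|\le h_0$ and $\Wone(\zeta,\zetas)\le\varepsilon_{\rm fld}$. On an open interval $I\subset(0,K_{\rm fld})$ where $\alpha_\zeta=m$, Lemma~\ref{lem:fld-ito}(d) gives $\Gamma_{\zeta,h}'''=\Psi_{\zeta,h}(\cdot,m)$. Since $\Dstar>0$ by Lemma~\ref{lem:plat-fourth-sign},
\[
\Gamma_{\zeta,h}'''\ge\Dstar+6m^2\azero^3\Ustar^4-\Dstar/2\ge\Dstar/2 .
\]

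The only real work is the uniformity in the starting point, that is, the moment bound on $X_t$ near the origin. This needs nothing beyond boundedness of the drift, since $|\Phi_{\zeta,x}|\le1$ and $\alpha_\zeta\le1$. I would stop at this moment bound and avoid any Girsanov or plateau-law argument, which is not needed here.
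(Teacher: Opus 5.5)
Your proof is correct and follows the paper's argument: you reduce $\Psi_{\zeta,h}$ to the jets along $X^{\zeta,h}$, compare with $\zetas$ by jet stability, and use the bounded drift to keep $X^{\zeta,h}_t$ within $O(|h|+t^{1/2})$ of the origin. The only difference is in controlling the spatial displacement. The paper localizes on the event $\{|X_t|\le r\}$ by Chebyshev's inequality and uses the continuity modulus of the jets of $\Phi_{\zetas}$ near the origin. You instead use the Lipschitz bound $B_{k+1}$ in space and \eqref{eq:plat-time-lip} in time, which gives the explicit rate $C(t^{1/2}+|h|+\Wone(\zeta,\zetas))$, the field analogue of \eqref{eq:plat-origin}.
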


It is proved in Appendix~\ref{app:field} by a continuity argument: for small $t$ the diffusion
$X^{\zeta,h}_t$ stays near $h$ with high probability, so the jets along it are close to those of
$\Phi_{\zetas}$ at the origin.

\begin{lemma}[No gaps near the origin]\label{lem:fld-gap}
Let $K_{\rm fld},h_0,\varepsilon_{\rm fld}$ be as in Lemma~\ref{lem:fld-plateau}, and let $|h|\le h_0$ with
$\Wone(\zeta_h,\zetas)\le\varepsilon_{\rm fld}$. There are no two points $u<w$ of $\supp\zeta_h$ in
$(0,K_{\rm fld})$ with $(u,w)\cap\supp\zeta_h=\emptyset$.
\end{lemma}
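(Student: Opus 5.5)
We argue by contradiction, following Lemma~\ref{lem:plat-local-gap} with the diffusion started at $h$. Suppose $u<w$ are points of $\supp\zeta_h$ in $(0,K_{\rm fld})$ with $(u,w)\cap\supp\zeta_h=\emptyset$.

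\emph{Contact conditions at $u$ and $w$.} By Lemma~\ref{lem:fld-diffusion}(c), every point of $\supp\zeta_h\cap(0,1)$ satisfies \eqref{eq:fld-sc}. Since $K_{\rm fld}<1$, this gives
\[
 \Gamma_{\zeta_h,h}(u)=u,\qquad \Gamma_{\zeta_h,h}(w)=w,\qquad
 \Gamma_{\zeta_h,h}'(u)\le1,\qquad \Gamma_{\zeta_h,h}'(w)\le1,
\]
using $\Gamma_{\zeta_h,h}'=a\,\E U^2$ from Lemma~\ref{lem:fld-ito}(b).

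\emph{Strict convexity on the gap.} The open interval $(u,w)$ carries no $\zeta_h$-mass, so $\alpha_{\zeta_h}$ is constant on it, say equal to $m$. By Lemma~\ref{lem:fld-ito}(d), $\Gamma_{\zeta_h,h}\in C^3(u,w)$ with $\Gamma_{\zeta_h,h}'''=\Psi_{\zeta_h,h}(\cdot,m)$ there. Since $|h|\le h_0$, $\Wone(\zeta_h,\zetas)\le\varepsilon_{\rm fld}$ and $(u,w)\subset(0,K_{\rm fld})$, Lemma~\ref{lem:fld-plateau} gives $\Gamma_{\zeta_h,h}'''\ge\Dstar/2$ on $(u,w)$. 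Here $\Dstar>0$ by Lemma~\ref{lem:plat-fourth-sign}. Hence $\Gamma_{\zeta_h,h}'$ is strictly convex on $(u,w)$.

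\emph{Trapezoid contradiction.} The trapezoid principle, Lemma~\ref{lem:plat-obstacle}(d), uses only $\Gamma\in C^1$ together with the four endpoint conditions, so it applies verbatim to $\Gamma_{\zeta_h,h}$. Since $\Gamma_{\zeta_h,h}'$ is continuous on $[u,w]$ and strictly convex inside, it lies strictly below its chord there. Therefore
\[
 1=\frac{w-u}{w-u}=\frac{1}{w-u}\int_u^w\Gamma_{\zeta_h,h}'
 <\tfrac12\bigl(\Gamma_{\zeta_h,h}'(u)+\Gamma_{\zeta_h,h}'(w)\bigr)\le1,
\]
which is a contradiction.

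All the analytic content sits in Lemma~\ref{lem:fld-plateau}, which is taken as given, so no step here is a real obstacle. The only point to check is that the contact conditions \eqref{eq:fld-sc} hold at every support point in $(0,1)$, not just at minimizers of the obstacle in some weaker sense. Lemma~\ref{lem:fld-diffusion}(c) states this directly. If that reading were doubted, one would argue instead that support points are minima of $\Obs_{\zeta_h,h}$ and apply the contact calculus of Lemma~\ref{lem:plat-obstacle}(a),(b), whose proof uses only first- and second-order conditions.
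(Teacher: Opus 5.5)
Your proposal is correct and follows the paper's proof essentially step for step: the contact conditions \eqref{eq:fld-sc} at the two support points come from Lemma~\ref{lem:fld-diffusion}(c), the bound $\Gamma_{\zeta_h,h}'''\ge\Dstar/2$ on the support-free interval comes from Lemma~\ref{lem:fld-plateau}, and the trapezoid argument of Lemma~\ref{lem:plat-obstacle}(d) gives the contradiction. Your remarks on continuity of $\Gamma_{\zeta_h,h}'$ on $[u,w]$ (Lemma~\ref{lem:fld-ito}(b)) and on $\Dstar>0$ match the points the paper relies on.
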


\begin{proof}
Suppose there are, and write $\Gamma=\Gamma_{\zeta_h,h}$. On $(u,w)$ the distribution function of
$\zeta_h$ is constant, so $\Gamma'''\ge\Dstar/2>0$ there by Lemma~\ref{lem:fld-plateau}. Thus
$\Gamma'$ is strictly convex on $(u,w)$, and it is continuous on $[u,w]$ by
Lemma~\ref{lem:fld-ito}(b). Both endpoints are support points in $(0,1)$, so by
Lemma~\ref{lem:fld-diffusion}(c) they satisfy \eqref{eq:fld-sc}: $\Gamma(u)=u$, $\Gamma(w)=w$ and
$\Gamma'(u),\Gamma'(w)\le1$. The trapezoid argument of Lemma~\ref{lem:plat-obstacle}(d), which
uses only these four relations and the strict convexity of $\Gamma'$, gives a contradiction.
\end{proof}

\begin{lemma}[The bottom of the support for small fields]\label{lem:fld-structure}
Assume \condH. Then part~(a) of Theorem~\ref{thm:field} holds with $h_1\le1$ and
$\xi'(\kappa_1)\le1$, and $\qmin(h)\to0$ as $h\to0$.
\end{lemma}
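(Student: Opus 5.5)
The argument parallels the proof of Proposition~\ref{prop:ons-onset}(a),(b), with Lemma~\ref{lem:fld-gap} in place of Lemma~\ref{lem:plat-local-gap}. Under \condH, Proposition~\ref{prop:ons-equilibrium}(a) gives $\kappa>0$ with $[0,\kappa]\subset\supp\zetas$. We shrink $\kappa$ so that $\kappa<K_{\rm fld}$ and $\xi'(\kappa)\le1$, and set $\kappa_1=\kappa/2$.

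By Lemma~\ref{lem:fld-continuity}, $\zeta_h\Rightarrow\zetas$ as $h\to0$, so $\Wone(\zeta_h,\zetas)\to0$. The open sets $(0,\varepsilon)$ for small $\varepsilon$, and $(\kappa/2,\kappa)$, have positive $\zetas$-mass. By the portmanteau theorem they have positive $\zeta_h$-mass for $|h|$ small. We choose $h_1\le\min\{1,h_0\}$ such that, for $0<|h|<h_1$, three things hold: $\Wone(\zeta_h,\zetas)\le\varepsilon_{\rm fld}$; $\zeta_h$ charges $(\kappa/2,\kappa)$; and $\zeta_h$ charges $(0,\varepsilon(h))$ for some $\varepsilon(h)\to0$. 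The last point gives $\qmin(h)\to0$. Here $\qmin(h)=\min\supp\zeta_h$ is positive, because $0\notin\supp\zeta_h$ for $h\ne0$ (Lemma~\ref{lem:fld-diffusion}(c)), and in particular $\qmin(h)<\kappa_1$.

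For the interval of support, fix $0<|h|<h_1$ and let $q=\qmin(h)$. Let $s$ be a support point in $(\kappa/2,\kappa)$. If some $t\in(q,s)$ were outside $\supp\zeta_h$, we would take $u=\max(\supp\zeta_h\cap[q,t])$ and $w=\min(\supp\zeta_h\cap[t,s])$. These are support points in $(0,K_{\rm fld})$ with no support point between them, which contradicts Lemma~\ref{lem:fld-gap}. Hence $[q,s]\subset\supp\zeta_h$, and in particular $[q,\kappa_1]\subset\supp\zeta_h$. The identity $\zeta_h([0,q))=0$ holds by the definition of $q$.

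For the contact identities, every point of $[q,s]\subset(0,1)$ satisfies \eqref{eq:fld-sc}, so $\Gamma_{\zeta_h,h}(t)=t$ on $[q,s]$. Since $\Gamma_{\zeta_h,h}\in C^1$ (Lemma~\ref{lem:fld-ito}(b)), this gives $\Gamma_{\zeta_h,h}'(q)=1$. By the formula $\Gamma'=a\,\E U^2$, this is exactly the marginality identity $\xi''(q)\E\Phi_{\zeta_h,xx}(q,X_q^{\zeta_h,h})^2=1$.

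The main obstacle is the uniformity in Lemma~\ref{lem:fld-plateau}, and that is assumed here. The only remaining delicate point is that $\supp\zeta_h$ has a point inside the fixed window $(\kappa/2,\kappa)\subset(0,K_{\rm fld})$. This is exactly what lets gap exclusion, which is only local, propagate upward from $q$ to $\kappa_1$, and it is supplied by weak continuity together with the interval of support of $\zetas$.
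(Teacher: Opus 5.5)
Your proof is correct and follows the paper's argument essentially line by line. You use the interval support of $\zetas$ near zero, then weak continuity of $h\mapsto\zeta_h$ and the portmanteau theorem to place support points of $\zeta_h$ in $(0,\varepsilon)$ and in a fixed window inside $(0,K_{\rm fld})$, then Lemma~\ref{lem:fld-gap} to fill $[q,s]$, and finally the $C^1$ property of $\Gamma_{\zeta_h,h}$ to get marginality. One point to tighten: the bound $\qmin(h)<\kappa_1$ for all $0<|h|<h_1$ should come from building the condition $\zeta_h((0,\kappa_1))>0$ into the choice of $h_1$, as the paper does by taking $h_1=h_1(\kappa_1)$, rather than from the limit $\qmin(h)\to0$.
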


\begin{proof}
By Theorem~\ref{thm:local} (Proposition~\ref{prop:ons-equilibrium}), there is $\kappa>0$ with
$[0,\kappa]\subset\supp\zetas$. Let $K_{\rm fld},h_0,\varepsilon_{\rm fld}$ be as in Lemma~\ref{lem:fld-plateau}.
Fix $\bar\kappa\in(0,\min\{\kappa,K_{\rm fld}\})$ with $\xi'(\bar\kappa)\le1$, and put
$\kappa_1=\bar\kappa/2$. For $\varepsilon\in(0,\kappa_1]$ the open intervals $(0,\varepsilon)$
and $(\kappa_1,\bar\kappa)$ lie in $\supp\zetas$, so they have positive $\zetas$-mass. Since
$\Wone$ metrizes weak convergence, Lemma~\ref{lem:fld-continuity} and the portmanteau theorem
give $h_1(\varepsilon)\in(0,1]$ such that, for $0<|h|<h_1(\varepsilon)$, both intervals have
positive $\zeta_h$-mass, $|h|\le h_0$ and $\Wone(\zeta_h,\zetas)\le\varepsilon_{\rm fld}$. For such $h$,
$\qmin(h)<\varepsilon$, and some $v\in\supp\zeta_h$ lies in $(\kappa_1,\bar\kappa)$. Letting
$\varepsilon\downarrow0$ proves $\qmin(h)\to0$.

Take $h_1=h_1(\kappa_1)$, let $0<|h|<h_1$, and put $q=\qmin(h)$, so that $q<\kappa_1$. By
Lemma~\ref{lem:fld-diffusion}(c), $q>0$, and $\zeta_h([0,q))=0$ by the definition of $q$. Suppose
that $t\in(q,v)\setminus\supp\zeta_h$, and let $u=\max(\supp\zeta_h\cap[q,t])$ and
$w=\min(\supp\zeta_h\cap[t,v])$. Then $0<q\le u<t<w\le v<K_{\rm fld}$ and $(u,w)$ misses the support, which
contradicts Lemma~\ref{lem:fld-gap}. Hence $[q,\kappa_1]\subset[q,v]\subset\supp\zeta_h$. By
\eqref{eq:fld-sc}, $\Gamma_{\zeta_h,h}(t)=t$ on $[q,v]$. Since $\Gamma_{\zeta_h,h}$ is $C^1$ with
$\Gamma_{\zeta_h,h}'(t)=\xi''(t)\E\Phi_{\zeta_h,xx}(t,X^{\zeta_h,h}_t)^2$
(Lemma~\ref{lem:fld-ito}(b)), this gives $\Gamma_{\zeta_h,h}'(q)=1$, the last identity of
part~(a).
\end{proof}

Part~(a) of Theorem~\ref{thm:field} is Lemma~\ref{lem:fld-structure}.

\subsection{Three identities at the edge of the gap}\label{sec:fld-identities}

We now derive parts (b)--(d) of Theorem~\ref{thm:field} from part~(a).
By Lemma~\ref{lem:fld-diffusion}(a) and Lemma~\ref{lem:fld-continuity}, $\zeta_h$ and $\qmin(h)$
are even in $h$ and $\mfield$ is odd. It therefore suffices to consider $0<h<h_1\le1$. Put
\[
 q=\qmin(h),\qquad \sigma^2=\xi'(q),\qquad X=h+\sigma Z,\qquad v=\Phi_{\zeta_h,x}(q,\cdot),
\]
with $Z$ standard Gaussian. Then $\sigma^2\le\xi'(\kappa_1)\le1$. The function $v$ is odd and
smooth, with $|v|\le1$ and $\|v^{(j)}\|_\infty\le B_{j+1}$ for $j\ge1$; these bounds do not depend
on $h$. Its Taylor coefficients at the origin satisfy
\begin{equation}\label{eq:fld-jets}
 c_1:=v'(0)=\Phi_{\zeta_h,xx}(q,0)\longrightarrow\Ustar,\qquad
 c_3:=v'''(0)=\Phi_{\zeta_h,xxxx}(q,0)\longrightarrow\Wstar\qquad(h\to0),
\end{equation}
because, for $k=2,4$,
\[
 \bigl|\partial_x^k\Phi_{\zeta_h}(q,0)-\partial_x^k\Phi_{\zetas}(0,0)\bigr|
 \le\|\partial_x^k(\Phi_{\zeta_h}-\Phi_{\zetas})\|_\infty
 +\bigl|\partial_x^k\Phi_{\zetas}(q,0)-\partial_x^k\Phi_{\zetas}(0,0)\bigr| ,
\]
and both terms tend to zero: the first by jet stability and Lemma~\ref{lem:fld-continuity}, the
second by continuity of $\partial_x^k\Phi_{\zetas}$, since $q\to0$. Since $\zeta_h([0,q))=0$,
Lemma~\ref{lem:fld-diffusion}(b) shows that $X^{\zeta_h,h}_q$ has the law of $X$, and
differentiating \eqref{eq:fld-heat} in $x$ at $(t,x)=(0,h)$ gives $\Phi_{\zeta_h,x}(0,h)=\E v(X)$.
Together with part~(a) this yields three exact identities:
\begin{equation}\label{eq:fld-exact}
 \E v(X)^2=\Gamma_{\zeta_h,h}(q)=q,\qquad
 \E v'(X)^2=\frac1{\xi''(q)},\qquad
 \E v(X)=\mfield(h).
\end{equation}
The covariance enters through the expansions, valid as $q\to0$ because $\xi'''(0)=0$ and $\xi$
is analytic near zero,
\begin{equation}\label{eq:fld-xi-exp}
 \begin{gathered}
 \xi'(q)=\azero q+O(q^3),\qquad
 \frac1{\xi''(q)}=\frac1\azero-\frac{\xi''''(0)}{2\azero^2}\,q^2+O(q^3),\\
 q-\frac{\xi'(q)}{\xi''(q)}=\frac{\xi''''(0)}{3\azero}\,q^3+O(q^4).
 \end{gathered}
\end{equation}
To expand the left sides of \eqref{eq:fld-exact} we use the following Gaussian moments of an odd
profile; the proof, a Taylor expansion, is in Appendix~\ref{app:field}.

\begin{lemma}[Gaussian moments of an odd profile]\label{lem:fld-taylor}
Let $v\in C^\infty(\R)$ be odd with $|v|\le1$ and $\|v^{(j)}\|_\infty\le C$ for $1\le j\le8$. Put
$c_1=v'(0)$, $c_3=v'''(0)$ and $c_5=v^{(5)}(0)$. Let $0<h\le1$, $0<\sigma\le1$ and $X=h+\sigma Z$
with $Z$ standard Gaussian. Then, with implied constants depending only on $C$,
\begin{align}
 \E v(X)&=h\Bigl(c_1+\frac{c_3}2\sigma^2+\frac{c_5}8\sigma^4+O(h^2+\sigma^6)\Bigr),
 \label{eq:fld-t1}\\
 \E v'(X)^2&=c_1^2+c_1c_3\sigma^2+\Bigl(\frac34c_3^2+\frac14c_1c_5\Bigr)\sigma^4
  +O(h^2+\sigma^6),\label{eq:fld-t2}\\
 \E v(X)^2-\sigma^2\E v'(X)^2&=c_1^2h^2-\frac{c_3^2}3\,\sigma^6
  +O(h^2\sigma^2+h^4+\sigma^8).\label{eq:fld-t3}
\end{align}
\end{lemma}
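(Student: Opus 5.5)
I will expand $v$ around $h$ and then average over $Z$. This is cleaner than expanding around $0$ and using the Gaussian moments of $X$ directly. Since $v$ is odd and smooth with bounded derivatives up to order $8$, Taylor's formula at the origin gives
\[
v(x)=c_1x+\frac{c_3}{6}x^3+\frac{c_5}{120}x^5+R(x),\qquad |R(x)|\le C'|x|^7,
\]
and similarly $v'(x)=c_1+\frac{c_3}{2}x^2+\frac{c_5}{24}x^4+R_1(x)$ with $|R_1(x)|\le C'x^6$. The coefficients are bounded by $C$. Substituting $x=h+\sigma Z$, the remainders contribute $O(\E|h+\sigma Z|^6)=O(h^6+\sigma^6)$ to \eqref{eq:fld-t2}. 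In \eqref{eq:fld-t1} I need more care: the remainder $\E R(X)$ must be $O(h\sigma^6+h^3)$, not merely $O(\sigma^7)$. For this I will use oddness of $R$, which gives $\E R(h+\sigma Z)=\frac12\E[R(h+\sigma Z)-R(-h+\sigma Z)]$ by symmetry of $Z$. This difference is at most $h\,\E\sup|R'|$ on the segment, and $|R'(y)|\le C'|y|^6$, so the term is $O(h(h^6+\sigma^6))$. The same trick works for every odd polynomial term: each odd monomial $\E X^{2k+1}$ is $h$ times a polynomial in $h^2,\sigma^2$. Explicitly, $\E X^3=h^3+3h\sigma^2$ and $\E X^5=h^5+10h^3\sigma^2+15h\sigma^4$. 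This yields
\[
\E v(X)=h\Bigl(c_1+\tfrac{c_3}{2}\sigma^2+\tfrac{c_5}{8}\sigma^4\Bigr)+O(h^3+h\sigma^6),
\]
which is \eqref{eq:fld-t1}.

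For \eqref{eq:fld-t2} I square the expansion of $v'$. The function $v'$ is even, so $\E v'(X)^2$ is even in $h$, and all odd-in-$h$ contributions cancel. Setting $h=0$ and using $\E Z^2=1$, $\E Z^4=3$, the $\sigma^4$ coefficient comes from $(c_3^2/4)\cdot 3+2c_1(c_5/24)\cdot 3$, which gives $\frac34c_3^2+\frac14c_1c_5$. The $\sigma^2$ coefficient is $c_1c_3$. The $h$-dependence is $O(h^2)$: the function $h\mapsto\E v'(h+\sigma Z)^2$ is even and has bounded second derivative, so it differs from its value at $h=0$ by $O(h^2)$. The remainders are $O(\sigma^6)$.

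For \eqref{eq:fld-t3}, the main obstacle is that crude bounds only give $O(h^2)$ errors in each piece separately, whereas I need an error of size $O(h^2\sigma^2+h^4)$ together with an exact $c_1^2h^2$ term. I will therefore first treat $h=0$ exactly and then do a separate expansion in $h$. At $h=0$, write $G(\sigma)=\E v(\sigma Z)^2-\sigma^2\E v'(\sigma Z)^2$ and expand it through order $\sigma^6$ with remainder $O(\sigma^8)$. The remainder is controlled by the eighth-derivative bound. The needed moments are $\E Z^6=15$ and $\E Z^8=105$, and the cross terms are computed from $v^2$ and $(v')^2$. The $\sigma^2$ term is $c_1^2-c_1^2=0$. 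The $\sigma^4$ term is $\frac{c_1c_3}{3}\cdot 3-c_1c_3=0$. At order $\sigma^6$ there are three contributions. In $v^2$, the $x^6$ coefficient is $\frac{c_3^2}{36}+\frac{c_1c_5}{60}$; multiplying by $15$ gives $\frac{5c_3^2}{12}+\frac{c_1c_5}{4}$. In $(v')^2$, the $\sigma^4$ coefficient is $\frac34c_3^2+\frac14c_1c_5$. The difference is $-\frac{c_3^2}{3}$, and the $c_5$ terms cancel as the statement requires.

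Next I extend to $h\neq0$. The map $h\mapsto F(h,\sigma)=\E[v^2-\sigma^2v'^2](h+\sigma Z)$ is even in $h$, so $F(h,\sigma)-F(0,\sigma)=\frac{h^2}{2}\partial_h^2F(0,\sigma)+O(h^4)$. Here $\partial_h^2F(0,\sigma)=\E[(v^2)''-\sigma^2(v'^2)''](\sigma Z)$. Since $(v^2)''(0)=2c_1^2$, this equals $2c_1^2+O(\sigma^2)$. Hence $F(h,\sigma)=c_1^2h^2-\frac{c_3^2}{3}\sigma^6+O(h^2\sigma^2+h^4+\sigma^8)$. The only delicate point is the bookkeeping of the order-$\sigma^6$ coefficients at $h=0$; that computation is where I expect errors, and I will double-check it against the explicit profile $v=\tanh$.
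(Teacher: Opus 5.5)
Your proof is correct. Part \eqref{eq:fld-t1} follows the paper's argument exactly: expand $v$ at the origin to degree five, compute $\E p(X)$ from the odd moments of $X$, and bound the odd remainder through the symmetrization $\E R(h+\sigma Z)=\tfrac12\E[R(\sigma Z+h)-R(\sigma Z-h)]$. Your opening sentence announces an expansion around $h$, but what you carry out is an expansion at the origin; the argument itself is unaffected.

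For \eqref{eq:fld-t2} and \eqref{eq:fld-t3} your bookkeeping differs from the paper's. The paper expands $v'^2$ and $v^2$ at zero to degrees five and seven and inserts the full moments $\E X^{2k}$ of $X=h+\sigma Z$ as polynomials in $h^2,\sigma^2$. It then subtracts $\sigma^2$ times the untruncated expansion of $\E v'(X)^2$ and notes that every $h$-dependent term carries $h^2\sigma^2$ or $h^4$.

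You instead do the cancellation exactly at $h=0$, where only $\E Z^{2k}$ enter. You then control the $h$-dependence structurally. The map $h\mapsto\E g(h+\sigma Z)$ is even for even $g$, so a Taylor expansion in $h$ with bounded fourth derivative, together with $(v^2)''(0)=2c_1^2$, gives $c_1^2h^2+O(h^2\sigma^2+h^4)$.

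The paper's route is one explicit polynomial computation. Yours separates the only delicate cancellation (the vanishing $\sigma^2$ and $\sigma^4$ terms and the coefficient $-c_3^2/3$) from the $h$-corrections, and avoids the cross terms of $\E X^6$.

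In both routes the hypothesis $|v|\le1$ is what bounds $(v^2)^{(8)}$, through its term $2v\,v^{(8)}$. In your route it also bounds $\partial_h^4$ of your $F$, through $2v\,v^{(4)}$. You use this only implicitly and should state it.
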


The point of \eqref{eq:fld-t3} is a cancellation: in $\E v(X)^2-\sigma^2\E v'(X)^2$ the terms of
order $\sigma^2$ and $\sigma^4$ cancel, so the first term without $h$ is of order $\sigma^6$, with
coefficient $-c_3^2/3$.

\begin{proof}[Proof of Theorem~\ref{thm:field}\textup{(b)}, the gap]
Insert \eqref{eq:fld-exact} into \eqref{eq:fld-t3}. The left side becomes
$q-\xi'(q)/\xi''(q)$, and $\sigma^6=\xi'(q)^3=\azero^3q^3+O(q^5)$. By \eqref{eq:fld-xi-exp},
\[
 h^2\bigl(c_1^2+O(q+h^2)\bigr)
 =\frac{q^3}3\Bigl(\frac{\xi''''(0)}{\azero}+\azero^3c_3^2\Bigr)+O(q^4).
\]
By \eqref{eq:fld-jets} and $\Ustar^2=\azero^{-1}$ (Theorem~\ref{thm:local}), the left side is
$h^2(\azero^{-1}+o(1))$, and the bracket on the right tends to
$\xi''''(0)\Ustar^2+\azero^3\Wstar^2=\Dstar$. Since $\Dstar>0$ and $q\to0$, this gives
$q^3=3h^2(\azero\Dstar)^{-1}(1+o(1))$.
\end{proof}

\begin{proof}[Proof of Theorem~\ref{thm:field}\textup{(b)}, the atom]
By part~(a), $\Gamma_{\zeta_h,h}'\equiv1$ on $[q,\kappa_1]$, so the right derivative of
$\Gamma_{\zeta_h,h}'$ at $q$ vanishes. Since $\zeta_h([0,q))=0$, we have
$\alpha_{\zeta_h}(q)=\zeta_h(\{q\})$. Lemma~\ref{lem:fld-ito}(c) at $t=q$, with
$U_q=v'(X^{\zeta_h,h}_q)$, $V_q=v''(X^{\zeta_h,h}_q)$ and the law of $X^{\zeta_h,h}_q$, gives
\[
 \zeta_h(\{q\})=\frac{a'(q)\E v'(X)^2+a(q)^2\E v''(X)^2}{2a(q)^2\E v'(X)^3}.
\]
Here $\E v'(X)^k\to\Ustar^k>0$ for $k=2,3$, because $|v'(x)-c_1|\le Cx^2$,
$\E X^2=h^2+\sigma^2\to0$ and $c_1\to\Ustar$ by \eqref{eq:fld-jets}. Since $v''$ is odd with $v'''(0)=c_3$, $|v''(x)-c_3x|\le C|x|^3$, and
therefore
\[
 \E v''(X)^2=c_3^2(h^2+\sigma^2)+O(h^4+\sigma^4)=\azero\Wstar^2q+o(q),
\]
by the first claim of part~(b), which gives $h^2=O(q^3)$, and by $\sigma^2=\azero q+O(q^3)$.
Also $a'(q)=\xi''''(0)q+O(q^2)$ and $a(q)=\azero+O(q^2)$, because $\xi'''(0)=0$. Thus
\[
 \zeta_h(\{q\})=\frac{\xi''''(0)\Ustar^2+\azero^3\Wstar^2+o(1)}{2\azero^2\Ustar^3+o(1)}\,q
 =\frac{\Dstar}{2\sqrt\azero}\,q\,(1+o(1))=\rhostar q\,(1+o(1)),
\]
using $\Ustar=\azero^{-1/2}$. Finally $\zetas([0,q])=\rhostar q(1+o(1))$, because, by
Theorem~\ref{thm:local}, $\alphas(0)=\zetas(\{0\})=0$ and $\alphas'(0)=\rhostar>0$.
\end{proof}

\begin{proof}[Proof of Theorem~\ref{thm:field}\textup{(c)}]
Square \eqref{eq:fld-t1} and subtract \eqref{eq:fld-t2}; the terms in $c_1c_5$ cancel, and
\[
 \Bigl(\frac{\mfield(h)}h\Bigr)^2-\E v'(X)^2=-\frac{c_3^2}2\,\sigma^4+O(h^2+\sigma^6).
\]
By part~(b), $h^2+\sigma^6=O(q^3)$. With \eqref{eq:fld-exact} and \eqref{eq:fld-xi-exp},
\[
 \Bigl(\frac{\mfield(h)}h\Bigr)^2
 =\frac1\azero-\frac{q^2}{2\azero}\Bigl(\frac{\xi''''(0)}{\azero}+\azero^3c_3^2\Bigr)+O(q^3)
 =\frac1\azero\Bigl(1-\frac\Dstar2\,q^2\,(1+o(1))\Bigr).
\]
Since $\Phi_{\zeta_h,x}(0,\cdot)$ is odd and strictly increasing, $\mfield(h)/h>0$;
square roots give part~(c).
\end{proof}

\begin{proof}[Proof of Theorem~\ref{thm:field}\textup{(d)}]
By Lemma~\ref{lem:fld-continuity}, $\Ffield$ is even and continuously differentiable with
$\Ffield'=\mfield$. By parts (b) and (c), for $0<s<h_1$,
\[
 \mfield(s)=\frac s{\sqrt\azero}-\lambda s^{7/3}\bigl(1+\epsilon(s)\bigr),\qquad
 \lambda=\frac\Dstar{4\sqrt\azero}\Bigl(\frac3{\azero\Dstar}\Bigr)^{2/3},
\]
where $\epsilon$ is continuous on $(0,h_1)$, because $\mfield$ is, and $\epsilon(s)\to0$ as
$s\downarrow0$. Hence $\mfield(h)/h\to\azero^{-1/2}$; since $\Ffield'(0)=\mfield(0)=0$ and
$\mfield$ is odd, this limit is $\Ffield''(0)$. Integrating,
\[
 \Ffield(h)-\Ffield(0)=\int_0^h\mfield(s)\,ds
 =\frac{h^2}{2\sqrt\azero}-\frac3{10}\,\lambda\,h^{10/3}(1+o(1))\qquad(h\downarrow0),
\]
where
\[
 \frac3{10}\,\lambda=\frac3{40}\,3^{2/3}\Dstar^{1/3}\azero^{-7/6}
 =\frac3{40}\,(9\Dstar)^{1/3}\azero^{-7/6} .
\]
Evenness gives the same expansion with $|h|$ for $h<0$.
If $\Ffield$ had an expansion $\sum_{j\le4}c_jh^j+o(h^4)$ at zero, comparison with this display
for $h>0$ would force $c_1=c_3=0$ and $c_2=(2\sqrt\azero)^{-1}$, and then
$c_4h^4+o(h^4)=-\frac3{10}\lambda h^{10/3}(1+o(1))$, which is impossible since $\lambda>0$.
\end{proof}

This completes the proof of parts (a)--(d) of Theorem~\ref{thm:field}. Its last sentence is
Corollary~\ref{cor:fld-examples}(a) below.

\begin{remark}[The exponent from the plateau calculus]\label{rem:fld-plateau-picture}
The first claim of part~(b) can also be read off the plateau calculus, and this explains why the
constant $\Dstar$ of the constrained onset appears. Let $0<h<h_1$, $q=\qmin(h)$ and
$\Gamma=\Gamma_{\zeta_h,h}$. The distribution function of $\zeta_h$ vanishes on $[0,q)$, so
$\Gamma'''=\Psi_{\zeta_h,h}(\cdot,0)$ on $(0,q)$, and this tends to $\Dstar$ uniformly on $(0,q)$
as $h\to0$, by Lemma~\ref{lem:fld-plateau}, since $q\to0$ and $\Wone(\zeta_h,\zetas)\to0$. At the
ends of the gap, $\Gamma(q)=q$ and $\Gamma'(q)=1$ by part~(a), $\Gamma(0)=\mfield(h)^2$ because
$X_0=h$, and the right derivative of $\Gamma'$ at zero is
$\azero^2\Phi_{\zeta_h,xxx}(0,h)^2=O(h^2)$ by Lemma~\ref{lem:fld-ito}(c), since
$\Phi_{\zeta_h,xxx}(0,\cdot)$ is odd with bounded derivative; since $\Gamma'''$ is bounded on $(0,q)$, $\Gamma''$ extends continuously to $[0,q]$, with this right derivative as its value at zero (as in the first step of the proof of Lemma~\ref{lem:plat-kernel}, and by the mean value theorem). Integrating by parts,
\[
 \mfield(h)^2=\int_0^qt\,\Gamma''(t)\,dt=\frac{\Dstar}3\,q^3(1+o(1))+O(h^2q^2).
\]
Since $\mfield(h)=h\,\Phi_{\zeta_h,xx}(0,y)$ for some $y\in(0,h)$, and
$\Phi_{\zeta_h,xx}(0,y)\to\Ustar$, this gives again $q^3=3h^2(\azero\Dstar)^{-1}(1+o(1))$.
\end{remark}

\begin{remark}[Where $\xi'''(0)=0$ enters]\label{rem:fld-scope}
This condition enters through Theorem~\ref{thm:local}, through $a'(0)=0$ in Lemma~\ref{lem:fld-plateau} and in Remark~\ref{rem:fld-plateau-picture},
through $a'(q)=\xi''''(0)q+O(q^2)$ in the proof of the atom in part~(b), and through
\eqref{eq:fld-xi-exp}. Without it, $q-\xi'(q)/\xi''(q)$ is of order $q^2$, and the exponent $2/3$
is not expected; we do not treat that case. All asymptotics in Theorem~\ref{thm:field} are as
$h\to0$ at fixed $\xi$; no uniformity in $\xi$ or in the temperature is claimed.
\end{remark}

\subsection{Covariances covered}\label{sec:fld-sk}

We prove the last sentence of Theorem~\ref{thm:field} and record a class of near-critical
covariances for which the input \condH\ comes from Theorem~\ref{thm:nearcrit}(a).

\begin{corollary}[Covariances covered by Theorem~\ref*{thm:field}]\label{cor:fld-examples}
\begin{thmlist}
\item For $\xi(t)=\beta^2t^2/2$ with $\beta>1$, the hypotheses of Theorem~\ref{thm:local}, and
 hence those of Theorem~\ref{thm:field}, hold.
\item Let $\varepsilon_*$ be as in Theorem~\ref{thm:nearcrit}(a). The hypotheses of
 Theorem~\ref{thm:field} hold for every $\xi\in\Kcl(\varepsilon_*)$. This class contains
 $\xi(t)=bt^2+et^4$ whenever $b>\frac12$, $e\ge0$ and $24(b-\frac12+e)<\varepsilon_*$, and
 $\xi(t)=\beta^2t^2/2$ whenever $1<\beta^2<1+\varepsilon_*$.
\end{thmlist}
\end{corollary}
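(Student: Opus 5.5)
Part~(a) has essentially been proved already in the proof of Theorem~\ref{thm:sk}. The covariance $\beta^2t^2/2$ is a polynomial with $\beta_2^2=\beta^2/2>0$ and $\xi'''(0)=0$, and its radius of convergence is infinite. Condition \condH\ holds for every $\beta>1$ by \cite[Proposition~5.1]{Lop26a}, after identifying Lopatto's functional with ours up to the additive constant $\log2$. Since Theorem~\ref{thm:field} assumes exactly the hypotheses of Theorem~\ref{thm:local}, nothing further is needed for (a).

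For part~(b), I first treat the class $\Kcl(\varepsilon_*)$. Its members have radius of convergence greater than one, $\beta_3=0$ (so $\xi'''(0)=0$) and $\xi''(0)>1$, so $\beta_2>0$. Theorem~\ref{thm:nearcrit}(a) gives $\supp\zetas=[0,v_*]$ with $v_*>0$, hence \condH. Thus the hypotheses of Theorem~\ref{thm:local}, and therefore those of Theorem~\ref{thm:field}, hold on $\Kcl(\varepsilon_*)$.

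For the examples, take $\xi(t)=bt^2+et^4$. All coefficients are nonnegative, $\beta_3=0$, the radius is infinite, and $\xi''(0)=2b>1$ exactly when $b>\frac12$. It remains to bound $\normCfour{\xi-\xic}$, where
\[
 \xi-\xic=(b-\tfrac12)t^2+et^4 .
\]
On $[0,1]$ each derivative of order $k\le4$ is a sum of nonnegative terms, maximized at $t=1$. The largest coefficients come from $k=2$, namely $2(b-\frac12)+12e$, and from $k=4$, namely $24e$. All of them are at most $24(b-\frac12+e)$, so this quantity is below $\varepsilon_*$ by assumption and $\xi\in\Kcl(\varepsilon_*)$. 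The SK case is $b=\beta^2/2$, $e=0$, for which the norm equals $\beta^2-1<\varepsilon_*$, as already noted in Remark~\ref{rem:fifth-sk-inputs}.

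The only step needing any care is the norm bookkeeping, checking each derivative order against $24(b-\frac12+e)$. The rest is citation.
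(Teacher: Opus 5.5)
Your proposal is correct and follows the paper's proof essentially step by step: \condH\ for the Sherrington--Kirkpatrick covariance at every $\beta>1$ comes from \cite[Proposition~5.1]{Lop26a}, \condH\ on $\Kcl(\varepsilon_*)$ comes from the interval support in Theorem~\ref{thm:nearcrit}(a), and the examples follow from the elementary bound of the derivatives of $\xi-\xic$ by $24(b-\frac12+e)$, respectively by $\beta^2-1$. Your norm bookkeeping is fine; the third derivative also contributes $24e$, which changes nothing.
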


\begin{proof}
(a) Here $\beta_2^2=\beta^2/2>0$ and $\beta_p=0$ for $p\ge3$, so $\xi'''(0)=0$ and the radius of
convergence is infinite. Condition \condH\ holds for every $\beta>1$ by Lopatto's theorem
\cite[Proposition~5.1]{Lop26a}; an alternative proof, with one computer-assisted step, is
\cite[Theorem~5.1]{Pei26}. Both are preprints (Remark~\ref{rem:fifth-sk-inputs}); for $1<\beta^2<1+\varepsilon_*$, part~(b) supplies \condH\ from refereed results and this paper. This is the input recalled in Theorem~\ref{thm:sk}.

(b) By Theorem~\ref{thm:nearcrit}(a), every $\xi\in\Kcl(\varepsilon_*)$ has
$\supp\zetas=[0,v_*]$ for some $v_*>0$, so \condH\ holds; moreover $\beta_3=0$ gives
$\xi'''(0)=0$, and $\xi''(0)>1$ gives $\beta_2>0$. For $\xi(t)=bt^2+et^4$ the derivatives of
order at most four of $\xi-\xic$ are bounded on $[0,1]$ by $24(b-\frac12+e)$, and
$\xi''(0)=2b>1$. For $\xi(t)=\beta^2t^2/2$ they are bounded by $\beta^2-1$, and
$\xi''(0)=\beta^2>1$.
\end{proof}

For the Sherrington--Kirkpatrick covariance, Corollary~\ref{cor:dict-field} writes parts (b)
and (d) of Theorem~\ref{thm:field} in the variables of Parisi and Rizzo, where they are the
leading terms of (B4) and (B5) of \cite{PR09}.

\begin{remark}[A fixed field]\label{rem:fld-earlier}
For fixed $h>0$, Lopatto proves that the Parisi measure of the Sherrington--Kirkpatrick model is
supported on an interval $[q_-,q_+]$, nondegenerate exactly when the measure is not replica
symmetric, and that it then has an atom at $q_-$ \cite[Theorem~1.1]{Lop26b}. For $0<|h|<h_1$,
Theorem~\ref{thm:field}(a) shows that $\zeta_h$ is not replica symmetric, and (b) gives the
small-field asymptotics of $q_-=\qmin(h)$ and of the atom $\zeta_h(\{q_-\})$. The proof uses neither
that theorem (near the origin the gap exclusion of Lemma~\ref{lem:fld-gap} suffices) nor
\cite[Theorem~B.1]{Lop26b} (interval support and a smooth positive density for $\zetas$ at $h=0$
and $\beta>1$).
\end{remark}

\begin{remark}[The atom carries the zero-field mass]\label{rem:fld-ansatz}
By parts (a) and (b) of Theorem~\ref{thm:field}, the masses of $[0,\qmin(h)]$ under $\zeta_h$ and $\zetas$
satisfy $\zeta_h([0,\qmin(h)])=\zetas([0,\qmin(h)])\,(1+o(1))$: to
leading order the field moves the whole zero-field mass of $[0,\qmin(h)]$ into the atom at
$\qmin(h)$, and the distribution function of $\zeta_h$ rejoins that of $\zetas$ at the edge
(Figure~\ref{fig:bottom}(b)). So the plateau of the order parameter $q(x)$ at height $\qmin(h)$
ends, asymptotically, at the zero-field value $x=\zetas([0,\qmin(h)])$, consistently with the
ansatz of \cite[Appendix~B]{PR09} that $q(x)$ is unperturbed above the plateau, which
Theorem~\ref{thm:field} does not prove. Under the constraint of Theorem~\ref{thm:local}(a) the
distribution function also rejoins the equilibrium one at the first positive contact to leading
order, but the atom at zero carries $\theta$ and the one at $q_\theta$ carries $\theta/2$
(Figure~\ref{fig:bottom}(a)).
\end{remark}

\section[An atom at zero: proof of Theorem~\ref{thm:atom}(a),(b),(d) and
Corollary~\ref{cor:pure-p}]{An atom at zero: proof of Theorem~\ref*{thm:atom}(a),(b),(d)\\
and Corollary~\ref*{cor:pure-p}}\label{sec:atom}

An atom of $\zetas$ at zero, the third perturbation of the bottom of the Parisi measure,
changes the rate at first order. If $m_0=\zetas(\{0\})>0$, then $\zetas$ itself satisfies
$\zeta(\{0\})\ge\theta$ for every $\theta\le m_0$, so the constraint costs nothing up to $m_0$
and $\Sigma$ leaves the equilibrium level with slope $-m_0$. Section~\ref{sec:atom-slope}
proves this by convex duality (Theorem~\ref{thm:atom}(a)) and deduces (d), the only part that
uses \condFM; Section~\ref{sec:atom-atom} reads the atom off from $\Gamma_*$ at the origin
(Theorem~\ref{thm:atom}(b)); Section~\ref{sec:atom-pure} treats pure $p$-spin models. Part
(c), where the argument of Theorem~\ref{thm:local}, built on the plateau calculus of
Section~\ref{sec:plat}, runs with $\theta-m_0$ in place of
$\theta$ (Figure~\ref{fig:bottom}(c)), uses the estimates of Section~\ref{sec:nc-reference}
and is proved in Section~\ref{sec:atom-odd}.

\paragraph{Standing assumption.}
In this section $\xi(t)=\sum_{p\ge2}\beta_p^2t^p$ is not identically zero and has radius of
convergence greater than one ($\beta_2=0$ is allowed), so $\xi',\xi''>0$ on $(0,1]$ and
$\xi'''\ge0$ on $[0,1]$. The results of Section~\ref{sec:cons}, including
Proposition~\ref{prop:cons-envelope} proved there, Lemma~\ref{lem:ons-zero-in-support}, the
bounds \eqref{eq:plat-bounds} and Sections~\ref{sec:plat-ito}--\ref{sec:plat-obstacle} hold in
this generality: their proofs use the covariance only through $\xi',\dots,\xi''''$ on $[0,1]$ and
the positivity of $\xi''$ on $(0,1]$. The notation is that of Sections~\ref{sec:cons}
and~\ref{sec:plat}; $\zetas$ is the unique minimizer of $P$ \cite[Theorem~2 and
Corollary~1]{AC15b}. From Lemmas~\ref{lem:cons-variational} and~\ref{lem:cons-regular} we
use: $\Lambar$ is convex; $\Lamhat=0$ on $[0,m_0]$ and $\Lamhat>0$ on $(m_0,1]$; the
constraint is active for $m_0<\theta<1$; if $m_0<1$, then $\feq<f_1<\infty$ and each
$f\in(\feq,f_1)$ has a unique dual minimizer $\theta_f\in(m_0,1)$.

\subsection{The slope at equilibrium}\label{sec:atom-slope}

\begin{proposition}[Slope at equilibrium]\label{prop:atom-slope}
The function $\Sigma$ is finite and concave on $\R$, and $\Sigma=0$ on $(-\infty,\feq]$. For
every $\delta>0$,
\begin{equation}\label{eq:atom-exact}
 \Sigma(\feq+\delta)=-m_0\delta-S(\delta),\qquad
 S(\delta):=\sup_{0\le s\le1-m_0}\bigl\{s\delta-\Lamhat(m_0+s)\bigr\},
\end{equation}
and $0\le S(\delta)=o(\delta)$ as $\delta\downarrow0$. Consequently
\begin{equation}\label{eq:atom-slope}
 \Sigma(\feq+\delta)=-\zetas(\{0\})\,\delta+o(\delta),\qquad
 \Sigma'_+(\feq)=-\zetas(\{0\}),
\end{equation}
and $|\Sigma(\feq+\delta)|=o(\delta)$ if and only if $\zetas(\{0\})=0$. If $m_0<1$, then
$\theta_f\to m_0$ and $\zeta_f\Rightarrow\zetas$ weakly as $f\downarrow\feq$.
\end{proposition}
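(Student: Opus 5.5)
The argument is pure convex duality on $[0,1]$, using only the facts recalled from Lemmas~\ref{lem:cons-variational} and~\ref{lem:cons-regular}. \emph{Finiteness and concavity.} $\Sigma$ is an infimum of affine functions of $f$, so it is concave. It is finite because $\Lambar$ is finite and bounded below on the compact interval $[0,1]$: it is continuous on $[0,1)$ and has a finite value at $1$. \emph{Vanishing on $(-\infty,\feq]$.} By Lemma~\ref{lem:cons-variational}(a), $\Lambar(\theta)=\theta p(\theta)\ge\theta\feq\ge\theta f$ for every $f\le\feq$, with equality at $\theta=0$. Hence $\Sigma(f)=\Lambar(0)=0$ there.

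\emph{Exact formula.} Write $\Lambar(\theta)-\theta f=\Lamhat(\theta)-\theta\delta$ with $f=\feq+\delta$. I would split $[0,1]$ into $[0,m_0]$ and $[m_0,1]$.
\begin{itemize}
\item On $[0,m_0]$, $\Lamhat=0$, so the infimum is $-m_0\delta$, attained at $m_0$.
\item On $[m_0,1]$, write $\theta=m_0+s$. The expression becomes $-m_0\delta+\{\Lamhat(m_0+s)-s\delta\}$, whose infimum over $s$ is $-m_0\delta-S(\delta)$.
\end{itemize}
Since $s=0$ is admissible, $S(\delta)\ge0$, and the overall infimum is $-m_0\delta-S(\delta)$. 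This gives \eqref{eq:atom-exact}.

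\emph{$S=o(\delta)$.} This is the one step with content. The key input is that the right derivative of $\Lamhat$ at $m_0$ vanishes.
\begin{itemize}
\item If $m_0=0$, Lemma~\ref{lem:cons-variational}(b) gives $\Lambar'_+(0)=\feq$, so $\Lamhat'_+(0)=0$.
\item If $m_0>0$, Proposition~\ref{prop:cons-envelope} gives $\Lambda'=p+\theta p'$ with $p'(m_0)=0$ and $p(m_0)=\feq$ (the case $m_0=1$ is trivial since then $S\equiv0$). Hence $\Lamhat'(m_0)=0$.
\end{itemize}
So $\Lamhat(m_0+s)=o(s)$, and since $\Lamhat\ge0$, $s\mapsto\Lamhat(m_0+s)/s$ is a nondecreasing function of $s$ by convexity and tends to $0$.

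Given $\varepsilon>0$, choose $s_\varepsilon>0$ with $\Lamhat(m_0+s_\varepsilon)>0$, using $\Lamhat>0$ on $(m_0,1]$. Set $\delta_\varepsilon:=\Lamhat(m_0+s_\varepsilon)/s_\varepsilon>0$. By monotonicity of the quotient, $\Lamhat(m_0+s)\ge\delta_\varepsilon s$ for $s\ge s_\varepsilon$. Hence, for $\delta<\delta_\varepsilon$, the supremum is effectively over $s\le s_\varepsilon$, where $s\delta-\Lamhat\le s_\varepsilon\delta$. Taking $s_\varepsilon\le\varepsilon$ gives $S(\delta)\le\varepsilon\delta$ for small $\delta$, which is $S(\delta)=o(\delta)$.

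Then \eqref{eq:atom-slope} follows at once. The ``if and only if'' statement follows from $\Sigma(\feq+\delta)=-m_0\delta+o(\delta)$.

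\emph{Convergence of $\theta_f$ and $\zeta_f$.} Assume $m_0<1$. By the above, the maximizing $s_f=\theta_f-m_0$ satisfies $s_f\le s_\varepsilon$ for small $\delta$, so $\theta_f\to m_0$; alternatively, Corollary~\ref{cor:cons-level} gives this directly. Weak continuity of $\theta\mapsto\zeta_\theta$ (Lemma~\ref{lem:cons-variational}(a)) together with $\zeta_{m_0}=\zetas$ then gives $\zeta_f\Rightarrow\zetas$. I expect the main obstacle to be only the bookkeeping at $m_0$: the derivative of $\Lamhat$ there, and the endpoint case $m_0=1$.
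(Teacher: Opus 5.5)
Your proof is correct and follows the paper's route: the same split of $[0,1]$ at $m_0$, and the same convexity argument confining the maximizing $s$ to a small interval. Your nondecreasing quotient $\Lamhat(m_0+s)/s$ plays the role of the paper's bound $\Lamhat(m_0+s)\ge\Lamhat(m_0+\eta)$ for $s\ge\eta$.

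Two small differences are worth noting. The argument uses only $\Lamhat>0$ on $(m_0,1]$ and never the vanishing of $\Lamhat'_+(m_0)$, so that step and its appeal to the envelope identity can be dropped. For $\zeta_f\Rightarrow\zetas$ you cite the weak continuity of $\theta\mapsto\zeta_\theta$ from Lemma~\ref{lem:cons-variational}(a), whereas the paper reruns the Lipschitz-bound-plus-uniqueness argument directly.
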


The proposition uses only the uniqueness of $\zetas$, these duality facts and the Lipschitz
bound \eqref{eq:cons-lipschitz}. In words,
$-\Sigma(\feq+\delta)$ is the largest vertical gap between the line $\theta\delta$ and the
graph of $\Lamhat$: already $m_0\delta$ at $\theta=m_0$, plus a remainder $S$, the convex
conjugate of $s\mapsto\Lamhat(m_0+s)$ on $[0,1-m_0]$, decided by the growth of $\Lamhat$ to
the right of $m_0$. When $m_0=0$, $S$ is the whole rate, and under the hypotheses of
Theorem~\ref{thm:local}, $\Lamhat(\theta)=\theta(p(\theta)-\feq)\sim c\theta^6$ gives the law
$\delta^{6/5}$. When $m_0>0$, $\Lamhat(m_0+u)=(m_0+u)(p(m_0+u)-\feq)$ carries the factor
$m_0+u\approx m_0$ rather than $u$, and for the mixtures of Section~\ref{sec:atom-odd} this
gives $\Lamhat(m_0+u)\sim m_0cu^5$ and a correction of order $\delta^{5/4}$.

\begin{proof}
The function $\Sigma$ is an infimum of affine functions of $f$, hence concave, and it is
finite because $\Lambar$ is finite and lower semicontinuous on $[0,1]$. For $f\le\feq$ we
have $\Lambar(\theta)-\theta f=\Lamhat(\theta)+\theta(\feq-f)\ge0$, with equality at
$\theta=0$; so $\Sigma=0$ on $(-\infty,\feq]$.

Fix $\delta>0$. Then $\Sigma(\feq+\delta)=\inf_{0\le\theta\le1}\{\Lamhat(\theta)-\theta\delta\}$.
On $[0,m_0]$ the bracket equals $-\theta\delta$, and its infimum there is $-m_0\delta$,
attained at $\theta=m_0$. On $[m_0,1]$ write $\theta=m_0+s$; the infimum there is
$-m_0\delta-S(\delta)$, and the choice $s=0$ shows $S(\delta)\ge-\Lamhat(m_0)=0$. This
proves \eqref{eq:atom-exact}.

If $m_0=1$, then $S\equiv0$. Let $m_0<1$ and $0<\eta\le1-m_0$, and put
$c_\eta:=\Lamhat(m_0+\eta)>0$. For $s\ge\eta$, writing $m_0+\eta$ as a convex combination of
$m_0$ and $m_0+s$, convexity and $\Lamhat(m_0)=0$ give $\Lamhat(m_0+s)\ge c_\eta$. Let
$0<\delta<c_\eta$. If $s\ge\eta$, then $s\delta-\Lamhat(m_0+s)\le\delta-c_\eta<0$; if
$s<\eta$, then $s\delta-\Lamhat(m_0+s)\le\eta\delta$. Hence $0\le S(\delta)\le\eta\delta$ for
$\delta<c_\eta$. Since $\eta$ is arbitrary, $S(\delta)=o(\delta)$, which gives
\eqref{eq:atom-slope} and the dichotomy.

For the same $\eta$ and $\delta<c_\eta$, every $\theta<m_0$ gives
$\Lamhat(\theta)-\theta\delta=-\theta\delta>-m_0\delta\ge\Sigma(\feq+\delta)$, and every
$\theta=m_0+s$ with $s\ge\eta$ gives
$\Lamhat(\theta)-\theta\delta\ge c_\eta-\delta-m_0\delta>-m_0\delta$. So the dual minimizer
at the level $\feq+\delta$ lies in $[m_0,m_0+\eta)$, and $\theta_f\to m_0$. For $\theta>m_0$
the measure $\nu=s\delta_0+(1-s)\zetas$ with $s=(\theta-m_0)/(1-m_0)$ satisfies
$\nu(\{0\})=\theta$ and $\Wone(\nu,\zetas)\le s$, so the Lipschitz bound
\eqref{eq:cons-lipschitz} gives $\feq\le p(\theta)\le\feq+\frac12\xi''(1)s$. Hence
$p(\theta_f)\to\feq$. If $\zeta'$ is a weak subsequential limit of $\zeta_f$, the same bound
gives $P_{\zeta'}=\feq$, and uniqueness gives $\zeta'=\zetas$. By compactness the whole
family converges.
\end{proof}

\begin{proof}[Proof of Theorem~\ref{thm:atom}\textup{(a)} and \textup{(d)}]
Part (a) is contained in Proposition~\ref{prop:atom-slope}. For (d), let $\zetas\ne\delta_0$,
that is $m_0<1$, and assume \condFM. Then $\feq<f_1$, and every $f\in(\feq,f_1)$ has its
dual minimizer $\theta_f$ in $(m_0,1)$ (Lemma~\ref{lem:cons-regular}), where $\Lambda$ is
differentiable (Proposition~\ref{prop:cons-envelope}). These are the inputs of
Lemma~\ref{lem:cons-gartner-ellis}: Markov's inequality at $\theta_f$ gives the upper bound
$\Sigma(f)$, and tilting by $\ZN^{\theta}$ at $\theta=\theta_{f'}$ with $f'$ slightly above
$f$ gives the matching lower bound. Hence
$\lim_NN^{-1}\log\Prob(N^{-1}\log\ZN\ge f)=\Sigma(f)$ for every $f\in(\feq,f_1)$.
\end{proof}

Part (d), the identification of $-\Sigma$ with the upper-tail rate, is noted in
\cite[Remark~1.3]{Bou26b} as a consequence of Talagrand's large-deviation principle;
Theorem~\ref{thm:atom} adds the behaviour of $\Sigma$ near $\feq$, which
Proposition~\ref{prop:atom-slope} gives for every upper-deviation exponent known to equal
$\Sigma$. Remark~\ref{rem:mo-fm} records that, for $\xi$ convex on $[-1,1]$, the upper bound
in (d) holds without \condFM.

\begin{remark}[An earlier statement of the slope identity]\label{rem:atom-earlier}
Boursier \cite[Remark~1.4]{Bou26a} states, for the Parisi measure $\zetas$, the identity
$(-\Lambda^*)'(\feq)=-\zetas(\{0\})$ without proof, and relates it to an extreme-value
heuristic. Her
dual function uses the equality constraint $\zeta(\{0\})=\theta$ \cite[(1.12)]{Bou26a}, and her
terminal value $\log(2\cosh x)$ shifts $P_\zeta$, $\feq$ and the level by $\log2$, which does
not affect slopes; after this shift her $-\Lambda^*$ agrees with $\Sigma$ above $\feq$. Indeed,
for $\theta\in[m_0,1]$ the two constraints give the same infimum, since the inequality
constraint is active for $m_0<\theta<1$ and $\zetas$, $\delta_0$ are admissible for both at
$\theta=m_0$, $\theta=1$. For $\theta<m_0$ and $f>\feq$,
\[
 \theta\inf_{\zeta(\{0\})=\theta}P_\zeta-\theta f\ge-\theta(f-\feq)>-m_0(f-\feq)
 =\Lambar(m_0)-m_0f,
\]
and also $\Lambar(\theta)-\theta f=-\theta(f-\feq)>\Lambar(m_0)-m_0f$; so both dual functions are the infimum over $[m_0,1]$ of a common bracket. As $\Sigma=0$ on
$(-\infty,\feq]$, the identity concerns the right derivative, and
Proposition~\ref{prop:atom-slope} proves it in this form, with the exact remainder $S$.
\end{remark}

\subsection{The atom}\label{sec:atom-atom}

For every probability measure $\zeta$, Lemma~\ref{lem:plat-ito}(a),(b) at $t=0$, where
$X_0=0$ and $\Phi_{\zeta,x}(0,0)=\Phi_{\zeta,xxx}(0,0)=0$ because $\Phi_\zeta(0,\cdot)$ is even,
give
\begin{equation}\label{eq:atom-at-zero}
\begin{gathered}
 \Gamma_\zeta(0)=0,\qquad
 \Gamma_\zeta'(0)=\xi''(0)\,\Phi_{\zeta,xx}(0,0)^2,\\
 \Gamma_\zeta''(0+)=\xi'''(0)\,\Phi_{\zeta,xx}(0,0)^2
   -2\xi''(0)^2\zeta(\{0\})\,\Phi_{\zeta,xx}(0,0)^3 ,
\end{gathered}
\end{equation}
where the right derivative $\Gamma_\zeta''(0+)$ of $\Gamma_\zeta'$ at zero involves
$\alpha_\zeta(0)=\zeta([0,0])=\zeta(\{0\})$, hence the atom (this is \eqref{eq:plat-zero}). By
Lemma~\ref{lem:ons-zero-in-support}, the support of $\zetas$ contains zero and not one, and
every $u\in\supp\zetas$ satisfies $\Gamma_*(u)=u$ and $\Gamma_*'(u)\le1$. Put
$\Ustar=\Phi_{\zetas,xx}(0,0)\in(0,1]$. If zero is isolated in $\supp\zetas$, the atom is
positive. If it is an accumulation point, $\Gamma_*(u)=u$ along support points $u\downarrow0$
pins $\Gamma_*'(0)$ to one and $\Gamma_*''(0+)$ to zero, and \eqref{eq:atom-at-zero} turns
these two equations into the values of $\Ustar$ and $m_0$.

\begin{proposition}[The atom]\label{prop:atom-atom}
\leavevmode
\begin{thmlist}
\item If zero is an isolated point of $\supp\zetas$, then $m_0>0$. This is the case whenever
 $\xi''(0)<1$, in particular whenever $\beta_2=0$.
\item If zero is an accumulation point of $\supp\zetas$, then $\xi''(0)\Ustar^2=1$, so
 $\xi''(0)\ge1$, and
 \begin{equation}\label{eq:atom-formula}
  \zetas(\{0\})=\frac{\xi'''(0)}{2\xi''(0)^{3/2}}=\frac{3\beta_3^2}{(2\beta_2^2)^{3/2}} .
 \end{equation}
\item $\zetas(\{0\})=0$ if and only if $\beta_3=0$ and zero is an accumulation point of
 $\supp\zetas$.
\end{thmlist}
\end{proposition}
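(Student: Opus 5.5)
Part (a) starts from Lemma~\ref{lem:ons-zero-in-support}: $0\in\supp\zetas$, and the obstacle $\Obs_*$ attains its minimum over $[0,1]$ at every support point. Suppose zero is isolated in the support. Then $\zetas([0,\varepsilon))=\zetas(\{0\})$ for small $\varepsilon$. Since every neighbourhood of zero has positive mass, this gives $m_0>0$ directly. For the criterion $\xi''(0)<1$, I would argue by contradiction. If zero were an accumulation point, part (b) below gives $\xi''(0)\Ustar^2=1$. Since $0<\Ustar\le1$ by \eqref{eq:plat-bounds}, this forces $\xi''(0)\ge1$. So $\xi''(0)<1$, and in particular $\beta_2=0$, makes zero isolated, hence $m_0>0$. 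Thus (a) reduces to (b).

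For part (b), take support points $u_n\downarrow0$ with $u_n>0$. By Lemma~\ref{lem:ons-zero-in-support}, $\Gamma_*(u_n)=u_n$. Also $\Gamma_*(0)=0$ by \eqref{eq:atom-at-zero}.

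\emph{First-order condition.} Since $\Gamma_*\in C^1$, the mean value theorem gives points $t_n\in(0,u_n)$ with $\Gamma_*'(t_n)=1$. Hence $\Gamma_*'(0)=1$, which is $\xi''(0)\Ustar^2=1$ by \eqref{eq:atom-at-zero}.

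\emph{Second-order condition.} Apply the same argument to $g=\Gamma_*-\id$, which satisfies $g(0)=0$, $g'(0)=0$ and $g(u_n)=0$. I need $\Gamma_*''(0+)=0$.
\begin{itemize}
\item The right derivative $\Gamma_*''(0+)$ of $\Gamma_*'$ at zero exists by Lemma~\ref{lem:plat-ito}(b).
\item Suppose $\Gamma_*''(0+)\ne0$, say positive. Then $g'(t)=\Gamma_*''(0+)\,t+o(t)>0$ for small $t>0$.
\item Integrating, $g(t)>0$ for small $t>0$, which contradicts $g(u_n)=0$. A negative value is excluded in the same way.
\end{itemize}

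Inserting $\Gamma_*''(0+)=0$ into \eqref{eq:atom-at-zero} and dividing by $\Ustar^2>0$ gives $\xi'''(0)=2\xi''(0)^2m_0\Ustar$. With $\Ustar=\xi''(0)^{-1/2}$ this becomes $m_0=\xi'''(0)/(2\xi''(0)^{3/2})$. The coefficient form follows from $\xi''(0)=2\beta_2^2$ and $\xi'''(0)=6\beta_3^2$; here $\beta_2>0$ because $\xi''(0)\ge1$.

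The delicate point is existence of the one-sided second derivative. Only a right derivative at the single point $0$ is needed, and Lemma~\ref{lem:plat-ito}(b) supplies it without any density assumption. So no regularity of $\zetas$ near zero is required, and this step should go through.

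Part (c) is then a case split.
\begin{itemize}
\item If zero is isolated in the support, then $m_0>0$ by (a).
\item If zero is an accumulation point, then by (b) the atom $m_0$ vanishes if and only if $\xi'''(0)=0$, that is, if and only if $\beta_3=0$.
\end{itemize}
Together these give the equivalence.
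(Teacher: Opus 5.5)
Your proof is correct and follows the paper's argument: both rest on Lemma~\ref{lem:ons-zero-in-support}, force $\Gamma_*'(0)=1$ and $\Gamma_*''(0+)=0$ along support points tending to zero, and read off $\Ustar$ and $m_0$ from \eqref{eq:atom-at-zero}. The differences are cosmetic. You obtain the $\xi''(0)<1$ criterion from (b), whereas the paper uses the direct mean-value bound $\Gamma_*'(v)\le\xi''(v)<1$. You also prove $\Gamma_*''(0+)=0$ by a sign argument on $\Gamma_*-\id$, whereas the paper takes difference quotients at points $v_n\downarrow0$ with $\Gamma_*'(v_n)=1$.
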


Parts (a) and (b) are Theorem~\ref{thm:atom}(b).

\begin{proof}
(a) If $(0,r)\cap\supp\zetas=\emptyset$ for some $r>0$, then
$\zetas(\{0\})=\zetas([0,r))>0$, because $0\in\supp\zetas$ and $[0,r)$ is a relatively open
neighbourhood of $0$ in $[0,1]$. Suppose $\xi''(0)<1$, and choose $r\in(0,1)$ with $\xi''<1$
on $[0,r]$. If some $u\in(0,r]$ belonged to $\supp\zetas$, Lemma~\ref{lem:ons-zero-in-support}
and $\Gamma_*(0)=0$ would give $\Gamma_*(u)-\Gamma_*(0)=u$, and the mean value theorem would
give $v\in(0,u)$ with $1=\Gamma_*'(v)=\xi''(v)\E U_v^2\le\xi''(v)<1$, since $0<U_v\le1$. Hence
zero is isolated. This is the argument of \cite[Theorem~1(ii)]{AC15a}.

(b) Choose $u_n\in\supp\zetas$ with $u_n\downarrow0$ strictly. By
Lemma~\ref{lem:ons-zero-in-support} and \eqref{eq:atom-at-zero},
$\xi''(0)\Ustar^2=\Gamma_*'(0)=\lim_n\Gamma_*(u_n)/u_n=1$. Since $\Ustar\le1$, this gives
$\xi''(0)\ge1$ and $\Ustar=\xi''(0)^{-1/2}$. The mean value theorem on $[u_{n+1},u_n]$ gives
$v_n\in(u_{n+1},u_n)$ with
$\Gamma_*'(v_n)=(\Gamma_*(u_n)-\Gamma_*(u_{n+1}))/(u_n-u_{n+1})=1$, so $v_n\downarrow0$.
Evaluating the right derivative of $\Gamma_*'$ at zero along $v_n$, and using
\eqref{eq:atom-at-zero},
\[
 0=\lim_n\frac{\Gamma_*'(v_n)-\Gamma_*'(0)}{v_n}=\Gamma_*''(0+)
  =\xi'''(0)\Ustar^2-2\xi''(0)^2m_0\Ustar^3 .
\]
Since $\xi''(0)\Ustar>0$, it follows that
$m_0=\xi'''(0)/(2\xi''(0)^2\Ustar)=\xi'''(0)/(2\xi''(0)^{3/2})$. Finally
$\xi''(0)=2\beta_2^2$ and $\xi'''(0)=6\beta_3^2$.

(c) If $m_0=0$, zero is not isolated by (a), and (b) gives $\beta_3=0$. The converse is (b).
\end{proof}

\begin{remark}\label{rem:atom-atom-comparison}
When $\xi'''(0)=0$, part (c) is the equivalence of \condH\ with $\zetas(\{0\})=0$
(Remark~\ref{rem:ons-H-no-atom}). When an interval $(0,b)$ lies in $\supp\zetas$, part (b)
combines $\Gamma_*'(0)=1$ with the value at $u=0$ of the identity
$\zetas([0,u])\,\sigma_{\zetas}(u)=N_{\zetas}(u)$ obtained in the proof of
\cite[Theorem~2(ii)]{AC15a} ($N_\zeta,\sigma_\zeta$ as in \eqref{eq:plat-NS}, their
$\gamma_1,\gamma_2$ \cite[Proposition~3]{AC15a}); part (b) needs only the accumulation. The
right side of \eqref{eq:atom-formula} is the value at $q=0$ of $\xi'''(q)/(2\xi''(q)^{3/2})$,
the Parisi distribution function on an interval of the support in the spherical model
\cite[Theorem~6(ii)]{AC15a}. For $\xi(t)=\beta^2t^2/2$ with $\beta>1$, part (b) and the
accumulation of the support at zero \cite[Proposition~5.1]{Lop26a} (a preprint; see also
\cite[Theorem~5.1]{Pei26}) give $\Ustar=1/\beta$ and
$\zetas(\{0\})=0$; these are \cite[Lemmas~6.1 and~6.2]{Lop26a}, proved there by essentially the
same use of the right derivative of $\Gamma_*'$ at zero.
\end{remark}

\subsection{Pure \texorpdfstring{$p$}{p}-spin models}\label{sec:atom-pure}

By Theorem~\ref{thm:atom}(a), the rate leaves the equilibrium level linearly exactly when
$\zetas$ charges zero. A pure $p$-spin covariance with $p\ge3$ has no quadratic term, so
$\xi''(0)=0<1$, and Proposition~\ref{prop:atom-atom}(a) puts it on the linear side at every
temperature. That the atom is positive when $\xi''(0)<1$ is \cite[Theorem~1]{AC15a};
Proposition~\ref{prop:atom-atom}(a) repeats its argument with
Lemma~\ref{lem:ons-zero-in-support} in place of \cite[Theorem~5]{AC15a}
(Section~\ref{sec:intro-perturbations} gives the context).

\begin{proof}[Proof of Corollary~\ref{cor:pure-p}]
The covariance is a polynomial and is not identically zero, so the standing assumption
holds. Since $\xi''(0)=0<1$, Proposition~\ref{prop:atom-atom}(a) gives $m_0=\zetas(\{0\})>0$,
and Proposition~\ref{prop:atom-slope} gives the expansion and the right derivative. If
$\zetas=\delta_0$, then $m_0=1$, $\Lamhat=0$ on $[0,1]$, and \eqref{eq:atom-exact} reads
$\Sigma(\feq+\delta)=-\delta$. If $\zetas\ne\delta_0$ and \condFM\ holds, the last
assertion is Theorem~\ref{thm:atom}(d) at $f=\feq+\delta$.
\end{proof}

\section{Near criticality: proof of Theorem~\ref*{thm:nearcrit} and of
Theorem~\ref*{thm:atom}(c)}\label{sec:nc}

At the critical covariance $\xic(t)=t^2/2$ the Parisi measure is $\delta_0$, and the Parisi
equation \eqref{eq:intro-pde} is solved by $\log\cosh x+\frac12(1-t)$. Closeness to this
reference controls the constrained minimizers $\zeta_\theta$ for every $\theta\in[0,1]$,
whereas the local analysis of Sections~\ref{sec:plat} and~\ref{sec:ons} describes them only
for small $\theta$. Section~\ref{sec:nc-reference} proves three uniform facts near $\xic$: the
plateau polynomial behind $\Gamma'''$ is positive below a fixed level $K$, so no support-free
interval separates two contacts below $K$, and every constrained minimizer is close to
$\delta_0$, so no contact lies in $[K,1]$. Section~\ref{sec:nc-class} deduces
Theorem~\ref{thm:nearcrit}(a), and Section~\ref{sec:atom-odd} Theorem~\ref{thm:atom}(c), the
same argument with a cubic term. For every covariance of the class, including the
Sherrington--Kirkpatrick covariance for $1<\beta^2<1+\varepsilon_*$, the global control also
shows that the marginal regime ends exactly at a de Almeida--Thouless threshold $\theta_c$, at and beyond which
$\zeta_\theta$ has two atoms (Section~\ref{sec:nc-dat}, Theorem~\ref{thm:nearcrit}(b)), and
that above its first contact $\zeta_\theta$ differs from $\zetas$ by $O(\theta^5)$
(Section~\ref{sec:nc-stability}, Theorem~\ref{thm:nearcrit}(c)); these two long computations
are outlined here and carried out in Appendices~\ref{app:dat} and~\ref{app:stability}.

Throughout this section a \emph{covariance} is a function $\xi(t)=\sum_{p\ge2}\beta_p^2t^p$
with radius of convergence greater than one. When the covariance varies we record it as a
superscript, writing $\Phi^\xi_\zeta$, $P^\xi_\zeta$, $\Gamma^\xi_\zeta$ and $\Obs^\xi_\zeta$,
and we write $\zeta_{\xi,\theta}$ for the minimizer of $P^\xi_\zeta$ subject to
$\zeta(\{0\})\ge\theta$, which is unique by Lemma~\ref{lem:cons-variational} (at $\theta=1$ the only feasible measure is $\delta_0$); thus
$\zeta_{\xi,0}=\zetas$. The superscript is dropped when $\xi$ is fixed. As in
Section~\ref{sec:plat}, $a=\xi''$, $\alpha_\zeta(t)=\zeta([0,t])$, and $(U_t,V_t,W_t)$ are the
second to fourth spatial derivatives of $\Phi_\zeta$ along the diffusion
\eqref{eq:intro-diffusion}. A \emph{contact} of $\zeta$ is a minimum point of $\Obs_\zeta$ on
$[0,1]$, and a \emph{plateau} of $\zeta$ is an interval on which $\alpha_\zeta$ is constant.

\subsection{Closeness to the critical reference}\label{sec:nc-reference}

Here closeness to $(\xic,\delta_0)$ replaces the closeness to $\zetas$ of
Section~\ref{sec:plat-gap}, so that gap exclusion applies to every constrained minimizer,
uniformly in the covariance. This adapts the near-critical mechanism of Zhou~\cite{Zho26},
proved there for the equilibrium measure of the Sherrington--Kirkpatrick model, to
constrained minimizers and mixed covariances.

\paragraph{Varying the covariance.}
If $\normCfour{\xi-\xic}<\frac14$, then $|a-1|$, $|a'|$ and $|a''|$ are at most
$\normCfour{\xi-\xic}$ on $[0,1]$; in particular $\frac34<a<\frac54$ there. These covariances,
together with $\xic$, form a family of the kind considered in Lemma~\ref{lem:plat-jets}(b): the
bounds \eqref{eq:plat-bounds} hold with constants uniform over the family, and
\eqref{eq:plat-jet-general} compares two pairs $(\xi,\zeta)$ and $(\tilde\xi,\tilde\zeta)$ in
it. Since
$\|a\alpha-\tilde a\tilde\alpha\|_{L^1}\le\|a-\tilde a\|_\infty+\frac54\Wone(\zeta,\tilde\zeta)$,
the right side of \eqref{eq:plat-jet-general} tends to zero when $\xi''$ converges uniformly and
the measures converge weakly. Under this convergence $\Gamma^\xi_\zeta$ and
$(\Gamma^\xi_\zeta)'$ converge uniformly on $[0,1]$ (Lemma~\ref{lem:plat-coupling}), and
$P^\xi_\zeta$ converges. For the last point, write $u=\Phi^\xi_{\zeta,x}$ and integrate
$\Phi_t=-\frac a2(u_x+\alpha u^2)$ in time at $x=0$: by \eqref{eq:plat-jet-general} with
$k=0,1$ the change of $\Phi^\xi_\zeta(0,0)$ is at most a constant times
$\|a-\tilde a\|_{L^1}+\|a\alpha-\tilde a\tilde\alpha\|_{L^1}$, and the linear term
$\frac12\int_0^1ta\alpha\,dt$ changes by at most
$\frac12\|a\alpha-\tilde a\tilde\alpha\|_{L^1}$.

\paragraph{The reference pair.}
For $\xic$ and $\delta_0$ the distribution function is identically one, and
$\Phi_c(t,x)=\log\cosh x+\frac12(1-t)$. Its derivatives $U,V,W$ at $x=0$ are $(1,0,-2)$ for
every $t$. The diffusion solves $dX_t=\tanh X_t\,dt+dB_t$, and since $e^{-t/2}\cosh x$ is
space-time harmonic, its law is the $\cosh$ transform of $\mathcal N(0,t)$:
\begin{equation}\label{eq:nc-reference-law}
 \E F(X_t)=e^{-t/2}\,\E\bigl[F(\sqrt tZ)\cosh(\sqrt tZ)\bigr],\qquad Z\sim\mathcal N(0,1).
\end{equation}
Hence $\Gamma_c(0)=0$ and $\Gamma_c'(t)=\E\sech^4X_t=e^{-t/2}\E\sech^3(\sqrt tZ)<1$ for $t>0$,
so that $\Gamma_c(t)<t$ for every $t>0$. The critical obstacle therefore satisfies
$\Obs_c'(t)=\frac12(t-\Gamma_c(t))>0$ for $t>0$, and its only minimum point is zero. Thus
$\int\Obs_c\,d(\nu-\delta_0)\ge0$ for every $\nu\in\Pspace([0,1])$, and the optimality
criterion of Lemma~\ref{lem:cons-standard}(g) makes $\delta_0$ the unique minimizer of
$P^{\xic}$.

\paragraph{The plateau polynomial.}
For a covariance $\xi$ write $\Psi^\xi_\zeta(t,m)$ for the function \eqref{eq:plat-psi}
computed with $\xi$; its definition does not use $\xi'''(0)=0$. By
Lemma~\ref{lem:plat-ito}(c), $\Gamma'''_\zeta=\Psi^\xi_\zeta(\cdot,m)$ on every open plateau of
$\zeta$ of height $m$. At the reference pair and $t=0$, where $X_0=0$, it equals $4+6m^2$.

\begin{lemma}[Positivity of the plateau polynomial]\label{lem:nc-positive-third}
There are $K_P\in(0,1)$, $\varepsilon_P\in(0,\frac14]$ and a weak neighbourhood
$\mathcal W_P$ of $\delta_0$ in $\Pspace([0,1])$ such that
\[
 \Psi^\xi_\zeta(t,m)\ge2\qquad(0\le t\le K_P,\ 0\le m\le1)
\]
for every covariance $\xi$ with $\normCfour{\xi-\xic}<\varepsilon_P$ and every
$\zeta\in\mathcal W_P$. For such $\xi$ and $\zeta$,
\begin{equation}\label{eq:nc-positive-third}
 \Gamma'''_\zeta\ge2\quad\text{on every open plateau of $\zeta$ contained in $(0,K_P)$.}
\end{equation}
\end{lemma}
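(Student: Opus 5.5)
The argument is the analogue of Lemma~\ref{lem:plat-local-plateau}, with the reference pair $(\xic,\delta_0)$ at time zero in place of $(\xi,\zetas)$. We compare $\Psi^\xi_\zeta(t,m)$ with its value $4+6m^2\ge4$ at $(\xic,\delta_0,t=0)$ and show that the difference is at most $2$ once $t$, $\normCfour{\xi-\xic}$ and the weak distance of $\zeta$ to $\delta_0$ are small, uniformly in $m\in[0,1]$.

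\textbf{Step 1 (structure of $\Psi$).} By \eqref{eq:plat-psi}, $\Psi^\xi_\zeta(t,m)$ is a polynomial of degree at most two in $m$. Its coefficients are built from $a(t),a'(t),a''(t)$ and from expectations of monomials of degree at most four in $J_k(t)=\partial_x^k\Phi^\xi_\zeta(t,X_t)$, $k=2,3,4$. All of these are bounded uniformly over the family $\normCfour{\xi-\xic}<\frac14$, by \eqref{eq:plat-bounds} with uniform constants (Lemma~\ref{lem:plat-jets}(b)). It therefore suffices to show that each monomial expectation is close to its reference value, where $(J_2,J_3,J_4)=(1,0,-2)$. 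The coefficient terms are simpler: $|a(t)-1|$, $|a'(t)|$ and $|a''(t)|$ are all at most $\normCfour{\xi-\xic}$.

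\textbf{Step 2 (jets at time zero).} We estimate $\E|J_k(t)-j_k^c|$, where $j_k^c=\partial_x^k\Phi_c(0,0)$, by splitting into three pieces:
\[
|\partial_x^k\Phi^\xi_\zeta(t,X_t)-\partial_x^k\Phi^\xi_\zeta(t,0)|
+|\partial_x^k\Phi^\xi_\zeta(t,0)-\partial_x^k\Phi^{\xic}_{\delta_0}(t,0)|
+|\partial_x^k\Phi_c(t,0)-\partial_x^k\Phi_c(0,0)|.
\]
\begin{itemize}
\item The first piece is at most $B_{k+1}\E|X_t|$. The drift of $X$ is bounded by $\frac54$ and its diffusion coefficient is bounded, so $\E|X_t|\le C(t+t^{1/2})$.
\item The second piece is controlled by the covariance-varying jet stability \eqref{eq:plat-jet-general}. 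As noted in the paragraph on varying the covariance, its right side is bounded by $C(\|a-1\|_\infty+\Wone(\zeta,\delta_0))$.
\item The third piece vanishes, since the $x$-jets of $\Phi_c$ at $0$ do not depend on $t$.
\end{itemize}
Since $|J_k|\le B_k$, products of the $J_k$ inherit these bounds. Together with Step 1 this gives
\[
|\Psi^\xi_\zeta(t,m)-(4+6m^2)|\le C_P\bigl(t^{1/2}+\normCfour{\xi-\xic}+\Wone(\zeta,\delta_0)\bigr)
\]
for $t\le1$, where $C_P$ depends only on the family.

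\textbf{Step 3 (choices).} Choose $K_P$, $\varepsilon_P\le\frac14$ and $r_P>0$ so that $C_P(K_P^{1/2}+\varepsilon_P+r_P)\le2$, and take $\mathcal W_P=\{\zeta:\Wone(\zeta,\delta_0)<r_P\}$. This set is a weak neighbourhood of $\delta_0$, because $\Wone$ metrizes weak convergence. Then $\Psi\ge4+6m^2-2\ge2$ on the required range. For \eqref{eq:nc-positive-third}, Lemma~\ref{lem:plat-ito}(c) gives $\Gamma'''_\zeta=\Psi^\xi_\zeta(\cdot,m)$ on an open plateau of height $m$; that lemma is valid for every covariance of the class, and the plateau lies in $(0,K_P)$.

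\textbf{Main obstacle.} The delicate point is the second piece of Step 2, namely jet stability that is uniform when the covariance and the measure vary simultaneously. It rests on \eqref{eq:plat-jet-general} and on the uniformity of the constants $B_k$ over the family, both stated in Appendix~\ref{sec:plat-jets}. One must check that the constants there depend only on bounds for $a$ on $[0,1]$, which hold for all $\xi$ with $\normCfour{\xi-\xic}<\frac14$. The small-time bound on $\E|X_t|$ is routine by comparison.
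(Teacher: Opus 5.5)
Your proof is correct and follows essentially the paper's argument: compare every jet along the diffusion with the reference values $(1,0,-2)$ of $(\xic,\delta_0)$ using the covariance-varying jet stability \eqref{eq:plat-jet-general} and the small-time bound on $\E|X_t|$, obtain $|\Psi^\xi_\zeta(t,m)-4-6m^2|\le C(\sqrt t+\normCfour{\xi-\xic}+\Wone(\zeta,\delta_0))$, and then choose the constants. The only cosmetic difference is how the spatial displacement is absorbed. You use the Lipschitz bound $B_{k+1}$ of the perturbed jets, whereas the paper first compares the two solutions in sup norm and then uses the Lipschitz bound of $(\log\cosh)^{(k)}$.
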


The proof (Appendix~\ref{sec:an-nc}) bounds $|\Psi^\xi_\zeta(t,m)-4-6m^2|$ by an absolute
constant times $\normCfour{\xi-\xic}+\Wone(\zeta,\delta_0)+\sqrt t$, by
\eqref{eq:plat-jet-general} and \eqref{eq:plat-small-time}.

\begin{lemma}[Confinement]\label{lem:nc-confinement}
\begin{thmlist}
\item For every weak neighbourhood $\mathcal W$ of $\delta_0$ there is $\varepsilon>0$ such
 that $\zeta_{\xi,\theta}\in\mathcal W$ for every covariance $\xi$ with
 $\normCfour{\xi-\xic}<\varepsilon$ and every $\theta\in[0,1]$.
\item For every $K\in(0,1)$ there are $\varepsilon>0$ and a weak neighbourhood
 $\mathcal W_K$ of $\delta_0$ such that
 \begin{equation}\label{eq:nc-confinement}
  \Gamma^\xi_\zeta(t)<t\qquad(K\le t\le1)
 \end{equation}
 for every covariance $\xi$ with $\normCfour{\xi-\xic}<\varepsilon$ and every
 $\zeta\in\mathcal W_K$. For such $\xi$ and $\zeta$, no point of $[K,1]$ minimizes $\Obs_\zeta$
 over $[0,1]$ or over $(0,1]$; in particular no point of $[K,1]$ is a contact of~$\zeta$.
\end{thmlist}
\end{lemma}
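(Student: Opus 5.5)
For part (a) I would argue by compactness and uniqueness, using the continuity of $(\xi,\zeta)\mapsto P^\xi_\zeta$ established in the paragraph ``Varying the covariance''. Suppose the claim fails for some weak neighbourhood $\mathcal W$. Then there are covariances $\xi_n$ with $\normCfour{\xi_n-\xic}\to0$ and $\theta_n\in[0,1]$ with $\zeta_n:=\zeta_{\xi_n,\theta_n}\notin\mathcal W$. By compactness of $\Pspace([0,1])$, pass to a subsequence with $\zeta_n\Rightarrow\zeta'$. Since $\delta_0$ satisfies every constraint $\zeta(\{0\})\ge\theta$, we get $P^{\xi_n}_{\zeta_n}\le P^{\xi_n}_{\delta_0}$. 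Passing to the limit with the joint continuity in $(\xi,\zeta)$ gives $P^{\xic}_{\zeta'}\le P^{\xic}_{\delta_0}$. The reference-pair paragraph shows that $\delta_0$ is the unique minimizer of $P^{\xic}$, so $\zeta'=\delta_0$. This contradicts $\zeta_n\notin\mathcal W$ for large $n$, because the complement of $\mathcal W$ is weakly closed.

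For part (b) I would also argue by contradiction, but in two stages: first a uniform strict inequality at the reference, then a perturbation.

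\emph{Reference bound.} At the reference pair, $\Gamma_c(t)<t$ for $t>0$, and both functions are continuous. Hence $g_K:=\min_{K\le t\le1}(t-\Gamma_c(t))>0$.

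\emph{Perturbation.} The convergence statement in ``Varying the covariance'' (Lemma~\ref{lem:plat-coupling}) gives uniform convergence of $\Gamma^\xi_\zeta$ to $\Gamma_c$ on $[0,1]$ as $\xi''\to1$ uniformly and $\zeta\Rightarrow\delta_0$. A sequential contradiction argument then yields $\varepsilon>0$ and a weak neighbourhood $\mathcal W_K$ on which $\sup_{[0,1]}|\Gamma^\xi_\zeta-\Gamma_c|<g_K$. This proves \eqref{eq:nc-confinement}. Note that $\normCfour{\xi-\xic}<\varepsilon$ controls $\|\xi''-1\|_\infty$.

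\emph{No contacts in $[K,1]$.} By Lemma~\ref{lem:cons-standard}(f), $\Obs_\zeta'=-\frac12\xi''(\Gamma_\zeta-\id)>0$ on $[K,1]$, since $\xi''>0$ there. So $\Obs_\zeta$ is strictly increasing on $[K,1]$. In particular $\Obs_\zeta(t)>\Obs_\zeta(K')$ for any $K'\in(0,K)$ close to $K$ and any $t\in[K,1]$. If $K$ itself were a minimizer over $[0,1]$ or over $(0,1]$, then $\Obs_\zeta'(K)=0$ would be forced, because $K$ is an interior point of $(0,1)$. That contradicts $\Obs_\zeta'(K)>0$. Therefore no point of $[K,1]$ is a minimum point of $\Obs_\zeta$, and in particular none is a contact.

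\emph{Main obstacle.} The delicate step is the joint continuity in the covariance used in (a) and (b). I would take it from \eqref{eq:plat-jet-general} and Lemma~\ref{lem:plat-coupling} exactly as the paragraph on varying the covariance sets it out, since the uniform family bounds hold once $\normCfour{\xi-\xic}<\frac14$.
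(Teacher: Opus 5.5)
Your proposal is correct and follows the paper's proof essentially step for step. Part (a) rests on compactness, feasibility of $\delta_0$, joint continuity of $P^\xi_\zeta$ and uniqueness of the minimizer of $P^{\xic}$. Part (b) rests on the positive gap $\min_{[K,1]}(t-\Gamma_c(t))$, the uniform convergence of Lemma~\ref{lem:plat-coupling}, and then $\Obs_\zeta'>0$ on $[K,1]$.

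One wording fix in (a): a weak neighbourhood $\mathcal W$ need not be open, so its complement need not be closed. Instead, say that $\zeta_n\Rightarrow\delta_0$ forces $\zeta_n$ into an open set $\mathcal U\subset\mathcal W$ containing $\delta_0$ for large $n$, as the paper does.
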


\begin{proof}
(a) Suppose not. Then there are an open set $\mathcal U\subset\mathcal W$ containing
$\delta_0$, covariances $\xi_n$ with $\normCfour{\xi_n-\xic}\to0$, and constraints $\theta_n$
such that $\zeta_n=\zeta_{\xi_n,\theta_n}\notin\mathcal U$. By compactness of
$\Pspace([0,1])$ a subsequence converges weakly to some $\zeta\notin\mathcal U$. The measure
$\delta_0$ is feasible for every constraint, so $P^{\xi_n}_{\zeta_n}\le P^{\xi_n}_{\delta_0}$,
and joint continuity gives $P^{\xic}_\zeta\le P^{\xic}_{\delta_0}$. Thus $\zeta$ minimizes
$P^{\xic}$, and $\zeta=\delta_0$ by uniqueness, a contradiction.

(b) The continuous function $t-\Gamma_c(t)$ has a positive minimum on $[K,1]$. If the claim
failed, there would be covariances $\xi_n\to\xic$ in $C^4$, measures
$\zeta_n\Rightarrow\delta_0$ and points $t_n\in[K,1]$ with
$\Gamma^{\xi_n}_{\zeta_n}(t_n)\ge t_n$; by uniform convergence of $\Gamma^{\xi_n}_{\zeta_n}$
to $\Gamma_c$, a limit point $t$ of $(t_n)$ would satisfy $\Gamma_c(t)\ge t$. Given
\eqref{eq:nc-confinement}, $\Obs'_\zeta(t)=\frac{a(t)}2(t-\Gamma^\xi_\zeta(t))>0$ on $[K,1]$,
so $\Obs_\zeta$ is strictly increasing there and takes smaller values just to the left of
$K$; no point of $[K,1]$ minimizes it.
\end{proof}

\paragraph{Choice of constants.}
We fix $K\in(0,\min\{K_P,\frac1{20}\}]$, whose value Appendix~\ref{sec:dat-constants} specifies, take $\varepsilon$ and $\mathcal W_K$ from
Lemma~\ref{lem:nc-confinement}(b) for this $K$, and put
$\mathcal W=\mathcal W_P\cap\mathcal W_K$. By Lemma~\ref{lem:nc-confinement}(a) we can then
choose $\varepsilon_1\in(0,\min\{\varepsilon_P,\varepsilon\}]$ such that, for every
covariance $\xi$ with $\normCfour{\xi-\xic}<\varepsilon_1$:
\begin{steplist}
\item\label{item:nc-positive} every $\zeta\in\mathcal W$ satisfies $\Psi^\xi_\zeta\ge2$ on
 $[0,K]\times[0,1]$, hence \eqref{eq:nc-positive-third} on $(0,K)$, and
 \eqref{eq:nc-confinement}, and has no contact in $[K,1]$;
\item\label{item:nc-confined} every constrained minimizer $\zeta_{\xi,\theta}$,
 $\theta\in[0,1]$, lies in $\mathcal W$; in particular all its positive contacts lie in
 $(0,K)$.
\end{steplist}
Everything below holds for every such choice of $K$, $\mathcal W$ and $\varepsilon_1$;
Appendix~\ref{sec:dat-constants} makes the choice used for Theorem~\ref{thm:nearcrit}(b),(c),
choosing $K$ first and then $\mathcal W$ and $\varepsilon_1$ for it. None of
Lemmas~\ref{lem:nc-positive-third}, \ref{lem:nc-confinement}, \ref{lem:nc-gap} and the choice
of constants uses the value of $\beta_3$ or the condition $\xi''(0)>1$: they hold on the whole
ball $\normCfour{\xi-\xic}<\varepsilon_1$.

\begin{lemma}[No gaps between contacts below $K$]\label{lem:nc-gap}
Let $\normCfour{\xi-\xic}<\varepsilon_1$ and $\zeta\in\mathcal W$.
\begin{thmlist}
\item If $0<u<v<K$ are local minimum points of $\Obs_\zeta$, then $\zeta((u,v))>0$.
\item If $\zeta=\zetas$ is the unconstrained minimizer of $P^\xi$ and
 $v\in\supp\zetas\cap(0,K)$, then $\zetas((0,v))>0$.
\end{thmlist}
\end{lemma}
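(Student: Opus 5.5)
Both parts follow the trapezoid argument of Lemma~\ref{lem:plat-local-gap}, now driven by the uniform plateau positivity~\eqref{eq:nc-positive-third} of the choice of constants in place of Lemma~\ref{lem:plat-local-plateau}.

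For (a), let $0<u<v<K$ be local minimum points of $\Obs_\zeta$. By Lemma~\ref{lem:plat-obstacle}(b) they are contacts: $\Gamma_\zeta(u)=u$, $\Gamma_\zeta(v)=v$ and $\Gamma_\zeta'(u),\Gamma_\zeta'(v)\le1$. Suppose, for a contradiction, that $\zeta((u,v))=0$. Then $(u,v)$ is an open plateau of $\zeta$ of some height $m\in[0,1]$, and it lies in $(0,K)\subset(0,K_P)$. Since $\zeta\in\mathcal W$ and $\normCfour{\xi-\xic}<\varepsilon_1$, item~\ref{item:nc-positive} gives $\Gamma_\zeta'''=\Psi^\xi_\zeta(\cdot,m)\ge2$ on $(u,v)$, so $\Gamma_\zeta'$ is strictly convex there. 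This contradicts Lemma~\ref{lem:plat-obstacle}(d).

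For (b), let $\zeta=\zetas\in\mathcal W$ and $v\in\supp\zetas\cap(0,K)$, and suppose $\zetas((0,v))=0$. By Lemma~\ref{lem:ons-zero-in-support}, $0$ and $v$ both lie in $\supp\zetas$, so both are contacts of $\zetas$. At $v$ we have $\Gamma_*(v)=v$ and $\Gamma_*'(v)\le1$. At $0$ we have $\Gamma_*(0)=0$ trivially, and $\Gamma_*'(0)=\xi''(0)\Ustar^2\le1$ by the endpoint inequality, Lemma~\ref{lem:plat-obstacle}(c). That inequality applies because $\Obs_*$ attains its minimum at $0$, so $\Obs_*(t)\ge\Obs_*(0)$ for all $t$.

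The interval $(0,v)$ is then an open plateau of height $m=\zetas(\{0\})$, contained in $(0,K)$. So again $\Gamma_*'''\ge2$ there, and $\Gamma_*'$ is strictly convex on $(0,v)$. The trapezoid principle, Lemma~\ref{lem:plat-obstacle}(d), applied with $u=0$ gives the contradiction.

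The one delicate point is this lower endpoint $u=0$ in (b). It is not an interior local minimum, so the contact conditions at $0$ must come from Lemma~\ref{lem:plat-obstacle}(c) rather than from (b) of that lemma. We also need $\Gamma_*'$ to be continuous on the closed interval $[0,v]$; this holds by Lemma~\ref{lem:plat-ito}(a).
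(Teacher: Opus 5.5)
Your proof is correct and is essentially the paper's: in (a), the contact conditions from Lemma~\ref{lem:plat-obstacle}(b), the bound $\Gamma_\zeta'''\ge2$ from item~\ref{item:nc-positive}, and the trapezoid principle; in (b), the same argument with $u=0$. The only difference is that you derive $\Gamma_*'(0)\le1$ directly from the endpoint inequality, Lemma~\ref{lem:plat-obstacle}(c), whereas the paper cites Lemma~\ref{lem:ons-zero-in-support}, whose proof does exactly that.
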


\begin{proof}
(a) By Lemma~\ref{lem:plat-obstacle}(b), $\Gamma_\zeta(u)=u$, $\Gamma_\zeta(v)=v$ and
$\Gamma_\zeta'(u),\Gamma_\zeta'(v)\le1$. If $\zeta((u,v))=0$, then $(u,v)$ is a plateau of
$\zeta$ inside $(0,K)$, so $\Gamma_\zeta'''\ge2$ there by \eqref{eq:nc-positive-third}, and
$\Gamma_\zeta'$ is strictly convex on $(u,v)$. This contradicts the trapezoid principle,
Lemma~\ref{lem:plat-obstacle}(d): the identity $\Gamma_\zeta(v)-\Gamma_\zeta(u)=v-u$ makes the
mean of $\Gamma_\zeta'$ over $(u,v)$ equal to one, while strict convexity and the endpoint
bounds force it below one.

(b) By Lemma~\ref{lem:ons-zero-in-support}, $0$ and $v$ are support points of $\zetas$ with
$\Gamma_*(0)=0$, $\Gamma_*(v)=v$ and $\Gamma_*'(0),\Gamma_*'(v)\le1$. If $\zetas((0,v))=0$,
the argument of (a) applies with $u=0$.
\end{proof}

\subsection{The near-critical class and Kondor's limit}\label{sec:nc-class}

Recall that $\Kcl(\varepsilon)$ is the set of covariances $\xi(t)=\sum_{p\ge2}\beta_p^2t^p$
with radius of convergence greater than one and
\begin{equation}\label{eq:nc-class}
 \beta_3=0,\qquad\xi''(0)=2\beta_2^2>1,\qquad\normCfour{\xi-\xic}<\varepsilon .
\end{equation}
For instance $\xi(t)=bt^2+et^4$ with $b>\frac12$, $e\ge0$ and $24(b-\frac12+e)<\varepsilon$
belongs to $\Kcl(\varepsilon)$, and so does the Sherrington--Kirkpatrick covariance
$\beta^2t^2/2$ when $1<\beta^2<1+\varepsilon$, since its distance to $\xic$ is $\beta^2-1$.
Odd degrees $p\ge5$ are allowed; only the cubic coefficient is excluded, and covariances with
a cubic term are treated in Section~\ref{sec:atom-odd}. Below, $\varepsilon_1\le\frac14$ and
$K\le\frac1{20}$ are the constants fixed in Section~\ref{sec:nc-reference}. The global input,
the next proposition, holds whatever $\beta_3$; Section~\ref{sec:atom-odd} uses it with a cubic
term.

\begin{proposition}[Interval support near criticality]\label{prop:nc-support}
Let $\xi(t)=\sum_{p\ge2}\beta_p^2t^p$ have radius of convergence greater than one, with
$\xi''(0)>1$ and $\normCfour{\xi-\xic}<\varepsilon_1$.
\begin{thmlist}
\item $\xi''(t)>1-\normCfour{\xi-\xic}$ for $t\in[-1,1]$; in particular $\xi$ is strictly
 convex on $[-1,1]$.
\item $\supp\zetas=[0,v_*]$ for some $v_*\in(0,K)$. In particular zero is an accumulation
 point of $\supp\zetas$.
\end{thmlist}
\end{proposition}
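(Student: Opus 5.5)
Part (a) is a direct estimate. Write $\xi''(t)=\sum_{p\ge2}p(p-1)\beta_p^2t^{p-2}$. For $t\in[0,1]$ we have $|\xi''(t)-1|\le\normCfour{\xi-\xic}$, so $\xi''(t)>1-\normCfour{\xi-\xic}>\frac34$. For $t\in[-1,0)$, split the series into the constant term $2\beta_2^2=\xi''(0)>1$ and the rest. The rest, evaluated at $t$, has absolute value at most the rest evaluated at $|t|$, since all coefficients are nonnegative. That value is $\xi''(|t|)-\xi''(0)\le\xi''(|t|)-1\le\normCfour{\xi-\xic}$, using $|\xi''(|t|)-1|\le\normCfour{\xi-\xic}$. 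Hence $\xi''(t)\ge\xi''(0)-\normCfour{\xi-\xic}>1-\normCfour{\xi-\xic}>0$, and $\xi$ is strictly convex on $[-1,1]$.

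For (b) I would use the choice of constants in Section~\ref{sec:nc-reference}. By item~\ref{item:nc-confined} with $\theta=0$, $\zetas\in\mathcal W$, so by item~\ref{item:nc-positive} no point of $[K,1]$ is a contact. Every positive support point minimizes $\Obs_*$ by Lemma~\ref{lem:ons-zero-in-support}, so $\supp\zetas\subset[0,K)$. Put $v_*=\max\supp\zetas<K$. Next, $\zetas\ne\delta_0$: if $\zetas=\delta_0$, then zero would be a minimum of $\Obs_*$ over $[0,1]$, and the endpoint inequality Lemma~\ref{lem:plat-obstacle}(c) would give $\xi''(0)\Phi_{\zetas,xx}(0,0)^2\le1$. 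I would contradict this by computing $\Phi_{\delta_0}$ explicitly. With $\alpha\equiv1$, the Cole--Hopf form gives $e^{\Phi}$ as the heat semigroup applied to $\cosh$ over variance $\xi'(1)$, so $\Phi_{\delta_0}(0,x)=\log\cosh x+\frac12\xi'(1)$ and $\Phi_{\delta_0,xx}(0,0)=1$. Then $\Gamma'_{\delta_0}(0)=\xi''(0)>1$, contradicting the inequality. So $v_*>0$.

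It remains to show $[0,v_*]\subset\supp\zetas$. Suppose $t\in(0,v_*)$ is not in the support, and let $v=\min(\supp\zetas\cap[t,v_*])$, so $v>t$. If $\supp\zetas\cap(0,t)\neq\emptyset$, let $u$ be its maximum. Then $0<u<v<K$ are minima of $\Obs_*$ with $\zetas((u,v))=0$, contradicting Lemma~\ref{lem:nc-gap}(a). Otherwise $\zetas((0,v))=0$ with $v\in\supp\zetas\cap(0,K)$, contradicting Lemma~\ref{lem:nc-gap}(b). Since the support is closed, $\supp\zetas=[0,v_*]$, and zero is an accumulation point.

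The main obstacle is excluding $\zetas=\delta_0$ above $\xic$. The explicit Cole--Hopf solution for $\delta_0$ together with the endpoint inequality should settle it, since the relevant quantity is exactly $\xi''(0)>1$. Everything else is bookkeeping with the constants fixed earlier.
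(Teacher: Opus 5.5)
Your proposal is correct and follows the paper's proof essentially step by step. It uses the same termwise series bound for (a); the explicit solution $\log\cosh x+\frac12(\xi'(1)-\xi'(t))$ for $\delta_0$, which gives $\Gamma'_{\delta_0}(0)=\xi''(0)>1$ and so excludes $\zetas=\delta_0$; confinement through item~\ref{item:nc-confined} to get $v_*<K$; and Lemma~\ref{lem:nc-gap}(a),(b) to rule out gaps. The only slip is in (a) for $t\in[0,1]$: the bound $|\xi''(t)-1|\le\normCfour{\xi-\xic}$ gives only the non-strict inequality, and the strict one follows from $\xi''(t)\ge\xi''(0)>1$, which is how the paper argues.
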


\begin{proof}
(a) For $0\le r\le1$, all terms of $\xi''(r)$ are nonnegative, so $\xi''(r)\ge\xi''(0)>1$. In
$\xi''(-r)$ only the terms of odd degree are negative, so
\[
 \xi''(-r)\ge2\beta_2^2-\sum_{p\ge3}p(p-1)\beta_p^2=2\xi''(0)-\xi''(1)
 >2-\bigl(1+\normCfour{\xi-\xic}\bigr),
\]
because $\xi''(0)>1$ and $|\xi''(1)-1|\le\normCfour{\xi-\xic}$. The lower bound is positive,
since $\varepsilon_1\le\frac14$.

(b) By Lemma~\ref{lem:ons-zero-in-support}, zero lies in $\supp\zetas$, every support point of
$\zetas$ is a contact, and $\Gamma_*'(u)\le1$ at every support point $u$. The measure $\zetas$
is not $\delta_0$: for $\delta_0$ the Parisi equation has the solution
$\log\cosh x+\frac12(\xi'(1)-\xi'(t))$, so $\Gamma'_{\delta_0}(0)=\xi''(0)>1$. So
$\supp\zetas$ has a positive point, and by item~\ref{item:nc-confined} of the choice of
constants (with $\theta=0$), every positive support point lies in $(0,K)$. Hence
$v_*=\max\supp\zetas\in(0,K)$. Suppose some $w\in(0,v_*)$ is not in the support. Put
$u=\max(\supp\zetas\cap[0,w])$ and $v=\min(\supp\zetas\cap[w,v_*])$; then $u<w<v$ and
$\zetas((u,v))=0$. If $u>0$, then $u$ and $v$ are contacts in $(0,K)$, which contradicts
Lemma~\ref{lem:nc-gap}(a); if $u=0$, this contradicts Lemma~\ref{lem:nc-gap}(b). So
$\supp\zetas=[0,v_*]$, and zero is an accumulation point of the support.
\end{proof}

\begin{proposition}[The near-critical class]\label{prop:nc-class}
Every $\xi\in\Kcl(\varepsilon_1)$ has the following properties.
\begin{thmlist}
\item $\xi''(t)>1-\varepsilon_1$ for $t\in[-1,1]$; in particular $\xi$ is strictly convex on
 $[-1,1]$.
\item $\supp\zetas=[0,v_*]$ for some $v_*\in(0,K)$. In particular \condH\ holds, and
 Theorem~\ref{thm:local} applies to $\xi$.
\item $\Dstar\ge2$.
\item There is $\theta_0\in(0,1)$ such that, for $0<\theta<\theta_0$,
 $\supp\zeta_\theta=\{0\}\cup[q_\theta,v_\theta]$ with $0<q_\theta<v_\theta<K$, and the first
 positive contact $q_\theta$ is marginal. As $\theta\downarrow0$,
 \begin{equation}\label{eq:nc-onset}
  \frac{q_\theta}\theta\longrightarrow\frac{3\sqrt\azero}{\Dstar},\qquad
  \frac{\zeta_\theta(\{q_\theta\})}\theta\longrightarrow\frac12 .
 \end{equation}
\end{thmlist}
\end{proposition}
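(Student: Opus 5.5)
Parts (a) and (b) follow at once from Proposition~\ref{prop:nc-support}. Every $\xi\in\Kcl(\varepsilon_1)$ satisfies its hypotheses, namely radius greater than one, $\xi''(0)>1$ and $\normCfour{\xi-\xic}<\varepsilon_1$. So (a) is Proposition~\ref{prop:nc-support}(a). For (b), $\supp\zetas=[0,v_*]$ with $v_*\in(0,K)$, and hence \condH\ holds. Since $\beta_3=0$ gives $\xi'''(0)=0$ and $\xi''(0)>1$ gives $\beta_2>0$, all hypotheses of Theorem~\ref{thm:local} are met.

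For (c) I would compare $\Dstar$ with its value $4$ at the reference pair. We have $\Dstar=\xi''''(0)\Ustar^2+\azero^3\Wstar^2\ge\azero^3\Wstar^2$, with $\xi''''(0)\ge0$ and $\azero>1$, so it suffices to show $\Wstar^2\ge2$, i.e.\ $\Wstar\le-\sqrt2$. At $(\xic,\delta_0)$ the value is $-2$. A cleaner route avoids a fresh estimate: $\Psi^\xi_{\zetas}(0,0)=\Dstar$, because $V_0=0$ and $a'(0)=0$, and the term with $m$ vanishes at $m=0$. The choice of constants (item~\ref{item:nc-positive}, with $\zetas=\zeta_{\xi,0}\in\mathcal W$ by item~\ref{item:nc-confined}) gives $\Psi^\xi_\zeta\ge2$ on $[0,K]\times[0,1]$ for every $\zeta\in\mathcal W$. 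Taking $\zeta=\zetas$, $t=0$ and $m=0$ yields $\Dstar\ge2$ directly. The only point to check is that the definition of $\Psi$ at $t=0$ is the stated polynomial even though $\zetas$ need not have a plateau at $0$. This holds because $\Psi_\zeta(t,m)$ is defined for every $\zeta$ with $m$ a free parameter.

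For (d), Theorem~\ref{thm:local}(a) already gives marginality of $q_\theta$ for $\theta<\theta_0$ and the limits \eqref{eq:nc-onset}. It remains to identify the support as exactly $\{0\}\cup[q_\theta,v_\theta]$ with $v_\theta<K$. I would set $v_\theta=\max\supp\zeta_\theta$. By item~\ref{item:nc-confined}, all positive contacts lie in $(0,K)$, and since the contact set carries $\zeta_\theta|_{(0,1]}$ (Lemma~\ref{lem:cons-variational}(e)), we get $v_\theta<K$. By \eqref{eq:cons-contact}, $\zeta_\theta((0,q_\theta))=0$, and $q_\theta$ is a support point once we know it is one. To rule out gaps in $[q_\theta,v_\theta]$, take $w$ there outside the support, with $u=\max(\supp\zeta_\theta\cap[q_\theta,w])$ and $v=\min(\supp\zeta_\theta\cap[w,v_\theta])$. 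Both are contacts, hence local minima of $\Obs_\theta$, in $(0,K)$ and enclose no mass, contradicting Lemma~\ref{lem:nc-gap}(a) since $\zeta_\theta\in\mathcal W$.

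The main obstacle is showing that $q_\theta$ itself carries mass and that $q_\theta<v_\theta$. I would use the asymptotics: $\zeta_\theta(\{q_\theta\})/\theta\to\frac12$, so the atom is positive for small $\theta$, after shrinking $\theta_0$. Also $q_\theta\to0$, while Proposition~\ref{prop:ons-onset}(b) supplies a support point $s_\theta>q_\theta$, giving $v_\theta\ge s_\theta>q_\theta$. If needed, the gap argument can be applied with $u=q_\theta$ taken as a contact rather than a support point; Lemma~\ref{lem:nc-gap}(a) only requires local minima, and this covers the case in which the atom has not yet been shown positive.
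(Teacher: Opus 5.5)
Your proof is correct. Parts (a)--(c) follow the paper's argument: (a) and (b) come from Proposition~\ref{prop:nc-support}, and (c) evaluates $\Psi^\xi_{\zetas}(0,0)=\Dstar$ and invokes item~\ref{item:nc-positive} with $\zetas\in\mathcal W$.

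Part (d) takes a different route.
\begin{itemize}
\item \emph{The paper's route.} It quotes Lemma~\ref{lem:nc-regular}, which holds for every $\theta\in(0,1)$. By the same gap exclusion, that lemma shows the positive contact set is an interval $[q_\theta,v_\theta]$ equal to the positive support, possibly reduced to a point. The paper then excludes the degenerate case by compactness: two-atom minimizers $\theta_n\delta_0+(1-\theta_n)\delta_{q_{\theta_n}}$ can only converge weakly to a Dirac mass, whereas $\zeta_\theta\Rightarrow\zetas$ and $\supp\zetas=[0,v_*]$ with $v_*>0$. Marginality then follows from $\Gamma_\theta=\id$ on $[q_\theta,v_\theta]$, and only the two limits are taken from Theorem~\ref{thm:local}(a).
\item \emph{Your route.} You redo the gap exclusion directly on the support, with $v_\theta=\max\supp\zeta_\theta$. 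You read the nondegeneracy off the local asymptotics, namely $\zeta_\theta(\{q_\theta\})=\frac\theta2(1+o(1))>0$.
\item \emph{Comparison.} The paper's route yields structural information at every $\theta$, and Lemma~\ref{lem:nc-regular} is reused for the de Almeida--Thouless threshold. It also gets $v_\theta>q_\theta$ without any asymptotics. Your route is self-contained for small $\theta$, which is all the statement asks.
\end{itemize}

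One citation should be tightened. The statement of Proposition~\ref{prop:ons-onset}(b) only says $\Gamma_\theta=\id$ on $[q_\theta,s_\theta]$; that $s_\theta$ is a support point appears in its proof, not its statement. You do not need it. Since $\zeta_\theta(\{0\})=\theta$ and $\zeta_\theta((0,q_\theta))=0$, we have $\zeta_\theta([q_\theta,1])=1-\theta$. Hence $\zeta_\theta((q_\theta,1])=1-\theta-\frac\theta2(1+o(1))>0$ for small $\theta$, so $v_\theta>q_\theta$ directly.
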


Part~(b) is the global information that the local theory lacks: Theorem~\ref{thm:local} takes
\condH\ as a hypothesis, and here it follows from the comparison with the critical reference.
Part~(d) rests on the following description of $\zeta_\theta$, valid for every constraint and
used again in Sections~\ref{sec:nc-dat} and~\ref{sec:nc-stability}; as in
Lemma~\ref{lem:cons-variational}(e), $q_\theta$ is the least point of the positive contact set
$C_\theta$.

\begin{lemma}[Shape of the constrained minimizers]\label{lem:nc-regular}
Let $\xi\in\Kcl(\varepsilon_1)$ and $\theta\in(0,1)$. Then $\zeta_\theta(\{0\})=\theta$, and
the positive contact set of $\zeta_\theta$ is an interval $[q_\theta,v_\theta]\subset(0,K)$,
possibly reduced to the point $q_\theta$, with
$\supp\zeta_\theta=\{0\}\cup[q_\theta,v_\theta]$. If $v_\theta>q_\theta$, then
$\Gamma_\theta(t)=t$ on $[q_\theta,v_\theta]$ and the first positive contact is marginal. If
$v_\theta=q_\theta$, then $\zeta_\theta=\theta\delta_0+(1-\theta)\delta_{q_\theta}$.
\end{lemma}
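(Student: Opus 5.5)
We prove Lemma~\ref{lem:nc-regular} by assembling the global facts of Section~\ref{sec:nc-reference} with the first-contact description of Lemma~\ref{lem:cons-variational}(e). The plan has four steps: saturation at zero, confinement of the contacts, gap exclusion between contacts, and a case split on whether the contact interval is a point.

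\emph{Saturation and confinement.} By Proposition~\ref{prop:nc-class}(b), zero is an accumulation point of $\supp\zetas$. Since $\beta_3=0$, Proposition~\ref{prop:atom-atom}(c) gives $m_0=\zetas(\{0\})=0$. Lemma~\ref{lem:cons-regular}(b) then gives $\zeta_\theta(\{0\})=\theta$ for every $\theta\in(0,1)$. Next, Lemma~\ref{lem:cons-variational}(e) applies: the positive contact set $C_\theta$ is a nonempty compact subset of $(0,1)$ that carries $\zeta_\theta|_{(0,1]}$, and its least point is $q_\theta$. By item~\ref{item:nc-confined} of the choice of constants, $\zeta_\theta\in\mathcal W$, so no point of $[K,1]$ is a contact and $C_\theta\subset(0,K)$. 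Put $v_\theta=\max C_\theta$.

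\emph{No gaps in $C_\theta$.} I will show that $C_\theta=[q_\theta,v_\theta]$. Suppose some $t\in(q_\theta,v_\theta)$ lies outside $C_\theta$. Let $u=\max(C_\theta\cap[q_\theta,t])$ and $v=\min(C_\theta\cap[t,v_\theta])$. Since $C_\theta$ is closed and carries $\zeta_\theta|_{(0,1]}$, we get $\zeta_\theta((u,v))=0$. But $u$ and $v$ are minimum points of $\Obs_\theta$ in $(0,K)$, so Lemma~\ref{lem:nc-gap}(a), applied with $\zeta=\zeta_\theta\in\mathcal W$, gives $\zeta_\theta((u,v))>0$, a contradiction.

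\emph{The support is the whole interval.} Knowing $\supp\zeta_\theta\cap(0,1]\subset[q_\theta,v_\theta]$ is not enough; I need equality. Both $q_\theta$ and $v_\theta$ are support points. For $q_\theta$: if $\zeta_\theta$ had no mass near $q_\theta$, the least positive support point $s$ would satisfy $s>q_\theta$. Then $(0,s)$ is a plateau of height $\theta$ containing the contact $q_\theta$. Since $\Gamma_\theta'''\ge2$ there, $\Gamma_\theta'$ is strictly convex on $(0,s)$. Apply the trapezoid principle on $[q_\theta,s]$, with both endpoints contacts: this contradicts Lemma~\ref{lem:plat-obstacle}(d). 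For $v_\theta$: the same argument on the plateau above the largest positive support point, up to $v_\theta$, works, since $v_\theta<K$. The intermediate points follow from the no-gap step applied to consecutive support points of $[q_\theta,v_\theta]$. Two support points $u<v$ in $(0,K)$ with no support between them are local minima of $\Obs_\theta$ bounding a massless interval, which Lemma~\ref{lem:nc-gap}(a) forbids. Hence $\supp\zeta_\theta=\{0\}\cup[q_\theta,v_\theta]$.

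\emph{The two cases.} If $v_\theta>q_\theta$, then $\Gamma_\theta=\id$ on the interval of contacts $[q_\theta,v_\theta]$. Since $\Gamma_\theta\in C^1$, this gives $\Gamma_\theta'(q_\theta)=1$, so the first contact is marginal. If $v_\theta=q_\theta$, the support is $\{0,q_\theta\}$ with mass $\theta$ at zero, so $\zeta_\theta=\theta\delta_0+(1-\theta)\delta_{q_\theta}$.

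I expect the main obstacle to be the step showing that the endpoints of $C_\theta$ carry mass. There one must check that the plateau-positivity estimate \eqref{eq:nc-positive-third} applies on the open plateau adjacent to a contact that is not a support point. It does apply, since that estimate needs only a massless open interval inside $(0,K)$ and a measure in $\mathcal W$.
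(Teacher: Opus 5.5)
Your proof is correct and follows the paper's route: $m_0=0$ and saturation, then the contact set $C_\theta$ of Lemma~\ref{lem:cons-variational}(e) confined to $(0,K)$ by item~\ref{item:nc-confined}, then Lemma~\ref{lem:nc-gap}(a) to make $C_\theta$ an interval, then the case split. The only difference is in proving $\supp\zeta_\theta\cap(0,1]=C_\theta$, where you can shorten your argument: Lemma~\ref{lem:nc-gap}(a) already gives $\zeta_\theta((u,v))>0$ for any two points $u<v$ of $C_\theta$, whether or not they are support points, so every interior point of $[q_\theta,v_\theta]$ is a support point and closedness gives the endpoints. Your separate trapezoid argument at $q_\theta$ and $v_\theta$ is that same lemma re-derived, and this is how the paper proceeds.
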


In its proof, given in Appendix~\ref{sec:dat-constants}, Proposition~\ref{prop:nc-class}(b), through Proposition~\ref{prop:ons-equilibrium}(b), makes the constraint
active, and by Lemma~\ref{lem:nc-gap}(a) no interval of zero mass separates two positive
contacts, so $C_\theta$ is an interval carrying the whole positive support.

\begin{proof}[Proof of Proposition~\ref{prop:nc-class}]
(a) and the first assertion of (b) are Proposition~\ref{prop:nc-support}, since
$\normCfour{\xi-\xic}<\varepsilon_1$. Zero is an accumulation point of the support, so
\condH\ holds; since $\xi'''(0)=6\beta_3^2=0$ and $\beta_2>0$, all hypotheses of
Theorem~\ref{thm:local} hold.

(c) Evaluate \eqref{eq:plat-psi} for $\zetas$ at $t=0$ and $m=0$. There $X_0=0$, so
$U_0=\Ustar$, $W_0=\Wstar$ and $V_0=0$ by parity, while $a'(0)=\xi'''(0)=0$ and
$a''(0)=\xi''''(0)$. Thus $\Psi^\xi_{\zetas}(0,0)=\xi''''(0)\Ustar^2+\azero^3\Wstar^2=\Dstar$.
Since $\zetas\in\mathcal W$ by item~\ref{item:nc-confined} of the choice of constants,
item~\ref{item:nc-positive} gives $\Dstar\ge2$.

(d) By Proposition~\ref{prop:ons-onset}(a), $\zeta_\theta\Rightarrow\zetas$ as
$\theta\downarrow0$. If $v_{\theta_n}=q_{\theta_n}$ along a sequence $\theta_n\downarrow0$, then
by Lemma~\ref{lem:nc-regular}
$\zeta_{\theta_n}=\theta_n\delta_0+(1-\theta_n)\delta_{q_{\theta_n}}$, and every weak limit
point of this sequence is a Dirac mass. But $\supp\zetas=[0,v_*]$ with $v_*>0$. Hence
$v_\theta>q_\theta$ for all small $\theta>0$, and Lemma~\ref{lem:nc-regular} gives the support
and the marginality. The limits \eqref{eq:nc-onset} are Theorem~\ref{thm:local}(a).
\end{proof}

\begin{proposition}[Kondor's limit]\label{prop:nc-kondor}
As $\normCfour{\xi-\xic}\to0$ within $\Kcl(\varepsilon_1)$, one has $\zetas\Rightarrow\delta_0$,
$\Wstar\to-2$, $\Dstar\to4$, $\rhostar\to2$ and
\[
 c=\frac{9\azero^3}{80\Dstar^3}\longrightarrow\frac9{5120} .
\]
\end{proposition}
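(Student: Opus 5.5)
The plan is a continuity argument at the reference pair $(\xic,\delta_0)$, followed by evaluation of the jets of $\Phi_c(t,x)=\log\cosh x+\frac12(1-t)$ at the origin.

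\emph{Step 1: $\zetas\Rightarrow\delta_0$.} This is Lemma~\ref{lem:nc-confinement}(a) with $\theta=0$. Since $\zeta_{\xi,0}=\zetas$, for every weak neighbourhood $\mathcal W$ of $\delta_0$ we have $\zetas\in\mathcal W$ once $\normCfour{\xi-\xic}$ is small enough. Equivalently, $\Wone(\zetas,\delta_0)\to0$.

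\emph{Step 2: the jets.} Apply the covariance-varying jet stability \eqref{eq:plat-jet-general} to the pairs $(\xi,\zetas)$ and $(\xic,\delta_0)$ in the family of covariances with $\normCfour{\xi-\xic}<\frac14$. As recorded in Section~\ref{sec:nc-reference}, its right side is bounded by a constant times $\|a-1\|_\infty+\frac54\Wone(\zetas,\delta_0)$, which tends to zero by Step~1. Hence
\[
 \Ustar=\Phi^\xi_{\zetas,xx}(0,0)\to(\log\cosh)''(0)=1,\qquad
 \Wstar=\Phi^\xi_{\zetas,xxxx}(0,0)\to(\log\cosh)''''(0)=-2 .
\]
Alternatively, $\Ustar=\azero^{-1/2}$ by Proposition~\ref{prop:ons-equilibrium}(b), which applies by Proposition~\ref{prop:nc-class}(b); this also gives $\Ustar\to1$, because $\azero=\xi''(0)\to1$.

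\emph{Step 3: the constants.} We have $\azero\to1$, and $|\xi''''(0)|\le\normCfour{\xi-\xic}\to0$. Therefore
\[
 \Dstar=\xi''''(0)\Ustar^2+\azero^3\Wstar^2\to0+4=4 .
\]
It follows that $\rhostar=\Dstar/(2\sqrt{\azero})\to2$ and $c=9\azero^3/(80\Dstar^3)\to9/(80\cdot64)=9/5120$. For the Sherrington--Kirkpatrick covariance $\normCfour{\xi-\xic}=\beta^2-1$, so this gives $\cb\to9/5120$ as $\beta\downarrow1$.

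The only step that is not immediate is Step~2, where the jet stability has to hold uniformly as the covariance varies together with the measure. That uniformity is exactly what \eqref{eq:plat-jet-general} supplies over the family with $\normCfour{\xi-\xic}<\frac14$, so I do not expect a genuine obstacle beyond invoking it.
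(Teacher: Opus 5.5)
Your proof is correct and is essentially the paper's: Lemma~\ref{lem:nc-confinement}(a) at $\theta=0$ gives $\zetas\Rightarrow\delta_0$, and the covariance-varying jet stability \eqref{eq:plat-jet-general} applied to $(\xi,\zetas)$ and $(\xic,\delta_0)$ gives $\Wstar\to\tanh'''(0)=-2$, after which $\Dstar$, $\rhostar$ and $c$ follow by continuity. The only difference is minor: the paper gets $\Ustar\to1$ from the exact identity $\Ustar=\azero^{-1/2}$ of Theorem~\ref{thm:local}, which is your stated alternative, rather than from jet stability with $k=1$.
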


\begin{proof}
As $\normCfour{\xi-\xic}\to0$ we have $\azero\to1$, $\xi''''(0)\to0$ and, by
Theorem~\ref{thm:local}, $\Ustar=\azero^{-1/2}\to1$. Lemma~\ref{lem:nc-confinement}(a) with
$\theta=0$ gives $\zetas\Rightarrow\delta_0$, so
$\|a\alpha_*-1\|_{L^1}\le\|a-1\|_{L^1}+\frac54\int s\,\zetas(ds)\to0$. Apply
\eqref{eq:plat-jet-general} with $k=3$ to the pairs $(\xi,\zetas)$ and $(\xic,\delta_0)$, for
which $\tilde u=\tanh$ and $\tanh'''(0)=-2$: this gives $\Wstar=\partial_x^3u(0,0)\to-2$. Hence
$\Dstar=\xi''''(0)\Ustar^2+\azero^3\Wstar^2\to4$, $\rhostar=\Dstar/(2\sqrt\azero)\to2$, and
$c\to9/(80\cdot64)=9/5120$.
\end{proof}

The value $9/5120$ is Kondor's coefficient~\cite{Kon83} for the truncated model near the
critical temperature, $\Phi(n)=\feq-9n^5/5120$ in the replica notation, as reported by Parisi
and Rizzo~\cite[Section~IV]{PR09}. Here it is a limit: at $\xi=\xic$, $\zetas=\delta_0$ is
feasible for every constraint, so $p(\theta)=\feq$ and the fifth-order coefficient is zero;
Theorem~\ref{thm:local} holds at a fixed covariance, with no uniformity as $\xi\to\xic$.

\begin{proof}[Proof of Theorem~\ref{thm:nearcrit}\textup{(a)}]
Take $\varepsilon_*=\varepsilon_1\in(0,\frac14]$. Proposition~\ref{prop:nc-class}(b),(c) give
$\supp\zetas=[0,v_*]$, hence \condH, and $\Dstar\ge2$; Theorem~\ref{thm:local} applies, and
Proposition~\ref{prop:nc-kondor} gives $c\to9/5120$. For $1<\beta^2<1+\varepsilon_*$ the
covariance $\beta^2t^2/2$ lies in $\Kcl(\varepsilon_*)$, at distance $\beta^2-1$ from $\xic$,
and its constant $c$ is $\cb$, since $\azero=\beta^2$ and
$\frac9{640}\beta^3\rhob^{-3}=\frac9{640}\beta^3\bigl(2\beta/\Dstar\bigr)^3
=\frac{9\beta^6}{80\Dstar^3}$. Hence $\cb\to9/5120$ as $\beta\downarrow1$.
\end{proof}

For the Sherrington--Kirkpatrick covariance with $1<\beta^2<1+\varepsilon_1$,
Proposition~\ref{prop:nc-class}(b) thus gives \condH\ without the accumulation theorem
\cite[Proposition~5.1]{Lop26a}; Remark~\ref{rem:fifth-sk-inputs} compares this with Zhou's
theorem and records the resulting inputs of Theorem~\ref{thm:sk} near $\beta=1$.

\subsection{A cubic term: proof of Theorem~\ref*{thm:atom}(c)}\label{sec:atom-odd}

We now compute the correction $S(\delta)$ of Theorem~\ref{thm:atom}(a) in a class where
$m_0>0$ and the constrained minimizers above $m_0$ can be described. For $\varepsilon>0$ let
$\Kclodd(\varepsilon)$ be the set of covariances $\xi(t)=\sum_{p\ge2}\beta_p^2t^p$ with radius
of convergence greater than one and
\begin{equation}\label{eq:atom-odd-class}
 \beta_3>0,\qquad \xi''(0)=2\beta_2^2>1,\qquad \normCfour{\xi-\xic}<\varepsilon ;
\end{equation}
it differs from $\Kcl(\varepsilon)$ only in the condition on $\beta_3$. For example,
$\xi(t)=bt^2+\gamma t^3+et^4$ with $b>\frac12$, $\gamma>0$, $e\ge0$ and
$2(b-\frac12)+6\gamma+24e<\varepsilon$ belongs to $\Kclodd(\varepsilon)$, since then
$\normCfour{\xi-\xic}\le2(b-\frac12)+6\gamma+24e$, and
Proposition~\ref{prop:atom-odd-structure}(b) gives its atom $m_0=3\gamma/(2b)^{3/2}$ when
$\varepsilon\le\varepsilon_3$, where we put $\varepsilon_3=\varepsilon_1\in(0,\frac14]$, the
constant of Section~\ref{sec:nc-reference}. Throughout this subsection $\azero=\xi''(0)$,
$\Ustar$ and $\Wstar=\Phi_{\zetas,xxxx}(0,0)$ are as before, and
\begin{equation}\label{eq:atom-odd-D}
 \Dstar:=\xi''''(0)\Ustar^2-6\xi''(0)\xi'''(0)m_0\Ustar^3
   +\xi''(0)^3\bigl(\Wstar^2+6m_0^2\Ustar^4\bigr),
 \qquad c:=\frac{9\azero^3}{80\Dstar^3}.
\end{equation}
If $\xi'''(0)=0$ and $m_0=0$, this $\Dstar$ is the constant of Theorem~\ref{thm:local}.

The proofs of Sections~\ref{sec:fifth} and~\ref{sec:ons} carry over with $\theta$ replaced by
$\theta-m_0$, with two changes. First, the curvature of $\Gamma_\theta$ at zero,
$\Gamma_\theta''(0+)=\xi'''(0)\bar U_\theta^2-2\azero^2\theta \bar U_\theta^3$ with
$\bar U_\theta=\Phi_{\zeta_\theta,xx}(0,0)$, is no longer proportional to $\theta$: its leading
terms cancel at $\theta=m_0$. The onset is then decided by the rate at which $\bar U_\theta$
approaches $\Ustar$, and marginality supplies that rate
(Proposition~\ref{prop:atom-odd-onset}(a)). Second, the Legendre step changes as explained
after Proposition~\ref{prop:atom-slope}: $\Lamhat(m_0+u)\sim m_0cu^5$, and the exponent of the
correction is $5/4$. With Theorem~\ref{thm:nearcrit}(a), this describes the rate near $\feq$
for every covariance with $\xi''(0)>1$ in a small enough $C^4$-neighbourhood of $\xic$: a law
of order $\delta^{6/5}$ when $\beta_3=0$, a linear law with a correction of order
$\delta^{5/4}$ when $\beta_3>0$.

Proposition~\ref{prop:atom-odd-structure} (Appendix~\ref{app:cubic}) supplies the structure
for $\xi\in\Kclodd(\varepsilon_3)$: $\supp\zetas=[0,v_*]$ (Proposition~\ref{prop:nc-support}),
the atom $m_0\in(0,1)$ and the closed form of $\Dstar$
(Proposition~\ref{prop:atom-atom}(b)), $\zeta_\theta=\zetas$ for $\theta\le m_0$, and, for
$m_0<\theta<\theta_3$ with some $\theta_3\in(m_0,1)$ and at the levels in $(\feq,f_3)$,
$f_3=p(\theta_3)$, a support $\{0\}\cup[q_\theta,v_\theta]$ with a marginal first positive
contact.
It uses Section~\ref{sec:nc-reference} only through items~\ref{item:nc-positive}
and~\ref{item:nc-confined} and Lemma~\ref{lem:nc-gap}, and Section~\ref{sec:nc-class} only through Proposition~\ref{prop:nc-support}. The next proposition is the counterpart of
Theorem~\ref{thm:local}(a),(b) above the atom; part~(a) is the additional input.

\begin{proposition}\label{prop:atom-odd-onset}
Let $\xi\in\Kclodd(\varepsilon_3)$. Then $\Dstar\ge2$, and as $\theta\downarrow m_0$:
\begin{thmlist}
\item $q_\theta\to0$ and
 $\bar U_\theta:=\Phi_{\zeta_\theta,xx}(0,0)=\azero^{-1/2}\bigl(1+\tfrac{\Dstar}{12}q_\theta^2
 +o(q_\theta^2)\bigr)$;
\item the first contact and its atom satisfy
 \begin{equation}\label{eq:atom-odd-onset}
  \frac{q_\theta}{\theta-m_0}\longrightarrow\frac{3\sqrt{\azero}}{\Dstar},\qquad
  \frac{\zeta_\theta(\{q_\theta\})}{\theta-m_0}\longrightarrow\frac12 ;
 \end{equation}
\item with $c$ as in \eqref{eq:atom-odd-D},
 \begin{gather}
  p'(\theta)=5c(\theta-m_0)^4(1+o(1)),\qquad
  p(\theta)-\feq=c(\theta-m_0)^5(1+o(1)),\label{eq:atom-odd-p}\\
  \Lambda(\theta)-\theta\feq=m_0c(\theta-m_0)^5(1+o(1)),\qquad
  \Lambda'(\theta)-\feq=5m_0c(\theta-m_0)^4(1+o(1)).\label{eq:atom-odd-Lambda}
 \end{gather}
\end{thmlist}
\end{proposition}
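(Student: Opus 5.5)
We follow the proofs of Propositions~\ref{prop:ons-onset} and~\ref{lem:fifth-area}, with $\theta-m_0$ in place of $\theta$. The structural input is Proposition~\ref{prop:atom-odd-structure}: for $m_0<\theta<\theta_3$, $\supp\zeta_\theta=\{0\}\cup[q_\theta,v_\theta]$ with $v_\theta>q_\theta$ and $\Gamma_\theta=\id$ on $[q_\theta,v_\theta]$. Hence Corollary~\ref{cor:plat-first-plateau}, whose parts (a)--(c) do not use $\xi'''(0)=0$, applies with $m=\theta$ and $q=q_\theta$.

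\emph{The bound $\Dstar\ge2$.} Evaluate the plateau polynomial \eqref{eq:plat-psi} for $\zetas$ at $t=0$ and height $m=m_0$. Since $X_0=0$ and $V_0=0$, the result is exactly \eqref{eq:atom-odd-D}, that is, $\Psi^\xi_{\zetas}(0,m_0)=\Dstar$. Since $\zetas\in\mathcal W$ by item~\ref{item:nc-confined}, item~\ref{item:nc-positive} gives $\Dstar\ge2$.

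\emph{Part (a).} First, $q_\theta\to0$ exactly as in Proposition~\ref{prop:ons-onset}(a): $\zeta_\theta\Rightarrow\zetas$ as $\theta\downarrow m_0$ by Lemma~\ref{lem:cons-variational}(a), and $\supp\zetas=[0,v_*]$ charges every small interval $(0,\varepsilon)$. Next, the uniform error
\[
 \varepsilon_\theta=\sup_{0<t<q_\theta}\bigl|\Psi_{\zeta_\theta}(t,\theta)-\Dstar\bigr|
\]
tends to zero. The argument is the one of Lemma~\ref{lem:plat-local-plateau}: replace every jet along $X$ by its value for $\zetas$ at the origin, with error $O(t^{1/2}+\Wone(\zeta_\theta,\zetas))$ by \eqref{eq:plat-origin}, and use that $\Psi$ is polynomial in $m$, so that moving $m$ from $\theta$ to $m_0$ costs $O(\theta-m_0)$. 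The limit is $\Psi_{\zetas}(0,m_0)=\Dstar$; the term $3aa'\E(\cdots)$ does not vanish here, but it is already included in \eqref{eq:atom-odd-D}.

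Now apply Lemma~\ref{lem:plat-cubic} to $g=\Gamma_\theta-\id$ and use the first derivative at zero, which is not stated there but follows from the same bounds. Since $g(q)=g'(q)=0$, we have $g'(0)=-\int_0^q g''$. The model cubic has $h_D'(0)=Dq^2/6$, and the bound $|e''|\le\frac53\varepsilon q$ gives $|e'(0)|\le\frac53\varepsilon q^2$. Since $g'(0)=\Gamma_\theta'(0)-1=\azero\bar U_\theta^2-1$,
\[
 \azero\bar U_\theta^2=1+\frac{\Dstar}{6}\,q_\theta^2\,(1+o(1)).
\]
Taking square roots gives (a).

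\emph{Part (b).} Corollary~\ref{cor:plat-first-plateau}(a) gives
\[
 \xi'''(0)\bar U_\theta^2-2\azero^2\theta\,\bar U_\theta^3=-\frac23\Dstar q_\theta\,(1+o(1)).
\]
By Proposition~\ref{prop:atom-atom}(b), $\xi'''(0)=2\azero^{3/2}m_0$, so the left side equals
\[
 2\azero^2\bar U_\theta^2\Bigl(-(\theta-m_0)\bar U_\theta-m_0\bigl(\bar U_\theta-\azero^{-1/2}\bigr)\Bigr).
\]
By (a), the second term inside the bracket is $O(q_\theta^2)=o(q_\theta)$. This is the step I expect to be the real obstacle. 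The leading terms cancel at $\theta=m_0$, so the onset is governed by the rate $\bar U_\theta\to\Ustar$. That rate comes only from marginality through (a), and it must be shown to be $o(q_\theta)$ rather than merely $o(1)$. Granting this, and using $\bar U_\theta\to\azero^{-1/2}$,
\[
 2\sqrt{\azero}\,(\theta-m_0)=\frac23\Dstar q_\theta\,(1+o(1)),
\]
which gives the first limit in \eqref{eq:atom-odd-onset}. The atom follows from Corollary~\ref{cor:plat-first-plateau}(b) together with $a(q)^2\E U_q^3\to\azero^2\Ustar^3=\sqrt{\azero}$, exactly as in Proposition~\ref{prop:ons-onset}(c).

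\emph{Part (c).} The envelope identity \eqref{eq:cons-envelope} holds for $\theta>m_0$. Corollary~\ref{cor:plat-first-plateau}(c) gives $\int_0^q g=\Dstar q^4/72\,(1+o(1))$ and $|g|=O(q^3)$. Now $|\xi''(s)-\azero|=O(s)$, because $\xi'''(0)>0$, so the error in the weight contributes only $O(q^5)$. Hence $p'(\theta)=\frac{\azero\Dstar}{144}q_\theta^4(1+o(1))=5c(\theta-m_0)^4(1+o(1))$ by (b).

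Integrate from $m_0$, where $p(m_0)=\feq$, as in the proof of Theorem~\ref{thm:local}(b); this gives \eqref{eq:atom-odd-p}. Then $\Lambda(\theta)-\theta\feq=\theta\{p(\theta)-\feq\}$, and $\theta=m_0+o(1)$ with $m_0>0$, which gives the first relation in \eqref{eq:atom-odd-Lambda}. Finally $\Lambda'(\theta)-\feq=\{p(\theta)-\feq\}+\theta p'(\theta)$, where $\theta p'(\theta)=5m_0c(\theta-m_0)^4(1+o(1))$ dominates $p(\theta)-\feq=O((\theta-m_0)^5)$; this gives the second relation.
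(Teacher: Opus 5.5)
Your proposal is correct and follows the paper's proof essentially step by step: the universal cubic on the plateau $(0,q_\theta)$ with $\varepsilon_\theta\to\Psi^\xi_{\zetas}(0,m_0)-\Dstar=0$, the marginality identity for $\bar U_\theta$ that resolves the cancellation at $\theta=m_0$, the jump formula for the atom, and the envelope identity integrated from $m_0$. There are two small differences. You obtain $\Dstar\ge2$ directly from $\Psi^\xi_{\zetas}(0,m_0)\ge2$ (item~\ref{item:nc-positive} applied to $\zetas\in\mathcal W$), whereas the paper lets $\theta\downarrow m_0$ in $\Gamma_\theta'''\ge2$. Also, nothing actually needs to be ``granted'' in part~(b): your part~(a) already gives $\bar U_\theta-\azero^{-1/2}=O(q_\theta^2)$.
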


The proof (Appendix~\ref{app:cubic}) follows Sections~\ref{sec:ons} and~\ref{sec:fifth} step
by step: a positive third derivative on the plateau below the first contact, three identities
at the first contact, the onset, the atom, and the envelope identity integrated from $m_0$.
The Legendre step then follows from the exact formula \eqref{eq:atom-exact}.

\begin{proposition}\label{prop:atom-odd-rate}
Let $\xi\in\Kclodd(\varepsilon_3)$. As $\delta\downarrow0$,
\begin{equation}\label{eq:atom-odd-rate}
 \Sigma(\feq+\delta)=-m_0\delta-\frac45\,(5m_0c)^{-1/4}\,\delta^{5/4}\,(1+o(1)),
 \qquad
 \frac45\,(5m_0c)^{-1/4}=\frac{8}{5\sqrt3}\Bigl(\frac{\Dstar}{\azero}\Bigr)^{3/4}m_0^{-1/4}.
\end{equation}
At the level $f=\feq+\delta$, $\theta_f-m_0=(\delta/(5m_0c))^{1/4}(1+o(1))$,
$q_f=\frac{3\sqrt{\azero}}{\Dstar}(\theta_f-m_0)(1+o(1))$ and
$\zeta_f(\{q_f\})=\frac12(\theta_f-m_0)(1+o(1))$.
\end{proposition}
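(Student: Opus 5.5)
The proof is a Legendre computation on the exact formula \eqref{eq:atom-exact} of Proposition~\ref{prop:atom-slope}, fed by Proposition~\ref{prop:atom-odd-onset}(c). Write $u=\theta-m_0$. Then $\Lamhat(m_0+u)=\Lambda(m_0+u)-(m_0+u)\feq=m_0c\,u^5(1+o(1))$, and
\[
S(\delta)=\sup_{0\le u\le1-m_0}\bigl\{u\delta-\Lamhat(m_0+u)\bigr\}.
\]
By Corollary~\ref{cor:cons-level}, for every $f\in(\feq,f_1)$ the dual minimizer $\theta_f\in(m_0,1)$ is the unique solution of $\Lambda'(\theta_f)=f$. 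Lemma~\ref{lem:cons-regular}(d) applies because $m_0<1$, by Proposition~\ref{prop:atom-odd-structure}. So the supremum is attained at $u_\delta=\theta_f-m_0$, where $\Lamhat'(m_0+u_\delta)=\Lambda'(\theta_f)-\feq=\delta$.

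First, Proposition~\ref{prop:atom-slope} gives $\theta_f\to m_0$ as $\delta\downarrow0$, so $u_\delta\to0$. The relation $\delta=5m_0c\,u_\delta^4(1+o(1))$ from \eqref{eq:atom-odd-Lambda} then inverts to $u_\delta=(\delta/(5m_0c))^{1/4}(1+o(1))$. Next,
\[
S(\delta)=u_\delta\delta-m_0c\,u_\delta^5(1+o(1))=5m_0c\,u_\delta^5-m_0c\,u_\delta^5+o(u_\delta^5)=4m_0c\,u_\delta^5(1+o(1)).
\]
The leading coefficients $5$ and $1$ do not cancel, so the relative errors combine as in the proof of Theorem~\ref{thm:local}(c). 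Substituting $u_\delta$ gives
\[
S(\delta)=4m_0c\,(5m_0c)^{-5/4}\delta^{5/4}(1+o(1))=\tfrac45(5m_0c)^{-1/4}\delta^{5/4}(1+o(1)).
\]
The closed form follows from $c=9\azero^3/(80\Dstar^3)$. Indeed $5c=9\azero^3/(16\Dstar^3)$, so
\[
(5m_0c)^{-1/4}=\frac{2}{\sqrt3}\Bigl(\frac{\Dstar}{\azero}\Bigr)^{3/4}m_0^{-1/4},
\]
and multiplying by $\tfrac45$ gives $\tfrac{8}{5\sqrt3}(\Dstar/\azero)^{3/4}m_0^{-1/4}$.

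Finally, the statements about $q_f$ and $\zeta_f(\{q_f\})$ are \eqref{eq:atom-odd-onset} evaluated at $\theta=\theta_f\downarrow m_0$. For $\delta$ small, $\theta_f$ lies in $(m_0,\theta_3)$, the range where Proposition~\ref{prop:atom-odd-structure} gives the support $\{0\}\cup[q_\theta,v_\theta]$ with a marginal first contact. This is what makes $q_f$ marginal, as stated in Theorem~\ref{thm:atom}(c).

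The main obstacle is justifying that $u_\delta$ is the maximizer and that it is small. I would get this from strict convexity and $C^1$-smoothness of $\Lambda$ on $(m_0,1)$ (Lemma~\ref{lem:cons-variational}(c) and Proposition~\ref{prop:cons-envelope}) together with $\theta_f\to m_0$; after that everything is bookkeeping of $o(1)$ terms.
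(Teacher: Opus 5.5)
Your proof is correct, but it computes $S(\delta)$ differently from the paper. You evaluate the supremum in \eqref{eq:atom-exact} at its exact maximizer $u_\delta=\theta_f-m_0$, which you identify through Corollary~\ref{cor:cons-level}. You then combine the first-order condition $\Lamhat'(m_0+u_\delta)=\delta$ with both asymptotic laws of \eqref{eq:atom-odd-Lambda}. This is exactly the computation the paper uses for Theorem~\ref{thm:local}(c), and it gives $S(\delta)$ and $\theta_f$ at once.

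The paper's proof of this proposition instead sandwiches $S$. For fixed $\varepsilon$ it picks $\eta$ with $A(1-\varepsilon)u^5\le\Lamhat(m_0+u)\le A(1+\varepsilon)u^5$ on $[0,\eta]$, where $A=m_0c$. Convexity of $\Lamhat$ and $\Lamhat(m_0)=0$ show that $u\ge\eta$ contributes negatively once $\delta<\Lamhat(m_0+\eta)/\eta$. Comparison with the explicit conjugates $T_B(\delta)=\sup_{u\ge0}\{u\delta-Bu^5\}=\frac45(5B)^{-1/4}\delta^{5/4}$ then gives $T_{A(1+\varepsilon)}(\delta)\le S(\delta)\le T_{A(1-\varepsilon)}(\delta)$.

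The sandwich needs only the value law $\Lamhat(m_0+u)\sim m_0cu^5$ and convexity. It does not use the location of the maximizer or the derivative law, so it would still work if only $p(\theta)-\feq\sim c(\theta-m_0)^5$ were known. In the paper the derivative law enters only afterwards, for the asymptotics of $\theta_f$ via $\Lambda'(\theta_f)=f$, just as in your argument. Since Proposition~\ref{prop:atom-odd-onset}(c) already supplies $\Lambda'(\theta)-\feq\sim5m_0c(\theta-m_0)^4$, your version is shorter and loses nothing.

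Your remaining steps match the paper: the closed form of the constant, and the transfer of \eqref{eq:atom-odd-onset} to $\theta=\theta_f\downarrow m_0$.
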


\begin{proof}[Proof of Theorem~\ref{thm:atom}\textup{(c)}]
The support, the atom and the closed form of $\Dstar$ are
Proposition~\ref{prop:atom-odd-structure}(b), and $\Dstar\ge2$ is
Proposition~\ref{prop:atom-odd-onset}. The asymptotics of $\Sigma$, of $\theta_f$, $q_f$ and
$\zeta_f(\{q_f\})$ are Proposition~\ref{prop:atom-odd-rate}, and the first positive contact at
every level in $(\feq,f_3)$ is marginal by Proposition~\ref{prop:atom-odd-structure}(d).
\end{proof}

\begin{remark}[Scope]\label{rem:atom-odd-scope}
All these asymptotics are taken at a fixed covariance, and no uniformity in $\xi$ is asserted.
In particular the coefficient of $\delta^{5/4}$ in \eqref{eq:atom-odd-rate} is of order
$m_0^{-1/4}$ and diverges as $\beta_3\downarrow0$. Remark~\ref{rem:mo-fm} records which bounds
on $\Prob(N^{-1}\log\ZN\ge\feq+\delta)$ follow from \eqref{eq:atom-odd-rate}.
\end{remark}

\subsection{The de Almeida--Thouless threshold}\label{sec:nc-dat}

Theorem~\ref{thm:local} shows that the first positive contact is marginal for small $\theta$.
For every covariance of the class we now determine where this regime ends: at the constraint
where the simplest candidate, a measure with two atoms, becomes the minimizer.

For $w\ge0$, $\theta\in[0,1]$, a standard Gaussian $Z$ and continuous $F$ with
$|F(x)|\le Ce^{|x|}$, write
\[
 \E_{w,\theta}F=\frac{\E\bigl[F(\sqrt wZ)\cosh^\theta(\sqrt wZ)\bigr]}
 {\E\cosh^\theta(\sqrt wZ)},\qquad
 \Tfrak(w,\theta)=\E_{w,\theta}\tanh^2,\qquad
 \Sfrak(w,\theta)=\E_{w,\theta}\sech^4 .
\]
For $\theta\in[0,1]$ and $r\in(0,1)$ let $\zeta^{\theta,r}=\theta\delta_0+(1-\theta)\delta_r$.
On $[r,1]$ its Parisi equation has the explicit solution $\log\cosh x+\frac12(\xi'(1)-\xi'(t))$,
and on $[0,r)$ its distribution function is the constant $\theta$, so by the plateau law
\eqref{eq:plat-law} the law of $X_r$ is the $\cosh^\theta$ tilt of $\mathcal N(0,\xi'(r))$.
Consequently (Lemma~\ref{lem:C-two-atom})
\begin{equation}\label{eq:nc-two-atom}
 \Gamma_{\zeta^{\theta,r}}(r)=\Tfrak(\xi'(r),\theta),\qquad
 \Gamma_{\zeta^{\theta,r}}'(r)=\xi''(r)\,\Sfrak(\xi'(r),\theta).
\end{equation}
The contact identity $\Gamma(r)=r$ of the two-atom measure is therefore the
replica-symmetric fixed-point equation $q=\Tfrak(\xi'(q),\theta)$ at replica number $\theta$,
the general-$\xi$ form of \cite[(10)]{PR09}, and the Plefka inequality \eqref{eq:intro-plefka} at
its contact, $\xi''(q)\Sfrak(\xi'(q),\theta)\le1$, is the general-$\xi$ form of the
de Almeida--Thouless stability condition \cite{dAT78} taken at that replica number, whose
equality case for the Sherrington--Kirkpatrick covariance is \cite[(22)]{PR09}. We write
$\tau=\azero-1$ for the distance to criticality.

\begin{proposition}[The de Almeida--Thouless threshold]\label{prop:nc-dat-threshold}
There are absolute constants $K_{\rm rs}\in(0,\frac14]$ and $C$ such that the following holds
for every $\xi\in\Kcl(\varepsilon_*)$, with $\varepsilon_*$ as fixed in
Appendix~\ref{sec:dat-constants}. Put $\tau=\xi''(0)-1\in(0,\varepsilon_*)$.
\begin{thmlist}
\item For each $\theta\in[0,1]$ the equation $q=\Tfrak(\xi'(q),\theta)$ has exactly one
 solution $\qrs_\theta$ in $(0,K_{\rm rs}]$. It satisfies $\qrs_\theta\le2\tau$, and
 $\theta\mapsto\qrs_\theta$ is continuously differentiable.
\item Put $\mathfrak A(\theta)=\xi''(\qrs_\theta)\,\Sfrak(\xi'(\qrs_\theta),\theta)$. There is
 a unique $\theta_c\in(0,1)$ with $\mathfrak A>1$ on $[0,\theta_c)$, $\mathfrak A(\theta_c)=1$
 and $\mathfrak A<1$ on $(\theta_c,1]$.
\item If $0<\theta<\theta_c$, then $\supp\zeta_\theta=\{0\}\cup[q_\theta,v_\theta]$ with
 $0<q_\theta<v_\theta<\frac1{20}$, and the first positive contact is marginal:
 $\xi''(q_\theta)\E\Phi_{\zeta_\theta,xx}(q_\theta,X_{q_\theta})^2=1$.
\item If $\theta_c\le\theta<1$, then $\zeta_\theta=\theta\delta_0+(1-\theta)\delta_{\qrs_\theta}$,
 its positive contact set is $\{\qrs_\theta\}$, and with $q=\qrs_\theta$
 \[
  1-\xi''(q)\,\E\Phi_{\zeta_\theta,xx}(q,X_q)^2=1-\mathfrak A(\theta),
 \]
 which vanishes at $\theta=\theta_c$ and is positive for $\theta>\theta_c$.
\item The threshold satisfies
 \begin{equation}\label{eq:nc-dat-asymptotics}
  \Bigl|\theta_c-\tfrac23\Bigl(1+\tfrac{\xi''''(0)}4\Bigr)\tau\Bigr|\le C\tau^2 .
 \end{equation}
\end{thmlist}
\end{proposition}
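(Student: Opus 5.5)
The plan is to study the replica-symmetric two-atom branch by explicit expansion near the critical reference. Then a convexity and optimality argument will identify this branch with $\zeta_\theta$ exactly when it is Plefka stable. Outside that range, Lemma~\ref{lem:nc-regular} will force the interval-support, marginal alternative.

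\emph{Step 1 (fixed point, part (a)).} Write $w=\xi'(q)=\azero q+O(q^3)$. Expand $\Tfrak(w,\theta)=\E_{w,\theta}\tanh^2$ for small $w$ using the $\cosh^\theta$ tilt: $\Tfrak(w,\theta)=w-(2-\theta)w^2+O(w^3)$ uniformly in $\theta\in[0,1]$, and differentiate in $w$. The fixed-point equation $q=\Tfrak(\xi'(q),\theta)$ then reads $q=\azero q-(2-\theta)\azero^2q^2+O(q^3)$. After division by $q$, the implicit function theorem, or monotonicity of $G(q)=\Tfrak(\xi'(q),\theta)/q-1$ on $(0,K_{\rm rs}]$, gives a unique root $\qrs_\theta=\tau/(2-\theta)+O(\tau^2)\le2\tau$, depending $C^1$ on $\theta$. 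Uniformity in $\xi$ comes from $\normCfour{\xi-\xic}<\varepsilon_*$ and the absolute bounds on tilted Gaussian moments.

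\emph{Step 2 (the function $\mathfrak A$, parts (b),(e)).} Expand $\Sfrak(w,\theta)=1-(4-\theta)w+O(w^2)$ the same way, keeping the next order since $\tau^2$ accuracy is needed. With $\xi''(q)=\azero+\tfrac12\xi''''(0)q^2+\dots$, this gives
\[
 \mathfrak A(\theta)-1=\tau-(4-\theta)\qrs_\theta+O(\tau^2)=\tau\Bigl(1-\frac{4-\theta}{2-\theta}\Bigr)+\dots ,
\]
and here the leading term must be redone carefully. Write $\theta=\lambda\tau$. At first order the fixed point gives $q\approx\tau/2$ and $\mathfrak A-1\approx\tau-4q+\theta q$. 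The cancellation at $O(\tau)$ forces a second-order expansion of both $\Tfrak$ and $\Sfrak$ (cubic terms in $w$, terms linear in $\theta w^2$, and the $\xi''''(0)$ correction), leading to $\mathfrak A(\theta)-1=\kappa\,\tau(\tfrac23(1+\xi''''(0)/4)\tau-\theta)+O(\tau^3+\theta\tau^2)$. To get uniqueness on all of $[0,1]$, not only for $\theta=O(\tau)$, show $\partial_\theta\mathfrak A<0$ whenever $\mathfrak A$ is near $1$, using $q\le2\tau$ and the explicit $\theta$-dependence. Alternatively, show $\mathfrak A<1$ directly for $\theta\ge C\tau$. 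This yields $\theta_c$ and \eqref{eq:nc-dat-asymptotics}. I expect this bookkeeping, an exact cancellation at leading order with uniform control of remainders, to be the main obstacle.

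\emph{Step 3 (identification, parts (c),(d)).} For $\theta\ge\theta_c$, take $\zeta=\zeta^{\theta,r}$ with $r=\qrs_\theta$. Verify the optimality criterion for the constrained problem: $\Obs_\zeta\ge\Obs_\zeta(r)$ on $[0,1]$, with $\Obs_\zeta(0)\ge\Obs_\zeta(r)$. By the first-variation formula \eqref{eq:cons-first-variation} and convexity, this suffices, since a minimizer over $\mathcal A_\theta$ is characterized by $\int\Obs\,d(\mu'-\mu)\ge0$. On $[r,1]$, $\Gamma_\zeta$ is the explicit critical-type function and $\Gamma_\zeta(t)<t$ for $t>r$, which follows from $\Gamma_\zeta'(r)=\mathfrak A(\theta)\le1$ and the convexity/confinement estimates of Lemma~\ref{lem:nc-confinement}. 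On $(0,r)$, the plateau positivity \eqref{eq:nc-positive-third} together with $h(0)=h(r)=0$ and $h'(r)\le0$ shows $h=\Gamma_\zeta-\id\ge0$ there, so $\Obs_\zeta$ decreases to $r$. Hence $\zeta_\theta=\zeta^{\theta,r}$ by uniqueness, and \eqref{eq:nc-two-atom} gives the Plefka factor $1-\mathfrak A(\theta)$.

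For $\theta<\theta_c$, Lemma~\ref{lem:nc-regular} leaves two options. If $v_\theta=q_\theta$, then $\zeta_\theta$ is a two-atom measure whose contact satisfies the fixed-point equation in $(0,K)$. Since $K\le K_{\rm rs}$ by the choice of constants, that contact is $\qrs_\theta$, and the Plefka inequality requires $\mathfrak A(\theta)\le1$, contradicting part (b). Hence $v_\theta>q_\theta$, the first contact is marginal, and $v_\theta<K\le\frac1{20}$.
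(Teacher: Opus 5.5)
Your architecture is the paper's. You expand $\Tfrak$ and $\Sfrak$ along the two-atom branch, prove that $\mathfrak A(\theta)\le1$ is sufficient by the optimality criterion and a sign count for $h=\Gamma_{\zeta^{\theta,r}}-\id$ (using \eqref{eq:nc-positive-third} and confinement), and get necessity from Lemma~\ref{lem:nc-regular}; your necessity argument for $\theta<\theta_c$ is exactly the paper's. Steps~2 and~3, however, each have a gap.

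\textbf{Step~2 does not establish (b) and (e).} The expansion $\Sfrak(w,\theta)=1-(4-\theta)w+O(w^2)$ is wrong. The $\cosh^\theta$ tilt cancels at first order, and in fact $\Sfrak(w,\theta)=1-2w+(7-2\theta)w^2+O(w^3)$. Taken literally, your coefficient gives $\mathfrak A-1\approx-2\tau/(2-\theta)<0$ for every $\theta$, so there would be no threshold at all. Your ``$\tau-4q+\theta q$'' does not cancel at $q\approx\tau/2$ either. The correct leading term is $\mathfrak A-1=\tau-2\qrs_\theta+O(\tau^2)=-\theta\tau/(2-\theta)+O(\tau^2)$, which degenerates exactly when $\theta=O(\tau)$. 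Your second-order formula and the uniqueness of $\theta_c$ on all of $[0,1]$ are then only asserted.

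The device that gives both is to factor out $q$ on the branch. There $\Tfrak(\xi'(q),\theta)/q=1$. Write $\xi'(q)=q\ell(q)$, $\xi''=\ell+q\chi$ and $\mathfrak D=(\Sfrak-\Tfrak/w)/w$. Then $\mathfrak A-1=\xi''\Sfrak-\Tfrak/q=\qrs_\theta\,\varpi(\qrs_\theta,\theta)$ with $\varpi=\ell^2\mathfrak D+\chi\Sfrak$ smooth up to $q=0$, and
$\varpi(0,\theta)=-\azero^2\theta$, $\partial_\theta\varpi(0,\theta)=-\azero^2$, $\partial_q\varpi(0,\theta)=\azero^3(\frac43+3\theta-\theta^2)+\frac13\xi''''(0)$.
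Since $|d\qrs_\theta/d\theta|=O(\qrs_\theta)$, the function $\theta\mapsto\varpi(\qrs_\theta,\theta)$ has derivative at most $-\frac12$ on all of $[0,1]$, which gives the single sign change. Taylor expansions in $q$ at $q=0$ of $\varpi=0$ and of the fixed-point equation then give \eqref{eq:nc-dat-asymptotics}.

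For an \emph{absolute} $C$ you also need $\xi''''(t)-\xi''''(0)=O(t)$ uniformly for small $t$. $C^4$-closeness alone does not give this, but the nonnegative coefficients do (Lemma~\ref{lem:dat-class-bounds}(iii)).

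\textbf{Step~3 fails at $\theta=\theta_c$.} There $h'(r)=0$, and strict convexity of $h'$ on $(r,K)$ does not make $h$ negative to the right of $r$. If $h''(r+)\ge0$, then $h'>0$ just right of $r$, so $h>0$ there and $r$ does not minimize $\Obs_\zeta$. The paper excludes this with \eqref{eq:C-right-second}. That bound comes from the right-derivative formula of Lemma~\ref{lem:plat-ito}(b), with the height raised to one by the atom at $r$: $h''(r+)\le\xi'''(r)+\xi''(r)^2(10r-2)<0$ for $r\le2\tau$.

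Alternatively, treat only $\theta>\theta_c$, where $h'(r)<0$ and your argument works. Then reach $\theta_c$ by the weak continuity of $\theta\mapsto\zeta_\theta$ (Lemma~\ref{lem:cons-variational}(a)) and the continuity of $\qrs_\theta$; Lemma~\ref{lem:nc-regular} then gives the contact set $\{\qrs_{\theta_c}\}$.

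The remaining checks in Step~3 are routine: $r\le2\tau<K$ and $\zeta^{\theta,r}\in\mathcal W$, so that \eqref{eq:nc-positive-third} and \eqref{eq:nc-confinement} apply.
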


For the Sherrington--Kirkpatrick covariance with $1<\beta^2<1+\varepsilon_*$, (e) reads
$\theta_c=\frac23(\beta^2-1)+O((\beta^2-1)^2)$; for $bt^2+et^4$ in the class it reads
$\theta_c=\frac23(1+6e)(2b-1)+O((2b-1)^2)$; odd degrees $p\ge5$ do not enter the leading
coefficient. As $C$ is absolute over the class, the quartic correction $1+\frac14\xi''''(0)$ is
resolved whenever $\xi''''(0)$ is large compared with $\tau$. The bound $v_\theta<\frac1{20}$
in~(c) comes from $K\le\frac1{20}$. Figure~\ref{fig:dat} shows the two regimes; in terms of
levels the proposition reads as follows.

\begin{corollary}[The marginal levels]\label{cor:nc-dat-levels}
Let $\xi\in\Kcl(\varepsilon_*)$ and $f_c=\Lambar'_+(\theta_c)$. Then $\feq<f_c<f_1$ and
$\theta_{f_c}=\theta_c$. A level $f\in(\feq,f_1)$ has a marginal first positive contact if and
only if $f\le f_c$. For $f\in(f_c,f_1)$ the measure $\zeta_f$ has exactly two atoms and the
first positive contact satisfies the strict Plefka inequality \cite[(1.8)]{Bou26b}.
Consequently, within $(\feq,f_1)$, the strict Plefka hypothesis of \cite[Theorem~1]{Bou26b}
holds exactly on $(f_c,f_1)$ and fails exactly on $(\feq,f_c]$.
\end{corollary}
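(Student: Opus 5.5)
The plan is to translate Proposition~\ref{prop:nc-dat-threshold} into levels through the homeomorphism of Corollary~\ref{cor:cons-level}. Every $\xi\in\Kcl(\varepsilon_*)$ satisfies $\zetas(\{0\})=0$ (Proposition~\ref{prop:nc-class}(b) and Proposition~\ref{prop:ons-equilibrium}(b)), so $m_0=0$. Then $\Lambda'$ is continuous and strictly increasing on $(0,1)$ and maps it onto $(\feq,f_1)$, with $\theta_f=(\Lambda')^{-1}(f)$. Since $\Lambda$ is $C^1$ on $(0,1)$ (Proposition~\ref{prop:cons-envelope}) and $\theta_c\in(0,1)$, we have $\Lambar'_+(\theta_c)=\Lambda'(\theta_c)$. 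Hence $f_c=\Lambda'(\theta_c)\in(\feq,f_1)$ and $\theta_{f_c}=\theta_c$. Because $\Lambda'$ is strictly increasing, $f\le f_c$ holds exactly when $\theta_f\le\theta_c$.

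Next I would read off the dichotomy. If $\theta_f<\theta_c$, Proposition~\ref{prop:nc-dat-threshold}(c) gives a marginal first contact. If $\theta_f=\theta_c$, part (d) gives $\zeta_f=\theta_c\delta_0+(1-\theta_c)\delta_{\qrs_{\theta_c}}$, and its Plefka factor $1-\mathfrak A(\theta_c)$ vanishes, so the first contact is again marginal. If $\theta_f>\theta_c$, part (d) gives the two-atom measure with positive contact set $\{\qrs_{\theta_f}\}$ and Plefka factor $1-\mathfrak A(\theta_f)>0$. In that case the contact is strictly Plefka stable, and exactly two atoms are present because $\theta_f\in(0,1)$ and $\qrs_{\theta_f}>0$.

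The last sentence of the corollary then follows from the first three. Every level in $(\feq,f_1)$ is regular by Lemma~\ref{lem:cons-variational}(d). The covariance is strictly convex on $[-1,1]$ by Proposition~\ref{prop:nc-class}(a), which is the standing assumption of \cite{Bou26b}. The strict Plefka condition \cite[(1.8)]{Bou26b} at $f$ is exactly the positivity of $1-\Gamma_{\zeta_f}'(q_f)$. So it holds precisely for $f\in(f_c,f_1)$ and fails on $(\feq,f_c]$.

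No step here is a serious obstacle, because all the work sits in Proposition~\ref{prop:nc-dat-threshold}. The one point needing care is the boundary case $f=f_c$. There the minimizer is two-atom, so marginality must come from the exact value $\mathfrak A(\theta_c)=1$ rather than from an interval of contacts.
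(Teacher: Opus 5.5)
Your proposal is correct and is essentially the paper's proof. You obtain $m_0=0$ and the homeomorphism $\Lambda'\colon(0,1)\to(\feq,f_1)$ directly from Corollary~\ref{cor:cons-level}, whereas the paper cites Theorem~\ref{thm:local}(c), which rests on that corollary; you then translate $f\le f_c$ into $\theta_f\le\theta_c$ and read off the three cases from Proposition~\ref{prop:nc-dat-threshold}(c),(d), with the boundary case $\theta_f=\theta_c$ handled through $\mathfrak A(\theta_c)=1$, exactly as the paper does.
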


\begin{proof}
The covariance lies in $\Kcl(\varepsilon_1)$, so it satisfies \condH\ by
Proposition~\ref{prop:nc-class}(b), and Theorem~\ref{thm:local}(c) applies: $\Lambda$ is $C^1$
and strictly convex on $(0,1)$, $\Lambda'$ maps $(0,1)$ increasingly onto $(\feq,f_1)$, and
$\theta_f=(\Lambda')^{-1}(f)$ for every $f\in(\feq,f_1)$. Hence
$f_c=\Lambda'(\theta_c)\in(\feq,f_1)$, $\theta_{f_c}=\theta_c$, and $\theta_f\le\theta_c$ if and
only if $f\le f_c$. By Proposition~\ref{prop:nc-dat-threshold}, the first positive contact of
$\zeta_f=\zeta_{\theta_f}$ is marginal when $\theta_f<\theta_c$, by~(c), and when
$\theta_f=\theta_c$, by~(d) with $\mathfrak A(\theta_c)=1$; when $\theta_f>\theta_c$, the
measure has two atoms and $1-\mathfrak A(\theta_f)>0$. The condition \cite[(1.8)]{Bou26b} is the
strict form of the Plefka inequality \eqref{eq:intro-plefka} at the first positive contact.
\end{proof}

Every $\xi\in\Kcl(\varepsilon_*)$ satisfies \cite[Assumption~1.1]{Bou26b}, which asks for a
series with nonnegative coefficients and radius of convergence greater than one that is
strictly convex on $[-1,1]$; strict convexity is Proposition~\ref{prop:nc-support}(a).

\paragraph{Structure of the proof of Proposition~\ref*{prop:nc-dat-threshold}.}
The proof, in Appendix~\ref{app:dat}, separates an explicit computation along the two-atom
branch from a convexity argument that identifies the minimizer.
\begin{steplist}
\item \emph{The replica-symmetric branch.} The functions $\Tfrak$ and $\Sfrak$ are smooth in
 $(w,\theta)$, with explicit jets at $w=0$ obtained from the heat equation
 (Lemma~\ref{lem:C-moments}). Writing $\xi'(q)=q\ell(q)$ with
 $\ell(q)=\int_0^1\xi''(sq)\,ds$, the fixed-point equation becomes $\Upsilon(q,\theta)=0$ for a
 smooth $\Upsilon$ with $\Upsilon(0,\theta)=\tau$ and $\partial_q\Upsilon\le-\frac12$ near
 $q=0$, which gives the root $\qrs_\theta\in(0,2\tau]$. Along the root,
 $\mathfrak A(\theta)-1=\qrs_\theta\,\mathfrak W(\theta)$ with $\mathfrak W$ strictly
 decreasing, $\mathfrak W(0)>0>\mathfrak W(1)$, and two Taylor expansions locate the zero
 $\theta_c$ of $\mathfrak W$ (Lemma~\ref{lem:C-rs}). The nonnegative coefficients control
 $\ell$ and the derivatives of $\xi''$ near zero uniformly over the class
 (Lemma~\ref{lem:dat-class-bounds}), so $K_{\rm rs}$ and $C$ are absolute
 (Lemma~\ref{lem:dat-branch}).
\item \emph{Necessity.} By Lemma~\ref{lem:nc-regular}, a constrained minimizer either has
 interval positive support with a marginal first contact, or has the form $\zeta^{\theta,r}$
 with $r\in(0,K)$. In the second case $r$ is a contact, so \eqref{eq:nc-two-atom} forces $r$ to
 be the only root of the fixed-point equation in $(0,K)\subset(0,K_{\rm rs}]$, namely
 $\qrs_\theta$ (Lemma~\ref{lem:C-rs}), and $\mathfrak A(\theta)\le1$. Hence $\zeta_\theta$ has
 interval positive support, contained in $(0,K)$, when $\theta<\theta_c$.
\item \emph{Sufficiency.} Conversely, if $\mathfrak A(\theta)\le1$, the obstacle of
 $\zeta^{\theta,\qrs_\theta}$ has its strict minimum at $\qrs_\theta$: $\Gamma'$ is strictly
 convex on both plateaus $(0,\qrs_\theta)$ and $(\qrs_\theta,K)$ by
 \eqref{eq:nc-positive-third}, $\Gamma(t)<t$ on $[K,1]$ by \eqref{eq:nc-confinement}, and in
 the equality case the right second derivative $\Gamma''(\qrs_\theta+)$ is negative
 (Lemma~\ref{lem:C-two-atom}). Convexity of $P$ and the first variation along mixing paths
 \cite[Lemma~3.2 and Example~3.4]{JT17} then make $\zeta^{\theta,\qrs_\theta}$ the constrained
 minimizer (Lemma~\ref{lem:C-sufficiency}).
\end{steplist}
No monotonicity of $\theta\mapsto\zeta_\theta$ is needed: the only monotone quantity is the
explicit scalar $\mathfrak W=(\mathfrak A-1)/\qrs_\theta$ along the replica-symmetric branch, whose sign is that of $\mathfrak A-1$.

\begin{figure}[tp]
\centering
\begin{tikzpicture}[x=1cm, y=1cm, font=\scriptsize,
  >={Stealth[length=1.5mm,width=1.1mm]},
  axis/.style={->, line width=0.4pt, draw=black!75},
  threshold/.style={line width=1.0pt, draw=black},
  fixed/.style={line width=0.6pt, draw=black, dash pattern=on 0.8pt off 1.6pt},
  guide/.style={line width=0.4pt, draw=black!60, dash pattern=on 0.8pt off 1.6pt},
  link/.style={->, line width=0.5pt, draw=black!55},
  supp/.style={line width=2.6pt, draw=black},
  numline/.style={line width=0.4pt, draw=black!75},
  dot/.style={circle, fill=black, inner sep=0pt, minimum size=4pt},
  odot/.style={circle, draw=black, fill=white, line width=0.6pt, inner sep=0pt,
    minimum size=4pt},
]
\fill[black!13] (0,0) -- (6.0,3.6) -- (6.0,0) -- cycle;
\draw[axis] (0,0) -- (6.45,0);
\node[anchor=west, inner sep=2pt] at (6.45,0) {$\tau=\azero-1$};
\draw[axis] (0,0) -- (0,4.45) node[anchor=east, inner sep=2pt] {$\theta$};
\draw[threshold] (0,0) -- node[sloped, anchor=south, pos=0.40, inner sep=2pt]
  {$\theta_c=\frac23\bigl(1+\frac14\xi''''(0)\bigr)\tau+O(\tau^2)$} (6.0,3.6);
\node[align=center] at (1.65,3.55) {two atoms};
\node[align=center] at (3.75,0.62) {marginal\\ first contact};
\draw[fixed] (5.4,0) -- (5.4,4.3);
\node[anchor=south, inner sep=1.5pt] at (5.4,4.3) {fixed $\xi$};
\draw[guide] (0,3.24) -- (5.4,3.24);
\node[anchor=east, inner sep=3pt] at (0,3.24) {$\theta_c$};
\node[odot] at (5.4,3.24) {};
\node[dot] (P2) at (5.4,3.95) {};
\node[dot] (P1) at (5.4,1.5) {};
\draw[link] (P2) -- (7.15,3.95);
\draw[link] (P1) -- (7.15,1.5);
\draw[numline] (7.3,3.95) -- (9.95,3.95) node[anchor=west, inner sep=2pt] {$t$};
\node[dot] at (7.3,3.95) {};
\node[dot] at (8.55,3.95) {};
\node[anchor=south, inner sep=3pt] at (7.3,3.95) {$\theta$};
\node[anchor=south, inner sep=3pt] at (8.55,3.95) {$1-\theta$};
\node[anchor=north, inner sep=3pt] at (7.3,3.95) {$0$};
\node[anchor=north, inner sep=3pt] at (8.55,3.95) {$\qrs_\theta$};
\node[anchor=south west, inner sep=0pt] at (7.1,4.42)
  {$\theta>\theta_c$: $\Gamma_\theta'(\qrs_\theta)<1$};
\draw[numline] (7.3,1.5) -- (9.95,1.5) node[anchor=west, inner sep=2pt] {$t$};
\node[dot] at (7.3,1.5) {};
\draw[supp] (8.25,1.5) -- (9.4,1.5);
\node[anchor=south, inner sep=3pt] at (7.3,1.5) {$\theta$};
\node[anchor=north, inner sep=3pt] at (7.3,1.5) {$0$};
\node[anchor=north, inner sep=3pt] at (8.25,1.5) {$q_\theta$};
\node[anchor=north, inner sep=3pt] at (9.4,1.5) {$v_\theta$};
\node[anchor=south west, inner sep=0pt] at (7.1,1.97)
  {$\theta<\theta_c$: $\Gamma_\theta'(q_\theta)=1$};
\end{tikzpicture}
\caption{The de Almeida--Thouless threshold of Theorem~\ref{thm:nearcrit}(b),
schematically, along a one-parameter family in $\Kcl(\varepsilon_*)$ with fixed
$\xi''''(0)$ (for instance the Sherrington--Kirkpatrick covariances, $\tau=\beta^2-1$), in
the plane of $\tau=\azero-1$ and the constraint $\theta$. The threshold is drawn by its
leading term. Below it (grey), $\supp\zeta_\theta=\{0\}\cup[q_\theta,v_\theta]$ and the first
positive contact is marginal (Proposition~\ref{prop:nc-dat-threshold}(c)). For
$\theta_c\le\theta<1$, the minimizer $\zeta_\theta=\theta\delta_0+(1-\theta)\delta_{\qrs_\theta}$
has exactly two atoms, and its only positive contact is strictly Plefka stable when
$\theta>\theta_c$ (Proposition~\ref{prop:nc-dat-threshold}(d)). The insets show
$\supp\zeta_\theta$ on either side of the threshold at one covariance (dotted line), which
crosses it once, with the masses of the atoms at zero and, above the threshold, at
$\qrs_\theta$. Gap exclusion
(Figure~\ref{fig:plateau}(a)) needs two positive contacts; beyond $\theta_c$ there is only
one, and the Plefka inequality at it is strict. In terms of levels, the first positive
contact is marginal exactly on $(\feq,f_c]$ (Corollary~\ref{cor:nc-dat-levels}). For the
Sherrington--Kirkpatrick covariances the leading slope $\frac23$ in $\tau$ is $\frac43$ in
$1-T$, the slope of the line $n_{\rm dAT}=\frac43(1-T)+O((1-T)^2)$ of Parisi and Rizzo
\cite[(22)--(23)]{PR09}; a positive $\xi''''(0)$ multiplies it by $1+\frac14\xi''''(0)$.}
\label{fig:dat}
\end{figure}

\begin{remark}\label{rem:nc-dat}
(1) For the Sherrington--Kirkpatrick model, Parisi and Rizzo located this threshold by a
replica computation at replica number $n$ \cite[Section~III, (10), and Section~III.C, (22)--(23)]{PR09}. With
$\xi(t)=\beta^2t^2/2$ and $n=\theta$, their equations (10) and (22) are the equation of
Proposition~\ref{prop:nc-dat-threshold}(a) and $\mathfrak A(\theta)=1$, and their expansion
$n_{\rm dAT}=\frac43(1-T)+O((1-T)^2)$ agrees with (e), since
$\tau=\beta^2-1=2(1-T)+O((1-T)^2)$ and $\xi''''(0)=0$; they describe the
replica-symmetry-breaking solution below the line in \cite[Section~IV]{PR09}. For
$1<\beta^2<1+\varepsilon_*$, Proposition~\ref{prop:nc-dat-threshold} is the constrained-Parisi
form, near criticality, of this prediction: the two-atom measure is the constrained minimizer
exactly when $\theta\ge\theta_c$, and below $\theta_c$ the minimizer carries an interval
starting at a marginal first contact. A positive $\xi''''(0)$ multiplies the leading term of
$\theta_c$ by $1+\frac14\xi''''(0)$.

(2) Every admissible $\theta_0$ in Theorem~\ref{thm:local}(a) satisfies
$\theta_0\le\theta_c$, because for $\theta\ge\theta_c$ the measure $\zeta_\theta$ has two
atoms and its only positive contact is $\qrs_\theta$, by~(d). By~(c) and
Lemma~\ref{lem:nc-regular}, $\theta_0=\theta_c$ is admissible for $\xi\in\Kcl(\varepsilon_*)$. In particular the range of constraints on which
the marginal onset of Theorem~\ref{thm:local} operates shrinks to zero as $\azero\downarrow1$,
in line with the absence of uniformity noted after Proposition~\ref{prop:nc-kondor}.
\end{remark}

\begin{proof}[Proof of Theorem~\ref{thm:nearcrit}\textup{(b)}]
Let $\varepsilon_*$ be as fixed in Appendix~\ref{sec:dat-constants}; it does not exceed
$\varepsilon_1$, so part~(a) holds with it. Proposition~\ref{prop:nc-dat-threshold} is proved
in Appendix~\ref{app:dat}. Its part (b) gives $\theta_c\in(0,1)$, and its parts (c) and (d) give the two regimes, which determine
$\theta_c$ uniquely, and the strict Plefka stability above $\theta_c$; (e) gives the asymptotics with
an absolute implied constant, including the form stated for $\xi(t)=\beta^2t^2/2$, where
$\azero=\beta^2$ and $\xi''''(0)=0$; Corollary~\ref{cor:nc-dat-levels} gives the levels.
\end{proof}

\subsection{Stability above the first contact}\label{sec:nc-stability}

The derivation of the fifth-order law at all temperatures by Parisi and Rizzo assumes that the
order parameter $q(x)$ is unperturbed above its first plateau \cite[Appendix~B]{PR09}, and they
predict that it changes there only by $O(n^5)$ \cite[Section~IV]{PR09}. The proof of
Theorem~\ref{thm:local} does not assume that the measure is unperturbed above its first contact: it uses the plateau below the first contact, the
interval of contacts that starts there (Proposition~\ref{prop:ons-onset}(b)), which makes that contact marginal, and $\Wone(\zeta_\theta,\zetas)\to0$. For every covariance of the class, the next proposition shows that above its
first contact the constrained measure agrees with $\zetas$ up to $O(\theta^5)$, and gives the
onset limits with rates.

Fix $\xi\in\Kcl(\varepsilon_*)$; it lies in $\Kcl(\varepsilon_1)$. For small $\theta>0$,
$\supp\zeta_\theta=\{0\}\cup[q_\theta,v_\theta]$ by Proposition~\ref{prop:nc-class}(d), and
$\supp\zetas=[0,v_*]$. Write $\alpha_\theta$ and $\alphas$ for the distribution functions of
$\zeta_\theta$ and $\zetas$, $\Delta_\theta=\alpha_\theta-\alphas$,
$x_\theta=\alpha_\theta(q_\theta)$, $v_-=\min\{v_\theta,v_*\}$ and
$v_+=\max\{v_\theta,v_*\}$. The measure $\Shift_q\zeta=\zeta|_{(q,1]}+\zeta([0,q])\delta_q$
has distribution function $\ind_{[q,1]}\alpha_\zeta$, so the parts of the two measures above
the first contact are compared by
\[
 L_\theta=\Wone\bigl(\Shift_{q_\theta}\zeta_\theta,\Shift_{q_\theta}\zetas\bigr)
 =\int_{q_\theta}^1|\Delta_\theta(t)|\,dt .
\]
For a distribution function $\alpha$ on $[0,1]$, its quantile function is
$Q(x)=\inf\{t:\alpha(t)\ge x\}$, $x\in(0,1]$; $Q_\theta$ and $Q_*$ belong to $\zeta_\theta$ and
$\zetas$.

\begin{proposition}[Stability above the first contact]\label{prop:nc-above-contact}
For every $\xi\in\Kcl(\varepsilon_*)$, as $\theta\downarrow0$:
\begin{thmlist}
\item $L_\theta=O(\theta^5)$;
\item $|v_\theta-v_*|=O(\theta^5)$ and
 $\sup\{|\Delta_\theta(t)|:t\in[q_\theta,v_-)\cup[v_+,1]\}=O(\theta^5)$; in particular
 $x_\theta=\zetas([0,q_\theta])+O(\theta^5)$;
\item $\sup_{x_\theta<x<1}|Q_\theta(x)-Q_*(x)|=O(\theta^5)$;
\item $q_\theta=\dfrac{3\sqrt\azero}{\Dstar}\,\theta\,(1+O(\theta))$,
 $\zeta_\theta(\{q_\theta\})=\dfrac\theta2\,(1+O(\theta))$, and
 \[
  \Wone(\zeta_\theta,\zetas)=\frac{5\sqrt\azero}{4\Dstar}\,\theta^2\,(1+O(\theta)).
 \]
\end{thmlist}
The implied constants depend on $\xi$.
\end{proposition}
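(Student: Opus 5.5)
The plan is a contraction argument above the first contact, followed by bookkeeping on the first plateau. On $[q_\theta,1]$ both $\zeta_\theta$ and $\zetas$ satisfy the same optimality relation on their supports. By Lemma~\ref{lem:nc-regular} and Proposition~\ref{prop:nc-class}(d), $\Gamma_\theta=\id$ on $[q_\theta,v_\theta]$ and $\Gamma_*=\id$ on $[0,v_*]$. On the common interval, the argument of Proposition~\ref{prop:ons-equilibrium}(c) gives the quotient formula $\alpha=N_\zeta/\sigma_\zeta$ of \eqref{eq:plat-NS}, namely $\alpha_\theta(t)=N_{\zeta_\theta}(t)/\sigma_{\zeta_\theta}(t)$ and $\alphas(t)=N_{\zetas}(t)/\sigma_{\zetas}(t)$ for $t\in(q_\theta,v_-)$. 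Above $v_\pm$ both distribution functions equal one. So $\Delta_\theta$ on $[q_\theta,1]$ is determined by the difference of the jets $U,V$ along the two diffusions after time $q_\theta$.

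These jets depend on two things: the restriction of the measure to $(q_\theta,1]$, which is controlled by $L_\theta$ through jet stability \eqref{eq:plat-jet-stability} on $[q_\theta,1]$, and the laws of $X^{\zeta_\theta}_{q_\theta}$ and $X^{\zetas}_{q_\theta}$. The first step is therefore a Gronwall/contraction estimate of the form
\[
 L_\theta+|v_\theta-v_*|+\sup_{[q_\theta,v_-)\cup[v_+,1]}|\Delta_\theta|
 \le \tfrac12 L_\theta + C\,\mathcal E_\theta ,
\]
valid once $K$ is small, since the interval $[q_\theta,K)$ is short. Here $\mathcal E_\theta$ measures the discrepancy of the two laws at time $q_\theta$ tested against the even functions $\Phi_{\cdot,x}^2$, $\Phi_{\cdot,xx}^2$ and so on. The endpoints $v_\theta,v_*$ are handled by the contact conditions $\Gamma(v)=v$ and $\Obs'$ at the top, using confinement \eqref{eq:nc-confinement}.

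The second step, which I expect to be the main obstacle, is to show $\mathcal E_\theta=O(\theta^5)$. Both diffusions start at $0$ and are driven on $[0,q_\theta]$ by drifts $a\alpha\Phi_x$ with $\alpha_\theta-\alphas=\Delta_\theta$ supported on the first plateau, where $\Delta_\theta=O(\theta)$ on an interval of length $O(\theta)$. A first-order (Duhamel/Girsanov) expansion in $\Delta_\theta$ shows that, tested against an even smooth function, the difference of laws is to leading order a linear functional $\int_0^{q_\theta}k(t)\Delta_\theta(t)\,dt$. Since $k(t)$ is an odd-derivative factor vanishing linearly at $t=0$ by parity, this is $k'(0)\int_0^{q_\theta}t\Delta_\theta(t)\,dt$ plus $O(\theta^4)$ terms. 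I expect these terms to be reducible to higher order, which is where careful accounting is needed. The single scalar $\int_0^{q_\theta}t\Delta_\theta\,dt$ is then tied to the identity $\Gamma_\theta(q_\theta)=q_\theta=\Gamma_*(q_\theta)$: expanding $\Gamma_\theta-\Gamma_*$ at $q_\theta$ with the It\^o identities of Lemma~\ref{lem:plat-ito} expresses $0=\Gamma_\theta(q_\theta)-\Gamma_*(q_\theta)$ as $-2a_0^2U_*^3\int_0^{q_\theta}(q_\theta-t)\Delta_\theta+\dots$. Combining this with marginality $\Gamma'_\theta(q_\theta)=1=\Gamma'_*(q_\theta)$ forces the relevant first moment to cancel, leaving $O(\theta^5)$, the size of $p'\cdot\theta$. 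I would first try to show that the law difference is exactly $c\,(\Gamma_\theta-\Gamma_*)(q_\theta)$ plus a derivative term plus $O(\theta^5)$. If the residual is only $O(\theta^4)$, I would bootstrap with the cubic of Corollary~\ref{cor:plat-first-plateau}, which pins $\Gamma_\theta$ on $[0,q_\theta]$ to $O(\theta^4)$ with an explicit leading term.

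Given (a) and (b), part (c) follows because, above $x_\theta$, the quantile functions invert distribution functions whose density $\alphas'\ge\rhostar/2>0$ near the support is bounded below, so the sup-distance of $\alpha$'s transfers to the quantiles. For (d), rerun the proof of Proposition~\ref{prop:ons-onset}(c) with rates: the error $\eta_\theta$ is $O(\theta)$, since $\Wone=O(\theta^2)$, the $t^{1/2}$ term in Lemma~\ref{lem:plat-local-plateau} improves to $O(t)$ by parity (odd jets vanish at the origin), and $m^2=\theta^2$. This gives $q_\theta$ and the atom with factors $1+O(\theta)$. Finally, $\Wone=\int_0^{q_\theta}|\Delta_\theta|+L_\theta$; with $\alphas=\rhostar t+O(t^2)$, the first integral is the area of the two triangles in Figure~\ref{fig:bottom}(a), $\frac{5\theta^2}{8\rhostar}(1+O(\theta))$, and $L_\theta=O(\theta^5)$.
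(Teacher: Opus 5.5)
Your architecture matches the paper's: the support relation $\alpha=N_\zeta/\sigma_\zeta$ above $q_\theta$, a contraction over the short interval $[q_\theta,K]$ driven by the discrepancy of the two laws at time $q_\theta$ on even test functions, and a Duhamel expansion of that discrepancy over the first plateau. The gap is the decisive estimate $\mathcal E_\theta=O(\theta^5+\theta L_\theta)$. You assert it but do not prove it, and the accounting you describe gives only $O(\theta^4)$. Replacing the kernel by $k'(0)t$ costs $\int_0^{q_\theta}O(t^2)|\Delta_\theta|\,dt=O(\theta q_\theta^3)=O(\theta^4)$. Contact plus marginality give $\int_0^{q_\theta}t\,\Gamma_\theta''(t)\,dt=0$. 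That bounds the weighted moment $\int_0^{q_\theta}t\,\sigma_{\zeta_\theta}(t)\Delta_\theta(t)\,dt$, not $\int_0^{q_\theta}t\Delta_\theta\,dt$, and only after you estimate $N_{\zeta_\theta}-N_{\zetas}$ and $\alphas(\sigma_{\zeta_\theta}-\sigma_{\zetas})$ on $[0,q_\theta]$, which you do not do. Converting one moment into the other again costs $O(\theta^4)$ by default. With $\mathcal E_\theta=O(\theta^4)$ the contraction yields only $L_\theta=O(\theta^4)$. Your fallback cannot supply the missing order: Corollary~\ref{cor:plat-first-plateau} pins $\Gamma_\theta-\id$ only up to $\varepsilon q_\theta^3$, with $\varepsilon$ at best of order $\theta$, which is again $O(\theta^4)$.

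The paper keeps the kernel exact, which avoids this bookkeeping. For even $H$ with $\|H\|_{10}\le1$, the law difference at $q_\theta$ is $H''(0)\Pi_\theta+\gamma_\theta q_\theta H''''(0)I_\theta+O(\theta^5+\theta^3L_\theta)$, where $\Pi_\theta=\int_0^{q_\theta}\Delta_\theta k_\theta$ and $k_\theta,\gamma_\theta$ do not depend on $H$ (Lemma~\ref{lem:D-plateau}). It then tests only the contact identity $\Gamma_\theta(q_\theta)=\Gamma_*(q_\theta)$, with $H_q=\Phi_{\zetas,x}(q_\theta,\cdot)^2$. Jets at time $q_\theta$ see $\Delta_\theta$ only on $[q_\theta,1]$, so the law difference of $H_q$ equals $\nu^\theta_{q_\theta}\bigl(\Phi_{\zetas,x}(q_\theta,\cdot)^2-\Phi_{\zeta_\theta,x}(q_\theta,\cdot)^2\bigr)=O(q_\theta L_\theta)$. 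With $\Pi_\theta=\azero^2U^\theta(0,0)I_\theta+O(\theta^4)$ this gives $I_\theta=O(\theta^4+\theta L_\theta)$, which suffices because $I_\theta$ carries a factor $q_\theta$. It then gives $\Pi_\theta=O(\theta^5+\theta L_\theta)$ (Lemma~\ref{lem:D-contact-closure}); marginality is not used.

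Your route can also be closed, but only by the computations you postpone. The $s^2$-coefficient of $k_\theta(s)=a(s)U^\theta(s,0)\E(X^{\zeta_\theta}_s)^2+\frac12\azero^3W^\theta(0,0)s^2$ vanishes, using $\partial_tU^\theta(0,0)=-\frac{\azero}2W^\theta(0,0)-\azero\theta U^\theta(0,0)^2$, $\E(X^{\zeta_\theta}_s)^2=\azero s+\azero^2\theta U^\theta(0,0)s^2+O(s^3)$ and $\xi'''(0)=0$. Also $\sigma_{\zeta_\theta}'(0+)=-6\azero^3\theta U^\theta(0,0)^4=O(\theta)$. Finally, the $N$ and $\sigma$ differences at times $t<q_\theta$ are $O(t(\theta q_\theta+L_\theta)+\theta t^2)$ by \eqref{eq:D-duality}.

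Smaller points:
\begin{itemize}
\item For (c) you need the slope of $\alphas$ bounded below on all of $[0,v_*)$, not only near zero; the paper proves $R_\zeta'\ge\frac12$ near criticality.
\item The endpoint bound comes from comparing $\Gamma'$ at $v_+$, using $\Gamma''\le-1$ on the height-one plateau $(v_-,v_+)$; confinement alone gives no rate.
\item For (d), rerunning the onset argument with $\eta_\theta=O(\theta)$ is a valid alternative to the paper's use of $I_\theta$. The $\Wone$ asymptotics, however, need $\alphas(t)=\rhostar t+O(t^2)$. Proposition~\ref{prop:ons-equilibrium} gives only $o(t)$; the paper proves the $O(t^2)$ form in Lemma~\ref{lem:D-support-relation}.
\end{itemize}
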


By~(d), the transport distance between $\zeta_\theta$ and $\zetas$ has exact order $\theta^2$,
and all of it except $O(\theta^5)$ is carried by $[0,q_\theta]$, where $\zeta_\theta$ has its
plateau and its atom. For the Sherrington--Kirkpatrick covariance with
$1<\beta^2<1+\varepsilon_*$, statement~(c) is the constrained-Parisi form, near criticality,
of the prediction $q(x)=q_{\rm free}(x)+O(n^5)$ for $x_c<x<1$ of Parisi and Rizzo
\cite[Section~IV]{PR09}, with $n=\theta$, $x_c=x_\theta$, $q=Q_\theta$ and
$q_{\rm free}=Q_*$. Their prediction is stated at every temperature; the argument below uses
the smallness of the support near criticality. Their remark that $q(x)$ is continuous at $x_c$
concerns the constrained order parameter and corresponds to $Q_\theta(x_\theta+)=q_\theta$,
which follows from $\supp\zeta_\theta=\{0\}\cup[q_\theta,v_\theta]$. Part~(b) extends the
$O(\theta^5)$ agreement to the end of the plateau, and~(d) gives the onset values
$x_\theta=\frac32\theta+O(\theta^2)$ and $q_\theta=\frac3{2\rhostar}\theta+O(\theta^2)$ with
rates.

\paragraph{Structure of the proof of Proposition~\ref*{prop:nc-above-contact}.}
The proof, in Appendix~\ref{app:stability}, compares the two measures through the law of the
diffusion at the common point $q_\theta$, which is a contact of both: $q_\theta$ is the first
positive contact of $\zeta_\theta$, and for small $\theta$ it is an interior support point of
$\zetas$, so that $\Gamma_\theta(q_\theta)=q_\theta=\Gamma_*(q_\theta)$.
\begin{steplist}[label=\textup{(S\arabic*)}, ref=(S\arabic*), widest=S4]
\item \emph{Relations on the supports.} On the interior of each support interval
 $\Gamma'\equiv1$, and the one-sided formulas for $\Gamma''$ in Lemma~\ref{lem:plat-ito}(b)
 force $\alpha_\zeta=N_\zeta/\sigma_\zeta$, with $N_\zeta$ and $\sigma_\zeta$ as in
 \eqref{eq:plat-NS}, computed along the diffusion of $\zeta$
 (Lemma~\ref{lem:D-support-relation}). Above $q_\theta$ the difference $\Delta_\theta$ is
 therefore driven by the jets of the two measures, which see only $\Delta_\theta$ on $[t,1]$
 and differ by $O(L_\theta)$, and by the laws of the two diffusions at time $t$.
\item \emph{The plateau acts through one scalar.} Law differences are written through the
 backward Kolmogorov function of the diffusion of $\zetas$ \eqref{eq:D-duality}. Their part
 over $[0,q_\theta]$ is the difference of the laws of $X_{q_\theta}$ under the two measures,
 tested against an even function $H$. Up to $O(\theta^5+\theta^3L_\theta)$, this difference is
 a combination of $H''(0)$ and $H''''(0)$ whose two coefficients do not depend on $H$, and
 both are controlled, up to $O(\theta^4)$, by the first moment
 $I_\theta=\int_0^{q_\theta}\Delta_\theta(s)\,s\,ds$ (Lemma~\ref{lem:D-plateau}). The reason
 is Gaussian: for a centred Gaussian of variance $v$, a change $dv$ changes the fourth moment
 by $6v\,dv$, so at the relevant order the plateau changes the law at $q_\theta$ only through
 its variance.
\item \emph{Closure by the common contact.} Testing the identity
 $\Gamma_\theta(q_\theta)=\Gamma_*(q_\theta)$ with the even function
 $\Phi_{\zetas,x}(q_\theta,\cdot)^2$ gives $I_\theta=O(\theta^4+\theta L_\theta)$; this is the
 $3/2$ onset law, since $I_\theta=q_\theta^2(\frac\theta2-\frac{\rhostar q_\theta}3)+O(q_\theta^4)$.
 Every even test function then sees a law difference $O(\theta^5+\theta L_\theta)$ at time
 $q_\theta$ (Lemma~\ref{lem:D-contact-closure}), and the bound propagates to all times in
 $[q_\theta,K]$ at the cost of a term $C_0tL_\theta$ (Lemma~\ref{lem:D-propagation}).
\item \emph{Contraction.} The support relation then bounds $\Delta_\theta$ pointwise on
 $(q_\theta,v_-)$, and the bound $\Gamma''\le-1$ on a plateau of height one inside $(0,K)$
 bounds $|v_\theta-v_*|$ (\eqref{eq:D-pointwise} and \eqref{eq:D-endpoint}). Together,
 $L_\theta\le2C_0K\,L_\theta+2C(\theta^5+\theta L_\theta)$, where $C_0$ depends only on the
 $C^4$ ball of radius $\frac14$ about $\xic$ and $C$ on the covariance. Since $K$ is chosen in
 Appendix~\ref{sec:dat-constants} with $2C_0K\le\frac14$, this gives $L_\theta=O(\theta^5)$,
 and parts (b)--(d) follow, with a lower bound $\frac12$ on the slopes of $\alpha_\theta$
 and $\alphas$ on their support intervals for the quantiles.
\end{steplist}

\needspace{3\baselineskip}
\begin{remark}\label{rem:nc-above-contact}
(1) The $3/2$ onset law is the vanishing of $I_\theta$ at order $\theta^3$, so the proof
of~(d) recovers the onset limits \eqref{eq:nc-onset}, with rates, given $q_\theta\asymp\theta$
and the support structure of Proposition~\ref{prop:nc-class}.

(2) The argument is perturbative near criticality through the smallness of $K$. It gives no
uniformity in the covariance, and no statement for the Sherrington--Kirkpatrick model away
from criticality, where $\supp\zetas$ is not contained in a short interval $[0,K]$ and the
contraction step does not apply.
\end{remark}

\begin{proof}[Proof of Theorem~\ref{thm:nearcrit}\textup{(c)}]
Propositions~\ref{prop:nc-dat-threshold} and~\ref{prop:nc-above-contact} hold on
$\Kcl(\varepsilon_*)$ with $\varepsilon_*$ fixed in Appendix~\ref{sec:dat-constants}, and
parts (c) and (d) of Proposition~\ref{prop:nc-above-contact}, proved in
Appendix~\ref{app:stability}, are the assertions of Theorem~\ref{thm:nearcrit}(c).
\end{proof}

\section{Earlier work and open questions}\label{sec:disc}

Credits that concern a single statement are given where it is used. Here we collect the rest of
the rigorous work behind the paper (Section~\ref{sec:disc-prior}) and record its inputs and the
questions it leaves open (Section~\ref{sec:disc-open}).

\subsection{Earlier work}\label{sec:disc-prior}

Talagrand proved Guerra's bound for $\E\ZN^\theta$ \cite[Section~6]{Tal07} and explained the
matching lower bound for even covariances \cite[Section~7]{Tal07}. Chen, Guionnet, Ko,
Lacroix-A-Chez-Toine and Mourrat prove \condFM\ for every $\xi$ finite on $\R$
\cite[Theorem~2.1]{CGKLM}, with the Guerra-type step written out for convex $\xi$, and relate the
Sherrington--Kirkpatrick case to the multiscale formula of Contucci and Mingione \cite{CM19};
Aronow and Lopatto give a quantitative version for that model, for $0<\theta\le\frac12$
\cite[Theorem~1.1]{AL26}. Boursier notes that Talagrand's results identify $-\Sigma$ as the
upper-tail rate \cite[Remark~1.3]{Bou26b} (see also \cite[Section~1.4]{Bou26a}) and proves, for
$\xi$ strictly convex on $[-1,1]$, a direct lower bound for $\log\ZN$ at levels satisfying the strict Plefka condition
\cite[Remark~1.4, Proposition~3.12]{Bou26b}; the large-deviation principle is therefore not new,
and we add the behaviour of $\Sigma$ near $\feq$ and the constrained minimizers behind it. On the
Parisi side we use \cite{AC15b,AC15a,Che17,JT17}. For the Sherrington--Kirkpatrick model, Zhou
proved full replica symmetry breaking near criticality, with an interval of support at the bottom
\cite{Zho26}, Lopatto proved that for every $\beta>1$ the Parisi measure is supported on
$[0,q_\beta]$, with a density in $C^\infty([0,q_\beta))$ and a single atom at $q_\beta$
\cite[Theorem~1.1]{Lop26a}, and the non-strict sign $\Phi_{\zeta,xxx}(t,x)\le0$ for $x>0$ is
\cite[(7.6)]{Lop26a}, with other proofs in \cite[Lemma~5.4]{Pei26} and
\cite[Lemma~3.4 and Appendix~B.3]{Che26} (Remark~\ref{rem:plat-sign}).

\paragraph{Existing results re-proved or extended.}
Items 2, 5 and 6 are the adapted boxes of Figure~\ref{fig:roadmap}, and items 3 and 4 belong to
its bottom band.
\begin{itemize}[leftmargin=*,itemsep=1pt,topsep=2pt]
\item Zero lies in $\supp\zetas$ \cite[Theorem~1(i)]{AC15a}. That proof uses
 \cite[Theorem~5]{AC15a} at the smallest support point; when that point is isolated, the
 inequality $\Gamma_*'\le1$ there can be taken from \cite[Corollary~3.10]{JT17}, whose derivative
 formulas are stated without proof \cite[Lemma~8.4]{JT17}. We give a direct proof
 (Lemma~\ref{lem:ons-zero-in-support}), as does \cite[Proposition~2.1 (P6) and Appendix~A.1]{Pei26}.
 For $\Gamma'\le1$ on the support we use the second-order condition for the obstacle at interior
 contacts and the endpoint inequality at zero (Lemma~\ref{lem:plat-obstacle}(b),(c)) and the
 first-variation characterization of the support \cite[Corollary~3.6]{JT17}.
\item The smoothness of $\alphas$ and its quotient formula on an interval $[t_1,t_2)$ of the support,
 $t_1=0$ included, \cite[Theorem~2(ii) and its proof]{AC15a} are obtained at zero from \condH,
 which under $\beta_2>0$ and $\xi'''(0)=0$ produces an interval $[0,\kappa]$ of support
 (Proposition~\ref{prop:ons-equilibrium}(a)), with
 $\rhostar=\Dstar/(2\sqrt{\azero})$, re-deriving $\Ustar=1/\beta$ and $\zetas(\{0\})=0$ of
 \cite[Lemmas~6.1 and~6.2]{Lop26a} for the Sherrington--Kirkpatrick model
 (Proposition~\ref{prop:ons-equilibrium}). For that model $\rhob=\beta^5\Wstar^2/2>0$ also follows
 from \cite[Theorem~B.1 and (B.4)]{Lop26b}, which uses interval support, and from
 \cite[Section~7]{AL26} (Remark~\ref{rem:fifth-sk-inputs}).
\item The non-strict sign is strengthened to $\Phi_{\zeta,xxx}(t,x)\le-\kappa_tx$ for
 $0\le x\le\frac12$ and $t<1$, uniformly in $\zeta$, for every covariance
 (Lemma~\ref{lem:plat-fourth-sign}); with $\beta_2>0$ it gives $\Dstar>0$.
\item The convexity, saturation and first-contact statements of \cite[Lemma~3.2]{Bou26b} are
 followed in Section~\ref{sec:cons-variational} without the strict convexity of $\xi$ on
 $[-1,1]$ assumed there, with the uniqueness of the dual minimizer and the level identity added;
 the contact inequality $\Gamma'(q)\le1$ at dual minimizers is noted in \cite[Remark~1.2]{Bou26b}.
 The envelope identity \eqref{eq:intro-envelope} at every $\theta$
 (Section~\ref{sec:cons-envelope}) is ours; for the zero-temperature Laplace transform,
 continuous differentiability with an envelope formula is \cite[Lemma~5.3]{CGKLM}.
\item The slope identity stated in \cite[Remark~1.4]{Bou26a} is proved as a right derivative with
 an exact remainder (Proposition~\ref{prop:atom-slope}). The atom formula of
 Theorem~\ref{thm:atom}(b) is implicit in \cite[Proposition~3]{AC15a} with the support conditions
 (for spherical models: \cite[Theorem~6(ii)]{AC15a}), and the atom for $\xi''(0)<1$ is
 \cite[Theorem~1]{AC15a}, whose argument Proposition~\ref{prop:atom-atom}(a) repeats.
\item Zhou's gap exclusion for the Sherrington--Kirkpatrick model near criticality \cite{Zho26} is
 adapted to the constrained problem for mixed covariances near $\xic$
 (Section~\ref{sec:nc-reference}); Section~\ref{sec:plat-gap} gives a version at the origin that
 needs no closeness to $\xic$.
\item Toulouse's identity $\Ffield''(0)=1/\beta$ \cite{Tou80}, proved for the
 Sherrington--Kirkpatrick model as a second-order limit in \cite[Theorem~3.1(b),(d)]{Pei26} using
 \cite[Lemma~6.1]{Lop26a}, is extended to the value $\azero^{-1/2}$ for the covariances of
 Theorem~\ref{thm:field}, together with the $|h|^{10/3}$ term; $\Ffield\in C^1$ with
 $\Ffield'=\mfield$ is the routine envelope argument of Lemma~\ref{lem:fld-continuity}.
\end{itemize}

\subsection{Inputs and open questions}\label{sec:disc-open}

\paragraph{What the results rest on.}
Apart from \condH\ and \condFM, which appear as hypotheses where they are needed,
Theorems~\ref{thm:local}--\ref{thm:nearcrit} and Corollary~\ref{cor:pure-p} rest on refereed
results \cite{AC15a,AC15b,Che17,JT17} and the present paper. The statements at every $\beta>1$ for
the Sherrington--Kirkpatrick model, in Theorem~\ref{thm:sk} and the last sentence of
Theorem~\ref{thm:field}, take \condH\ from a preprint, \cite[Proposition~5.1]{Lop26a} or the
computer-assisted alternative proof \cite[Theorem~5.1]{Pei26}; for $\beta$ close to $1$ they rest
on refereed results (Remark~\ref{rem:fifth-sk-inputs}), and \condFM\ is a refereed result for this
covariance \cite{CM19} (Remark~\ref{rem:mo-fm} for other covariances). Beyond these cases and
$\Kcl(\varepsilon_*)$, with $\varepsilon_*$ as in Theorem~\ref{thm:nearcrit}(a), we do not know a proof of \condH\ for covariances with $\beta_2>0$ and
$\xi'''(0)=0$ (Section~\ref{sec:intro-constrained}); any such proof makes Theorems~\ref{thm:local}
and~\ref{thm:field} apply to the covariances it covers.

\paragraph{Rates and deviations that shrink with $N$.}
The $o(1)$ terms of Theorems~\ref{thm:sk} and~\ref{thm:local} come from the convergences
$\Wone(\zeta_\theta,\zetas)\to0$ and $q_\theta\to0$ of Proposition~\ref{prop:ons-onset}(a), which
follow from compactness and the uniqueness of $\zetas$ and come without a rate; along our route a
rate for the fifth-order law requires a quantitative form of them. Such a rate is also the first
missing ingredient for deviations that shrink with $N$, which Aronow and Lopatto reach down to
order $N^{-2/3}$ \cite[Theorem~1.5]{AL26}, with a quantitative \condFM\ for $\theta\le\frac12$
\cite[Theorem~1.1]{AL26}:
our method would run the change of measure of Lemma~\ref{lem:cons-gartner-ellis} at a constraint
$\theta_N\downarrow0$, with every error small compared with $\Lambda(\theta_N)-\theta_N\feq$, of
order $\theta_N^6$. As all limits are taken at a fixed covariance with $N\to\infty$ first,
Theorem~\ref{thm:sk} says nothing about the fluctuations of $N^{-1}\log\ZN$ on the scale
$N^{-5/6}$, at which $N\delta^{6/5}$ is of order one.

\paragraph{Zero temperature.}
The $o(1)$ terms are not known to be uniform as $\beta\to\infty$, so Theorem~\ref{thm:sk} gives
no constant for the ground-state energy, whose six-fifths upper-tail behaviour is a physics
prediction recorded in \cite[(1.9)]{CGKLM}, who also prove that the ground-state rate is quadratic
near its minimum in a field and superquadratic without one \cite[Theorem~1.3]{CGKLM}; Aronow and
Lopatto prove two-sided bounds of this order with unspecified
constants \cite[Theorem~1.6 and (1.14)]{AL26}. The constant would require control of our $o(1)$
terms uniformly in large $\beta$, or a plateau calculus for the zero-temperature Parisi problem.

\paragraph{Away from zero.}
The local calculus controls the contacts of $\zeta_\theta$ only in a fixed interval
$(0,K_{\rm loc})$, where every plateau of a measure close to $\zetas$ has $\Gamma'''\ge\Dstar/2$
(Lemma~\ref{lem:plat-local-plateau}). It leaves open whether, for small $\theta$, every positive
contact of $\zeta_\theta$ is marginal, as the first one is, and whether the quantile functions of
$\zeta_\theta$ and $\zetas$ differ by $O(\theta^5)$ above the first contact, as Parisi and Rizzo
predict at every temperature below the critical one \cite[Section~IV]{PR09}. Near criticality,
closeness to $\xic$ supplies the missing global control: for every $K\in(0,1)$ and every
covariance close enough to $\xic$, no constrained minimizer has a contact in $[K,1]$
(Lemma~\ref{lem:nc-confinement}). On $\Kcl(\varepsilon_*)$, which contains the
Sherrington--Kirkpatrick model for $1<\beta^2<1+\varepsilon_*$, Theorem~\ref{thm:nearcrit}(b),(c)
answer both questions: for $0<\theta<\theta_c$ the positive contacts of $\zeta_\theta$ form an
interval $[q_\theta,v_\theta]$ with $q_\theta<v_\theta$, on which $\Gamma_\theta(t)=t$
(Lemma~\ref{lem:nc-regular}), so that $\Gamma_\theta'=1$ at each of them; and the bound of~(c)
comes from a contraction argument that needs the support of $\zetas$ to be short
(Remark~\ref{rem:nc-above-contact}). For the Sherrington--Kirkpatrick model away from criticality
both questions remain open.

\paragraph{TAP states.}
The levels in $(\feq,\feq+\bar\delta)$, which carry the six-fifths law, lie outside the scope of
Boursier's theorem on the existence rate of TAP states \cite[Theorem~1]{Bou26b}, which assumes the
strict Plefka condition \cite[(1.8)]{Bou26b}: under the hypotheses of Theorem~\ref{thm:local},
hence for the Sherrington--Kirkpatrick model at every $\beta>1$, the first positive contact is
marginal at every such level, and the Plefka factor $1-\Gamma'_{\zeta_f}$ vanishes on a whole
interval of contacts (Corollary~\ref{cor:fifth-plefka}). On $\Kcl(\varepsilon_*)$, which contains
the Sherrington--Kirkpatrick model for $1<\beta^2<1+\varepsilon_*$, that condition holds, within
$(\feq,f_1)$, exactly on $(f_c,f_1)$ (Corollary~\ref{cor:nc-dat-levels}). Determining the existence
rate at marginal levels requires an argument that does not rely on a strict Plefka gap. We state
no result on TAP states here; they are the subject of a companion paper in preparation.

\appendix
\section{Routine proofs for Sections~\ref*{sec:cons}--\ref*{sec:ons} and~\ref*{sec:nc-reference}}\label{app:analytic}

This appendix contains the proofs that the main text defers: routine estimates, and direct
proofs of standard facts in the form used here. Appendix~\ref{sec:an-cons} proves the
convex-duality facts of Section~\ref{sec:cons}. Appendix~\ref{sec:plat-jets} states and proves
the quantitative estimates on the Parisi equation behind the analytic facts of
Section~\ref{sec:plat} (jet stability in the measure and in the covariance, bounds in time, and
a coupling lemma), and proves the plateau law and the It\^o identities of
Section~\ref{sec:plat-ito}. Appendix~\ref{sec:an-zero} proves that zero lies in the support of
the Parisi measure, and Appendix~\ref{sec:an-nc} proves the positivity of the plateau
polynomial near criticality. The tools are convexity and the first-variation formula, the
maximum principle, Gronwall's inequality and It\^o's formula. The main text uses only the
statements; those of Lemmas~\ref{lem:plat-jets} and~\ref{lem:plat-coupling} and of the estimates \eqref{eq:plat-jet-stability}--\eqref{eq:plat-origin} are given in Appendix~\ref{sec:plat-jets}.

\subsection{The constrained problem}\label{sec:an-cons}

The standing assumption and notation are those of Section~\ref{sec:cons}.

\begin{proof}[Proof of Lemma~\ref{lem:cons-standard}]
Parts (a)--(d) are taken from the literature, with two remarks on scope. First, the papers
\cite{AC15a,AC15b,Che17} state their models under a summability condition such as
$\sum_p2^p\beta_p^2<\infty$, which our radius assumption need not imply; there it serves only
to define the Hamiltonian. The cited statements concern the Parisi equation and functional,
which involve only finitely many derivatives of $\xi$ on $[0,1]$ ($\xi'$, $\xi''$ and, in \cite[Proposition~3]{AC15a}, $\xi'''$); these are bounded here, with
$\xi''>0$ on $(0,1]$. Second, the statements of Auffinger and Chen, made along a time-changed
Brownian motion, hold along the diffusion \eqref{eq:intro-diffusion}, and their function
$\Gamma_\zeta$ of \cite[(18)]{AC15a} is ours, by the Girsanov argument recalled in the
paragraph on analytic facts at the beginning of Section~\ref{sec:plat}.

(a) Existence, continuity and uniform convergence of the spatial derivatives are
\cite[Proposition~1(i)]{AC15a}, and the two bounds are \cite[Proposition~2, (15)--(16)]{AC15a}.
For finitely atomic $\zeta$ the Cole--Hopf formulas \cite[(11)--(12)]{AC15a} show that
$\Phi_\zeta(t,\cdot)$ is even, and evenness passes to general $\zeta$ by the uniform
convergence.

(b) Differentiability, continuity of $\Gamma_\zeta'$ and its formula are
\cite[Proposition~3, (20)]{AC15a}. The uniform convergence of $\Gamma_{\zeta_n}$ and
$\Gamma'_{\zeta_n}$ is \cite[Proposition~1(ii)]{AC15a}, applied to the polynomials $y_1^2$ and
$y_2^2$; that of $\Obs_{\zeta_n}$ follows by integration. Since $X_0=0$ and
$\Phi_{\zeta,x}(0,\cdot)$ is odd, $\Gamma_\zeta(0)=\Phi_{\zeta,x}(0,0)^2=0$. The bounds
$0\le\Gamma_\zeta\le1$ follow from (a), and $\Gamma_\zeta(1)=\E\tanh^2X_1<1$.

(c) This is \cite[Theorem~2 and Corollary~1]{AC15b}.

(d) At $s=0$ the right derivative is the first-variation formula \cite[Theorem~2]{Che17}, which gives the
second expression in \eqref{eq:cons-first-variation}. Applied at the base point $\zeta_s$ in the directions $\nu-\zeta_s=(1-s)(\nu-\zeta)$ and $\zeta-\zeta_s=-s(\nu-\zeta)$, the same formula gives both one-sided derivatives at every $s\in(0,1)$ and the left derivative at $s=1$, and all equal the second expression in \eqref{eq:cons-first-variation}. Write
$\delta\alpha=\alpha_\nu-\alpha_\zeta$. By \eqref{eq:intro-obstacle} the integrand is
$-\Obs_{\zeta_s}'\,\delta\alpha$, and Fubini's theorem gives
\[
 -\int_0^1\Obs_{\zeta_s}'(t)\,\delta\alpha(t)\,dt
 =-\int d(\nu-\zeta)(r)\int_r^1\Obs_{\zeta_s}'(t)\,dt
 =\int\Obs_{\zeta_s}\,d(\nu-\zeta),
\]
because $\Obs_{\zeta_s}(1)=0$. As $s\mapsto\zeta_s$ is weakly continuous, (b) shows that the
derivative is continuous in $s$. Convexity follows from (c).

(e) Apply (d) to the segment from $\nu$ to $\mu$. Since $0\le\Gamma\le1$ and $0\le t\le1$, we
have $|\Gamma(t)-t|\le1$, so the derivative in \eqref{eq:cons-first-variation} is at most
$\frac12\xi''(1)\int_0^1|\alpha_\mu-\alpha_\nu|\,dt$ in absolute value. The same estimate,
with $\sup_{[0,1]}\xi''$ in place of $\frac12\xi''(1)$, appears in the proof
of~\cite[Lemma~3.2]{Bou26b}.

(f) Since $\Gamma_\zeta\in C^1[0,1]$, differentiating \eqref{eq:intro-obstacle} gives the first
two identities in \eqref{eq:cons-obstacle} and $\Obs_\zeta\in C^2[0,1]$. Moreover
$\Obs_\zeta'(1)=\frac12\xi''(1)\{1-\Gamma_\zeta(1)\}>0$ by (b), so $\Obs_\zeta(t)<\Obs_\zeta(1)$
for $t<1$ close to $1$. At a minimum point $t\in(0,1)$ of $\Obs_\zeta$ on $(0,1]$ we have
$\Obs_\zeta'(t)=0$ and $\Obs_\zeta''(t)\ge0$. Since $\xi''(t)>0$, the first gives
$\Gamma_\zeta(t)=t$, and the second then gives $\Gamma_\zeta'(t)\le1$.

(g) By (d), the right derivative at $s=0$ of $s\mapsto P_{(1-s)\zetas+s\nu}$ is
$\int\Obs_*\,d(\nu-\zetas)$, and it is nonnegative because $\zetas$ minimizes $P$. Taking
$\nu=\delta_t$ gives $\Obs_*(t)\ge o_*$. So $\Obs_*-o_*$ is a nonnegative continuous function
with $\zetas$-integral zero, and it vanishes on $\supp\zetas$. This is
\cite[Corollary~3.6]{JT17}. Conversely, under the stated condition, convexity of
$s\mapsto P_{(1-s)\zeta+s\nu}$ and (d) give
$P_\nu\ge P_\zeta+\int\Obs_\zeta\,d(\nu-\zeta)\ge P_\zeta$ for every $\nu$, so $\zeta$
minimizes $P$ and equals $\zetas$ by (c).
\end{proof}

\begin{proof}[Proof of Lemma~\ref{lem:cons-variational}]
(a) The set $\mathcal A_\theta$ is convex. It is weakly closed, because
$\zeta\mapsto\zeta(\{0\})$ is upper semicontinuous, and hence compact. By
Lemma~\ref{lem:cons-standard}(c),(e), $P$ attains its minimum on $\mathcal A_\theta$ at exactly
one point $\zeta_\theta$. If $\theta\le m_0$, then $\zetas\in\mathcal A_\theta$, so
$\zeta_\theta=\zetas$ and $p(\theta)=\feq$. The sets $\mathcal A_\theta$ decrease in $\theta$,
so $p$ is nondecreasing and $p\ge p(0)=\feq$. The measure $\theta\delta_0+(1-\theta)\zetas$ lies
in $\mathcal A_\theta$, at $\Wone$-distance $\theta\int_0^1(1-\alphas)\,dt\le\theta$ from
$\zetas$, so \eqref{eq:cons-lipschitz} gives the upper bound on $p(\theta)$.

Let $\theta_n\to\theta$ in $[0,1)$, and put $\lambda_n=(\theta_n-\theta)^+/(1-\theta)$. The
measure $\lambda_n\delta_0+(1-\lambda_n)\zeta_\theta$ lies in $\mathcal A_{\theta_n}$ and within
$\Wone$-distance $\lambda_n$ of $\zeta_\theta$, so $\limsup_np(\theta_n)\le p(\theta)$ by
\eqref{eq:cons-lipschitz}. Let $\zeta'$ be the weak limit of a subsequence of
$(\zeta_{\theta_n})$. Since $\{0\}$ is closed,
$\zeta'(\{0\})\ge\limsup_n\zeta_{\theta_n}(\{0\})\ge\theta$, and
$P_{\zeta'}=\lim P_{\zeta_{\theta_n}}\le p(\theta)$ along the subsequence. Thus $\zeta'$
minimizes $P$ on $\mathcal A_\theta$, so $\zeta'=\zeta_\theta$ and $P_{\zeta'}=p(\theta)$. By
compactness, $\zeta_{\theta_n}\Rightarrow\zeta_\theta$ and $p(\theta_n)\to p(\theta)$.

(b) Let $\theta_1,\theta_2\in[0,1)$ and $t\in(0,1)$ with
$\theta=t\theta_1+(1-t)\theta_2>0$. The probability measure
\begin{equation}\label{eq:cons-mixture}
 \nu=\frac{t\theta_1\zeta_{\theta_1}+(1-t)\theta_2\zeta_{\theta_2}}{\theta}
 \quad\text{satisfies}\quad
 \nu(\{0\})\ge\frac{t\theta_1^2+(1-t)\theta_2^2}{\theta}\ge\theta
\end{equation}
(as in \cite[proof of Lemma~3.2]{Bou26b}), so $\nu\in\mathcal A_\theta$, and convexity of $P$ gives
$\Lambda(\theta)\le\theta P_\nu\le t\Lambda(\theta_1)+(1-t)\Lambda(\theta_2)$. Hence $\Lambda$ is
convex on $[0,1)$. By (a), $\Lambda(\theta)/\theta=p(\theta)\to\feq$ as $\theta\downarrow0$,
and $\Lambda(0)=0$; this gives $\Lambar'_+(0)=\feq$. If $\mu\in\mathcal A_{1-h}$ with
$0<h\le1$, then $\mu((0,1])\le h$, so $\Wone(\mu,\delta_0)=\int_0^1\mu((t,1])\,dt\le h$. Since
$\delta_0\in\mathcal A_{1-h}$, \eqref{eq:cons-lipschitz} gives
\[
 (1-h)\bigl(P_{\delta_0}-\tfrac12\xi''(1)h\bigr)\le\Lambda(1-h)\le(1-h)P_{\delta_0}.
\]
Hence $\Lambda(\theta)\to P_{\delta_0}$ as $\theta\uparrow1$, so $\Lambar(1)=P_{\delta_0}$ and
$\Lambar$ is continuous on $[0,1]$; with the value $+\infty$ outside $[0,1]$ it is closed and
convex. The same bounds give
$\{\Lambar(1)-\Lambar(1-h)\}/h\le P_{\delta_0}+\frac12\xi''(1)$. By convexity these quotients
increase to $\Lambar'_-(1)$ as $h\downarrow0$, so $f_1<\infty$.

(c) Let $m_0<\theta<1$ and suppose that $\zeta_\theta(\{0\})>\theta$. For every $\nu$, the
measure $(1-s)\zeta_\theta+s\nu$ lies in $\mathcal A_\theta$ for small $s>0$, so
\[
 P_{\zeta_\theta}\le P_{(1-s)\zeta_\theta+s\nu}\le(1-s)P_{\zeta_\theta}+sP_\nu ,
\]
that is, $P_{\zeta_\theta}\le P_\nu$. Then $\zeta_\theta=\zetas$, whose mass at zero is
$m_0<\theta$, a contradiction. Hence $\zeta_\theta(\{0\})=\theta$; this also holds at
$\theta=m_0$, where $\zeta_{m_0}=\zetas$.

Let $m_0\le\theta_1<\theta_2<1$. If $p(\theta_1)=p(\theta_2)$, then
$\zeta_{\theta_2}\in\mathcal A_{\theta_1}$ attains $p(\theta_1)$, so
$\zeta_{\theta_2}=\zeta_{\theta_1}$. This is impossible, because $\zeta_{\theta_i}(\{0\})=\theta_i$.
So $p$ is strictly increasing on $[m_0,1)$.

Let $\theta_1\ne\theta_2$ in $[m_0,1)$, let $t\in(0,1)$, and put
$\theta=t\theta_1+(1-t)\theta_2$, so that $m_0<\theta<1$. For the measure $\nu$ of
\eqref{eq:cons-mixture}, saturation gives
\[
 \nu(\{0\})=\frac{t\theta_1^2+(1-t)\theta_2^2}{\theta}
 =\theta+\frac{t(1-t)(\theta_1-\theta_2)^2}{\theta}>\theta=\zeta_\theta(\{0\}).
\]
So $\nu\in\mathcal A_\theta$ differs from $\zeta_\theta$, uniqueness gives
$\theta P_\nu>\Lambda(\theta)$, and the convexity inequality of (b) is strict.

(d) Let $\feq<f<f_1$ and $\varphi(\theta)=\Lambar(\theta)-\theta f$. By (b), $\varphi$ is closed
and convex, so it attains its minimum on $[0,1]$, and
$\varphi'_+(0)=\feq-f<0<f_1-f=\varphi'_-(1)$. Hence no minimizer lies at $0$ or at $1$. On
$[0,m_0]$ we have $\varphi(\theta)=\theta(\feq-f)$ by (a), which is strictly decreasing, so the
minimizers lie in $[m_0,1)$. They form an interval on which $\varphi$ is constant, that is,
on which $\Lambda$ is affine. By (c) this interval is a single point.

(e) Let $m_0<\theta<1$. By (c), $\mu_\theta(\{0\})=0$. For $\nu\in\Pspace([0,1])$, the segment
from $\zeta_\theta$ to $\theta\delta_0+(1-\theta)\nu$ stays in $\mathcal A_\theta$. By
minimality and \eqref{eq:cons-first-variation}, its right derivative at $\zeta_\theta$, which is
$(1-\theta)\int\Obs_\theta\,d(\nu-\mu_\theta)$, is nonnegative. Taking $\nu=\delta_t$ gives
$\Obs_\theta(t)\ge o_\theta$ for every $t\in[0,1]$. Since
$\int(\Obs_\theta-o_\theta)\,d\mu_\theta=0$ and $\Obs_\theta$ is continuous,
$\Obs_\theta=o_\theta$ on $\supp\mu_\theta$. Suppose that $\Obs_\theta(0)=o_\theta$. Then for
every $\nu$,
\[
 \int\Obs_\theta\,d(\nu-\zeta_\theta)
 =\int\Obs_\theta\,d\nu-\theta\,\Obs_\theta(0)-(1-\theta)o_\theta
 =\int(\Obs_\theta-o_\theta)\,d\nu\ge0,
\]
and Lemma~\ref{lem:cons-standard}(g) gives $\zeta_\theta=\zetas$, which contradicts
$\zeta_\theta(\{0\})=\theta>m_0$. Hence $\Obs_\theta(0)>o_\theta$.

Consequently $0\notin\supp\mu_\theta$. So $o_\theta=\min_{(0,1]}\Obs_\theta$, and
$\supp\zeta_\theta\cap(0,1]=\supp\mu_\theta\subset C_\theta$; in particular $C_\theta$ is
nonempty. Since $o_\theta=\min_{(0,1]}\Obs_\theta<\Obs_\theta(0)$, also $C_\theta=\{t\in[0,1]:\Obs_\theta(t)=o_\theta\}$; this set is closed and does not
contain $0$. It is therefore compact and bounded away from zero, and by
Lemma~\ref{lem:cons-standard}(f) it does not contain $1$. Thus $C_\theta\subset(0,1)$ and
$q_\theta=\min C_\theta\in(0,1)$. Since $\supp\mu_\theta\subset C_\theta\subset[q_\theta,1)$,
we get $\zeta_\theta((0,q_\theta))=0$. A point $r\in(0,q_\theta)$ is not in $C_\theta$, so
$\Obs_\theta(r)>o_\theta=\Obs_\theta(q_\theta)$, and the same holds at $r=0$. Finally,
$q_\theta\in(0,1)$ is a minimum point of $\Obs_\theta$ on $(0,1]$, and
Lemma~\ref{lem:cons-standard}(f),(b) give the last two relations in \eqref{eq:cons-contact}.
\end{proof}

\begin{proof}[Proof of Lemma~\ref{lem:cons-regular}]
(a) For $0\le\theta<1$ we have $\Lamhat(\theta)=\theta\{p(\theta)-\feq\}\ge0$, with equality on
$[0,m_0]$ by Lemma~\ref{lem:cons-variational}(a). If $m_0<\theta<1$, then
$p(\theta)>p(m_0)=\feq$ by Lemma~\ref{lem:cons-variational}(c). Also
$\Lamhat(1)=P_{\delta_0}-\feq\ge0$ by Lemma~\ref{lem:cons-variational}(b). If $m_0=1$, then
$\zetas=\delta_0$ and $\Lamhat(1)=0$; if $m_0<1$, then $\delta_0\ne\zetas$, and uniqueness of the
minimizer gives $\Lamhat(1)>0$.

(b) This is Lemma~\ref{lem:cons-variational}(c).

(c) An affine piece of $\Lambar$ on $[a,b]\subset[m_0,1]$ with $a<b$ would be affine on
$[a,(a+b)/2]\subset[m_0,1)$, which contradicts the strict convexity in
Lemma~\ref{lem:cons-variational}(c).

(d) Let $m_0<1$. Finiteness of $f_1$ is Lemma~\ref{lem:cons-variational}(b). By convexity of
$\Lambar$ and (a),
\[
 f_1=\Lambar'_-(1)\ge\frac{\Lambar(1)-\Lambar(m_0)}{1-m_0}
 =\feq+\frac{\Lamhat(1)}{1-m_0}>\feq .
\]
Let $f\in(\feq,f_1)$. By Lemma~\ref{lem:cons-variational}(d),
$\theta_f\in(0,1)\cap[m_0,1)$. Suppose that $\theta_f=m_0$. Then $0<m_0<1$, and since $\theta_f$
minimizes the convex function $\Lambar(\theta)-\theta f$, we have $f\in\partial\Lambar(m_0)$.
By Proposition~\ref{prop:cons-envelope}, $\Lambda$ is differentiable at the interior point $m_0$,
with $\Lambda'(m_0)=p(m_0)+m_0p'(m_0)=\feq$. So $\partial\Lambar(m_0)=\{\feq\}$, a contradiction.
Hence $\theta_f>m_0$. If $m_0=1$, then Lemma~\ref{lem:cons-variational}(a),(b) give
$\Lambar(\theta)=\theta\feq$ on $[0,1]$, so $f_1=\feq$.
\end{proof}

\begin{proof}[Proof of Corollary~\ref{cor:cons-level}]
By Proposition~\ref{prop:cons-envelope}, $\Lambda'$ is continuous on $(0,1)$. It is strictly
increasing on $(m_0,1)$, because $\Lambda$ is strictly convex on $[m_0,1)$
(Lemma~\ref{lem:cons-variational}(c)). As $\theta\uparrow1$, $\Lambda'(\theta)$ increases to
$\Lambar'_-(1)=f_1$, by convexity and the continuity of $\Lambar$ at $1$
(Lemma~\ref{lem:cons-variational}(b)). If $m_0>0$, then $\Lambda'(\theta)\to\Lambda'(m_0)=\feq$
as $\theta\downarrow m_0$, as in the proof of Lemma~\ref{lem:cons-regular}(d). If $m_0=0$, then
for $0<\theta<\frac12$ convexity gives
\[
 \feq=\Lambar'_+(0)\le\Lambda'(\theta)\le\frac{\Lambda(2\theta)-\Lambda(\theta)}{\theta}
 =2p(2\theta)-p(\theta),
\]
and the right side tends to $\feq$ by Lemma~\ref{lem:cons-variational}(a). Therefore $\Lambda'$
maps $(m_0,1)$ onto $(\feq,f_1)$. For $f$ in this interval and $\theta=(\Lambda')^{-1}(f)$,
the convex function $\Lambar(\vartheta)-\vartheta f$ has derivative zero at $\vartheta=\theta$, so $\theta$
minimizes it and $\theta=\theta_f$ by Lemma~\ref{lem:cons-variational}(d). Finally,
$\Lambda'=p+\theta p'$ and \eqref{eq:cons-envelope} at $\theta_f\in(m_0,1)$ give
\eqref{eq:cons-level}.
\end{proof}

\subsection{Jets, coupling and plateau identities}\label{sec:plat-jets}

This subsection states and proves the quantitative estimates announced in the paragraph on
analytic facts of Section~\ref{sec:plat}, and proves Lemmas~\ref{lem:plat-law}
and~\ref{lem:plat-ito}. The standing assumptions and notation are those of
Section~\ref{sec:plat}; as stated there, Lemmas~\ref{lem:plat-jets}
and~\ref{lem:plat-coupling} assume in addition $\beta_2>0$.

The next lemma quantifies how the spatial derivatives depend on the measure, on time and,
when the covariance varies, on $\xi$. Let $\mathfrak F$ be a set of covariances as in
Section~\ref{sec:plat} on which $\xi''$ is bounded above and below by positive constants on
$[0,1]$, uniformly over $\mathfrak F$. For $\xi,\tilde\xi\in\mathfrak F$ and
$\zeta,\tilde\zeta\in\Pspace([0,1])$ write $a=\xi''$, $\tilde a=\tilde\xi''$,
$\alpha=\alpha_\zeta$, $\tilde\alpha=\alpha_{\tilde\zeta}$, $u=\Phi_{\zeta,x}$ and
$\tilde u=\tilde\Phi_{\tilde\zeta,x}$, where $\tilde\Phi$ solves \eqref{eq:intro-pde} with
$\tilde\xi$ in place of $\xi$.

\begin{lemma}[Jets]\label{lem:plat-jets}
\begin{thmlist}
\item For a fixed covariance there are constants $C_k$, depending only on $\xi$ and $k$,
 such that for all $\zeta,\nu\in\Pspace([0,1])$
 \begin{equation}\label{eq:plat-jet-stability}
  \|\partial_x^k(\Phi_\zeta-\Phi_\nu)\|_\infty\le C_k\|a\|_\infty\Wone(\zeta,\nu)
  \qquad(k\ge1).
 \end{equation}
\item The constants $B_k$ in \eqref{eq:plat-bounds} can be chosen uniformly over
 $\mathfrak F$. For every $k\ge0$ there is $C_k'$, depending only on $k$ and
 $\mathfrak F$, such that
 \begin{equation}\label{eq:plat-jet-general}
  \|\partial_x^k(u-\tilde u)\|_\infty
  \le C_k'\bigl(\|a-\tilde a\|_{L^1[0,1]}+\|a\alpha-\tilde a\tilde\alpha\|_{L^1[0,1]}\bigr).
 \end{equation}
\item For every $k\ge1$ there is $L_k$, depending only on $\xi$ and $k$, such that for
 every $\zeta$ and all $s,t\in[0,1]$
 \begin{equation}\label{eq:plat-time-lip}
  \sup_{x\in\R}|\partial_x^k\Phi_\zeta(t,x)-\partial_x^k\Phi_\zeta(s,x)|\le L_k|t-s| .
 \end{equation}
\item For every $\zeta$ and $t\in[0,1]$,
 \begin{equation}\label{eq:plat-small-time}
  \E X_t^2\le2\|a\|_\infty t+2\|a\|_\infty^2t^2 .
 \end{equation}
\item For all $\zeta,\nu\in\Pspace([0,1])$, $t\in[0,1]$ and $k\ge1$,
 \begin{equation}\label{eq:plat-origin}
  \E\bigl|\partial_x^k\Phi_\zeta(t,X^\zeta_t)-\partial_x^k\Phi_\nu(0,0)\bigr|
  \le B_{k+1}\bigl(2\|a\|_\infty t+2\|a\|_\infty^2t^2\bigr)^{1/2}+L_kt
  +C_k\|a\|_\infty\Wone(\zeta,\nu).
 \end{equation}
\end{thmlist}
\end{lemma}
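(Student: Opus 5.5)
The five parts of Lemma~\ref{lem:plat-jets} are standard parabolic estimates. I would prove them first for measures with finitely many atoms, where the Cole--Hopf formulas make every manipulation legitimate, and then pass to general $\zeta$ by the uniform convergence of \cite[Proposition~1(i)]{AC15a}. The constants must not depend on the number of atoms. The basic tool is the linear equation satisfied by a difference of derivatives. For $u=\Phi_{\zeta,x}$ we have $u_t+\frac a2u_{xx}+a\alpha uu_x=0$. Subtracting the same equation for $\tilde u$, the difference $w=u-\tilde u$ solves
\[
 w_t+\tfrac a2w_{xx}+a\alpha\,(uw)_x=-\tfrac{a-\tilde a}2\tilde u_{xx}-(a\alpha-\tilde a\tilde\alpha)\tilde u\tilde u_x ,
\]
with terminal value $0$. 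This is a linear equation with bounded coefficients. By Duhamel's principle, together with the maximum principle (the zeroth-order coefficient $a\alpha u_x$ is bounded), we get $\|w\|_\infty\le C\int_0^1\bigl(|a-\tilde a|+|a\alpha-\tilde a\tilde\alpha|\bigr)dt$. Here the source terms are bounded through $B_2$, $B_1$, and Gronwall's inequality in the time variable absorbs the exponential factor. Differentiating the equation $k$ times gives the same structure for $\partial_x^kw$. The new lower-order terms involve $\partial_x^jw$ for $j<k$, multiplied by bounded jets, so induction on $k$ gives \eqref{eq:plat-jet-general}. The uniformity of $B_k$ over $\mathfrak F$ comes from the same induction applied to $\partial_x^ku$ itself, whose terminal value $\partial_x^k\tanh$ is bounded. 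That induction uses only $\sup a$ and $\inf a$ over the family.

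Part (a) is the special case $\tilde\xi=\xi$. Then $\|a\alpha-a\tilde\alpha\|_{L^1}\le\|a\|_\infty\Wone(\zeta,\nu)$, since $\Wone$ is exactly the $L^1$ distance of the distribution functions. The case $k=0$ of (b) then gives the case $k=1$ of (a), and so on up in $k$. For (c), I would use $\partial_t\partial_x^k\Phi=-\frac a2\partial_x^{k+2}\Phi-\frac a2\partial_x^k(\alpha u^2)$. The right side is bounded by the $B_j$, uniformly in $x$ and $t$. Its jumps in $\alpha$ at atoms do not matter, because $\partial_x^k\Phi$ is continuous in time and Lipschitz on each strip with a common constant. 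This gives $L_k$.

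For (d), apply It\^o's formula to $X_t^2$ along \eqref{eq:intro-diffusion}, or simply write $X_t=\int_0^ta\alpha u\,ds+\int_0^t\sqrt a\,dB$. The drift is bounded by $\|a\|_\infty t$ since $|u|\le1$ and $\alpha\le1$, and the martingale part has second moment at most $\|a\|_\infty t$. The inequality $(x+y)^2\le2x^2+2y^2$ then gives \eqref{eq:plat-small-time}. For (e), I would split the difference into three terms:
\[
 \bigl[\partial_x^k\Phi_\zeta(t,X_t)-\partial_x^k\Phi_\zeta(t,0)\bigr]
 +\bigl[\partial_x^k\Phi_\zeta(t,0)-\partial_x^k\Phi_\zeta(0,0)\bigr]
 +\bigl[\partial_x^k\Phi_\zeta(0,0)-\partial_x^k\Phi_\nu(0,0)\bigr].
\]
The first term is bounded by $B_{k+1}|X_t|$; applying Cauchy--Schwarz and (d) gives the first summand of \eqref{eq:plat-origin}. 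The second term is bounded by (c), and the third by (a).

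The main obstacle is (b). The transport term $a\alpha uw_x$ and the jumps of $\alpha$ have to be handled so that the constants depend only on $k$ and $\mathfrak F$, and not on the measure or the number of atoms. I would first try the maximum principle on each strip between atoms. The key point is that $\alpha$ enters only as a bounded coefficient, never differentiated in time, so chaining the strips with Gronwall's inequality in the time variable produces $e^{C\int_0^1a\,dt}$ rather than a factor that grows with the number of atoms. If the maximum principle alone is awkward, I would switch to a Feynman--Kac representation along the diffusion of $\zeta$. In that representation the divergence-form term $(uw)_x$ is absorbed by writing the equation as $w_t+\frac a2w_{xx}+a\alpha uw_x+a\alpha u_xw=\text{source}$, whose potential term $a\alpha u_x$ is bounded by $\xi''(1)$.
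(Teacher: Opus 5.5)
Your proposal is correct and matches the paper's proof essentially step for step: maximum-principle bounds on the strips between atoms for $\partial_x^k u$ and for $w=u-\tilde u$ and its derivatives give (b) and the uniform $B_k$, (a) is the one-covariance case of (b) after the index shift, (c) comes from the differentiated Parisi equation together with continuity in time across atoms, (d) from the split into drift and martingale part, and (e) from the same three-term decomposition. One algebra slip needs fixing: $uu_x-\tilde u\tilde u_x=uw_x+\tilde u_xw$, not $(uw)_x$ (these differ by $ww_x$), so the zeroth-order coefficient is $a\alpha\tilde u_x$ rather than $a\alpha u_x$; it is still bounded by $\|a\|_\infty$, so the argument is unaffected.
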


\begin{proof}
(b) First let $\zeta$ and $\tilde\zeta$ have finitely many atoms. On each closed strip
between consecutive atoms of either measure, $u$ solves
\[
 u_t+\tfrac a2u_{xx}+a\alpha uu_x=0,\qquad u(1,x)=\tanh x,
\]
classically, and $\tilde u$ solves the same equation with $\tilde a$, $\tilde\alpha$. For
$k\ge2$, differentiating $k$ times in $x$ gives a linear backward equation for
$\partial_x^ku$, with drift $a\alpha u$, zeroth-order coefficient $(k+1)a\alpha u_x$, and a
forcing that is a polynomial in derivatives of order less than $k$. Since $|u|\le1$ and
$0<u_x\le1$, the maximum principle and induction on $k$ bound $\partial_x^ku$ uniformly
over $\mathfrak F$. The difference $w=u-\tilde u$ has terminal value zero and solves
\[
 w_t+\tfrac a2w_{xx}+a\alpha uw_x+a\alpha\tilde u_xw
 =-\tfrac{a-\tilde a}2\,\tilde u_{xx}-(a\alpha-\tilde a\tilde\alpha)\,\tilde u\tilde u_x ,
\]
whose drift and zeroth-order coefficient are bounded uniformly. The maximum principle
bounds $\|w\|_\infty$ by a constant times the time integral of the supremum of the
right side, which is \eqref{eq:plat-jet-general} for $k=0$. Differentiating this
equation and inducting gives every $k$; at each step the forcing involves only finitely
many of the derivatives already bounded, so the constants stay uniform over $\mathfrak F$
and over all measures. Both statements pass to general measures
by~\cite[Proposition~1(i)]{AC15a}, applied at each of the two covariances, because weak
convergence of measures on $[0,1]$ implies $L^1$ convergence of their distribution
functions.

(a) Take $\mathfrak F=\{\xi\}$ in (b). Then
$\|a(\alpha_\zeta-\alpha_\nu)\|_{L^1}\le\|a\|_\infty\Wone(\zeta,\nu)$, and
\eqref{eq:plat-jet-general} for $u=\Phi_{\zeta,x}$ becomes \eqref{eq:plat-jet-stability}
after the index shift $k\mapsto k-1$.

(c) For a measure with finitely many atoms the Parisi equation holds classically on each
closed strip between consecutive atoms, and there
$\partial_t\partial_x^k\Phi=-\partial_x^{k-1}\{\frac a2\Phi_{xxx}+a\alpha\Phi_x\Phi_{xx}\}$.
This is bounded by a constant $L_k$ that depends only on $\xi$ and $B_1,\dots,B_{k+2}$.
Since $\partial_x^k\Phi$ is continuous in time across atoms~\cite[Proposition~5,
(64)]{AC15a}, \eqref{eq:plat-time-lip} holds for such measures, and it passes to every
$\zeta$ by \eqref{eq:plat-jet-stability}.

(d) The drift of $X$ is bounded by $\|a\|_\infty$, since $\alpha\le1$ and
$|\Phi_{\zeta,x}|\le1$, and the martingale part has variance $\int_0^ta\le\|a\|_\infty t$.

(e) Split the difference at $(t,0)$ and at $(0,0)$. The three pieces are bounded by
$B_{k+1}\E|X_t|$, by $L_kt$ (part (c)) and by $C_k\|a\|_\infty\Wone(\zeta,\nu)$
(part (a)); then use $\E|X_t|\le(\E X_t^2)^{1/2}$ and \eqref{eq:plat-small-time}.
\end{proof}

Part (b), uniform over a family of covariances, is needed only near criticality
(Section~\ref{sec:nc-reference}); elsewhere the covariance is fixed and
\eqref{eq:plat-jet-stability} suffices. The next lemma gives continuity of
$\Gamma_\zeta$ and $\Gamma_\zeta'$ in the pair $(\xi,\zeta)$, uniformly in time. It is used to
pass It\^o identities from finitely atomic measures to general ones
(Proposition~\ref{prop:ons-equilibrium} and Lemma~\ref{lem:atom-odd-slope}) and near
criticality (Section~\ref{sec:nc-reference}).

\begin{lemma}[Coupling]\label{lem:plat-coupling}
Let $\xi_n,\xi\in\mathfrak F$ and $\zeta_n,\zeta\in\Pspace([0,1])$, with $\xi_n''\to\xi''$
uniformly on $[0,1]$ and $\zeta_n\to\zeta$ weakly. Let $X^n$ and $X$ be the diffusions of
the pairs $(\xi_n,\zeta_n)$ and $(\xi,\zeta)$, driven by the same Brownian motion, and let
$\Gamma_n$, $\Gamma$ be their functions $\Gamma$. Then
$\E\sup_{t\le1}|X^n_t-X_t|^2\to0$, and $\Gamma_n\to\Gamma$ and $\Gamma_n'\to\Gamma'$
uniformly on $[0,1]$.
\end{lemma}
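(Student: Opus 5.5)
The plan is a strong-error coupling estimate by Gronwall, followed by uniform convergence of the jets along the coupled paths.

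\emph{Step 1 (the paths).} Write the drifts as $b_n(t,x)=\xi_n''(t)\alpha_{\zeta_n}(t)u_n(t,x)$ and $b(t,x)=\xi''(t)\alpha_\zeta(t)u(t,x)$, where $u_n=\Phi^{\xi_n}_{\zeta_n,x}$ and $u=\Phi^\xi_{\zeta,x}$. Subtracting the two equations gives
\[
 X^n_t-X_t=\int_0^t\bigl(b_n(s,X^n_s)-b(s,X_s)\bigr)\,ds+\int_0^t\bigl(\sqrt{\xi_n''(s)}-\sqrt{\xi''(s)}\bigr)\,dB_s .
\]
Split the drift difference as
\[
 b_n(s,X^n_s)-b(s,X_s)=\bigl[b_n(s,X^n_s)-b_n(s,X_s)\bigr]+\bigl[b_n(s,X_s)-b(s,X_s)\bigr].
\]
The first bracket is at most $\|\xi_n''\|_\infty|X^n_s-X_s|$, since $|\alpha|\le1$ and $0<u_{n,x}\le1$; the constant is uniform in $n$ because $\mathfrak F$ bounds $\xi''$.

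The second bracket is at most $|\xi_n''\alpha_{\zeta_n}-\xi''\alpha_\zeta|(s)+\|\xi''\|_\infty\|u_n-u\|_\infty$. By \eqref{eq:plat-jet-general} with $k=0$, $\|u_n-u\|_\infty\to0$. This holds because $\|a_n-a\|_{L^1}\to0$ and $\|a_n\alpha_n-a\alpha\|_{L^1}\le\|a_n-a\|_\infty+\|a\|_\infty\Wone(\zeta_n,\zeta)\to0$. The time integral of $|\xi_n''\alpha_{\zeta_n}-\xi''\alpha_\zeta|$ tends to zero by the same estimate.

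The martingale term is controlled by Doob's inequality. Its quadratic variation is at most $\int_0^1(\sqrt{\xi_n''}-\sqrt{\xi''})^2\to0$, by uniform convergence and the positive lower bound on $\xi''$ over $\mathfrak F$.

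Squaring, taking suprema and expectations, and applying Gronwall's inequality then gives $\E\sup_{t\le1}|X^n_t-X_t|^2\le C\varepsilon_n$ with $\varepsilon_n\to0$.

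\emph{Step 2 (the functions).} The formulas are $\Gamma_n(t)=\E u_n(t,X^n_t)^2$ and $\Gamma_n'(t)=\xi_n''(t)\E U^n(t,X^n_t)^2$, with $U^n=u_{n,x}$; the derivative formula is Lemma~\ref{lem:plat-ito}(a), valid for each fixed covariance. For $k=0,1$ we compare $\partial_x^ku_n(t,X^n_t)$ with $\partial_x^ku(t,X_t)$ by the triangle inequality:
\begin{itemize}
\item $|\partial_x^ku_n(t,X^n_t)-\partial_x^ku(t,X^n_t)|\le\|\partial_x^k(u_n-u)\|_\infty\to0$ by \eqref{eq:plat-jet-general};
\item $|\partial_x^ku(t,X^n_t)-\partial_x^ku(t,X_t)|\le B_{k+2}|X^n_t-X_t|$ by \eqref{eq:plat-bounds}.
\end{itemize}
All quantities are bounded (by $1$ for $k=0$, by $1$ for $U$), so the squares differ by at most a constant times these errors. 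Taking expectations and using Step 1 gives convergence uniformly in $t$. For $\Gamma_n'$ there is one extra term, $\|\xi_n''-\xi''\|_\infty$, which tends to zero.

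\emph{Main obstacle.} The delicate point is that $\Gamma_\zeta'=\xi''\E U^2$ and the bounds on the jets must hold uniformly over the varying covariances and measures. The first is taken from Lemma~\ref{lem:plat-ito}(a) for each pair. The second is Lemma~\ref{lem:plat-jets}(b), whose constants are uniform over $\mathfrak F$, so no further approximation by finitely atomic measures is needed. Pathwise existence and uniqueness of each diffusion hold because the drift is bounded and Lipschitz in $x$, uniformly in $n$.
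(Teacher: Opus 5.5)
Your proposal is correct and follows the paper's proof essentially step for step: Gronwall applied to $\E\sup_{s\le t}|X^n_s-X_s|^2$, with Doob's inequality for the difference of the martingale parts, \eqref{eq:plat-jet-general} with $k=0$ together with $\|a_n\alpha_n-a\alpha\|_{L^1}\le\|a_n-a\|_\infty+\|a\|_\infty\Wone(\zeta_n,\zeta)$ for the drift, and then the triangle inequality (using $|u|\le1$, $0<u_x\le1$, $|u_{xx}|\le B_3$ and jet stability with $k=1$) for $\Gamma_n$ and $\Gamma_n'$. The only cosmetic difference is that you take the spatial Lipschitz constant from $b_n$, uniform over $\mathfrak F$, whereas the paper takes it from the limit drift $b$.
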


\begin{proof}
Write $a_n=\xi_n''$, $\alpha_n=\alpha_{\zeta_n}$, $u_n$ for the corresponding
$\Phi_{\zeta_n,x}$, and $b_n=a_n\alpha_nu_n$, $b=a\alpha u$ for the drifts. Since
$|u_n|\le1$ and $|u_x|\le1$,
\[
 |b_n(s,y)-b(s,x)|\le|a_n\alpha_n-a\alpha|(s)+\|a\|_\infty\|u_n-u\|_\infty
 +\|a\|_\infty|y-x| .
\]
The martingale parts differ by $\int_0^t(\sqrt{a_n}-\sqrt a)\,dB$, whose squared maximum
has expectation at most $4\int_0^1|\sqrt{a_n}-\sqrt a|^2$ by Doob's inequality. Gronwall's
inequality applied to $\Delta_n(t)=\E\sup_{s\le t}|X^n_s-X_s|^2$ gives
\[
 \Delta_n(1)\le3e^{3\|a\|_\infty^2}\Bigl\{\bigl(\|a_n\alpha_n-a\alpha\|_{L^1}
 +\|a\|_\infty\|u_n-u\|_\infty\bigr)^2+4\int_0^1|\sqrt{a_n}-\sqrt a|^2\Bigr\},
\]
which tends to zero by \eqref{eq:plat-jet-general} with $k=0$ and because
$\|a_n\alpha_n-a\alpha\|_{L^1}\le\|a_n-a\|_\infty+\|a\|_\infty\Wone(\zeta_n,\zeta)$. Since
$|u|\le1$, $0<u_x\le1$ and $|u_{xx}|\le B_3$,
\[
 |\Gamma_n(t)-\Gamma(t)|\le2\|u_n-u\|_\infty+2\E|X^n_t-X_t|,
\]
and, with $\Gamma'=a\E u_x(t,X_t)^2$,
\[
 |\Gamma_n'(t)-\Gamma'(t)|\le\|a_n-a\|_\infty
 +2\|a\|_\infty\bigl(\|\partial_x(u_n-u)\|_\infty+B_3\,\E|X^n_t-X_t|\bigr).
\]
Both bounds are uniform in $t$ and tend to zero.
\end{proof}

We turn to the two lemmas of Section~\ref{sec:plat-ito}.

\begin{proof}[Proof of Lemma~\ref{lem:plat-law}]
On $[s,t)$ we have $\alpha_\zeta=m$. If $m=0$, the drift of $X$ vanishes there and the
claim is the Gaussian transition law. If $m>0$, put $g=e^{m\Phi_\zeta}$. For $s<r<t'<t$, $\Phi_\zeta(r,\cdot)$ is the Cole--Hopf transform of $\Phi_\zeta(t',\cdot)$ (as in the proof of Lemma~\ref{lem:plat-ito}(c)); since $\Phi_\zeta$ is continuous on $[0,1]\times\R$ with $|\Phi_{\zeta,x}|\le1$ (Lemma~\ref{lem:cons-standard}(a)), letting $t'\uparrow t$ and $r\downarrow s$ shows that $g(r,\cdot)$ is the Gaussian average of $g(t,\cdot)$ with variance $\xi'(t)-\xi'(r)$ for $s\le r\le t$. Hence $g_r+\frac a2g_{xx}=0$ on $[s,t)\times\R$, and $g$ is continuous on $[s,t]\times\R$. Let $M_r=x+\int_s^r\sqrt{a}\,dB$.
Then $g(r,M_r)/g(s,x)$, $s\le r\le t$, is a positive martingale with mean one. Under the
probability measure that it defines at time $t$, Girsanov's theorem gives $M$ the drift
$a\,\partial_x\log g=am\Phi_{\zeta,x}$, which is the drift of $X$ on $[s,t)$. The drift is
Lipschitz in space, so the law of $X$ on $[s,t]$ started from $x$ is the law of $M$ under
the new measure. The density of $X_t$ is therefore $g(t,y)/g(s,x)$ times the
$\mathcal N(x,\xi'(t)-\xi'(s))$ density of $M_t$.
\end{proof}

\begin{proof}[Proof of Lemma~\ref{lem:plat-ito}]
(a) is~\cite[Proposition~3, (20)]{AC15a}, transferred to $X$ as explained in the
paragraph on analytic facts of Section~\ref{sec:plat}, together with
$\Gamma(0)=\Phi_{\zeta,x}(0,0)^2=0$ by oddness. (b) The one-sided derivatives
are~\cite[Proposition~3, (21)--(22)]{AC15a}; the right derivative involves $\zeta([0,t])$ and the left one $\zeta([0,t))$. At $t=0$ we have $X_0=0$, $V_0=0$ and $a'(0)=\xi'''(0)$, which gives
\eqref{eq:plat-zero}, and \eqref{eq:plat-jump} is the difference of the two expressions.
(c) On an open plateau $I$ the function $\Phi_\zeta$ is a Cole--Hopf transform of its value
at a later time of $I$, hence smooth on $I\times\R$, and the stochastic differentials of
$U$ and $V$ stated after \eqref{eq:plat-third-pde} hold. Their martingale parts are true
martingales by \eqref{eq:plat-bounds}.
Taking expectations,
\[
 \frac d{dt}\E U_t^2=a\E\bigl(V_t^2-2mU_t^3\bigr),\qquad
 \frac d{dt}\E\bigl(V_t^2-2mU_t^3\bigr)=a\E\bigl(W_t^2-12mU_tV_t^2+6m^2U_t^4\bigr),
\]
with continuous right sides. Differentiating $\Gamma'=a\E U_t^2$ twice gives
\eqref{eq:plat-second} and \eqref{eq:plat-third}.
\end{proof}

\subsection{Zero lies in the support}\label{sec:an-zero}

Here $X$ is the diffusion of $\zetas$, $U_t=\Phi_{\zetas,xx}(t,X_t)$ and
$V_t=\Phi_{\zetas,xxx}(t,X_t)$, and $\Gamma_*=\Gamma_{\zetas}$, $\Obs_*=\Obs_{\zetas}$.

\begin{proof}[Proof of Lemma~\ref{lem:ons-zero-in-support}]
By Lemma~\ref{lem:cons-standard}(g), with $o_*=\int\Obs_*\,d\zetas$, we have
$o_*=\min_{[0,1]}\Obs_*$, and the minimum is attained at every support point. By
Lemma~\ref{lem:cons-standard}(f), $\Obs_*$ is twice continuously differentiable, with
$\Obs_*'(t)=-\frac12\xi''(t)\{\Gamma_*(t)-t\}$, and $1$ is not a minimum point of $\Obs_*$
on $(0,1]$; so $\Obs_*(1)>o_*$ and $1\notin\supp\zetas$.

\emph{Positive support points.} A point $u\in\supp\zetas\cap(0,1)$ is an interior minimum
of $\Obs_*$, and~\eqref{eq:plat-contact} gives $\Gamma_*(u)=u$ and $\Gamma_*'(u)\le1$.

\emph{Zero lies in the support.} Suppose that $t_0:=\min\supp\zetas>0$. By the two
preceding steps, $t_0<1$ and $\Gamma_*(t_0)=t_0$. The distribution function $\alphas$
vanishes on $[0,t_0)$. Hence the drift of $X$ vanishes there, and for $0<t<t_0$ the variable
$X_t=\int_0^t\sqrt{\xi''(s)}\,dB_s$ is centered Gaussian with variance $\xi'(t)>0$. The
interval $(0,t_0)$ is a plateau of height $m=0$, and~\eqref{eq:plat-second} gives
\[
 \Gamma_*''(t)=\xi'''(t)\,\E U_t^2+\xi''(t)^2\,\E V_t^2\qquad(0<t<t_0).
\]
Here $\xi'''(t)\ge0$ and $\xi''(t)>0$. Moreover $\E V_t^2>0$. Indeed, the continuous
function $\Phi_{\zetas,xxx}(t,\cdot)$ is not identically zero. Otherwise
$\Phi_{\zetas,xx}(t,\cdot)$ would be the positive constant $\Phi_{\zetas,xx}(t,0)$, and by
oddness $\Phi_{\zetas,x}(t,x)=x\,\Phi_{\zetas,xx}(t,0)$ would be unbounded, which
contradicts $|\Phi_{\zetas,x}|\le1$ (Lemma~\ref{lem:cons-standard}(a)). So $\Phi_{\zetas,xxx}(t,\cdot)$ is nonzero on a nonempty
open set, which has positive probability under the Gaussian law of $X_t$. Hence
$\Gamma_*''>0$ on $(0,t_0)$. The function $h=\Gamma_*-\id$ is therefore continuous and
strictly convex on $[0,t_0]$, with $h(t_0)=0$ and $h(0)=\Phi_{\zetas,x}(0,0)^2=0$ by
oddness. Thus $h<0$ on $(0,t_0)$, so $\Obs_*'=-\frac12\xi''h>0$ there and
$\Obs_*(0)<\Obs_*(t_0)=o_*$. This contradicts $\Obs_*\ge o_*$, so $0\in\supp\zetas$.

\emph{The point zero.} By oddness $\Gamma_*(0)=0$, and $\Obs_*(0)=o_*$ because
$0\in\supp\zetas$. If $\xi''(0)>0$, the endpoint inequality,
Lemma~\ref{lem:plat-obstacle}(c), gives $\Gamma_*'(0)\le1$: the difference
$\Obs_*(t)-\Obs_*(0)=-\frac14\xi''(0)\{\Gamma_*'(0)-1\}t^2+o(t^2)$ is nonnegative. If
$\xi''(0)=0$, then $\Gamma_*'(0)=\xi''(0)\Ustar^2=0$.
\end{proof}

\subsection{Positivity of the plateau polynomial near criticality}\label{sec:an-nc}

We use the notation of Section~\ref{sec:nc}: the covariance is recorded as a superscript,
$\Phi_c(t,x)=\log\cosh x+\frac12(1-t)$ is the solution for the reference pair
$(\xic,\delta_0)$, and $\Psi^\xi_\zeta$ is the plateau polynomial \eqref{eq:plat-psi} computed
with $\xi$. As in the paragraph on varying the covariance in Section~\ref{sec:nc-reference}, the
covariances with $\normCfour{\xi-\xic}<\frac14$, together with $\xic$, form a family
$\mathfrak F$ as in Lemma~\ref{lem:plat-jets}(b), with $\frac34<a<\frac54$ on $[0,1]$.

\begin{proof}[Proof of Lemma~\ref{lem:nc-positive-third}]
Let $\normCfour{\xi-\xic}<\frac14$, let $\zeta$ be a probability measure, and let
$k\in\{2,3,4\}$. The derivative $\partial_x^k\Phi_c(t,x)=(\log\cosh)^{(k)}(x)$ does not depend
on $t$, and $(\log\cosh)^{(k+1)}$ is bounded by an absolute constant $L$. Hence
\[
 \bigl|\partial_x^k\Phi^\xi_\zeta(t,X_t)-\partial_x^k\Phi_c(0,0)\bigr|
 \le\|\partial_x^k(\Phi^\xi_\zeta-\Phi_c)\|_\infty+L|X_t| .
\]
We apply the estimate \eqref{eq:plat-jet-general} to the pairs $(\xi,\zeta)$ and $(\xic,\delta_0)$, for
which $(\tilde a,\tilde\alpha)=(1,1)$. Since
$\|a\alpha_\zeta-1\|_{L^1}\le\|a-1\|_{L^1}+\frac54\int_0^1(1-\alpha_\zeta)\,dt$ and
$\int_0^1(1-\alpha_\zeta)\,dt=\int s\,\zeta(ds)=\Wone(\zeta,\delta_0)$, the first term is at most
$C(\normCfour{\xi-\xic}+\Wone(\zeta,\delta_0))$. By \eqref{eq:plat-small-time},
$\E|X_t|\le(2\|a\|_\infty t+2\|a\|_\infty^2t^2)^{1/2}\le3\sqrt t$. Now $\Psi^\xi_\zeta(t,m)$ is a
polynomial in $a(t),a'(t),a''(t),m$ and in expectations of monomials of degree at most four in
$(U_t,V_t,W_t)$, all of which are bounded uniformly; at the reference values $(1,0,0)$ and
$(1,0,-2)$ it equals $4+6m^2$. Therefore
\[
 \bigl|\Psi^\xi_\zeta(t,m)-4-6m^2\bigr|
 \le C'\bigl(\normCfour{\xi-\xic}+\Wone(\zeta,\delta_0)+\sqrt t\bigr)
 \qquad(0\le t\le1,\ 0\le m\le1),
\]
with an absolute constant $C'$. Choose $K_P$ with $C'\sqrt{K_P}\le\frac23$, and then
$\varepsilon_P$ and $\mathcal W_P$ so that the other two terms contribute at most $\frac43$.
Then $\Psi^\xi_\zeta\ge2$ on $[0,K_P]\times[0,1]$, and \eqref{eq:nc-positive-third} follows from
Lemma~\ref{lem:plat-ito}(c).
\end{proof}

\section{Fractional moments and the free energy}\label{app:guerra}

The probability statements, Theorem~\ref{thm:sk}(c), Theorem~\ref{thm:local}(d),
Theorem~\ref{thm:atom}(d) and the last assertion of Corollary~\ref{cor:pure-p}, pass from
$\Lambda$ to $N^{-1}\log\ZN$ through \condFM. This appendix collects that spin-system side: the
tilting lemma and the places where \condFM\ enters, \condFM\ for the Sherrington--Kirkpatrick
model from \cite{CM19}, and Guerra's bound, the upper half of \condFM, for $\xi$ convex on
$[-1,1]$.

\subsection{From fractional moments to the rate}\label{sec:cons-moments}

That \condFM\ makes $-\Sigma$ the upper-tail rate of $N^{-1}\log\ZN$ is noted in
\cite[Remark~1.3]{Bou26b}; \cite[Remark~1.4 and Proposition~3.12]{Bou26b} prove, for $\xi$
strictly convex on $[-1,1]$, a direct lower bound at regular levels that satisfy the strict
Plefka condition. We record the tilting
argument because it shows what it needs from the variational side: differentiability of
$\Lambda$ at the dual minimizer, which Corollary~\ref{cor:cons-level} gives at every level.
Remark~\ref{rem:mo-fm} lists the sources of \condFM\ and which bounds need it.

\begin{lemma}[Upper deviations of the free energy]\label{lem:cons-gartner-ellis}
Assume \condFM. Then for every $f\in(\feq,f_1)$,
\begin{equation}\label{eq:cons-ldp}
 \lim_{N\to\infty}\frac1N\log\Prob\bigl(N^{-1}\log\ZN\ge f\bigr)=\Sigma(f).
\end{equation}
The interval $(\feq,f_1)$ is nonempty if and only if $\zetas\ne\delta_0$.
\end{lemma}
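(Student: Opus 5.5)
The plan is the standard Gärtner--Ellis tilting argument, carried out at a single level instead of invoking the full theorem. Fix $f\in(\feq,f_1)$. The final assertion needs no tilting. By Lemma~\ref{lem:cons-regular}(d), $\zetas\ne\delta_0$ gives $\feq<f_1$, and $\zetas=\delta_0$ gives $f_1=\feq$, so the interval is nonempty exactly when $\zetas\ne\delta_0$.

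\emph{Upper bound.} By Corollary~\ref{cor:cons-level}, $\theta_f\in(m_0,1)\subset(0,1)$. Markov's inequality gives $\Prob(N^{-1}\log\ZN\ge f)\le e^{-N\theta_f f}\E\ZN^{\theta_f}$, and \condFM\ then yields $\limsup_N N^{-1}\log\Prob(\cdots)\le\Lambda(\theta_f)-\theta_f f=\Sigma(f)$. The last equality holds because $\theta_f$ attains the infimum defining $\Sigma(f)$ (Lemma~\ref{lem:cons-variational}(d)).

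\emph{Lower bound.} Take $f'\in(f,f_1)$ close to $f$ and set $\theta'=\theta_{f'}=(\Lambda')^{-1}(f')\in(0,1)$, so that $\Lambda'(\theta')=f'$ by Corollary~\ref{cor:cons-level}. Define the tilted law $d\mathbb Q_N=\ZN^{\theta'}d\Prob/\E\ZN^{\theta'}$. For $\varepsilon>0$ with $f<f'-\varepsilon$, restricting to $A=\{N^{-1}\log\ZN\in[f'-\varepsilon,f'+\varepsilon]\}$ gives
\[
 \Prob(N^{-1}\log\ZN\ge f)\ge\E\ZN^{\theta'}\,e^{-N\theta'(f'+\varepsilon)}\,\mathbb Q_N(A).
\]
It then suffices to show $\mathbb Q_N(A)\to1$. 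For the upper tail, apply Markov under $\mathbb Q_N$ at exponent $\eta>0$ with $\theta'+\eta<1$:
\[
 \mathbb Q_N(N^{-1}\log\ZN>f'+\varepsilon)\le e^{-N\eta(f'+\varepsilon)}\,\E\ZN^{\theta'+\eta}/\E\ZN^{\theta'}.
\]
By \condFM\ this has exponential rate $\Lambda(\theta'+\eta)-\Lambda(\theta')-\eta(f'+\varepsilon)=\eta(\Lambda'(\theta')-f'-\varepsilon)+o(\eta)<0$ for small $\eta$, since $\Lambda$ is differentiable at $\theta'$ (Proposition~\ref{prop:cons-envelope}). The lower tail is handled the same way with exponent $\theta'-\eta>0$. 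Combining, $\liminf_N N^{-1}\log\Prob(\cdots)\ge\Lambda(\theta')-\theta'f'-\theta'\varepsilon$. Let $\varepsilon\downarrow0$ and then $f'\downarrow f$. By continuity of $(\Lambda')^{-1}$, $\theta'\to\theta_f$, and since $\Lambda$ is continuous the bound tends to $\Lambda(\theta_f)-\theta_f f=\Sigma(f)$.

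The main obstacle is the lower bound. It needs $\Lambda$ differentiable at $\theta'$, together with the fact that every $f'$ slightly above $f$ is $\Lambda'(\theta')$ for some interior $\theta'$ where \condFM\ applies. Both are supplied by the envelope identity and Corollary~\ref{cor:cons-level}. Without differentiability, a kink in $\Lambda$ would leave a gap between the two bounds.
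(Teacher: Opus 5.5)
Your proposal is correct and follows the paper's route: Markov at $\theta_f$ for the upper bound, and for the lower bound a tilt by $\ZN^{\theta_{f'}}$ with Markov under the tilted law at $\theta_{f'}\pm\lambda$, using that $\Lambda$ is differentiable (Proposition~\ref{prop:cons-envelope}, Corollary~\ref{cor:cons-level}). The only difference is cosmetic. The paper ties $f'=f+2\varepsilon$ and concludes by continuity of the concave function $\Sigma$, while you first let $\varepsilon\downarrow0$ and then $f'\downarrow f$, using continuity of $(\Lambda')^{-1}$ and $\Lambda$. Also say explicitly that $(\feq,f_1)\neq\emptyset$ forces $\zetas\ne\delta_0$, since that is what licenses Corollary~\ref{cor:cons-level}.
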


\begin{proof}
The last sentence is Lemma~\ref{lem:cons-regular}(d). Let $f\in(\feq,f_1)$, so that
$\zetas\ne\delta_0$ and Corollary~\ref{cor:cons-level} applies.

\emph{Upper bound.} For $\theta\in(0,1)$, Markov's inequality gives
$\Prob(N^{-1}\log\ZN\ge f)\le e^{-N\theta f}\E\ZN^\theta$, so
$\limsup_NN^{-1}\log\Prob(N^{-1}\log\ZN\ge f)\le\Lambda(\theta)-\theta f$. At
$\theta=\theta_f$ the right side is $\Sigma(f)$.

\emph{Lower bound.} Let $\varepsilon>0$ with $f'=f+2\varepsilon<f_1$, and put
$\theta=\theta_{f'}\in(m_0,1)$, so that $\Lambda'(\theta)=f'$ by
Corollary~\ref{cor:cons-level}. Tilt by $Z_N^\theta$: for events $E$ put
$\mathbb Q_N(E)=\E[\ZN^\theta\ind_E]/\E\ZN^\theta$. For $0<\lambda<\min\{\theta,1-\theta\}$,
Markov's inequality under $\mathbb Q_N$ and \condFM\ give
\begin{align*}
 \limsup_N\frac1N\log\mathbb Q_N\bigl(N^{-1}\log\ZN\ge f'+\varepsilon\bigr)
  &\le\Lambda(\theta+\lambda)-\Lambda(\theta)-\lambda(f'+\varepsilon),\\
 \limsup_N\frac1N\log\mathbb Q_N\bigl(N^{-1}\log\ZN\le f'-\varepsilon\bigr)
  &\le\Lambda(\theta-\lambda)-\Lambda(\theta)+\lambda(f'-\varepsilon).
\end{align*}
Since $\Lambda'(\theta)=f'$, both right sides equal $-\lambda\varepsilon+o(\lambda)$ as
$\lambda\downarrow0$. Fix $\lambda$ so that both are negative. Then $\mathbb Q_N(A_N)\to1$ for
$A_N=\{|N^{-1}\log\ZN-f'|<\varepsilon\}$. On $A_N$ we have
$\ZN^\theta\le e^{N\theta(f'+\varepsilon)}$ and $N^{-1}\log\ZN>f+\varepsilon$. Hence
$\Prob(N^{-1}\log\ZN\ge f)\ge\Prob(A_N)
\ge\mathbb Q_N(A_N)\,\E\ZN^\theta\,e^{-N\theta(f'+\varepsilon)}$, and \condFM\ gives
$\liminf_NN^{-1}\log\Prob(N^{-1}\log\ZN\ge f)\ge\Lambda(\theta)-\theta f'-\theta\varepsilon
\ge\Sigma(f')-\varepsilon$. The function $\Sigma$ is an infimum of affine functions, and it is
finite because $\Lambar$ is lower semicontinuous and finite on $[0,1]$; so $\Sigma$ is concave
and continuous on $\R$. Letting $\varepsilon\downarrow0$ gives the lower bound $\Sigma(f)$.
\end{proof}

This is the tilting step behind the G\"artner--Ellis theorem~\cite{DZ98}, written for the
one-sided event because \condFM\ concerns only $\theta\in(0,1)$. For the
Sherrington--Kirkpatrick covariance the field \eqref{eq:intro-covariance} is the original
Hamiltonian, summed over $i<j$, completed by an independent diagonal term, as
in~\cite[Section~1.1]{AL26}. The next lemma shows that the completion is invisible on the scale
of large deviations; the proof of Theorem~\ref{thm:sk}(c) applies it with $I=(\feq,f_1)$.

\begin{lemma}[The original Sherrington--Kirkpatrick model]\label{lem:cons-original-sk}
Let $\xi(t)=\beta^2t^2/2$ with $\beta>0$, and let
$\FNb=\log\sum_{\sigma\in\{-1,1\}^N}\exp\bigl(\beta N^{-1/2}\sum_{i<j}g_{ij}\sigma_i\sigma_j\bigr)$
with independent standard Gaussian $g_{ij}$. For all $f\in\R$, $\eta>0$ and $N\ge1$,
\begin{equation}\label{eq:cons-sk-sandwich}
\begin{aligned}
 \Prob\bigl(N^{-1}\FNb-\log2\ge f\bigr)
 &\le\Prob\bigl(N^{-1}\log\ZN\ge f-\eta\bigr)+e^{-\eta^2N^2/\beta^2},\\
 \Prob\bigl(N^{-1}\log\ZN\ge f+\eta\bigr)
 &\le\Prob\bigl(N^{-1}\FNb-\log2\ge f\bigr)+e^{-\eta^2N^2/\beta^2}.
\end{aligned}
\end{equation}
Consequently, if $\lim_NN^{-1}\log\Prob(N^{-1}\log\ZN\ge f)=\Sigma(f)$ for every $f$ in an open
interval $I$, then $\lim_NN^{-1}\log\Prob(N^{-1}\FNb-\log2\ge f)=\Sigma(f)$ for every $f\in I$.
\end{lemma}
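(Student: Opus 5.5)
The plan is to couple the two models on one probability space and compare the partition functions deterministically up to a single Gaussian factor. As recalled in Section~\ref{sec:intro-deviations}, the field $H_N$ with covariance $N\xi(N^{-1}\sigma\cdot\sigma')$ for $\xi(t)=\beta^2t^2/2$ is realized as $H_N(\sigma)=\beta N^{-1/2}\sum_{i<j}g_{ij}\sigma_i\sigma_j+G$. Here $G$ is the diagonal contribution. Since $\sigma_i^2=1$, the diagonal part of the quadratic form does not depend on $\sigma$. It is therefore a centered Gaussian $G$, independent of $(g_{ij})$. Its variance is fixed by matching covariances: $\E H_N(\sigma)H_N(\sigma')=\beta^2(\sigma\cdot\sigma')^2/(2N)$, while the off-diagonal sum has covariance $\frac{\beta^2}{N}\sum_{i<j}\sigma_i\sigma_j\sigma'_i\sigma'_j=\frac{\beta^2}{2N}\bigl((\sigma\cdot\sigma')^2-N\bigr)$. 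Hence $\operatorname{Var}G=\beta^2/2$. First I will check this identity and record the exact relation
\[
 \log\ZN=\FNb-N\log2+G,\qquad G\sim\mathcal N(0,\beta^2/2)\ \text{independent of}\ \FNb .
\]
Because $\ZN$ carries the factor $2^{-N}$, this gives
\[
 N^{-1}\log\ZN=\bigl(N^{-1}\FNb-\log2\bigr)+N^{-1}G .
\]

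Next I derive the sandwich \eqref{eq:cons-sk-sandwich} from a union bound. For the first inequality, the event $\{N^{-1}\FNb-\log2\ge f\}$ is contained in $\{N^{-1}\log\ZN\ge f-\eta\}\cup\{G<-\eta N\}$. The Gaussian tail bound $\Prob(G<-\eta N)\le e^{-\eta^2N^2/(2\cdot\beta^2/2)}=e^{-\eta^2N^2/\beta^2}$ then gives the first line. The second line follows in the same way, from the inclusion of $\{N^{-1}\log\ZN\ge f+\eta\}$ in $\{N^{-1}\FNb-\log2\ge f\}\cup\{G>\eta N\}$. Independence is not even needed for these bounds.

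For the consequence, fix $f\in I$ and take $\eta>0$ small enough that $f\pm\eta\in I$. The Gaussian error decays like $e^{-cN^2}$, which is superexponential. Since $\Sigma$ is finite, it is negligible against $e^{N(\Sigma(f\pm\eta)-1)}$. Using $\log(x+y)\le\log 2+\max(\log x,\log y)$, the first line gives
\[
 \limsup_N N^{-1}\log\Prob\bigl(N^{-1}\FNb-\log2\ge f\bigr)\le\Sigma(f-\eta).
\]
The second line gives
\[
 \liminf_N N^{-1}\log\Prob\bigl(N^{-1}\FNb-\log2\ge f\bigr)\ge\Sigma(f+\eta),
\]
because $\Prob(N^{-1}\log\ZN\ge f+\eta)$ decays only exponentially, so the subtracted term $e^{-\eta^2N^2/\beta^2}$ is at most half of it for large $N$. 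Finally, $\Sigma$ is concave and finite on $\R$, hence continuous, as noted in the proof of Lemma~\ref{lem:cons-gartner-ellis}. Letting $\eta\downarrow0$ yields the limit $\Sigma(f)$.

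The only step needing care is the first: checking the decomposition and the variance $\beta^2/2$ of $G$, including the normalization $2^{-N}$ in $\ZN$ that produces the shift by $\log2$. Everything after that is elementary.
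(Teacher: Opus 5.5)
Your proposal is correct and follows the paper's proof step by step. It uses the same coupling $\log\ZN=\FNb-N\log2+G$ with an independent $G\sim\mathcal N(0,\beta^2/2)$ (the paper writes $G=\beta D_N$), the same two event inclusions with the Gaussian tail $e^{-\eta^2N^2/\beta^2}$, and the same passage to the limit, which uses that the error is superexponentially small and that $\Sigma$ is finite and concave, hence continuous.
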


\begin{proof}
Let $D_N$ be a centered Gaussian variable with variance $\frac12$, independent of
$(g_{ij})_{i<j}$, and put
$H_N(\sigma)=\beta N^{-1/2}\sum_{i<j}g_{ij}\sigma_i\sigma_j+\beta D_N$. For
$\sigma,\sigma'\in\{-1,1\}^N$ with $R=N^{-1}\sigma\cdot\sigma'$,
$\E H_N(\sigma)H_N(\sigma')
=\frac{\beta^2}N\sum_{i<j}\sigma_i\sigma'_i\sigma_j\sigma'_j+\frac{\beta^2}2
=\frac{\beta^2}{2N}(N^2R^2-N)+\frac{\beta^2}2=N\xi(R)$.
So $H_N$ is a field as in \eqref{eq:intro-covariance}, and
$\ZN=2^{-N}\sum_\sigma e^{H_N(\sigma)}$ has the law of the partition function. On this
probability space $N^{-1}\log\ZN=N^{-1}\FNb-\log2+\beta D_N/N$ identically. If
$N^{-1}\FNb-\log2\ge f$ and $N^{-1}\log\ZN<f-\eta$, then $\beta D_N<-\eta N$; if
$N^{-1}\log\ZN\ge f+\eta$ and $N^{-1}\FNb-\log2<f$, then $\beta D_N>\eta N$. Since $\beta D_N$
is centered Gaussian with variance $\beta^2/2$, each of these events has probability at most
$e^{-\eta^2N^2/\beta^2}$, and \eqref{eq:cons-sk-sandwich} follows.

For the consequence, let $f\in I$ and let $\eta>0$ be small enough that $f\pm\eta\in I$. The
function $\Sigma$ is finite and continuous on $\R$ (proof of
Lemma~\ref{lem:cons-gartner-ellis}), and the error terms in \eqref{eq:cons-sk-sandwich} decay
faster than every exponential. The first inequality therefore gives
$\limsup_NN^{-1}\log\Prob(N^{-1}\FNb-\log2\ge f)\le\Sigma(f-\eta)$, and the second gives
$\liminf_NN^{-1}\log\Prob(N^{-1}\FNb-\log2\ge f)\ge\Sigma(f+\eta)$. Let $\eta\downarrow0$.
\end{proof}

\begin{remark}[Where the fractional-moment formula enters]\label{rem:mo-fm}
\begin{steplist}
\item \emph{The upper bound.} The upper bound in the proof of
 Lemma~\ref{lem:cons-gartner-ellis} uses only
 $\limsup_NN^{-1}\log\E\ZN^\theta\le\Lambda(\theta)$ at $\theta_f$, which for $\xi$ convex on
 $[-1,1]$ is Proposition~\ref{prop:A-guerra}. For such $\xi$,
 $\limsup_NN^{-1}\log\Prob(N^{-1}\log\ZN\ge f)\le\Sigma(f)$ for $\feq<f<f_1$ without \condFM,
 and if such $\xi$ also satisfies the hypotheses of Theorem~\ref{thm:local}, its item~(c)
 gives, again without \condFM,
 \[
  \liminf_{\delta\downarrow0}\delta^{-6/5}\liminf_{N\to\infty}
  \Bigl(-\frac1N\log\Prob\bigl(N^{-1}\log\ZN\ge\feq+\delta\bigr)\Bigr)
  \ge\frac56\,(6c)^{-1/5}.
 \]
\item \emph{The lower bound.} It uses \condFM\ at $\theta_{f+2\varepsilon}$ and
 $\theta_{f+2\varepsilon}\pm\lambda$, which lie
 in any prescribed neighbourhood of $\theta_f$ once $\varepsilon$ and $\lambda$ are small, by
 continuity of $(\Lambda')^{-1}$. So \eqref{eq:cons-ldp} holds at $f$ once \condFM\ holds near
 $\theta_f$, and since $\theta_f\downarrow m_0$ as $f\downarrow\feq$
 (Corollary~\ref{cor:cons-level}), limits as $f\downarrow\feq$, such as the second display of
 Theorem~\ref{thm:local}(d), need \condFM\ only on $(m_0,\theta_1)$ for some $\theta_1>m_0$. For
 the Sherrington--Kirkpatrick covariance with $\beta>1$, where \condH\ holds
 \cite[Proposition~5.1]{Lop26a} (an alternative proof is \cite[Theorem~5.1]{Pei26}) and hence
 $m_0=0$ by Theorem~\ref{thm:local}, \cite[Theorem~1.1]{AL26} for $\theta\le\frac12$ suffices
 for these limits.
\item \emph{Sources.} The upper half of \condFM\ is Guerra's bound \cite[Section~6]{Tal07},
 proved for $\xi$ convex on $[-1,1]$ in Proposition~\ref{prop:A-guerra}, and
 \cite[Section~7]{Tal07} explains how the proof of the
 Parisi formula is modified to give the lower half for even $\xi$. By the preprint
 \cite[Theorem~2.1 and its proof]{CGKLM}, \condFM\ holds for every $\xi$ finite on $\R$, with the
 Guerra-type upper bound written out for convex $\xi$ and sketched otherwise, and the lower half
 \cite[Lemma~2.6]{CGKLM} written as modifications of the Aizenman--Sims--Starr scheme; this is
 the only source we know for covariances with a term of odd degree, such as the pure $p$-spin
 models with odd $p$ and the mixtures of Theorem~\ref{thm:atom}(c). For the
 Sherrington--Kirkpatrick covariance, \condFM\ is the one-level case of \cite[Theorem~2.1]{CM19}
 (Lemma~\ref{lem:A-fm-sk}), and the preprint \cite[Theorem~1.1]{AL26} gives a quantitative
 version for $0<\theta\le\frac12$.
\item \emph{A cubic term.} The covariances of Theorem~\ref{thm:atom}(c) are convex on $[-1,1]$
 (Proposition~\ref{prop:nc-support}(a), as $\varepsilon_3=\varepsilon_1$), so by (i) the bound
 $\limsup_NN^{-1}\log\Prob(N^{-1}\log\ZN\ge\feq+\delta)\le\Sigma(\feq+\delta)$,
 $0<\delta<f_1-\feq$, holds for them without \condFM; when $\xi$ is finite on $\R$, \condFM\
 from \cite[Theorem~2.1]{CGKLM} and Theorem~\ref{thm:atom}(d) turn \eqref{eq:atom-odd-rate} into
 the exponent of $\Prob(N^{-1}\log\ZN\ge\feq+\delta)$.
\item \emph{Even $p$.} For the pure $p$-spin models with even $p$, $\xi$ is convex on $[-1,1]$,
 $m_0>0$ (Corollary~\ref{cor:pure-p}) and $P_{\zetas}=\feq$. Taking $\zeta=\zetas$ in
 \eqref{eq:A-guerra}, Markov's inequality gives
 $\Prob(N^{-1}\log\ZN\ge\feq+\delta)\le e^{-N\theta\delta}$ for all $N$ and
 $\theta\in(0,1)$ with $\theta\le m_0$, hence
 $\limsup_NN^{-1}\log\Prob(N^{-1}\log\ZN\ge\feq+\delta)\le-\zetas(\{0\})\,\delta$. This immediate
 consequence of Guerra's bound \cite[Section~6]{Tal07} is not claimed as new
 (Section~\ref{sec:intro-perturbations}).
\end{steplist}
\end{remark}

\subsection{The fractional-moment formula for the Sherrington--Kirkpatrick model}
\label{sec:A-sk}

\begin{lemma}[The fractional-moment formula for the Sherrington--Kirkpatrick model]
\label{lem:A-fm-sk}
Let $\xi(t)=\beta^2t^2/2$ with $\beta>0$. Then \condFM\ holds: for every $\theta\in(0,1)$,
$\lim_{N\to\infty}N^{-1}\log\E\ZN^\theta=\Lambda(\theta)$.
\end{lemma}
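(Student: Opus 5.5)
The plan is to read \condFM\ for $\xi(t)=\beta^2t^2/2$ off the multiscale formula of Contucci and Mingione \cite[Theorem~2.1]{CM19}, specialised to one level, and then to identify their variational expression with $\Lambda(\theta)=\theta\inf_{\zeta(\{0\})\ge\theta}P_\zeta$.

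\textbf{Step 1 (their one-level formula).} Their theorem gives the limit of $N^{-1}\log\E Z^\theta$ for the Sherrington--Kirkpatrick partition function with couplings over $i<j$. I will first write that limit with a single exponent $\theta\in(0,1)$, as an infimum over Parisi-type order parameters. The outer average at replica number $\theta$ should correspond to a Parisi measure with mass $\theta$ at zero, and the remaining levels to the law $\mu$ on $[0,1]$.

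\textbf{Step 2 (normalisation).} I will then pass from their Hamiltonian to the field \eqref{eq:intro-covariance}, as in the proof of Lemma~\ref{lem:cons-original-sk}. Writing $\log\ZN=\FNb-N\log2+\beta D_N$ with $D_N$ an independent $\mathcal N(0,\frac12)$ variable, the Gaussian factor contributes $\E e^{\theta\beta D_N}=e^{\theta^2\beta^2/4}$. This is $e^{O(1)}$ and does not affect the exponential rate. The $2^{-N}$ normalisation and the terminal value $\log\cosh$ versus $\log(2\cosh)$ shift everything by $\theta\log2$; I will check that this shift cancels.

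\textbf{Step 3 (matching the variational problems).} This is the main obstacle. Their formula is written in Ruelle-cascade or replica form, presumably as an infimum over sequences $0\le q_1<\dots$ and weights $x_k$, with the first weight equal to $\theta$. I must show that this equals $\theta\,p(\theta)$ with our Parisi functional, including the linear term $-\frac12\int t\xi''\zeta([0,t])\,dt$. Two points need care. First, the infimum over finitely many levels must extend to all $\mu\in\Pspace([0,1])$; this should follow by weak continuity (Lemma~\ref{lem:cons-standard}(e)) and density of finitely atomic measures. Second, their constraint (an equality, or first weight exactly $\theta$) must give the same value as our inequality constraint $\zeta(\{0\})\ge\theta$. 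For this I will use that the parametrisation $\theta\delta_0+(1-\theta)\mu$ of $\mathcal A_\theta$ allows $\mu$ to charge zero, so both infima run over the same set. I will verify the Cole--Hopf identity at one plateau explicitly: the top-level factor $\E[\cosh^\theta]$ structure should reproduce $\theta\Phi_\zeta(0,0)$ through the plateau law (Lemma~\ref{lem:plat-law}). If their formula is stated in the form of \cite[(1.12)]{Bou26a} with an equality constraint, the argument of Remark~\ref{rem:atom-earlier} transfers it.
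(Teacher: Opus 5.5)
Your overall route (specialize \cite[Theorem~2.1]{CM19} to one scale and match its trial functional with the constrained Parisi problem) is the paper's. However, Step~1 misstates what that theorem computes, and the step connecting it to \condFM\ is missing. The theorem of Contucci and Mingione is a variational formula for the pressure of their multiscale model, $p_N=N^{-1}\E\log\sum_{\alpha\ge1}\nu_\alpha\sum_\sigma e^{H_N(\sigma,\alpha)}$. Here an auxiliary index $\alpha\in\N$ carries independent copies of the disorder, and $(\nu_\alpha)$ are Poisson--Dirichlet weights with parameter $\theta$, independent of the Hamiltonian. It is not a statement about $\log\E Z^\theta$. The missing idea is the identity $\E\log\sum_\alpha\nu_\alpha Z_\alpha=\theta^{-1}\log\E Z^\theta$, valid for independent copies $Z_\alpha$ of a positive variable $Z$ with $\E Z^\theta<\infty$, independent of the weights. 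It follows by marking and mapping: the point process $\{u_\alpha Z_\alpha\}$ built from the underlying Poisson points has the law of $\{(\E Z^\theta)^{1/\theta}u_\alpha\}$. Take $Z_\alpha=2^{-N}\sum_\sigma e^{H_N(\sigma,\alpha)}$, which are independent copies of $\ZN$ with $\E\ZN^\theta\le(\E\ZN)^\theta<\infty$; the identity then gives $p_N=\log2+(N\theta)^{-1}\log\E\ZN^\theta$. Without this identity, Step~1 has nothing to specialize.

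This also changes Steps~2 and~3. The one-scale Hamiltonian of \cite{CM19} sums over all ordered pairs $(i,j)$, diagonal included. Its covariance is therefore already $N\xi(N^{-1}\sigma\cdot\sigma')$, and Step~2 is unnecessary (though harmless, since $\E e^{\theta\beta D_N}=e^{\theta^2\beta^2/4}$). In Step~3 the target is $\log2+p(\theta)$, not $\theta p(\theta)$. After removing $\log2$, the recursion in the trial functional is the Cole--Hopf recursion \eqref{eq:A-cole-hopf} of a finitely supported measure $\mu$, with the first nondegenerate Gaussian increment integrated with exponent $\theta$. Its deterministic term is $\frac12\int_0^1t\xi''(t)\mu([0,t])\,dt$, and no plateau law is involved. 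The trial points correspond exactly to finitely supported $\mu$ with $\mu(\{0\})\ge\theta$ and $\mu([0,t])<1$ for $t<1$. Strict inequality at zero arises when the first overlap level is $0$, so there is no equality constraint and no appeal to Remark~\ref{rem:atom-earlier} is needed. Your density step must produce approximants satisfying both conditions. Truncating the distribution function of $\zeta$ at $1-1/n$ on a grid of mesh $1/n$ does this, within $\Wone$-distance $2/n$. Then \eqref{eq:cons-lipschitz} identifies the infimum as $p(\theta)$, and multiplying $(N\theta)^{-1}\log\E\ZN^\theta\to p(\theta)$ by $\theta$ gives \condFM.
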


Contucci and Mingione proved a variational formula for the pressure of a multiscale
Sherrington--Kirkpatrick model \cite[Theorem~2.1]{CM19}; Chen, Guionnet, Ko,
Lacroix-A-Chez-Toine and Mourrat observe that its one-scale case contains \condFM\ for this model
without external field \cite{CGKLM}. We write out the dictionary, using an identity for
Poisson--Dirichlet weights, which turns the one-scale pressure into a fractional moment, and an
approximation lemma, which turns the trial set of \cite{CM19} into the constraint
$\zeta(\{0\})\ge\theta$. (The upper half of Lemma~\ref{lem:A-fm-sk} is also a case of
Proposition~\ref{prop:A-guerra}, since $\xi$ is convex.)

\paragraph{Finitely supported measures.}
This proof and that of Proposition~\ref{prop:A-guerra} reduce to measures with finitely many
atoms, for which the Parisi equation \eqref{eq:intro-pde} is solved by Cole--Hopf transforms
\cite[(11)--(12)]{AC15a}. Let $\zeta\in\Pspace([0,1])$ have finite support, let
$0=q_0<q_1<\dots<q_L=1$ be points with $\supp\zeta\subset\{q_0,\dots,q_L\}$, and put
$y_j=\zeta([0,q_{j-1}])$ for $1\le j\le L$, so that $\zeta([0,t])=y_j$ for
$t\in[q_{j-1},q_j)$. If $y_1=\zeta(\{0\})>0$, then for $1\le j\le L$ and $x\in\R$
\begin{equation}\label{eq:A-cole-hopf}
 \Phi_\zeta(q_{j-1},x)=\frac1{y_j}\log\E\exp\Bigl\{y_j\,
 \Phi_\zeta\bigl(q_j,\,x+\sqrt{\xi'(q_j)-\xi'(q_{j-1})}\;g\bigr)\Bigr\},
 \qquad \Phi_\zeta(1,x)=\log\cosh x,
\end{equation}
where $g$ is a standard Gaussian variable. With $\psi(t)=t\xi'(t)-\xi(t)$, so that
$\psi'(t)=t\xi''(t)$ and $\psi(0)=0$, the second term of $P_\zeta$ in
\eqref{eq:intro-functional} is
\begin{equation}\label{eq:A-penalty}
 \frac12\int_0^1t\xi''(t)\,\zeta([0,t])\,dt
 =\frac12\sum_{j=1}^Ly_j\bigl\{\psi(q_j)-\psi(q_{j-1})\bigr\}.
\end{equation}

\paragraph{The Poisson--Dirichlet identity.}
Let $0<\theta<1$, let $u_1>u_2>\cdots$ be the points of a Poisson process on $(0,\infty)$ with
mean measure $\theta x^{-1-\theta}\,dx$, and let $\nu_\alpha=u_\alpha/\sum_{\alpha'\ge1}u_{\alpha'}$,
$\alpha\ge1$, be the Poisson--Dirichlet weights with parameter $\theta$ (see \cite{Pan13}). If
$Z>0$ is a random variable with $\E Z^\theta<\infty$ and $(Z_\alpha)_{\alpha\ge1}$ are
independent copies of $Z$, independent of $(\nu_\alpha)$, then
\begin{equation}\label{eq:A-pd}
 \E\log\sum_{\alpha\ge1}\nu_\alpha Z_\alpha=\frac1\theta\log\E Z^\theta .
\end{equation}
This is the case of one level of \cite[(121)--(122)]{CM19}; see also \cite{Pan13}. (For one
level it also follows from the marking and mapping theorems for Poisson processes: the points
$u_\alpha Z_\alpha$ form a Poisson process with mean measure
$\E Z^\theta\cdot\theta x^{-1-\theta}dx$, the image of the original one under multiplication by
$c_Z=(\E Z^\theta)^{1/\theta}$; so $\sum_\alpha u_\alpha Z_\alpha$ has the law of
$c_Z\sum_\alpha u_\alpha$, and \eqref{eq:A-pd} follows on subtracting the finite quantity
$\E\log\sum_\alpha u_\alpha$.)

\begin{lemma}[Approximation by finitely supported measures]\label{lem:A-approx}
Let $0<\theta<1$, and let $\mathcal D_\theta$ be the set of finitely supported
$\zeta\in\Pspace([0,1])$ with $\zeta(\{0\})\ge\theta$ and $\zeta([0,t])<1$ for every $t<1$.
Every $\zeta\in\Pspace([0,1])$ with $\zeta(\{0\})\ge\theta$ is the limit in $\Wone$ of a
sequence in $\mathcal D_\theta$. Consequently
$p(\theta)=\inf\{P_\zeta:\zeta\in\mathcal D_\theta\}$.
\end{lemma}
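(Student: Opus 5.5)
The plan is to approximate a given $\zeta$ with $\zeta(\{0\})\ge\theta$ in two stages, each moving little mass or moving mass only a short distance. The consequence will then follow from the Lipschitz bound \eqref{eq:cons-lipschitz}.

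\emph{First stage: discretize.} Fix $n\ge1$. Define $\zeta^{(n)}$ by moving the mass of $\zeta$ on each cell $((k-1)/n,k/n]$, $1\le k\le n$, to the right endpoint $k/n$, and keeping the atom at $0$ in place. Then $\zeta^{(n)}(\{0\})=\zeta(\{0\})\ge\theta$ and $\zeta^{(n)}$ has finite support. Its distribution function satisfies $\alpha_{\zeta^{(n)}}(t)\le\alpha_\zeta(t)$, and the two agree at every grid point. Hence $|\alpha_{\zeta^{(n)}}-\alpha_\zeta|$ is at most the $\zeta$-mass of the current cell, and $\Wone(\zeta^{(n)},\zeta)=\int_0^1|\alpha_{\zeta^{(n)}}-\alpha_\zeta|\,dt\le 1/n$. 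This is just the bound that the transport cost of moving each unit of mass a distance at most $1/n$ is at most $1/n$.

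\emph{Second stage: put mass at $1$.} The discretized measure may fail the condition $\zeta([0,t])<1$ for $t<1$. This happens when $\zeta^{(n)}$ has no mass at $1$, for instance when $\zeta=\delta_0$. To fix it, set
\[
 \tilde\zeta^{(n)}=(1-\lambda_n)\zeta^{(n)}+\lambda_n\delta_1,
\]
and choose $\lambda_n\in(0,1)$ small enough that $(1-\lambda_n)\zeta^{(n)}(\{0\})\ge\theta$. This is possible when $\zeta(\{0\})>\theta$. In the boundary case $\zeta(\{0\})=\theta$, I would instead remove the mass $\lambda_n$ from the positive atoms of $\zeta^{(n)}$ and place it at $1$, leaving the atom at zero untouched. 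If $\zeta^{(n)}$ has no positive atoms, then $\zeta^{(n)}=\delta_0$, which forces $\theta=1$; since $\theta<1$, this cannot occur. In either case the atom at zero stays at least $\theta$, the support remains finite, and $\tilde\zeta^{(n)}([0,t])\le1-\lambda_n<1$ for $t<1$. The modification costs at most $\lambda_n$ in $\Wone$, because the total mass moved is $\lambda_n$ over a distance at most $1$. Taking $\lambda_n=1/n$, for $n$ large, gives $\tilde\zeta^{(n)}\in\mathcal D_\theta$ and $\Wone(\tilde\zeta^{(n)},\zeta)\le 2/n$.

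\emph{The infimum.} Since $\mathcal D_\theta\subset\mathcal A_\theta$, we have $p(\theta)\le\inf_{\mathcal D_\theta}P$. For the reverse inequality, apply the approximation to the minimizer $\zeta_\theta$ of Lemma~\ref{lem:cons-variational}(a). The bound \eqref{eq:cons-lipschitz} gives
\[
 P_{\tilde\zeta^{(n)}}\le p(\theta)+\xi''(1)/n\to p(\theta),
\]
so the two infima agree.

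The only delicate point is the boundary case $\zeta(\{0\})=\theta$ in the second stage, where the mass sent to $1$ must be taken from the positive atoms rather than from the atom at zero. The rest is routine.
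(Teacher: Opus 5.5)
Your proof is correct and follows essentially the same route as the paper: discretize onto the grid $\{0,1/n,\dots,1\}$ by moving mass to right endpoints, send a mass of order $1/n$ to the point $1$ without letting the atom at zero drop below $\theta$, and conclude with the Lipschitz bound \eqref{eq:cons-lipschitz}. The paper does both stages at once with the single distribution function $\alpha_n(t)=\min\{\alpha(k/n),1-1/n\}$ on $[k/n,(k+1)/n)$. Because this takes the transferred mass from the top of the distribution, the paper needs no case split on whether $\zeta(\{0\})>\theta$.
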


\begin{proof}
Let $\alpha(t)=\zeta([0,t])$. For an integer $n$ with $1-1/n\ge\theta$, define $\alpha_n(1)=1$
and $\alpha_n(t)=\min\{\alpha(k/n),1-1/n\}$ for $t\in[k/n,(k+1)/n)$, $0\le k\le n-1$. Then
$\alpha_n$ is nondecreasing and right-continuous, so it is the distribution function of a
measure $\zeta_n$ supported in $\{0,1/n,\dots,1\}$. Moreover
$\zeta_n(\{0\})=\min\{\alpha(0),1-1/n\}\ge\theta$ and $\alpha_n\le1-1/n$ on $[0,1)$, so
$\zeta_n\in\mathcal D_\theta$. For $t\in[k/n,(k+1)/n)$,
$0\le\alpha(t)-\alpha_n(t)\le\alpha(\frac{k+1}n)-\alpha(\frac kn)+\frac1n$,
and integration gives $\Wone(\zeta_n,\zeta)\le2/n$. Since $\mathcal D_\theta$ lies in the
constraint set $\{\zeta(\{0\})\ge\theta\}$, the last assertion follows from the Lipschitz bound
\eqref{eq:cons-lipschitz}.
\end{proof}

\begin{proof}[Proof of Lemma~\ref{lem:A-fm-sk}]
Fix $\theta\in(0,1)$. We use \cite{CM19} with one scale: in their notation $r=1$, $\zeta_0=\theta$,
$\gamma_0=0$ and $\gamma_1=\beta/\sqrt2$ \cite[(6), (8)]{CM19}.

\emph{The pressure is a fractional moment.} With one scale the auxiliary index $\alpha$ ranges
over $\N$, and two distinct indices meet at level $0$, where $\gamma_0=0$ \cite[(7),
(9)]{CM19}. The Hamiltonian \cite[(10)]{CM19} is therefore
$H_N(\sigma,\alpha)=N^{-1/2}\sum_{i,j=1}^Ng_{ij}(\alpha)\sigma_i\sigma_j$, where the families
$(g_{ij}(\alpha))_{i,j\le N}$, $\alpha\in\N$, are independent and consist of independent centered
Gaussian variables with variance $\gamma_1^2$. For each $\alpha$ the field
$\sigma\mapsto H_N(\sigma,\alpha)$ is centered Gaussian with covariance
$\gamma_1^2N^{-1}(\sigma\cdot\sigma')^2=N\xi(N^{-1}\sigma\cdot\sigma')$, exactly as in
\eqref{eq:intro-covariance}; the diagonal terms $i=j$ are included. The weights of
\cite[(15)]{CM19} are the Poisson--Dirichlet weights with parameter $\theta$, independent of the
Hamiltonian. Put $Z_\alpha=2^{-N}\sum_\sigma e^{H_N(\sigma,\alpha)}$; these are independent copies
of $\ZN$, and $\E\ZN^\theta\le(\E\ZN)^\theta<\infty$. By \eqref{eq:A-pd}, the pressure
\cite[(14)]{CM19} is
$p_N=N^{-1}\E\log\sum_{\alpha\ge1}\nu_\alpha\,2^NZ_\alpha=\log2+(N\theta)^{-1}\log\E\ZN^\theta$.

\emph{The trial functional is the Parisi functional.} A point of the trial set of
\cite[Definition~2.2]{CM19} consists of an integer $k\ge1$, numbers
$0=x_{-1}<x_0<x_1<\dots<x_k=1$, one of which, say $x_{m'}$ with $0\le m'<k$, equals $\theta$, and
numbers $0=q_0\le q_1\le\dots\le q_k=1$ \cite[(20), (21), (27)]{CM19}; we write $x_j$ for their
$\xi_j$. The coupling levels \cite[(25)--(26)]{CM19} are $\tilde\gamma_j=0$ for $j\le m'$ and
$\tilde\gamma_j=\gamma_1$ for $j>m'$. We read off three facts from the functional
\cite[(28)--(30)]{CM19}. First, the Gaussian increments with index $j\le m'$ vanish; the
increment with index $m'+1$ has variance $2\gamma_1^2q_{m'+1}=\xi'(q_{m'+1})$, and for $j>m'+1$ the
increment with index $j$ has variance $2\gamma_1^2(q_j-q_{j-1})=\xi'(q_j)-\xi'(q_{j-1})$.
Second, the recursion \cite[(29)]{CM19} integrates out the increment with index $j$ with the
exponent $x_{j-1}$, for $j=k,k-1,\dots,1$ (we read $J_p$ in \cite[(28)]{CM19} as $J_j$, and
the range of \cite[(29)]{CM19} as $1\le j\le k$); in particular the increment from $0$ to $q_{m'+1}$
carries the exponent $x_{m'}=\theta$. Third, the terminal function is $\log2+\log\cosh$, and the
deterministic term of \cite[(30)]{CM19} is
$\frac12\bigl\{\theta\gamma_1^2q_{m'+1}^2+\sum_{j=m'+1}^{k-1}x_j\gamma_1^2(q_{j+1}^2-q_j^2)\bigr\}$.

Let $\mu$ be the probability measure on $[0,1]$ whose distribution function equals $\theta$ on
$[0,q_{m'+1})$, equals $x_j$ on $[q_j,q_{j+1})$ for $m'<j<k$, and equals $1$ at $1$. Integrating
out a Gaussian variable of variance zero changes nothing, so the first two facts show that the
recursion of \cite[(28)--(29)]{CM19} is the recursion \eqref{eq:A-cole-hopf} for $\mu$, shifted
by $\log2$; hence its value is $\log2+\Phi_\mu(0,0)$. Since
$\gamma_1^2(b^2-a^2)=\int_a^bt\xi''(t)\,dt$, the deterministic term equals
$\frac12\int_0^1t\xi''(t)\mu([0,t])\,dt$. So the functional of \cite[(30)]{CM19} at this point
equals $\log2+P_\mu$. The measure $\mu$ is finitely supported, $\mu(\{0\})\ge\theta$, and
$\mu([0,t])\le x_{k-1}<1$ for $t<1$; that is, $\mu\in\mathcal D_\theta$.

Conversely, every $\mu\in\mathcal D_\theta$ arises in this way. Let $v_0<v_1<\dots<v_n$ be the
values of $\mu([0,t])$ for $t\in[0,1)$, so that $v_0=\mu(\{0\})\ge\theta$ and $v_n<1$, and let
$0=s_0<s_1<\dots<s_n<s_{n+1}=1$ be such that $\mu([0,t])=v_i$ for $t\in[s_i,s_{i+1})$. If
$v_0=\theta$, take $k=n+1$, $x_j=v_j$ for $j\le n$, and $q_j=s_j$ for $1\le j\le k$. If
$v_0>\theta$, take $k=n+2$, $x_0=\theta$, $x_j=v_{j-1}$ for $1\le j\le n+1$, $q_1=0$, and
$q_j=s_{j-1}$ for $2\le j\le k$. In both cases $m'=0$, and the construction of the previous
paragraph returns $\mu$.

\emph{Conclusion.} The infimum of the trial functional is therefore
$\log2+\inf\{P_\mu:\mu\in\mathcal D_\theta\}=\log2+p(\theta)$, by Lemma~\ref{lem:A-approx}.
By \cite[Theorem~2.1]{CM19}, $p_N$ converges to this infimum, so
$(N\theta)^{-1}\log\E\ZN^\theta\to p(\theta)$. Multiplying by $\theta$ gives \condFM.
\end{proof}

\subsection{Guerra's bound for fractional moments}\label{sec:A-guerra-bound}

Guerra's interpolation \cite{Gue03}, which Talagrand extended to fractional moments for convex
covariances \cite[Section~6]{Tal07}, gives the upper half of \condFM\ at every $N$. We include
the proof in our normalization; it shows where the constraint $\zeta(\{0\})\ge\theta$ and
convexity of $\xi$ at negative overlaps enter.

\begin{proposition}[Guerra's bound for fractional moments]\label{prop:A-guerra}
Let $\xi(t)=\sum_{p\ge2}\beta_p^2t^p$ have radius of convergence greater than one, and assume
that $\xi$ is convex on $[-1,1]$. Then for every $N\ge1$, every $\theta\in(0,1)$ and every
$\zeta\in\Pspace([0,1])$ with $\zeta(\{0\})\ge\theta$,
\begin{equation}\label{eq:A-guerra}
 \frac1{N\theta}\log\E\ZN^\theta\le P_\zeta .
\end{equation}
Consequently $N^{-1}\log\E\ZN^\theta\le\Lambda(\theta)$ for every $N\ge1$ and
$\theta\in(0,1)$.
\end{proposition}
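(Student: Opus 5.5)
The plan is Guerra's interpolation in Talagrand's fractional-moment form, run with a Ruelle probability cascade whose first level carries the parameter $\theta$. By Lemma~\ref{lem:A-approx} and the Lipschitz bound \eqref{eq:cons-lipschitz}, it suffices to prove \eqref{eq:A-guerra} for $\zeta\in\mathcal D_\theta$.

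\emph{Setup.} Fix such a $\zeta$, with points $0=q_0<q_1<\dots<q_L=1$ and heights $y_j$ as in \eqref{eq:A-cole-hopf}; choose the points so that $y_1=\zeta(\{0\})\ge\theta$. Insert an extra level with exponent $\theta\le y_1$ at overlap $0$. Concretely, take a cascade with parameters $\theta\le y_1\le\dots\le y_L<1$ and weights $\nu_\alpha$ indexed by $\alpha\in\N^{L+1}$. The top level is a Poisson--Dirichlet process with parameter $\theta$ that carries no Gaussian increment. The deeper levels carry the Gaussian fields $Y_i(\alpha)$ with covariance $\xi'(q_{\alpha\wedge\alpha'})$ and the fields $y(\alpha)$ with covariance $\psi(q_{\alpha\wedge\alpha'})$.

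\emph{Interpolation.} Define
\[
 \varphi(t)=\frac1{N\theta}\log\E\Bigl(\sum_{\alpha}\nu_\alpha\,2^{-N}\sum_\sigma
 \exp\bigl\{\sqrt tH_N(\sigma)+\sqrt{1-t}\textstyle\sum_iY_i(\alpha)\sigma_i+\sqrt t\,\sqrt N\,y(\alpha)\bigr\}\Bigr)^{\theta}.
\]
I take the outer $\theta$-power conditionally, and the expectation over everything. At $t=1$ the identity \eqref{eq:A-pd}, applied at the top level, turns $\varphi(1)$ into $(N\theta)^{-1}\log\E\ZN^\theta$ plus the constant contributed by $y$, namely $\frac12\sum_j y_j\{\psi(q_j)-\psi(q_{j-1})\}$, which is the penalty \eqref{eq:A-penalty}. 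At $t=0$ the spins decouple. The cascade recursion with the Cole--Hopf steps \eqref{eq:A-cole-hopf}, together with \eqref{eq:A-pd} for the top level (whose Gaussian increment vanishes), gives $\varphi(0)=\Phi_\zeta(0,0)$. This matching of both endpoints is the bookkeeping to be checked.

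\emph{Derivative.} Gaussian integration by parts under the Gibbs measure on pairs $(\sigma,\alpha)$, reweighted by the $\theta$-power, gives
\[
 \varphi'(t)=-\frac12\,\bigl\langle \xi(R)-R\,\xi'(q_{\alpha\wedge\alpha'})+\psi(q_{\alpha\wedge\alpha'})\bigr\rangle_t ,
\]
with $R=R_{1,2}$ the overlap of the two replicas. For each replica pair the bracket has the sign of $\xi(R)-\xi(q)-\xi'(q)(R-q)\ge0$, by convexity of $\xi$ on $[-1,1]$ with $q\in[0,1]$ and $R\in[-1,1]$. The diagonal terms cancel because self-overlaps equal $1$ and $q_L=1$. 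The main obstacle is the derivative computation under the outer $\theta$-power, which produces an extra tilted Gibbs average. The structural point is that the Poisson--Dirichlet invariance (Bolthausen--Sznitman/Ruelle) lets the $\theta$-power be absorbed as one more cascade level with parameter $\theta\le y_1$. For this to be consistent the sequence $\theta\le y_1\le\dots$ must be nondecreasing, and this is exactly where the constraint $\zeta(\{0\})\ge\theta$ enters. With that, $\varphi'\le0$, hence $\varphi(1)\le\varphi(0)$, which rearranges to \eqref{eq:A-guerra}. Taking the infimum over $\zeta$ gives $N^{-1}\log\E\ZN^\theta\le\theta p(\theta)=\Lambda(\theta)$.
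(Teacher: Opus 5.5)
Your reduction to $\mathcal D_\theta$ via Lemma~\ref{lem:A-approx} and your convexity step are fine. The gap is in the interpolating function: as you define it, the endpoint identifications you leave as bookkeeping are false.

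You represent the level with exponent $\theta$ twice, once as the Poisson--Dirichlet top level of the cascade and once as the outer power in $\log\E(\cdot)^\theta$. Meanwhile $H_N(\sigma)$ is the same for every $\alpha$.
\begin{itemize}
\item At $t=1$ the inner sum is $Z_NS$ with $S=\sum_\alpha\nu_\alpha e^{\sqrt N y(\alpha)}$ independent of $Z_N$. Hence $\varphi(1)=\frac1{N\theta}\log\E Z_N^\theta+\frac1{N\theta}\log\E S^\theta$.
\item The identity \eqref{eq:A-pd} concerns $\E\log\sum_\alpha\nu_\alpha Z_\alpha$ with i.i.d.\ marks attached to the top index, and neither feature is present here.
\item The cascade recursion produces the penalty $\frac12\sum_jy_j\{\psi(q_j)-\psi(q_{j-1})\}$ as $\frac1N\E\log S$, not as $\frac1{N\theta}\log\E S^\theta$.
\item Likewise $\Phi_\zeta(0,0)=\frac1N\E\log\sum_\alpha\nu_\alpha\prod_i\cosh Y_i(\alpha)$, which differs from your $\varphi(0)=\frac1{N\theta}\log\E\bigl(\sum_\alpha\nu_\alpha\prod_i\cosh Y_i(\alpha)\bigr)^\theta$.
\end{itemize}
Monotonicity is not the problem. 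With $X_t$ the inner sum and $\Delta=\xi(R)-\xi(q)-\xi'(q)(R-q)$, differentiating through the outer power gives $\varphi'(t)=-\frac{1-\theta}2\,\E[X_t^\theta\langle\Delta\rangle_t]/\E X_t^\theta\le0$. It simply connects the wrong endpoints. Dropping the outer power does not help while $H_N$ is shared: $\E\log(Z_NS)$ splits and the top level decouples. You then obtain only the quenched bound $\frac1N\E\log Z_N\le P_\zeta$, which is weaker than \eqref{eq:A-guerra} by Jensen's inequality and never uses $\zeta(\{0\})\ge\theta$.

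The missing idea is to replicate the physical disorder independently over the top index. Take $\varphi(t)=\frac1N\E\log\sum_\alpha\nu_\alpha2^{-N}\sum_\sigma\exp\{\sqrt t\,H_N(\sigma,\alpha_1)+\sqrt{1-t}\sum_iY_i(\alpha)\sigma_i+\sqrt{tN}\,y(\alpha)\}$, where $H_N(\cdot,\alpha_1)$, $\alpha_1\in\N$, are i.i.d.\ copies, i.e.\ with covariance $N\xi(R)\ind\{\alpha_1=\alpha_1'\}$. This is the one-scale construction of \cite{CM19} behind Lemma~\ref{lem:A-fm-sk}.
\begin{itemize}
\item The recursion and \eqref{eq:A-pd} then give $\varphi(1)=\frac1{N\theta}\log\E Z_N^\theta+\frac12\sum_jy_j\{\psi(q_j)-\psi(q_{j-1})\}$ and $\varphi(0)=\Phi_\zeta(0,0)$.
\item Gaussian integration by parts gives $\varphi'(t)=-\frac12\E\langle\Delta\,\ind\{\alpha^1_1=\alpha^2_1\}\rangle_t\le0$, with no tilted average. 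Pairs split at the top level contribute nothing, because there $q=0$ and the two copies of $H_N$ are independent.
\item The constraint then enters only through the ordering $\theta<y_1<\dots<y_L<1$ of the cascade parameters. Since $\mathcal D_\theta$ allows $y_1=\theta$, that case requires merging the two top levels.
\end{itemize}
Repaired this way, your route differs from the paper's. The paper keeps the outer power but applies it to the physical disorder alone, and replaces the cascade by nested expectations $Y_{j-1}=(\E_jY_j^{y_j})^{1/y_j}$. It computes the tilted average you call the main obstacle through the pair measures $\mu_{u,l}$ of Lemma~\ref{lem:A-derivative}. There the constraint appears as the nonnegative coefficient $y_1-\theta$ of the term $l=0$, and no strict ordering or condition $y_L<1$ is needed.
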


As the coefficients of $\xi$ are nonnegative, $\xi$ is convex on $[0,1]$, so the hypothesis
concerns negative overlaps only; it holds for even $\xi$, in particular for the
Sherrington--Kirkpatrick covariance and the pure $p$-spin models with even $p$.

\paragraph{The interpolation.}
Fix $N\ge1$, $\theta\in(0,1)$ and a finitely supported $\zeta$ with $\zeta(\{0\})\ge\theta$;
choose points $q_0,\dots,q_L$ and exponents $y_1,\dots,y_L$ as in \eqref{eq:A-cole-hopf}, and put
$y_0=\theta$ and $y_{L+1}=1$. Then
\begin{equation}\label{eq:A-exponents}
 \theta=y_0\le y_1\le y_2\le\dots\le y_L\le y_{L+1}=1,
\end{equation}
and the first inequality is the constraint $y_1=\zeta(\{0\})\ge\theta$.

The randomness is arranged in $L+1$ independent levels. Level $0$ carries the field $H_N$ of
\eqref{eq:intro-covariance}. For $1\le j\le L$, level $j$ carries a vector
$z^j=(z^j_i)_{i\le N}$ of independent centered Gaussian variables with variance
$\xi'(q_j)-\xi'(q_{j-1})$. Write $\E_j$ for the expectation over level $j$ alone. For
$u\in[0,1]$ and $\sigma\in\{-1,1\}^N$ put
$H_u(\sigma)=\sqrt u\,H_N(\sigma)+\sqrt{1-u}\,\sum_{j=1}^Lz^j\cdot\sigma$,
let $Y_L=2^{-N}\sum_\sigma e^{H_u(\sigma)}$, define recursively
$Y_{j-1}=(\E_jY_j^{y_j})^{1/y_j}$ for $j=L,L-1,\dots,1$, and put
\begin{equation}\label{eq:A-interpolation}
 \varphi(u)=\frac1{N\theta}\log\E_0Y_0^\theta .
\end{equation}
Thus $Y_j$ depends on the levels $0,\dots,j$ only, and the physical disorder at level $0$ is
integrated with the exponent $y_0=\theta$.

At $u=1$ the field $H_u=H_N$ does not involve the levels $1,\dots,L$, so $Y_0=\ZN$ and
$\varphi(1)=(N\theta)^{-1}\log\E\ZN^\theta$. At $u=0$ we have $H_u(\sigma)=\sum_i\sigma_i\bar z_i$
with $\bar z_i=\sum_jz^j_i$, so $Y_L=\prod_i\cosh\bar z_i$. The coordinates are independent, and
induction on $j$ with \eqref{eq:A-cole-hopf} gives
$Y_j=\prod_i\exp\Phi_\zeta\bigl(q_j,\sum_{l\le j}z^l_i\bigr)$ for $j=L,\dots,0$. Hence
$Y_0=e^{N\Phi_\zeta(0,0)}$ is deterministic, and $\varphi(0)=\Phi_\zeta(0,0)$. By
\eqref{eq:A-penalty}, inequality \eqref{eq:A-guerra} for this $\zeta$ reads
\begin{equation}\label{eq:A-reduced}
 \varphi(1)-\varphi(0)\le-\frac12\sum_{j=1}^Ly_j\bigl\{\psi(q_j)-\psi(q_{j-1})\bigr\}.
\end{equation}
It follows from a formula for $\varphi'$, which needs no convexity.

\begin{lemma}[Derivative of the interpolation]\label{lem:A-derivative}
In the setting above, $\varphi$ is continuous on $[0,1]$ and differentiable on $(0,1)$. For
every $u\in(0,1)$ there are probability measures $\mu_{u,0},\dots,\mu_{u,L}$ on
$\{-1,1\}^N\times\{-1,1\}^N$ such that
\begin{equation}\label{eq:A-derivative}
 \varphi'(u)=-\frac12\sum_{j=1}^Ly_j\bigl\{\psi(q_j)-\psi(q_{j-1})\bigr\}
 -\frac12\sum_{l=0}^L(y_{l+1}-y_l)\sum_{\sigma,\tau}\mu_{u,l}(\sigma,\tau)\,
 \Delta_l\bigl(R(\sigma,\tau)\bigr),
\end{equation}
where $R(\sigma,\tau)=N^{-1}\sigma\cdot\tau$ and
$\Delta_l(r)=\xi(r)-\xi(q_l)-\xi'(q_l)(r-q_l)$.
\end{lemma}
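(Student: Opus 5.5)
The approach is the standard Guerra computation, run level by level through the recursion $Y_{j-1}=(\E_jY_j^{y_j})^{1/y_j}$, with Gaussian integration by parts at each level. Continuity on $[0,1]$ and differentiability on $(0,1)$ follow from dominated convergence. All quantities are finite sums over $\sigma$ of exponentials of Gaussians, and every $Y_j$ is bounded above and below by $2^{-N}$ times the minimum and maximum of $e^{H_u(\sigma)}$, which have all moments. So differentiation under each $\E_j$ is justified, and only $u\in(0,1)$ needs care because of the factors $u^{-1/2}$, $(1-u)^{-1/2}$ in $\partial_uH_u$.

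First I would define the weights. Put $W_j=Y_j^{y_j}/\E_jY_j^{y_j}$ for $1\le j\le L$ and $W_0=Y_0^\theta/\E_0Y_0^\theta$. Then set $\Gamma_j=W_0W_1\cdots W_j$, so that $\E_0\E_1\cdots\E_j\Gamma_j=1$. Differentiating the recursion gives
\[
 \varphi'(u)=\frac1N\,\E\bigl[W_0\cdots W_L\,\langle\partial_uH_u\rangle\bigr],
\]
where $\langle\cdot\rangle$ is the Gibbs average with weights $e^{H_u(\sigma)}$, and $\E$ is the full expectation with the weights inserted in nested order. This is the usual telescoping: $\partial_u\log Y_{j-1}=\E_j[W_j\,\partial_u\log Y_j]$.

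Next I would integrate by parts in each Gaussian family. The derivative is
\[
 \partial_uH_u=\tfrac1{2\sqrt u}H_N-\tfrac1{2\sqrt{1-u}}\textstyle\sum_jz^j\cdot\sigma .
\]
A Gaussian integration by parts in the variables of level $l$ acts on the Gibbs measure and on every weight $W_k$ with $k\ge l$. Since the derivative of $W_k$ with respect to a level-$l$ variable produces factors $y_k$, this yields the familiar expression. One piece is a diagonal (self-overlap) term, with covariances $N\xi(1)$ and $\sum_j N(\xi'(q_j)-\xi'(q_{j-1}))=N\xi'(1)$. The other pieces are two-replica terms weighted by $y_{l+1}-y_l$. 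Here $\mu_{u,l}$ is the law of a pair $(\sigma,\tau)$ sampled from two Gibbs copies that share the levels $0,\dots,l$ and are conditionally independent beyond, reweighted by $\Gamma$. For this pair the $H_N$ part gives covariance $\xi(R)$, and the $z$ part gives $\sum_{j\le l}R\,(\xi'(q_j)-\xi'(q_{j-1}))=R\,\xi'(q_l)$. The combination contributed by a pair is therefore $\xi(R)-R\xi'(q_l)+(\text{constant terms})$.

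Finally I would collect the constants. The self-overlap parts combine to $\xi(1)-\xi'(1)=-\psi(1)$. The remaining constants are obtained by adding and subtracting $\xi(q_l)-q_l\xi'(q_l)=-\psi(q_l)$ in each replica term, which turns $\xi(R)-R\xi'(q_l)$ into $\Delta_l(R)-\psi(q_l)$. Summation by parts in $l$, using $y_{L+1}=1$ and $\psi(q_0)=0$, then converts $-\psi(1)+\sum_l(y_{l+1}-y_l)\psi(q_l)$ into $-\sum_j y_j\{\psi(q_j)-\psi(q_{j-1})\}$. This gives \eqref{eq:A-derivative}, including the factor $\tfrac12$ and the signs.

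I expect the main obstacle to be the bookkeeping of the integration by parts through the nested weights. Each $W_k$ depends on the level-$l$ Gaussians through all deeper levels. One must check that the cross terms telescope exactly into differences $y_{l+1}-y_l$, including $y_0=\theta$ at the physical level. This identification of the $\mu_{u,l}$ as probability measures is the crux, and I would do it by induction on $L$.
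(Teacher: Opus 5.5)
Your proposal is correct and follows the same route as the paper: nested weights $Y_j^{y_j}/\E_jY_j^{y_j}$, and Gaussian integration by parts level by level, in which the derivatives of the weights telescope into $-\sum_{l\ge\ell}(y_{l+1}-y_l)$ times the reweighted conditional Gibbs averages. The pair measures share the levels $0,\dots,l$, and a final summation by parts gives \eqref{eq:A-derivative}.

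The only slip is in the regularity step. For $j<L$, $Y_j$ is not bounded pointwise by the extreme values of $e^{H_u}$. The paper bounds it recursively through the power-mean inequality by $\exp(-\E_{j+1}\cdots\E_LM)\le Y_j\le\E_{j+1}\cdots\E_Le^M$, using the $u$-uniform variable $M=\max_\sigma|H_N(\sigma)|+\sum_{j,i}|z^j_i|$. This supplies the dominated-convergence bounds you need.
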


Under $\mu_{u,l}$ the two configurations $\sigma,\tau$ are drawn with the same randomness at the
levels $0,\dots,l$; the measures are defined in \eqref{eq:A-pair-measures} below.

\begin{proof}[Proof of Proposition~\ref{prop:A-guerra}]
Let first $\zeta$ be finitely supported. By \eqref{eq:A-exponents} every coefficient
$y_{l+1}-y_l$ in \eqref{eq:A-derivative} is nonnegative. Each $\Delta_l(r)$ is the gap between
$\xi$ and its tangent line at $q_l\in[0,1]$, so it is nonnegative for $r\in[-1,1]$ because $\xi$
is convex there; and $R(\sigma,\tau)\in[-1,1]$. Hence
$\varphi'(u)\le-\frac12\sum_jy_j\{\psi(q_j)-\psi(q_{j-1})\}$ for $u\in(0,1)$, and the mean value
theorem gives \eqref{eq:A-reduced}, that is, \eqref{eq:A-guerra}. For a general $\zeta$ with
$\zeta(\{0\})\ge\theta$, Lemma~\ref{lem:A-approx} gives finitely supported $\zeta_n$ with
$\zeta_n(\{0\})\ge\theta$ and $\Wone(\zeta_n,\zeta)\to0$; apply \eqref{eq:A-guerra} to $\zeta_n$
and let $n\to\infty$, using \eqref{eq:cons-lipschitz}. Taking the infimum over $\zeta$ gives
$(N\theta)^{-1}\log\E\ZN^\theta\le p(\theta)$, and multiplying by $\theta$ gives the last
assertion.
\end{proof}

The term with $l=0$ in \eqref{eq:A-derivative} comes from pairs of configurations that share
only the physical disorder, and its coefficient is $y_1-y_0=\zeta(\{0\})-\theta$: the
constraint $\zeta(\{0\})\ge\theta$ in the definition of $\Lambda$ is exactly what makes it
nonnegative. Convexity is used only through $\Delta_l(R)\ge0$ at negative overlaps $R$.

\begin{proof}[Proof of Lemma~\ref{lem:A-derivative}]
Write $H_u=\sum_{\ell=0}^LX_\ell$ with $X_0=\sqrt u\,H_N$ and
$X_\ell(\sigma)=\sqrt{1-u}\,z^\ell\cdot\sigma$ for $\ell\ge1$; the process $X_\ell$ is a function
of level $\ell$, and these processes are independent.

\emph{Regularity.} Put $M=\max_\sigma|H_N(\sigma)|+\sum_{j,i}|z^j_i|$, which has exponential
moments of every order. Then $|H_u(\sigma)|\le M$ for all $u$ and $\sigma$, so
$e^{-M}\le Y_L\le e^M$. A power mean with exponent in $(0,1]$ lies between the geometric and
the arithmetic mean, so by induction
$\exp(-\E_{j+1}\cdots\E_LM)\le Y_j\le\E_{j+1}\cdots\E_Le^M$ for every $j$ and $u$. Hence every
quantity below, and its derivative in $u$ on a compact subset of $(0,1)$, is bounded by
$Ce^{CM}$ for a constant $C$. Dominated convergence therefore justifies the differentiations
under the expectations and the Gaussian integrations by parts below, and it gives continuity of
$\varphi$ on $[0,1]$.

\emph{Weights.} For $0\le j\le L$ put $V_j=Y_j^{y_j}/\E_jY_j^{y_j}$; it depends on the levels
$0,\dots,j$, and $\E_jV_j=1$. Let $\pi(\sigma)=2^{-N}e^{H_u(\sigma)}/Y_L$ be the Gibbs weights.
For a function $F$ of all levels put $\mathcal A_LF=F$ and
$\mathcal A_{j-1}F=\E_j[V_j\,\mathcal A_jF]$ for $j=L,\dots,1$; thus $\mathcal A_jF$ depends on
the levels $0,\dots,j$, and $\mathcal A_j1=1$. Since the levels are independent, with
$V=V_0V_1\cdots V_L$,
\begin{equation}\label{eq:A-tower}
 \E[VF]=\E\bigl[V_0\cdots V_j\,\mathcal A_jF\bigr]\qquad(0\le j\le L).
\end{equation}
For $0\le l\le L$ define
\begin{equation}\label{eq:A-pair-measures}
 \mu_{u,l}(\sigma,\tau)=\E\bigl[V_0\cdots V_l\;\mathcal A_l\pi(\sigma)\;\mathcal A_l\pi(\tau)\bigr].
\end{equation}
These numbers are nonnegative and sum to $\E[V_0\cdots V_l]=1$.

\emph{The derivative.} Differentiating the recursion gives
\begin{gather*}
 \partial_u\log Y_L=\sum_\sigma\pi(\sigma)\,\partial_uH_u(\sigma),\qquad
 \partial_u\log Y_{j-1}=\E_j\bigl[V_j\,\partial_u\log Y_j\bigr]\quad(1\le j\le L),\\
 \partial_u\log\E_0Y_0^\theta=\theta\,\E_0\bigl[V_0\,\partial_u\log Y_0\bigr].
\end{gather*}
By \eqref{eq:A-tower} with $j=0$,
\[
 N\varphi'(u)=\sum_{\ell=0}^L\E\Bigl[V\sum_\sigma\pi(\sigma)G_\ell(\sigma)\Bigr],\qquad
 G_0=\frac{H_N}{2\sqrt u},\quad G_\ell(\sigma)=-\frac{z^\ell\cdot\sigma}{2\sqrt{1-u}}\ (\ell\ge1).
\]

\emph{Integration by parts at level $\ell$.} Fix $\ell$ and condition on the other levels. The
pair $(G_\ell,X_\ell)$ is a centered Gaussian process, and the integrand depends on level $\ell$
only through the values $X_\ell(\tau)$, $\tau\in\{-1,1\}^N$. Gaussian integration by parts gives
\[
 \E\Bigl[V\sum_\sigma\pi(\sigma)G_\ell(\sigma)\Bigr]
 =\sum_{\sigma,\tau}c_\ell(\sigma,\tau)\,
 \E\Bigl[\frac{\partial(V\pi(\sigma))}{\partial X_\ell(\tau)}\Bigr],
 \qquad c_\ell(\sigma,\tau)=\E\,G_\ell(\sigma)X_\ell(\tau).
\]
To compute the derivative, note that $Y_j$ does not depend on level $\ell$ when $j<\ell$. For
$j\ge\ell$, induction from $j=L$, where $\partial\log Y_L/\partial X_\ell(\tau)=\pi(\tau)$, gives
$\partial\log Y_j/\partial X_\ell(\tau)=\mathcal A_j\pi(\tau)$, because $\E_j$ does not
integrate level $\ell$ when $j>\ell$. Consequently $\partial\log V_j/\partial X_\ell(\tau)$ is
$0$ for $j<\ell$; it is $y_\ell\,\mathcal A_\ell\pi(\tau)$ for $j=\ell$, since the normalization
$\E_\ell Y_\ell^{y_\ell}$ does not depend on level $\ell$; and it is
$y_j\{\mathcal A_j\pi(\tau)-\mathcal A_{j-1}\pi(\tau)\}$ for $j>\ell$. Together with
$\partial\log\pi(\sigma)/\partial X_\ell(\tau)=\ind_{\sigma=\tau}-\pi(\tau)$, and after
regrouping with $y_{L+1}=1$ and $\mathcal A_L\pi=\pi$,
\[
 \frac{\partial(V\pi(\sigma))}{\partial X_\ell(\tau)}
 =V\pi(\sigma)\Bigl[\ind_{\sigma=\tau}-\sum_{l=\ell}^L(y_{l+1}-y_l)\,\mathcal A_l\pi(\tau)\Bigr].
\]
Since $\mathcal A_l\pi(\tau)$ depends on the levels $0,\dots,l$ only, \eqref{eq:A-tower} gives
$\E[V\pi(\sigma)\,\mathcal A_l\pi(\tau)]=\mu_{u,l}(\sigma,\tau)$. The diagonal value
$\bar c_\ell=c_\ell(\sigma,\sigma)$ does not depend on $\sigma$ (see below), and
$\sum_\sigma\E[V\pi(\sigma)]=1$. Hence
\begin{equation}\label{eq:A-ibp}
 \E\Bigl[V\sum_\sigma\pi(\sigma)G_\ell(\sigma)\Bigr]
 =\bar c_\ell-\sum_{l=\ell}^L(y_{l+1}-y_l)\sum_{\sigma,\tau}\mu_{u,l}(\sigma,\tau)\,
 c_\ell(\sigma,\tau).
\end{equation}

\emph{Covariances.} With $R=R(\sigma,\tau)$,
\[
 c_0(\sigma,\tau)=\tfrac12\E H_N(\sigma)H_N(\tau)=\tfrac N2\,\xi(R),\qquad
 c_\ell(\sigma,\tau)=-\tfrac N2\bigl\{\xi'(q_\ell)-\xi'(q_{\ell-1})\bigr\}R\quad(\ell\ge1),
\]
and $R(\sigma,\sigma)=1$; so $\bar c_0=\frac N2\xi(1)$, and
$\bar c_\ell=-\frac N2\{\xi'(q_\ell)-\xi'(q_{\ell-1})\}$ for $\ell\ge1$. Write
$\mu_{u,l}[F]=\sum_{\sigma,\tau}\mu_{u,l}(\sigma,\tau)\,F(R(\sigma,\tau))$. Summing
\eqref{eq:A-ibp} over $\ell$, the diagonal values add up to
$\frac N2\{\xi(1)-\xi'(1)\}=-\frac N2\psi(1)$, because $\xi'(q_0)=0$. In the remaining terms
with $\ell\ge1$ we exchange the sums over $\ell$ and $l$; since
$\sum_{\ell=1}^l\{\xi'(q_\ell)-\xi'(q_{\ell-1})\}=\xi'(q_l)$, which vanishes for $l=0$,
\[
 \varphi'(u)=-\frac12\psi(1)
 -\frac12\sum_{l=0}^L(y_{l+1}-y_l)\,\mu_{u,l}\bigl[\xi(R)-\xi'(q_l)R\bigr].
\]
Finally, $\xi(r)-\xi'(q_l)r=\Delta_l(r)-\psi(q_l)$, and summation by parts with $\psi(q_0)=0$,
$q_L=1$ and $y_{L+1}=1$ gives
$\sum_{l=0}^L(y_{l+1}-y_l)\psi(q_l)=\psi(1)-\sum_{j=1}^Ly_j\{\psi(q_j)-\psi(q_{j-1})\}$. Since
each $\mu_{u,l}$ is a probability measure, \eqref{eq:A-derivative} follows.
\end{proof}

\subsection{Normalizations}\label{sec:A-normalizations}

\begin{remark}\label{rem:A-normalizations}
\begin{steplist}
\item \emph{Terminal value.} Our terminal value $\log\cosh x$ in \eqref{eq:intro-pde} and our
 partition function $\ZN=2^{-N}\sum_\sigma e^{H_N(\sigma)}$ are those of \cite[(1.2)--(1.4)]{Bou26b} and, with the
 uniform reference measure on spins, of \cite[(2.2), (2.4)]{CGKLM}. With $\log(2\cosh x)$, as in
 \cite{CPS23}, \cite[(1.4)--(1.5)]{Bou26a}, \cite[(1.2)]{AL26} and \cite[(28)]{CM19}, or with
 $\log2$ added to the functional, as in \cite[(1.3)]{Lop26a}, the Parisi functional, $\feq$ and
 $p(\theta)$ exceed ours by $\log2$. This matches the passage from $\ZN$ to
 $\sum_\sigma e^{H_N(\sigma)}=2^N\ZN$: $\Lambda(\theta)$ becomes $\Lambda(\theta)+\theta\log2$,
 and $\Sigma$ is unchanged once $f$ is shifted by $\log2$. Thus the constrained minimum of
 Aronow and Lopatto is $P_\beta(s)=p(s)+\log2$, with $s$ in the role of $\theta$
 \cite[(1.2)]{AL26}.
\item \emph{The Sherrington--Kirkpatrick Hamiltonian.} For $\xi(t)=\beta^2t^2/2$, a field with
 covariance exactly $N\xi(N^{-1}\sigma\cdot\sigma')$ is obtained by summing over all ordered
 pairs, the diagonal included, for instance $\beta(2N)^{-1/2}\sum_{i,j=1}^Ng_{ij}\sigma_i\sigma_j$ with independent standard
 Gaussian $g_{ij}$ \cite[(1.1)]{Lop26a}, or the
 one-scale Hamiltonian of \cite[(10)]{CM19} with $\gamma_1=\beta/\sqrt2$. The original model
 $\FNb$ sums over $i<j$ only; its completion $\FNb+\beta D_N$ \cite[Section~1.1]{AL26}, with $D_N$
 centered Gaussian of variance $\frac12$ and independent of $\FNb$ (as in the proof of
 Lemma~\ref{lem:cons-original-sk}), has the law of $\log\ZN+N\log2$. Hence, for $0<\theta<1$,
 \begin{equation}\label{eq:A-original-moments}
  \E e^{\theta\FNb}=e^{-\theta^2\beta^2/4}\,2^{\theta N}\,\E\ZN^\theta ,
 \end{equation}
 and Lemma~\ref{lem:A-fm-sk} holds for $\FNb$ with $\Lambda(\theta)$ replaced by
 $\Lambda(\theta)+\theta\log2$.
\end{steplist}
\end{remark}

\section{Dictionary to the replica predictions}\label{app:dictionary}

Parisi and Rizzo \cite{PR09} state their predictions for the Sherrington--Kirkpatrick model in
the language of replicas and in physical units, in which the free energy of a sample is
$-T\log Z$. This appendix translates them into our variables and records which statement proves
each of them, and in what scope; equation and section numbers of \cite{PR09} refer to its arXiv
version~2. Their replica free energy is $\Phi(n)=-(\beta nN)^{-1}\log\E Z^n$, where $Z$ sums over
all $2^N$ configurations without the factor $2^{-N}$, and they measure deviations $\Delta f<0$ of
the free energy per spin. With their coefficient $\cPR<0$ of $n^5$ in $\Phi(n)$
\cite[(33), (B3)]{PR09}, the dictionary is
\begin{equation}\label{eq:intro-pr-dictionary}
 n=\theta,\qquad p(\theta)-\feq=-\beta\bigl\{\Phi(n)-\Phi(0)\bigr\},\qquad
 \delta=\beta|\Delta f|,\qquad \cb=-\beta\,\cPR .
\end{equation}
In detail, with $\xi(t)=\beta^2t^2/2$ and $T=1/\beta$: the replica number $n\in(0,1)$ is the
constraint $\theta$ on the atom at zero, and their maximization over Parisi's order parameter
$q(x)$ is our constrained minimization \eqref{eq:B-functional}; $\Phi(n)$ tends to
$-\beta^{-1}\{\log2+p(n)\}$ as $N\to\infty$ \eqref{eq:B-replica}; $q(x)$ is the quantile function
of the measure, so that $\qdot=1/\rhostar$; their solution $f(x,y)$ of the Parisi equation in the
variable $x$ is $\beta^{-1}\Phi_\zeta(q(x),\beta y)$ up to an additive constant \eqref{eq:B-f};
their field, written $h$ in \cite{PR09} and $H$ here, is our $h/\beta$ \eqref{eq:B-field}. Their replica free energy always carries its
argument, $\Phi(n)$, and the solution $\Phi_\zeta$ of the Parisi equation its measure.

\paragraph{What is proved.}
Table~\ref{tab:B-summary} lists the predictions of \cite{PR09} on the bottom of the Parisi
measure. The first five hold for the Sherrington--Kirkpatrick model at every $\beta>1$. They use
\condH, which holds by Lopatto's theorem \cite[Proposition~5.1]{Lop26a} (an alternative proof is
\cite[Theorem~5.1]{Pei26}; both are preprints, and for $1<\beta^2<1+\varepsilon_*$, with $\varepsilon_*$ as in Theorem~\ref{thm:nearcrit}(a), refereed inputs and this paper suffice, Remark~\ref{rem:fifth-sk-inputs}); the probabilistic form of the six-fifths law also uses \condFM, which
holds by Lemma~\ref{lem:A-fm-sk}. ``Leading term'' means the first term of the expansion as the
small parameter tends to zero, with a relative error $o(1)$ and no rate; the small parameter is
taken at fixed $\beta$, except for the de Almeida--Thouless line, whose small parameter is
$\beta^2-1$ and whose error is $O((\beta^2-1)^2)$.
``Near criticality'' means: proved for $1<\beta^2<1+\varepsilon_*$, with $\varepsilon_*$ as in
Theorem~\ref{thm:nearcrit}(b), for the constrained Parisi problem; Parisi and Rizzo state these
predictions at every temperature below the critical one.

\begin{table}[tp]
\centering
\small
\renewcommand{\arraystretch}{1.15}
\newcommand{\dictcell}{\raggedright\hspace{0pt}}
\begin{tabular}{@{}p{0.30\textwidth}p{0.31\textwidth}p{0.33\textwidth}@{}}
\toprule
\dictcell Prediction in \cite{PR09} & \dictcell In our notation & \dictcell Status here\tabularnewline
\midrule
\dictcell plateau height and end, (B2) and Section~IV
 & \dictcell $q_\theta\sim\frac{3\theta}{2\rhob}$,\ \ $\zeta_\theta([0,q_\theta])\sim\frac{3\theta}2$
 & \dictcell leading terms; Theorem~\ref{thm:sk}(a)\tabularnewline
\dictcell fifth-order law, (B3) and (33)
 & \dictcell $p(\theta)-\feq\sim\cb\,\theta^5$
 & \dictcell leading term; Theorem~\ref{thm:sk}(a)\tabularnewline
\dictcell six-fifths law, (4) and Section~IV
 & \dictcell $\Sigma(\feq+\delta)\sim-\frac56(6\cb)^{-1/5}\delta^{6/5}$
 & \dictcell leading term; Theorem~\ref{thm:sk}(b),(c)\tabularnewline
\dictcell identities (B9)--(B10)
 & \dictcell $\Ustar=1/\beta$,\ \ $\Wstar<0$,\ \ $\rhob=\beta^5\Wstar^2/2$
 & \dictcell exact; Proposition~\ref{prop:ons-equilibrium}, Lemma~\ref{lem:plat-fourth-sign},
  Theorem~\ref{thm:sk} (also \cite[Lemma~6.1]{Lop26a}, \cite[(B.4)]{Lop26b}, \cite[Section~7]{AL26})\tabularnewline
\dictcell small field, (B4)--(B5)
 & \dictcell asymptotics of $\qmin$ and $\Ffield$
 & \dictcell leading terms; Theorem~\ref{thm:field}, Corollary~\ref{cor:dict-field}\tabularnewline
\dictcell Kondor's coefficient, Section~IV
 & \dictcell $\cb\to9/5120$ as $\beta\downarrow1$
 & \dictcell as a limit only; Theorem~\ref{thm:nearcrit}(a)\tabularnewline
\dictcell de Almeida--Thouless line, (22)--(23)
 & \dictcell the threshold $\theta_c$ of the marginal regime
 & \dictcell near criticality: leading term; Theorem~\ref{thm:nearcrit}(b)\tabularnewline
\dictcell $q(x)=q_{\rm free}(x)+O(n^5)$ for $x>x_c$, Section~IV
 & \dictcell $\sup_{x>x_\theta}|Q_\theta(x)-Q_*(x)|=O(\theta^5)$
 & \dictcell near criticality; Theorem~\ref{thm:nearcrit}(c)\tabularnewline
\dictcell plateau expansion (B1); $\ddot q(0)=0$, (B8)
 & \dictcell ---
 & \dictcell neither proved nor used\tabularnewline
\bottomrule
\end{tabular}
\caption{The predictions of Parisi and Rizzo \cite{PR09} for the bottom of the Parisi measure,
their form in the notation of this paper, and their status here. Equation and section numbers
in the first column are those of \cite{PR09}.}
\label{tab:B-summary}
\end{table}

\subsection{Replica free energy and fractional moments}\label{sec:B-normalization}

For a sample $J$ with partition function $Z_J$ and free energy per spin
$f_J=-(\beta N)^{-1}\log Z_J$, $\Phi(n)=-(\beta nN)^{-1}\log\E Z_J^n$ \cite[(1)]{PR09}, with the
limit $N\to\infty$ understood. That $Z_J$ carries no factor $2^{-N}$ is visible in the terminal
value $\beta^{-1}\log2\cosh\beta y$ of \cite[(27)]{PR09} and in the high-temperature value
$f_{\rm RS}=-\beta/4-\log2/\beta=-\beta^{-1}\{\log2+P_{\delta_0}\}$ \cite[Section~III]{PR09},
since $P_{\delta_0}=\xi(1)/2=\beta^2/4$. Take $Z_J=e^{\FNb}$, the original model of
Theorem~\ref{thm:sk}(c). In the coupling of Lemma~\ref{lem:cons-original-sk},
$\log\ZN=\FNb-N\log2+\beta D_N$ with $D_N\sim\mathcal N(0,\frac12)$ independent of $\FNb$, so
$\E e^{n\FNb}=e^{-n^2\beta^2/4}\,2^{nN}\,\E\ZN^n$, and Lemma~\ref{lem:A-fm-sk} gives, for $0<n<1$,
\begin{equation}\label{eq:B-replica}
 \lim_{N\to\infty}\Phi(n)=\lim_{N\to\infty}\Bigl(-\frac1{\beta nN}\log\E Z_J^n\Bigr)
 =-\frac1\beta\bigl\{\log2+p(n)\bigr\}.
\end{equation}
Adding diagonal terms to the Hamiltonian multiplies $Z_J$ by $e^G$, with $G$ Gaussian of bounded
variance, independent of the configuration and of the other couplings; this does not change the
limit. As $p$ is continuous at
zero with $p(0)=\feq$ (Lemma~\ref{lem:cons-variational}(a)), \eqref{eq:B-replica} tends to
$-\beta^{-1}\{\log2+\feq\}$ as $n\downarrow0$: their typical free energy $f_{\rm typ}=\Phi(0)$
\cite[Section~I]{PR09}, the limit of $\E f_J$ by the Parisi formula \cite{Tal06}. This is the
second identity of \eqref{eq:intro-pr-dictionary}. Likewise $f_J\le f_{\rm typ}-|\Delta f|$
exactly when $N^{-1}\FNb-\log2\ge\feq+\beta|\Delta f|$: a deviation $\Delta f<0$ of the free
energy per spin is the deviation $\delta=\beta|\Delta f|$ of $N^{-1}\log\ZN$ above $\feq$, up to
the term $\beta D_N/N$, which Lemma~\ref{lem:cons-original-sk} controls. Positive replica numbers
thus describe free energies per spin below the typical one \cite[Section~I]{PR09}, that is, the
upper deviations studied here.

Parisi and Rizzo relate the sample complexity $L(\varphi)=\lim_NN^{-1}\log P_N(\varphi)$, where
$P_N$ is the density of $f_J$ \cite[(5)]{PR09}, to $\Phi$ by a Legendre transform:
$L(\varphi)=\beta n\varphi-\beta n\Phi(n)$ at the $n$ with $\varphi=\partial_n\{n\Phi(n)\}$
\cite[(8)--(9)]{PR09}. Put $f=-\beta\varphi-\log2$. By \eqref{eq:B-replica},
$\Lambda(\theta)=\theta p(\theta)=-\beta\theta\Phi(\theta)-\theta\log2$, so
$\Lambda(\theta)-\theta f=\beta\theta\{\varphi-\Phi(\theta)\}$, and
$\varphi=\partial_n\{n\Phi(n)\}$ at $n=\theta$ is $\Lambda'(\theta)=f$. Their (8)--(9) thus state
$L(\varphi)=\Lambda(\theta_f)-\theta_ff=\Sigma(f)$ with $\Lambda'(\theta_f)=f$, which is
Corollary~\ref{cor:cons-level}; Lemma~\ref{lem:cons-gartner-ellis} proves the corresponding
statement, under \condFM, for the upper tail $\{N^{-1}\log\ZN\ge f\}$ instead of a density.

\subsection{The order parameter and the Parisi equation}\label{sec:B-order}

\paragraph{The order parameter.}
At $n=0$ it is the quantile function $q(x)=Q_*(x)=\inf\{t:\alphas(t)\ge x\}$, $x\in(0,1]$, of the
Parisi measure. Under the hypotheses of Theorem~\ref{thm:local}, $\alphas$ is $C^1$ near zero with
$\alphas'(0)=\rhostar>0$ (Proposition~\ref{prop:ons-equilibrium}), so $\qdot=Q_*'(0)=1/\rhostar$,
and $\qdot=1/\rhob$ for the Sherrington--Kirkpatrick model. For $0<n<1$, Kondor's approach, which
Parisi and Rizzo use below the de Almeida--Thouless line \cite[Section~IV]{PR09}, takes a
nondecreasing $q(x)$ on $n\le x\le1$; extended by zero to $(0,n)$, it is the quantile function of
a measure $\zeta$ with $\zeta(\{0\})\ge n$, and every such measure arises in this way. An interval
on which $q(x)$ is constant is an atom of $\zeta$, and a plateau of $\alpha_\zeta$ in our sense is
a jump of $q(x)$.

\paragraph{The functional.}
Parisi and Rizzo write $\Phi(n)=\max_qF_n[q]$ for an explicit functional $F_n$ that also contains
an external field, written $h$ there and $H$ here \cite[Section~IV]{PR09}. Let $\zeta$ be the measure whose quantile function
is $q$, extended by zero, and recall
$P^h_\zeta=\Phi_\zeta(0,h)-\frac12\int_0^1t\xi''(t)\zeta([0,t])\,dt$ from
Theorem~\ref{thm:field}. Then
\begin{equation}\label{eq:B-functional}
 -\beta F_n[q]-\log2=P^{\beta H}_\zeta .
\end{equation}
Indeed, $F_n[q]$ is the sum of $-\frac\beta4\{1-2q(1)+\int_n^1q(x)^2dx\}$ and
$-(\beta n)^{-1}\log\E e^{\beta nf(n,H+\sqrt{q(n)}Z)}$, with $Z$ standard Gaussian. By
\eqref{eq:B-f} at $x=n$ and the Cole--Hopf formula on the first plateau $[0,q(n))$, on which
$\zeta([0,t])=n$, the second term is
$-\beta^{-1}\{\Phi_\zeta(0,\beta H)+\log2-\frac{\beta^2}2(1-q(1))\}$; since $q=0$ on $(0,n)$ and
$\int_0^1t\,\zeta([0,t])\,dt=\frac12\{1-\int_0^1q(x)^2dx\}$, the sum is
$-\beta^{-1}\{P^{\beta H}_\zeta+\log2\}$. So, for $H=0$, maximizing $F_n$ over order parameters on
$[n,1]$ is minimizing $P_\zeta$ under $\zeta(\{0\})\ge n$, which defines $p(n)$; the factor
$-\beta$ turns the maximum into a minimum. For constant $q(x)\equiv q$, $F_n$ is their
replica-symmetric functional \cite[Section~III]{PR09} and $\zeta$ is the two-atom measure
$n\delta_0+(1-n)\delta_q$ of Section~\ref{sec:nc-dat}.

\paragraph{The function $f(x,y)$.}
It solves $\partial_xf=-\frac12\dot q(x)\{\partial_y^2f+\beta x(\partial_yf)^2\}$ with
$f(1,y)=\beta^{-1}\log2\cosh\beta y$ \cite[(26)--(27)]{PR09}. Where $q$ is strictly increasing,
$\zeta([0,q(x)])=x$, and as $\xi''=\beta^2$ this is the Parisi equation \eqref{eq:intro-pde} of
$\zeta$ in the variables $x$ and $y=b/\beta$, $b$ being the space variable of $\Phi_\zeta$; where
$q$ is constant neither side moves, and across a jump of $q$ the equation is read at fixed $x$, as
the Parisi equation over an interval that $\zeta$ does not charge. At the top $q(1)$ of the
support, $\Phi_\zeta(q(1),b)=\log\cosh b+\frac12\{\xi'(1)-\xi'(q(1))\}$; comparing terminal
values,
\begin{equation}\label{eq:B-f}
 \beta f(x,y)=\Phi_\zeta\bigl(q(x),\beta y\bigr)+\log2-\frac{\beta^2}2\bigl(1-q(1)\bigr).
\end{equation}
So $\partial_yf(x,y)=\partial_b\Phi_\zeta(q(x),\beta y)$ is their $m(x,y)$ \cite[(29)]{PR09}, and
their Lagrange multiplier $P(x,y)$ \cite[(30)--(31)]{PR09} is the density of $X_{q(x)}/\beta$ for
the diffusion \eqref{eq:intro-diffusion} of $\zeta$ started at $\beta H$: (30) is its
Fokker--Planck equation in the time $t=q(x)$, and (31) is the plateau law \eqref{eq:plat-law} at
the time $q(n)$, with the Gaussian centred at the starting point. Their stationarity condition
$q(x)=\int P(x,y)\,m(x,y)^2\,dy$ \cite[(28)]{PR09} therefore reads
$\Gamma_{\zeta,\beta H}(q(x))=q(x)$, with $\Gamma_{\zeta,h}$ as in \eqref{eq:fld-obstacle}; at
$H=0$ this is the contact equation of minimizers on their support
(Lemma~\ref{lem:cons-standard}(f),(g) and Lemma~\ref{lem:cons-variational}(e)), and
Lemma~\ref{lem:fld-diffusion}(c) gives it for $\zeta_h$. The function $\Gamma_{\zeta,\beta H}$ is
the right side of (28), not the order parameter, and the two agree on the support. In the
replica-symmetric case this contact equation
is their (10), and its marginal case their de Almeida--Thouless condition (22), see
\eqref{eq:B-dat}.

\subsection{The predictions}\label{sec:B-predictions}

We follow Table~\ref{tab:B-summary}, with $\xi(t)=\beta^2t^2/2$ and $\beta>1$, so that
$\azero=\beta^2$ and, as $\xi''''=0$, $\Dstar=\beta^6\Wstar^2$.

\paragraph{The first plateau and the fifth-order law.}
The constrained order parameter has a plateau at height $q_0=\frac32n\qdot+o(n)$
\cite[Section~IV and Appendix~B, (B2)]{PR09}, ending at $x_c=\frac32n+O(n^2)$
\cite[Section~IV]{PR09}. As $q_0$ is the first positive contact $q_\theta$ and
$x_c=\zeta_\theta([0,q_\theta])=\theta+\zeta_\theta(\{q_\theta\})$, Theorem~\ref{thm:sk}(a) gives
\begin{equation}\label{eq:B-plateau}
 q_\theta=\frac32\,\theta\,\qdot\,\bigl(1+o(1)\bigr),\qquad
 \zeta_\theta([0,q_\theta])=\frac32\,\theta\,\bigl(1+o(1)\bigr)\qquad(\theta\downarrow0),
\end{equation}
and Theorem~\ref{thm:local}(a) gives the same for every covariance it covers, with $\qdot=1/\rhostar$; Theorem~\ref{thm:nearcrit}(c) gives relative errors $O(\theta)$ on $\Kcl(\varepsilon_*)$.
By \eqref{eq:intro-pr-dictionary}, $\beta T=1$ and $\qdot=1/\rhob$, the expansion
\cite[(B3)]{PR09} with the exact coefficient \cite[(33)]{PR09},
\begin{equation}\label{eq:B-fifth}
 \Phi(n)=\Phi(0)+\cPR\,n^5+O(n^7),\qquad
 \cPR=-\frac9{640}\,\beta^5\bigl(T\qdot\bigr)^3 ,
\end{equation}
predicts $p(\theta)-\feq=-\beta\cPR\,\theta^5+O(\theta^7)$ with
$-\beta\cPR=\frac9{640}\beta^3\rhob^{-3}=\cb$; we prove neither the order of the remainder nor the
absence of a term $n^6$ \cite[Section~IV]{PR09}. Under the hypotheses of Theorem~\ref{thm:local},
its part~(b) gives $c=\frac9{640}\azero^{3/2}\rhostar^{-3}$, the same expression with $\azero$ in
place of $\beta^2$. With a cubic term (Theorem~\ref{thm:atom}(c)), Parisi's $x(q)=\zetas([0,q])$ has
$x(0)=m_0$ and $\dot q(x(0)+)=1/\rhostar$ (Lemma~\ref{lem:atom-odd-slope}); with
$\theta-x(0)$ in the role of $n$, Proposition~\ref{prop:atom-odd-onset} and that lemma give
$q_\theta\sim\frac32(\theta-x(0))\,\dot q(x(0)+)$ and
$p(\theta)-\feq\sim\frac9{640}\azero^{3/2}\dot q(x(0)+)^3(\theta-x(0))^5$, the form of (B2)--(B3), derived in
\cite{PR09} for the Sherrington--Kirkpatrick model ($x(0)=0$), with $\dot q(x(0)+)$ in place of
$\qdot$. In terms of the plateau height,
$p(\theta)-\feq\sim\frac{\Dstar^2\sqrt\azero}{2160}\,q_\theta^5$, the fifth-order effect of a small
plateau anticipated in \cite[Appendix~B]{PR09}.

\begin{remark}[Kondor's coefficient is a limit]\label{rem:B-kondor}
Near the critical temperature, Kondor \cite{Kon83} found that $\Phi(n)$ starts at order $n^5$
\cite[(3)]{PR09}, with $\Phi(n)=\Phi(0)-\frac9{5120}n^5$ for the truncated model, as reported in
\cite[Section~IV]{PR09}; in \cite[Figure~3]{PR09}, $\cPR\beta^{-5}=-\frac9{5120}$ at $T=1$,
consistent with (33) and the leading term $\qdot=\frac12$ of their series (B6) at $T=1$. Here it
is the limit $\cb\to\frac9{5120}$, equivalently $\rhob\to2$, as $\beta\downarrow1$
(Theorem~\ref{thm:nearcrit}(a); Proposition~\ref{prop:nc-kondor}: $\Wstar\to-2$, $\Dstar\to4$),
and only a limit: at $\beta=1$, $\zetas=\delta_0$ (Section~\ref{sec:nc-reference}), \condH\ fails, the constraint is inactive
($\zeta_\theta=\delta_0$, $p(\theta)=\feq$ for every $\theta$) and the coefficient of $\theta^5$ is
zero; correspondingly, Parisi and Rizzo find that on the line $T=1$ the replica free energy is
$f_{\rm RS}$, the value of the solution $q=0$, for $n<2$ \cite[Section~III]{PR09}. The $o(1)$
terms of Theorem~\ref{thm:sk} are not uniform in $\beta$, nor those of Theorem~\ref{thm:local} in $\xi$; on
$\Kcl(\varepsilon_*)$ the marginal regime ends at $\theta_c$ of order $\azero-1$
(Theorem~\ref{thm:nearcrit}(b), Remark~\ref{rem:nc-dat}).
\end{remark}

\paragraph{The six-fifths law.}
For a deviation $\Delta f=\varphi-f_{\rm typ}<0$ fixed as $N\to\infty$ \cite[Section~I]{PR09},
the order of limits of Theorem~\ref{thm:sk}(c), Parisi and Rizzo predict \cite[(4)]{PR09}
\begin{equation}\label{eq:B-rate}
 P_N(\varphi)\propto\exp\bigl\{-Na_{6/5}\,(f_{\rm typ}-\varphi)^{6/5}\bigr\},\qquad
 a_{6/5}=5\beta\,6^{-6/5}|\cPR|^{-1/5},
\end{equation}
written in \cite[Section~IV]{PR09} as
$L(\varphi)=-5\beta\,6^{-6/5}|\cPR|^{-1/5}|\Delta f|^{6/5}+O(|\Delta f|^{8/5})$, where the coefficient, also called $a_{6/5}$ there, is the negative of ours. As $\delta=\beta|\Delta f|$ and $\cb=\beta|\cPR|$,
\[
 \frac56\,(6\cb)^{-1/5}\delta^{6/5}=5\cdot6^{-6/5}\,\beta\,|\cPR|^{-1/5}\,|\Delta f|^{6/5}
 =a_{6/5}\,|\Delta f|^{6/5},
\]
so Theorem~\ref{thm:sk}(b),(c) give the leading term of the exponent, for the upper tail of
$N^{-1}\log\ZN$ and of $N^{-1}\FNb-\log2$. We prove no correction term, and nothing on the
fluctuations of order $N^{-5/6}$ that \cite[Section~VI]{PR09} relate to \eqref{eq:B-rate} by a
matching argument described there as not rigorous.

\paragraph{The identities of Crisanti and Rizzo.}
Parisi and Rizzo use three exact statements at $n=H=0$ due to Crisanti and Rizzo \cite{CR02}:
$\ddot q(0)=0$, $f^{(0,2)}(0,0)=1$ and $f^{(0,4)}(0,0)=-(2/(T\qdot))^{1/2}$
\cite[(B8)--(B10)]{PR09}, $f^{(0,k)}$ being the $k$-th derivative in $y$. Under the hypotheses of
Theorem~\ref{thm:local}, $Q_*(x)\to0$ as $x\downarrow0$, so \eqref{eq:B-f} for $\zetas$ at $x=0$
gives $f^{(0,k)}(0,0)=\beta^{k-1}\partial_b^k\Phi_{\zetas}(0,0)$, that is,
\begin{equation}\label{eq:B-crisanti-rizzo}
 f^{(0,2)}(0,0)=\beta\Ustar,\qquad f^{(0,4)}(0,0)=\beta^3\Wstar .
\end{equation}
So (B9) is $\Ustar=1/\beta$ (Proposition~\ref{prop:ons-equilibrium} with $\azero=\beta^2$), and
(B10) says that $\Wstar<0$ (Lemma~\ref{lem:plat-fourth-sign}) and
$\beta^6\Wstar^2=2\beta/\qdot=2\beta\rhob$, that is, $\rhob=\beta^5\Wstar^2/2$
(Theorem~\ref{thm:sk}; Remark~\ref{rem:fifth-sk-inputs} lists other derivations of the value and
the positivity of $\rhob$). We neither use nor prove (B8), on $Q_*''(0)$: our results give only
$\alphas\in C^1$ near zero.

\paragraph{A small external field.}
Their field $H$ and temperature $T=1/\beta$ enter the standard Gibbs weight
$\exp\{\beta\sum_{i<j}J_{ij}\sigma_i\sigma_j+\beta H\sum_i\sigma_i\}$, with independent
$\mathcal N(0,1/N)$ couplings $J_{ij}$, the convention implied by \cite[(27) and Section~III]{PR09}; in our variables
\begin{equation}\label{eq:B-field}
 h=\beta H,\qquad f(H)=-T\bigl\{\log2+\Ffield(\beta H)\bigr\},\qquad q_0=\qmin(\beta H),
\end{equation}
where $q_0$ is the height of their plateau at $n=0$. Indeed, for $\xi(t)=\beta^2t^2/2$ the
process $H_N$ of \eqref{eq:intro-covariance} can be realized as
$\beta\sum_{i<j}J_{ij}\sigma_i\sigma_j+\beta D_N$, with $D_N\sim\mathcal N(0,\frac12)$ independent
of $\sigma$ and of the $J_{ij}$ (the coupling of Lemma~\ref{lem:cons-original-sk}); as
$D_N/N\to0$ almost surely, \eqref{eq:fld-free-energy} shows that $f(H)$ is the limiting free
energy per spin of this model in the field.

\begin{corollary}[The small-field laws]\label{cor:dict-field}
Let $\xi(t)=\beta^2t^2/2$ with $\beta>1$, let $T=1/\beta$, $h=\beta H$ and $f(H)$ be as in
\eqref{eq:B-field}, and let $\qdot=1/\rhob$ with $\rhob$ as in Theorem~\ref{thm:sk}. Then, as
$H\to0$,
\begin{align}
 \qmin(\beta H)&=\Bigl(\frac{3T\qdot}2\Bigr)^{1/3}|H|^{2/3}\,(1+o(1)),\label{eq:fld-pr-b4}\\
 f(H)&=f(0)-\frac{H^2}2+\frac3{20}\Bigl(\frac9{4T\qdot}\Bigr)^{1/3}|H|^{10/3}\,(1+o(1)).
 \label{eq:fld-pr-b5}
\end{align}
These are the leading terms of (B4) and (B5) of \cite{PR09}.
\end{corollary}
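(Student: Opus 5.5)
The plan is to specialize Theorem~\ref{thm:field} to $\xi(t)=\beta^2t^2/2$ and rewrite its parts (b) and (d) in the variables \eqref{eq:B-field}. By Corollary~\ref{cor:fld-examples}(a), the hypotheses of Theorem~\ref{thm:local}, and hence those of Theorem~\ref{thm:field}, hold for every $\beta>1$. Here $\azero=\beta^2$ and $\xi''''(0)=0$, so $\Dstar=\beta^6\Wstar^2$. By Theorem~\ref{thm:sk}, $\rhob=\rhostar=\Dstar/(2\beta)$. The one identity that drives both translations is therefore
\[
 \Dstar=2\beta\rhob=\frac{2\beta}{\qdot}=\frac{2}{T\qdot}.
\]

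\emph{The gap.} Theorem~\ref{thm:field}(b) gives $\qmin(h)^3=3h^2(\azero\Dstar)^{-1}(1+o(1))$. Put $h=\beta H$, $\azero=\beta^2$ and $\Dstar=2/(T\qdot)$. Then
\[
 \qmin(\beta H)^3=\frac{3\beta^2H^2\,T\qdot}{2\beta^2}\,(1+o(1))=\frac{3T\qdot}{2}\,H^2\,(1+o(1)).
\]
Taking cube roots gives \eqref{eq:fld-pr-b4}. Since $h\to0$ exactly when $H\to0$, the $o(1)$ transfers unchanged.

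\emph{The free energy.} By \eqref{eq:B-field}, $f(H)-f(0)=-T\{\Ffield(\beta H)-\Ffield(0)\}$. We insert the expansion of Theorem~\ref{thm:field}(d) with $\azero^{-1/2}=T$ and $\azero^{-7/6}=\beta^{-7/3}$. The quadratic term becomes $-T\cdot\beta^2H^2/(2\beta)=-H^2/2$. The remainder becomes
\[
 +\frac3{40}(9\Dstar)^{1/3}\,T\beta^{-7/3}\beta^{10/3}|H|^{10/3}
 =\frac3{40}\Bigl(\frac{18}{T\qdot}\Bigr)^{1/3}|H|^{10/3},
\]
because $T\beta=1$. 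It remains to check that this coefficient equals $\frac3{20}\bigl(\frac9{4T\qdot}\bigr)^{1/3}$. This follows from $2\,(9/4)^{1/3}=(72/4)^{1/3}=18^{1/3}$.

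\emph{Identification with \cite{PR09}.} We read (B4) and (B5) of \cite{PR09} in the dictionary \eqref{eq:B-field}. Their plateau height at $n=0$ is $q_0=\qmin(\beta H)$, and their $f(H)$ is our $-T\{\log2+\Ffield(\beta H)\}$, by \eqref{eq:fld-free-energy} and the coupling of Lemma~\ref{lem:cons-original-sk}. We then compare their leading coefficients with the two displays above.

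No step is a real obstacle; the work is bookkeeping of the powers of $\beta$ and of the sign conventions. The only point needing care is the sign flip introduced by $-T$ and the factor $T\beta=1$, which makes the $|H|^{10/3}$ coefficient free of explicit $\beta$.
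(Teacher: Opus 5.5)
Your proposal is correct and follows the paper's proof essentially line by line. The paper's proof applies Theorem~\ref{thm:field}(b),(d) via Corollary~\ref{cor:fld-examples}(a) and uses the identity $\Dstar=2\beta\rhob=2/(T\qdot)$. It then tracks the powers of $\beta$ through $T\beta=1$, including the same check $\frac3{40}(18/(T\qdot))^{1/3}=\frac3{20}(9/(4T\qdot))^{1/3}$.
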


\begin{proof}
By Corollary~\ref{cor:fld-examples}(a), Theorems~\ref{thm:local} and~\ref{thm:field} apply. Here
$\azero=\beta^2$, and Theorem~\ref{thm:local} gives
$\Dstar=2\sqrt\azero\,\rhostar=2\beta\rhob=2/(T\qdot)$. With $h=\beta H$,
Theorem~\ref{thm:field}(b) gives
$\qmin(\beta H)^3=\frac{3\beta^2H^2}{\azero\Dstar}(1+o(1))=\frac32T\qdot H^2(1+o(1))$, which is
\eqref{eq:fld-pr-b4}. Since $T\azero^{-1/2}\beta^2=1$ and $T\azero^{-7/6}\beta^{10/3}=1$,
Theorem~\ref{thm:field}(d) gives
$f(H)-f(0)=-T(\Ffield(\beta H)-\Ffield(0))
=-\frac{H^2}2+\frac3{40}(9\Dstar)^{1/3}|H|^{10/3}(1+o(1))$, and
$\frac3{40}(9\Dstar)^{1/3}=\frac3{40}(18/(T\qdot))^{1/3}=\frac3{20}(9/(4T\qdot))^{1/3}$, which is
\eqref{eq:fld-pr-b5}.
\end{proof}

The quadratic term $-H^2/2$, a unit susceptibility throughout the spin-glass phase that
\cite[Appendix~B]{PR09} describe as a known result, is Toulouse's identity
$\Ffield''(0)=1/\beta$ \cite{Tou80}; Theorem~\ref{thm:field}(d) gives $\Ffield''(0)=\azero^{-1/2}$
in general, and Section~\ref{sec:intro-perturbations} recalls its history.

\paragraph{The de Almeida--Thouless line.}
The replica-symmetric overlap $q$ solves \cite[(10)]{PR09} and is stable above the line
\cite[(22)]{PR09}. Both are expectations of functions of $\beta y$, with $y$ a centred Gaussian of
variance $q$ tilted by $\cosh^n\beta y$; so $\beta y$ has the $\cosh^n$-tilted law of
$\mathcal N(0,w)$, $w=\beta^2q=\xi'(q)$, whose expectation is $\E_{w,n}$ of
Section~\ref{sec:nc-dat}, and (10) and (22) read
\begin{equation}\label{eq:B-dat}
 q=\Tfrak\bigl(\xi'(q),n\bigr),\qquad \xi''(q)\,\Sfrak\bigl(\xi'(q),n\bigr)=1 ,
\end{equation}
the contact equation of the two-atom measure $n\delta_0+(1-n)\delta_q$ and its marginal case
\eqref{eq:nc-two-atom}. For $1<\beta^2<1+\varepsilon_*$, Theorem~\ref{thm:nearcrit}(b) gives
$\theta_c=\frac23(\beta^2-1)+O((\beta^2-1)^2)$, whose leading term is that of their line
$n_{\rm dAT}=\frac43(1-T)+O((1-T)^2)$ \cite[(23)]{PR09}, since
$\frac23(\beta^2-1)=\frac43(1-T)+O((1-T)^2)$. Their small parameter $1-T$ is not the distance
$\azero-1$ to criticality used in Section~\ref{sec:nc}.

\paragraph{Stability above the first plateau.}
With $q=Q_\theta$, $q_{\rm free}=Q_*$ and $x_c=x_\theta=\zeta_\theta([0,q_\theta])$, the
prediction $q(x)=q_{\rm free}(x)+O(n^5)$ for $x_c<x<1$ \cite[Section~IV]{PR09} is the bound
$\sup_{x_\theta<x<1}|Q_\theta(x)-Q_*(x)|=O(\theta^5)$ of Theorem~\ref{thm:nearcrit}(c), and the
continuity of the constrained $q(x)$ at $x_c$ noted there says that $q_\theta$ is not isolated
from above in $\supp\zeta_\theta$, which on $\Kcl(\varepsilon_*)$ follows from
$\supp\zeta_\theta=\{0\}\cup[q_\theta,v_\theta]$, $0<\theta<\theta_c$
(Proposition~\ref{prop:nc-dat-threshold}(c)). For $1<\beta^2<1+\varepsilon_*$ both are thus proved
near criticality, in their constrained-Parisi form (Section~\ref{sec:nc-stability}); Parisi and
Rizzo state them at every temperature below the critical one.

\subsection{The plateau expansion}\label{sec:B-expansion}

Parisi and Rizzo obtain (B2)--(B5) by expanding their functional along a one-parameter family
\cite[Appendix~B]{PR09}, assuming that $q(x)$ is unperturbed above its first plateau: $q(x)=q_0$
for $n<x<x_{\rm plateau}$ and $q(x)=q_{\rm free}(x)$ for $x>x_{\rm plateau}$, where
$q_{\rm free}(x_{\rm plateau})=q_0$. As a measure, this trial order parameter is
$\tilde\zeta^{\theta,r}=\theta\delta_0+(\alphas(r)-\theta)\delta_r+\zetas|_{(r,1]}$, with $r>0$
and $\alphas(r)\ge\theta$: the mass of $\zetas$ on $[0,r]$ is moved into an atom $\theta$ at zero
and an atom at $r$. Their expansion \cite[(B1)]{PR09} of $\Phi(n,H,q_0)-\Phi(0,0,0)$ (written with $h$ for $H$ there), stated up
to terms of order six in $q_0$ with $n=O(q_0)$ and $H^2=O(q_0^3)$, becomes by
\eqref{eq:B-functional}, with $h=\beta H$, $\theta=n$, $r=q_0$ and $\rho=\rhob=1/\qdot$,
\begin{equation}\label{eq:B-expansion}
 P^h_{\tilde\zeta^{\theta,r}}-\feq=\frac{h^2}{2\beta}+\frac{\theta h^2r}2-\frac{\rho h^2r^2}4
 +\beta^3\Bigl(\frac{\theta^2r^3}6-\frac{5\rho\theta r^4}{24}+\frac{\rho^2r^5}{15}\Bigr)+\cdots .
\end{equation}
At $h=0$ and $r=u\theta/\rho$ the right side is $\beta^3\rho^{-3}\theta^5g(u)$, with
$g(u)=\frac{u^3}6-\frac{5u^4}{24}+\frac{u^5}{15}$ and $g'(u)=\frac16u^2(u-1)(2u-3)$. The trial
measure is admissible when $\alphas(r)\ge\theta$, that is, $u\ge1$ to leading order, and on
$[1,\infty)$ the function $g$ is smallest at $u=\frac32$, where $g(\frac32)=\frac9{640}$: this is
(B2)--(B3), with $r=\frac{3\theta}{2\rho}$, the value $\cb\theta^5$ and an atom
$\alphas(r)-\theta\approx\frac\theta2$ at $r$. At $\theta=0$ the minimum over $r$ is at
$r^3=3h^2/(2\beta^3\rho)=3h^2/(\azero\Dstar)$, with value
$\frac{h^2}{2\beta}-\frac{\beta^3\rho^2r^5}{10}$: this is (B4)--(B5), the constants of
Theorem~\ref{thm:field}(b),(d), and the atom at $r$ carries all of $\alphas(r)$. Within the
ansatz, the constants thus come from minimizing over the height of the first plateau (our minimum
is their maximum of $\Phi$).

We neither prove nor use \eqref{eq:B-expansion}. The ansatz presupposes the marginal first
contact of Theorem~\ref{thm:local}(a): were $\tilde\zeta^{\theta,r}$ the constrained minimizer,
its support would contain $[r,\kappa]$ for small $r$, since $[0,\kappa]\subset\supp\zetas$, and
$\Gamma=\id$ would hold there (Lemmas~\ref{lem:cons-variational}(e)
and~\ref{lem:cons-standard}(f)), so that $\Gamma'(r)=1$. We prove this marginality
(Proposition~\ref{prop:ons-onset}) and obtain the constants from the envelope identity and the
universal cubic (Section~\ref{sec:intro-ideas}), with no information on the measure above its
first contact. To leading order the minimizer has the shape of the trial measure at the bottom of
its support (Figure~\ref{fig:bottom}(a)); on $\Kcl(\varepsilon_*)$ it also agrees with $\zetas$
above its first contact, up to $O(\theta^5)$ (Theorem~\ref{thm:nearcrit}(c)).

\section{Proofs for the small field}\label{app:field}

This appendix proves the auxiliary lemmas of Section~\ref{sec:fld}, in the order
Lemma~\ref{lem:fld-diffusion}(a),(b), Lemma~\ref{lem:fld-ito} (which uses only part~(a)),
Lemma~\ref{lem:fld-diffusion}(c) (which uses Lemma~\ref{lem:fld-ito}), and
Lemmas~\ref{lem:fld-continuity}, \ref{lem:fld-plateau} and~\ref{lem:fld-taylor}.
Lemmas~\ref{lem:fld-diffusion} and~\ref{lem:fld-ito} extend to the diffusion \eqref{eq:fld-sde}
facts that are standard for $h=0$; Lemma~\ref{lem:fld-diffusion}(c) rests on
\cite[Proposition~1.1]{JT17}, and Lemma~\ref{lem:fld-continuity} concerns the dependence on $h$.

\begin{proof}[Proof of Lemma~\ref{lem:fld-diffusion}\textup{(a),(b)}]
(a) Put $M_t=\int_0^t\sqrt{a(s)}\,dB_s$ and $\mathsf b(t,x)=a(t)\alpha_\zeta(t)\Phi_{\zeta,x}(t,x)$.
By the facts of Section~\ref{sec:fld-setting}, $\mathsf b$ is Borel, $|\mathsf b|\le\|a\|_\infty$ and
$|\mathsf b(t,x)-\mathsf b(t,y)|\le\|a\|_\infty|x-y|$. For each continuous path $M$, the integral
equation $Y_t=h+\int_0^t\mathsf b(s,Y_s+M_s)\,ds$ has a unique continuous solution (Picard
iteration, and Gronwall's inequality for uniqueness), and the solutions of \eqref{eq:fld-sde} are
exactly the processes $X=Y+M$. Thus $X^{\zeta,h}=\Pi_h(M)$ for a measurable map $\Pi_h$. Since
$\Phi_{\zeta,x}(t,\cdot)$ is odd, $\mathsf b(t,-x)=-\mathsf b(t,x)$, so $-\Pi_h(M)$ solves the
equation with data $(-h,-M)$, that is, $-\Pi_h(M)=\Pi_{-h}(-M)$. As $-M$ has the law of $M$,
$X^{\zeta,-h}$ has the law of $-X^{\zeta,h}$. The functions $\Phi_{\zeta,x}^2$ and
$\Phi_{\zeta,xx}^2$ are even in $x$, which gives the two identities of expectations. Evenness of
$\Phi_\zeta(0,\cdot)$ gives $P^{-h}_\zeta=P^h_\zeta$, and uniqueness of the minimizer gives
$\zeta_{-h}=\zeta_h$.

(b) Let $\mathcal G_n=\{k/n:0\le k\le n\}\cup\{q\}=\{s_0<s_1<\dots<s_{N_n}\}$, and move the mass of
$\zeta$ on each $(s_{i-1},s_i]$ to $s_i$:
$\zeta_n=\sum_{i=1}^{N_n}\zeta((s_{i-1},s_i])\,\delta_{s_i}$. This is a probability measure because
$\zeta(\{0\})=0$. Every atom moves by at most $1/n$, so $\zeta_n\to\zeta$ weakly. If $s_i<q$, then
$(s_{i-1},s_i]\subset(0,q)$ carries no mass, so $\zeta_n$ has no atom in $[0,q)$; let $t_1\ge q$
be its first atom. The Cole--Hopf form with $m=0$ on $[0,t_1)$ and the semigroup property of
Gaussian convolution give \eqref{eq:fld-heat} for $\zeta_n$, and it passes to $\zeta$ by the
uniform convergence of $\Phi_{\zeta_n}$ to $\Phi_\zeta$. On $[0,q)$ the drift in
\eqref{eq:fld-sde} vanishes because $\alpha_\zeta=0$ there, so
$X^{\zeta,h}_q=h+M_q\sim\mathcal N(h,\xi'(q))$, as $\xi'(0)=0$.
\end{proof}

The It\^o identities are proved for finitely atomic $\zeta$, by It\^o's formula between atoms,
and extended to general $\zeta$ by coupling the diffusions.

\begin{proof}[Proof of Lemma~\ref{lem:fld-ito}]
(a) The derivatives $\partial_x^j\Phi_\zeta$, $j\ge1$, are continuous and bounded on
$[0,1]\times\R$, and $X$ has continuous paths; apply dominated convergence.

\emph{Step 1: finitely atomic measures.} Let $\zeta$ have finitely many atoms, and let
$[t_l,t_{l+1})$ be an interval on which $\alpha_\zeta=m_l$. Put $\varphi=\Phi_\zeta(t_{l+1},\cdot)$;
it is smooth, its derivatives of positive order are bounded, and $|\varphi(y)|\le|\varphi(0)|+|y|$.
For $\tau>0$ the Gaussian convolution of $e^{m_l\varphi}$ (of $\varphi$ if $m_l=0$) with variance
$\tau$ is $C^\infty$ in $(\tau,x)$, positive, and solves the heat equation. Since
$\xi'(t_{l+1})-\xi'(s)$ is analytic and positive for $s<t_{l+1}$, the Cole--Hopf form shows that
$\Phi_\zeta$ is $C^\infty$ on $[t_l,t_{l+1})\times\R$ and solves the Parisi equation there
classically, with $\alpha_\zeta=m_l$. Hence $u=\Phi_{\zeta,x}$ satisfies
$u_t+\frac a2u_{xx}+am_luu_x=0$ there, and the generator of $X$ on this interval is
$\frac a2\partial_{xx}+am_lu\,\partial_x$. It\^o's formula on $[s_1,s_2]\subset[t_l,t_{l+1})$,
together with the equations obtained by differentiating the equation for $u$ in $x$, gives
\[
 du(t,X_t)=\sqrt a\,U_t\,dB_t,\qquad
 dU_t=-m_laU_t^2\,dt+\sqrt a\,V_t\,dB_t,\qquad
 dV_t=-3m_laU_tV_t\,dt+\sqrt a\,W_t\,dB_t .
\]
These use only the equation, not the starting point. All stochastic integrands are bounded, so
the martingale parts have mean zero, and
\begin{align}
 \E u(s_2,X_{s_2})^2-\E u(s_1,X_{s_1})^2&=\int_{s_1}^{s_2}a\,\E U_s^2\,ds,\label{eq:fld-I1}\\
 \E U_{s_2}^2-\E U_{s_1}^2&=\int_{s_1}^{s_2}a\,\E\bigl(V_s^2-2m_lU_s^3\bigr)\,ds,\label{eq:fld-I2}\\
 \E\bigl(V^2-2m_lU^3\bigr)_{s_2}-\E\bigl(V^2-2m_lU^3\bigr)_{s_1}
 &=\int_{s_1}^{s_2}a\,\E\bigl(W_s^2-12m_lU_sV_s^2+6m_l^2U_s^4\bigr)\,ds.\label{eq:fld-I3}
\end{align}
By part~(a), \eqref{eq:fld-I1} and \eqref{eq:fld-I2} extend to $s_2=t_{l+1}$. Summing over the
intervals, \eqref{eq:fld-I1} holds for all $0\le s_1\le s_2\le1$, and so does \eqref{eq:fld-I2}
with $m_l$ replaced by $\alpha_\zeta(s)$ inside the integral.

\emph{Step 2: general measures.} Let $S\subset(0,1)$ be finite, let
$\mathcal G_n=\{k/n:0\le k\le n\}\cup S$, and define a finitely atomic measure $\zeta_n$ by its
distribution function $\alpha_n(s)=\alpha_\zeta(\max\{g\in\mathcal G_n:g\le s\})$ for $s<1$ and
$\alpha_n(1)=1$. Since $\alpha_\zeta(s-1/n)\le\alpha_n(s)\le\alpha_\zeta(s)$, $\alpha_n\to\alpha_\zeta$ at
every continuity point of $\alpha_\zeta$; hence $\zeta_n\to\zeta$ weakly and
$\Wone(\zeta_n,\zeta)\to0$. If $\alpha_\zeta=m$ on an open interval $I$ and $s_1<s_2$ are points of
$S\cap I$, then $\alpha_n=m$ on $[s_1,s_2]$. Let $X^n$ solve \eqref{eq:fld-sde} for $\zeta_n$ with
the same Brownian motion and $X^n_0=h$ (Lemma~\ref{lem:fld-diffusion}(a)), and put
$u_n=\Phi_{\zeta_n,x}$ and $u=\Phi_{\zeta,x}$. The martingale parts cancel, and since
$\alpha_n\le1$, $|u|\le1$ and $|u_x|\le1$,
\[
 |X^n_t-X_t|\le\|a\|_\infty\Bigl(\|u_n-u\|_\infty+\Wone(\zeta_n,\zeta)
 +\int_0^t|X^n_s-X_s|\,ds\Bigr).
\]
Gronwall's inequality gives, on every path,
$\sup_t|X^n_t-X_t|\le\|a\|_\infty e^{\|a\|_\infty}\bigl(\|u_n-u\|_\infty+\Wone(\zeta_n,\zeta)\bigr)$,
which tends to zero. Then, for $j\ge1$,
\[
 \sup_t\bigl|\partial_x^j\Phi_{\zeta_n}(t,X^n_t)-\partial_x^j\Phi_\zeta(t,X_t)\bigr|
 \le\|\partial_x^j(\Phi_{\zeta_n}-\Phi_\zeta)\|_\infty+B_{j+1}\sup_t|X^n_t-X_t|\longrightarrow0 .
\]
All these quantities are bounded by the constants $B_j$, so expectations of polynomials in them
converge uniformly in $t$; also $\alpha_n\to\alpha_\zeta$ almost everywhere. By dominated
convergence in the time integrals, \eqref{eq:fld-I1} and \eqref{eq:fld-I2} (with
$\alpha_\zeta(s)$ in the integrand) hold for $\zeta$ and all $0\le s_1\le s_2\le1$, and
\eqref{eq:fld-I3} holds for $\zeta$ on $[s_1,s_2]$ whenever $\alpha_\zeta=m$ on an open interval
containing $[s_1,s_2]$ (take $S=\{s_1,s_2\}$ and $m_l=m$).

\emph{Step 3: conclusions.} (b) By \eqref{eq:fld-I1} and part~(a),
$\Gamma(s_2)-\Gamma(s_1)=\int_{s_1}^{s_2}a\,\E U_s^2\,ds$ with a continuous integrand. (c) Let
$A(t)=\E U_t^2$. By part~(a) and right continuity of $\alpha_\zeta$, the integrand
$a(s)\E(V_s^2-2\alpha_\zeta(s)U_s^3)$ in \eqref{eq:fld-I2} is right continuous, so $A$ has the right
derivative $a(t)\E(V_t^2-2\alpha_\zeta(t)U_t^3)$ at every $t\in[0,1)$. Since $\Gamma'=aA$ and
$a\in C^1$, part~(c) follows. (d) On $I$ both integrands in \eqref{eq:fld-I2} and
\eqref{eq:fld-I3} are continuous by part~(a). Hence $A$ and $R(t)=\E(V_t^2-2mU_t^3)$ are $C^1$ on
$I$, with $A'=aR$ and $R'=a\E(W^2-12mUV^2+6m^2U^4)$. Differentiating $\Gamma'=aA$ twice gives
\eqref{eq:fld-second} and \eqref{eq:fld-third}.
\end{proof}

Part~(c) rests on \cite[Proposition~1.1]{JT17}; the main step is to identify the weak solution of
the Parisi equation used there with $\Phi_\zeta$.

\begin{proof}[Proof of Lemma~\ref{lem:fld-diffusion}\textup{(c)}]
We use \cite[Proposition~1.1]{JT17}, applied with $\beta=1$ and $\xi_0=\xi$; its hypothesis
$\xi_0(1+\epsilon)<\infty$ holds because the radius of convergence exceeds one. In \cite{JT17}
the functional and the diffusion are built from the unique weak solution $u_\zeta$ of the Parisi
equation in the class of continuous functions on $[0,1]\times\R$ with essentially bounded weak
derivative $\partial_xu_\zeta$ \cite[Section~8.1 and Proposition~8.1]{JT17}. We first check that
$u_\zeta=\Phi_\zeta$. Let $\zeta$ be finitely atomic. Then $\Phi_\zeta$ is continuous on
$[0,1]\times\R$ with $|\Phi_{\zeta,x}|\le1$, and by Step~1 of the proof of
Lemma~\ref{lem:fld-ito} it is $C^\infty$ on each strip $[t_l,t_{l+1})\times\R$ and solves the
Parisi equation classically there. Hence
$0\le-\partial_t\Phi_\zeta=\frac a2(\Phi_{\zeta,xx}+m_l\Phi_{\zeta,x}^2)\le\|a\|_\infty$ on each
strip, and $\Phi_\zeta(\cdot,x)$ is Lipschitz on $[0,1]$. Let
$\varphi\in C_c^\infty((0,1]\times\R)$. Integrating by parts once in $t$ over $[0,1]$ and twice in
$x$, and using $\Phi_\zeta(1,x)=\log\cosh x$ and $\varphi=0$ near $t=0$, we can write the weak
formulation of \cite[Section~8.1]{JT17} as
\[
 \int_0^1\!\!\int_\R\Bigl(\partial_t\Phi_\zeta+\frac a2\bigl\{
 \Phi_{\zeta,xx}+\alpha_\zeta\Phi_{\zeta,x}^2\bigr\}\Bigr)\varphi\,dx\,dt=0,
\]
which holds because the integrand vanishes off the finitely many atom times. Thus $\Phi_\zeta$ is
a weak solution, and $\Phi_\zeta=u_\zeta$ by uniqueness. For general $\zeta$, take finitely atomic
$\zeta_n$ with $\Wone(\zeta_n,\zeta)\to0$, as in Step~2 of the proof of Lemma~\ref{lem:fld-ito}.
Then $\Phi_{\zeta_n}\to\Phi_\zeta$ uniformly, and $u_{\zeta_n}\to u_\zeta$ uniformly by
\cite[Proposition~8.2]{JT17}, which is stated for the same metric. Hence $u_\zeta=\Phi_\zeta$.
Consequently the Parisi functional of \cite{JT17} at field $h$ is $P^h_\zeta$, its minimizer is
$\zeta_h$, its diffusion is \eqref{eq:fld-sde}, and its function $G_\mu$
\cite[(1.1.1)]{JT17} is $\Obs_{\zeta,h}$.

Let $h\ge0$. By \cite[Proposition~1.1]{JT17} (see also \cite[Corollary~3.6]{JT17}; these results were obtained independently in \cite{Che17}, see \cite[Remark~1.3]{JT17}), $\zeta_h$ gives
full mass to the closed set of minimizers of $\Obs_{\zeta_h,h}$ over $[0,1]$, so $\supp\zeta_h$
is contained in that set. The same proposition also states \eqref{eq:fld-sc} at every support
point, but its inequality is derived \cite[Corollary~3.10]{JT17} from derivative formulas whose
proofs are omitted there \cite[Lemma~8.4]{JT17}. At points of $(0,1)$ we derive
\eqref{eq:fld-sc} from the support statement instead. By Lemma~\ref{lem:fld-ito}(b),
$\Gamma=\Gamma_{\zeta_h,h}$ is $C^1$, so $\Obs=\Obs_{\zeta_h,h}$ is $C^2$ on $[0,1]$ with
$\Obs'=-\frac a2(\Gamma-t)$ and $\Obs''=-\frac{a'}2(\Gamma-t)-\frac a2(\Gamma'-1)$. At an interior
minimum, $\Obs'=0\le\Obs''$; since $a>0$, this gives \eqref{eq:fld-sc}, as in
Section~\ref{sec:plat-obstacle}.

For $h<0$, part~(a) gives $\zeta_h=\zeta_{-h}$ and $\Gamma_{\zeta_h,h}=\Gamma_{\zeta_{-h},-h}$, hence
$\Obs_{\zeta_h,h}=\Obs_{\zeta_{-h},-h}$, and the expectation in \eqref{eq:fld-sc} is the same for
$h$ and $-h$; so the statements for $-h>0$ transfer.

If $h\ne0$, then $\Obs_{\zeta_h,h}$ is $C^1$ with
$\Obs_{\zeta_h,h}'(0)=-\frac\azero2\Gamma_{\zeta_h,h}(0)=-\frac\azero2\Phi_{\zeta_h,x}(0,h)^2<0$,
because $X_0=h$ and $\Phi_{\zeta_h,x}(0,\cdot)$ is odd and strictly increasing. So zero is not a
minimizer of $\Obs_{\zeta_h,h}$ and does not lie in $\supp\zeta_h$; for $h>0$ this is also
\cite[Lemma~3.9]{JT17}.
\end{proof}

\begin{proof}[Proof of Lemma~\ref{lem:fld-continuity}]
If $h_n\to h$ and $\zeta_n\to\zeta$ weakly, then $P^{h_n}_{\zeta_n}\to P^h_\zeta$: indeed
$|\Phi_{\zeta_n}(0,h_n)-\Phi_\zeta(0,h)|\le\|\Phi_{\zeta_n}-\Phi_\zeta\|_\infty+|h_n-h|$, which tends
to zero, and $\alpha_{\zeta_n}\to\alpha_\zeta$ in $L^1$. Let $h_n\to h$, and let $\zeta'$ be a weak
limit point of $(\zeta_{h_n})$; the space of probability measures on $[0,1]$ is compact. Along the
corresponding subsequence, for every $\nu$,
$P^h_{\zeta'}=\lim P^{h_n}_{\zeta_{h_n}}\le\lim P^{h_n}_\nu=P^h_\nu$, so $\zeta'=\zeta_h$ by
uniqueness. Hence $h\mapsto\zeta_h$ is weakly continuous. By Lemma~\ref{lem:fld-diffusion}(a),
$\zeta_{-h}=\zeta_h$ and $\Ffield$ is even; since $\Phi_{\zeta_h,x}(0,\cdot)$ is odd,
$\mfield(-h)=-\mfield(h)$.

Fix $h$ and $k\ne0$. Comparing each minimizer in the other problem,
\[
 \Phi_{\zeta_{h+k}}(0,h+k)-\Phi_{\zeta_{h+k}}(0,h)
 \le\Ffield(h+k)-\Ffield(h)
 \le\Phi_{\zeta_h}(0,h+k)-\Phi_{\zeta_h}(0,h).
\]
By the mean value theorem both bounds equal $k\,\Phi_{\zeta,x}(0,y)$, with
$\zeta\in\{\zeta_h,\zeta_{h+k}\}$ and $y$ between $h$ and $h+k$. Since $\Phi_{\zeta,xx}\le1$,
$|\Phi_{\zeta,x}(0,y)-\Phi_{\zeta_h,x}(0,h)|\le\|\Phi_{\zeta,x}-\Phi_{\zeta_h,x}\|_\infty+|y-h|$,
which tends to zero as $k\to0$ by jet stability and the continuity just proved. Dividing by $k$
(the two bounds have the same limit for either sign of $k$) gives $\Ffield'(h)=\mfield(h)$, and
the same estimate shows that $\mfield$ is continuous.
\end{proof}

Lemma~\ref{lem:fld-plateau} holds because, for small $t$ and $|h|$ and for $\zeta$ near $\zetas$,
the diffusion stays near the origin with high probability and the jets along it are close to those
of $\Phi_{\zetas}$ at $(0,0)$.

\begin{proof}[Proof of Lemma~\ref{lem:fld-plateau}]
Write $X=X^{\zeta,h}$ and, for $k=2,3,4$, $j_k=\partial_x^k\Phi_{\zetas}(0,0)$. Thus $j_2=\Ustar$,
$j_4=\Wstar$, and $j_3=0$ because $\Phi_{\zetas}(0,\cdot)$ is even. Let $Q_m(t,x;\zeta)$ be the
expression defining $\Psi_{\zeta,h}(t,m)$, the right side of \eqref{eq:plat-psi}, with the
expectations removed and $U,V,W$ replaced by $\partial_x^k\Phi_\zeta(t,x)$, $k=2,3,4$; thus
$\Psi_{\zeta,h}(t,m)=\E Q_m(t,X_t;\zeta)$. Since $a'(0)=\xi'''(0)=0$ and $a''(0)=\xi''''(0)$, its
value at the reference point is
\[
 Q_m^*:=\xi''''(0)j_2^2+\azero^3\bigl(j_4^2+6m^2j_2^4\bigr)=\Dstar+6m^2\azero^3\Ustar^4 .
\]
The function $Q_m$ is a polynomial in $(a,a',a'',m)$ and in the three jets, which range over a
fixed compact set by the uniform bounds of Section~\ref{sec:fld-setting}. Hence
$|Q_m|\le Q_{\max}$ and, with a constant $L$ uniform in $m\in[0,1]$,
\[
 |Q_m(t,x;\zeta)-Q_m^*|\le L\Bigl(\delta(t)+\sum_{k=2}^4|\partial_x^k\Phi_\zeta(t,x)-j_k|\Bigr),
 \qquad \delta(t)=|a(t)-\azero|+|a'(t)|+|a''(t)-a''(0)| ,
\]
where $\delta(t)\to0$ as $t\to0$. Put
$\varepsilon(\zeta)=\max_{2\le k\le4}\|\partial_x^k(\Phi_\zeta-\Phi_{\zetas})\|_\infty$, which is at
most $C'\Wone(\zeta,\zetas)$ by jet stability, with $C'=\max_{2\le k\le4}C_k\|a\|_\infty$, and
\[
 \omega(r)=\max_{2\le k\le4}\ \sup_{0\le t\le r,\ |x|\le r}
 \bigl|\partial_x^k\Phi_{\zetas}(t,x)-j_k\bigr|,
\]
which tends to zero as $r\to0$ by continuity of the jets. The drift in \eqref{eq:fld-sde} is
bounded by $\|a\|_\infty$, so, as in \eqref{eq:plat-small-time},
$\E|X_t-h|^2\le2\|a\|_\infty t+2\|a\|_\infty^2t^2$. Let $0<r\le1$, $t\le r$ and $|h|\le r/2$.
Chebyshev's inequality gives
$\Prob(|X_t|>r)\le\Prob(|X_t-h|>r/2)\le8(\|a\|_\infty t+\|a\|_\infty^2t^2)/r^2$, and on the event
$\{|X_t|\le r\}$ each jet differs from $j_k$ by at most $\varepsilon(\zeta)+\omega(r)$. Hence
\begin{equation}\label{eq:fld-plateau-error}
 \sup_{0\le m\le1}\bigl|\Psi_{\zeta,h}(t,m)-Q_m^*\bigr|
 \le L\bigl(\delta(t)+3\varepsilon(\zeta)+3\omega(r)\bigr)
 +\frac{16Q_{\max}}{r^2}\bigl(\|a\|_\infty t+\|a\|_\infty^2t^2\bigr).
\end{equation}
Given $\eta>0$, choose $r\in(0,1]$ with $3L\omega(r)\le\eta/4$, and then $t_0\in(0,r]$ such that
the terms in $\delta(t)$ and in $t$ are each at most $\eta/4$ for $t\le t_0$. The left side of
\eqref{eq:fld-plateau-error} is then at most $\eta$ whenever $t\le t_0$, $|h|\le r/2$ and
$3LC'\Wone(\zeta,\zetas)\le\eta/4$. This proves the first claim. For the second, take
$\eta=\Dstar/2$, which is positive by Remark~\ref{rem:fld-nondegeneracy}, and put
$K_{\rm fld}=\min\{t_0,1/2\}$, $h_0=r/2$ and $\varepsilon_{\rm fld}=\eta/(12LC')$. If $\alpha_\zeta=m$ on an open
interval $I\subset(0,K_{\rm fld})$, then, by Lemma~\ref{lem:fld-ito}(d), $\Gamma_{\zeta,h}'''=\Psi_{\zeta,h}(\cdot,m)\ge Q_m^*-\Dstar/2\ge\Dstar/2$
on $I$.
\end{proof}

\begin{proof}[Proof of Lemma~\ref{lem:fld-taylor}]
Let $T_{k-1}g$ denote the Taylor polynomial of degree $k-1$ at zero of a function $g$. If
$\|g^{(k)}\|_\infty\le C''$, Taylor's theorem gives $|g(x)-T_{k-1}g(x)|\le C''|x|^k/k!$ on
$\R$. We use $\E X^2=h^2+\sigma^2$, $\E X^4=h^4+6h^2\sigma^2+3\sigma^4$,
$\E X^6=h^6+15h^4\sigma^2+45h^2\sigma^4+15\sigma^6$ and $\E|X|^k\le C_k''(h^k+\sigma^k)$; since
$h,\sigma\le1$, higher powers are absorbed into lower ones.

For \eqref{eq:fld-t1}, write $v=p+r$ with $p(x)=c_1x+c_3x^3/6+c_5x^5/120$. The remainder $r$ is
odd and $|r'(x)|\le C|x|^6/6!$. Since $Z$ and $-Z$ have the same law,
$\E r(h+\sigma Z)=\frac12\E[r(\sigma Z+h)-r(\sigma Z-h)]$, which is at most
$h\,\E\sup_{|s|\le h}|r'(\sigma Z+s)|=O(h(\sigma^6+h^6))$ in absolute value. Also
$\E p(X)=c_1h+\frac{c_3}6(h^3+3h\sigma^2)+\frac{c_5}{120}(h^5+10h^3\sigma^2+15h\sigma^4)$.

For \eqref{eq:fld-t2}, $v'^2$ is even, with
$T_5(v'^2)(x)=c_1^2+c_1c_3x^2+(\frac14c_3^2+\frac1{12}c_1c_5)x^4$ and remainder bounded by a
constant times $x^6$.

For \eqref{eq:fld-t3}, $v^2$ is even, with
$T_7(v^2)(x)=c_1^2x^2+\frac13c_1c_3x^4+(\frac1{36}c_3^2+\frac1{60}c_1c_5)x^6$ and remainder bounded
by a constant times $x^8$; here $|v|\le1$ is used. Take expectations and subtract $\sigma^2$
times the expansion of $\E v'(X)^2$ before truncation. The terms of order $\sigma^2$ and
$\sigma^4$ cancel, and the coefficient of $\sigma^6$ is
\[
 15\Bigl(\frac1{36}c_3^2+\frac1{60}c_1c_5\Bigr)-3\Bigl(\frac14c_3^2+\frac1{12}c_1c_5\Bigr)
 =-\frac13c_3^2 .
\]
Every remaining polynomial term contains $h^2\sigma^2$ or $h^4$, and the truncation errors are
$O(h^8+\sigma^8+\sigma^2h^6)$.
\end{proof}

\section{A cubic term near criticality: proofs}\label{app:cubic}

We prove the statements of Section~\ref{sec:atom-odd} for $\xi\in\Kclodd(\varepsilon_3)$,
$\varepsilon_3=\varepsilon_1$, with $K$, $\mathcal W$ as in Section~\ref{sec:nc-reference}
and $\Dstar$, $c$ as in \eqref{eq:atom-odd-D}. Proposition~\ref{prop:atom-odd-structure}
describes $\zetas$ and $\zeta_\theta$; Proposition~\ref{prop:atom-odd-onset} then follows
Sections~\ref{sec:fifth} and~\ref{sec:ons} with $\theta-m_0$ in place of $\theta$, and
Proposition~\ref{prop:atom-odd-rate} follows from \eqref{eq:atom-exact}; Lemma~\ref{lem:atom-odd-slope} expresses the constants
through the slope of $\alphas$ to the right of its jump at zero.

\subsection{The equilibrium measure and the constrained minimizers}\label{sec:cubic-structure}

\begin{proposition}[The measures with a cubic term]\label{prop:atom-odd-structure}
Every $\xi\in\Kclodd(\varepsilon_3)$, where $\varepsilon_3=\varepsilon_1\in(0,\frac14]$, has
the following properties.
\begin{thmlist}
\item $\xi''\ge1-\varepsilon_3$ on $[-1,1]$; in particular $\xi$ is strictly convex on
 $[-1,1]$.
\item $\supp\zetas=[0,v_*]$ for some $v_*\in(0,K)$. Moreover $\Ustar=\azero^{-1/2}$,
 \[
  m_0=\frac{\xi'''(0)}{2\xi''(0)^{3/2}}=\frac{3\beta_3^2}{(2\beta_2^2)^{3/2}}\in(0,1),
  \qquad
  \Dstar=\frac{\xi''''(0)}{\azero}-\frac{3\xi'''(0)^2}{2\azero^2}+\azero^3\Wstar^2 .
 \]
\item For $0\le\theta\le m_0$, $\zeta_\theta=\zetas$ and $p(\theta)=\feq$. For
 $m_0<\theta<1$, $\zeta_\theta(\{0\})=\theta$ and $p(\theta)>\feq$, the first positive
 contact $q_\theta$ of $\zeta_\theta$ exists and satisfies $\zeta_\theta((0,q_\theta))=0$,
 $\Gamma_\theta(q_\theta)=q_\theta$ and $\Gamma_\theta'(q_\theta)\le1$, and $p$ is
 differentiable at $\theta$ with
 $p'(\theta)=\Obs_{\zeta_\theta}(0)-\Obs_{\zeta_\theta}(q_\theta)$. Also $\feq<f_1$, and
 every $f\in(\feq,f_1)$ has $\theta_f\in(m_0,1)$.
\item As $\theta\downarrow m_0$, $p(\theta)\to\feq$ and $\zeta_\theta\Rightarrow\zetas$.
 There is $\theta_3\in(m_0,1)$ such that, for $m_0<\theta<\theta_3$,
 \[
  \supp\zeta_\theta=\{0\}\cup[q_\theta,v_\theta],\qquad 0<q_\theta<v_\theta<K,\qquad
  \Gamma_\theta'(q_\theta)=\xi''(q_\theta)\E U_{q_\theta}^2=1 .
 \]
 With $f_3=p(\theta_3)>\feq$, every $f\in(\feq,f_3)$ has $\theta_f\in(m_0,\theta_3)$, so
 its first positive contact is marginal.
\end{thmlist}
\end{proposition}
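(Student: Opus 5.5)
Parts (a) and the first half of (b) come directly from earlier results. Part (a) is Proposition~\ref{prop:nc-support}(a), since $\normCfour{\xi-\xic}<\varepsilon_3=\varepsilon_1$; that proposition did not use $\beta_3$. Likewise Proposition~\ref{prop:nc-support}(b) gives $\supp\zetas=[0,v_*]$ with $v_*\in(0,K)$, so zero is an accumulation point of the support.

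Proposition~\ref{prop:atom-atom}(b) then gives $\xi''(0)\Ustar^2=1$, hence $\Ustar=\azero^{-1/2}$, together with the atom formula. The bound $m_0<1$ holds because $\zetas\ne\delta_0$. The bound $m_0>0$ holds because $\beta_3>0$, and also $\beta_2>0$ since $\xi''(0)>1$.

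For the closed form of $\Dstar$, I will substitute $\Ustar=\azero^{-1/2}$ and $m_0=\xi'''(0)/(2\azero^{3/2})$ into \eqref{eq:atom-odd-D}. The middle term becomes $-6\azero\xi'''(0)m_0\azero^{-3/2}=-3\xi'''(0)^2/\azero^2$. The term $6\azero^3m_0^2\Ustar^4$ becomes $6\azero\cdot\xi'''(0)^2/(4\azero^3)=\tfrac32\xi'''(0)^2/\azero^2$. Together these give $-\tfrac32\xi'''(0)^2/\azero^2$, and $\xi''''(0)\Ustar^2=\xi''''(0)/\azero$, which is the stated formula.

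Part (c) is bookkeeping from Section~\ref{sec:cons}:
\begin{itemize}
\item $\zeta_\theta=\zetas$ for $\theta\le m_0$, and $p$ is continuous, by Lemma~\ref{lem:cons-variational}(a).
\item Saturation and $p(\theta)>\feq$ come from Lemma~\ref{lem:cons-variational}(c) and Lemma~\ref{lem:cons-regular}(a).
\item The first contact and its properties come from Lemma~\ref{lem:cons-variational}(e).
\item Differentiability of $p$, with $p'(\theta)=\Obs_\theta(0)-\Obs_\theta(q_\theta)$, is Proposition~\ref{prop:cons-envelope}.
\item $\feq<f_1$ and $\theta_f\in(m_0,1)$ come from Lemma~\ref{lem:cons-regular}(d).
\end{itemize}

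For (d), continuity at $m_0$ gives $p(\theta)\to p(m_0)=\feq$, and weak continuity of $\theta\mapsto\zeta_\theta$ gives $\zeta_\theta\Rightarrow\zetas$; both are in Lemma~\ref{lem:cons-variational}(a).

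The structural claim follows the proof of Lemma~\ref{lem:nc-regular} and Proposition~\ref{prop:nc-class}(d), which do not use $\beta_3=0$ once saturation holds. By item~\ref{item:nc-confined} of the choice of constants, $\zeta_\theta\in\mathcal W$, so every positive contact lies in $(0,K)$. Lemma~\ref{lem:nc-gap}(a) then forbids a mass-free gap between two positive contacts. Hence the positive contact set $C_\theta$ is an interval $[q_\theta,v_\theta]$ carrying $\zeta_\theta|_{(0,1]}$.

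The step needing care is $C_\theta=\supp\zeta_\theta\cap(0,1]$. I would argue by contradiction. Suppose a contact $t$ in $[q_\theta,v_\theta]$ lay outside the support; here I take $q_\theta$ and $v_\theta$ to be the extreme support points. Then the maximal mass-free open interval containing $t$ has support points, hence contacts, at both ends, which Lemma~\ref{lem:nc-gap}(a) forbids.

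Next I rule out the degenerate case $v_\theta=q_\theta$ for $\theta$ near $m_0$. In that case $\zeta_\theta=\theta\delta_0+(1-\theta)\delta_{q_\theta}$, and any weak limit along $\theta\downarrow m_0$ would have at most two atoms. This contradicts $\zeta_\theta\Rightarrow\zetas$, whose support is the interval $[0,v_*]$. That gives $\theta_3$. For $m_0<\theta<\theta_3$, $\Gamma_\theta=\id$ on $[q_\theta,v_\theta]$, and since $\Gamma_\theta\in C^1$ this forces $\Gamma_\theta'(q_\theta)=1$, which is the marginality claim.

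Finally, $f_3$ should be the level corresponding to $\theta_3$. Since $\Lambda'$ is an increasing homeomorphism of $(m_0,1)$ onto $(\feq,f_1)$ by Corollary~\ref{cor:cons-level}, I would set $f_3=\Lambda'(\theta_3)$, which gives $\theta_f<\theta_3$ for $f<f_3$. The statement writes $f_3=p(\theta_3)$; I expect this is meant as the level $\Lambda'(\theta_3)$, or else it needs the inequality $p(\theta_3)\le\Lambda'(\theta_3)$, which holds because $p'\ge0$. With the latter, $f<p(\theta_3)\le\Lambda'(\theta_3)$ again gives $\theta_f<\theta_3$.

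The main obstacle is the structural step in (d): checking that Lemma~\ref{lem:nc-gap} and the confinement apply uniformly to every $\zeta_\theta$ even though $\beta_3>0$. The choice of constants was stated on the whole $C^4$ ball, so I expect this to go through.
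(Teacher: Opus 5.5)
Your proposal is correct and follows the paper's proof essentially step by step, including the gap-exclusion argument that makes $C_\theta$ an interval and the weak-limit argument against two-atom measures that produces $\theta_3$. One small notational point: the statement's $q_\theta$ is the first positive contact, not the least positive support point $r_\theta$. The two coincide because otherwise $(q_\theta,r_\theta)$ would be a mass-free interval between two contacts, which your own use of Lemma~\ref{lem:nc-gap}(a) already excludes and which the paper checks explicitly.

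The only real variation is the last step. You get $\theta_f<\theta_3$ from the monotonicity of $\Lambda'$ (Corollary~\ref{cor:cons-level}) together with $p(\theta_3)\le\Lambda'(\theta_3)$, whereas the paper shows directly that $\Lambar(s)-sf>0$ on $[\theta_3,1]$. The definition $f_3=p(\theta_3)$ is intended as written, and both arguments handle it.
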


\begin{proof}
Let $\xi\in\Kclodd(\varepsilon_3)$. Since $\xi''(0)>1$ and
$\normCfour{\xi-\xic}<\varepsilon_3=\varepsilon_1$, the comparison with the critical
reference applies to $\xi$, whatever $\beta_3$: Proposition~\ref{prop:nc-support} and
items~\ref{item:nc-positive} and~\ref{item:nc-confined} of the choice of constants in
Section~\ref{sec:nc-reference}.

(a) By Proposition~\ref{prop:nc-support}(a),
$\xi''>1-\normCfour{\xi-\xic}>1-\varepsilon_3\ge\frac34$ on $[-1,1]$.

(b) The first assertion is Proposition~\ref{prop:nc-support}(b). Zero is therefore an
accumulation point of the support, and Proposition~\ref{prop:atom-atom}(b) gives
$\Ustar=\azero^{-1/2}$ and the formula for $m_0$, which is positive since $\beta_3>0$. Also
$m_0<1$, because the open set $(0,v_*)$ meets the support and so has positive mass.
Inserting $\Ustar=\azero^{-1/2}$ and $\xi'''(0)=2\azero^{3/2}m_0$ into \eqref{eq:atom-odd-D}
gives $6\azero\xi'''(0)m_0\Ustar^3=3\xi'''(0)^2/\azero^2$ and
$6\azero^3m_0^2\Ustar^4=\frac32\xi'''(0)^2/\azero^2$, which is the closed form of $\Dstar$.

(c) For $\theta\le m_0$ the measure $\zetas$ is admissible, so $p(\theta)=\feq$, and
uniqueness gives $\zeta_\theta=\zetas$. For $m_0<\theta<1$, the activity of the constraint
and $p(\theta)>\feq$, that is $\Lamhat(\theta)>0$, are part of Lemma~\ref{lem:cons-regular},
and so are $\feq<f_1$ and $\theta_f\in(m_0,1)$, since $m_0<1$ by (b). Let
$\theta\in(m_0,1)$. By Corollary~\ref{cor:cons-level}, which applies since $m_0<1$,
$f=\Lambda'(\theta)$ lies in $(\feq,f_1)$ and $\theta=\theta_f$.
Lemma~\ref{lem:cons-variational}(e) applies at $\theta$ and gives the first contact
$q_\theta$ with the stated properties, and Proposition~\ref{prop:cons-envelope} gives the
derivative of $p$.

(d) For $\theta>m_0$ the proof of Proposition~\ref{prop:atom-slope} gives
$\feq\le p(\theta)\le\feq+\frac12\xi''(1)(\theta-m_0)/(1-m_0)$, so $p(\theta)\to\feq$, and every weak
subsequential limit of $\zeta_\theta$ is $\zetas$ by uniqueness; hence
$\zeta_\theta\Rightarrow\zetas$.

For $\theta$ close to $m_0$, the positive support of $\zeta_\theta$ has at least two points.
Otherwise, by (c), there would be $\theta_n\downarrow m_0$ with
$\zeta_{\theta_n}=\theta_n\delta_0+(1-\theta_n)\delta_{r_n}$, and a weak limit of the form
$m_0\delta_0+(1-m_0)\delta_r$, which is not $\zetas$ by (b). Positive support points of
$\zeta_\theta$ are contacts (Lemma~\ref{lem:cons-variational}(e)), and by
item~\ref{item:nc-confined} they lie in $(0,K)$ and $\zeta_\theta\in\mathcal W$. A gap between
two of them would be a plateau of $\zeta_\theta$ in $(0,K)$ bounded by two contacts, at which
$\Gamma_\theta=\id$ and $\Gamma_\theta'\le1$ by \eqref{eq:plat-contact};
item~\ref{item:nc-positive} and the trapezoid principle exclude it
(Lemma~\ref{lem:nc-gap}(a)). So the positive support is an interval $[r_\theta,v_\theta]$ with
$r_\theta<v_\theta<K$. The first contact satisfies $q_\theta\le r_\theta$. If
$q_\theta<r_\theta$, then $(q_\theta,r_\theta)$ is a support-free interval between two
contacts, excluded in the same way; thus $q_\theta=r_\theta$. Since $\Gamma_\theta=\id$ on
$[q_\theta,v_\theta]$ and $\Gamma_\theta'$ is continuous, $\Gamma_\theta'(q_\theta)=1$. This
holds for $m_0<\theta<\theta_3$ with a suitable $\theta_3\in(m_0,1)$.

Let $f_3=p(\theta_3)$, which exceeds $\feq$ by (c). Since $p$ is nondecreasing, $p'\ge0$ and
$f_3\le p(\theta_3)+\theta_3p'(\theta_3)=\Lambda'(\theta_3)<f_1$ by the proof of (c). Let
$f\in(\feq,f_3)$. As $p$ is nondecreasing and $\Lambar(1)=P_{\delta_0}\ge p(s)$ for every
$s$, we have $\Lambar(s)-sf\ge s(p(\theta_3)-f)>0$ for $s\in[\theta_3,1]$. Since
$p(s)\to\feq<f$ as $s\downarrow m_0$, the same quantity is negative for some
$s\in(m_0,\theta_3)$. Hence $\theta_f\in(m_0,\theta_3)$ by (c).
\end{proof}

\subsection{Onset above the atom}\label{sec:cubic-onset}

The new inputs are the identity \eqref{eq:atom-odd-id2}, which gives part~(a), and Step~3,
where the leading terms of $\Gamma_\theta''(0+)$ cancel at $\theta=m_0$ and part~(a), which
comes from marginality, controls what is left; Step~1 replaces
Lemma~\ref{lem:plat-local-plateau} by a direct jet argument.

\begin{proof}[Proof of Proposition~\ref{prop:atom-odd-onset}]
Throughout, $m_0<\theta<\theta_3$, $q=q_\theta$, $o(1)$ refers to $\theta\downarrow m_0$, and
$U_t,V_t,W_t$ are built from $\zeta_\theta$.

\emph{Step 1: the third derivative on the left plateau.} Fix $\rho\in(0,v_*)$. By
Proposition~\ref{prop:atom-odd-structure}(b), $\zetas((0,\rho))>0$, and weak convergence gives
$\zeta_\theta((0,\rho))>0$ for $\theta$ close to $m_0$. Positive support points are contacts,
so $q_\theta<\rho$. Hence $q_\theta\to0$.

On $[0,q)$ the cumulative mass of $\zeta_\theta$ is exactly $\theta$, so
\eqref{eq:plat-third} holds on $(0,q)$ with $m=\theta$:
\[
 \Gamma_\theta'''=a''\E U^2+3aa'\E\bigl(V^2-2\theta U^3\bigr)
   +a^3\E\bigl(W^2-12\theta UV^2+6\theta^2U^4\bigr).
\]
We claim that
\begin{equation}\label{eq:atom-odd-third}
 \epsilon_\theta:=\sup_{0<t<q_\theta}\bigl|\Gamma_\theta'''(t)-\Dstar\bigr|\longrightarrow0 .
\end{equation}
Let $F$ be a monomial of degree at most four in $(\Phi_{xx},\Phi_{xxx},\Phi_{xxxx})$, and
write $F_\theta(t,x)$ and $F_*(t,x)$ for its values built from $\zeta_\theta$ and $\zetas$. For
$0<t<q$,
\[
 \bigl|\E F_\theta(t,X_t)-F_*(0,0)\bigr|
 \le\|F_\theta-F_*\|_\infty+\E\bigl|F_*(t,X_t)-F_*(t,0)\bigr|+\bigl|F_*(t,0)-F_*(0,0)\bigr| .
\]
The first term is at most $C\Wone(\zeta_\theta,\zetas)$ by \eqref{eq:plat-jet-stability} and
the bounds \eqref{eq:plat-bounds}, and it tends to zero because
$\zeta_\theta\Rightarrow\zetas$. The second is at most $C(\E X_t^2)^{1/2}\le C\sqrt q$ by
\eqref{eq:plat-bounds} and \eqref{eq:plat-small-time}, uniformly in $\theta$. The third is
at most $Cq$ by \eqref{eq:plat-time-lip}. Finally $a,a',a''$ are continuous at zero,
$\Phi_{\zetas,xxx}(0,0)=0$ by parity, and $\theta\to m_0$. Therefore $\Gamma_\theta'''$
converges uniformly on $(0,q)$ to
\[
 \xi''''(0)\Ustar^2+3\azero\xi'''(0)\bigl(-2m_0\Ustar^3\bigr)
   +\azero^3\bigl(\Wstar^2+6m_0^2\Ustar^4\bigr)=\Dstar ,
\]
which proves \eqref{eq:atom-odd-third}. The interval $(0,q)\subset(0,K)$ is a plateau of
$\zeta_\theta$, and $\zeta_\theta\in\mathcal W$ by item~\ref{item:nc-confined}, so
item~\ref{item:nc-positive} gives $\Gamma_\theta'''\ge2$ there; letting $\theta\downarrow m_0$
in \eqref{eq:atom-odd-third} proves $\Dstar\ge2$.

\emph{Step 2: three identities.} By Proposition~\ref{prop:atom-odd-structure}(d), every point
of $[q,v_\theta]$ is a support point of $\zeta_\theta$, hence a contact
(Lemma~\ref{lem:cons-variational}(e)), so $\Gamma_\theta=\id$ on $[q,v_\theta]$
by~\eqref{eq:plat-contact}; and $\zeta_\theta((0,q))=0$. So
Corollary~\ref{cor:plat-first-plateau} applies to $\zeta_\theta$ with $m=\theta$ and
$D=\Dstar$, and its error parameter is the quantity $\epsilon_\theta$ of
\eqref{eq:atom-odd-third}. Its part~(a), with \eqref{eq:atom-at-zero}, gives
\begin{equation}\label{eq:atom-odd-Gpp0}
 \Gamma_\theta''(0+)=-\frac{2\Dstar}3\,q+O(\epsilon_\theta q).
\end{equation}
Next, $\Gamma_\theta'(0)=\azero \bar U_\theta^2$ by \eqref{eq:atom-at-zero} and
$\Gamma_\theta'(q)=1$, and $\Gamma_\theta''$ extends continuously to $[0,q]$
(Lemma~\ref{lem:plat-kernel}), so
\begin{equation}\label{eq:atom-odd-id2}
 1-\azero \bar U_\theta^2=\Gamma_\theta'(q)-\Gamma_\theta'(0)
 =q\,\Gamma_\theta''(0+)+\int_0^q(q-s)\,\Gamma_\theta'''(s)\,ds
 =-\frac{\Dstar}6\,q^2+O(\epsilon_\theta q^2).
\end{equation}
Thus $\azero \bar U_\theta^2=1+\frac{\Dstar}6q^2+o(q^2)$, which is (a). Finally,
\eqref{eq:atom-at-zero} for $\zeta_\theta$, whose atom at zero is $\theta$, reads
\begin{equation}\label{eq:atom-odd-id3}
 \Gamma_\theta''(0+)=\xi'''(0)\bar U_\theta^2-2\azero^2\theta \bar U_\theta^3 .
\end{equation}

\emph{Step 3: the onset.} By Proposition~\ref{prop:atom-odd-structure}(b),
$\xi'''(0)=2\azero^{3/2}m_0$, so \eqref{eq:atom-odd-id3} becomes
\[
 \Gamma_\theta''(0+)=2\azero^{3/2}\bar U_\theta^2\bigl(m_0-\theta\sqrt{\azero}\,\bar U_\theta\bigr)
 =-2\azero^{3/2}\bar U_\theta^2(\theta-m_0)
  -2\azero^{3/2}\bar U_\theta^2\,\theta\bigl(\sqrt{\azero}\,\bar U_\theta-1\bigr).
\]
The two leading terms $2\azero^{3/2}\bar U_\theta^2m_0$ and $2\azero^{3/2}\bar U_\theta^2\theta$ cancel
at $\theta=m_0$, and the second term on the right is of the size of
$\sqrt{\azero}\bar U_\theta-1$. This is where (a) is needed: it gives
$\sqrt{\azero}\bar U_\theta-1=O(q^2)$ and $2\azero^{3/2}\bar U_\theta^2=2\sqrt{\azero}(1+O(q^2))$.
Hence
$\Gamma_\theta''(0+)=-2\sqrt{\azero}(\theta-m_0)(1+O(q^2))+O(q^2)$. Comparing with
\eqref{eq:atom-odd-Gpp0},
\[
 2\sqrt{\azero}\,(\theta-m_0)(1+o(1))=\frac{2\Dstar}3\,q+o(q),
\]
and since $\Dstar>0$ this gives $q_\theta/(\theta-m_0)\to3\sqrt{\azero}/\Dstar$, the first
limit in \eqref{eq:atom-odd-onset}. In particular $q_\theta$ is of the order of
$\theta-m_0$.

\emph{Step 4: the atom at the first contact.} Part~(b) of
Corollary~\ref{cor:plat-first-plateau}, which rests on the jump formula \eqref{eq:plat-jump}
and on $\Gamma_\theta'\equiv1$ on $[q,v_\theta]$, gives
$|2a(q)^2\zeta_\theta(\{q\})\E U_q^3-\frac{\Dstar}3q|\le\frac53\epsilon_\theta q$. As in
Step~1, $a(q)^2\E U_q^3\to\azero^2\Ustar^3=\sqrt{\azero}$. Therefore
$\zeta_\theta(\{q\})=\frac{\Dstar}{6\sqrt{\azero}}\,q\,(1+o(1))$, and the first limit in
\eqref{eq:atom-odd-onset} gives the second.

\emph{Step 5: the envelope derivative.} By Proposition~\ref{prop:atom-odd-structure}(c) and
the definition of the obstacle,
\begin{equation}\label{eq:atom-odd-envelope}
 p'(\theta)=\Obs_{\zeta_\theta}(0)-\Obs_{\zeta_\theta}(q)=\frac12\int_0^qa(s)h(s)\,ds,
 \qquad h=\Gamma_\theta-\id\ \text{on }[0,q].
\end{equation}
Part~(c) of Corollary~\ref{cor:plat-first-plateau} gives
$|\int_0^qh-\Dstar q^4/72|\le\epsilon_\theta q^4/72$ and
$|h|\le(\frac{\Dstar}6+\frac56\epsilon_\theta)q^3$ on $[0,q]$. Also
$|a(s)-\azero|\le\|a'\|_\infty q$ on $[0,q]$. Splitting $a=\azero+(a-\azero)$ in
\eqref{eq:atom-odd-envelope},
\[
 p'(\theta)=\frac{\azero}2\int_0^qh+O(q^5)=\frac{\azero\Dstar}{144}\,q_\theta^4\,(1+o(1)).
\]
By \eqref{eq:atom-odd-onset}, $q_\theta^4=81\azero^2\Dstar^{-4}(\theta-m_0)^4(1+o(1))$, so
\[
 p'(\theta)=\frac{9\azero^3}{16\Dstar^3}\,(\theta-m_0)^4(1+o(1))=5c\,(\theta-m_0)^4(1+o(1)).
\]

\emph{Step 6: integration.} By Step 5 there is $\theta_1\in(m_0,\theta_3)$ with $|p'|\le1$
on $(m_0,\theta_1]$. For $m_0<\eta'<\theta\le\theta_1$, the function $p$ is differentiable on
$[\eta',\theta]$ with bounded derivative, hence absolutely continuous, and
$p(\theta)-p(\eta')=\int_{\eta'}^\theta p'$. Let $\eta'\downarrow m_0$; then $p(\eta')\to\feq$ by
Proposition~\ref{prop:atom-odd-structure}(d). Writing $p'(s)=5c(s-m_0)^4(1+r(s))$ with
$r(s)\to0$,
\[
 \bigl|p(\theta)-\feq-c(\theta-m_0)^5\bigr|\le c(\theta-m_0)^5\sup_{m_0<s\le\theta}|r(s)| ,
\]
which is \eqref{eq:atom-odd-p}. Since $\Lambda=\theta p$ and $\theta\to m_0>0$,
\[
 \Lambda(\theta)-\theta\feq=\theta\bigl(p(\theta)-\feq\bigr)=m_0c(\theta-m_0)^5(1+o(1)),
\]
and
$\Lambda'(\theta)-\feq=(p(\theta)-\feq)+\theta p'(\theta)
=c(\theta-m_0)^5(1+o(1))+5\theta c(\theta-m_0)^4(1+o(1))
=5m_0c(\theta-m_0)^4(1+o(1))$. This is \eqref{eq:atom-odd-Lambda}.
\end{proof}

\subsection{The Legendre step}\label{sec:cubic-rate}

The remainder $S$ in \eqref{eq:atom-exact} is now the conjugate of a fifth power.

\begin{proof}[Proof of Proposition~\ref{prop:atom-odd-rate}]
By \eqref{eq:atom-exact},
\begin{equation}\label{eq:atom-odd-S}
 \Sigma(\feq+\delta)=-m_0\delta-S(\delta),\qquad
 S(\delta)=\sup_{0\le u\le1-m_0}\bigl\{u\delta-\Lamhat(m_0+u)\bigr\},
\end{equation}
and by \eqref{eq:atom-odd-Lambda}, $\Lamhat(m_0+u)=Au^5(1+r(u))$ with $A:=m_0c$ and
$r(u)\to0$ as $u\downarrow0$. For $B>0$ let
$T_B(\delta)=\sup_{u\ge0}\{u\delta-Bu^5\}=\frac45(5B)^{-1/4}\delta^{5/4}$, attained at
$u_B=(\delta/5B)^{1/4}$.

Fix $\varepsilon\in(0,\frac12)$ and choose $\eta\in(0,1-m_0)$ with $|r|\le\varepsilon$ on
$(0,\eta]$; put $c_\eta=\Lamhat(m_0+\eta)>0$. For $u\ge\eta$, convexity of $\Lamhat$ and
$\Lamhat(m_0)=0$ give $\Lamhat(m_0+u)\ge(u/\eta)c_\eta$, so $u\delta-\Lamhat(m_0+u)<0$ when
$\delta<c_\eta/\eta$. For $u\in[0,\eta]$,
$A(1-\varepsilon)u^5\le\Lamhat(m_0+u)\le A(1+\varepsilon)u^5$. Take $\delta$ so small that
$\delta<c_\eta/\eta$ and $u_{A(1-\varepsilon)}\le\eta$. Then also
$u_{A(1+\varepsilon)}\le\eta$, and
\[
 0<T_{A(1+\varepsilon)}(\delta)\le S(\delta)\le T_{A(1-\varepsilon)}(\delta).
\]
Since $\varepsilon$ is arbitrary, $S(\delta)=\frac45(5m_0c)^{-1/4}\delta^{5/4}(1+o(1))$,
which is \eqref{eq:atom-odd-rate}. The closed form follows from
$5m_0c=\frac{9\azero^3m_0}{16\Dstar^3}$ and $(16/9)^{1/4}=2/\sqrt3$.

For the dual parameter, let $f=\feq+\delta$ with $\delta$ as above. Every
$\theta=m_0+u$ with $u\ge\eta$ has
$\Lamhat(\theta)-\theta\delta\ge-m_0\delta+u(c_\eta/\eta-\delta)>-m_0\delta\ge\Sigma(f)$,
so $\theta_f<m_0+\eta$, and $\theta_f\downarrow m_0$. By
Proposition~\ref{prop:atom-odd-structure}(c), $\theta_f\in(m_0,1)$ and $\Lambda$ is
differentiable at $\theta_f$; an interior minimizer of the convex function
$\vartheta\mapsto\Lambar(\vartheta)-\vartheta f$ satisfies $\Lambda'(\theta_f)=f$. By
\eqref{eq:atom-odd-Lambda}, $\delta=5m_0c(\theta_f-m_0)^4(1+o(1))$, which is the asymptotics
of $\theta_f$. For small $\delta$, $\theta_f\in(m_0,\theta_3)$, so $q_f=q_{\theta_f}$ is the
marginal first contact of Proposition~\ref{prop:atom-odd-structure}(d), and
Proposition~\ref{prop:atom-odd-onset}(b) gives the last two assertions.
\end{proof}

\subsection{The slope to the right of the jump}\label{sec:cubic-slope}

The constants have the same form as in Theorem~\ref{thm:local} once the slope of the
equilibrium distribution function is measured to the right of its jump at zero.

\begin{lemma}\label{lem:atom-odd-slope}
Let $\xi\in\Kclodd(\varepsilon_3)$ and $\alphas(t)=\zetas([0,t])$. Then
\[
 \alphas(t)=m_0+\rhostar t+o(t)\quad(t\downarrow0),\qquad \rhostar:=\frac{\Dstar}{2\sqrt{\azero}}
 \quad\text{(for $m_0=0$ this is \eqref{eq:ons-slope})}.
\]
Consequently $q_\theta/(\theta-m_0)\to3/(2\rhostar)$ and
$c=\frac9{640}\,\azero^{3/2}\rhostar^{-3}$.
\end{lemma}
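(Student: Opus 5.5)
I will follow the proof of Proposition~\ref{prop:ons-equilibrium}(c), now with the atom $m_0>0$ at zero. By Proposition~\ref{prop:atom-odd-structure}(b), $\supp\zetas=[0,v_*]$, so every point of $(0,v_*)$ is an interior contact. Hence $\Gamma_*=\id$ on $[0,v_*]$, using $\Gamma_*(0)=0$ and continuity, and $\Gamma_*'\equiv1$ there. I take the moment identities \eqref{eq:ons-moments}, which hold for every measure by the approximation argument given there, with $A_2=\E U_t^2$, $A_3=\E U_t^3$, $B=\E V_t^2$. Differentiating $aA_2\equiv1$ at almost every $t\in(0,v_*)$ gives $a'A_2+a^2(B-2\alphas A_3)=0$, that is,
\[
 \alphas(t)=\psi(t):=\frac{N_{\zetas}(t)}{\sigma_{\zetas}(t)}
 =\frac{a'(t)A_2(t)+a(t)^2B(t)}{2a(t)^2A_3(t)} .
\]
By right continuity of $\alphas$ and continuity of $\psi$ this holds for every $t\in[0,v_*)$. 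Once $\alphas$ is known to be continuous on $(0,v_*)$, the integrands in \eqref{eq:ons-moments} are right-continuous at $0$. Consistency at $t=0$ is the right-derivative formula \eqref{eq:plat-zero}: it gives $\psi(0)=\xi'''(0)\Ustar^2/(2\azero^2\Ustar^3)=m_0$, in agreement with Proposition~\ref{prop:atom-atom}(b).

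The next step is the right derivative of $\psi$ at $0$. In \eqref{eq:ons-moments} the integrands involve $\alphas(\tau)$, which tends to $m_0=\alphas(0)$ as $\tau\downarrow0$ by right continuity. So $A_2,A_3,B$ have right derivatives at $0$, computed with $\alphas(0)=m_0$, $V_0=0$, $U_0=\Ustar$, $W_0=\Wstar$:
\[
 A_2'(0+)=-2\azero m_0\Ustar^3,\qquad A_3'(0+)=-3\azero m_0\Ustar^4,\qquad B'(0+)=\azero\Wstar^2 .
\]
Write $N=a'A_2+a^2B$ and $\sigma=2a^2A_3$, so $\psi=N/\sigma$. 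First,
\[
 N'(0+)=\xi''''(0)\Ustar^2+\xi'''(0)A_2'(0+)+\azero^3\Wstar^2
 =\xi''''(0)\Ustar^2-2\azero\xi'''(0)m_0\Ustar^3+\azero^3\Wstar^2 ,
\]
using $B(0)=0$. Second, with $\sigma(0)=2\azero^2\Ustar^3$,
\[
 \sigma'(0+)=4\azero\xi'''(0)\Ustar^3-6\azero^3m_0\Ustar^4 .
\]
Then $\psi'(0+)=\bigl(N'(0+)-m_0\sigma'(0+)\bigr)/\sigma(0)$, and
\[
 N'(0+)-m_0\sigma'(0+)=\xi''''(0)\Ustar^2-6\azero\xi'''(0)m_0\Ustar^3+\azero^3\bigl(\Wstar^2+6m_0^2\Ustar^4\bigr)=\Dstar ,
\]
which is exactly \eqref{eq:atom-odd-D}. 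Dividing by $\sigma(0)=2\azero^2\Ustar^3=2\sqrt\azero$, since $\Ustar=\azero^{-1/2}$, gives $\rhostar=\Dstar/(2\sqrt\azero)$ and hence $\alphas(t)=m_0+\rhostar t+o(t)$.

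The consequences are algebraic. Proposition~\ref{prop:atom-odd-onset}(b) gives $q_\theta/(\theta-m_0)\to3\sqrt\azero/\Dstar=3/(2\rhostar)$. Substituting $\Dstar=2\sqrt\azero\,\rhostar$ into $c=9\azero^3/(80\Dstar^3)$ gives $c=\frac{9}{640}\azero^{3/2}\rhostar^{-3}$.

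The main obstacle is the regularity needed for a genuine right derivative at $0$. Near $0$ the integrands of \eqref{eq:ons-moments} contain $\alphas$, which jumps at $0$. I would first establish the quotient formula on $(0,v_*)$, which gives continuity of $\alphas$ there. I would then note that $\alphas(\tau)\to m_0$ as $\tau\downarrow0$, so the integrands converge to their values at $\tau=0$ with $\alphas(0)=m_0$. This justifies the one-sided derivatives of $A_2,A_3,B$ used above, by the continuity of the expectations in $t$ (Lemma~\ref{lem:plat-ito}).
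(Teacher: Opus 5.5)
Your proposal is correct and follows essentially the same route as the paper: the quotient formula $\alphas=N_{\zetas}/\sigma_{\zetas}$ on $[0,v_*)$ from $\Gamma_*'\equiv1$, the right derivatives of the three moments at zero computed with $\alphas(0)=m_0$, and the identity $N'(0+)-m_0\sigma'(0+)=\Dstar$ divided by $\sigma(0)=2\sqrt{\azero}$. The only difference is that the paper reads the quotient formula directly off the right-derivative formula of Lemma~\ref{lem:plat-ito}(b) at every $t\in[0,v_*)$, including $t=0$. You instead differentiate $aA_2\equiv1$ almost everywhere and extend by right continuity, which is equally valid.
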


\begin{proof}
Let $U_t,V_t,W_t$ be built from $\zetas$, and put $\mathcal U_2(t)=\E U_t^2$,
$\mathcal V_2(t)=\E V_t^2$ and $\mathcal U_3(t)=\E U_t^3$. For $0\le s\le t\le1$,
\begin{gather*}
 \mathcal U_2(t)-\mathcal U_2(s)=\int_s^ta\,\E\bigl(V_r^2-2\alphas(r)U_r^3\bigr)\,dr,\qquad
 \mathcal V_2(t)-\mathcal V_2(s)=\int_s^ta\,\E\bigl(W_r^2-6\alphas(r)U_rV_r^2\bigr)\,dr,\\
 \mathcal U_3(t)-\mathcal U_3(s)=\int_s^t3a\,\E\bigl(U_rV_r^2-\alphas(r)U_r^4\bigr)\,dr .
\end{gather*}
For measures with finitely many atoms these follow from It\^o's formula on each plateau,
where $dU_t=-ma U_t^2\,dt+\sqrt a\,V_t\,dB_t$ and
$dV_t=-3maU_tV_t\,dt+\sqrt a\,W_t\,dB_t$, and from continuity across the atoms; in general
they follow by approximation, using \eqref{eq:plat-jet-stability} and
Lemma~\ref{lem:plat-coupling}, as in the proof of Proposition~\ref{prop:ons-equilibrium}.
In the notation \eqref{eq:plat-NS}, write $N=N_{\zetas}=a'\mathcal U_2+a^2\mathcal V_2$ and
$\sigma=\sigma_{\zetas}=2a^2\mathcal U_3$; here $\sigma>0$ because
$\mathcal U_3\ge\mathcal U_2^{3/2}>0$. By Proposition~\ref{prop:atom-odd-structure}(b) and
Lemma~\ref{lem:ons-zero-in-support}, $\Gamma_*=\id$ on $[0,v_*]$, so $\Gamma_*'\equiv1$ on $[0,v_*)$ and the right derivative of
$\Gamma_*'$ vanishes there. By Lemma~\ref{lem:plat-ito}(b) this right derivative is
$N(t)-\alphas(t)\sigma(t)$, so
\[
 \alphas(t)=\frac{N(t)}{\sigma(t)}\qquad(0\le t<v_*).
\]
The integrands above are right continuous at $r=0$, so $\mathcal U_2$, $\mathcal V_2$ and
$\mathcal U_3$ have right derivatives at zero. With $X_0=0$, $V_0=0$, $\alphas(0)=m_0$ and
$a'(0)=\xi'''(0)$,
\[
 \begin{gathered}
 N(0)=a'(0)\Ustar^2,\qquad
 N'(0+)=a''(0)\Ustar^2-2\azero a'(0)m_0\Ustar^3+\azero^3\Wstar^2,\\
 \sigma(0)=2\azero^2\Ustar^3,\qquad
 \sigma'(0+)=4\azero a'(0)\Ustar^3-6\azero^3m_0\Ustar^4 .
 \end{gathered}
\]
Hence $N(0)/\sigma(0)=m_0$, and the right derivative of $\alphas$ at zero is
\[
 \frac{N'(0+)-m_0\sigma'(0+)}{\sigma(0)}
 =\frac{a''(0)\Ustar^2-6\azero a'(0)m_0\Ustar^3+\azero^3(\Wstar^2+6m_0^2\Ustar^4)}
       {2\azero^2\Ustar^3}
 =\frac{\Dstar}{2\azero^2\Ustar^3}=\frac{\Dstar}{2\sqrt{\azero}} .
\]
The two consequences follow from \eqref{eq:atom-odd-onset} and \eqref{eq:atom-odd-D}.
\end{proof}

On an interval $[t_1,t_2)$ of the support, smoothness of $\alphas$ and the quotient formula
$\alphas=N_{\zetas}/\sigma_{\zetas}$ are \cite[Theorem~2(ii) and its proof]{AC15a}; with
$t_1=0$ this also gives the right derivative of $\alphas$ at zero, where $\zetas$ has its atom,
and Lemma~\ref{lem:atom-odd-slope} computes its value. With it,
Proposition~\ref{prop:atom-odd-onset}(b) and the law for $p$ in \eqref{eq:atom-odd-p} read as
Theorem~\ref{thm:local}(a),(b) with $\theta$ replaced by $\theta-m_0$ (the laws for $\Lambda$
and $\Sigma$ change, Section~\ref{sec:atom-odd}):
$q_\theta\sim\frac3{2\rhostar}(\theta-m_0)$, an atom $\frac12(\theta-m_0)$ at $q_\theta$,
and $p(\theta)-\feq\sim\frac9{640}\azero^{3/2}\rhostar^{-3}(\theta-m_0)^5$. In particular
$\zeta_\theta([0,q_\theta])=\theta+\frac12(\theta-m_0)(1+o(1))$ and
$\alphas(q_\theta)=m_0+\rhostar q_\theta+o(q_\theta)$ agree up to $o(\theta-m_0)$: to leading
order the constrained distribution function rejoins $\alphas$ at its first contact, as drawn
in Figure~\ref{fig:bottom}(c).

\section{The de Almeida--Thouless threshold: proof of
\texorpdfstring{Proposition~\ref*{prop:nc-dat-threshold}}%
{Proposition \getrefnumber{prop:nc-dat-threshold}}}\label{app:dat}

This appendix proves Proposition~\ref{prop:nc-dat-threshold} for every
$\xi\in\Kcl(\varepsilon_*)$. By Lemma~\ref{lem:nc-regular}, the positive support of a
constrained minimizer is either an interval that starts at a marginal contact or a single point
$r$, and then the minimizer is $\zeta^{\theta,r}=\theta\delta_0+(1-\theta)\delta_r$. For such a
measure, $\Gamma$ and $\Gamma'$ at $r$ are explicit Gaussian integrals
(Lemma~\ref{lem:C-two-atom}), so the contact conditions at $r$ are the replica-symmetric
fixed-point equation $r=\Tfrak(\xi'(r),\theta)$ and the de Almeida--Thouless inequality
$\mathfrak A(\theta)\le1$. The fixed-point equation has exactly one small root $\qrs_\theta$,
and $\mathfrak A-1$ changes sign exactly once along it, at $\theta_c$ (Lemma~\ref{lem:C-rs}),
uniformly over the class. Conversely,
if $\mathfrak A(\theta)\le1$, the obstacle of $\zeta^{\theta,\qrs_\theta}$ has its strict
minimum at $\qrs_\theta$, and convexity of the Parisi functional makes this measure the
constrained minimizer (Lemma~\ref{lem:C-sufficiency}). No monotonicity of
$\theta\mapsto\zeta_\theta$ is used; the only monotone quantity is the scalar
$\mathfrak W=(\mathfrak A-1)/\qrs_\theta$, whose sign is that of $\mathfrak A-1$.
Section~\ref{sec:dat-constants} gives the bounds that the nonnegative coefficients of $\xi$
provide, proves Lemma~\ref{lem:nc-regular} and fixes the constants, in the order $K$, then
$(\mathcal W,\varepsilon_1)$, then $\varepsilon_*$, for this appendix and
Appendix~\ref{app:stability}. Section~\ref{sec:C-two-atom} computes the two-atom measures,
Section~\ref{sec:C-reduction} states the two remaining lemmas and proves the proposition, and
Sections~\ref{sec:C-branch} and~\ref{sec:C-sufficiency} prove these lemmas.

\subsection{The class and the choice of constants}\label{sec:dat-constants}

\begin{lemma}[Elementary bounds on the class]\label{lem:dat-class-bounds}
Let $0<\varepsilon\le\frac14$, $\xi\in\Kcl(\varepsilon)$, $a=\xi''$ and $\tau=\azero-1$. Then:
\begin{steplist}
\item $0<\tau<\varepsilon$, and $1<\azero\le a(t)\le1+\varepsilon$ for $t\in[0,1]$;
\item $0\le a'(t)\le\varepsilon t$, $0\le a''(t)\le\varepsilon$ and
 $0\le a(t)-\azero\le\varepsilon t^2/2$ for $t\in[0,1]$;
\item $0\le\xi^{(5)}(t)\le\varepsilon$ and $0\le a'(t)-\xi''''(0)t\le\varepsilon t^2/2$ for
 $t\in[0,\frac12]$;
\item the functions $\ell(q)=\int_0^1a(sq)\,ds$ and $\chi(q)=\int_0^1v\,\xi'''(vq)\,dv$
 satisfy $\xi'(q)=q\ell(q)$ and $\xi''(q)=\ell(q)+q\chi(q)$ on $[0,1]$, $\ell(0)=\azero$,
 $\ell'(0)=0$, $\chi(0)=0$, $\chi'(0)=\xi''''(0)/3$, $\chi\ge0$, $1<\ell\le1+\varepsilon$, and
 $|\ell'|,|\ell''|,|\chi|,|\chi'|,|\chi''|\le\varepsilon$ on $[0,\frac12]$.
\end{steplist}
\end{lemma}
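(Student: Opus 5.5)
The plan is to derive every bound from two facts: the coefficients $\beta_p^2$ are nonnegative, and $\normCfour{\xi-\xic}<\varepsilon$ controls $\xi''-1$, $\xi'''$ and $\xi''''$ uniformly on $[0,1]$. Since $\xi-\xic$ has derivatives $\xi''-1,\xi''',\xi''''$ in orders two to four, the $C^4$ bound gives $|a-1|\le\varepsilon$, $|a'|\le\varepsilon$ and $|a''|\le\varepsilon$ on $[0,1]$. Nonnegative coefficients make $a,a',a'',\xi^{(5)}$ nonnegative and nondecreasing on $[0,1]$.

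For (i), I combine $\tau=\azero-1>0$ (the class condition) with $\azero\le a(t)\le a(1)\le1+\varepsilon$, which follows from monotonicity. This also gives $\tau<\varepsilon$, with strict inequality because $\normCfour{\cdot}<\varepsilon$ is strict.

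For (ii), the key input is $a'(0)=\xi'''(0)=6\beta_3^2=0$. Then $a'(t)=\int_0^ta''\le\varepsilon t$, because $0\le a''\le a''(1)\le\varepsilon$. Integrating once more gives $0\le a(t)-\azero\le\varepsilon t^2/2$.

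For (iii), I expect the only obstacle: bounding $\xi^{(5)}$ on $[0,\tfrac12]$, since the $C^4$ norm does not control a fifth derivative directly. I would use monotonicity of $a''=\xi''''$. By the mean value theorem on $[\tfrac12,1]$ there is a point $s$ with $\xi^{(5)}(s)=2(a''(1)-a''(\tfrac12))\le2\varepsilon$. Since $\xi^{(5)}$ is nondecreasing, this gives $\xi^{(5)}\le2\varepsilon$ on $[0,\tfrac12]$, which is off by a factor $2$.

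To get the stated constant I would refine the argument. First try convexity of $\xi^{(5)}$, which holds because its coefficients are nonnegative: then $\int_{1/2}^1\xi^{(5)}\ge\tfrac12\xi^{(5)}(\tfrac12)$, which gives the same factor. Instead I use $\int_0^1\xi^{(5)}=a''(1)-a''(0)\le a''(1)\le\varepsilon$ together with monotonicity, $\int_t^1\xi^{(5)}\ge(1-t)\xi^{(5)}(t)$. This yields $\xi^{(5)}(t)\le\varepsilon/(1-t)$, still $2\varepsilon$ at $t=\tfrac12$. The sharper route is the series itself. For $t\le\tfrac12$ we have
\[
 \xi^{(5)}(t)=\sum_p\frac{p!}{(p-5)!}\beta_p^2t^{p-5}\le\sum_p\frac{p!}{(p-4)!}\beta_p^2=\xi''''(1)\le\varepsilon,
\]
because $(p-4)t^{p-5}\le(p-4)2^{-(p-5)}\le1$ for $p\ge5$. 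Then $a'(t)-\xi''''(0)t=\int_0^t(a''(s)-a''(0))\,ds\le\int_0^t\varepsilon s\,ds=\varepsilon t^2/2$, using $a''(s)-a''(0)\le\varepsilon s$ from the bound on $\xi^{(5)}$. Nonnegativity of this difference follows from $a''$ being nondecreasing.

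For (iv), $\xi'(q)=\int_0^qa=q\ell(q)$, and differentiating gives $\xi''=\ell+q\ell'$. Here $\ell'(q)=\int_0^1s\,a'(sq)\,ds=\chi(q)$, since $a'=\xi'''$. The values at zero come from $a'(0)=0$ and $\chi'(0)=\int_0^1v^2\,dv\,\xi''''(0)=\xi''''(0)/3$. The bounds $1<\ell\le1+\varepsilon$ and $\chi\ge0$ are immediate from (i) and (ii). The derivative bounds follow by differentiating under the integral, which is legitimate since each is an integral of $a',a'',a'''=\xi^{(5)}$ against weights $s^k\le1$ with total mass at most $1$. Each of these integrands is bounded by $\varepsilon$ on $[0,\tfrac12]$ by (ii) and (iii), and $|\chi|\le\int_0^1v\,\varepsilon vq\,dv\le\varepsilon$.
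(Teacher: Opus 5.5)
Your proof is correct and follows the paper's argument. You use the $C^4$ bound together with nonnegative, nondecreasing derivatives for (i)--(ii), the termwise comparison $(p-4)t^{p-5}\le1$ on $[0,\frac12]$ to bound $\xi^{(5)}$ by $\xi''''(1)$ in (iii), and differentiation under the integral for (iv). Your two preliminary attempts in (iii) only give $2\varepsilon$ and can be dropped. The one small variation is that you get $\xi''=\ell+q\chi$ by observing $\chi=\ell'$ and differentiating $\xi'=q\ell$, whereas the paper obtains it by Fubini's theorem.
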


\begin{proof}
All derivatives of $\xi$ are nonnegative and nondecreasing on $[0,1]$. Since $\xic''\equiv1$
and $\xic'''=\xic''''\equiv0$, the $C^4$ condition gives $|a-1|<\varepsilon$,
$0\le\xi'''<\varepsilon$ and $0\le\xi''''<\varepsilon$ on $[0,1]$. This gives (i), because
$\azero>1$ and $a$ is nondecreasing. Since $\xi'''(0)=6\beta_3^2=0$,
$a'(t)=\int_0^t\xi''''\le\varepsilon t$ and $a(t)-\azero=\int_0^ta'\le\varepsilon t^2/2$,
which is (ii). For (iii), compare the series termwise: for $p\ge5$ and $0\le t\le\frac12$,
$(p-4)t^{p-5}\le(p-4)2^{5-p}\le1$, so
\[
 \xi^{(5)}(t)=\sum_{p\ge5}\beta_p^2\,p(p-1)(p-2)(p-3)(p-4)\,t^{p-5}
 \le\sum_{p\ge4}\beta_p^2\,p(p-1)(p-2)(p-3)=\xi''''(1)<\varepsilon .
\]
Then $a'(t)-\xi''''(0)t=\int_0^t\int_0^s\xi^{(5)}$ gives the second bound. For (iv),
$\xi'(q)=\int_0^qa=q\ell(q)$, and Fubini's theorem gives
$\xi''(q)-\ell(q)=\int_0^1\int_{sq}^q\xi'''(u)\,du\,ds=q\,\chi(q)$. Differentiation under the
integral gives $\ell^{(j)}(q)=\int_0^1s^j\xi^{(j+2)}(sq)\,ds$ and
$\chi^{(j)}(q)=\int_0^1v^{j+1}\xi^{(j+3)}(vq)\,dv$, and the bounds follow from (i)--(iii). The
values at zero use $\xi'''(0)=0$.
\end{proof}

\begin{proof}[Proof of Lemma~\ref{lem:nc-regular}]
By Proposition~\ref{prop:nc-class}(b), \condH\ holds, so $\zetas(\{0\})=0$ by
Proposition~\ref{prop:ons-equilibrium}(b), and at every $\theta\in(0,1)$
Lemma~\ref{lem:cons-variational}(c),(e) applies: $\zeta_\theta(\{0\})=\theta$, and $C_\theta$ is a nonempty compact
subset of $(0,1)$ that contains $\supp\zeta_\theta\cap(0,1]$. By item~\ref{item:nc-confined} of
the choice of constants in Section~\ref{sec:nc-reference}, $\zeta_\theta\in\mathcal W$ and
$C_\theta\subset(0,K)$. Every point of $C_\theta$ is a global, hence local, minimum point of
$\Obs_{\zeta_\theta}$, so Lemma~\ref{lem:nc-gap}(a) gives $\zeta_\theta((u,v))>0$ whenever
$u<v$ lie in $C_\theta$.

First, $C_\theta$ is an interval. Otherwise some $t\in(q_\theta,\max C_\theta)$ is not in
$C_\theta$; then $u=\max(C_\theta\cap[q_\theta,t])$ and $v=\min(C_\theta\cap[t,1])$ satisfy
$u<t<v$ and $(u,v)\cap C_\theta=\emptyset$, so $\zeta_\theta((u,v))=0$, a contradiction. Write
$C_\theta=[q_\theta,v_\theta]$. Second, $\supp\zeta_\theta\cap(0,1]=C_\theta$. The positive
support is contained in $C_\theta$. If $q_\theta<v_\theta$, every interval with endpoints in
$C_\theta$ has positive mass, so every point of $(q_\theta,v_\theta)$, and by closedness every
point of $C_\theta$, is a support point. If $q_\theta=v_\theta$, the positive support is
nonempty, because $\zeta_\theta((0,1])=1-\theta>0$, and is contained in $\{q_\theta\}$. Since
$\zeta_\theta(\{0\})=\theta>0$, this proves $\supp\zeta_\theta=\{0\}\cup[q_\theta,v_\theta]$
and the two-atom form when $v_\theta=q_\theta$. Finally let $q_\theta<v_\theta$. Every
$t\in[q_\theta,v_\theta]\subset(0,1)$ is a minimum point of $\Obs_{\zeta_\theta}$, so
$\Gamma_\theta(t)=t$ by Lemma~\ref{lem:plat-obstacle}(b). Since $\Gamma_\theta$ is $C^1$,
$\Gamma_\theta'(q_\theta)=1$, that is,
$\xi''(q_\theta)\E\Phi_{\zeta_\theta,xx}(q_\theta,X_{q_\theta})^2=1$.
\end{proof}

\paragraph{Choice of constants.}
Put $\eta=\frac1{40}$. Let $K_{\rm rs}$ and $\tau_0$ be the absolute constants of
Lemma~\ref{lem:C-rs}. Appendix~\ref{app:stability} uses finitely many constants, each written
$C_0$, which depend only on the ball $\mathfrak B=\{\xi:\normCfour{\xi-\xic}<\frac14\}$, not
on $K$, the measure or $\theta$.
\emph{First}, fix $K>0$ with $K\le\min\{K_P,\frac1{20},K_{\rm rs}\}$, where $K_P$ is the
constant of Lemma~\ref{lem:nc-positive-third} (so that the choice of constants in
Section~\ref{sec:nc-reference} can be made with this $K$), and $C_0K\le\eta$ for each of
these constants $C_0$. Then
$2C_0K\le\frac14$, and the slope condition $\frac{3-C_0K}3-C_0K\ge\frac12$ used in
Appendix~\ref{sec:D-proof} holds. \emph{Second}, with this $K$ choose $\mathcal W$ and
$\varepsilon_1$ as in Section~\ref{sec:nc-reference}. Then intersect $\mathcal W$ with the weak
neighbourhood $\{\zeta:\Wone(\zeta,\delta_0)<r\}$ of $\delta_0$, where $r>0$ is the radius chosen
in Appendix~\ref{sec:D-setting} to obtain condition~\ref{item:D-K2} there; it depends only on
$\mathfrak B$ and $\eta$, not on $K$. Decrease $\varepsilon_1$ to at most $r$ and, using
Lemma~\ref{lem:nc-confinement}(a), further so that items~\ref{item:nc-positive}
and~\ref{item:nc-confined} of Section~\ref{sec:nc-reference} hold for the new $\mathcal W$. From
now on $K$, $\mathcal W$ and $\varepsilon_1$ denote these values.
\emph{Third}, since $\Wone$ metrizes the weak topology on $\Pspace([0,1])$, fix
$\varepsilon_*\in(0,\varepsilon_1]$ with $\varepsilon_*\le\min\{\eta,\tau_0,K/2\}$ such that
every probability measure on $[0,1]$ within $\Wone$-distance $2\varepsilon_*$ of $\delta_0$ lies
in $\mathcal W$. Theorem~\ref{thm:nearcrit}(a) holds with this $\varepsilon_*$, since
$\varepsilon_*\le\varepsilon_1$. Now let $\xi\in\Kcl(\varepsilon_*)$ and $\tau=\azero-1$. Then:
\begin{enumerate}[label=\textup{(N\arabic*)},ref=(N\arabic*),leftmargin=*,widest=4,
  topsep=3pt,itemsep=2pt]
\item\label{item:dat-C1} $0<\tau<\varepsilon_*\le\tau_0$ and $\xi\in\Kcl(\frac14)$, so
 Lemma~\ref{lem:C-rs} applies to $\xi$; moreover
 $2\tau<2\varepsilon_*\le K\le\min\{K_{\rm rs},\frac1{20}\}$;
\item\label{item:dat-C2} $\xi\in\Kcl(\varepsilon_1)$, so Propositions~\ref{prop:nc-support}
 and~\ref{prop:nc-class}, Lemma~\ref{lem:nc-regular}, and items~\ref{item:nc-positive}
 and~\ref{item:nc-confined} of the choice of constants apply to $\xi$;
\item\label{item:dat-C3} $\xi''\ge\azero>1$ on $[0,1]$ and $0\le\xi'''(t)\le\varepsilon_*t$ for
 $t\in[0,1]$, by Lemma~\ref{lem:dat-class-bounds}(i),(ii) with $\varepsilon=\varepsilon_*$;
\item\label{item:dat-C4} $\zeta^{\theta,r}\in\mathcal W$ for $\theta\in[0,1]$ and
 $r\in(0,2\tau]$, since
 $\Wone(\zeta^{\theta,r},\delta_0)=\int_0^1\zeta^{\theta,r}((t,1])\,dt=(1-\theta)r
 \le2\tau<2\varepsilon_*$.
\end{enumerate}

\paragraph{Notation.}
$\E_{w,\theta}$, $\Tfrak$ and $\Sfrak$ ($w\ge0$, $\theta\in[0,1]$) are as in
Section~\ref{sec:nc-dat}, and $\tau$, $\ell$, $\chi$ as in Lemma~\ref{lem:dat-class-bounds}. For a
fixed covariance, $\theta\in[0,1]$ and $r\in(0,1)$ we write
$\Gamma^{\theta,r}=\Gamma_{\zeta^{\theta,r}}$; superscripts refer to the two-atom measure, not
to the covariance.

\subsection{The two-atom measures}\label{sec:C-two-atom}

Above $r$ the Parisi equation of $\zeta^{\theta,r}$ is solved by a shifted $\log\cosh$, and below
$r$ the plateau law (Lemma~\ref{lem:plat-law}) identifies the law of $X_r$; this gives
$\Gamma^{\theta,r}$ near $r$ in closed form.

\begin{lemma}[Two-atom measures]\label{lem:C-two-atom}
Let $\xi(t)=\sum_{p\ge2}\beta_p^2t^p$ have radius of convergence greater than one and
$\beta_2>0$, let $\theta\in[0,1]$ and $r\in(0,1)$, and let $X$ be the diffusion
\eqref{eq:intro-diffusion} of $\zeta^{\theta,r}$. Then
\[
 \Gamma^{\theta,r}(r)=\Tfrak\bigl(\xi'(r),\theta\bigr),\qquad
 (\Gamma^{\theta,r})'(r)=\xi''(r)\,\E\Phi_{\zeta^{\theta,r},xx}(r,X_r)^2
 =\xi''(r)\,\Sfrak\bigl(\xi'(r),\theta\bigr),
\]
and the right second derivative satisfies
\begin{equation}\label{eq:C-right-second}
 (\Gamma^{\theta,r})''(r+)\le\xi'''(r)+\xi''(r)^2\bigl(10\,\Gamma^{\theta,r}(r)-2\bigr).
\end{equation}
\end{lemma}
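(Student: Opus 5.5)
We solve the Parisi equation of $\zeta^{\theta,r}$ explicitly above $r$, use the plateau law for the law of $X_r$, and then read off $\Gamma^{\theta,r}$, its derivative and its right second derivative at $r$ from the It\^o identities of Lemma~\ref{lem:plat-ito}.

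\emph{Above $r$.} On $[r,1]$ the distribution function of $\zeta^{\theta,r}$ equals one. Set $\Phi(t,x)=\log\cosh x+\frac12(\xi'(1)-\xi'(t))$ there. This solves $\partial_t\Phi+\frac a2(\Phi_{xx}+\Phi_x^2)=0$, because $\Phi_{xx}+\Phi_x^2=\sech^2+\tanh^2=1$, and it has the right terminal value. By uniqueness (Cole--Hopf with $m=1$) this is $\Phi_{\zeta^{\theta,r}}$ on $[r,1]$. In particular
\[
 \Phi_x(r,\cdot)=\tanh,\qquad \Phi_{xx}(r,\cdot)=\sech^2,\qquad \Phi_{xxx}(r,\cdot)=-2\tanh\sech^2 .
\]

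\emph{The law of $X_r$.} On $[0,r)$ the distribution function is the constant $\theta=\zeta(\{0\})$. Lemma~\ref{lem:plat-law}, in the form \eqref{eq:plat-law}, gives $d\Law(X_r)/d\mathcal N(0,\xi'(r))\propto e^{\theta\Phi(r,y)}\propto\cosh^\theta y$. The additive constant in $\Phi(r,\cdot)$ cancels in the normalization, and when $\theta=0$ the law is simply Gaussian. So $X_r$ has the law defining $\E_{w,\theta}$ with $w=\xi'(r)$. Then
\[
 \Gamma^{\theta,r}(r)=\E\tanh^2X_r=\Tfrak(\xi'(r),\theta).
\]
Lemma~\ref{lem:plat-ito}(a) gives $(\Gamma^{\theta,r})'(r)=\xi''(r)\E\sech^4X_r=\xi''(r)\Sfrak(\xi'(r),\theta)$. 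This is exactly \eqref{eq:nc-two-atom}.

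\emph{The right second derivative.} Lemma~\ref{lem:plat-ito}(b) at $t=r$ uses the right value $\zeta([0,r])=1$:
\[
 (\Gamma^{\theta,r})''(r+)=\xi'''(r)\E U_r^2+\xi''(r)^2\E\bigl(V_r^2-2U_r^3\bigr),
\]
with $U_r=\sech^2X_r$ and $V_r=-2\tanh X_r\sech^2X_r$. Since $0<U_r\le1$, the first term is at most $\xi'''(r)$, using $\xi'''\ge0$. For the second term, write $s=\tanh^2X_r\in[0,1)$, so $U_r=1-s$. Then
\[
 V_r^2-2U_r^3=4s(1-s)^2-2(1-s)^3=(1-s)^2(6s-2).
\]
This is a pointwise function $g(s)$, and we need $\E g(s)\le10\E s-2$. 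It suffices to check the elementary inequality $(1-s)^2(6s-2)\le10s-2$ on $[0,1]$. Its difference is
\[
 10s-2-(1-s)^2(6s-2)=s\bigl(10-(1-s)^2\cdot6\bigr)+2\bigl((1-s)^2-1\bigr)+\dots,
\]
which simplifies to $s(8+8s-6s^2)\ge0$ on $[0,1]$. Taking expectations and using $\E s=\Gamma^{\theta,r}(r)$ gives \eqref{eq:C-right-second}.

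The only delicate point is applying Lemma~\ref{lem:plat-ito}(b) at an atom, with the right value $\zeta([0,r])=1$ rather than $\theta$, together with the identification of $\Phi$ above $r$. That lemma holds at atoms without any density assumption, so no further work is needed there.
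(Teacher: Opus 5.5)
Your proof is correct and follows the paper's argument: the explicit solution $\log\cosh x+\frac12(\xi'(1)-\xi'(t))$ above $r$, the plateau law \eqref{eq:plat-law} for $X_r$, and Lemma~\ref{lem:plat-ito}(b) with $\zeta([0,r])=1$. The only variation is that you bound $V_r^2-2U_r^3=(1-s)^2(6s-2)$ exactly, where the paper bounds it termwise via $V^2\le4\tanh^2$ and $U^3\ge1-3\tanh^2$.

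There is one arithmetic slip. The difference $10s-2-(1-s)^2(6s-2)$ equals $s^2(14-6s)$, not $s(8+8s-6s^2)$. Your first expression $s(10-6(1-s)^2)+2((1-s)^2-1)$ is already exact, so the trailing ``$+\dots$'' should be dropped. Since $s^2(14-6s)\ge0$ on $[0,1]$, the inequality and the conclusion stand.
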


\begin{proof}
Write $\Phi=\Phi_{\zeta^{\theta,r}}$. On $[r,1]$ the distribution function equals one, and
$\Phi(t,x)=\log\cosh x+\frac12(\xi'(1)-\xi'(t))$ solves the Parisi equation
\eqref{eq:intro-pde} there, because $\sech^2+\tanh^2=1$. Hence $\Phi_x(r,\cdot)=\tanh$,
$U:=\Phi_{xx}(r,\cdot)=\sech^2$ and $V:=\Phi_{xxx}(r,\cdot)=-2\sech^2\tanh$. The measure has
no mass in $(0,r)$ and mass $\theta$ at zero, so by \eqref{eq:plat-law} the law of $X_r$ has
density proportional to $e^{\theta\Phi(r,y)}\propto\cosh^\theta y$ with respect to
$\mathcal N(0,\xi'(r))$. Since a probability density integrates to one,
$\E F(X_r)=\E_{\xi'(r),\theta}F$ for every continuous $F$ with $|F(x)|\le Ce^{|x|}$. Taking
$F=\tanh^2$ gives the first identity. The second follows from
$(\Gamma^{\theta,r})'(r)=\xi''(r)\E U(X_r)^2$ (Lemma~\ref{lem:plat-ito}(a)) with
$U^2=\sech^4$.

By Lemma~\ref{lem:plat-ito}(b), the right derivative of $(\Gamma^{\theta,r})'$ at $r$ carries
the mass $\zeta^{\theta,r}([0,r])=1$:
\[
 (\Gamma^{\theta,r})''(r+)=\xi'''(r)\,\E U(X_r)^2+\xi''(r)^2\,\E\bigl(V(X_r)^2-2U(X_r)^3\bigr).
\]
Now $U^2\le1$, $V^2=4\sech^4\tanh^2\le4\tanh^2$, and
$U^3=(1-\tanh^2)^3\ge1-3\tanh^2$, because $(1-y)^3-(1-3y)=y^2(3-y)\ge0$ for $y\in[0,1]$.
Since $\E\tanh^2X_r=\Gamma^{\theta,r}(r)$ and $\xi'''\ge0$ on $[0,1]$ (the coefficients of
$\xi$ are nonnegative), this proves \eqref{eq:C-right-second}.
\end{proof}

The bound \eqref{eq:C-right-second} is needed only at a marginal atom, which below occurs only
for $\theta=\theta_c$: there $(\Gamma^{\theta,r})'(r)=1$, and first-order information alone
would not exclude that $\Gamma^{\theta,r}$ rises above the identity just after $r$, in which
case $r$ would not be a contact; \eqref{eq:C-right-second} excludes it.

\subsection{Reduction to the replica-symmetric branch}\label{sec:C-reduction}

The first lemma analyses the two conditions along the class. It involves no Parisi measure and
is proved in Section~\ref{sec:C-branch}, where the function $\varpi$ and the absolute constant
$\overline C\ge1$ that enter it are defined (Lemma~\ref{lem:dat-branch}).

\begin{lemma}[The replica-symmetric branch]\label{lem:C-rs}
Let $\overline C\ge1$ be the absolute constant of Lemma~\ref{lem:dat-branch}, and put
$K_{\rm rs}=\min\{\frac14,1/(2\overline C)\}$ and
$\tau_0=\min\{\frac12K_{\rm rs},1/(12\overline C^2)\}$. There is an absolute constant $C$ such
that the following holds for every $\xi\in\Kcl(\frac14)$ with $\tau=\azero-1\le\tau_0$.
\begin{thmlist}
\item For each $\theta\in[0,1]$ the equation $q=\Tfrak(\xi'(q),\theta)$ has exactly one solution
 $\qrs_\theta$ in $(0,K_{\rm rs}]$. It satisfies $\tau/\overline C\le\qrs_\theta\le2\tau$, and
 $\theta\mapsto\qrs_\theta$ is continuously differentiable on $[0,1]$ \textup{(}one-sided at
 the endpoints\textup{)}.
\item Put $\mathfrak A(\theta)=\xi''(\qrs_\theta)\,\Sfrak(\xi'(\qrs_\theta),\theta)$. Then
 $\mathfrak A(\theta)-1=\qrs_\theta\,\mathfrak W(\theta)$ with
 $\mathfrak W(\theta)=\varpi(\qrs_\theta,\theta)$; $\mathfrak W$ is strictly decreasing on
 $[0,1]$, $\mathfrak W(0)>0$ and $\mathfrak W(1)<0$. Hence there is a unique
 $\theta_c\in(0,1)$ with $\mathfrak A>1$ on $[0,\theta_c)$, $\mathfrak A(\theta_c)=1$ and
 $\mathfrak A<1$ on $(\theta_c,1]$.
\item $\bigl|\theta_c-\frac23\bigl(1+\frac14\xi''''(0)\bigr)\tau\bigr|\le C\tau^2$.
\end{thmlist}
\end{lemma}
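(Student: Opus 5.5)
The approach is to reduce everything to explicit small-$w$ jets of $\Tfrak$ and $\Sfrak$, and to rescale the fixed-point equation so that it becomes a nondegenerate root problem. First, I compute the jets of $\E_{w,\theta}$ at $w=0$ by the heat equation: for an even test function $F$, $\E[F(\sqrt wZ)\cosh^\theta(\sqrt wZ)]=\sum_k\frac{w^k}{2^kk!}(F\cosh^\theta)^{(2k)}(0)$, with remainders controlled uniformly in $\theta\in[0,1]$ because $|F|\le Ce^{|x|}$. From $\tanh^2=x^2-\frac23x^4+O(x^6)$ and $\cosh^\theta=1+\frac\theta2x^2+O(x^4)$ this gives
\[
 \Tfrak(w,\theta)=w-(2-\theta)w^2+O(w^3),\qquad
 \Sfrak(w,\theta)=1-(2-\theta)w+O(w^2).
\]
Here I trust the structure of these coefficients rather than their exact values, and I will fix the constants by the computation itself. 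In the same way I obtain bounds on the $\theta$-derivatives.

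\textbf{Part (a).} Write $\xi'(q)=q\ell(q)$ as in Lemma~\ref{lem:dat-class-bounds}(iv). For $q>0$ the equation $q=\Tfrak(q\ell(q),\theta)$ divides by $q$ to
\[
 \Upsilon(q,\theta):=\frac{\Tfrak(q\ell(q),\theta)}{q}-1
 =\ell(q)-1-(2-\theta)q\ell(q)^2+O(q^2)=0 .
\]
Now $\Upsilon(0,\theta)=\ell(0)-1=\tau$. Moreover $\partial_q\Upsilon=-(2-\theta)\ell^2+O(q)+O(\varepsilon)\le-\frac12$ on $[0,K_{\rm rs}]$, where the lemma bounds $|\ell'|\le\varepsilon$ and the $O(q)$ is at most $\overline Cq$ with $q\le1/(2\overline C)$; this is where $\overline C$ and $K_{\rm rs}$ enter. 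Strict monotonicity gives uniqueness of the root. The bounds $\Upsilon(0)=\tau$ and slope in $[-\overline C,-\frac12]$ give the existence of $\qrs_\theta$ with $\tau/\overline C\le\qrs_\theta\le2\tau$. Since $\tau\le\frac12K_{\rm rs}$, the root lies inside the interval. The implicit function theorem, with $\Upsilon$ being $C^1$ in $(q,\theta)$, gives the $C^1$ dependence on $\theta$.

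\textbf{Part (b).} Using $\xi''=\ell+q\chi$ and the expansion of $\Sfrak$,
\[
 \mathfrak A-1=\ell-1+q\chi-(2-\theta)q\ell^2+O(q^2)\quad\text{at }q=\qrs_\theta .
\]
Subtracting the fixed-point relation $\Upsilon=0$, the $\ell-1$ terms cancel to leading order. What remains is $q$ times a smooth function $\varpi(q,\theta)$, which I define exactly as $(\xi''(q)\Sfrak(\xi'(q),\theta)-1-\Upsilon(q,\theta))/q$ so that it is smooth. Its value at $q=0$ is an affine function of $\theta$ with negative slope, coming from the $\theta$-dependence of the second-order coefficients of $\Tfrak$ and $\Sfrak$; the $\chi$ contribution adds $\xi''''(0)/3\cdot q$, which is of lower order. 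To show that $\mathfrak W(\theta)=\varpi(\qrs_\theta,\theta)$ is strictly decreasing, I differentiate:
\[
 \mathfrak W'=\partial_\theta\varpi+\partial_q\varpi\,\frac{d\qrs_\theta}{d\theta}.
\]
Here $\partial_\theta\varpi\le-c+\overline Cq$, while $d\qrs/d\theta=-\partial_\theta\Upsilon/\partial_q\Upsilon=O(q^2)$ because $\partial_\theta\Upsilon=O(q)$. With $q\le2\tau\le1/(6\overline C^2)$ this gives $\mathfrak W'<0$; this is exactly the role of $\tau_0$. The signs $\mathfrak W(0)>0>\mathfrak W(1)$ then follow from the leading affine term plus $O(\tau)$ errors.

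\textbf{Part (c) and the main obstacle.} For (c), I expand $\varpi(q,\theta)=\varpi_0(\theta)+q\,\varpi_1(\theta)+O(q^2)$ and insert $\qrs_\theta=\tau+O(\tau^2)$, which follows from the slope $-(2-\theta)+O(q)$ and the linearization $\Upsilon\approx\tau-(2-\theta)q$. This last point needs care: the root is really $\tau/(2-\theta)$ to leading order, so I will carry the exact $\theta$-dependence. I then solve $\mathfrak W(\theta_c)=0$ by one Newton step around the zero of $\varpi_0$, and $\varpi_0$ must vanish at $\theta=0$ for the threshold to be $O(\tau)$. The main obstacle is getting this bookkeeping right: identifying that the leading ($q$-independent) part of $\varpi$ is proportional to $\theta$ times a positive constant, minus a term proportional to $q$ carrying the $1+\xi''''(0)/4$ factor. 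This requires the jets of $\Tfrak$ and $\Sfrak$ to third order in $w$ and second order in $\theta$, computed carefully so that the coefficient $\frac23$ emerges. I would first verify the computation on $\xi=\beta^2t^2/2$, where the known value $\frac23(\beta^2-1)$ serves as a check, and then add the $\chi'(0)=\xi''''(0)/3$ contribution.
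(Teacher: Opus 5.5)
Your architecture is the paper's. You divide the fixed-point equation by $q$, so that $\Upsilon(0,\theta)=\tau$ with slope in $[-\overline C,-\frac12]$. You set $\varpi=(\xi''\Sfrak-\ell\mathfrak R)/q$, get monotonicity of $\mathfrak W$ by implicit differentiation, and locate $\theta_c$ by Taylor expansion at $q=0$. However, the jet input is wrong at exactly the point where the structure, not just the constants, is decided.

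\textbf{The first-order jet of $\Sfrak$.} For any even $F$, normalizing the $\cosh^\theta$ tilt cancels $\theta$ at first order:
$\E_{w,\theta}F=F_0+F_2w+(3F_4+\theta F_2)w^2+O(w^3)$.
Hence $\Sfrak=1-2w+(7-2\theta)w^2+O(w^3)$, not $1-(2-\theta)w$. With your displayed expansions, $\xi''\Sfrak-\ell\mathfrak R=O(q^2)$ and $\varpi(0,\theta)\equiv0$. Then the bound $\partial_\theta\varpi\le-c+\overline Cq$ that drives your monotonicity argument has no source.

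The correct value $\varpi(0,\theta)=-\azero^2\theta$ comes from comparing two coefficients of different order. One is the $w^2$-coefficient $\theta-2$ of $\Tfrak$, which enters at order $q$ after division by $q$. The other is the $w$-coefficient $-2$ of $\Sfrak$.

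\textbf{The sign of $\mathfrak W(0)$.} Part (b) has a second gap. $\mathfrak W(0)>0$ does not follow from ``the leading affine term plus $O(\tau)$ errors'', because that leading term vanishes at $\theta=0$, as you note yourself in (c). The sign of $\mathfrak W(0)=\varpi(\qrs_0,0)$ is decided entirely by $\partial_q\varpi(0,0)=\frac43\azero^3+\frac13\xi''''(0)$. This needs three inputs:
\begin{itemize}
\item $\partial_w\mathfrak D(0,0)=7-\frac{17}3=\frac43$, where $\mathfrak D=(\Sfrak-\mathfrak R)/w$; this uses the third-order jet of $\Tfrak$ and the second-order jet of $\Sfrak$;
\item $\chi'(0)=\xi''''(0)/3$;
\item the lower bound $\qrs_0\ge\tau/\overline C$, so that the linear term dominates the quadratic remainder: $\mathfrak W(0)\ge\qrs_0(\frac43-\overline C\tau)>0$.
\end{itemize}
If this coefficient were negative, $\mathfrak W$ would be negative on all of $[0,1]$ and there would be no threshold. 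So the existence half of (b) already requires the computation you deferred to (c). For the same reason, the $\chi$ contribution is not ``lower order'' near the threshold, where $\theta\asymp q\asymp\tau$; it is what produces the factor $1+\frac14\xi''''(0)$.

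\textbf{Smaller points.}
\begin{itemize}
\item The correct expansion is $\varpi\approx-\azero^2\theta+(\frac43\azero^3+\frac13\xi''''(0))q$. This gives $\mathfrak A>1$ below $\theta_c$, and with $q_c=\tau/2+O(\tau^2)$ it yields $\frac23(1+\frac14\xi''''(0))\tau$. Your description in (c) has the overall sign reversed relative to (b).
\item $d\qrs_\theta/d\theta=-\partial_\theta\Upsilon/\partial_q\Upsilon$ is $O(q)$, not $O(q^2)$, since $\partial_\theta\Upsilon=\azero^2q+O(q^2)$. The bound $|d\qrs_\theta/d\theta|\le2\overline C\qrs_\theta\le4\overline C\tau$ still gives $\mathfrak W'\le-1+2\overline C\tau+4\overline C^2\tau\le-\frac12$ under $\tau\le1/(12\overline C^2)$, so this slip is harmless.
\end{itemize}
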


The second lemma is the sufficiency half of the threshold; it is proved in
Section~\ref{sec:C-sufficiency}.

\begin{lemma}[Sufficiency of the de Almeida--Thouless condition]\label{lem:C-sufficiency}
Let $\xi\in\Kcl(\varepsilon_*)$, $\theta\in(0,1)$ and $r\in(0,2\tau]$, write
$\zeta=\zeta^{\theta,r}$ and $h(t)=\Gamma^{\theta,r}(t)-t$, and assume $h(r)=0$ and
$h'(r)\le0$. Then $h>0$ on $(0,r)$ and $h<0$ on $(r,1]$. Consequently
$\Obs_\zeta(t)>\Obs_\zeta(r)$ for every $t\in[0,1]\setminus\{r\}$, $\zeta_\theta=\zeta$, and
the positive contact set of $\zeta_\theta$ is $\{r\}$.
\end{lemma}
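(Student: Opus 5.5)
The plan is to read off the sign of $h$ from the plateau structure of $\zeta=\zeta^{\theta,r}$, and then to use the first-variation optimality criterion for the constrained problem. The measure has two plateaus inside $(0,K)$: $(0,r)$ of height $\theta$, and $(r,1)$ of height one. By item~\ref{item:dat-C4}, $\zeta\in\mathcal W$. Item~\ref{item:nc-positive} therefore gives $\Gamma'''\ge2$ on $(0,r)$ and on $(r,K)$, and \eqref{eq:nc-confinement} gives $h<0$ on $[K,1]$.

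\emph{The left plateau.} On $[0,r]$ the function $h'$ is strictly convex, with $h(0)=0=h(r)$ and $h'(r)\le0$. Since $h$ vanishes at both ends, $h'$ has a zero $s\in(0,r)$. A strictly convex $h'$ with $h'(s)=0$ and $h'(r)\le0$ must be negative on $(s,r)$. It is positive on $[0,s)$, because $h'\le0$ at two points forces $h'<0$ strictly between them. Hence $h$ increases and then decreases, starting and ending at $0$, so $h>0$ on $(0,r)$.

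\emph{The right plateau.} On $(r,K)$ I use $h(r)=0$, $h'(r)\le0$, and $h''(r+)<0$ when $h'(r)=0$. The last fact comes from \eqref{eq:C-right-second} with $\Gamma^{\theta,r}(r)=r\le2\tau$ small: the bound is $\le\xi'''(r)+\xi''(r)^2(10r-2)<0$ by item~\ref{item:dat-C3}. So $h<0$ just to the right of $r$.

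Suppose $h$ had a zero in $(r,K)$, and let $v$ be the first one. Then $h'(v)\ge0$ and $h<0$ on $(r,v)$. Strict convexity of $h'$ on $(r,v)$ gives $h'<\max(h'(r),h'(v))$ there. The case $h'(v)<0$ cannot occur, since $h$ rises to $0$ at $v$; so $h'(v)\ge0$, and $h'$ crosses zero exactly once in $(r,v)$, going from negative to positive. This alone gives no contradiction. The obstacle is to rule it out by using the whole interval $(r,K)$ together with $h<0$ on $[K,1]$.

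The route I would take is to note that $h$ cannot stay $\ge0$ up to $K$ after $v$. Once $h'$ is positive beyond its zero, convexity of $h'$ keeps it increasing on the rest of the plateau up to $K$. Hence $h>0$ on $(v,K]$, which contradicts $h(K)<0$ from \eqref{eq:nc-confinement}. So $h<0$ on $(r,1]$.

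\emph{The constrained minimizer.} With $\Obs'=-\frac a2h$, the sign pattern shows that $\Obs$ decreases on $(0,r)$ and increases on $(r,1]$. Hence $r$ is the strict global minimizer of $\Obs_\zeta$ on $[0,1]$, and $\Obs_\zeta(0)>\Obs_\zeta(r)$.

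For any feasible $\nu=\theta'\delta_0+(1-\theta')\mu$ with $\theta'\ge\theta$, I would write
\[
\int\Obs_\zeta\,d(\nu-\zeta)=(\theta'-\theta)\Obs_\zeta(0)+(1-\theta')\int\Obs_\zeta\,d\mu-(1-\theta)\Obs_\zeta(r).
\]
This is bounded below by $(\theta'-\theta)\bigl(\Obs_\zeta(0)-\Obs_\zeta(r)\bigr)\ge0$. Convexity of $P$ together with Lemma~\ref{lem:cons-standard}(d) then gives $P_\nu\ge P_\zeta$, so $\zeta=\zeta_\theta$ by uniqueness. The positive contact set is $\{r\}$ because the minimum of $\Obs_\zeta$ is strict.
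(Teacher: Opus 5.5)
Your proof is correct and follows essentially the same route as the paper. The sign count for $h$ rests on strict convexity of $h'$ on the plateaus $(0,r)$ and $(r,K)$, on \eqref{eq:C-right-second} in the marginal case, and on confinement on $[K,1]$; minimality then follows from the obstacle inequality and convexity of $P$ along mixing segments. Your Rolle and first-zero arguments are only rephrasings of the paper's sublevel-set arguments for $\{h'\le0\}$. You should state explicitly that $r\le2\tau<K\le\frac1{20}$ by \ref{item:dat-C1}: this is what places $(0,r)$ inside $(0,K)$ and makes $10r-2<0$.
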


\begin{proof}[Proof of Proposition~\ref{prop:nc-dat-threshold}]
Let $\xi\in\Kcl(\varepsilon_*)$, so that \ref{item:dat-C1}--\ref{item:dat-C4} hold.

\emph{Parts~(a), (b) and~(e).} By \ref{item:dat-C1}, Lemma~\ref{lem:C-rs} applies to $\xi$.
Its part~(a) gives~(a), with the absolute constant $K_{\rm rs}\in(0,\frac14]$; its part~(b)
gives~(b); and its part~(c) is \eqref{eq:nc-dat-asymptotics}, with an absolute constant $C$.

\emph{Part~(c).} Let $0<\theta<\theta_c$, so that $\mathfrak A(\theta)>1$. By
\ref{item:dat-C2}, Lemma~\ref{lem:nc-regular} applies: the positive contact set of
$\zeta_\theta$ is an interval $[q_\theta,v_\theta]\subset(0,K)$, and
$\supp\zeta_\theta=\{0\}\cup[q_\theta,v_\theta]$. Suppose that $v_\theta=q_\theta$, and put
$r=q_\theta$. Lemma~\ref{lem:nc-regular} then gives $\zeta_\theta=\zeta^{\theta,r}$. As in the
proof of Lemma~\ref{lem:nc-regular}, Lemma~\ref{lem:cons-variational}(e) applies to
$\theta$, and the contact conditions \eqref{eq:cons-contact} at the first positive contact give
$\Gamma^{\theta,r}(r)=r$ and $(\Gamma^{\theta,r})'(r)\le1$. By Lemma~\ref{lem:C-two-atom} the
first condition says $r=\Tfrak(\xi'(r),\theta)$. Since $0<r<K\le K_{\rm rs}$ by
\ref{item:dat-C1}, the uniqueness in~(a) gives $r=\qrs_\theta$, and then the second condition
says $\mathfrak A(\theta)\le1$, a contradiction. Hence $v_\theta>q_\theta$. By
Lemma~\ref{lem:nc-regular}, $\Gamma_\theta(t)=t$ on $[q_\theta,v_\theta]$ and the first positive
contact is marginal, that is,
$\xi''(q_\theta)\E\Phi_{\zeta_\theta,xx}(q_\theta,X_{q_\theta})^2=\Gamma_\theta'(q_\theta)=1$;
moreover $0<q_\theta<v_\theta<K\le\frac1{20}$.

\emph{Part~(d).} Let $\theta_c\le\theta<1$, so that $\theta>0$, and put
$r=\qrs_\theta\in(0,2\tau]$. By Lemma~\ref{lem:C-two-atom} and~(a),
$\Gamma^{\theta,r}(r)=\Tfrak(\xi'(r),\theta)=r$ and
$(\Gamma^{\theta,r})'(r)=\mathfrak A(\theta)\le1$, with equality only for $\theta=\theta_c$,
by~(b). In the notation of Lemma~\ref{lem:C-sufficiency}, $h(r)=0$ and $h'(r)\le0$, so that
lemma gives $\zeta_\theta=\zeta^{\theta,r}$ with positive contact set $\{r\}$. By
Lemma~\ref{lem:C-two-atom}, the Plefka quantity \eqref{eq:intro-plefka} at this contact is
\[
 1-\xi''(r)\,\E\Phi_{\zeta_\theta,xx}(r,X_r)^2=1-(\Gamma^{\theta,r})'(r)=1-\mathfrak A(\theta),
\]
which vanishes at $\theta=\theta_c$ and is positive for $\theta>\theta_c$, by~(b).
\end{proof}

\subsection{Gaussian jets and the replica-symmetric branch}\label{sec:C-branch}

We prove Lemma~\ref{lem:C-rs}. The fixed-point equation divided by $q$, and the difference of
the two conditions divided by $q$ once more, are smooth functions of $(q,\theta)$ up to $q=0$,
with jets at $q=0$ given by the heat equation. The covariance enters only through $\ell$ and
$\chi$, which Lemma~\ref{lem:dat-class-bounds} controls uniformly over the class; Taylor's
formula then locates the root and the threshold.

\begin{lemma}[Gaussian jets]\label{lem:C-moments}
The functions $\Tfrak$ and $\Sfrak$ are $C^\infty$ on $[0,\infty)\times[0,1]$, with one-sided
derivatives on the boundary, and so are
\[
 \mathfrak R(w,\theta)=\frac{\Tfrak(w,\theta)}w,\qquad
 \mathfrak D(w,\theta)=\frac{\Sfrak(w,\theta)-\mathfrak R(w,\theta)}w ,
\]
extended continuously to $w=0$. For every $\theta\in[0,1]$,
\begin{equation}\label{eq:C-jets}
 \mathfrak R(0,\theta)=1,\qquad \partial_w\mathfrak R(0,\theta)=\theta-2,\qquad
 \mathfrak D(0,\theta)=-\theta,\qquad
 \partial_w\mathfrak D(0,\theta)=\tfrac43+3\theta-\theta^2 .
\end{equation}
\end{lemma}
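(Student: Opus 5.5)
The plan is to write everything through the heat semigroup. Put $Y=\sqrt wZ$. Then $\E F(\sqrt wZ)\cosh^\theta(\sqrt wZ)=(P_wG)(0)$, where $P_w=e^{\frac w2\partial_x^2}$ and $G=F\cosh^\theta$. Hence
\[
\Tfrak(w,\theta)=\frac{(P_w[\tanh^2\cosh^\theta])(0)}{(P_w\cosh^\theta)(0)},\qquad
\Sfrak(w,\theta)=\frac{(P_w[\sech^4\cosh^\theta])(0)}{(P_w\cosh^\theta)(0)}.
\]

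\emph{Smoothness.} For a function $G(\cdot,\theta)$ that is smooth in $x$, even, and has all $x$-derivatives bounded by $Ce^{|x|}$ uniformly in $\theta\in[0,1]$, the map $(w,\theta)\mapsto (P_wG)(0)$ is $C^\infty$ on $[0,\infty)\times[0,1]$. There are two reasons. First, $\partial_w^k P_wG(0)=2^{-k}\E\, G^{(2k)}(\sqrt wZ)$, and this is continuous up to $w=0$ by dominated convergence. Second, the $\theta$-derivatives of $\cosh^\theta$ are $(\log\cosh)^j\cosh^\theta$, which keep the same growth. The denominator is at least $1$, since $\cosh^\theta\ge1$. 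So $\Tfrak$ and $\Sfrak$ are $C^\infty$ with one-sided derivatives on the boundary.

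\emph{Division by $w$.} This uses Taylor's formula with integral remainder in $w$ (Hadamard's lemma). If $g(w,\theta)$ is $C^\infty$ and $g(0,\theta)=0$, then $g/w=\int_0^1\partial_wg(sw,\theta)\,ds$ is again $C^\infty$ up to $w=0$. Evenness and $\tanh^2(0)=0$ give $\Tfrak(0,\theta)=0$, so $\mathfrak R$ is smooth. Once we have $\mathfrak R(0,\theta)=1=\Sfrak(0,\theta)$, the same argument shows that $\mathfrak D$ is smooth.

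\emph{The jets.} They follow from $\partial_w^kP_wG(0)=2^{-k}G^{(2k)}(0)$ at $w=0$ and from expanding the ratio $N(w)/M(w)$ to second order in $w$. Write $c=\cosh^\theta$. From $c(0)=1$, $c''(0)=\theta$ and $c''''(0)=3\theta^2-2\theta$ we get
\[
M(w)=1+\tfrac\theta2 w+\tfrac{3\theta^2-2\theta}{8}w^2+O(w^3).
\]
For $\tanh^2\cosh^\theta$, use $\tanh^2=x^2-\frac23x^4+\dots$ and $\cosh^\theta=1+\frac\theta2x^2+\dots$. The second derivative at $0$ is $2$ and the fourth is $24(\frac\theta2-\frac23)=12\theta-16$. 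Hence $N_T=w+\frac{12\theta-16}{8}w^2+\frac{(\cdot)}{48}w^3+\dots$, and $\mathfrak R=1+(\frac{3\theta}2-2-\frac\theta2)w+O(w^2)=1+(\theta-2)w+O(w^2)$. This matches the claimed $\partial_w\mathfrak R(0)$.

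For $\mathfrak D$ we need $\Sfrak$ to order $w^2$ and $\mathfrak R$ to order $w^2$. That requires the sixth derivative of $\tanh^2\cosh^\theta$ at $0$, and the fourth derivative of $\sech^4\cosh^\theta$. These come from the series $\sech^4=1-2x^2+\frac53x^4-\dots$ and $\tanh^2=x^2-\frac23x^4+\frac{17}{45}x^6-\dots$, multiplied by $\cosh^\theta=1+\frac\theta2x^2+\frac{3\theta^2-2\theta}{24}x^4+\dots$. One then checks the first-order coefficient. The order-$w$ coefficient of $\Sfrak$ is $\frac12(-4+\theta)-\frac\theta2=-2$. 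Together with the $w$ coefficient $\theta-2$ of $\mathfrak R$, this gives $\mathfrak D(0,\theta)=-2-(\theta-2)=-\theta$, as claimed. The second-order coefficient is the same routine computation one level higher and should produce $\frac43+3\theta-\theta^2$.

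\emph{Main obstacle.} The real difficulty is the bookkeeping in this last coefficient, the sixth-order term of $\tanh^2\cosh^\theta$. I would organize it by computing $\log$ of the moment generating functions, i.e. cumulant-style expansions of $\log M$, $\log N_T$ and $\log N_S$, which keeps the algebra short. As an independent check I would test the case $\theta=0$: there $\mathfrak D$ comes from pure Gaussian moments and must give $\frac43$.
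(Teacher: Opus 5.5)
Your route is the paper's: the heat-equation identity $\partial_w^k\E g(\sqrt wZ,\theta)|_{w=0}=2^{-k}\partial_x^{2k}g(0,\theta)$, a denominator bounded below by one, division by $w$ through $\int_0^1\partial_w(\cdot)(sw,\theta)\,ds$, and Taylor expansion of the three Gaussian moments. The smoothness argument is right, and so are the three jets $\mathfrak R(0,\theta)=1$, $\partial_w\mathfrak R(0,\theta)=\theta-2$ and $\mathfrak D(0,\theta)=-\theta$. One small correction: $\partial_\theta^j\cosh^\theta x=(\log\cosh x)^j\cosh^\theta x$ is bounded by $|x|^je^{|x|}$, not by $Ce^{|x|}$. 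The growth class must therefore allow polynomial factors, which is harmless for Gaussian integrals.

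The gap is the fourth jet. You do not compute $\partial_w\mathfrak D(0,\theta)$; you only say it ``should'' come out. Moreover, the one new input you supply for it is wrong: $\sech^4x=(1-\tanh^2x)^2=1-2x^2+\tfrac73x^4+O(x^6)$, not $1-2x^2+\tfrac53x^4$. This coefficient is exactly what the last jet depends on. Write $s_4$ for it. The $x^4$ coefficient of $\sech^4\cosh^\theta$ is then $s_4-\frac{13\theta}{12}+\frac{\theta^2}8$, and dividing by your (correct) $M(w)$ gives $\Sfrak=1-2w+(3s_4-2\theta)w^2+O(w^3)$. On the other side, the $x^6$ coefficient of $\tanh^2\cosh^\theta$ is $\frac{17}{45}-\frac{5\theta}{12}+\frac{\theta^2}8$, which gives $\mathfrak R=1+(\theta-2)w+(\theta^2-5\theta+\frac{17}3)w^2+O(w^3)$. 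Hence
\[
 \partial_w\mathfrak D(0,\theta)=3s_4-\tfrac{17}3+3\theta-\theta^2 .
\]
With your $s_4=\frac53$ this is $-\frac23+3\theta-\theta^2$, which contradicts the statement; your proposed check at $\theta=0$ would have caught this. The correct $s_4=\frac73$ gives $\frac43+3\theta-\theta^2$. So the proof goes through only after you correct this coefficient and actually carry out the second-order expansion, which is what the paper does.
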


\begin{proof}
Let $g(x,\theta)$ be smooth on $\R\times[0,1]$ with
$|\partial_x^k\partial_\theta^jg(x,\theta)|\le C_{k,j}(1+|x|)^je^{|x|}$ for all $k,j\ge0$, and
put $M_g(w,\theta)=\E g(\sqrt wZ,\theta)$ with $Z$ standard Gaussian. For $w>0$, write $M_g$ as
the integral of $g$ against the $\mathcal N(0,w)$ density $\varphi_w$. Since
$\partial_w\varphi_w=\frac12\varphi_w''$, differentiation under the integral and two
integrations by parts give $\partial_wM_g=\frac12M_{\partial_x^2g}$; the boundary terms vanish
because $\varphi_w$ has Gaussian decay. The right side is continuous on
$[0,\infty)\times[0,1]$ by dominated convergence, so
$M_g(w,\theta)-M_g(0,\theta)=\frac12\int_0^wM_{\partial_x^2g}(s,\theta)\,ds$, and the identity
$\partial_wM_g=\frac12M_{\partial_x^2g}$ extends to $w=0$. Differentiation in $\theta$ under the
expectation is also dominated. Iterating, every mixed partial derivative of $M_g$ exists and is
continuous, and
\begin{equation}\label{eq:C-heat-jets}
 \partial_w^kM_g(0,\theta)=2^{-k}\,\partial_x^{2k}g(0,\theta)\qquad(k\ge0).
\end{equation}
The functions $\cosh^\theta x$, $\tanh^2x\cosh^\theta x$ and $\sech^4x\cosh^\theta x$ satisfy
the growth hypothesis: each $x$-derivative of $\cosh^\theta x$ is a polynomial in
$(\theta,\tanh x)$ times $\cosh^\theta x\le e^{|x|}$, and each $\theta$-derivative contributes a
factor $\log\cosh x$, which lies in $[0,|x|]$. Since $\E\cosh^\theta(\sqrt wZ)\ge1$, the
functions $\Tfrak$ and $\Sfrak$ are smooth quotients. Moreover $\Tfrak(0,\theta)=0$ and
$\Sfrak(0,\theta)=1$. Hence $\mathfrak R(w,\theta)=\int_0^1\partial_w\Tfrak(sw,\theta)\,ds$ is
smooth, with $\mathfrak R(0,\theta)=\partial_w\Tfrak(0,\theta)$, and the same representation
applies to $\mathfrak D$ once $\Sfrak(0,\theta)=\mathfrak R(0,\theta)$ is checked below.

By \eqref{eq:C-heat-jets}, an even function with Taylor coefficients $g_0,g_2,g_4,g_6$ at
$x=0$ has $M_g(w)=g_0+g_2w+3g_4w^2+15g_6w^3+O(w^4)$ at fixed $\theta$. From
\begin{gather*}
 \log\cosh x=\tfrac{x^2}2-\tfrac{x^4}{12}+O(x^6),\qquad
 \tanh^2x=x^2-\tfrac23x^4+\tfrac{17}{45}x^6+O(x^8),\\
 \sech^4x=1-2x^2+\tfrac73x^4+O(x^6),
\end{gather*}
we get $\cosh^\theta x=1+\frac\theta2x^2+(\frac{\theta^2}8-\frac\theta{12})x^4+O(x^6)$, and
therefore
\begin{align*}
 \E\cosh^\theta(\sqrt wZ)
 &=1+\tfrac\theta2\,w+\bigl(\tfrac{3\theta^2}8-\tfrac\theta4\bigr)w^2+O(w^3),\\
 \E\bigl[\tanh^2\cosh^\theta\bigr](\sqrt wZ)
 &=w+\bigl(\tfrac{3\theta}2-2\bigr)w^2
   +\bigl(\tfrac{15\theta^2}8-\tfrac{25\theta}4+\tfrac{17}3\bigr)w^3+O(w^4),\\
 \E\bigl[\sech^4\cosh^\theta\bigr](\sqrt wZ)
 &=1+\bigl(\tfrac\theta2-2\bigr)w+\bigl(\tfrac{3\theta^2}8-\tfrac{13\theta}4+7\bigr)w^2+O(w^3).
\end{align*}
Dividing by the first series,
\[
 \Tfrak(w,\theta)=w+(\theta-2)w^2+\bigl(\theta^2-5\theta+\tfrac{17}3\bigr)w^3+O(w^4),\qquad
 \Sfrak(w,\theta)=1-2w+(7-2\theta)w^2+O(w^3).
\]
Thus $\mathfrak R=1+(\theta-2)w+(\theta^2-5\theta+\frac{17}3)w^2+O(w^3)$, so
$\Sfrak(0,\theta)=\mathfrak R(0,\theta)=1$, and
$\mathfrak D=-\theta+(\frac43+3\theta-\theta^2)w+O(w^2)$. All functions involved are smooth, so
these Taylor coefficients are the values in \eqref{eq:C-jets}.
\end{proof}

\paragraph{The two conditions as smooth functions.}
For $\xi\in\Kcl(\frac14)$, $q\in[0,\frac14]$ and $\theta\in[0,1]$ define
\begin{align*}
 \Upsilon(q,\theta)&=\ell(q)\,\mathfrak R\bigl(\xi'(q),\theta\bigr)-1,\\
 \varpi(q,\theta)&=\ell(q)^2\,\mathfrak D\bigl(\xi'(q),\theta\bigr)
 +\chi(q)\,\Sfrak\bigl(\xi'(q),\theta\bigr),
\end{align*}
with $\ell$, $\chi$ as in Lemma~\ref{lem:dat-class-bounds}(iv). For $bt^2+et^4$ one has
$\ell(q)=2b+4eq^2$ and $\chi(q)=8eq$; for the Sherrington--Kirkpatrick covariance
$\ell\equiv\beta^2$ and $\chi\equiv0$. Since $\xi'(q)=q\ell(q)\in[0,\frac5{16}]$, the arguments
of $\mathfrak R$, $\mathfrak D$, $\Sfrak$ lie in the compact set $[0,\frac12]\times[0,1]$, where
these functions are smooth by Lemma~\ref{lem:C-moments}.

\needspace{4\baselineskip}
\begin{lemma}[The branch functions]\label{lem:dat-branch}
There is an absolute constant $\overline C\ge1$ such that, for every $\xi\in\Kcl(\frac14)$, all
partial derivatives of $\Upsilon$ and $\varpi$ in $(q,\theta)$ of orders one and two are
bounded by $\overline C$ on $[0,\frac14]\times[0,1]$. For $0<q\le\frac14$,
\begin{align}
 \frac{\Tfrak(\xi'(q),\theta)}q-1&=\Upsilon(q,\theta),\label{eq:C-rs-identity}\\
 \xi''(q)\,\Sfrak(\xi'(q),\theta)-\frac{\Tfrak(\xi'(q),\theta)}q
 &=q\,\varpi(q,\theta).\label{eq:C-at-identity}
\end{align}
At $q=0$,
\begin{equation}\label{eq:C-zero-jets}
\begin{gathered}
 \Upsilon(0,\theta)=\tau,\qquad
 \partial_q\Upsilon(0,\theta)=-\azero^2(2-\theta),\qquad
 \partial_\theta\Upsilon(0,\theta)=0,\\
 \varpi(0,\theta)=-\azero^2\theta,\qquad
 \partial_\theta\varpi(0,\theta)=-\azero^2,\qquad
 \partial_q\varpi(0,\theta)=\azero^3\bigl(\tfrac43+3\theta-\theta^2\bigr)+\tfrac13\xi''''(0) .
\end{gathered}
\end{equation}
\end{lemma}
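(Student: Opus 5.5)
The two identities \eqref{eq:C-rs-identity} and \eqref{eq:C-at-identity} are pure algebra, so I would check them first. By Lemma~\ref{lem:dat-class-bounds}(iv), $\xi'(q)=q\ell(q)$. Hence $\Tfrak(\xi'(q),\theta)/q=\ell(q)\,\mathfrak R(\xi'(q),\theta)$, which is \eqref{eq:C-rs-identity}. For \eqref{eq:C-at-identity}, write $w=\xi'(q)$. The definition of $\mathfrak D$ gives $\Sfrak=\mathfrak R+w\mathfrak D$, and $\xi''=\ell+q\chi$. Then
\[
 \xi''\Sfrak-\ell\mathfrak R=\ell w\mathfrak D+q\chi\Sfrak=q\bigl(\ell^2\mathfrak D+\chi\Sfrak\bigr)=q\,\varpi ,
\]
as claimed.

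For the derivative bounds I would argue by composition. $\Upsilon$ and $\varpi$ are polynomials in $\ell,\chi$ and in the functions $\mathfrak R,\mathfrak D,\Sfrak$ evaluated at $(\xi'(q),\theta)$. The argument $\xi'(q)=q\ell(q)$ lies in $[0,\frac5{16}]\subset[0,\frac12]$ for $q\le\frac14$. By Lemma~\ref{lem:C-moments}, $\mathfrak R$, $\mathfrak D$ and $\Sfrak$ are $C^\infty$ on the compact set $[0,\frac12]\times[0,1]$, so their derivatives up to order two are bounded there by absolute constants, since these functions do not depend on $\xi$. Lemma~\ref{lem:dat-class-bounds}(i),(iv) with $\varepsilon=\frac14$ bounds $\ell,\chi,\ell',\ell'',\chi',\chi''$ uniformly on $[0,\frac12]$. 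It also bounds the derivatives of $w(q)=q\ell(q)$, namely $w'=\ell+q\ell'$ and $w''=2\ell'+q\ell''$. The chain and product rules then bound every first and second partial derivative of $\Upsilon$ and $\varpi$ by a quantity that depends only on these uniform bounds, which defines the absolute constant $\overline C$ (enlarged to be at least $1$).

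For the jets \eqref{eq:C-zero-jets} I would use $w(0)=0$, $w'(0)=\ell(0)=\azero$, $\ell'(0)=0$, $\chi(0)=0$, $\chi'(0)=\xi''''(0)/3$, together with \eqref{eq:C-jets} and $\Sfrak(0,\theta)=1$.
\begin{itemize}
\item $\Upsilon(0,\theta)=\azero\cdot1-1=\tau$. This value is independent of $\theta$, so $\partial_\theta\Upsilon(0,\theta)=0$.
\item $\partial_q\Upsilon(0,\theta)=\ell'(0)\mathfrak R+\ell(0)\,\partial_w\mathfrak R\,w'(0)=\azero^2(\theta-2)$.
\item $\varpi(0,\theta)=\azero^2\mathfrak D(0,\theta)+0=-\azero^2\theta$, and therefore $\partial_\theta\varpi(0,\theta)=-\azero^2$.
\item $\partial_q\varpi(0,\theta)=2\ell\ell'\mathfrak D+\ell^2\,\partial_w\mathfrak D\,w'+\chi'\Sfrak+\chi\,(\cdots)$ at $q=0$. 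This equals $\azero^3(\frac43+3\theta-\theta^2)+\frac13\xi''''(0)$.
\end{itemize}

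The only step that needs care is making sure the constant in the derivative bounds is genuinely absolute. That is why I rely on $\xi'(q)$ staying in the fixed compact set $[0,\frac12]$, where the Gaussian functions do not depend on $\xi$, and on Lemma~\ref{lem:dat-class-bounds} for every $\xi$-dependent factor. The computations themselves are routine.
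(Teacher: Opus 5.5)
Your proposal is correct and follows essentially the same route as the paper's proof: the two identities come from $\xi'(q)=q\ell(q)$, $\xi''=\ell+q\chi$ and $\Sfrak-\mathfrak R=\xi'(q)\mathfrak D$, the uniform derivative bounds come from the chain rule (absolute bounds on $\mathfrak R,\mathfrak D,\Sfrak$ over $[0,\tfrac12]\times[0,1]$, and Lemma~\ref{lem:dat-class-bounds} with $\varepsilon=\tfrac14$ for the $\xi$-dependent factors), and the jets come from \eqref{eq:C-jets} together with the values of $\ell,\ell',\chi,\chi'$ at zero. The only cosmetic difference is that you control $\frac{d}{dq}\xi'(q)$ and $\frac{d^2}{dq^2}\xi'(q)$ through $\ell,\ell',\ell''$, whereas the paper uses $\xi''(q)$ and $\xi'''(q)$ directly.
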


So the fixed-point equation is $\Upsilon=0$, and on its solutions the de Almeida--Thouless
quantity minus one is $q\varpi$.

\begin{proof}
By the chain rule, each derivative in question is a polynomial with universal coefficients in
the derivatives of order at most two of $\mathfrak R$, $\mathfrak D$, $\Sfrak$ at
$(\xi'(q),\theta)$ and in $\ell,\ell',\ell'',\chi,\chi',\chi'',\xi''(q),\xi'''(q)$. The former
are bounded by absolute constants on $[0,\frac12]\times[0,1]$, and the latter by
Lemma~\ref{lem:dat-class-bounds} with $\varepsilon=\frac14$. For the identities,
$\Tfrak(\xi'(q),\theta)=\xi'(q)\mathfrak R(\xi'(q),\theta)=q\ell(q)\mathfrak R(\xi'(q),\theta)$,
and, with $\xi''=\ell+q\chi$ and $\Sfrak-\mathfrak R=\xi'(q)\mathfrak D=q\ell\mathfrak D$,
\[
 \xi''\Sfrak-\ell\mathfrak R=\ell(\Sfrak-\mathfrak R)+q\chi\Sfrak
 =q\bigl(\ell^2\mathfrak D+\chi\Sfrak\bigr),
\]
where $\Sfrak$, $\mathfrak R$, $\mathfrak D$ are evaluated at $(\xi'(q),\theta)$. The jets
follow from \eqref{eq:C-jets}, $\Sfrak(0,\theta)=1$, $\frac{d}{dq}\xi'(q)=\xi''(q)$, and
$\ell(0)=\azero$, $\ell'(0)=0$, $\chi(0)=0$, $\chi'(0)=\xi''''(0)/3$.
\end{proof}

\begin{proof}[Proof of Lemma~\ref{lem:C-rs}]
Let $\xi\in\Kcl(\frac14)$ with $\tau\le\tau_0$. Since $\tau_0\le\frac12K_{\rm rs}$, we have
$2\tau\le K_{\rm rs}$.

(a) By \eqref{eq:C-zero-jets}, $\partial_q\Upsilon(0,\theta)\le-\azero^2<-1$. Since the first
and second derivatives of $\Upsilon$ are bounded by $\overline C$ and
$K_{\rm rs}\le1/(2\overline C)$,
$-\overline C\le\partial_q\Upsilon\le-\frac12$ on $[0,K_{\rm rs}]\times[0,1]$. Hence
\[
 \tau-\overline Cq\le\Upsilon(q,\theta)\le\tau-\tfrac q2\qquad(0\le q\le K_{\rm rs}).
\]
Since $2\tau\le K_{\rm rs}$, the strictly decreasing function $\Upsilon(\cdot,\theta)$ has
exactly one zero in $(0,K_{\rm rs}]$, and it lies in $[\tau/\overline C,2\tau]$. By
\eqref{eq:C-rs-identity} this zero is the only solution $\qrs_\theta$ of
$q=\Tfrak(\xi'(q),\theta)$ in $(0,K_{\rm rs}]$. Since $\partial_q\Upsilon\ne0$, the implicit
function theorem and uniqueness make $\theta\mapsto\qrs_\theta$ continuously differentiable,
with $d\qrs_\theta/d\theta=-\partial_\theta\Upsilon/\partial_q\Upsilon$ at
$(\qrs_\theta,\theta)$; at $\theta\in\{0,1\}$ the same formula for the one-sided derivative
follows from the mean value theorem on the convex set $[0,K_{\rm rs}]\times[0,1]$.

(b) At $q=\qrs_\theta$, \eqref{eq:C-rs-identity} gives $\Tfrak(\xi'(q),\theta)/q=1$, and
\eqref{eq:C-at-identity} gives
\[
 \mathfrak A(\theta)-1=\qrs_\theta\,\mathfrak W(\theta),\qquad
 \mathfrak W(\theta)=\varpi(\qrs_\theta,\theta).
\]
Since $\qrs_\theta>0$, the sign of $\mathfrak A-1$ is the sign of $\mathfrak W$, and it suffices
to show that $\mathfrak W$ is strictly decreasing on $[0,1]$ with
$\mathfrak W(0)>0>\mathfrak W(1)$. Because $\partial_\theta\Upsilon(0,\theta)=0$, we have
$|\partial_\theta\Upsilon(q,\theta)|\le\overline Cq$, hence
$|d\qrs_\theta/d\theta|\le2\overline C\qrs_\theta\le4\overline C\tau$. Also
$\partial_\theta\varpi(q,\theta)\le-\azero^2+\overline Cq\le-1+2\overline C\tau$ at
$q=\qrs_\theta$. Therefore
\[
 \frac{d\mathfrak W}{d\theta}
 =\partial_\theta\varpi+\partial_q\varpi\,\frac{d\qrs_\theta}{d\theta}
 \le-1+2\overline C\tau+4\overline C^2\tau\le-\tfrac12 ,
\]
since $\tau\le1/(12\overline C^2)$ and $\overline C\ge1$. With $\qrs_0\le2\tau$, Taylor's
formula at $(0,0)$ and \eqref{eq:C-zero-jets}, where $\varpi(0,0)=0$ and
$\partial_q\varpi(0,0)=\frac43\azero^3+\frac13\xi''''(0)\ge\frac43$, and the bound
$|\partial_q\varpi|\le\overline C$ with $\qrs_1\le2\tau$ give
\[
 \mathfrak W(0)\ge\partial_q\varpi(0,0)\,\qrs_0-\tfrac{\overline C}2\bigl(\qrs_0\bigr)^2
 \ge\qrs_0\bigl(\tfrac43-\overline C\tau\bigr)>0,\qquad
 \mathfrak W(1)\le\varpi(0,1)+\overline C\qrs_1\le-1+2\overline C\tau<0 ;
\]
both signs hold because $\overline C\tau\le1/(12\overline C)\le\frac1{12}$. Hence $\mathfrak W$
has a unique zero $\theta_c\in(0,1)$, which is the threshold of~(b).

(c) Write $q_c=\qrs_{\theta_c}\le2\tau$. Taylor's formula in $q$ at $(0,\theta_c)$, applied to
$\varpi(q_c,\theta_c)=\mathfrak W(\theta_c)=0$ and to $\Upsilon(q_c,\theta_c)=0$, gives by
\eqref{eq:C-zero-jets}
\[
 \azero^2\theta_c
 =\Bigl[\azero^3\bigl(\tfrac43+3\theta_c-\theta_c^2\bigr)+\tfrac13\xi''''(0)\Bigr]q_c+R_1,
 \qquad
 \azero^2(2-\theta_c)\,q_c=\tau+R_2,
\]
with $|R_1|,|R_2|\le\frac12\overline Cq_c^2\le2\overline C\tau^2$. Since $\azero\le\frac54$ and
$\xi''''(0)\le\frac14$ (Lemma~\ref{lem:dat-class-bounds}(i),(ii) with $\varepsilon=\frac14$),
the first identity gives $\theta_c\le C_1\tau$. As $\azero^2=(1+\tau)^2$, the second then gives
$|q_c-\frac\tau2|\le C_2\tau^2$. Substituting both into the first identity, every term other
than $(\frac43+\frac13\xi''''(0))q_c$ is at most $C_3\tau^2$. The constants $C_1,C_2,C_3$ are
absolute. Hence
$\theta_c=(\frac43+\frac13\xi''''(0))\frac\tau2+O(\tau^2)
=\frac23(1+\frac14\xi''''(0))\tau+O(\tau^2)$, with an absolute implied constant $C$.
\end{proof}

\subsection{Sufficiency}\label{sec:C-sufficiency}

We prove Lemma~\ref{lem:C-sufficiency} by a sign count for $h=\Gamma^{\theta,r}-\id$. On the
plateaus $(0,r)$ and $(r,K)$ of $\zeta^{\theta,r}$, of heights $\theta$ and $1$, the positive
third derivative of Section~\ref{sec:nc-reference} makes $h'$ strictly convex, and above $K$
confinement keeps $h$ negative. With $h(0)=h(r)=0$, $h'(r)\le0$, $h(K)<0$ and, when $h'(r)=0$,
the right curvature \eqref{eq:C-right-second}, this forces $h>0$ on $(0,r)$ and $h<0$ on
$(r,1]$. Since $\Obs_\zeta'=-\frac12\xi''h$, the obstacle decreases up to $r$ and increases
after it. For one-atom Parisi measures in a field, the same route, from the
de Almeida--Thouless inequality at the atom through the sign of $\Gamma-\id$ to optimality, is
\cite[Lemmas~4.1--4.2]{JT17}; for the Sherrington--Kirkpatrick covariance, \cite[Remark~1.2]{JT17}
identifies the contact conditions of a one-atom measure with the de Almeida--Thouless conditions.

\begin{proof}[Proof of Lemma~\ref{lem:C-sufficiency}]
By \ref{item:dat-C4}, $\zeta\in\mathcal W$, and by \ref{item:dat-C1}, $r\le2\tau<K$. We use
three facts. First, $h<0$ on $[K,1]$, by \eqref{eq:nc-confinement}, which holds for every
measure in $\mathcal W$ (item~\ref{item:nc-positive} of the choice of constants in
Section~\ref{sec:nc-reference}). Second, $h(0)=0$, since $\Gamma^{\theta,r}(0)=0$
(Lemma~\ref{lem:plat-ito}(a)). Third, the intervals $(0,r)$ and $(r,K)$ are plateaus of $\zeta$,
of heights $\theta$ and $1$, contained in $(0,K)$. By \eqref{eq:nc-positive-third}, which also
holds for every measure in $\mathcal W$, $h'''=(\Gamma^{\theta,r})'''\ge2$ on each of them.
Since $h'$ is continuous on $[0,1]$, it is strictly convex on $[0,r]$ and on $[r,K]$.

\emph{The sign of $h$ on $(0,r)$.} The set $\mathcal L=\{t\in[0,r]:h'(t)\le0\}$ is a closed
interval containing $r$, because $h'$ is convex and continuous and $h'(r)\le0$. If
$\mathcal L=[0,r]$, then $\int_0^rh'=h(r)-h(0)=0$ would force $h'\equiv0$ on $[0,r]$, which is
impossible for a strictly convex function. Thus $\mathcal L=[s,r]$ with $s>0$, and $h'>0$ on
$[0,s)$, so $h>0$ on $(0,s]$. Moreover $s<r$, since otherwise $h'>0$ on $[0,r)$ would give
$h(r)>h(0)=0$. On $(s,r)$ strict convexity gives $h'<\max\{h'(s),h'(r)\}\le0$, so $h$ decreases
strictly on $[s,r]$ to $h(r)=0$, and $h>0$ on $[s,r)$. Hence $h>0$ on $(0,r)$.

\emph{The sign of $h$ on $(r,1]$.} We first show that $h'<0$ on some interval $(r,r+\delta)$.
This is immediate if $h'(r)<0$. If $h'(r)=0$, then $(\Gamma^{\theta,r})'(r)=1$, and
\eqref{eq:C-right-second} with $\Gamma^{\theta,r}(r)=r\le\frac1{20}$, $\xi''\ge1$ and
$\xi'''(r)\le\varepsilon_*r<\frac14$ (by \ref{item:dat-C1} and \ref{item:dat-C3}) gives
\[
 h''(r+)=(\Gamma^{\theta,r})''(r+)\le\xi'''(r)+\xi''(r)^2\Bigl(\frac{10}{20}-2\Bigr)
 <\frac14-\frac32<0 .
\]
The set $\mathcal L'=\{t\in[r,K]:h'(t)\le0\}$ is therefore a closed interval $[r,s']$ with
$s'>r$. Strict convexity gives $h'<0$ on $(r,s')$, so $h<0$ on $(r,s']$. If $s'<K$, then $h'>0$
on $(s',K]$ and $h(K)<0$ by the first fact, so $h<0$ on $(s',K]$ as well. Together with the
first fact, $h<0$ on $(r,1]$.

\emph{The obstacle.} By \eqref{eq:intro-obstacle},
\[
 \Obs_\zeta(t)-\Obs_\zeta(r)=\frac12\int_t^r\xi''(u)h(u)\,du\quad(t<r),\qquad
 \Obs_\zeta(t)-\Obs_\zeta(r)=-\frac12\int_r^t\xi''(u)h(u)\,du\quad(t>r),
\]
and both are positive by the signs of $h$ just proved. So $\Obs_\zeta$ has its strict minimum
over $[0,1]$ at $r$.

\emph{Minimality.} Let $\nu$ be a probability measure on $[0,1]$ with $\nu(\{0\})\ge\theta$.
Since $\Obs_\zeta\ge\Obs_\zeta(r)$ and $\Obs_\zeta(0)-\Obs_\zeta(r)>0$,
\[
 \int\Obs_\zeta\,d\nu
 \ge\Obs_\zeta(r)+\nu(\{0\})\bigl(\Obs_\zeta(0)-\Obs_\zeta(r)\bigr)
 \ge\Obs_\zeta(r)+\theta\bigl(\Obs_\zeta(0)-\Obs_\zeta(r)\bigr)
 =\int\Obs_\zeta\,d\zeta .
\]
The segment $s\mapsto(1-s)\zeta+s\nu$ stays in the constrained set. Along it the Parisi
functional is convex, and its right derivative at $s=0$ is $\int\Obs_\zeta\,d(\nu-\zeta)$
(Lemma~\ref{lem:cons-standard}(c),(d); for this mixing direction the formula is also
\cite[Lemma~3.2 and Example~3.4]{JT17}). Convexity gives
\[
 P_\nu-P_\zeta\ge\int\Obs_\zeta\,d(\nu-\zeta)\ge0 .
\]
Thus $\zeta$ minimizes $P$ under the constraint $\zeta(\{0\})\ge\theta$, and the uniqueness in
Lemma~\ref{lem:cons-variational}(a) gives $\zeta_\theta=\zeta$. The positive contact set of
$\zeta_\theta$, the set of minimizers of $\Obs_{\zeta_\theta}$ over $(0,1]$, is $\{r\}$.
\end{proof}

\section{Stability above the first contact: proof of
\texorpdfstring{Proposition~\ref*{prop:nc-above-contact}}%
{Proposition \getrefnumber{prop:nc-above-contact}}}\label{app:stability}

This appendix proves Proposition~\ref{prop:nc-above-contact} along the four steps (S1)--(S4) described in
Section~\ref{sec:nc-stability}. Section~\ref{sec:D-setting} fixes the facts, conditions and
functions used below; its paragraph on uniform estimates, (U1)--(U4), is routine and can be
skipped on a first reading, but the radius $r$ chosen there enters the choice of constants in
Appendix~\ref{sec:dat-constants}. Section~\ref{sec:D-lemmas} proves the four lemmas
behind the steps, and Section~\ref{sec:D-proof} closes the contraction.

\subsection{Setting and uniform estimates}\label{sec:D-setting}

\paragraph{The covariance and the facts used.}
Throughout, $\xi\in\Kcl(\varepsilon_*)$, with $\varepsilon_*$ as fixed in
Appendix~\ref{sec:dat-constants}; this value does not depend on the covariance. The covariance
is then fixed and $\theta\downarrow0$. We use the notation of Section~\ref{sec:nc-stability}:
$\alpha_\theta$ and $\alphas$ are the distribution functions of $\zeta_\theta$ and $\zetas$,
$\Delta_\theta=\alpha_\theta-\alphas$, $x_\theta=\alpha_\theta(q_\theta)$,
$v_-=\min\{v_\theta,v_*\}$, $v_+=\max\{v_\theta,v_*\}$,
$L_\theta=\int_{q_\theta}^1|\Delta_\theta(t)|\,dt$, and $Q_\theta$, $Q_*$ are the quantile
functions. We write $\Gamma_\theta=\Gamma_{\zeta_\theta}$ and $\Gamma_*=\Gamma_{\zetas}$, and
$a=\xi''$. Since $\varepsilon_*\le\varepsilon_1$, the covariance lies in $\Kcl(\varepsilon_1)$,
and we use the following facts from Sections~\ref{sec:nc-reference} and~\ref{sec:nc-class},
with the value of $K$ fixed in Appendix~\ref{sec:dat-constants}.
\begin{enumerate}[label=\textup{(P\arabic*)},ref=(P\arabic*),leftmargin=*,topsep=3pt,
  itemsep=2pt]
\item\label{item:D-F1}
 $\supp\zetas=[0,v_*]$ with $0<v_*<K$, $\zetas(\{0\})=0$, $\Ustar=\azero^{-1/2}$ and
 $\Dstar\ge2$ (Propositions~\ref{prop:nc-support}(b) and~\ref{prop:nc-class}(b),(c), and
 Theorem~\ref{thm:local}; the bound $v_*<K$ is item~\ref{item:nc-confined} of the choice of
 constants in Section~\ref{sec:nc-reference}, with $\theta=0$).
 Every $t\in[0,v_*]$ satisfies $\Gamma_*(t)=t$ (Lemma~\ref{lem:ons-zero-in-support}).
\item\label{item:D-F2}
 There is $\theta_0>0$ such that, for $0<\theta<\theta_0$, $\zeta_\theta(\{0\})=\theta$,
 $\zeta_\theta((0,q_\theta))=0$ and $\supp\zeta_\theta=\{0\}\cup[q_\theta,v_\theta]$ with
 $0<q_\theta<v_\theta<K$, and every point of $[q_\theta,v_\theta]$ is a positive contact
 of $\zeta_\theta$ (Proposition~\ref{prop:nc-class}(d) and Lemma~\ref{lem:nc-regular}).
 Since a positive contact in $(0,1)$ is a minimum point of $\Obs_\theta$ on $(0,1]$,
 Lemma~\ref{lem:cons-standard}(f) gives $\Gamma_\theta(t)=t$ on $[q_\theta,v_\theta]$.
\item\label{item:D-F3}
 $q_\theta/\theta\to3\sqrt\azero/\Dstar\in(0,\infty)$ as $\theta\downarrow0$
 \eqref{eq:nc-onset}. In particular $q_\theta\to0$, so $q_\theta<v_*$ for all small $\theta$.
\end{enumerate}

\paragraph{Notation and constants.}
For a probability measure $\zeta$ on $[0,1]$ write $u^\zeta=\Phi_{\zeta,x}$,
$U^\zeta=\partial_xu^\zeta$, $V^\zeta=\partial_x^2u^\zeta$ and $W^\zeta=\partial_x^3u^\zeta$,
as functions of $(t,x)\in[0,1]\times\R$; the superscripts $\theta$ and $*$ refer to
$\zeta_\theta$ and $\zetas$, so that $\Ustar=U^*(0,0)$ and $\Wstar=W^*(0,0)$. Let $X^\zeta$ be the diffusion
\eqref{eq:intro-diffusion} of $\zeta$, with drift
$\omega^\zeta(t,x)=a(t)\alpha_\zeta(t)u^\zeta(t,x)$, and put $\nu^\zeta_t(F)=\E F(X^\zeta_t)$;
we write $\nu^\theta=\nu^{\zeta_\theta}$ and $\nu^*=\nu^{\zetas}$. For $F:\R\to\R$ let
$\|F\|_k=\max_{j\le k}\sup|F^{(j)}|$. Let $\mathfrak B$ be the set of covariances $\xi$ with
$\normCfour{\xi-\xic}<\frac14$; on $[0,1]$ these satisfy $\frac34<a<\frac54$ and
$|a'|,|a''|<\frac14$, so $\mathfrak B$ is a family of the kind considered in
Lemma~\ref{lem:plat-jets}(b), as in Section~\ref{sec:nc-reference}. Below, $C_0$ denotes constants
that depend only on $\mathfrak B$ and on derivative orders, which never exceed $12$. They do
not depend on the covariance in $\mathfrak B$, on the measure, on $K$ or on $\theta$. The
letter $C$ denotes constants that may also depend on the fixed covariance and on $K$, but not
on $\theta$. Both may change from line to line.

\paragraph{Uniform estimates.}
The following four estimates hold for every $\xi\in\mathfrak B$.

\smallskip\noindent\emph{(U1) Jets.} For every probability measure $\zeta$ and $k\ge1$,
$|\partial_x^ku^\zeta|\le C_0$ on $[0,1]\times\R$, and $t\mapsto\partial_x^ku^\zeta(t,x)$ is
$C_0$-Lipschitz, uniformly in $x$. These are \eqref{eq:plat-bounds} and
\eqref{eq:plat-time-lip}: by Lemma~\ref{lem:plat-jets}(b) the constants $B_k$ are uniform over
$\mathfrak B$, and so are the constants $L_k$, which the proof of
Lemma~\ref{lem:plat-jets}(c) bounds in terms of $\|a\|_\infty$ and $B_1,\dots,B_{k+2}$. Since
$u^\zeta(t,\cdot)$ is odd and $0<U^\zeta\le1$ \eqref{eq:plat-bounds}, $|u^\zeta(t,x)|\le|x|$
and $|V^\zeta(t,x)|\le C_0|x|$. On every open interval on which $\alpha_\zeta$ is continuous, the jets are continuously
differentiable in time and satisfy the $x$-derivatives of \eqref{eq:intro-pde} pointwise.
Indeed, for $j\ge1$ and $0\le t_1\le t_2\le1$,
\[
 \partial_x^j\Phi_\zeta(t_2,x)-\partial_x^j\Phi_\zeta(t_1,x)
 =-\int_{t_1}^{t_2}\partial_x^j\Bigl[\frac a2\bigl(\Phi_{\zeta,xx}
 +\alpha_\zeta\Phi_{\zeta,x}^2\bigr)\Bigr](s,x)\,ds .
\]
For measures with finitely many atoms this follows from the equation between atoms and the
continuity of the jets in time across atoms \cite[Proposition~5, (64)]{AC15a}. It passes to
every $\zeta$ because the jets converge uniformly along weakly convergent sequences of measures
\cite[Proposition~1(i)]{AC15a} and the distribution functions converge in $L^1$; and the
integrand is continuous in $s$ wherever $\alpha_\zeta$ is.

\smallskip\noindent\emph{(U2) Stability localized in time.} For two probability measures
$\zeta,\tilde\zeta$, $t\in[0,1]$ and $0\le k\le10$,
\begin{equation}\label{eq:D-local-jet-stability}
 \sup_{x\in\R}\bigl|\partial_x^k(u^\zeta-u^{\tilde\zeta})(t,x)\bigr|
 \le C_0\int_t^1|\alpha_\zeta-\alpha_{\tilde\zeta}|\,ds .
\end{equation}
This is the proof of \eqref{eq:plat-jet-general} with $\tilde a=a$, restricted to $[t,1]$.
For measures with finitely many atoms, the difference $w=u^\zeta-u^{\tilde\zeta}$ vanishes at
time one and solves
\[
 w_s+\tfrac a2w_{xx}+a\alpha_\zeta u^\zeta w_x+a\alpha_\zeta u^{\tilde\zeta}_xw
 =-a(\alpha_\zeta-\alpha_{\tilde\zeta})\,u^{\tilde\zeta}u^{\tilde\zeta}_x
\]
on $[t,1]\times\R$, between the atoms of either measure. Its coefficients are bounded
by $C_0$, so the maximum principle on $[t,1]\times\R$ bounds $|w|$ by $C_0$ times the time
integral of the forcing over $[t,1]$. The equations for $\partial_x^kw$ have forcings bounded
by $C_0|\alpha_\zeta-\alpha_{\tilde\zeta}|(s)$ plus $C_0$ times lower derivatives of $w$ at
times $s\ge t$, which by induction are at most $C_0\int_s^1|\alpha_\zeta-\alpha_{\tilde\zeta}|
\le C_0\int_t^1|\alpha_\zeta-\alpha_{\tilde\zeta}|$. This gives
\eqref{eq:D-local-jet-stability}, which passes to general measures as in
Lemma~\ref{lem:plat-jets}(b). Since $w$ and $\partial_x^2w$ are odd in $x$,
\eqref{eq:D-local-jet-stability} with $k=1,3$ gives
\begin{equation}\label{eq:D-odd-stability}
 \bigl|(u^\zeta-u^{\tilde\zeta})(t,x)\bigr|+\bigl|(V^\zeta-V^{\tilde\zeta})(t,x)\bigr|
 \le C_0|x|\int_t^1|\alpha_\zeta-\alpha_{\tilde\zeta}|\,ds .
\end{equation}
We use these with $\{\zeta,\tilde\zeta\}=\{\zeta_\theta,\zetas\}$ and $t\ge q_\theta$, where the
integral is at most $L_\theta$.

\smallskip\noindent\emph{(U3) Moments.} For every probability measure $\zeta$ and
$t\in[0,1]$, with $X=X^\zeta$,
\begin{equation}\label{eq:D-moments}
 \E\sup_{s\le t}|X_s|^{2k}\le C_0t^k\quad(k\le4),\qquad
 \bigl|\E X_t^2-\azero t\bigr|\le C_0t^2,\qquad
 \bigl|\E X_t^4-3\azero^2t^2\bigr|\le C_0t^3 .
\end{equation}
Write $X=M+A$ with $M_t=\int_0^t\sqrt{a}\,dB$ and $A_t=\int_0^t\omega^\zeta(s,X_s)\,ds$. The
process $M$ is a centred Gaussian martingale with $\E M_t^2=\xi'(t)$, and
$|\xi'(t)-\azero t|\le\frac12\sup|\xi'''|\,t^2\le t^2$. By (U1), $|\omega^\zeta(s,x)|\le2|x|$,
so Gronwall's inequality gives $\sup_{s\le t}|X_s|\le e^{2t}\sup_{s\le t}|M_s|$, and Doob's
inequality gives the first bound. Hence $\|A_t\|_{L^{2k}}\le C_0t^{3/2}$ for $k\le4$. Since
$\E M_t^4=3\xi'(t)^2=3\azero^2t^2+O(t^3)$, expanding $(M_t+A_t)^2$ and $(M_t+A_t)^4$ and using
H\"older's inequality gives the other two bounds; for instance
$|\E M_tA_t|\le\|M_t\|_{L^2}\|A_t\|_{L^2}\le C_0t^2$ and
$|\E M_t^3A_t|\le\|M_t\|_{L^4}^3\|A_t\|_{L^4}\le C_0t^3$.

\smallskip\noindent\emph{(U4) Duality.} Let $\zeta$ be a probability measure whose
distribution function is continuous outside a finite subset of $[0,1]$, let $t_1\in(0,1]$, let
$F$ be even with $\|F\|_{10}<\infty$, and put
$\psi(s,x)=\E\bigl[F(X^\zeta_{t_1})\bigm|X^\zeta_s=x\bigr]$ for $0\le s\le t_1$ and
$x\in\R$. The drift $\omega^\zeta$ is odd in $x$, has $x$-derivatives bounded by $C_0$ through order
$12$ by (U1), satisfies $|\omega^\zeta(s,x)|\le2|x|$, and is continuous in $(s,x)$, together
with these derivatives, for $s$ outside a finite set, since by (U1) and $u^\zeta(s,0)=0$ the
map $s\mapsto\partial_x^ku^\zeta(s,x)$ is $C_0(1+|x|)$-Lipschitz for $0\le k\le12$. For
$s\le\tau\le t_1$ let $X^{s,x}_\tau$ solve \eqref{eq:intro-diffusion} with $X^{s,x}_s=x$; by the
Markov property, $\psi(s,x)=\E F(X^{s,x}_{t_1})$. Pathwise, $X^{s,x}_\tau-\int_s^\tau\sqrt a\,dB$
solves an ordinary differential equation in $\tau$ whose vector field is continuous between the
points of discontinuity and has $x$-derivatives of orders $1$ to $12$ bounded by $C_0$. By
differentiable dependence on initial data, applied on each interval between these points,
$x\mapsto X^{s,x}_\tau$ is $C^{10}$, and its derivatives of orders $1$ to $10$ solve the
variational equations; they are therefore bounded by $C_0$, continuous in $(s,x)$, and tend to
those of the identity map as $\tau-s\to0$. By the chain rule and dominated convergence,
$\partial_x^k\psi$ is continuous on $[0,t_1]\times\R$ for $k\le10$ and
$\|\psi(s,\cdot)\|_{10}\le C_0\|F\|_{10}$. For $s<s'\le t_1$ put $Y=X^{s,x}$. The Markov
property gives $\psi(s,x)=\E\psi(s',Y_{s'})$, and It\^o's formula for $\psi(s',Y_\tau)$,
$s\le\tau\le s'$, whose martingale part has a bounded integrand, gives
\[
 \psi(s,x)-\psi(s',x)=\int_s^{s'}\E\,G_{s'}(\tau,Y_\tau)\,d\tau,\qquad
 G_{s'}(\tau,y)=\omega^\zeta(\tau,y)\psi_x(s',y)+\tfrac{a(\tau)}2\psi_{xx}(s',y).
\]
Differentiate this identity $k\le8$ times in $x$ under the integral. Gronwall's inequality gives
\[
 \E\sup_{s\le\tau\le s'}|Y_\tau-x|^2\le C_0(1+x^2)(s'-s),
\]
so the $k$-th derivative of the right side is at most $C_0\|F\|_{10}(1+|x|)(s'-s)$, and, divided by $s'-s$, it tends to
$\partial_x^kG_{s_0}(s_0,x)$ when $s$ and $s'$ tend to a point $s_0$ at which $\omega^\zeta$ is
continuous. Hence $\psi$ solves $\partial_s\psi+\omega^\zeta\psi_x+\frac a2\psi_{xx}=0$,
$\psi(t_1,\cdot)=F$, classically between the points of discontinuity, and there its
$x$-derivatives of order at most $8$ solve the differentiated equations classically; moreover
$s\mapsto\partial_x^k\psi(s,x)$ is $C_0\|F\|_{10}(1+|x|)$-Lipschitz on $[0,t_1]$ for $k\le8$,
so $\psi$ is continuous in $s$.
Since $\omega^\zeta$ is odd in $x$ and $F$ is even, $\psi(s,\cdot)$ is even, so
$\psi_x(s,0)=0$ and $|\psi_x(s,x)|\le C_0\|F\|_{10}|x|$. Let $\tilde\zeta$ be another
probability measure whose distribution function is continuous outside a finite set. It\^o's
formula for $\psi(s,X^{\tilde\zeta}_s)$ on the intervals between the points of discontinuity,
joined by continuity in $s$, and $\psi(0,0)=\nu^\zeta_{t_1}(F)$ give
\begin{equation}\label{eq:D-duality}
 \nu^{\tilde\zeta}_{t_1}(F)-\nu^\zeta_{t_1}(F)
 =\int_0^{t_1}\E\Bigl[\bigl(\omega^{\tilde\zeta}-\omega^\zeta\bigr)(s,X^{\tilde\zeta}_s)\,
 \psi_x\bigl(s,X^{\tilde\zeta}_s\bigr)\Bigr]\,ds .
\end{equation}
The martingale part has mean zero because $\psi_x$ grows at most linearly and
$X^{\tilde\zeta}$ has the moments \eqref{eq:D-moments}. We apply \eqref{eq:D-duality} with
$\zeta=\zetas$ and $\tilde\zeta=\zeta_\theta$, whose distribution functions are continuous
outside $\{v_*\}$ and $\{q_\theta,v_\theta\}$, respectively, by
Lemma~\ref{lem:D-support-relation} below.

\paragraph{Conditions on $K$ and $\varepsilon_*$.}
For $\xi=\xic$ and $\zeta=\delta_0$ one has $u=\tanh$, so $U(t,0)=1$ and $W(t,0)=-2$ for all
$t$. Appendix~\ref{sec:dat-constants} fixes, with $\eta=\frac1{40}$, first $K$, then the
neighbourhood $\mathcal W$ and the radius $\varepsilon_1$ of Section~\ref{sec:nc-reference}
(so that items~\ref{item:nc-positive} and~\ref{item:nc-confined} hold for this $K$), then
$\varepsilon_*$. This appendix uses three of its conditions: $C_0K\le\eta$ for each of the
finitely many constants $C_0$ that occur below, so that $2C_0K\le\frac14$; $\varepsilon_1\le r$
and $\Wone(\zeta,\delta_0)\le r$ for every $\zeta\in\mathcal W$; and
$\varepsilon_*\le\min\{\varepsilon_1,\eta\}$. Here $r>0$ is chosen as follows. By
\eqref{eq:plat-jet-general}, applied over $\mathfrak B$ to the pairs $(\xi,\zeta)$ and
$(\xic,\delta_0)$,
\[
 \|\partial_x^k(u^\zeta-\tanh)\|_\infty
 \le C_0\bigl(\|a-1\|_{L^1}+\|a\alpha_\zeta-1\|_{L^1}\bigr)
 \le C_0\bigl(2\normCfour{\xi-\xic}+\tfrac54\Wone(\zeta,\delta_0)\bigr)
 \qquad(k=1,3),
\]
since $\|a\alpha_\zeta-1\|_{L^1}\le\|a-1\|_{L^1}+\frac54\int_0^1(1-\alpha_\zeta)$ and
$\int_0^1(1-\alpha_\zeta)=\Wone(\zeta,\delta_0)$. We take $r$ so small that the right side is
at most $\eta$ when $\normCfour{\xi-\xic}<r$ and $\Wone(\zeta,\delta_0)\le r$. Then
$\xi\in\Kcl(\varepsilon_1)\subset\mathfrak B$, and item~\ref{item:nc-confined} of the choice of
constants in Section~\ref{sec:nc-reference},
$\tanh'(0)=1$, $\tanh'''(0)=-2$ and Lemma~\ref{lem:dat-class-bounds}(i),(ii) with
$\varepsilon=\varepsilon_*\le\eta$ give, for every $\theta\in[0,1]$ and
$\zeta=\zeta_{\xi,\theta}$:
\begin{enumerate}[label=\textup{(K\arabic*)},ref=(K\arabic*),leftmargin=*,topsep=3pt,
  itemsep=2pt]
\item\label{item:D-K1} every positive contact of $\zeta$ lies in $(0,K)$; in particular
 $v_\theta,v_*<K$;
\item\label{item:D-K2} $1-\eta\le U^\zeta(t,0)\le1$ and $|W^\zeta(t,0)+2|\le\eta$ for all
 $t\in[0,1]$;
\item\label{item:D-K3} $1<\azero\le a\le1+\eta$, $0\le a'(t)\le\eta t$ and $0\le a''\le\eta$
 on $[0,1]$.
\end{enumerate}

\paragraph{Consequences for $t\le K$.}
Let $\zeta=\zeta_{\xi,\theta}$ for some $\theta\in[0,1]$, and write $X=X^\zeta$. Since
$U^\zeta(t,\cdot)$ and $W^\zeta(t,\cdot)$ are even with second derivatives bounded by $C_0$,
\ref{item:D-K2} and \eqref{eq:D-moments} give $\E|U^\zeta(t,X_t)-1|\le\eta+C_0t$, and similarly
for $W^\zeta$; as $C_0K\le\eta$,
\begin{equation}\label{eq:D-K4}
 \E\bigl|U^\zeta(t,X_t)-1\bigr|\le2\eta,\qquad
 \E\bigl|W^\zeta(t,X_t)+2\bigr|\le2\eta\qquad(0\le t\le K).
\end{equation}
For $0\le t\le K$ let $N_\zeta(t)=a'(t)\E U^\zeta(t,X_t)^2+a(t)^2\E V^\zeta(t,X_t)^2$ and
$\sigma_\zeta(t)=2a(t)^2\E U^\zeta(t,X_t)^3$ be the functions \eqref{eq:plat-NS}, and put
$R_\zeta=N_\zeta/\sigma_\zeta$. They are the two coefficients of the one-sided derivatives of $\Gamma_\zeta'$ in
Lemma~\ref{lem:plat-ito}(b): the right derivative is $N_\zeta(t)-\alpha_\zeta(t)
\sigma_\zeta(t)$ and the left derivative is $N_\zeta(t)-\alpha_\zeta(t-)\sigma_\zeta(t)$.
Since $1-y^3\le3(1-y)$ for $y\in(0,1]$, \eqref{eq:D-K4} gives
$\E U^\zeta(t,X_t)^3\ge1-6\eta$. With \ref{item:D-K3}, $|V^\zeta(t,x)|\le C_0|x|$ and
\eqref{eq:D-moments},
\begin{equation}\label{eq:D-rho-sigma}
 0\le N_\zeta(t)\le C_0t,\qquad 1\le\sigma_\zeta(t)\le3\qquad(0\le t\le K).
\end{equation}
On an open plateau of height one inside $(0,K)$, \eqref{eq:plat-second} with $m=1$ and the
same bounds give, as $C_0K\le\eta$ and $\eta=\frac1{40}$,
\begin{equation}\label{eq:D-top-concave}
 \Gamma_\zeta''=a'\E U^2+a^2\E\bigl(V^2-2U^3\bigr)
 \le\eta K+(1+\eta)^2C_0K-2(1-6\eta)\le-1 .
\end{equation}

\subsection{Four lemmas}\label{sec:D-lemmas}

The first lemma identifies both distribution functions on their support intervals, and gives
the behaviour of $\alphas$ at zero that fixes the onset.

\begin{lemma}[Relations on the supports]\label{lem:D-support-relation}
Let $0<\theta<\theta_0$. Then $\alpha_\theta=R_{\zeta_\theta}$ on $(q_\theta,v_\theta)$ and
$\alphas=R_{\zetas}$ on $(0,v_*)$. Moreover $\alpha_\theta=\theta$ on $[0,q_\theta)$,
$\alpha_\theta=1$ on $[v_\theta,1]$ and $\alphas=1$ on $[v_*,1]$. Consequently $\alphas$ is
continuous on $[0,v_*)$ with $\alphas(t)\le C_0t$ there, $\alpha_\theta$ is continuous on
$(q_\theta,v_\theta)$ and right continuous at $q_\theta$, and
\begin{equation}\label{eq:D-alpha-star-linear}
 \alphas(t)=\rhostar t+O(t^2)\qquad(t\downarrow0),\qquad
 \rhostar=\frac{\Dstar}{2\sqrt\azero} .
\end{equation}
Finally, for all small $\theta>0$, $\theta/C\le q_\theta\le C\theta$ and $|\Delta_\theta|\le
C\theta$ on $[0,q_\theta)$.
\end{lemma}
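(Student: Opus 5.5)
The plan is to read both distribution functions off the one-sided derivative formulas of Lemma~\ref{lem:plat-ito}(b), as in the proof of Proposition~\ref{prop:ons-equilibrium}(c). By \ref{item:D-F2}, $\Gamma_\theta=\id$ on $[q_\theta,v_\theta]$. Hence $\Gamma_\theta'\equiv1$ there, and at every interior point both one-sided derivatives of $\Gamma_\theta'$ vanish:
\[
N_{\zeta_\theta}(t)-\alpha_\theta(t)\sigma_{\zeta_\theta}(t)=0,
\qquad
N_{\zeta_\theta}(t)-\alpha_\theta(t-)\sigma_{\zeta_\theta}(t)=0 .
\]
Since $\sigma\ge1$ by \eqref{eq:D-rho-sigma}, this gives $\alpha_\theta(t)=\alpha_\theta(t-)=R_{\zeta_\theta}(t)$.

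The same argument applies to $\zetas$ on $(0,v_*)$, using \ref{item:D-F1}. The right-derivative formula at $t=q_\theta$ gives $\alpha_\theta(q_\theta)=R_{\zeta_\theta}(q_\theta)$, and $R$ is continuous because the expectations are continuous in $t$ (Lemma~\ref{lem:plat-ito}/\ref{lem:fld-ito}(a)). This yields the stated continuity and right continuity. The values $\theta$ on $[0,q_\theta)$ and $1$ above the tops of the supports follow from the support description in \ref{item:D-F1}--\ref{item:D-F2}. Finally, $\alphas\le C_0t$ follows from $N\le C_0t$ and $\sigma\ge1$ in \eqref{eq:D-rho-sigma}, together with $\zetas(\{0\})=0$.

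For \eqref{eq:D-alpha-star-linear}, I would repeat the derivative computation of Proposition~\ref{prop:ons-equilibrium}(c), with one upgrade from $o(t)$ to $O(t^2)$. The moment identities \eqref{eq:ons-moments} have integrands bounded by $C$ and Lipschitz in $\tau$. This holds because $\alphas$ is now known to be Lipschitz near zero: it equals $R_{\zetas}$, whose numerator and denominator are Lipschitz with $\sigma\ge1$. The bootstrap runs as follows. First, $A_2,A_3,B$ are Lipschitz, hence so is $\alphas=R_{\zetas}$. Then the integrands are Lipschitz, so $A_2,A_3,B$ are $C^{1,1}$ near zero. Hence $N_{\zetas}(t)=N_{\zetas}'(0)t+O(t^2)=\Dstar t+O(t^2)$, and $\sigma_{\zetas}(t)=2\sqrt\azero+O(t)$. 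Dividing gives $\rhostar t+O(t^2)$. For the value $N_{\zetas}'(0)=\Dstar$ I would cite the computation already done there. Since $a'(t)\le\eta t$ and $a''$ is bounded, the $a'$-terms contribute $O(t^2)$.

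For the last claims, $q_\theta\asymp\theta$ is immediate from \ref{item:D-F3}. On $[0,q_\theta)$ we have $\Delta_\theta=\theta-\alphas$, and $0\le\alphas(t)\le C_0t\le C_0q_\theta\le C\theta$, so $|\Delta_\theta|\le C\theta$.

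The main obstacle I anticipate is justifying the $O(t^2)$ remainder rather than $o(t)$. The difficulty is that the Lipschitz regularity of the integrands in \eqref{eq:ons-moments} requires Lipschitz control of $\E F(X_\tau)$ in $\tau$. I would obtain this from It\^o's formula together with the bounds (U1) and (U3). A secondary point is checking that the one-sided formula in Lemma~\ref{lem:plat-ito}(b) applies at interior points without any a priori continuity assumption on $\alpha$; the lemma already states it in that generality.
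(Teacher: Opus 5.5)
Your proof is correct. The first half follows the paper exactly: you read $\alpha_\theta$ and $\alphas$ off the two one-sided derivatives of $\Gamma'$ from Lemma~\ref{lem:plat-ito}(b), use $\sigma\ge1$, and then use continuity of $R$. The plateau values, $\alphas\le C_0t$, and the bounds on $q_\theta$ and $\Delta_\theta$ also match.

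Where you differ is the proof of \eqref{eq:D-alpha-star-linear}. The paper never differentiates moments in time. It expands the jets of $\Phi_{\zetas}$ in space at $x=0$:
\begin{itemize}
\item $U^*(t,x)=\Ustar+O(x^2+t)$;
\item $V^*(t,x)=W^*(t,0)x+O(|x|^3)$, with $W^*(t,0)=\Wstar+O(t)$.
\end{itemize}
It then inserts the moment bounds (U3), $\E X_t^2=\azero t+O(t^2)$ and $\E X_t^4=O(t^2)$. With $a'(t)=\xi''''(0)t+O(t^2)$ and $a(t)=\azero+O(t^2)$ from Lemma~\ref{lem:dat-class-bounds}, this gives $N_{\zetas}=\Dstar t+O(t^2)$ and $\sigma_{\zetas}=2\sqrt\azero+O(t)$ in a few lines, with no regularity of $\alphas$ needed.

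Your bootstrap through \eqref{eq:ons-moments} is also valid, and has no circularity:
\begin{itemize}
\item bounded integrands make $A_2,A_3,B$ Lipschitz;
\item hence $\alphas=R_{\zetas}$ is Lipschitz on $[0,v_*)$, since $\alphas(0)=0=R_{\zetas}(0)$;
\item hence the integrands are Lipschitz;
\item hence $N_{\zetas}\in C^{1,1}$ near zero, and $N_{\zetas}(t)=N_{\zetas}'(0)t+O(t^2)$.
\end{itemize}
The one input you must supply is the Lipschitz-in-time bound for $\E W_\tau^2$, $\E U_\tau V_\tau^2$ and $\E U_\tau^4$. Your suggested argument via (U1) and It\^o's formula, applied at a frozen time, does give it.

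What each route buys: yours reuses $N_{\zetas}'(0)=\Dstar$ from Proposition~\ref{prop:ons-equilibrium}(c) rather than recomputing the expansion, and yields the by-product $\alphas\in C^{1,1}$ near zero. The cost is more regularity bookkeeping. The paper's route is shorter and uses only (U1), (U3) and the class bounds already at hand in that appendix.

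One small imprecision: $a'A_2$ itself contributes $\xi''''(0)\Ustar^2t$ at first order, so it is not $O(t^2)$. Only the $a'$-containing terms of $N_{\zetas}'$, which vanish at zero, contribute $O(t^2)$ after integration.
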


\begin{proof}
By \ref{item:D-F2}, $\Gamma_\theta(t)=t$ on $[q_\theta,v_\theta]$, so
$\Gamma_\theta'\equiv1$ on $(q_\theta,v_\theta)$, and both one-sided derivatives of
$\Gamma_\theta'$ vanish there. By Lemma~\ref{lem:plat-ito}(b) they equal
$N_{\zeta_\theta}(t)-\alpha_\theta(t)\sigma_{\zeta_\theta}(t)$ and
$N_{\zeta_\theta}(t)-\alpha_\theta(t-)\sigma_{\zeta_\theta}(t)$; since
$\sigma_{\zeta_\theta}\ge1$ \eqref{eq:D-rho-sigma}, $\alpha_\theta(t-)=\alpha_\theta(t)=
R_{\zeta_\theta}(t)$. The same argument applies to $\zetas$ on $(0,v_*)$, because $\Gamma_*$
is the identity on $[0,v_*]$ by \ref{item:D-F1}. The values on the plateaus follow from
$\zeta_\theta(\{0\})=\theta$, $\zeta_\theta((0,q_\theta))=0$ and the supports. Expectations
along the diffusion of polynomials in the jets are continuous in time
(Section~\ref{sec:plat}), so $R_\zeta$ is continuous on $[0,K]$. Hence $\alphas$ is
continuous on $(0,v_*)$, and at zero because $\alphas(0)=\zetas(\{0\})=0$ and
$0\le\alphas=R_{\zetas}\le C_0t$ on $(0,v_*)$ by \eqref{eq:D-rho-sigma}. Likewise $\alpha_\theta$
is continuous on $(q_\theta,v_\theta)$ and right continuous at $q_\theta$.

For \eqref{eq:D-alpha-star-linear}, write $X=X^{\zetas}$. By evenness, (U1) and
\eqref{eq:D-moments}, $|U^*(t,x)-\Ustar|\le C_0(x^2+t)$, so $\E U^*(t,X_t)^2=\Ustar^2+O(t)$ and
$\E U^*(t,X_t)^3=\Ustar^3+O(t)$. Since $V^*(t,\cdot)$ is odd with $\partial_xV^*=W^*$,
$V^*(t,x)=W^*(t,0)x+O(|x|^3)$ and $W^*(t,0)=\Wstar+O(t)$, so
$\E V^*(t,X_t)^2=\azero\Wstar^2t+O(t^2)$ by \eqref{eq:D-moments}. By
Lemma~\ref{lem:dat-class-bounds}(iii), $a'(t)=\xi''''(0)\,t+O(t^2)$, and $a(t)=\azero+O(t^2)$
by Lemma~\ref{lem:dat-class-bounds}(ii). Hence
\[
 N_{\zetas}(t)=\bigl(\xi''''(0)\Ustar^2+\azero^3\Wstar^2\bigr)t+O(t^2)=\Dstar t+O(t^2),
 \qquad
 \sigma_{\zetas}(t)=2\azero^2\Ustar^3+O(t)=2\sqrt\azero+O(t),
\]
using $\Ustar=\azero^{-1/2}$, and \eqref{eq:D-alpha-star-linear} follows from
$\alphas=N_{\zetas}/\sigma_{\zetas}$. The bounds on $q_\theta$ follow from \ref{item:D-F3}.
On $[0,q_\theta)$ we have $\Delta_\theta=\theta-\alphas$ and $0\le\alphas\le C_0q_\theta$, so
$|\Delta_\theta|\le\theta+C_0q_\theta\le C\theta$.
\end{proof}

From now on $\theta$ is small, and we write $q=q_\theta$. The plateau of $\zeta_\theta$ is
measured by two integrals of $\Delta_\theta$ over $[0,q)$:
\begin{gather*}
 I_\theta=\int_0^q\Delta_\theta(s)\,s\,ds,\qquad
 \Pi_\theta=\int_0^q\Delta_\theta(s)\,k_\theta(s)\,ds,\qquad\text{where}\\
 k_\theta(s)=a(s)\,U^\theta(s,0)\,\E\bigl(X^{\zeta_\theta}_s\bigr)^2
 +\tfrac12\azero^3\,W^\theta(0,0)\,s^2,\qquad
 \gamma_\theta=\tfrac12\azero^3\,U^\theta(0,0).
\end{gather*}
Neither $k_\theta$ nor $\gamma_\theta$ depends on a test function.

\begin{lemma}[The plateau acts through its first moment]\label{lem:D-plateau}
For all small $\theta>0$ and every even $H$ with $\|H\|_{10}\le1$,
\begin{equation}\label{eq:D-plateau}
 \bigl|\nu^\theta_q(H)-\nu^*_q(H)-H''(0)\,\Pi_\theta-\gamma_\theta\,q\,H''''(0)\,I_\theta\bigr|
 \le C\bigl(\theta^5+\theta^3L_\theta\bigr).
\end{equation}
Moreover $|\Pi_\theta-\azero^2U^\theta(0,0)\,I_\theta|\le C\theta^4$.
\end{lemma}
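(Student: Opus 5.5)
I will apply the duality formula \eqref{eq:D-duality} with $\zeta=\zetas$, $\tilde\zeta=\zeta_\theta$, $t_1=q$ and $F=H$, and write $\psi$ for the backward Kolmogorov function of $X^{\zetas}$ with terminal value $H$ at time $q$. Then
\[
 \nu^\theta_q(H)-\nu^*_q(H)=\int_0^q\E\Bigl[a(s)\bigl(\alpha_\theta u^\theta-\alphas u^*\bigr)(s,X^\theta_s)\,\psi_x(s,X^\theta_s)\Bigr]ds ,
\]
with $X^\theta=X^{\zeta_\theta}$. I split the drift difference as $\Delta_\theta(s)\,u^\theta+\alphas(u^\theta-u^*)$. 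For $s<q$, (U2) and \eqref{eq:D-odd-stability} give $|u^\theta-u^*|(s,x)\le C_0|x|\int_s^1|\Delta_\theta|\le C_0|x|(C\theta q+L_\theta)$. Here I use Lemma~\ref{lem:D-support-relation}: $|\Delta_\theta|\le C\theta$ on $[0,q)$. I also use $\alphas\le C_0s$, $|\psi_x|\le C_0|x|$ and \eqref{eq:D-moments}. So the second piece contributes at most $C\int_0^q s\cdot s\,(\theta^2+L_\theta)\,ds\le C(\theta^5+\theta^3L_\theta)$. This explains both error terms on the right of \eqref{eq:D-plateau}.

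The main piece is $\int_0^q\Delta_\theta(s)\,a(s)\,\E[u^\theta\psi_x](s,X^\theta_s)\,ds$, and I expand $\E[u^\theta\psi_x]$ to order $s^2$. Both factors are odd, so $u^\theta(s,x)=U^\theta(s,0)x+\tfrac16 W^\theta(s,0)x^3+O(x^5)$ and $\psi_x(s,x)=\psi_{xx}(s,0)x+\tfrac16\psi_{xxxx}(s,0)x^3+O(x^5)$. Since the weight $|\Delta_\theta|$ is already $O(\theta)$ on an interval of length $q\sim\theta$, I only need $\E[u^\theta\psi_x]$ up to an error $O(s^3)$.

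To expand $\psi$, I use the classical backward equation from (U4) and its $x$-derivatives. This gives $\psi_{xx}(s,0)=H''(0)+\tfrac12\azero(q-s)H''''(0)+O(q^2)$, with the drift term contributing at order $q$ through $\alphas\le C_0 s$, which is small. It also gives $\psi_{xxxx}(s,0)=H''''(0)+O(q)$. Together with the moments $\E X_s^2=\azero s+O(s^2)$ and $\E X_s^4=3\azero^2s^2+O(s^3)$, the integrand becomes
\[
 a\,U^\theta(s,0)\,\E X_s^2\,H''(0)+\tfrac12\azero^3\bigl(U^\theta(0,0)(q-s)s+\cdots\bigr)H''''(0)
\]
plus terms that collect into $\tfrac12\azero^3W^\theta(0,0)s^2H''(0)$ and $\gamma_\theta\,q\,s\,H''''(0)$. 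The assembly goes as follows.
\begin{itemize}
\item The $H''(0)$ terms assemble into $k_\theta(s)$, hence into $\Pi_\theta$.
\item In the $H''''(0)$ terms, the $s^2$ contributions from $\E X^4$ and from $(q-s)s$ must cancel up to $O(s^3)$, leaving $\gamma_\theta\,q\,H''''(0)\,I_\theta$.
\end{itemize}

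Checking this cancellation exactly, with correct combinatorial factors $\tfrac16\cdot3$, $\tfrac12$ and so on, is the step I expect to be the main obstacle. If the coefficients do not cancel, I would redefine the bookkeeping by absorbing the residual $s^2H''''$ term. It is $O(\theta\cdot q^3)=O(\theta^4)$, which is not small enough, so the cancellation must hold. It reflects the Gaussian fact quoted in (S2): a variance change $dv$ moves the fourth moment by $6v\,dv$.

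Finally, replacing $U^\theta(s,0)$ by $U^\theta(0,0)$ and $a(s)$ by $\azero$ costs $O(s)$ relative, by (U1) and \ref{item:D-K3}, and $\E X_s^2=\azero s+O(s^2)$. Therefore $k_\theta(s)=\azero^2U^\theta(0,0)s+O(s^2)$, and
\[
 |\Pi_\theta-\azero^2U^\theta(0,0)I_\theta|\le C\int_0^q\theta s^2\,ds\le C\theta^4 .
\]
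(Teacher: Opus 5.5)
Your outline is the paper's proof step for step. It uses the duality formula \eqref{eq:D-duality} at $t_1=q$ with $F=H$ and the same split $\Delta_\theta u^\theta+\alphas(u^\theta-u^*)$ of the drift difference. The bound $C(\theta^5+\theta^3L_\theta)$ on the second piece comes, as in the paper, from \eqref{eq:D-odd-stability}, $\alphas\le C_0s$ and $|\psi_x|\le C_0|x|$. The expansion of $\psi_{xx}(s,0)$ comes from the twice-differentiated backward equation, and the rest from the moments \eqref{eq:D-moments}.

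The one thing you have not established is the step you flag as the main obstacle. Arguing that the $s^2H''''(0)$ terms \emph{must} cancel because otherwise the lemma would fail is circular. The check is immediate, however, and it succeeds. Up to errors $O(qs^2+q^2s)$, the coefficient of $H''''(0)$ in $a(s)\E[u^\theta\psi_x](s,X^\theta_s)$ has two sources. The first is the correction $\frac{\azero}2(q-s)H''''(0)$ in $\psi_{xx}(s,0)$, multiplied by $a(s)U^\theta(s,0)\E X_s^2$:
\[
 \azero\cdot U^\theta(0,0)\cdot\azero s\cdot\tfrac{\azero}{2}(q-s)=\tfrac12\azero^3U^\theta(0,0)\,(q-s)s .
\]
The second is the term $a(s)\cdot\frac16U^\theta(s,0)\psi_{xxxx}(s,0)\E X_s^4$:
\[
 \azero\cdot\tfrac16U^\theta(0,0)\cdot3\azero^2s^2=\tfrac12\azero^3U^\theta(0,0)\,s^2 .
\]
Their sum is $\gamma_\theta\bigl[(q-s)s+s^2\bigr]=\gamma_\theta\,q\,s$ identically. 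Integrating against $\Delta_\theta$ therefore gives exactly $\gamma_\theta\,q\,H''''(0)\,I_\theta$.

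Two bookkeeping points should also be stated correctly.
\begin{itemize}
\item The error in $\E[u^\theta\psi_x]$ is not $O(s^3)$. It is $O(s^3+qs^2+q^2s)$, because $\psi_{xx}(s,0)$ itself is known only up to $O(q^2)$. This is still $O(q^3)$ pointwise on $[0,q]$, and after integration against $|\Delta_\theta|\le C\theta$ it gives $C\theta q^4\le C\theta^5$.
\item The drift term changes $\psi_{xx}(s,0)$ by $O(q^2)$, not by order $q$. This holds because $\partial_x\omega^{\zetas}(s,0)=a\alphas U^*(s,0)\le C_0s$ is integrated over $[s,q]$. An $O(q)$ error there, multiplied by $H''(0)$, would already produce an uncontrolled $O(\theta^4)$ term.
\end{itemize}
With these filled in, your argument is complete, including the bound on $\Pi_\theta-\azero^2U^\theta(0,0)I_\theta$.
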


\begin{proof}
Apply \eqref{eq:D-duality} with $\zeta=\zetas$, $\tilde\zeta=\zeta_\theta$, $t_1=q$ and $F=H$,
and let $\psi$ be the corresponding function. On $[0,q)$ the drifts differ by
\[
 \omega^{\zeta_\theta}-\omega^{\zetas}
 =a\bigl[\Delta_\theta\,u^\theta+\alphas\,(u^\theta-u^*)\bigr],
\]
and we treat the two terms separately; write $X=X^{\zeta_\theta}$ and
$\mathsf m_k(s)=\E X_s^k$.

\emph{The second term.} For $s<q$ we have $\alphas(s)\le C_0s$ and
$\int_s^1|\Delta_\theta|\le C\theta q+L_\theta$ by Lemma~\ref{lem:D-support-relation}. By
\eqref{eq:D-odd-stability}, $|\psi_x(s,x)|\le C_0|x|$ and \eqref{eq:D-moments}, this term
contributes at most $C\int_0^qs^2(\theta q+L_\theta)\,ds\le C(\theta^5+\theta^3L_\theta)$.

\emph{The first term.} It equals $\int_0^q\Delta_\theta(s)\,g(s)\,ds$ with
$g(s)=\E[a(s)\,u^\theta(s,X_s)\,\psi_x(s,X_s)]$. Put $\psi_j(s)=\partial_x^j\psi(s,0)$. The
function $x\mapsto u^\theta(s,x)\psi_x(s,x)$ is even, with sixth derivative bounded by $C_0$ by
(U1) and (U4), and its Taylor expansion at zero has the coefficients
$U^\theta(s,0)\psi_2(s)$ of $x^2$ and $\frac16(U^\theta(s,0)\psi_4(s)+W^\theta(s,0)\psi_2(s))$
of $x^4$. Since $\mathsf m_6(s)\le C_0s^3$,
\[
 g(s)=a(s)\Bigl[U^\theta(s,0)\,\psi_2(s)\,\mathsf m_2(s)
 +\tfrac16\bigl(U^\theta(s,0)\psi_4(s)+W^\theta(s,0)\psi_2(s)\bigr)\mathsf m_4(s)\Bigr]+O(s^3).
\]
To evaluate $\psi_2$, differentiate the backward equation twice in $x$ and set $x=0$. Since
$\psi_x(s,0)=\omega^{\zetas}(s,0)=\partial_x^2\omega^{\zetas}(s,0)=0$ by parity,
$\psi_2'(s)=-2\,\partial_x\omega^{\zetas}(s,0)\,\psi_2(s)-\frac{a(s)}2\,\psi_4(s)$,
where $0\le\partial_x\omega^{\zetas}(s,0)=a(s)\alphas(s)U^*(s,0)\le C_0s$. With $\psi_2(q)=H''(0)$,
$\psi_4(s)=H''''(0)+O(q-s)$ by (U4), and $a(s)=\azero+O(s)$, integration from $s$ to $q$ gives
\begin{equation}\label{eq:D-psi-two}
 \psi_2(s)=H''(0)+\tfrac{\azero}2\,H''''(0)\,(q-s)+O(q^2)\qquad(0\le s\le q).
\end{equation}
Now insert \eqref{eq:D-psi-two}, $U^\theta(s,0)=U^\theta(0,0)+O(s)$,
$W^\theta(s,0)=W^\theta(0,0)+O(s)$ (by (U1)), $\psi_4=H''''(0)+O(q)$, $\psi_2=H''(0)+O(q)$,
$a(s)=\azero+O(s)$, $\mathsf m_2(s)=\azero s+O(s^2)$ and $\mathsf m_4(s)=3\azero^2s^2+O(s^3)$
\eqref{eq:D-moments}. In the first summand of $g$ we keep
$a(s)U^\theta(s,0)\mathsf m_2(s)H''(0)$ exactly; the rest of that summand is
$\frac12\azero^3U^\theta(0,0)H''''(0)(q-s)s+O(qs^2+q^2s)$. The second summand equals
$\frac12\azero^3\bigl(U^\theta(0,0)H''''(0)+W^\theta(0,0)H''(0)\bigr)s^2+O(s^3+qs^2)$. The two
$H''''(0)$ contributions add up to $\gamma_\theta H''''(0)\bigl[(q-s)s+s^2\bigr]
=\gamma_\theta\,q\,H''''(0)\,s$, so
\[
 g(s)=H''(0)\,k_\theta(s)+\gamma_\theta\,q\,H''''(0)\,s+O\bigl(s^3+qs^2+q^2s\bigr).
\]
The cancellation of the terms in $s^2H''''(0)$ is the Gaussian effect described in
Section~\ref{sec:nc-stability}: at this order the plateau moves the law of $X_q$ only through
its variance. Multiplying by $\Delta_\theta$, which satisfies $|\Delta_\theta|\le C\theta$ on
$[0,q)$, and integrating leaves an error at most $C\theta q^4\le C\theta^5$. This proves
\eqref{eq:D-plateau}. Finally $a(s)=\azero+O(s)$, $U^\theta(s,0)=U^\theta(0,0)+O(s)$ and
$\mathsf m_2(s)=\azero s+O(s^2)$ give $k_\theta(s)=\azero^2U^\theta(0,0)\,s+O(s^2)$, hence
$|\Pi_\theta-\azero^2U^\theta(0,0)I_\theta|\le C\theta\int_0^qs^2\,ds\le C\theta^4$.
\end{proof}

The next lemma is the decisive step: the two contact identities at the common point $q$ fix
the first moment of the plateau.

\begin{lemma}[Closure by the common contact]\label{lem:D-contact-closure}
For all small $\theta>0$,
\[
 |I_\theta|\le C\bigl(\theta^4+\theta L_\theta\bigr),\qquad
 |\Pi_\theta|\le C\bigl(\theta^5+\theta L_\theta\bigr).
\]
Consequently, for every even $H$ with $\|H\|_{10}\le1$,
\[
 \bigl|\nu^\theta_q(H)-\nu^*_q(H)\bigr|\le C\bigl(\theta^5+\theta L_\theta\bigr).
\]
\end{lemma}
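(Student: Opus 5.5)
The plan is to test the identity $\Gamma_\theta(q)=q=\Gamma_*(q)$ against an even function and feed it into Lemma~\ref{lem:D-plateau}. By \ref{item:D-F1}--\ref{item:D-F3}, $q=q_\theta$ is a contact of both measures: $\Gamma_\theta(q)=q$ by \ref{item:D-F2}, and $\Gamma_*(q)=q$ because $q<v_*$. Hence
\[
 0=\nu^\theta_q\bigl((u^\theta)^2(q,\cdot)\bigr)-\nu^*_q\bigl((u^*)^2(q,\cdot)\bigr).
\]
Split the right side as $\nu^\theta_q(H)-\nu^*_q(H)$ plus $\nu^\theta_q\bigl((u^\theta)^2-(u^*)^2\bigr)$, with $H=(u^*)^2(q,\cdot)$, which is even and satisfies $\|H\|_{10}\le C_0$ by (U1).

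The correction term is bounded by \eqref{eq:D-odd-stability}. The difference $(u^\theta)^2-(u^*)^2$ is of size $C_0x^2\int_q^1|\Delta_\theta|\le C_0x^2L_\theta$, and $\E X_q^2\le C_0q$, so this term is $O(\theta L_\theta)$.

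For the main term, apply \eqref{eq:D-plateau} to $H$. Since $u^*(q,x)=U^*(q,0)x+O(x^3)$, we get $H''(0)=2U^*(q,0)^2\ge1$ by \ref{item:D-K2}, and $H''''(0)=O(1)$. The $H''''$ term is $O(\theta|I_\theta|)$. Combining with the second statement of Lemma~\ref{lem:D-plateau},
\[
 2U^*(q,0)^2\azero^2U^\theta(0,0)\,I_\theta=O\bigl(\theta^4+\theta L_\theta+\theta|I_\theta|\bigr).
\]
The coefficient on the left is bounded below by an absolute constant by \ref{item:D-K2}, so the $\theta|I_\theta|$ term is absorbed for small $\theta$, giving $|I_\theta|\le C(\theta^4+\theta L_\theta)$.

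The bound on $\Pi_\theta$ is the main obstacle. The relation $\Pi_\theta=\azero^2U^\theta(0,0)I_\theta+O(\theta^4)$ only gives $O(\theta^4)$, not the claimed $\theta^5$. I would redo the identity with $\Pi_\theta$ kept exact, without passing to $I_\theta$. The contact identity then gives
\[
 H''(0)\,\Pi_\theta+\gamma_\theta\,q\,H''''(0)\,I_\theta=O\bigl(\theta^5+\theta L_\theta\bigr).
\]
Since $qI_\theta=O(\theta^5+\theta^2L_\theta)$ by the bound just proved, dividing by $H''(0)\ge1$ yields $|\Pi_\theta|\le C(\theta^5+\theta L_\theta)$. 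This is the real reason for introducing $k_\theta$: the contact test is exactly the $H''(0)$-direction.

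The final claim follows from \eqref{eq:D-plateau}. For a general even $H$ with $\|H\|_{10}\le1$, the $H''(0)\Pi_\theta$ term is $O(\theta^5+\theta L_\theta)$ by the bound on $\Pi_\theta$, the $\gamma_\theta qH''''(0)I_\theta$ term is $O(\theta^5+\theta^2L_\theta)$ by the bound on $I_\theta$, and the error in \eqref{eq:D-plateau} is $O(\theta^5+\theta^3L_\theta)$.
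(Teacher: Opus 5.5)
Your proposal is correct and follows the paper's proof essentially step for step. You test the common contact $\Gamma_\theta(q)=q=\Gamma_*(q)$ with $H_q=u^*(q,\cdot)^2$, bound the jet-difference term by $C_0qL_\theta$ via \eqref{eq:D-odd-stability}, extract $I_\theta$ from Lemma~\ref{lem:D-plateau} and its second assertion, and then return to \eqref{eq:D-plateau} with $\Pi_\theta$ kept exact to reach the $\theta^5$ bound. The only differences are cosmetic: you treat $\gamma_\theta qH''''(0)I_\theta$ as an absorbable error instead of folding it into the coefficient of $I_\theta$, and Lemma~\ref{lem:D-plateau} should formally be applied to $H/C_0$, since $\|H\|_{10}\le C_0$.
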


\begin{proof}
For small $\theta$ we have $q<v_*$, so $q$ is a contact of both measures:
$\Gamma_\theta(q)=q=\Gamma_*(q)$ by \ref{item:D-F1} and \ref{item:D-F2}.
Test with $H_q=u^*(q,\cdot)^2$. It is
even, $\|H_q\|_{10}\le C_0$ by (U1), $H_q''(0)=2U^*(q,0)^2\in[2(1-\eta)^2,2]$ by
\ref{item:D-K2}, and $|H_q''''(0)|=8|U^*(q,0)W^*(q,0)|\le C_0$. Since
$\Gamma_\zeta(q)=\nu^\zeta_q\bigl(u^\zeta(q,\cdot)^2\bigr)$, the two contact identities give
\[
 \nu^\theta_q(H_q)-\nu^*_q(H_q)
 =\nu^\theta_q\bigl(u^*(q,\cdot)^2-u^\theta(q,\cdot)^2\bigr),
\]
and by \eqref{eq:D-odd-stability}, with $\int_q^1|\Delta_\theta|=L_\theta$, its absolute value is
at most $C_0L_\theta\,\E(X^{\zeta_\theta}_q)^2\le C_0qL_\theta$. Lemma~\ref{lem:D-plateau}
applied to $H_q/C_0$, followed by its second assertion, gives
\[
 \bigl[H_q''(0)\,\azero^2U^\theta(0,0)+\gamma_\theta\,q\,H_q''''(0)\bigr]\,I_\theta
 =O\bigl(\theta^4+\theta L_\theta\bigr).
\]
By \ref{item:D-K2} and \ref{item:D-K3} the bracket is at least $2(1-\eta)^3-C_0q\ge1$ for
small $\theta$, which gives the bound on $I_\theta$. Returning to \eqref{eq:D-plateau} for
$H_q$ gives $H_q''(0)\,|\Pi_\theta|\le C(qL_\theta+\theta^5+\theta^3L_\theta)+Cq|I_\theta|$,
which is the bound on $\Pi_\theta$. The last assertion is \eqref{eq:D-plateau} with these two
bounds.
\end{proof}

The bound at time $q$ propagates to later times, up to a term that is small because the times
involved are below $K$.

\begin{lemma}[Laws above the first contact]\label{lem:D-propagation}
For all small $\theta>0$, every $t\in[q,K]$ and every even $F$ with $\|F\|_{10}\le1$,
\[
 \bigl|\nu^\theta_t(F)-\nu^*_t(F)\bigr|\le C_0\,t\,L_\theta+C\bigl(\theta^5+\theta L_\theta\bigr).
\]
\end{lemma}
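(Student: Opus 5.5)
The plan is to run the duality identity \eqref{eq:D-duality} on the interval $[q,t]$ rather than $[0,t]$, starting from the law difference at time $q$ controlled by Lemma~\ref{lem:D-contact-closure}. Fix $t\in[q,K]$ and an even $F$ with $\|F\|_{10}\le1$. Let $\psi(s,x)=\E[F(X^{\zetas}_t)\mid X^{\zetas}_s=x]$ be the backward Kolmogorov function of the diffusion of $\zetas$, as in (U4). Then $\nu^*_t(F)=\nu^*_q(\psi(q,\cdot))$. Applying It\^o's formula to $\psi(s,X^{\zeta_\theta}_s)$ only on $[q,t]$ gives
\[
 \nu^\theta_t(F)-\nu^*_t(F)=\bigl[\nu^\theta_q-\nu^*_q\bigr]\bigl(\psi(q,\cdot)\bigr)
 +\int_q^t\E\Bigl[\bigl(\omega^{\zeta_\theta}-\omega^{\zetas}\bigr)(s,X^{\zeta_\theta}_s)\,\psi_x(s,X^{\zeta_\theta}_s)\Bigr]\,ds .
\]
This is the same derivation as \eqref{eq:D-duality}, with the diffusion started at time $q$ from the law $\nu^\theta_q$, and using the Markov property of $X^{\zeta_\theta}$.

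For the first term, note that $\psi(q,\cdot)$ is even because $\omega^{\zetas}$ is odd and $F$ is even, and that $\|\psi(q,\cdot)\|_{10}\le C_0$ by (U4). Lemma~\ref{lem:D-contact-closure}, applied to $\psi(q,\cdot)/C_0$, bounds this term by $C(\theta^5+\theta L_\theta)$.

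For the integral, split the drift difference as
\[
 \omega^{\zeta_\theta}-\omega^{\zetas}=a\bigl[\Delta_\theta\,u^\theta+\alphas\,(u^\theta-u^*)\bigr].
\]
By \eqref{eq:D-odd-stability} with $s\ge q$, we have $|u^\theta-u^*|(s,x)\le C_0|x|L_\theta$. Together with $|\psi_x(s,x)|\le C_0|x|$ from (U4), $\alphas\le1$ and $\E X_s^2\le C_0s\le C_0$, the second piece contributes at most $C_0 tL_\theta$. For the first piece, $|u^\theta(s,x)|\le|x|$, so
\[
 \int_q^t|\Delta_\theta(s)|\,\E\bigl|a\,u^\theta\psi_x\bigr|\,ds\le C_0\int_q^t|\Delta_\theta(s)|\,s\,ds\le C_0\,t\,L_\theta ,
\]
using $\E X_s^2\le C_0 s\le C_0t$. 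Both pieces therefore carry the explicit factor $t$ with an absolute constant $C_0$, which is the form the contraction step needs.

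The main obstacle is justifying the restarted duality on $[q,t]$. The distribution function $\alpha_\theta$ jumps at $q$ (the atom there), so we must check that the It\^o argument of (U4) applies. It does: $\psi$ is continuous in $s$ across the finitely many discontinuity points $\{q,v_\theta,v_*\}$, it solves the backward equation classically between them, and $\psi_x$ has linear growth, so the martingale part has mean zero by \eqref{eq:D-moments}. The remaining point is the uniformity of the constants. (U4) gives $\|\psi(s,\cdot)\|_{10}\le C_0$ with $C_0$ depending only on $\mathfrak B$, which keeps the constant in front of $tL_\theta$ absolute, as claimed.
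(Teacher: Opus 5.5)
Your proposal is correct and is essentially the paper's proof: the paper applies \eqref{eq:D-duality} on $[0,t]$ and splits the integral at $q$, using the Markov property to identify the $[0,q]$ part with $\nu^\theta_q(\psi(q,\cdot))-\nu^*_q(\psi(q,\cdot))$. That is exactly your first term, handled by Lemma~\ref{lem:D-contact-closure} applied to $\psi(q,\cdot)/C_0$, and the $[q,t]$ part is bounded by the same drift splitting and the same factor $t$ as in your argument.
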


\begin{proof}
Apply \eqref{eq:D-duality} with $\zeta=\zetas$, $\tilde\zeta=\zeta_\theta$ and $t_1=t$, and
split the integral at $q$. By the Markov property, the part over $[0,q]$ is the right side of
\eqref{eq:D-duality} for $t_1=q$ and the test function $H=\psi(q,\cdot)$, so it equals
$\nu^\theta_q(H)-\nu^*_q(H)$. This $H$ is even with $\|H\|_{10}\le C_0$, and
Lemma~\ref{lem:D-contact-closure} applies to $H/C_0$. For $s\ge q$, the drifts differ by
$a[\Delta_\theta u^\theta+\alphas(u^\theta-u^*)]$, and \eqref{eq:D-odd-stability} with
$\int_s^1|\Delta_\theta|\le L_\theta$ gives
$|(\omega^{\zeta_\theta}-\omega^{\zetas})(s,x)|\le C_0|x|\,(|\Delta_\theta(s)|+L_\theta)$. With
$|\psi_x(s,x)|\le C_0|x|$ and \eqref{eq:D-moments}, the part over $[q,t]$ is at most
$C_0\int_q^t(|\Delta_\theta(s)|+L_\theta)\,s\,ds\le C_0\,t\,L_\theta$.
\end{proof}

\subsection{Proof of \texorpdfstring{Proposition~\ref*{prop:nc-above-contact}}%
{Proposition \getrefnumber{prop:nc-above-contact}}}\label{sec:D-proof}

\paragraph{Parts (a) and (b).}
\emph{The interior of the common support.} Let $t\in(q,v_-)$. By
Lemma~\ref{lem:D-support-relation}, $\Delta_\theta(t)=R_{\zeta_\theta}(t)-R_{\zetas}(t)$.
Evaluating the jets of both measures at time $t$,
\begin{align*}
 N_{\zeta_\theta}-N_{\zetas}
 ={}&a'\Bigl[\nu^\theta_t\bigl((U^\theta)^2-(U^*)^2\bigr)
   +\nu^\theta_t\bigl((U^*)^2\bigr)-\nu^*_t\bigl((U^*)^2\bigr)\Bigr]\\
 &+a^2\Bigl[\nu^\theta_t\bigl((V^\theta)^2-(V^*)^2\bigr)
   +\nu^\theta_t\bigl((V^*)^2\bigr)-\nu^*_t\bigl((V^*)^2\bigr)\Bigr],
\end{align*}
where $a$ and $a'$ are evaluated at $t$. By \eqref{eq:D-local-jet-stability},
$|(U^\theta)^2-(U^*)^2|\le C_0L_\theta$; by \eqref{eq:D-odd-stability} and (U1),
$|(V^\theta)^2-(V^*)^2|(t,x)\le C_0L_\theta x^2$. Moreover $a'(t)\le t$ by \ref{item:D-K3},
and the even functions $U^*(t,\cdot)^2$, $V^*(t,\cdot)^2$ and $U^*(t,\cdot)^3$ have
$\|\cdot\|_{10}\le C_0$. Lemma~\ref{lem:D-propagation} and \eqref{eq:D-moments} therefore give
\[
 |N_{\zeta_\theta}(t)-N_{\zetas}(t)|\le C_0tL_\theta+C(\theta^5+\theta L_\theta),
 \qquad
 |\sigma_{\zeta_\theta}(t)-\sigma_{\zetas}(t)|\le C_0L_\theta+C(\theta^5+\theta L_\theta).
\]
With \eqref{eq:D-rho-sigma}, in particular $N_{\zetas}(t)\le C_0t$ and
$\sigma_\zeta\ge1$,
\begin{equation}\label{eq:D-pointwise}
 |\Delta_\theta(t)|
 \le\frac{|N_{\zeta_\theta}-N_{\zetas}|}{\sigma_{\zeta_\theta}}
 +\frac{N_{\zetas}\,|\sigma_{\zeta_\theta}-\sigma_{\zetas}|}
 {\sigma_{\zeta_\theta}\,\sigma_{\zetas}}
 \le C_0tL_\theta+C\bigl(\theta^5+\theta L_\theta\bigr)\qquad(q<t<v_-).
\end{equation}

\emph{The upper endpoints.} Suppose $v_-<v_+$, let $\zeta_+$ be the measure among
$\zeta_\theta,\zetas$ whose support ends at $v_+$, and let $\zeta_-$ be the other one. On
$[v_+,1]$ both distribution functions equal one, so the two Parisi equations coincide there,
with the same terminal value, and $U^\theta(v_+,\cdot)=U^*(v_+,\cdot)$. The function
$\Gamma_{\zeta_+}$ is the identity on an interval ending at $v_+$ (on $[q,v_\theta]$ or
$[0,v_*]$), and $\Gamma'_{\zeta_+}$ is continuous, so $\Gamma'_{\zeta_+}(v_+)=1$; likewise
$\Gamma'_{\zeta_-}(v_-)=1$. On $(v_-,v_+)\subset(0,K)$ the measure $\zeta_-$ has a plateau of
height one, so \eqref{eq:D-top-concave} gives $\Gamma'_{\zeta_-}(v_+)\le1-(v_+-v_-)$. Hence,
with $F=U^*(v_+,\cdot)^2$,
\[
 v_+-v_-\le\bigl|\Gamma_\theta'(v_+)-\Gamma_*'(v_+)\bigr|
 =a(v_+)\,\bigl|\nu^\theta_{v_+}(F)-\nu^*_{v_+}(F)\bigr|,
\]
and Lemma~\ref{lem:D-propagation} with $t=v_+\le K$ gives
\begin{equation}\label{eq:D-endpoint}
 |v_\theta-v_*|\le C_0KL_\theta+C\bigl(\theta^5+\theta L_\theta\bigr).
\end{equation}

\emph{The contraction.} On $[v_+,1]$ we have $\Delta_\theta=0$, and $|\Delta_\theta|\le1$
everywhere. Integrating \eqref{eq:D-pointwise} over $(q,v_-)$ and adding \eqref{eq:D-endpoint},
\[
 L_\theta\le\int_q^{v_-}|\Delta_\theta|+(v_+-v_-)
 \le2C_0K\,L_\theta+2C\bigl(\theta^5+\theta L_\theta\bigr).
\]
Since $2C_0K\le\frac14$, for $\theta\le1/(8C)$ this gives $L_\theta\le4C\theta^5$, which is
(a). Inserting (a) into \eqref{eq:D-pointwise} and \eqref{eq:D-endpoint} gives (b) on
$(q,v_-)$ and on $[v_+,1]$. At $t=q$ both distribution functions are right continuous, so
$\Delta_\theta(q)=\lim_{t\downarrow q}\Delta_\theta(t)=O(\theta^5)$; in particular
$x_\theta=\alpha_\theta(q)=\alphas(q)+O(\theta^5)$.

\paragraph{A slope bound and part (c).}
Let $\zeta\in\{\zeta_\theta,\zetas\}$, and let $J$ be its open support interval,
$(q_\theta,v_\theta)$ or $(0,v_*)$. On $J$ the function $\alpha_\zeta=R_\zeta$ is continuous,
the jets are $C^1$ in time by (U1), and It\^o's formula gives, with $U,V,W$ taken along
$X^\zeta$ and $\alpha=\alpha_\zeta(t)$,
\[
 \frac d{dt}\E U^2=a\E\bigl(V^2-2\alpha U^3\bigr),\qquad
 \frac d{dt}\E V^2=a\E\bigl(W^2-6\alpha UV^2\bigr),\qquad
 \frac d{dt}\E U^3=3a\E\bigl(UV^2-\alpha U^4\bigr),
\]
as in the proof of Lemma~\ref{lem:plat-ito}(c) with the plateau height replaced by
$\alpha_\zeta(t)$. Hence
\[
 N_\zeta'=a''\E U^2+aa'\E\bigl(V^2-2\alpha U^3\bigr)+2aa'\E V^2
 +a^3\E\bigl(W^2-6\alpha UV^2\bigr),
\]
and $|\sigma_\zeta'|\le C_0$. By \eqref{eq:D-K4},
$\E W^2\ge(\E|W|)^2\ge(2-2\eta)^2\ge3$, and by \ref{item:D-K3}, (U1) and \eqref{eq:D-moments} the
remaining terms of $N_\zeta'$ are at least $-C_0K$. With \eqref{eq:D-rho-sigma},
\begin{equation}\label{eq:D-slope}
 R_\zeta'\ge\frac{N_\zeta'}{\sigma_\zeta}
 -\frac{N_\zeta\,|\sigma_\zeta'|}{\sigma_\zeta^2}
 \ge\frac{3-C_0K}{3}-C_0K\ge\frac12\qquad\text{on }J,
\end{equation}
because $C_0K\le\eta$ by the choice of $K$ in Appendix~\ref{sec:dat-constants}. Thus
$\alpha_\theta$ and $\alphas$ increase with slope at least $\frac12$ on $[q_\theta,v_\theta)$
and on $[0,v_*)$: for $t_1<t_2$ in either interval, $\alpha(t_2)-\alpha(t_1)\ge\frac12(t_2-t_1)$.

Let $e_\theta=\sup_{[q,v_-)}|\Delta_\theta|+(v_+-v_-)$, which is $O(\theta^5)$ by
(b), and let $x\in(x_\theta,1)$. Then $t:=Q_\theta(x)\in[q,v_\theta]$ and
$\alpha_\theta(t)\ge x$. If $t\ge v_-$, then $Q_*(x)\le v_*\le v_+\le t+e_\theta$. If
$t<v_-$, then $\alphas(t)\ge x-e_\theta$, and the slope bound gives
$Q_*(x)\le t+2e_\theta$ (if $t+2e_\theta\ge v_*$, use $Q_*(x)\le v_*$).
Conversely, let $t'=Q_*(x)\in(0,v_*]$. If $t'\ge v_-$, then
$Q_\theta(x)\le v_\theta\le v_+\le t'+e_\theta$. If $q\le t'<v_-$, the same argument
with the roles of the two measures exchanged gives $Q_\theta(x)\le t'+2e_\theta$. If
$t'<q$, then $x\le\alphas(t')\le\alphas(q)\le x_\theta+e_\theta$; the slope of $\alphas$
on $[0,v_*)$ gives $q-t'\le2(\alphas(q)-\alphas(t'))<2e_\theta$, and the slope of
$\alpha_\theta$ on $[q,v_\theta)$ gives $Q_\theta(x)\le q+2e_\theta<t'+4e_\theta$,
while $Q_\theta(x)\ge q>t'$. In all cases $|Q_\theta(x)-Q_*(x)|\le4e_\theta$, which is
(c).

\paragraph{Part (d).}
By Lemma~\ref{lem:D-contact-closure} and (a), $I_\theta=O(\theta^4)$. On the other hand, by
\eqref{eq:D-alpha-star-linear},
\[
 I_\theta=\int_0^q\bigl(\theta-\alphas(s)\bigr)\,s\,ds
 =q^2\Bigl(\frac\theta2-\frac{\rhostar q}3\Bigr)+O(q^4).
\]
Since $\theta/C\le q\le C\theta$, this gives $\rhostar q=\frac32\theta\,(1+O(\theta))$, the
first claim, because $3/(2\rhostar)=3\sqrt\azero/\Dstar$. Next,
$\zeta_\theta(\{q\})=x_\theta-\theta$, and by (b) and \eqref{eq:D-alpha-star-linear},
$x_\theta=\alphas(q)+O(\theta^5)=\rhostar q+O(\theta^2)=\frac32\theta\,(1+O(\theta))$, so
$\zeta_\theta(\{q\})=\frac\theta2(1+O(\theta))$. Finally
$\Wone(\zeta_\theta,\zetas)=\int_0^1|\Delta_\theta|=\int_0^q|\theta-\alphas(s)|\,ds+L_\theta$.
With $\lambda_\theta=\rhostar q/\theta=\frac32+O(\theta)$ and \eqref{eq:D-alpha-star-linear},
\[
 \int_0^q|\theta-\alphas(s)|\,ds=q\theta\int_0^1|1-\lambda_\theta\varsigma|\,d\varsigma+O(q^3)
 =\tfrac5{12}\,q\theta\,(1+O(\theta)),
\]
because $\int_0^1|1-\lambda\varsigma|\,d\varsigma=\frac\lambda2+\frac1\lambda-1$ for
$\lambda\ge1$, which equals $\frac5{12}$ at $\lambda=\frac32$ and is smooth in $\lambda$.
Hence, with (a),
\[
 \Wone(\zeta_\theta,\zetas)=\frac5{12}\cdot\frac{3}{2\rhostar}\,\theta^2(1+O(\theta))+O(\theta^5)
 =\frac{5\sqrt\azero}{4\Dstar}\,\theta^2(1+O(\theta)).
\]
This completes the proof of Proposition~\ref{prop:nc-above-contact}.
\qed

\fi

\end{document}